\documentclass[english, hidelinks, 11pt]{article}
\usepackage{geometry}
\usepackage{setspace}
\usepackage{bm}
\usepackage{graphicx}
\graphicspath{ {.} }
\usepackage{hyperref}
\usepackage{xr}

\usepackage{amssymb, amsmath, amsthm, amsfonts, mathtools, bbm}
\usepackage{float}
\usepackage{color, xcolor}
\usepackage[dvipsnames]{xcolor}
\usepackage{url}
\usepackage{enumitem}
\usepackage{csquotes}
\usepackage{multirow}
\usepackage{siunitx}
\usepackage{placeins}
\usepackage{authblk}
\usepackage{appendix}
\usepackage{lscape}
\usepackage{booktabs}
\usepackage{stackrel}
\usepackage{xurl}

\allowdisplaybreaks[1]

\usepackage{tikz}
\usetikzlibrary{arrows.meta, decorations.pathreplacing}

\usepackage{algorithm}
\usepackage[noend]{algpseudocode}

\usepackage{xr-hyper}
\usepackage{hyperref}
\hypersetup{
    colorlinks=true,
    linkcolor=blue,
    citecolor=blue,
    filecolor=magenta,
    urlcolor=blue
}

\usepackage[round]{natbib}

\newtheorem{theorem}{Theorem}
\newtheorem{lemma}{Lemma}
\newtheorem{proposition}{Proposition}

\newtheorem{definition}{Definition}
\newtheorem{remark}{Remark}
\newtheorem{assumption}{Assumption}

\newtheorem{example}{Example}
\newtheorem{condition}{Condition}

\numberwithin{equation}{section}

\usepackage{soul}       
\usepackage{ulem}   
\DeclareMathOperator*{\argmin}{arg\,min}
\DeclareMathOperator*{\argmax}{arg\,max}

\DeclarePairedDelimiter{\ceil}{\lceil}{\rceil}
\DeclarePairedDelimiter{\floor}{\lfloor}{\rfloor}

\newtheorem{assumptionalt}{Assumption}[assumption]
\newenvironment{assumptionp}[1]{
  \renewcommand\theassumptionalt{#1}
  \assumptionalt
}{\endassumptionalt}

\newcommand{\bnorm}[1]{\bigl\lVert #1 \bigr\rVert}

\def\pmk{p_{\text{-}k}}

\def\rmk{r_{\text{-}k}}
\def\wmk{w_{\text{-}k}}
\def\Lambdamk{{\Lambda}_{\text{-}k}}
\def\Lambdamk{\Lambda_{\text{-}k}}
\def\wtLambdamk{\wt{\Lambda}_{\text{-}k}}
\def\whLambdamk{\wh{\Lambda}_{\text{-}k}}
\def\Aemk{A_{e,\text{-}k}}
\newcommand{\Ajmk}[1]{A_{#1,\text{-}k}}
\def\mat{\textnormal{\textsc{mat}}}
\def\tr{\textnormal{\text{tr}}}
\def\wt#1{\widetilde{#1}}
\def\wh#1{\widehat{#1}}

\def\C{\mathbb{C}} 
\def\E{\mathbb{E}} 
\def\L{\mathbb{L}} 
\def\M{\mathbb{M}} 
\def\N{\mathbb{N}} 
\def\P{\mathbb{P}}
\def\R{\mathbb{R}}

\def\Z{\mathbb{Z}} 

\def\cC{\mathcal{C}} 
\def\cD{\mathcal{D}} 
\def\cE{\mathcal{E}} 
\def\cF{\mathcal{F}}
\def\cG{\mathcal{G}} 
\def\cH{{\mathcal{H}}} 
\def\cI{\mathcal{I}} 
\def\cII{\mathcal{II}} 
\def\cIII{\mathcal{III}} 
\def\cIV{\mathcal{IV}} 
\def\cJ{{\mathcal{J}}} 
\def\cK{{\mathcal{K}}} 

\def\cM{\mathcal{M}} 
\def\cN{\mathcal{N}} 
\def\cO{\mathcal{O}}

\def\cQ{\mathcal{Q}} 
\def\cS{\mathcal{S}}
\def\cT{\mathcal{T}} 
\def\cU{\mathcal{U}} 
 
\def\cV{\mathcal{V}} 
\def\cX{\mathcal{X}} 
\def\cY{\mathcal{Y}}

\def\bi{\boldsymbol{i}}

\def\bu{\mathbf{u}}

\def\bg{\mathbf{g}}

 \def\bxi{\boldsymbol{\xi}}

\def\thh{\widehat{\theta}} 
 
\def\thj{\theta_j}

\renewcommand{\l}{\left}
\renewcommand{\r}{\right}

\def\wh{\widehat}
\def\wt{\widetilde}

\def\d{\mathrm{d}}

\def\trans{\intercal}
 
\def\F{{F}}

\def\cov{\mathbb{C}\mathrm{ov}} 
\def\var{\mathbb{V}\mathrm{ar}}

\def\vec{\mathrm{vec}} 
\def\Vech{\mathrm{vech}} 
\def\col{\mathrm{col}}

\def\rank{\mathrm{rank}}

\def\diag{\mathrm{diag}}

\def\lc{\ell^{\circ}} 
\def\al{a_{\ell}} 
\def\bl{b_{\ell}} 
\def\sll{s_{\ell}} 
\def\el{e_{\ell}} 
\def\alc{a_{\ell^{\circ}}} 
\def\blc{b_{\ell^{\circ}}}
\def\delc{\Delta^{\circ}} 
\def\gc{\mathbf{g}^{\circ}}

\usepackage[makeroom]{cancel}

\begin{document}

\begin{titlepage}

\title{Detection and Mode-Identification of Multiple Change Points in Tensor Factor Models}

\author[1]{Yuqi Zhang\thanks{Equal contribution. Email: \url{yuqi.zhang@bristol.ac.uk}. Supported by Engineering and Physical Sciences Research Council for Doctoral Training in Computational Statistics and Data Science (Compass, EP/S023569/1).}}
\author[1]{Zetai Cen\thanks{Equal contribution; corresponding author. Email: \url{zetai.cen@bristol.ac.uk}. Supported by Engineering and Physical Sciences Research Council (EP/Z531327/1).}}
\author[1]{Haeran Cho\thanks{Email: \url{haeran.cho@bristol.ac.uk}. Supported by Engineering and Physical Sciences Research Council (EP/Z531327/1).}}

\affil[1]{School of Mathematics, University of Bristol}

\date{}

\maketitle

\begin{abstract}
We study the problems arising from modeling high-dimensional tensor-valued time series under a Tucker decomposition-based factor model with multiple structural change points. First, we propose an algorithm for detecting the multiple change points, which utilizes the low-rank structure of the data for statistical and computational efficiency. Also, the multi-dimensional array setting poses unique challenges, as some changes are associated with a subset of the modes, and the changes in different modes may interact with one another. Recognizing these, we investigate the problem of identifying each change with the tensor modes post-segmentation. To this end, we formalize the mode-identifiability of each change and propose an algorithm for detecting the modes at which the data are undergoing a mode-identifiable shift. We establish the consistency of both change point detection and mode-identification methods under a weak moment condition, and demonstrate their good performance on simulated datasets where, in particular, it is shown that the mode-identification step can improve the post-segmentation estimation of the mode-wise loading space. Additionally we analyze the datasets on New York City taxi usage and Fama--French portfolio returns using the proposed suite of methods.
\end{abstract}

\bigskip

\noindent
{\sl Key words and phrases:}
Change point detection,
mode-identifiability,
tensor factor model,
Tucker decomposition.

\end{titlepage}

\setcounter{page}{2}

\maketitle

\tableofcontents

\clearpage

\section{Introduction}
\label{sec: introduction}

High-dimensional time series are increasingly observed in the form of tensors, where each observation is a multi-dimensional array organized along several modes. Examples include dynamic transport and trade networks across product groups, multi-dimensional finance panels indexed by firms, characteristics and time, spatiotemporal environmental fields, and biomedical imaging data, to name a few \citep{li2020tensor, chen2022factor, sedighin2024tensor, babii2025tensor}.
Such tensor formatting not only preserves the innate interpretation of the data, but also can underline inherent structure therein.

Tensor factor models provide a natural framework for analyzing such data by imposing a low-rank representation.
Specifically, each tensor-valued observation is assumed to be driven by a handful of latent factors, as characterized by some mode-specific loading matrices, thereby achieving dimension reduction while preserving the role of each mode \citep{wang2019factor,chen2022factor,han2024cp}. 
Based on the two popular tensor decompositions \citep{kolda2009tensor}, Tucker and CanDecomp/PARAFAC (CP) factor models have frequently been studied, where the latter is a special case of the former, and the recent literature has seen a wide range of developments from determining the ranks \citep{Hanetal2022}, to estimating the mode-wise loading spaces and latent factors \citep{ChenLam2024, han2024tensor, barigozzi2026statistical, Barigozzietal2025_robust} and handling missing observations by imputation \citep{CenLam2025}.

While most of these contributions are made assuming that the factor structure is time-invariant, departure from such an assumption is often observed in many applications.
Macroeconomic and financial dependence patterns may shift after major policy changes, crises or market re-organizations, leading to structural changes in how certain factors drive the observed time series \citep{stock2009forecasting, breitung2011testing, chen2014detecting}. In these scenarios, ignoring changes could undermine parameter estimation and mask the interpretation of how the latent structure evolves.

The problem of change point detection has studied extensively in vector time series factor modeling.
Existing work includes tests for structural instability in factor loadings \citep{chen2014detecting,yamamoto2015testing,baltagi2017identification}, estimation of a single break date \citep{baltagi2017identification,Duanetal2023,Duanetal2025}, inference for break dates \citep{Baietal2024}, and multiple change point detection algorithms \citep{chan2014group, barigozzi2018simultaneous, Baltagietal2021, li2023detection, Baietal2024, barigozzi2025moving}. By contrast, the problem of detecting change points in matrix or, more generally, tensor time series remains largely unexplored. For matrix factor models with potential changes, \citet{Heetal2024AOS}
proposed an online monitoring scheme, while \citet{peng2025detection, peng2025estimation} developed offline procedures.
Beyond the matrix settings, \citet{huang2022multiple, pevsta2025tensor} study the problem of detecting changes in the mean of a tensor-variate sequence, \citet{zhang2024change, wang2025multilayer} investigate the problem in dynamic networks, 
and \citet{anastasiou2025tensor} propose a practical approach to detect changes in cryptocurrency networks by leveraging CP decomposition.

Despite the growing interests in tensor data with change points, a formal treatment towards offline detection in tensor factor models is still absent, to the best of our knowledge.
Moreover, due to the multi-dimensional array nature of the data, changes in a tensor factor model may affect one or several modes of the tensor data and manifest differently across modes, and these changes in different modes may interact with one another.
Vectorizing the data would annihilate this property, thus obscuring the relations between any detected change and the tensor modes, which calls for methods that preserve and exploit the tensor structure.

Recognizing these new challenges, we develop a suite of methods for modeling tensor time series under a factor model with multiple changes.
Firstly, we propose an algorithm for detecting multiple change points in the tensor factor model.
It extends the idea often utilized in the vector factor modeling literature, that different types of changes in the factor structure are observationally equivalent to the changes in the covariance of so-called pseudo-factor of a fixed dimension \citep{han2015tests, Baltagietal2021, Duanetal2023}, to the tensor setting.
This is particularly beneficial for the tensor-valued data that quickly becomes high-dimensional: For a series of order-$K$ tensors $\cX_t \in \R^{p_1 \times \dots \times p_K}$, our proposed method performs multiple change point detection on the objects of dimension $\sum_{k = 1}^K r_k(r_k + 1)/2$, where $r_k$'s denote the dimensions of the order-$K$ pseudo-factor tensor which are typically much smaller than $p_k$. 

We also formulate and study the problem of mode-identification for tensor factor models with multiple change points, a first contribution of its kind.
We formally characterize the \textit{mode-identifiability} of a change, which enables the selection of the tensor modes that undergo identifiable shifts at each change point once the change points are detected.
A direct downstream benefit of mode-identification is that it enables \textit{mode-informed} loading space estimation which brings demonstrable numerical benefits over estimating the loading spaces segment by segment. 
Under a weak moment condition, we establish the consistency of the combined methodology, both in estimating the total number and locations of the change points and in selecting the subset of modes undergoing identifiable changes at each change point.

The rest of this paper is organized as follows. Section~\ref{sec:model} introduces the tensor factor model with multiple change points and defines the notion of mode-identifiability of a change. 
In Section~\ref{sec: method}, we describe the methods for multiple change point detection and post-detection mode-identification, with a detailed discussion on the selection of tuning parameters. 
Section~\ref{sec: theorem} presents the assumptions and establishes the asymptotic consistency of the proposed methods. 
Section~\ref{sec: simulation} reports the simulation results and in Section~\ref{sec: real}, we analyze the datasets on New York City taxi usage and Fama–French portfolio returns.
Section~\ref{sec: conclusion} concludes the paper, and all the proofs, additional discussions and simulation results are deferred to the supplement.

\paragraph{Notations.} 
Given a positive integer $m$, we write $[m]=\{1,2,\dots,m\}$. 
For sequences $\{a_n\}$ and $\{b_n\}$, We use $a_n \lesssim b_n$ to denote $a_n =\cO(b_n)$, $a_n\gtrsim b_n$ to denote $b_n=\cO(a_n)$, and $a_n \asymp b_n$ to denote $a_n = \cO(b_n)$ and $b_n = \cO(a_n)$; the stochastic boundness is denoted by $\cO_P(\cdot)$.
For a vector $u = (u_1, \ldots, u_p)^\trans \in \R^p$, let $|u|_2 = (\sum_{i=1}^p u_i^2)^{1/2}$ denote the Euclidean norm. 
For a matrix $A = [a_{ij}]\in\R^{m\times n}$, let $\|A\|$ denote the operator norm, $\|A\|_\F = (\sum_{i=1}^m\sum_{j=1}^n a_{ij}^2)^{1/2}$ the Frobenius norm, $\Vert A \Vert_1 = \max_{1\le j\le n}\sum_{i=1}^m|a_{ij}|$ and $\Vert A \Vert_\infty = \max_{1\le i\le m}\sum_{j=1}^n|a_{ij}|$ denote the induced $\ell_1$- and $\ell_{\infty}$-norm, respectively. For a matrix $A$, let $\col(A)$ denote its column space.
Also, $\vec(A)$ denotes the vector in $\R^{mn}$ obtained by stacking the columns of $A$ in column-major order and for a symmetric matrix $A$, let $\Vech(A)$ denote the vector obtained by stacking the lower triangular entries of $A$ (including the diagonal) in column-major order.
For an order-$K$ tensor $\cX = (X_{i_1,\ldots,i_K}) \in \mathbb{R}^{p_1\times \cdots\times p_K}$, with $p = \prod_{k=1}^K p_k$ and $\pmk =p/p_k$, 
we denote by $\mat_k(\cX) \in \R^{p_k\times \pmk}$ its \textit{mode-$k$ unfolding/matricization}, and by $\cX \times_k A$ the \textit{mode-$k$ product} of a tensor $\cX$ with a matrix $A$, defined by $\mat_k(\cX \times_k A) = A\, \mat_k(\cX)$.
Finally, we use $\otimes$ to represent the Kronecker product, and $\circ$ the Hadamard product. 

\section{Model}
\label{sec:model}

\subsection{Tensor factor model with multiple change points}\label{subsec: model}

Consider a mean-zero time series of order-$K$ tensors $\cX_t\in\R^{p_1\times \dots \times p_K}, \, t\in[T]$, with $K \ge 1$, which follows a Tucker tensor factor model with $q$ change points such that
\begin{equation}
\label{eqn: tfm_change}
\cX_t = \cC_t + \cE_t, \text{ \ where \ } \cC_t = \left\{
    \begin{array}{ll}
	\cF_t \times_{k=1}^K \Lambda_{1,k} & \text{for $\theta_0 + 1 = 1 \le t\le \theta_1$,} \\
    \cF_t \times_{k=1}^K \Lambda_{2,k} & \text{for $\theta_1 + 1 \le t\le \theta_2$,} \\
    \vdots \\
    \cF_t \times_{k=1}^K \Lambda_{q + 1,k} & \text{for $\theta_q + 1 \le t\le \theta_{q+1} = T$,}
    \end{array}
    \right.
\end{equation}
where $\theta_0=0$, $\theta_{q+1}=T$ and $\Theta=\{\theta_1, \ldots, \theta_{q}\} \subseteq [T]$ denotes the set of change point locations.
Here, $\cF_t\in \R^{r_1\times \dots\times r_K}$ denotes the core factor tensor with its dimensions $r_k$'s fixed regardless of growing $p_k$, $\cC_t$ denotes the common component driven by the core factor, and $\cE_t$ is the idiosyncratic component unaccounted by $\cF_t$.
We denote by $\Lambda_{j,k}\in \R^{p_k\times r_k}$ the mode-$k$ factor loading matrix in the $j$-th segment such that at each change point $\theta_j$, at least one of the loading matrices undergoes a shift.
We permit the numbers of factors to change such that the column rank of each $\Lambda_{j, k}$, which effectively represents the mode-$k$ factor number in the $j$-th segment, may be strictly smaller than $r_k$; later we introduce an alternative representation of the model~\eqref{eqn: tfm_change} with which the meaning of $r_k$ is made clearer. 
Throughout, we make the following condition on $\{\cF_t\}_{t = 1}^T$:
\begin{condition}
\label{cond:factor}
At any $t\in[T]$, the elements of the core factor $\cF_t$ are uncorrelated with each other and have zero mean and unit variance, i.e.\ $\cov(\vec(\cF_t)) = I_r$. 
\end{condition}
For each given $j$, while the space spanned by the columns of each $\Lambda_{j, k}$, i.e.\ $\col(\Lambda_{j, k})$, is uniquely defined, the loading matrices and the core factor tensor are not identifiable regardless of $K$. 
From this perspective, Condition~\ref{cond:factor} is akin to the assumption commonly found in the literature on vector factor modeling \citep{bai2003inferential}, if $\cF_t$ admits a separable covariance structure \citep{hoff2011separable}, i.e.\ $\cov(\vec(\cF_t)) = \otimes_{k = K}^1 \Gamma^{(k)}_F$ for some positive definite $\Gamma^{(k)}_F \in \mathbb{R}^{r_k \times r_k}$, whereby the rotation and normalization therein can be absorbed into the factor loadings.
Later we make Assumption~\ref{assum: core_factor} on the core factor time series which, while fulfilling Condition~\ref{cond:factor}, further characterizes the degree of serial dependence.
Even under Condition~\ref{cond:factor}, we have $\Lambda_{j, k}$ identifiable up to an orthogonal transform, see Lemma~\ref{lemma: identifiable} in the supplement.
Moreover, changes in the loading matrices may equivalently be represented as the changes in the covariance of $\cF_t$ and vice versa:
\begin{example}
\label{ex:equiv}
Suppose that we observe $\cX_t = \cG_t \times_{k = 1}^K \Lambda_k + \cE_t, \, t \in [T]$, where $\{\cG_t\}_{t = 1}^T$ undergoes a single change point in its covariance, as
\begin{align*}
\cG_t = \left\{\begin{array}{l}
\cF_t \\ 
\cF_t \times_{k = 1}^K A_k \\ 
\end{array} \right.
\text{ \ whereby \ }
\cov(\vec(\cG_t)) = \left\{\begin{array}{ll}
I_r = I_{r_K} \otimes \cdots \otimes I_{r_1} 
& \text{for \ } 1 \le t \le \theta_1, \\
\Gamma_G^{(K)} \otimes \cdots \otimes \Gamma_G^{(1)} & \text{for \ } \theta_1 +  1 \le t \le T,
\end{array} \right.
\end{align*}
with $\cF_t$ satisfying Condition~\ref{cond:factor}, and $\Gamma_G^{(k)} = A_k A_k^\trans \ne I_{r_k}$ for at least one $k \in [K]$.
Then, we may re-write this time series as in~\eqref{eqn: tfm_change}, with $\cF_t$ as the core factor and segment-wise mode-$k$ loading matrices $\Lambda_{1, k} = \Lambda_k$ and $\Lambda_{2, k} = \Lambda_k A_k$. 
\end{example}
Example~\ref{ex:equiv} demonstrates that the model in~\eqref{eqn: tfm_change} accommodates different types of changes arising from the Tucker tensor factor model; 
see also Section~\ref{subsec: change_scenario} for further example scenarios.

In fact, there exists a global mode-$k$ loading matrix $\Lambda_k \in\R^{p_k\times r_k}$ 
such that for each $\Lambda_{j,k}$, there is a transformation matrix $A_{j,k} \in\R^{r_k\times r_k}$, not necessarily of full rank, with which we can write $\Lambda_{j,k} = \Lambda_k A_{j,k}$; 
we set $r_k$ to be the smallest integer so that such~$\Lambda_k$ is of full column rank. 
Then, the model in~\eqref{eqn: tfm_change} is equivalently re-written as
\begin{equation}
\label{eqn: tfm_change_rewrite}
\begin{split}
    \cX_t 
    &=
    \l(\sum_{j=1}^{q+1} \cF_t \times_{k=1}^K A_{j,k} \cdot \mathbb{I}_{\{\theta_{j-1} <t\le \theta_j\}} \r) \times_{k=1}^K \Lambda_k + \cE_t
    =: \cG_t \times_{k=1}^K \Lambda_k + \cE_t,
\end{split}
\end{equation}
where $\cG_t$ is the pseudo-factor tensor of the same dimensions as $\cF_t$, and is piecewise stationary with the change points in its covariance at $\theta_j, \, j \in [q]$.
To see this, at any $\theta_j$, note that there exists some $k\in[K]$ such that the corresponding mode-$k$ covariance of $\cG_t$ undergoes a change, as
\begin{align}
& 
\E\Big\{ \mat_k(\cG_{\theta_j}) \mat_k(\cG_{\theta_j})^\trans \Big\} =
A_{j,k} \E\Big\{ \mat_k(\cF_{\theta_j}) A_{j, \text{-}k}^\trans A_{j, \text{-}k} \mat_k(\cF_{\theta_j})^\trans \Big\} A_{j,k}^\trans \notag \\
=& \,
\tr\Big( A_{j, \text{-}k}^\trans A_{j, \text{-}k} \Big) \cdot A_{j,k} A_{j,k}^\trans \notag \neq
\tr\Big( A_{j + 1, \text{-}k}^\trans A_{j + 1, \text{-}k} \Big) \cdot A_{j + 1,k} A_{j + 1,k}^\trans \notag \\
=& \,
\E\Big\{ \mat_k(\cG_{\theta_j+1}) \mat_k(\cG_{\theta_j+1})^\trans \Big\},
\label{eqn: cov_G_signal}
\end{align}
with $A_{j, \text{-}k} := \otimes_{i = K, i \ne k}^1 A_{j, i}$, where the second and last equalities hold under Condition~\ref{cond:factor} (see Lemma~\ref{lemma: FMF_expectation} in the supplement). 
The re-formulation in~\eqref{eqn: tfm_change_rewrite} extends the idea popularly exploited for change point detection under vector factor models 
to higher-order tensor time series, that structural changes in the factor loading can equivalently be represented as changes in the (mode-wise) covariance matrix of the pseudo-factor.
Thus motivated, we define the size of change at~$\theta_j$.
\begin{definition}[Size of change]\label{def: jump}
Let
$
\Gamma_{G,(j)}^{(k)} = \E\{ \mat_k(\cG_t) \mat_k(\cG_t)^\trans \cdot\mathbb{I}_{\{\theta_{j-1}+1\le t \le\theta_j\}} \}
$ for each $j\in[q+1]$ and $k\in[K]$. 
Then the size of change in mode-$k$ at $\theta_j$ is defined as $\omega_j^{(k)} =\|\Omega_j^{(k)}\|_F$ with $\Omega_j^{(k)}=\Gamma_{G,(j+1)}^{(k)}-\Gamma_{G,(j)}^{(k)}$.
With these, we define the size of change at $\theta_j$ as $\omega_j = \big\{\sum_{k=1}^K \big(\omega_j^{(k)}\big)^2\big\}^{1/2}$.
\end{definition}

\subsection{Mode-identifiability at each change point}
\label{sec:mode}

Inspecting~\eqref{eqn: cov_G_signal} reveals that the size of change $\omega^{(k)}_j$ in Definition~\ref{def: jump}, depends not only on the change in the mode-$k$ transformation matrix, but also those corresponding to other modes.
Accordingly, the tensor setting with $K > 1$ brings complications previously unobserved in the vector factor modeling literature, and naturally raises the following questions.
\begin{enumerate}[label = (Q\arabic*)]
    \item \label{q:one} Can we identify all the modes that undergo a shift at the $j$-th change point, i.e.\ all $k \in [K]$ such that $A_{j,k} \neq A_{j + 1,k}$?
    \item \label{q:two} If multiple modes have $A_{j, k} \neq A_{j + 1, k}$ at the $j$-th change point, can these changes cancel each other out, leading to $\omega_j = 0$? 
\end{enumerate}
Although the problem of change point detection under matrix factor models has previously been studied, these questions have not been addressed:
\cite{Heetal2024AOS} focus on the situation where a change occurs in a single mode only, in developing and analyzing an online change test; \cite{peng2025detection, peng2025estimation} propose to detect changes in the covariance of $\vec(\cG_t)$ and thus ignore the cross-mode interactions. 
We make a first contribution in addressing the inherently unique challenges brought by the tensor setting.
To this end, we first define 
the notion of mode-identifiability of a change.
\begin{definition}[Mode-identification of changes]\label{def: classification_change}
Consider the model in~\eqref{eqn: tfm_change_rewrite} and suppose that Condition~\ref{cond:factor} holds.
Then at $\theta_j$ for some $j \in [q]$, the mode-$k$ change from $A_{j,k}$ to $A_{j + 1,k}$ is defined to be mode-unidentifiable if and only if there exists a set of matrices $\{B_\ell\}_{\ell\in[K]\setminus\{k\}}$ such that
\[
\cF_t \times_{\ell\in[K]\setminus\{k\}} A_{j + 1,\ell} \times_k A_{j + 1,k} = \cF_t \times_{\ell\in[K]\setminus\{k\}} B_\ell \times_k A_{j,k}.
\]
Otherwise the change point $\theta_j$ is mode-$k$ identifiable. A change point is referred to as mode-unidentifiable when it is mode-$k$ unidentifiable for all $k \in [K]$; otherwise it is mode-identifiable.
\end{definition}

With Definition~\ref{def: classification_change}, we can associate each change point $\theta_j$ with the subset of tensor modes $k \in [K]$ for which it is mode-identifiable,
such that characterizing the mode-$k$ change only depends upon the relation between $A_{j,k}$ and $A_{j + 1,k}$ regardless of the transformation matrices at modes $\ell \ne k$. 
We justify this notion of mode-(un)identifiability with the following theorem.
\begin{theorem}[The only type of mode-unidentifiable change]\label{thm: identification_change}
Consider the model in~\eqref{eqn: tfm_change_rewrite} and suppose that Condition~\ref{cond:factor} holds.
Then for any $j \in [q]$ and $k \in [K]$, the mode-$k$ change is mode-unidentifiable for $\theta_j$ if and only if $A_{j + 1, k} = c A_{j, k} Q$ for some constant $c$ and orthogonal matrix~$Q$.
\end{theorem}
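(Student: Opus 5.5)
The plan is to read the identity in Definition~\ref{def: classification_change} as equality of second-order structure, since under Condition~\ref{cond:factor} the core factor is only pinned down up to its covariance. So I will show that the two sides have the same covariance if and only if $A_{j+1,k}=cA_{j,k}Q$. This reading is forced: with literal pathwise equality the $Q$ in the statement could not appear, because $\cF_t\times_k Q\neq\cF_t$ pathwise. The first step vectorizes both sides. Write $A'_\ell:=A_{j+1,\ell}$ and $A_\ell:=A_{j,\ell}$, and let $P$ be the fixed permutation matrix that brings mode $k$ to the front. Then
\begin{equation*}
P\,\vec\Big(\cF_t\times_{\ell\ne k}A'_\ell\times_k A'_k\Big)=\big(A'_{\text{-}k}\otimes A'_k\big)P\,\vec(\cF_t),
\end{equation*}
with the analogous expression for $B_{\text{-}k}\otimes A_k$, where $B_{\text{-}k}:=\otimes_{\ell=K,\ell\ne k}^1 B_\ell$. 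Since $\cov(P\,\vec(\cF_t))=I_r$, the two covariances are equal if and only if
\begin{equation*}
\big(A'_{\text{-}k}A'^{\trans}_{\text{-}k}\big)\otimes\big(A'_kA'^{\trans}_k\big)=\big(B_{\text{-}k}B_{\text{-}k}^\trans\big)\otimes\big(A_kA_k^\trans\big).
\end{equation*}
This is the same computation as in Lemma~\ref{lemma: FMF_expectation}.

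For the ``only if'' direction I will use the elementary fact that $X\otimes Y=Z\otimes W$ with $X,Z\ne0$ forces $Y=\lambda W$. To see it, compare a nonzero block $X_{ab}Y=Z_{ab}W$. Apply it with $X=A'_{\text{-}k}A'^{\trans}_{\text{-}k}$, which is nonzero since each $A'_\ell$ is nonzero (otherwise $\cG_t$ vanishes on that segment; that degenerate case I will treat separately or exclude). This gives $A'_kA'^{\trans}_k=\lambda A_kA_k^\trans$. Both sides are positive semidefinite and nonzero, so $\lambda=c^2>0$. The step I expect to be the main obstacle is passing from $A'_kA'^{\trans}_k=c^2A_kA_k^\trans$ to $A'_k=cA_kQ$ with $Q$ genuinely orthogonal, because $A_k$ may be rank-deficient. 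I will take compact SVDs $A'_k=U\Sigma V'^{\trans}$ and $cA_k=U\Sigma V^\trans$. Both can be chosen with the same $U$ and $\Sigma$, since the Gram matrices coincide; this needs care when singular values repeat, but the left singular subspaces and their bases can be chosen jointly. Then $V'$ and $V$ have orthonormal columns spanning subspaces of equal dimension in $\R^{r_k}$. I will extend the partial isometry $VV'^{\trans}$ to a full orthogonal $Q$ by mapping an orthonormal complement of $\col(V')$ onto one of $\col(V)$, so that $cA_kQ=U\Sigma V^\trans Q=U\Sigma V'^{\trans}=A'_k$.

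For the ``if'' direction, suppose $A'_k=cA_kQ$. I set $B_\ell=A'_\ell$ for all $\ell\ne k$ except one fixed index $\ell_0$, where I take $B_{\ell_0}=cA'_{\ell_0}$. Then $B_{\text{-}k}B_{\text{-}k}^\trans\otimes A_kA_k^\trans=c^2A'_{\text{-}k}A'^{\trans}_{\text{-}k}\otimes A_kA_k^\trans=A'_{\text{-}k}A'^{\trans}_{\text{-}k}\otimes A'_kA'^{\trans}_k$, using $QQ^\trans=I$, so the covariance identity holds. Equivalently, $\cF_t\times_kQ$ has the same covariance as $\cF_t$ under Condition~\ref{cond:factor}, so $Q$ can be absorbed into the factor. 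Finally, I will remark that when $K=1$ no $B_\ell$ is available to absorb the scale, so the same argument yields $c=\pm1$.
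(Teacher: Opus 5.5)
Your argument is correct and is essentially the paper's own: the paper also compares second moments, using the mode-$k$ identity of Lemma~\ref{lemma: FMF_expectation} (which is the partial trace of your Kronecker covariance identity) to obtain $A_{j+1,k}A_{j+1,k}^\trans \propto A_{j,k}A_{j,k}^\trans$ and then invoking Lemma~\ref{lemma: AA=cBB}; for the converse it spreads the scalar over the other modes as $c^{1/(K-1)}$ and handles $Q$ through the rotational non-identifiability of Lemma~\ref{lemma: identifiable}, rather than through your explicit covariance reading. Your remark that the scalar can only be absorbed when $K\ge 2$ is well taken, since the paper's factor $c^{1/(K-1)}$ tacitly assumes the same.
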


Notice that under Condition~\ref{cond:factor}, the transformation matrices $A_j$ are identifiable up to orthogonal transforms (cf.\ Lemma~\ref{lemma: identifiable}).
Therefore, Theorem~\ref{thm: identification_change} establishes that the only type of mode-unidentifiable change is in the form of a scalar multiplication.
Jointly, Definition~\ref{def: classification_change} and Theorem~\ref{thm: identification_change} allow us to regard the changes across different modes individually in higher-order tensor factor models.
Back to the earlier questions, Theorem~\ref{thm: identification_change} directly addresses~\ref{q:one}.
As for~\ref{q:two}, combined with Lemma~\ref{lemma: AA=cBB} in the supplement, the theorem shows that any mode-identifiable change is detectable (in the sense that $\omega_j > 0$) and, in fact, the cross-mode cancellation of the change occurs in the trivial scenario only: When $\theta_j$ is mode-unidentifiable with the scalar multiples on all modes canceling one another.
For completeness, we further categorize the types of mode-identifiable changes in Section~\ref{sec: anatomy_mode_id}.

In summary, the model~\eqref{eqn: tfm_change} poses an interesting challenge of mode-identification which, following the detection of multiple change points, can enrich their interpretation and reveal cross-mode interactions. 
We propose a mode-identification algorithm in Section~\ref{sec: mode_identify} that accounts for the stochasticity arising from the change point detection step.

\section{Methodology}
\label{sec: method}


\subsection{Change point detection}
\label{sec: detection}

Recall that under the model~\eqref{eqn: tfm_change_rewrite}, we are able to detect the change points by scanning for the changes in the covariance of the global pseudo-factor tensor $\cG_t$.
As a first step for estimating the unobservable $\cG_t$, we estimate the mode-$k$ loading matrix $\Lambda_k$ from the (scaled) mode-$k$ sample covariance matrix of $\cX_t$ for each $k\in[K]$, i.e.\ $\wh{\Gamma}_X^{(k)} = (Tp)^{-1} \sum_{t=1}^T \mat_k(\cX_t) \mat_k(\cX_t)^\trans$.
In the time series tensor factor modeling literature, a popularly adopted estimation approach takes two steps: 
We first perform eigendecomposition on $\wh{\Gamma}_X^{(k)}$, and retain the $r_k$ leading eigenvectors multiplied by $\sqrt{p_k}$, as the preliminary estimator $\wt{\Lambda}_k$.
Then, writing
\begin{equation}
\label{eqn: est_lam}
\wtLambdamk = \otimes_{j\in[K]\setminus\{k\}}\wt\Lambda_k, \quad
Y_{k,t}= \frac{1}{\pmk} \mat_{k}(\cX_t) \wtLambdamk, \text{\ and \ } 
\wh\Gamma_Y^{(k)} = \frac{1}{Tp_k} \sum_{t=1}^T Y_{k,t} Y_{k,t}^\trans,
\end{equation}
we obtain a refined estimator $\wh\Lambda_k$ as $\sqrt{p_k}$ times the $r_k$ leading eigenvectors of $\wh\Gamma_Y^{(k)}$.
The additional step of projecting $\mat_{k}(\cX_t)$ onto the column space of $\wtLambdamk$, leads to improved estimation performance both theoretically and numerically \citep{yu2022projected, zhang2025tucker, barigozzi2026statistical}.
This approach is justifiable under the orthogonality imposed on $\Lambda_k$, see Assumption~\ref{assum: loadings} below.
Then, the pseudo-factor tensor is estimated as $\wh\cG_t = p^{-1} \cX_t \times_{k=1}^K \wh\Lambda_k^\trans, \, t\in[T]$.

For effectively scanning for the changes in the mode-wise covariance of $\wh{\cG}_t$, we construct a cumulative sum (CUSUM) statistic. 
For its definition and motivation, let us write
\begin{equation}\label{eqn: gamma_G}
\wh{\Gamma}_{G,a,b}^{(k)} = \frac{1}{b-a} \sum_{t=a+1}^b \mat_k(\wh\cG_t) \mat_k(\wh\cG_t)^\trans
\text{ \ and \ }
{\Gamma}_{G,a,b}^{(k)} = \frac{1}{b-a} \sum_{t=a+1}^b \E\l\{ \mat_k(\cG_t) \mat_k(\cG_t)^\trans \r\},
\end{equation}
for any $k \in [K]$ and $0 \le a < b \le T$.
We have, for each $j \in [q]$ and any integers $a, b$ such that $\theta_{j-1} \leq a <\theta_j < b \leq \theta_{j+1}$,
\begin{align*}
{\Gamma}_{G,\tau,b}^{(k)} - {\Gamma}_{G,a,\tau}^{(k)}
&=
\left\{
\begin{array}{ll}
\frac{b - \theta_j}{b - \tau} \Big( {\Gamma}_{G,(j + 1)}^{(k)} - {\Gamma}_{G,(j)}^{(k)} \Big)  & \text{for $a < \tau \le \theta_j$,}
\\
\frac{\theta_j-a}{\tau-a}
\Big( {\Gamma}_{G,(j + 1)}^{(k)} - {\Gamma}_{G,(j)}^{(k)} \Big)  & \text{for $\theta_j + 1 \le \tau \le b$.} 
\end{array}
\right.
\end{align*}
Then, provided that $\omega_j^{(k)} \ne 0$ (see Definition~\ref{def: jump}), any matrix norm of ${\Gamma}_{G,\tau,b}^{(k)} - {\Gamma}_{G,a,\tau}^{(k)}$ is maximized at $\tau = \theta_j$, which justifies the scanning of its estimated counterpart for change point detection.
Instead of detecting the change points separately for each mode, we advocate an approach that searches for the changes in all $K$ modes simultaneously. 
The latter enhances the detection power by pooling the information across multiple modes, while avoiding any necessity for post-processing as a change in mode $k$ at $\theta_j$ may manifest itself in different modes with $\omega^{(k')}_j \ne 0$ for some $k' \ne k$, see~\eqref{eqn: cov_G_signal} and the discussions in Section~\ref{sec:mode}. 
To this end, we consider the following CUSUM statistics over appropriately selected intervals $(a, b], \, 0 \le a < b \le T$,
\begin{equation} 
\begin{split}
\cM_{a, \tau, b} &= \sqrt{\frac{(\tau - a) (b - \tau)}{b - a}} \begin{bmatrix}
\Vech\left(\wh{\Gamma}_{G,\tau,b}^{(1)} - \wh{\Gamma}_{G,a,\tau}^{(1)}\right) \\
\vdots \\
\Vech\left(\wh{\Gamma}_{G,\tau,b}^{(K)} - \wh{\Gamma}_{G,a,\tau}^{(K)} \right)
\end{bmatrix} \in \R^d, \, \text{ \ for \ } a < \tau < b,
\end{split} 
\nonumber
\end{equation}
with $d=\sum_{k=1}^K r_k(r_k+1)/2$. Then finally, with a (possibly) location-dependent, positive definite weight matrix $W_\tau \in \R^{d \times d}$, we define the detector statistic
\begin{equation}
\cT_{a, \tau, b}=\big|\cM_{a, \tau, b}^\trans W_{\tau}^{-1} \cM_{a, \tau, b}\big|^{1/2}. \label{eqn: test_stat_agg_std}
\end{equation}
We propose to scan the data for multiple changes using $\cT_{a, \tau, b}$, looking for local maximizers of the detector statistic over strategically selected intervals $(a, b]$. 
Specifically, we embed this statistic in a multiscale search based on seeded intervals \citep{kovacs2023seeded} and the narrowest-over-threshold principle \citep{baranowski2019narrowest}, the combination of which has successfully been applied to a wide range of change point detection problems. 

Specifically, we evaluate $\cT_{a, \tau, b}$ over a collection of the seeded intervals generated as
\begin{align}
\label{def: seeded_interval}
\M = \bigcup_{h = 1}^{\lfloor \mu_T\rfloor} \left\{ \Big( \floor[\big]{(i-1) m_{h}}, \ceil[\big]{(i+1) m_{h}} \Big] : \; i\in\big[ \ceil{T/m_{h}}-1 \big], \; m_{h}=T /2^{h} \right\},
\end{align}
with $\mu_T \asymp \log_2\log(T)$.
By construction, the number of intervals in $\M$ is $\cO(\log(T))$, and the lengths of the intervals are at least of order $T/\log(T)$.
In the literature on change point detection under factor models, it is often assumed that the change points are linearly spaced (see Assumption~\ref{assum: trans_mat_alt}), which ensures that pseudo-factors induced by the changes are well-captured by the global estimation procedure leading to $\wh{\cG}_t, \, t \in [T]$.
This allows us, with the thus-constructed $\M$, to systematically zoom in the neighborhoods of individual change points and isolate each in a sufficiently large seeded interval for its detection and estimation.

Next, we iteratively select the shortest interval, say $(a, b] \in \M$, over which the maximal local covariance difference aggregated over the $K$ modes, as measured by $\max_{a + \varpi_T < \tau < b - \varpi_T} \cT_{a, \tau, b}$, exceeds a given threshold $\pi_{T, p}$; if there are ties, we select the interval with the largest detector statistic. 
The trimming parameter $\varpi_T$, as well as the choice of the finest level $\mu_T$ for the seeded interval construction, are introduced to effectively control for the errors arising from scanning the data multiple times, which becomes more challenging under the weaker moment condition we impose on the data (see Assumptions~\ref{assum: core_factor} and~\ref{assum: noise} below).
This step returns a seeded interval containing exactly one change point with high probability, effectively breaking down the multiple change point problem to that of detecting each change point individually. 
Then we identify a change point estimator as $\wh\theta = \mathop{\arg\max}_{a + \varpi_T < \tau < b - \varpi_T} \cT_{a, \tau, b}$, remove any intervals in $\M$ containing $\wh\theta$ from future consideration, and repeatedly perform the above steps until no seeded interval is left to be considered.
Referred to as \underline{t}ensor \underline{f}actor \underline{m}odel \underline{seg}mentation algorithm (TFMseg), its full description is given in  Algorithm~\ref{alg: mad}.

\begin{algorithm}[h!t!]
\caption{TFMseg: Multiple change point detection in tensor factor models}
\label{alg: mad}
\begin{algorithmic}[1]
\setstretch{1.25}
\State \textbf{Input:} Trimming parameter $\varpi_T \ge 0$, threshold $\pi_{T, p} \ge 0$, seeded interval collection $\M \gets \{(a_\ell, b_\ell], \, \ell \in [M] \}$ from~\eqref{def: seeded_interval}
\State \textbf{Initialize:} Set $\wh\Theta \gets \emptyset$ and $\L\gets [M]$
\For{$\ell \in \L$}
\If{$b_\ell -a_\ell > 2\varpi_T$}
\State Set $\tau_\ell \gets \argmax_{a_\ell + \varpi_T <\tau < b_\ell -\varpi_T} \cT_{a_\ell, \tau, b_\ell}$
and $\cT_\ell \gets \cT_{a_\ell, \tau_\ell, b_\ell}$
\Else \State Set $\tau_\ell \gets b_\ell$ and $\cT_\ell \gets 0$
\EndIf
\If{$\cT_\ell \leq \pi_{T, p}$}
\State Set $\L \gets \L\setminus\{\ell\}$
\EndIf
\EndFor
\While{$\L\neq \emptyset$}
\State Set $\ell^\circ \gets \argmin_{\ell\in \L} (b_\ell -a_\ell)$; if there are ties, select the index with the largest $\cT_\ell$
\State Set $\wh\theta \gets \tau_{\ell^\circ}$, $\wh\Theta \gets \wh\Theta \cup \{\wh\theta\}$ and $\L \gets \L\setminus \{\ell\in\L : \wh\theta \in (a_\ell, b_\ell]\}$
\EndWhile
\State \textbf{Output:} Set of change point estimators $\wh\Theta$
\end{algorithmic}
\end{algorithm}

\begin{remark}[Utilizing mode-wise separability for change point detection.]
\label{rem:tensor}
In the context of matrix factor time series segmentation, \citet{peng2025detection,peng2025estimation} propose to search for change points in the $r \times r$ covariance of $\Vec(\wh{\cG}_t)$ where $r = \prod_{k = 1}^3 r_k$.
While not reported here, in preliminary numerical experiments, we considered its tensor-analogue by modifying the CUSUM statistic $\cM_{a, \tau, b}$ accordingly while keeping the rest of the algorithm identical, and found that the resultant procedure suffered from the relatively large dimensionality.
For example, for the order-$3$ tensor factor time series in Section~\ref{sec: general_dgp} with $(r_1, r_2, r_3) = (3, 3, 3)$, this approach handles a CUSUM process of dimension $27 \times 28 / 2= 378$, compared to $3 \times 3 \times 4 / 2 = 18$ in the case of TFMseg.
This illustrates the advantage of the proposed detector statistic which utilizes the separability of the covariance of $\cG_t$ implied by Condition~\ref{cond:factor}, in further reducing the dimensionality of the structure of interest from $r (r + 1) / 2$ to $\sum_{k = 1}^K r_k (r_k + 1)/2$, the gap between which grows with the tensor order~$K$. 
\end{remark}

\begin{remark}[Change point detection with missing data]\label{rem: miss}
Often the data contain missingness particularly in high-dimensional time series. Analyzing a fully-observed subset of the data only, may significantly reduce the realized sample size and potentially compromise the interpretation of the outcome, all of which may be intensified for tensor data. 
Common techniques to circumvent missing observations can be categorized as likelihood-based methods \citep{LittleAn2004, Garciaetal2010}, inverse probability weighting \citep{Johnsonetal2008, SeamanWhite2013}, and imputation \citep{KazijevsSamad2023, DengLumley2024}, and the problem of handling missingness in the change point detection framework has attracted some attention \citep{Xieetal2012, follain2022high, LiuSafikhani2025}. 
We may leverage an imputation procedure proposed by \cite{CenLam2025} for tensor time series admitting a representation in \eqref{eqn: tfm_change} (without change points). 
Specifically, noting that the detector statistic in~\eqref{eqn: test_stat_agg_std} only requires some estimator of the core factors, we may employ the estimated factor series (which contains no missing data) in Equation~(3.7) of \cite{CenLam2025} to plug in \eqref{eqn: gamma_G}, with the rest of the steps of TFMseg remaining the same.
We empirically validate this modification of TFMseg in Section~\ref{sec:fama} and Appendix~\ref{app: num_miss}.
\end{remark}

\subsection{Post-detection mode-identification}\label{sec: mode_identify}

Once the set of change point estimators $\wh{\Theta} = \{\wh\theta_j, \, j \in [\wh q]: \, \wh\theta_1 < \ldots < \wh\theta_{\wh q} \}$ is returned by Algorithm~\ref{alg: mad}, we identify the subset of modes for which each $\wh\theta_j$ is identifiable according to Definition~\ref{def: classification_change}. 
For this, we measure the degree of the mode-$k$ identifiability at change point $\theta_j$ with $\|\Xi_j^{(k)}\|$ where, with $\Gamma_{G,(j+1)}^{(k)}$ given in Definition~\ref{def: jump},
\begin{equation}
\label{eqn: scaled_mode_change}
\Xi_j^{(k)}:= \tr\Big( \Gamma_{G,(j+1)}^{(k)} \Big)^{-1} \Gamma_{G,(j+1)}^{(k)} - \tr\Big( \Gamma_{G,(j)}^{(k)} \Big)^{-1} \Gamma_{G,(j)}^{(k)} .
\end{equation}
By Theorem~\ref{thm: identification_change} and~\eqref{eqn: cov_G_signal}, if the change at $\theta_j$ is mode-unidentifiable, we have $\|\Xi_j^{(k)}\| = 0$. More importantly, together with Lemma~\ref{lemma: AA=cBB} in the supplement, $\|\Xi_j^{(k)}\| = 0$ only if the change is mode-unidentifiable.
In the case of $c = \prod_{k=1}^K \|A_{j,k}\|_F =\prod_{k=1}^K \|A_{j + 1,k}\|_F$, we have $\omega_j^{(k)}$ directly related to $\|\Xi_j^{(k)}\|$ as $\omega_j^{(k)} = c \cdot \|\Xi_j^{(k)}\|$.
That is, if the overall magnitude of the transformation matrices remains invariant, the difficulty in detecting the change using its mode information is proportional to the difficulty in mode-identification.
Then, our aim is to identify $\cK_j := \{k \in [K]: \, \Vert \Xi^{(k)}_j \Vert  > 0 \}$ for all $j \in [q]$, with $\cK_j = \emptyset$ if and only if the change at $\theta_j$ is mode-unidentifiable.

For the estimation of $\Xi_j^{(k)}$, we propose to adopt the set of \textit{finer} seeded intervals, 
\begin{align}
\M^* = \left\{ \Big( \floor[\big]{(i-1) m^*}, \ceil[\big]{(i+1) m^*} \Big] : \; i\in\big[ \ceil{T/m^*}-1 \big], \; m^* = T /2^{\mu_T + 1} \right\},
\nonumber
\end{align}
with $\mu_T$ used in the construction of $\M$ in~\eqref{def: seeded_interval}.
Compared to $\M$, the set $\M^*$ contains single-scale seeded intervals at the finer scale $\mu_T + 1$.
Setting $\wh\theta_0 = \wh\theta_0^+ = 0$ and $\wh\theta_{\wh{q}+1} = \wh\theta_{\wh{q}+1}^{-} = T$, we define 
\begin{equation}
\label{eqn: seeded_adj_cp}
\wh\theta_j^{-} := \max\left\{ a: (a,b] \in \M^*, b\leq \wh\theta_j \right\} \text{ \ and \ } 
\wh\theta_j^{+} := \min\left\{ b: (a,b] \in \M^*, a> \wh\theta_j \right\},
\end{equation}
for each $j\in [\wh q]$.
With these, $\Gamma^{(k)}_{G, (j)}, \, k \in [K]$, are estimated as $\wh{\Gamma}_{G,(j)}^{(k)} = \wh{\Gamma}_{G, \wh\theta_{j-1}^{+}, \wh\theta_j^{-}}^{(k)}$, see~\eqref{eqn: gamma_G}.
In doing so, upon the consistent estimation of $\Theta$ by $\wh{\Theta}$,
we select $(\wh\theta_{j-1}^{+}, \wh\theta_j^{-}]$ from the end points of the intervals $\M^*$ to be the maximal interval lying between $\theta_{j - 1}$ and $\theta_j$ with probability tending to one. 
At the same time, selecting $\wh\theta_j^{\pm}$ from the end points of the deterministic seeded intervals in $\M^*$, allows us to control for multiplicity arising from the stochasticity of $\wh\Theta$ effectively.
Next, we estimate $\Xi_j^{(k)}$ by $\wh\Xi_j^{(k)}$ obtained according to~\eqref{eqn: scaled_mode_change} by plugging in $\wh{\Gamma}_{G,(j)}^{(k)}$ in place of ${\Gamma}_{G,(j)}^{(k)}$.
Finally, for each $j\in [\wh{q}]$, we determine the change detected by $\wh\theta_j$ to be mode-$k$ identifiable if $\|\wh\Xi_j^{(k)}\| > \zeta_{T, p}$ for some threshold $\zeta_{T, p}$, i.e.\ $\wh{\cK}_j = \{k \in [K]: \, \|\wh\Xi_j^{(k)}\| > \zeta_{T, p}\}$.
We conclude that change detected by $\wh\theta_j$ as mode-unidentifiable if $\|\wh\Xi_j^{(k)}\| \leq \zeta_{T, p}$ for all $k\in[K]$ or, equivalently, if $\wh{\cK}_j = \emptyset$. 

\begin{remark}[Mode-informed estimation of loading spaces]\label{rem: reest}
Once the change points are detected as in Section~\ref{sec: detection}, we may estimate the mode-$k$ loading space for the $j$-th segment, $\textnormal{span}(\Lambda_{j, k})$, using the section of the data over the interval $(\wh\theta_{j - 1}, \wh\theta_j]$.  
However, $\textnormal{span}(\Lambda_{j, k})$ for some mode-$k$ does not necessarily vary at every change point, either as $A_{j, k} = A_{j + 1, k}$, or as the change is mode-$k$ unidentifiable (Theorem~\ref{thm: identification_change}). 
Taking advantage of the proposed mode-identification method, it is possible to utilize the maximally selected data section for mode-wise loading space estimation. Specifically, for each mode $k\in[K]$, we concatenate all the intervals where no change is deemed mode-$k$ identifiable within, say $(\wh\theta_{j_1}, \wh\theta_{j_2}]$ for some $0 \le j_1 < j_2 \le \wh q + 1$, and estimate the loading space therein.
Upon consistently estimating $\cK_j$, this \textit{mode-informed} approach is expected to improve upon the naive, segment-wise estimator, which we illustrate empirically in Section~\ref{sec:num:loading}.
\end{remark}

\subsection{Tuning parameter selection}
\label{sec: tuning}

\paragraph{Core factor dimensions.}
We estimate $r_k, \, k \in [K]$, by inspecting the eigenvalues of $\wh{\Gamma}^{(k)}_Y$, say $\lambda_\ell(\wh\Gamma^{(k)}_Y), \, \ell \ge 1$, in the decreasing order, for each $k\in[K]$, see~\eqref{eqn: est_lam}. 
Specifically, following \cite{barigozzi2026statistical}, we set $\wh r_k = \argmax_{1 \le \ell \le \bar{r}_k} \lambda_\ell(\wh{\Gamma}^{(k)}_Y)/\lambda_{\ell+1}(\wh{\Gamma}^{(k)}_Y)$, where $\bar{r}_k = \lceil p_k/3 \rceil$ is an upper bound on the factor number for mode $k$.
\vspace{-10pt}

\paragraph{Weight matrix $W_\tau$.} The asymptotic consistency in change point detection is established for any positive definite matrix $W$ with bounded spectrum, as $W_\tau$ (see Theorem~\ref{thm: asymp_consistency_detection}).
In the context of detecting mean shifts in multivariate time series, \cite{cho2026multivariate} discuss the computational, numerical and statistical trade-off between the use of a weight matrix estimating the full (long-run) covariance of the noise, and that only contains its diagonal elements. 
They show that none of the two choices uniformly outperforms the other in terms of the detection power, and the latter choice is to be preferred for its numerical stability for moderately large dimensions, particularly as the inverse of the matrix is used in the detector statistic.
From these considerations, we choose to use as $W$ the matrix containing the diagonal entries of the following Bartlett kernel long-run covariance matrix estimator $\wh{W}_0$, an approach also taken by \cite{barigozzi2025moving}: With $\wh g_{k, t} = \Vech(\mat_k(\wh\cG_t)\mat_k(\wh\cG_t)^\trans -\wh{\Gamma}_{G,1,T} ^{(k)})$ and bandwidth $w=\lfloor T^{1/4}\rfloor$,
\begin{align*}
\wh{W}_0 = \wh{\Gamma}(0) + \sum_{\ell = 1}^w\left(1-\frac{\ell}{w+1} \right)\left(\widehat{\Gamma}(\ell)+ (\widehat{\Gamma}(\ell))^\trans \right), \text{\ where \ } 
\widehat{\Gamma}(\ell) = \frac{1}{T} \sum_{t = \ell + 1}^T \begin{bmatrix}
\wh g_{1, t} \wh g_{1, t - \ell}^\trans \\ \vdots \\ \wh g_{K, t} \wh g_{K, t - \ell}^\trans
\end{bmatrix}.
\end{align*}
\vspace{-10pt}

\paragraph{Seeded intervals and trimming parameter.}
We set $\mu_T$ in~\eqref{def: seeded_interval} so that the minimum interval length is $0.5T/\log(T)$, i.e.\ $\mu_T = \log_2(4\log(T))$. Accordingly, we set $\varpi_T = 0.25T/\log(T)$.
\vspace{-10pt}

\paragraph{Threshold $\pi_{T, p}$ for change point detection.}
Theorem~\ref{thm: asymp_consistency_detection} below gives a rate range for $\pi_{T, p}$ which, however, involves quantities that are unknown in practice. 
This is a common issue in change point detection where a practical solution is through large-scale simulations, an approach which we also adopt here. 
Specifically, we simulate tensor time series under the null model ($q = 0$, see Appendix~\ref{app: num_setting}). 
For each realization, we record $\max_{(s, e] \in \M} \max_{s < \tau < e} \cT_{s, \tau, e}$, 
take the empirical $0.9$-quantiles of these maxima over $100$ realizations for each combination of dimensions and other data generating parameters, 
and regress them on the functions of $T$ and $(r_1, r_2, r_3)$.
We use the fitted model ($R^2_{\text{adj}}$ is roughly 0.94) with $T$ and $\wh r_k$ as regressors, to determine $\pi_{T, p}$; see Appendix~\ref{app: num_det_oracle} for the complete description. 
\vspace{-10pt}

\paragraph{Threshold $\zeta_{T, p}$ for mode-identification.}
Based on Theorem~\ref{thm: mode_identify} below, we examine the rescaled statistic $\Vert \wh{\Xi}_j^{(k)} \Vert/(T^{-1/2} + p^{-1})$, computed on simulated datasets (with $q = 3$, see Appendix~\ref{app: num_setting})) while treating $\theta_j$ as known, over varying configurations.
The empirical $0.99$-quantiles of the statistics for the modes $k \in [K] \setminus \cK_j$, lie around $3.5$, which suggests the threshold $\zeta_{T,p} = 3.5(T^{-1/2} + p^{-1})$; see Appendix~\ref{app: num_idt_oracle} for details of the experiment.

\section{Theory}\label{sec: theorem}

\subsection{Assumptions}\label{subsec: assumption}

We introduce the assumptions on the degree of serial and cross-sectional dependence in $\{ \cX_t \}_{t = 1}^T$, and the strength of the factors under the model~\eqref{eqn: tfm_change_rewrite}.
We first define a class of general linear tensor time series.
\begin{definition}[]\label{def: general_linear_tensor}
A tensor time series $\{\cY_t\}_{t \in \Z}$ with $\cY_t \in \R^{p_1 \times \ldots \times p_K}$, is a $(\nu,\beta)$-general linear tensor time series for some $\nu \geq 4$ and $\beta\geq 0$, if and only if it holds that $\cY_t = \sum_{h\geq 0} a_{y,h} \cX_{y,t-h}$ where (i)~the innovation process $\{\cX_{y,t}\}$ has independent elements with zero mean, unit variance, and uniformly bounded $\nu$-th order moments, i.e.\ denoting the elements of $\cX_{y, t}$ by $\cX_{y, t, \bi}, \, \bi = (i_1, \ldots, i_K)$, there exists some constant $C_\nu > 0$ depending on $\nu$, such that ${\sup_{t\in\mathbb Z}}\max_{\bi \in \prod_{k = 1}^K [p_k]} \E(\vert \cX_{y, t, \bi} \vert^\nu) \le C_\nu$; (ii)~the scalar coefficients $a_{y,h}$ satisfy $\sum_{h\geq 0} a_{y,h}^2=1$ and $\sum_{h\geq 0} |a_{y,h}| \leq c$ for some constant $c > 0$; and (iii)~there exists some constant $c_\beta > 0$ depending on $\beta$, such that $\sup_{m \ge 0} (m + 1)^\beta \sum_{s = m}^\infty \vert a_{y, s} \vert \le c_{\beta}$.
\end{definition}

Definition~\ref{def: general_linear_tensor} formalizes the data generating process first used in \cite{ChenLam2024}, which is convenient in modeling tensor time series with temporal dependence structure. Essentially, for each $\bi \in \prod_{k = 1}^K [p_k]$, the time series $\{\cY_{t, \bi}\}_{t \in \Z}$ follows a general linear process with weak serial correlation, bounded $\nu$-th moment, and $\beta$ characterizing the tail summability. 
By construction, the linearity of $\cY_t$ in $\{\cX_{y, t - h}\}_{h \ge 0}$, is preserved under various operations on $\cY_t$ such as vectorization, unfolding and tensor reshaping. In the definition, (iii) is implied by (ii) e.g.\ when $\beta = 0$.
With Definition~\ref{def: general_linear_tensor}, and $\Delta_j = \min(\theta_j -\theta_{j-1}, \theta_{j+1} -\theta_j)$ denoting the spacing between $\theta_j$ and its neighboring change points for each $j \in [q]$, we make the following assumptions.

\begin{assumption}[Core factor]\label{assum: core_factor}
The series $\{\cF_{t}\}_{t \in \Z}$ is a $(\nu,\beta)$-general linear tensor time series.
\end{assumption}

\begin{assumption}[Global factor loadings]\label{assum: loadings}
For each $k\in[K]$, there exists some constant $c_{\lambda}$ such that (i)~the largest elements of $\Lambda_k$ is bounded by $c_{\lambda}$ in modulus and (ii)~$p_k^{-1} \Lambda_k^\trans \Lambda_k = I_{r_k}$ for all  sufficiently large $p_k$.
\end{assumption}

\begin{assumption}[Transformation matrices]\label{assum: trans_mat_alt}
The transformation matrices satisfy the following.
\begin{enumerate}[itemsep=0pt, label = (\roman*), left = 0pt]
    \item For each $j\in[q + 1]$ and $k\in[K]$, we have $\|A_{j,k}\|_F$ bounded away from zero and infinity.
    \item For each $k\in[K]$, there is $\Sigma_{A,k}$ such that
    $T^{-1} \sum_{j=1}^{q+1} \big\|\otimes_{\ell = K, \, \ell \ne k}^1 A_{j,\ell} \big\|_F^2 \cdot (\theta_j - \theta_{j-1}) \cdot A_{j,k} A_{j,k}^\trans \to \Sigma_{A,k}$ as $\min(T,p_1,\dots, p_K) \to\infty$. 
    \item For each $k\in[K]$, $[A_{1,k}, A_{2,k}, \dots, A_{q + 1,k}]$ has full row rank as $\min(T,p_1,\dots, p_K) \to\infty$. 
    
    \item There exists some constant $c_\Delta \in (0, 1/2]$ such that $\min_{1 \le j \le q} \Delta_j \ge c_\Delta T. $
\end{enumerate}
\end{assumption}

\begin{assumption}[Idiosyncratic noise]\label{assum: noise}
There are $(\nu,\beta)$-general linear tensor time series $\{\cF_{e,t}\}_{t \in \Z}$ and $\{\epsilon_t\}_{t \in \Z}$ independent of each other, such that the noise $\cE_t$, $t\in[T]$, admits the representation
$\cE_t = \cF_{e,t}\times_{k=1}^K A_{e,k} + \Sigma_{\epsilon} \circ \epsilon_t$,
where each $A_{e,k} \in \R^{p_k\times r_{e,k}}$ satisfies $\|A_{e,k}\|_1 =O(1)$ and $\|A_{e,k}\|_\infty =O(1)$, and the order-$K$ tensor $\Sigma_{\epsilon}$ has uniformly bounded elements. 
Finally, $\{\cF_{e,t}\}_{t \in \Z}$ and $\{\epsilon_t\}_{t \in \Z}$ are independent of $\{\cF_t\}_{t \in \Z}$.
\end{assumption}

Under Assumption~\ref{assum: core_factor}, we have $\cF_t$ satisfy Condition~\ref{cond:factor}.
Assumption~\ref{assum: loadings} is a tensor analogue of the condition found in \cite{BaiNg2013}, and similar conditions are made in the literature on factor modeling for vector- \citep{stock2002forecasting}, matrix- \citep{yu2022projected}, and tensor-valued \citep{barigozzi2026statistical} time series.
Assumption~\ref{assum: trans_mat_alt} regularizes the transformation matrices so that, together with Assumption~\ref{assum: loadings}, the global loading matrices $\Lambda_k, \, k \in [K]$, can be estimated consistently. Part~(i) prevents the factor structure from vanishing along some modes in any segment; parts~(ii) and (iii), in combination with Assumption~\ref{assum: loadings}, generalize the standard full column rank condition imposed on the loading matrices in factor models, to the change point setting.
In the change point literature, when $K = 1$, our part~(ii) is akin to Assumption~5~(iii) in \cite{barigozzi2025moving} which constrains the covariance of the pseudo-factor rather than the transformation matrices; part~(iii) boils down to Assumption~11~(i) in \cite{Duanetal2023}, see also their Remark~2 for more discussion.
The linear spacing condition made in~(iv) is invariably found in the related literature; see, for example, \cite{barigozzi2018simultaneous, barigozzi2025moving, Baltagietal2021, Duanetal2025}. 
We may relax this which, however, calls for the expansion of the interval collection $\M$ with $\mu_T \asymp \log_2(T)$ (see~\eqref{def: seeded_interval}), and has further implication on the detection and estimation performance, see Appendix~\ref{sec:assum:lin:space} for detail.

Under Assumption~\ref{assum: noise}, the idiosyncratic component is decomposed into a factor-driven component with approximately sparse loadings, and a linear tensor time series,
and thus contains both cross-sectional and serial correlation.
At the same time, the cross-sectional correlations are sufficiently weak so that with Assumptions~\ref{assum: loadings} and~\ref{assum: trans_mat_alt}, there exists a large gap between the $r_k$-th and the $(r_k + 1)$-th largest eigenvalues of the mode-$k$ covariance of $\mat_k(\cX_t)$ for all $k \in [K]$, which ensures the asymptotic identifiability of the (unobserved) $\cC_t$ and $\cE_t$ across all the modes.
We specify our requirements on $(\nu, \beta)$ in Assumptions~\ref{assum: core_factor} and~\ref{assum: noise} later; it suffices to assume that $\nu \ge 4$ and $\beta \ge 0$ for consistent estimation of $\Lambda_k$ (Proposition~\ref{prop: consistency_proj}), whereas we require that $\nu > 8$ and $\beta > 1$ for the consistency in change point detection and mode-identification (Theorems~\ref{thm: asymp_consistency_detection} and~\ref{thm: mode_identify}).

\begin{assumption}[Signal-to-noise ratio]\label{assum: signal-to-noise}
The jump magnitudes $\omega_j, \, j \in [q]$, defined in Definition~\ref{def: jump} satisfy
$\min_{j \in [q]} \, \sqrt{\Delta_j}\,\omega_j / \sqrt{\log(T)} \to \infty$ and $p \min_{j \in [q]} \, \omega_j \to \infty$ as $\min(T, p) \to \infty$.
\end{assumption}

\begin{assumption}[Dimensionality]
\label{assum: dim}
$p^{-1}\sqrt{T}  = \cO( \sqrt{\log(T)})$ as $\min(T,p_1,\ldots,p_K)\to \infty$. 
\end{assumption}
Assumption~\ref{assum: signal-to-noise} permits $\omega_j \to 0$ as $\min(T, p_1, \ldots, p_K) \to \infty$, at a sufficiently slow rate and accordingly,  the rate of estimation for each $\theta_j$ explicitly depends on $\omega_j$ (Theorem~\ref{thm: asymp_consistency_detection}).
Assumption~\ref{assum: dim} is weaker than the conditions on the relative divergence of $p$ and $T$ found in \citet{Baltagietal2021, Duanetal2023,Baietal2024,barigozzi2025moving} for change point detection and testing, that $p^{-1} \sqrt{T} = o(1)$. 
Under Assumptions~\ref{assum: trans_mat_alt}~(iv) and~\ref{assum: dim}, the second requirement in Assumption~\ref{assum: signal-to-noise} implies the first. 



Finally, we impose the following condition on $\Xi_j^{(k)}$ defined in~\eqref{eqn: scaled_mode_change} for the change at $\theta_j$ to be identifiable with the modes $k \in \cK_j$ (where $\Vert \Xi_j^{(k)} \Vert \ne 0$), which still allows for $\min_{k \in \cK_j} \!\! \Vert \Xi_j^{(k)} \Vert \to 0$.
\begin{assumption}[The degree of mode-identifiability]
\label{assum: mode-id}
$\max(T^{-1/2}, p^{-1}) = o\big( \min_{j\in[q], \, k \in \cK_j} \| \Xi_j^{(k)} \| \big)$.
\end{assumption}

\subsection{Asymptotic results}\label{subsec: theorem}

We first establish that the column space of each loading matrix $\Lambda_k$ in~\eqref{eqn: tfm_change_rewrite} is consistently estimated by that of $\wh{\Lambda}_k$ given in Section~\ref{sec: detection}.
This plays a key role in the subsequent analysis, and is of independent interest as a first-of-its-kind result derived in the presence of multiple change points under a weak moment condition.

\begin{proposition}[Loading estimation]
\label{prop: consistency_proj}
Let Assumptions~\ref{assum: core_factor}, \ref{assum: loadings}, \ref{assum: trans_mat_alt} and~\ref{assum: noise} hold, with $\nu\geq 4$ and $\beta\geq 0$ for Assumptions~\ref{assum: core_factor} and~\ref{assum: noise}. 
Then there exists a matrix $\wh{H}_k \in \R^{r_k \times r_k}$ which satisfies $\wh{H}_k \wh{H}_k^\trans = I_{r_k} + o_P(1)$ as $\min\{T,p_1,\dots, p_K\} \to \infty$, such that
\[
\frac{1}{p_k}\Big\| \wh\Lambda_k - \Lambda_k \wh{H}_k \Big\|_F^2 = \cO_P\l\{ \frac{1}{T\pmk} + \frac{1}{p^2} + \Big( \sum_{\ell\in [K]\setminus\{k\}} \frac{1}{Tp_{\text{-}\ell}} \Big) \Big( \frac{1}{T} + \frac{1}{p_k} \Big) \r\}.
\]
\end{proposition}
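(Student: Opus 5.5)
The argument runs in two stages, mirroring the two-step estimator: first a rough consistency for the initial $\wt\Lambda_k$, then a refined bound for the projected estimator $\wh\Lambda_k$.

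\emph{Stage 1 (initial estimator).} Write $\mat_k(\cX_t)=\Lambda_k\mat_k(\cG_t)\Lambdamk^\trans+\mat_k(\cE_t)$ with $\Lambdamk=\otimes_{\ell\ne k}\Lambda_\ell$. Expanding $\wh\Gamma_X^{(k)}$ gives four terms: a signal term, two cross terms and a noise term. The signal term is
\[
(Tp)^{-1}\sum_t\Lambda_k\mat_k(\cG_t)\Lambdamk^\trans\Lambdamk\mat_k(\cG_t)^\trans\Lambda_k^\trans .
\]
By Assumption~\ref{assum: loadings} we have $\Lambdamk^\trans\Lambdamk=\pmk I$, so this term reduces to $p_k^{-1}\Lambda_k\{T^{-1}\sum_t\mat_k(\cG_t)\mat_k(\cG_t)^\trans\}\Lambda_k^\trans$. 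By Lemma~\ref{lemma: FMF_expectation} and Assumption~\ref{assum: trans_mat_alt}(ii), the bracket converges to $\Sigma_{A,k}$. Assumption~\ref{assum: trans_mat_alt}(iii) makes $\Sigma_{A,k}$ positive definite, which yields an eigengap of order one.

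I then bound the deviation of the bracket from $\Sigma_{A,k}$ by $\cO_P(T^{-1/2})$ entrywise. Within each segment $\mat_k(\cG_t)$ is a fixed linear transformation of $\mat_k(\cF_t)$, and the second moments of a $(\nu,\beta)$-general linear process with $\nu\ge4$ have variance $\cO(1/T)$; summing over the $q+1$ segments preserves the rate because $q$ is bounded under (iv).

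The cross and noise terms are handled with the structure in Assumption~\ref{assum: noise}. Since $\|A_{e,k}\|_1,\|A_{e,k}\|_\infty=\cO(1)$, the noise has bounded mode-$k$ covariance spectrum, and the $\epsilon_t$ part has independent innovations. Combining this with independence of $\cF$ and $\cE$ should give $\|\wh\Gamma_X^{(k)}-p_k^{-1}\Lambda_k\Sigma_{A,k}\Lambda_k^\trans\|$ of order $(Tp_{\text{-}k})^{-1/2}+p_k^{-1}+T^{-1/2}$. Davis--Kahan then yields a crude rate for $p_k^{-1}\|\wt\Lambda_k-\Lambda_k\wt H_k\|_F^2$.

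\emph{Stage 2 (projection).} Write
\[
Y_{k,t}=\pmk^{-1}\Lambda_k\mat_k(\cG_t)\Lambdamk^\trans\wtLambdamk+\pmk^{-1}\mat_k(\cE_t)\wtLambdamk .
\]
I decompose $\wtLambdamk=\Lambdamk\wt H_{\text{-}k}+(\wtLambdamk-\Lambdamk\wt H_{\text{-}k})$ by telescoping the Kronecker product over $\ell\ne k$. For the signal, $\pmk^{-1}\Lambdamk^\trans\wtLambdamk=\wt H_{\text{-}k}+o_P(1)$ is close to orthogonal, so $\wh\Gamma_Y^{(k)}$ again has leading part $p_k^{-1}\Lambda_k\Sigma_{A,k}^\ast\Lambda_k^\trans$ with an order-one eigengap.

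The noise projected onto $\Lambdamk$, namely $\pmk^{-1}\mat_k(\cE_t)\Lambdamk$, has entries of size $\pmk^{-1/2}$. Its contribution is therefore $\cO_P(1/(T\pmk))$ after the Davis--Kahan / standard $\wh\Lambda_k-\Lambda_k\wh H_k$ expansion in the style of Bai (2003), which represents $\wh\Lambda_k-\Lambda_k\wh H_k$ as $\wh\Gamma_Y^{(k)}$ perturbations times $\wh\Lambda_k$ times the inverse eigenvalue matrix. The correlated-noise part $A_{e,k}$ contributes the $p^{-2}$ term.

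The remaining terms come from the estimation errors $\wt\Lambda_\ell-\Lambda_\ell\wt H_\ell$ for $\ell\ne k$ multiplying the noise. Here I bound $\|\pmk^{-1}\sum_t\mat_k(\cE_t)(\wtLambdamk-\Lambdamk\wt H_{\text{-}k})\cdots\|$ by Cauchy--Schwarz together with the Stage 1 rates. The aim is to obtain the product $\big(\sum_{\ell\ne k}1/(Tp_{\text{-}\ell})\big)(1/T+1/p_k)$, which is the main obstacle. The crude Stage 1 rate is too weak, so I will first derive the finer representation of $\wt\Lambda_\ell-\Lambda_\ell\wt H_\ell$ as a leading linear term in the noise $\cE$ and the factors. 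This makes the cross product $\mat_k(\cE_t)\times(\text{error in }\wt\Lambda_\ell)$ a double sum over $t,s$, whose second moments can be computed with the independent-innovation structure and weak serial dependence under $\nu\ge4$, $\beta\ge0$. That computation produces the factor $1/T+1/p_k$.

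Finally, $\wh H_k\wh H_k^\trans=I+o_P(1)$ follows from the normalisations $p_k^{-1}\wh\Lambda_k^\trans\wh\Lambda_k=I$ and $p_k^{-1}\Lambda_k^\trans\Lambda_k=I$ combined with the consistency just established.
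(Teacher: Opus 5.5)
Your two-stage architecture matches the paper's. You take a crude rate for $\wt\Lambda_k$, then use the representation $\wh\Lambda_k-\Lambda_k\wh H_k=(\wt\cII+\wt\cIII+\wt\cIV)\wh\Lambda_k\wh D_k^{-1}$, and the noise projected on $\Lambdamk$ contributes terms within $1/(T\pmk)+1/p^2$. However, the step you flag as the main obstacle is only promised, and the route you sketch for it targets the wrong part of the error.

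\emph{What actually breaks Cauchy--Schwarz.} It is not the fluctuation of $\wt\Lambda_\ell-\Lambda_\ell\wt H_\ell$ that is linear in $\cE$ and the factors. That part has scaled squared norm $\cO_P\{1/(Tp_{\text{-}\ell})\}$. Multiplying it by $\|\cIV\|_F^2=\cO_P\{1/(T\pmk)+1/p_k\}$ already gives $\wmk\{1/(T\pmk)+1/p_k\}$, which is inside the target. The real obstruction is the deterministic bias $(Tp)^{-1}\sum_t\E(E_{\ell,t}E_{\ell,t}^\trans)\,\wt\Lambda_\ell\wt D_\ell^{-1}$, which is the source of the $1/p_\ell^2$ term in Lemma~\ref{lemma: consistency_PCA}. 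Through Cauchy--Schwarz it leaves a term $p_\ell^{-2}p_k^{-1}$. This is not dominated by the target: take $K=2$ and $T$ large relative to $p$, where the target is essentially $1/p^2$. An expansion ``linear in the noise'' does not isolate this piece.

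\emph{Why the proposed second-moment computation fails.} $\wt\Lambda_\ell$ is built from the same noise tensor as $\mat_k(\cE_t)$. So the summands of your double sums are cubic or quartic in the noise innovations, and their second moments involve sixth or eighth moments. Only $\nu\ge4$ is assumed, so that computation is not justified. You also attribute both parts of $\wmk(1/T+1/p_k)$ to the noise--error product. In the paper, the $\wmk/T$ part comes from the signal--noise cross terms $\wt\cII,\wt\cIII$.

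\emph{The missing idea.} The paper uses an intermediate target (Lemma~\ref{lemma: population_cov_PCA_consistency}). It compares $\wt\Lambda_\ell$ not with $\Lambda_\ell\wt H_\ell$ but with $\Ddot\Lambda_\ell J_\ell$. Here $\Ddot\Lambda_\ell$ is $\sqrt{p_\ell}$ times the leading eigenvectors of $\cI+\Gamma_E^{(\ell)}$ as in~\eqref{eqn: DdotGamma_X_decomp}, i.e.\ the sample signal covariance plus the \emph{expected} noise covariance. This has two consequences.
\begin{itemize}
\item It absorbs the bias. A perturbation bound (Lemma~B.4 of \citet{Barigozzietal2025_robust}) gives $p_\ell^{-1}\|\wt\Lambda_\ell-\Ddot\Lambda_\ell J_\ell\|_F^2=\cO_P\{1/(Tp_{\text{-}\ell})\}$, with no $1/p_\ell^2$ term.
\item $\Ddot\Lambda_\ell$ depends only on the factors, so it is independent of $\cE$.
\end{itemize}
Now write $\wtLambdamk=\Ddot\Lambda_{\text{-}k}J_{\text{-}k}+(\wtLambdamk-\Ddot\Lambda_{\text{-}k}J_{\text{-}k})$. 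The leading noise term $\wt\cIV_c$ becomes a quadratic form $\sum_tE_{k,t}VV^\trans E_{k,t}^\trans\Lambda_k$ with $V=\Ddot\Lambda_{\text{-}k}$ independent of the noise. It is handled by a conditional second-moment calculation that needs only fourth moments (Lemma~\ref{lemma: prelim_rate}(iv)) and gives $1/(Tp\pmk)+1/p^2$. The remainders $\wt\cIV_a,\wt\cIV_b$, and the analogous remainders in $\wt\cII,\wt\cIII$, are then bounded by plain Cauchy--Schwarz. That is exactly where $\wmk(1/T+1/p_k)$ comes from.

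\emph{Minor point.} Assumption~\ref{assum: trans_mat_alt}(ii) gives no rate, so your ``$\cO_P(T^{-1/2})$ deviation from $\Sigma_{A,k}$'' is not available. The paper never needs it, because $\wt H_k$ (resp.\ $\wh H_k$) absorbs the sample factor covariance.
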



\begin{theorem}[Consistency of change point detection]\label{thm: asymp_consistency_detection}
Suppose that Assumptions~\ref{assum: core_factor}--\ref{assum: dim} hold, with $\nu > 8$ and $\beta > 1$ for Assumptions~\ref{assum: core_factor} and~\ref{assum: noise}. Suppose that the threshold $\pi_{T, p}$ is chosen to satisfy $\sqrt{\log(T)}/\pi_{T, p} = o(1)$ and  $\pi_{T, p} = o(\min_{j \in [q]} \sqrt{\Delta_j} \omega_j)$, 
$\M$ defined in~\eqref{def: seeded_interval} is constructed with $\mu_T \asymp \log_2\log(T)$, the trimming parameter satisfies $\varpi_T = c_\varpi T/2^{\mu_T}$ for some constant $c_\varpi \in (0, 1/2)$, and $W_\tau = W$ is used in~\eqref{eqn: test_stat_agg_std} where $W$ is a positive definite matrix with $\max(\Vert W \Vert, \Vert W^{-1} \Vert) \le C_W$ for some constant $C_W > 0$. Then TFMseg (Algorithm~\ref{alg: mad}) returns a set of change point estimators $\wh\Theta = \{\wh{\theta}_j, \, j \in [\wh{q}]: \, \wh{\theta}_1 < \ldots < \wh{\theta}_{\wh{q}} \}$ which satisfies, for any sequence $\upsilon_T \to \infty$ arbitrarily slowly,
\begin{align*}
\P\l\{ \wh{q} = q, \, \max_{j\in[q]}\, \omega_j^{2} \vert \thh_j-\thj\vert \le \upsilon_T \r\} \to 1 \text{ \ as \ } \min(T,p_1,\ldots,p_K)\to \infty.
\end{align*}
\end{theorem}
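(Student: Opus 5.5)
The argument reduces the problem to a multivariate change-in-covariance problem for the low-dimensional pseudo-factor, and then runs the standard seeded-interval/narrowest-over-threshold argument on a high-probability event.

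\textbf{Step 1: reduction to the oracle pseudo-factor.} Write $\wh{H} = \otimes_{k=K}^1 \wh H_k$ with $\wh H_k$ from Proposition~\ref{prop: consistency_proj}, and compare $\mat_k(\wh\cG_t)\mat_k(\wh\cG_t)^\trans$ with its rotated oracle counterpart $\wh H_k^\trans \mat_k(\cG_t) \wh H_{\text{-}k}\wh H_{\text{-}k}^\trans \mat_k(\cG_t)^\trans \wh H_k$. I would decompose $\wh\cG_t - \cG_t \times_{k} \wh H_k^\trans$ into three kinds of term. The first is loading-estimation error times $\cG_t$. The second is $p^{-1}\cE_t \times_k \Lambda_k^\trans$. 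The third is cross terms of $\cE_t$ with $\wh\Lambda_k - \Lambda_k\wh H_k$.

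The goal is a uniform bound on partial sums of the form
\[
\max_{(a,b]\in\M}\ \max_{a<\tau<b}\ \frac{1}{\sqrt{\tau-a}}\Bigl\| \sum_{t=a+1}^{\tau} \bigl(\mat_k(\wh\cG_t)\mat_k(\wh\cG_t)^\trans - \text{rotated oracle}\bigr)\Bigr\|_F = \cO_P\bigl(\sqrt{T}/p + \sqrt{\log T}\bigr).
\]
By Assumption~\ref{assum: dim}, the right-hand side is $\cO_P(\sqrt{\log T})$.

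Because $\wh H_k\wh H_k^\trans = I + o_P(1)$ and $\cK$ is fixed-dimensional, the rotation only affects norms up to constants. The weighted statistic $\cT$ is then equivalent to the oracle CUSUM up to $C_W$ factors.

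\textbf{Step 2: uniform concentration of the oracle CUSUM.} Let $g_t = \Vech(\mat_k(\cG_t)\mat_k(\cG_t)^\trans)$ stacked over $k$. This is a quadratic form in a $(\nu,\beta)$-general linear process. With $\nu>8$, $g_t - \E g_t$ has more than $4$ moments, and $\beta>1$ gives summable dependence. I would apply a Fuk--Nagaev / moment-type maximal inequality for linear-process quadratic forms. Together with $|\M| = \cO(\log T)$, intervals of length at least $cT/\log T$, and trimming $\varpi_T \asymp T/\log T$, this should give
\[
\max_{(a,b]\in\M}\max_{a+\varpi_T<\tau<b-\varpi_T}\bigl|\cM^{\circ}_{a,\tau,b} - \E\cM^{\circ}_{a,\tau,b}\bigr|_2 = \cO_P(\sqrt{\log T}).
\]
The trimming is exactly what allows a polynomial-moment union bound: the scaling $\sqrt{(\tau-a)(b-\tau)/(b-a)}$ is then of order $\sqrt{T/\log T}$ and never small. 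Call $\mathcal{A}_T$ the event on which Steps 1--2 hold with bound $\upsilon'_T\sqrt{\log T}$, where $\upsilon'_T$ diverges slowly and $\upsilon'_T\sqrt{\log T} = o(\pi_{T,p})$.

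\textbf{Step 3: detection and localisation on $\mathcal{A}_T$.} On $\mathcal{A}_T$, consider an interval $(a,b]\in\M$ containing no change point. Its signal is zero, so its statistic is below $\pi_{T,p}$ and it is discarded.

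Now take a change point $\theta_j$. Under Assumption~\ref{assum: trans_mat_alt}(iv) and $\mu_T\asymp\log_2\log T$, some interval at a sufficiently fine scale contains only $\theta_j$, with $\theta_j$ at distance at least $\asymp T/\log T$ from both ends, beyond $\varpi_T$ given $c_\varpi<1/2$. Its signal is of order $\sqrt{T/\log T}\,\omega_j$. Since $\Delta_j\asymp T$, I expect this to be $\gg \pi_{T,p}$ in the intended regime; this requires a careful scale comparison, possibly invoking $\pi_{T,p}=o(\sqrt{\Delta_j}\omega_j)$ at a coarse-enough scale that still isolates $\theta_j$. Thus every change point is covered by some interval exceeding the threshold.

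Next, any interval selected as shortest-over-threshold contains at least one change point. Because it is shortest, it contains exactly one change point that is well inside its trimmed range; spurious intervals around change points near the boundary give small signal. The deletion step then removes all intervals containing $\wh\theta$, which gives $\wh q = q$.

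For localisation within a selected $(a,b]$, use the standard argument. Compare $\cT_{a,\tau,b}^2 - \cT_{a,\theta_j,b}^2$, using the piecewise form of the population CUSUM displayed in Section~\ref{sec: detection}. The signal decreases by at least $c\,\omega_j^2|\tau-\theta_j|$. The noise in the difference is $\cO_P(\omega_j\sqrt{|\tau-\theta_j|})$, controlled by a maximal inequality over dyadic shells around $\theta_j$ together with Step~1's cross terms, where $p\omega_j\to\infty$ kills the $\sqrt{T}/p$-type bias. The argmax therefore satisfies $\omega_j^2|\wh\theta_j-\theta_j|\le\upsilon_T$.

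\textbf{Main obstacle.} The hardest part is Step 1 combined with the localisation noise bound. The estimation error of $\wh\Lambda_k$ induces a bias in $\wh\cG_t$ that is not mean zero and is correlated across $t$. This bias must be shown to contribute only $o(\omega_j^2|\tau-\theta_j|)$ to local CUSUM differences, using only $\nu>8$ moments. I would handle it by expanding the products, bounding the deterministic-rate pieces by Proposition~\ref{prop: consistency_proj} times $|\tau-\theta_j|$, and treating the stochastic pieces through partial-sum maximal inequalities for general linear processes.
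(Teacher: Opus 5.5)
\textbf{Overall.} Your architecture matches the paper's, and so does most of the machinery. You reduce to the rotated oracle CUSUM. You obtain a uniform $\cO_P(\sqrt{\log T})$ noise bound over trimmed triples from $\M$ via a Fuk--Nagaev inequality and a union bound over $\cO(T\log T)$ triples; this is where $\nu>8$ and $\varpi_T\asymp T/\log T$ enter. You then run a narrowest-over-threshold argument.

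Your Step~1 bound must also be restricted to $\min(\tau-a,b-\tau)>\varpi_T$. Single summands with only $\nu/2$ moments can be of order $T^{2/\nu}\gg\sqrt{\log T}$.

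Your localisation is a valid alternative to the paper's. You compare $\cT^2_{a,\tau,b}-\cT^2_{a,\theta_j,b}$ and use an anchored maximal inequality over $|\tau-\theta_j|\ge\omega_j^{-2}\upsilon_T$. The paper instead projects onto $\gc$ and bounds $U_1$ against $U_2$ using Kirch's H\'{a}jek--R\'{e}nyi inequality, which is where $\beta>1$ is used. The genuine problems are in Step~3.

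\textbf{Detection scale.} As written, the detection step fails. An isolating interval at the finest level has signal only $\asymp\sqrt{T/\log T}\,\omega_j$. The condition $\pi_{T,p}=o(\min_j\sqrt{\Delta_j}\omega_j)$ permits, e.g., $\pi_{T,p}=\sqrt{T}\min_j\omega_j/\log\log T$, which exceeds that signal.

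You must commit to the coarse scale you only allude to. Since $\Delta_j\ge c_\Delta T$, there is a bounded level $h$ with $2m_h\in[\Delta_j/6,2\Delta_j/3]$. At that level there is $(a,b]\in\M$ with $\Delta_j/12\le\min(\theta_j-a,b-\theta_j)\le\max(\theta_j-a,b-\theta_j)\le\Delta_j/3$; this is the paper's event $\cM_{T,M}$. It gives $\cT^*_{a,\theta_j,b}\ge\sqrt{\Delta_j}\,\omega_j/(2\sqrt6)$, with $\theta_j$ inside the trimmed range.

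\textbf{Selection step (the main gap).} You claim the shortest interval over threshold has its change point well inside the trimmed range, and that intervals with a change point near the boundary give small signal. This is false. Take an interval whose undetected $\theta_j$ satisfies $d=\theta_j-a\le\varpi_T$. Its population statistic at $\tau=a+\varpi_T+1$ is about $d\,\omega_j/\sqrt{\varpi_T}$. For $d$ near $\varpi_T$ this is of order $\sqrt{T/\log T}\,\omega_j$, which can far exceed $\pi_{T,p}$.

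At any level, $\theta_j$ is covered by two half-overlapping intervals, and one of them can be of exactly this type. If it is selected, its argmax is pinned near $a+\varpi_T$. Your localisation then never starts, because it needs a drop $\ge c\,\omega_j^2|\tau-\theta_j|$ over the whole trimmed range, i.e.\ $\theta_j$ well inside. Shortest-ness cannot exclude this interval; the tie-breaking rule of Algorithm~\ref{alg: mad} does.

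The missing argument is Step~1 of the paper's proof of (R2). The same-length neighbour has $\theta_j$ at distance at least $m_h-\varpi_T>\varpi_T$ from both ends. Its population CUSUM therefore exceeds the near-boundary one by a margin of order $\sqrt{m_h}\,\omega_j$, for $c_\varpi$ small enough. The near-boundary interval can exceed $\pi_{T,p}$ only if $\sqrt{\varpi_T}\,\omega_j\gtrsim\pi_{T,p}\gg\sqrt{\log T}$. Hence this margin dominates the $\cO_P(\sqrt{\log T})$ noise, and the near-boundary interval loses the tie. Only then can you assert $\min(\theta_j-\alc,\blc-\theta_j)>\varpi_T$ and proceed to localisation.
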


Theorem~\ref{thm: asymp_consistency_detection} establishes that TFMseg consistently estimates both the total number and locations of the change points, which holds for any positive definite matrix $W$ with bounded spectra in place of $W_\tau$ in~\eqref{eqn: test_stat_agg_std};
the specific form of $W$ may only affect the finite-sample power. This viewpoint is in line with the multivariate change point detection framework of \cite{kirch2015detection}, where the weighting matrix in their CUSUM statistic is treated as a tuning parameter, and structured choices are discussed as a way of balancing between numerical stability and detection power.
The choice of $\pi_{T, p}$ is compatible with Assumption~\ref{assum: signal-to-noise}. 
The rate of estimation for each $\theta_j$ is characterized by the change size $\omega_j$, in the sense that $\vert \wh\theta_j - \theta_j \vert = \cO_P(\omega_j^{-2} \upsilon_T)$, i.e.\ larger changes are estimated with better accuracy. 
With $\upsilon_T \to \infty$ arbitrarily slowly, under Assumption~\ref{assum: signal-to-noise}, we have $\vert \wh\theta_j - \theta_j \vert/\Delta_j = o_P(1)$, i.e.\ the change point estimators are consistent in the re-scaled time. 
We remark that Theorem~\ref{thm: asymp_consistency_detection} is established under a condition on the tail behavior of $\{\cF_t\}_{t \in \Z}$ and $\{\cE_t\}_{t \in \Z}$ characterized through $\nu > 8$, which is comparable to those found in vector \citep{Duanetal2023, Baietal2024} and matrix \citep{peng2025detection, peng2025estimation} settings. 
We may relax this and require $\nu > 4$, which calls for e.g.\ a stronger condition on the signal-to-noise, namely $T^{-\nu/8 + 1} = o(\min_{j \in [q]} \omega_j \sqrt{\Delta_j})$, than that found in Assumption~\ref{assum: signal-to-noise}.

Finally, we establish the consistency in mode-identification at each change point, a first result in its kind, which builds upon the consistency in change point detection in Theorem~\ref{thm: asymp_consistency_detection}.
\begin{theorem}[Consistency of mode-identification]\label{thm: mode_identify}
Let all the assumptions in Theorem~\ref{thm: asymp_consistency_detection} hold, 
and additionally impose Assumption~\ref{assum: mode-id}. 
Setting the mode-identification threshold $\zeta_{T, p}$ to fulfill $T^{-1/2} + p^{-1} = o(\zeta_{T, p})$ and $\zeta_{T, p} / \min_{j \in [q], \, k \in \cK_j} \Vert \Xi_j^{(k)} \Vert = o(1)$ as $\min(T, p_1, \ldots, p_K) \to \infty$, 
we have $\P\big\{ \wh{\cK}_j = \cK_j \text{ for all } j \in [q] \big\} \to 1$.
\end{theorem}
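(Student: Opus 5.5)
The plan is to reduce the claim to a uniform bound on the estimation error of the segment-wise normalized covariances, namely
\[
\max_{j\in[q],\,k\in[K]} \bigl\| \wh\Xi_j^{(k)} - \Xi_j^{(k)} \bigr\| = \cO_P\bigl(T^{-1/2} + p^{-1}\bigr).
\]
Given this bound, the threshold conditions finish the argument at once. For $k\notin\cK_j$ we have $\Xi_j^{(k)}=0$, so $\|\wh\Xi_j^{(k)}\| = \cO_P(T^{-1/2}+p^{-1}) = o_P(\zeta_{T,p})$. For $k\in\cK_j$ we have $\|\wh\Xi_j^{(k)}\| \ge \|\Xi_j^{(k)}\| - \cO_P(T^{-1/2}+p^{-1})$, and this exceeds $\zeta_{T,p}$ with probability tending to one because $\zeta_{T,p}=o(\min_{j,k\in\cK_j}\|\Xi_j^{(k)}\|)$. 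Since $q$ and $K$ are fixed, a union bound over $(j,k)$ gives $\P\{\wh\cK_j=\cK_j\ \forall j\}\to1$.

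\textbf{Step 1: localising the estimation intervals.} By Theorem~\ref{thm: asymp_consistency_detection}, on an event of probability tending to one we have $\wh q=q$ and $|\wh\theta_j-\theta_j|\le \upsilon_T\omega_j^{-2}$. Assumption~\ref{assum: signal-to-noise} gives $\omega_j^{-2}=o(\Delta_j/\log T)$, and we may take $\upsilon_T$ growing slowly enough that $|\wh\theta_j-\theta_j| = o(T/\log T)$. The grid $\M^*$ has spacing $m^*\asymp T/\log T$, so on this event $\wh\theta_j^{\pm}$ coincide with deterministic grid points $\theta_j^{\pm}$. We define these by replacing $\wh\theta_j$ with a point within $o(m^*)$ of $\theta_j$. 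There are at most two candidates for each of $\theta_j^-$ and $\theta_j^+$, determined by which side of a grid point $\wh\theta_j$ falls. Consequently $(\wh\theta_{j-1}^+,\wh\theta_j^-]$ lies in a finite, deterministic collection of intervals, each contained in $(\theta_{j-1},\theta_j]$ with length $\ge c_\Delta T/2$ by Assumption~\ref{assum: trans_mat_alt}(iv). This reduces the problem to deterministic intervals $(a,b]$ inside the $j$-th stationary segment, and only finitely many of them need to be handled.

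\textbf{Step 2: covariance error on a fixed interval.} On each such interval, decompose $\wh\cG_t = \wh H^\trans$-rotated $\cG_t$ plus error terms coming from $\wh\Lambda_k-\Lambda_k\wh H_k$ and from $\cE_t\times_k\Lambda_k^\trans/p$. Write $\wh H=\otimes\wh H_k$ with mode-wise rotations. The target bound is
\[
\bigl\|\wh\Gamma^{(k)}_{G,a,b} - \wh H_k^\trans \Gamma_{G,(j)}^{(k)} \wh H_k \cdot c_{j,-k}\bigr\| = \cO_P(T^{-1/2}+p^{-1}),
\]
where $c_{j,-k}$ is the scalar produced by the other modes' rotations, namely $\tr$ of the rotated $\otimes_{\ell\neq k}A_{j,\ell}$ Gram matrix. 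The $T^{-1/2}$ rate is the ergodic fluctuation of the sample second moments of the linear process $\cF_t$ over an interval of length $\asymp T$; this requires fourth moments with summable $\beta>1$ tails, which follows from Assumption~\ref{assum: core_factor} and the moment bounds already used for Theorem~\ref{thm: asymp_consistency_detection}. The cross terms with noise and loading error are controlled using Proposition~\ref{prop: consistency_proj} together with Assumption~\ref{assum: noise}. Under Assumption~\ref{assum: dim} these are of order $p^{-1}+(T p_{\text{-}k})^{-1/2}$, which is dominated by $T^{-1/2}+p^{-1}$.

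\textbf{Step 3: removing the rotation ambiguity.} This is the main obstacle. $\wh H_k$ is only asymptotically orthogonal, and the scalar $c_{j,-k}$ involves the other modes. The normalisation by the trace in~\eqref{eqn: scaled_mode_change} is what kills $c_{j,-k}$: both the numerator and the trace scale by the same factor. The trace of $\Gamma^{(k)}_{G,(j)}$ is bounded away from zero by Assumption~\ref{assum: trans_mat_alt}(i), so the ratio map $\Gamma\mapsto\Gamma/\tr(\Gamma)$ is Lipschitz near the truth, and the error rate transfers. The same $\wh H_k$ appears for both segments $j$ and $j+1$, because it comes from the global estimator. Hence $\wh\Xi_j^{(k)}\approx \wh H_k^\trans \Xi_j^{(k)}\wh H_k$ up to $\cO_P(T^{-1/2}+p^{-1})$. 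Finally, $\wh H_k\wh H_k^\trans=I+o_P(1)$ gives $\|\wh H_k^\trans\Xi\wh H_k\|=\|\Xi\|(1+o_P(1))$. This multiplicative $o_P(1)$ is harmless: for $k\notin\cK_j$ it multiplies zero, and for $k\in\cK_j$ it only perturbs the signal by a vanishing fraction. This completes the uniform bound, and hence the proof.
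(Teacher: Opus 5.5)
Your argument is correct, and its outer structure matches the paper's. On the event from Theorem~\ref{thm: asymp_consistency_detection}, the paper also traps $\wh\theta_j^{\pm}$ in finite deterministic sets $\Theta_j^{\pm}$ (see~\eqref{eq:Theta:pm}). It then proves $\max_{j,k}\|\wh\Xi_j^{(k)} - \wh H_k^\trans \Xi_j^{(k)} \wh H_k\| = \cO_P(T^{-1/2}+p^{-1})$ (Lemma~\ref{lemma: rate_mode_change}) and concludes via $\|\wh H_k^\trans \Xi_j^{(k)} \wh H_k\| \asymp_P \|\Xi_j^{(k)}\|$.

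Where you differ is the core estimate. The paper (Lemma~\ref{lemma: gen_sample_Hpopu_error}) bounds only the \emph{differenced} unnormalised error $\wh\Gamma^{(k)}_{G,(j+1)} - \wh\Gamma^{(k)}_{G,(j)} - \wh H_k^\trans (\Gamma^{(k)}_{G,(j+1)} - \Gamma^{(k)}_{G,(j)}) \wh H_k$. In it, the noise term $p_k^{-1}\wh\Lambda_k^\trans \cJ^{(3)}_{k,a,b} \wh\Lambda_k$ cancels exactly by stationarity (Lemma~\ref{lemma: E(xx)}~(iv)). The paper then passes to the trace-normalised statistic through~\eqref{eqn: trace_diff_aux}. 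You instead bound each segment separately and use that $\Gamma \mapsto \Gamma/\tr(\Gamma)$ is Lipschitz where the trace is bounded away from zero.

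Your route makes the normalisation step transparent. An error common to both segments cancels in the unnormalised difference but not after dividing by two different traces, so the transfer in~\eqref{eqn: trace_diff_aux} implicitly needs per-segment control, which is exactly what you supply. The price is that the noise bias no longer cancels, and you must show it is $\cO(p^{-1})$ per segment, which you only gesture at. Lemma~\ref{lemma: E(xx)}~(ii) gives only $\cO(1)$ for this term. Use instead $\|\E(E_{k,t}\Lambdamk\Lambdamk^\trans E_{k,t}^\trans)\| \le \rmk\,\pmk\,\|\cov(\vec(\cE_t))\|$, with $\|\cov(\vec(\cE_t))\| = \cO(1)$ under Assumption~\ref{assum: noise}. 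This bias is nonzero even when $\Xi_j^{(k)}=0$ with a scalar change, and it is one reason $\zeta_{T,p}$ must dominate $p^{-1}$.

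Three small corrections:
\begin{itemize}
\item Your opening reduction should target $\wh H_k^\trans \Xi_j^{(k)} \wh H_k$, not $\Xi_j^{(k)}$. $\wh H_k$ tends to an orthogonal matrix, not to $I_{r_k}$, so $\|\wh\Xi_j^{(k)} - \Xi_j^{(k)}\|$ need not vanish. Your Step~3 effectively makes this fix, and only norms enter the conclusion.
\item $c_{j,\text{-}k}$ should multiply $A_{j,k}A_{j,k}^\trans$ rather than $\Gamma^{(k)}_{G,(j)}$, which already contains $\tr(A_{j,\text{-}k}^\trans A_{j,\text{-}k})$. Since $\otimes_{\ell \ne k} \wh H_\ell \wh H_\ell^\trans = I + \cO_P(\alpha_{T,p})$, this scalar is simply $\tr(A_{j,\text{-}k}^\trans A_{j,\text{-}k})(1+\cO_P(\alpha_{T,p}))$. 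That is why the paper can just use $\whLambdamk\whLambdamk^\trans \approx \Lambdamk\Lambdamk^\trans$ (Lemma~\ref{lemma: loading_prod_rate}).
\item When you replace $\tr(\wh H_k^\trans \Gamma^{(k)}_{G,(j)} \wh H_k)$ by $\tr(\Gamma^{(k)}_{G,(j)})$, cite the rate $\wh H_k\wh H_k^\trans = I + \cO_P(\alpha_{T,p})$ from Lemma~\ref{lemma: consistency_proj}, or use $\tr(\Xi_j^{(k)})=0$. The $o_P(1)$ of Proposition~\ref{prop: consistency_proj} alone is not enough, because $\|\Xi_j^{(k)}\|$ may tend to zero.
\end{itemize}
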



\section{Numerical experiments}
\label{sec: simulation}

\subsection{Settings}
\label{sec: general_dgp}

We generate $\{\cX_t\}_{t=1}^T$ from an order-$3$ tensor factor model under~\eqref{eqn: tfm_change}
with $q = 3$ and $(r_1,r_2,r_3)=(3,3,3)$.
The factor process is generated as
$\vec(\cF_t) =  \rho_f\,\vec(\cF_{t-1}) + \varepsilon_t$ with $\varepsilon_t \stackrel{\mathrm{i.i.d.}}{\sim}
\cN_r({0},\, (1- \rho_f^{2})I_r)$, 
and the idiosyncratic tensor $\cE_t$ has i.i.d.\ $\cN(0,1)$ entries. 
The loading matrix $\Lambda_{1, k}$ has its entries generated independently from the uniform distribution $\cU(-1, 1)$.
Then at each $j \in [3]$, we set $\Lambda_{j + 1, k} = \Lambda_{j, k} A_{j, k}$ for $k = j$ while $\Lambda_{j + 1, k} = \Lambda_{j, k}, \, k \ne j$, where
\begin{equation*}
A_{1,1} =
\begin{bmatrix}
0.5&0&0\\
a_{1,21}&1&0\\
a_{1,31}&a_{1,32}&1.5
\end{bmatrix},
\quad
A_{2,2}=
\begin{bmatrix}
1&0&0\\
0&1&0\\
0&0&0
\end{bmatrix},
\text{ \ and \ }
A_{3,3}= \begin{bmatrix} a_{3, 11} & a_{3, 12} & a_{3, 13} \\
a_{3, 21} & a_{3, 22} & a_{3, 23} \\
a_{3, 31} & a_{3, 32} & a_{3, 33} 
\end{bmatrix},
\end{equation*}
with $a_{1,\ell h}\stackrel{\mathrm{i.i.d.}}{\sim}\cN(0,1)$ for $(\ell,h)\in\{(2,1),(3,1),(3,2)\}$, and $a_{3,\ell h}\stackrel{\mathrm{i.i.d.}}{\sim}\cN(0,1/3)$ for $\ell,h\in [3]$; this way, we have the dimensions of the pseudo-factor $\cG_t$ at $(r_1, r_2, r_3) = (3, 3, 3)$ while locally, the mode-$2$ factor number from the third segment onward is two.
In the main text, we report the results on change point detection and mode-identification under the above model with varying $T \in \{400, 800, 1600, 3200\}$ and $(p_1,p_2,p_3) \in \{(10,10,10),(10,10,100),(10,20,40),(20,20,20)\}$, when the change points are \enquote{unbalanced}, i.e.\ $\Theta = \bigl\{\lfloor 0.25T \rfloor,\lfloor 0.5T \rfloor,\lfloor 0.625T \rfloor\bigr\}$, and the data are serially correlated with $\rho_f = 0.7$, which we consider as the more challenging scenario. For each setting, $N = 100$ realizations are generated.

Additionally, we consider further change point scenarios
as well as the null scenarios (i.e.\ $\Theta = \emptyset$), and explore the performance of the proposed methods for change point detection and mode-identification in Appendices~\ref{app: num_det} and~\ref{app: num_idt}, respectively; we verify the discussion in Remark~\ref{rem: reest} on improving the mode-wise loading space estimation via mode-identification, see Appendix~\ref{app: num_reest}; finally, we report in Appendix~\ref{app: num_miss} the performance TFMseg in the presence of missingness with the modification as discussed in Remark~\ref{rem: miss}.
Throughout, we apply TFMseg and the mode-identification procedure with the tuning parameters generated as in Section~\ref{sec: tuning}. 

\subsection{Results for change point detection}\label{sec: num_detect}

For each $n \in [N]$, denote by $\wh{q}^{(n)}$ and $\wh{\theta}_j^{(n)}$ the number of change point estimators and their locations for the $n$-th realization. 
Figure~\ref{fig: subplot_unbal_rnull} displays the distributions of the change point estimators $\wh{\theta}_j^{(n)}, \, j \in [\wh{q}^{(n)}]$, returned by TFMseg over $N = 100$ realizations, across varying dimensions and sample size.
We additionally report the accuracy of the change point estimators conditional on the successful detection of all three change points, by considering the realizations where $\wh q^{(n)} = 3$ and  $\wh\theta^{(n)}_j \in ((\theta_{j-1} + \theta_j)/2, (\theta_j + \theta_{j + 1})/2]$ for all $j \in [3]$; the index set of those realizations is denoted by $\wh{\cD} \subset [N]$.
For comparison, we consider the binary segmentation procedure of \citet{Baietal2024} which, referred to as \enquote{LR}, recursively performs likelihood-ratio tests.
As the method is proposed for vector time series, we first vectorize the tensor time series prior to its application.
In its implementation, we set the minimum segment length to be $0.5T/\log(T)$ while all other tuning parameters are set as recommended by the authors.

Overall, the performance of both methods improves as $T$ and $(p_1, p_2, p_3)$ grow, which is in line with Assumption~\ref{assum: signal-to-noise}.
TFMseg tends to perform better than LR, particularly when $T$ and/or $(p_1, p_2, p_3)$ are smaller, and the difference in performance is more striking when focusing on how frequently all three change points are detected by each method. 
This is not too surprising as the vectorization step taken by LR ignores the tensor-structure of the data, see also Remark~\ref{rem:tensor}.
The tensor-based approach is also beneficial computationally, and TFMseg is considerably faster than LR as the dimensions increase (see Figure~\ref{fig:runtime}).
Additionally, the results in Appendix~\ref{app: num_det} indicate that TFMseg's performance improves in the absence of serial correlations (with $\rho_f = 0$), and its calibration suggested in Section~\ref{sec: tuning} works well so that TFMseg returns few spurious estimator in the absence of any change point (see Appendix~\ref{app: num_det_null}).

\begin{figure}[h!t!b!p!]
\centering
\includegraphics[width = 0.68\linewidth]{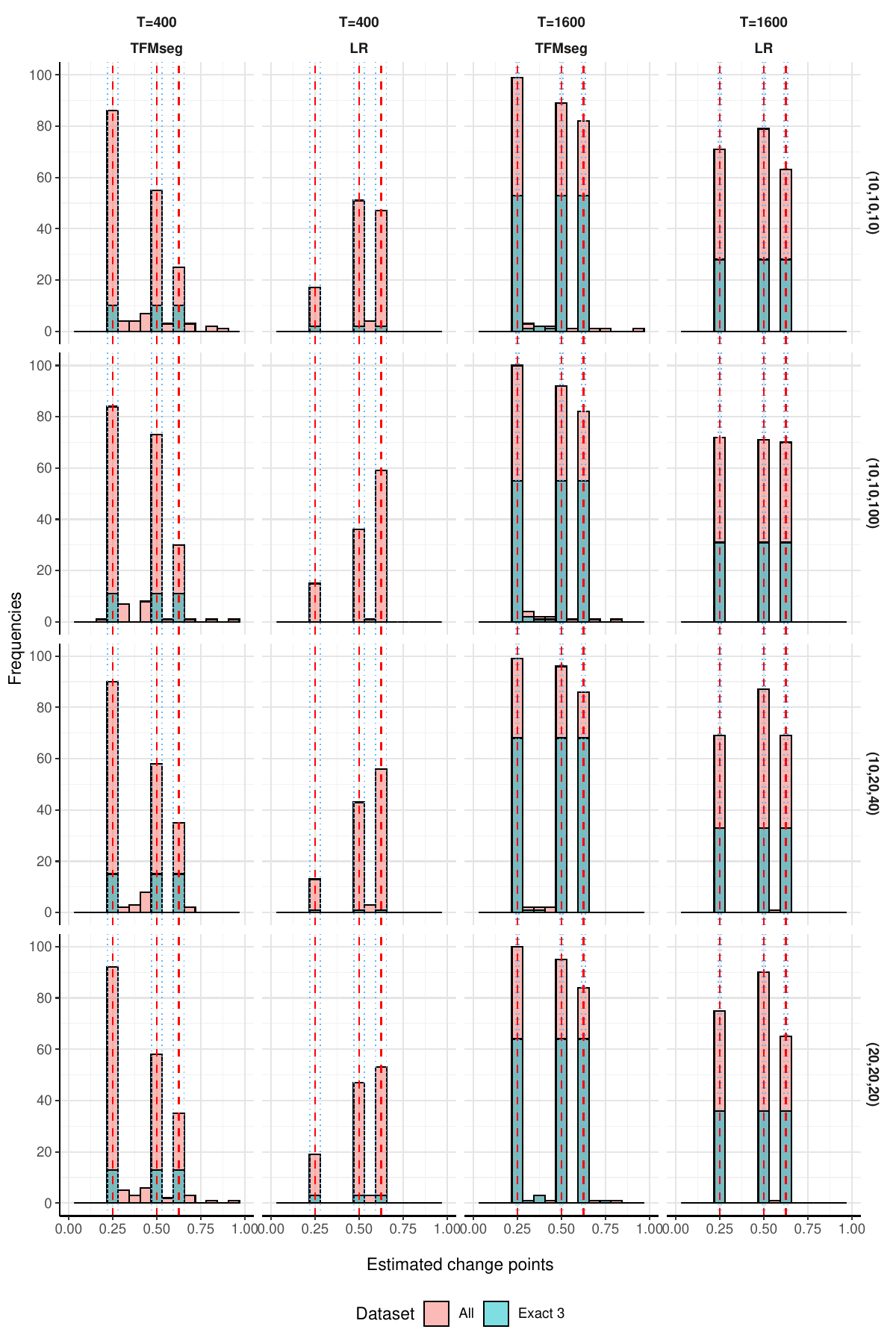}
\caption{Barplots of the scaled change point estimators $\{ \wh\theta^{(n)}_j/T, \, j \in [\wh q^{(n)}]\}$, returned by TFMseg and LR with varying $T \in \{400, 1600\}$ and $(p_1,p_2,p_3)$ (top to bottom) over $N = 100$ realizations. The red bars give the total frequency of estimated change points for all $n \in [N]$, while the blue bars give the frequency from the subset of realizations $n \in \wh{\cD}$ where all three change points are detected (for each method).}
\label{fig: subplot_unbal_rnull}
\end{figure}

\subsection{Results for mode-identification}
\label{sec:num:mode}

While performing the mode-identification with $\wh\theta^\pm_j$ defined as in~\eqref{eqn: seeded_adj_cp} facilitates the theoretical analysis, for practical implementation, we propose to simply set $\wh\theta^\pm_j = \wh\theta_j$ as it makes little difference numerically. 
To separately treat the problem of change point detection and mode-identification, we investigate the performance of the mode-identification procedure on the subset of realizations $\wh{\cD}$ defined in Section~\ref{sec: num_detect} for each setting, where all three change points are detected by TFMseg. 
Recalling that $\cK_j \subset [K]$ denotes the subset of modes for which the change at $\theta_j$ is mode-identifiable according to Definition~\ref{def: classification_change}, we report the following performance metrics:
With $\wh\cK_j^{(n)} = \{ k \in [K]: \, \Vert \wh\Xi_j^{(k), (n)} \Vert > \zeta_{T,p} \}$,
\[
\text{TPR}_j = \frac{1}{\vert \wh{\cD} \vert}\sum_{n \in \wh{\cD}} \frac{\vert \wh{\cK}_j^{(n)} \cap \cK_j \vert}{ \max(\vert \cK_j \vert, \, 1) } \text{ \ and \ } \text{FPR}_j = \frac{1}{\vert \wh{\cD} \vert}\sum_{n \in \wh{\cD}} \frac{\vert \wh\cK_j^{(n)} \setminus \cK_j \vert}{\max(K - \vert \cK_j \vert, \, 1)}.
\]
For the data generating process described in Section~\ref{sec: general_dgp}, we have $\cK_j=\{j\}$ for all $j \in [3]$. 

\begin{table}[h!t!b!p!]
\centering
\caption{Summary of the mode-identification results for varying $T$ and $(p_1,p_2,p_3)$.}
\label{tab: Mode_unbal_rnull_est}
\centering
\resizebox{\ifdim\width>\linewidth\linewidth\else\width\fi}{!}{
\small
\begin{tabular}[t]{ccccccccc}
\toprule
\multicolumn{2}{c}{ } & \multicolumn{2}{c}{$j=1$} & \multicolumn{2}{c}{$j=2$} & \multicolumn{2}{c}{$j=3$} & \multicolumn{1}{c}{  } \\
\cmidrule(l{3pt}r{3pt}){3-4} \cmidrule(l{3pt}r{3pt}){5-6} \cmidrule(l{3pt}r{3pt}){7-8}
$T$ & $(p_1,p_2,p_3)$ & TPR & FPR & TPR & FPR & TPR & FPR & $\vert \wh{\cD} \vert$\\
\cmidrule(lr){1-2} 
\cmidrule(lr){3-4} 
\cmidrule(lr){5-6} 
\cmidrule(lr){7-8}
\cmidrule(lr){9-9}
& (10,10,10) & 0.80 & 0 & 0.90 & 0.05 & 0.90 & 0 & 10\\
 & (10,10,100) & 0.73 & 0 & 0.91 & 0.09 & 1 & 0.09 & 11\\
 & (10,20,40) & 0.93 & 0 & 1 & 0.03 & 1 & 0.03 & 15\\
\multirow{-4}{*}{\centering\arraybackslash 400} & (20,20,20) & 1 & 0 & 1 & 0.04 & 1 & 0.04 & 13\\
\cmidrule(lr){1-2} \cmidrule(lr){3-4} \cmidrule(lr){5-6} \cmidrule(lr){7-8} \cmidrule(lr){9-9}
 & (10,10,10) & 0.90 & 0 & 1 & 0.10 & 0.97 & 0.07 & 30\\
 & (10,10,100) & 0.92 & 0 & 1 & 0.10 & 1 & 0.02 & 25\\
 & (10,20,40) & 0.92 & 0 & 1 & 0.11 & 1 & 0.03 & 38\\
\multirow{-4}{*}{\centering\arraybackslash 800} & (20,20,20) & 1 & 0 & 1 & 0.05 & 1 & 0.03 & 31\\
\cmidrule(lr){1-2} \cmidrule(lr){3-4} \cmidrule(lr){5-6} \cmidrule(lr){7-8} \cmidrule(lr){9-9}
 & (10,10,10) & 0.94 & 0.01 & 0.92 & 0.06 & 1 & 0.06 & 53\\
 & (10,10,100) & 1 & 0.01 & 0.93 & 0.09 & 1 & 0.06 & 55\\
 & (10,20,40) & 0.91 & 0.01 & 0.99 & 0.05 & 1 & 0.04 & 68\\
\multirow{-4}{*}{\centering\arraybackslash 1600} & (20,20,20) & 1 & 0.01 & 0.97 & 0.06 & 0.98 & 0.06 & 64\\
\cmidrule(lr){1-2} \cmidrule(lr){3-4} \cmidrule(lr){5-6} \cmidrule(lr){7-8} \cmidrule(lr){9-9}
 & (10,10,10) & 0.89 & 0.02 & 0.95 & 0.07 & 1 & 0.06 & 85\\
 & (10,10,100) & 0.90 & 0.01 & 0.96 & 0.10 & 1 & 0.04 & 83\\
 & (10,20,40) & 0.92 & 0.01 & 0.98 & 0.07 & 1 & 0.05 & 87\\
\multirow{-4}{*}{\centering\arraybackslash 3200} & (20,20,20) & 0.99 & 0.03 & 0.98 & 0.08 & 0.99 & 0.07 & 94\\
\bottomrule
\end{tabular}}
\end{table}

Table~\ref{tab: Mode_unbal_rnull_est} shows that the proposed mode-identification method performs well, with TPR and FPR improving as the sample size and the dimensions increase. 
Additional numerical experiments not reported here, indicate that the relatively lower TPR when $T = 400$, is attributed to that $\wh{\cD}$ is relatively small, and that the factor number estimator (see Section~\ref{sec: tuning}) occasionally under-estimates $r_k$ which leads to loss of signal---as evidenced by that TPR improves when $\wh r_k$ is replaced with $r_k$ on such occasions.
Appendix~\ref{app: num_idt} reports the results from additional scenarios with more complex change structures including mode-$k$ unidentifiable changes for some modes.

\subsection{Results for mode-wise loading space estimation}
\label{sec:num:loading}

We examine the impact of mode-identification for the downstream task of estimating the loading spaces.
To this end, we compare the \enquote{mode-informed} approach discussed in Remark~\ref{rem: reest}, against the naive approach that estimates the column space $\col(\Lambda_{j, k})$ over each estimated segment for each $k \in [K]$, focusing on the single change point scenario ($q = 1$) where $K = 3$, $(r_1, r_2, r_3) = (3, 3, 3)$, $\theta_1 = \lfloor T/2 \rfloor$ and $\cK_1 = \{1\}$; we refer to Appendix~\ref{app: num_reest} for the complete description of the experiments.
After applying TFMseg and the mode-identification procedure to $N = 100$ realizations per setting, we denote by $\wt{\cD} \subset \wh{\cD} \subset [N]$, the index set of realizations where the single change is detected and $\wh{\cK}_1 = \cK_1$ is returned.
On those realizations, noting that $\col(\Lambda_{1, k}) = \col(\Lambda_{2, k}) = \col(\Lambda_k)$ for $k \in \{2, 3\}$, the mode-informed approach estimates such $\col(\Lambda_k)$ using the whole sample, while $\col(\Lambda_{j, 1})$ is estimated separately on $(\wh\theta_{j - 1}, \wh\theta_j]$ for each $j \in [2]$.
We report the loading space estimation error as measured by~\eqref{eq:fle_dist} in Appendix~\ref{app: num_reest}, see Figure~\ref{fig: plot_reest_rnull_T400} for a representative example.
Overall, the mode-$k$ loading estimation error decreases with $\pmk$, in line with Proposition~\ref{prop: consistency_proj}. Also, it is apparent that the mode-informed approach benefits from pooling the information across the entire data for the modes $k \notin \cK_1$. 


\begin{figure}[h!t!b!p!]
\centering
\includegraphics[width=0.8\linewidth]{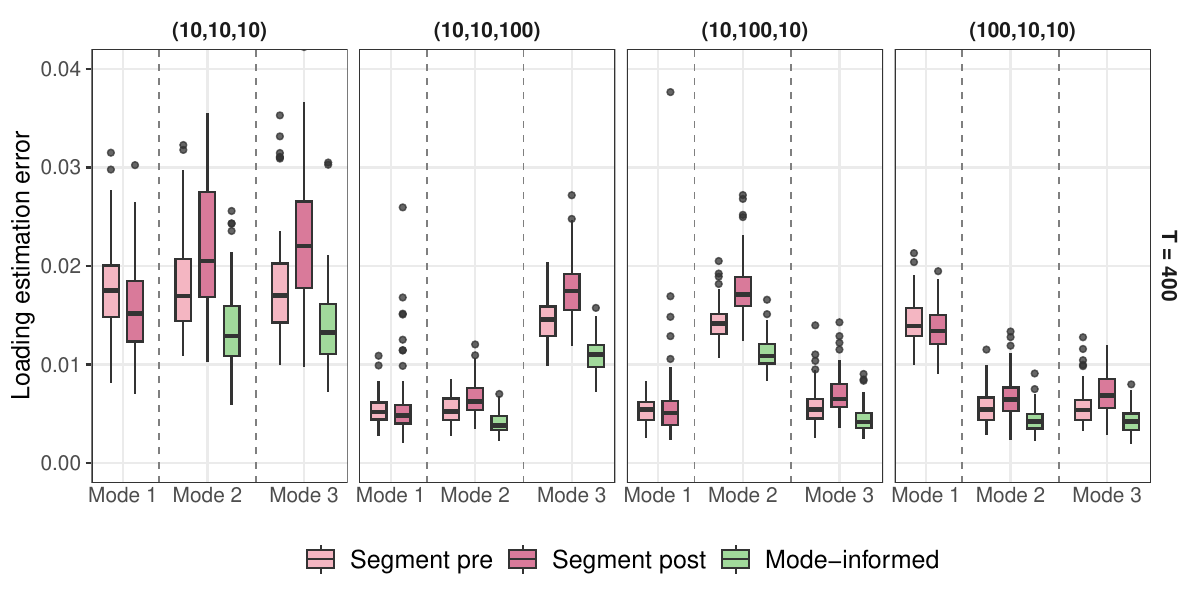}
\caption{Boxplots of mode-wise loading estimation errors with $T = 400$ and varying $(p_1, p_2, p_3)$ (left to right). Segment~pre (resp.\ Segment~post) refers to segment-wise estimation errors using the data before (resp.\ after) the change point estimator $\wh\theta_1$.}
\label{fig: plot_reest_rnull_T400}
\end{figure}

\section{Real data applications}\label{sec: real}

\subsection{New York City Yellow Taxi data}\label{sec: real_nyc}

We analyze the New York City (NYC) Yellow Taxi trip records released by the city's Taxi and Limousine Commission (TLC). \footnote{Retrieved from \url{https://www.nyc.gov/site/tlc/about/tlc-trip-record-data.page}.} 
We focus on trips within Manhattan, which account for the majority of the observations. The pick-up and drop-off locations are aggregated into $p_1 = p_2 = 69$ pre-defined taxi zones, and each day is partitioned into $p_3 = 24$ hourly intervals.
Then on each business day $t$, we observe an order-$3$ tensor $\cX_t \in \R^{69 \times 69 \times 24}$, where the entry $x_{i_1,i_2,i_3,t}$ records the number of trips from zone $i_1$ to zone $i_2$ during the $i_3$-th hourly interval, over the period from 1 January 2013 to 31 December 2022 ($T=2519$).
Inspired by the discussion on the same dataset in \cite{chen2022factor}, we set the pseudo-factor dimensions to $({r}_1,{r}_2,{r}_3)=(4,2,2)$ for change point detection and mode-identification; 
all other tuning parameters set as in Section~\ref{sec: tuning}.
As comparison, we apply the recursive likelihood-ratio testing procedure of \cite{Baietal2024} (\enquote{LR}) to the vectorized time series similarly as in Section~\ref{sec: num_detect}.

TFMseg detects four change points (2013-10-16, 2014-10-15, 2016-10-19, 2020-03-06) while LR detects three (2016-03-21, 2020-03-10, 2021-04-02). The two methods agree most closely on the disruption in March 2020, with estimated break dates only a few business days apart. Outside this episode, however, the estimated change locations differ substantially, as visualized in Figure~\ref{fig: real_taxi_daily_volume}.
The breaks in March 2020 has a clear interpretation: NYC confirmed its first COVID-19 case on 1 March 2020, and TLC reported that the transportation demand fell to 84\% of the pre-COVID level by the beginning of April, and both TFMseg and LR detect this pandemic-related change at a plausible date.
On the other hand, 
the changes detected prior to 2017 may be related to the market shrinkage of traditional yellow taxi service, due to the rapidly growing app-based for-hire services. For example, TLC proceedings in October 2014 indicated that the expansion of Uber had become major regulatory concerns.
The mode-identification results shown in Figure~\ref{fig: real_taxi_heat} are consistent with these findings. The first two change point estimators returned by TFMseg are not assigned to any mode, reflecting a change in the overall market size.
The estimator on 2016-10-19 is associated with the pick-up and drop-off locations and appear location-related.
Interestingly, the 2020-03-06 change is associated with all three modes, reflecting again the overall change to NYC traffic pattern due to COVID-19.


\begin{figure}[h!t!b!p!]
\centering
\includegraphics[width=.8\linewidth]{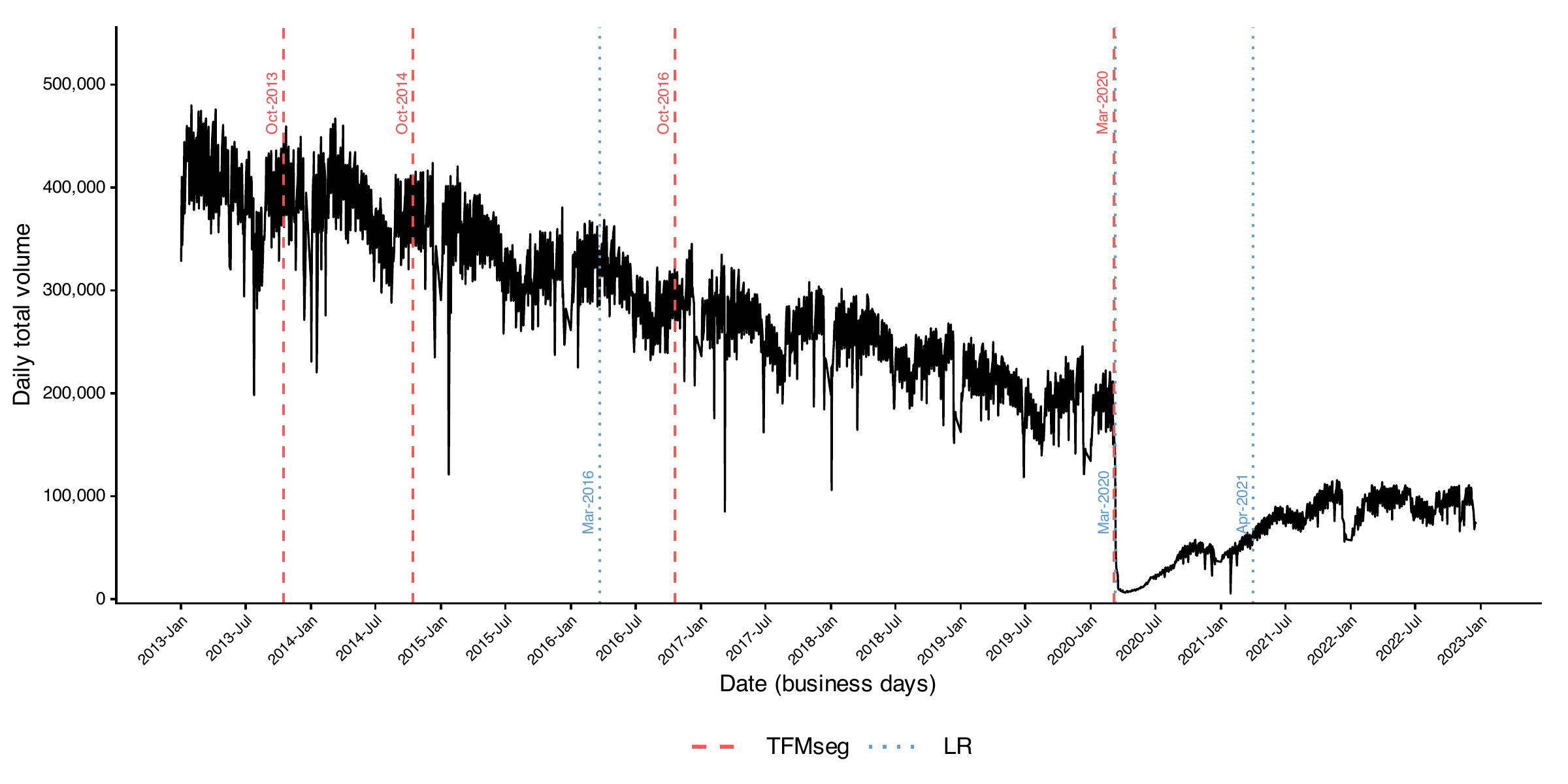}
\caption{Daily total NYC Yellow Taxi trip volume 
with the change point estimators returned by TFMseg (dashed) and LR (dotted).}
\label{fig: real_taxi_daily_volume}
\end{figure}

\begin{figure}[h!t!b!]
\centering
\includegraphics[width = .5\linewidth]{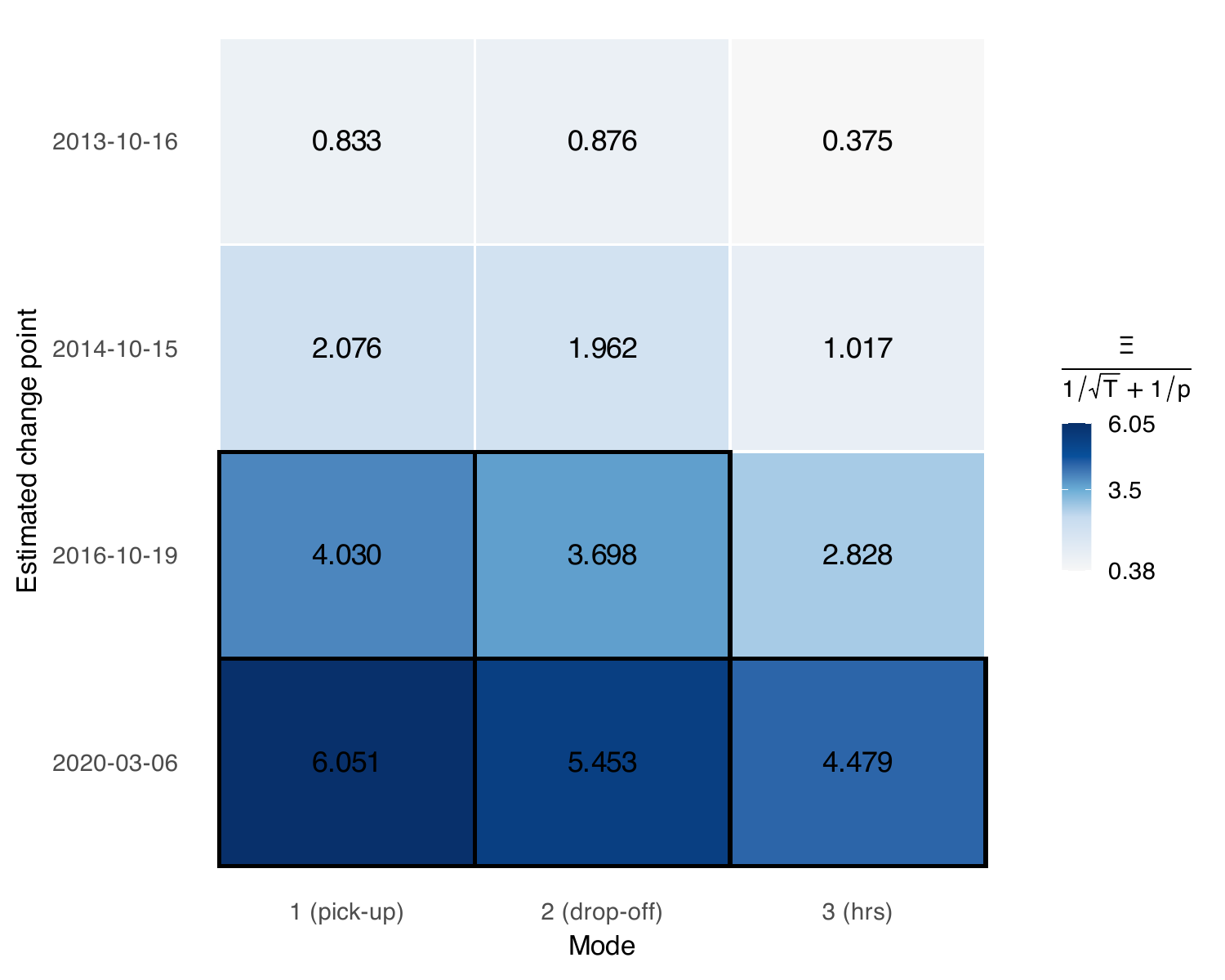}
\caption{Heatmap of the scaled mode-identification statistic $\Vert \wh{\Xi}_j^{(k)} \Vert /(T^{-1/2} + p^{-1})$ for the NYC taxi dataset. Rows correspond to the segments returned by TFMseg, and columns to the modes. 
Darker shades indicate larger scaled statistics, and highlighted borders indicate $\Vert \wh{\Xi}_j^{(k)} \Vert > \zeta_{T,p}$.}
\label{fig: real_taxi_heat}
\end{figure}

\subsection{Fama–French portfolio returns}
\label{sec:fama}

We study the Fama--French portfolio returns data,\footnote{Retrieved from \url{https://mba.tuck.dartmouth.edu/pages/faculty/ken.french/Data_Library/det_100_port_sz.html}.}
where the stocks are sorted into ten groups by market equity (ME) and ten groups by book-to-equity ratio (BE) to form $10 \times 10$ portfolios at each time stamp. We consider the time series of order-2 tensors $\cX_t \in \R^{10 \times 10}$ on value- and equal-weighted returns, respectively, where the former weights stocks by market capitalization and the latter assigns equal weights within each portfolio.
We use the monthly data from January 1974 to June 2021 ($T = 570$).
The data are not fully observed with 0.28\% missingness, and complete observations are available from July 2009 only.
We leverage the approach described in Remark~\ref{rem: miss} and estimate the pseudo-factors directly from the incomplete data with $(\wh{r}_1, \wh{r}_2) = (2,2)$ \citep{CenLam2025}.
With all the tuning parameters set as in Section~\ref{sec: tuning},
TFMseg detects a single change occurring at 1998-11-01 in the value-weighted series and at 2002-06-01 in the equal-weighted series. 
These dates lie close to the peak of the dot-com boom; Appendix~\ref{app: real_fama} presents figures that illustrate the overall market-wide variation along with the estimated change point.
When applied to the vectorized time series, LR \citep{Baietal2024} does not return any change, even when the tuning parameter controlling the minimum length of the segment, or the factor numbers are varied. 

The mode-identification method does not assign this change to either mode for both series, as all scaled mode-identification statistics, $\Vert \wh{\Xi}^{(k)}_1 \Vert / (T^{-1/2} + p^{-1})$, fall below $3.5$ (cf.\ $\zeta_{T, p} = 3.5(T^{-1/2} + p^{-1})$). 
For the equal-weighted series, however, the scaled statistics are closer to the benchmark value $3.5$, with values $2.53$ for ME mode and $3.09$ for BE mode, whereas the corresponding values for the value-weighted series are markedly smaller at $0.95$ and $1.77$ each. This suggests that, although the detected changes for both series are mode-unidentifiable, the change in the equal-weighted series exhibits a relatively stronger alignment with the portfolio categorization ME and BE. The association in the value-weighted series is weaker, possibly because value weighting downplays the heterogeneity arising from smaller firms.

\section{Conclusion}\label{sec: conclusion}

This paper studies the modeling of high-dimensional tensor time series under a factor model with multiple change points. 
Beyond the detection of change points, by formalizing the concept of mode-identifiable changes, we develop a statistic that consistently assigns the detected change points to their corresponding modes. 
This provides an interpretation of structural changes in tensor factor models that is unavailable from detection alone, 
and underpins the necessity of preserving the multi-dimensional array structure of the data. 
Moreover, loading matrices for the modes where, either no change occurs or the change is not mode-identifiable, can be estimated by pooling the information across the segments, which leads to a mode-informed estimation strategy with evidently improved estimation accuracy.
We construct rigorous theoretical guarantees for our proposed methods, which is further backed up by extensive numerical results. 
We remark that estimating the number of pseudo-factors, $\{r_k\}_{k=1}^K$, might suffer from potentially weak factors or short intervals in practice. To the best of our knowledge, a consistent estimator of factor numbers that can adapt non-linearly spaced changes remains absent in the literature, which certainly merits future investigation.


\clearpage
\bibliographystyle{apalike}
\bibliography{reference.bib}	


\clearpage
\appendix
\renewcommand{\thesection}{\Alph{section}}
\renewcommand{\theHsection}{\Alph{section}} 
\renewcommand{\thesubsection}{\Alph{section}.\arabic{subsection}}
\renewcommand{\theHsubsection}{\Alph{section}.\arabic{subsection}}

\numberwithin{equation}{section}
\numberwithin{figure}{section}
\numberwithin{table}{section}
\numberwithin{theorem}{section}
\numberwithin{lemma}{section}
\numberwithin{proposition}{section}
\numberwithin{definition}{section}

\section{Additional discussions on the model and assumption}

\subsection{Change point scenarios and their representations}\label{subsec: change_scenario}

To demonstrate the generality of our model formulation in~\eqref{eqn: tfm_change} and~\eqref{eqn: tfm_change_rewrite}, we showcase some examples of different types of changes, complementing Example~\ref{ex:equiv}. For this purpose, it suffices to consider a vector factor model with a single change point, as the conclusions are easily generalized to tensor factor models with multiple change points in different combinations of the modes. Below, all $\cX_t$ and $\cE_t$ are in $\R^{p_1}$, where the use of calligraphic letters are simply for consistency with the notations in~\eqref{eqn: tfm_change} and~\eqref{eqn: tfm_change_rewrite}.

First, rotational changes (i.e., a scale or space change according to Proposition~\ref{prop: identification_change_type} in Appendix~\ref{sec: anatomy_mode_id})
are straightforwardly encoded in our formulation. That is,
\begin{align*}
\cX_t = \left\{
    \begin{array}{ll}
	\cF_t \times_1 \Lambda_{1,1} + \cE_t  & \text{for $1 \le t\le \theta_1$,} \\
    \cF_t \times_1 (\Lambda_{1,1} A_{2,1}) + \cE_t  & \text{for $\theta_1 + 1 \le t\le T$,} 
    \end{array}
    \right.
\end{align*}
where $\Lambda_{1,1} \in\R^{p_1\times r_1}$ and $A_{2,1} \in\R^{r_1\times r_1}$ are some matrices with rank $r_1$. In this case, the global loading matrix is $\Lambda_1 = \Lambda_{1,1}$, with the pre- and post-change transformation matrices $A_{1,1} =I_{r_1}$ and $A_{2,1}$, respectively.

All other types of changes are characterized with a rank-deficient transformation matrix either before or after the change point. These include, for example, an increase in the number of factors such that
\begin{align*}
\cX_t = \left\{
    \begin{array}{ll}
	\cF_{t,1} \times_1 Z_1 + \cE_t  & \text{for $1 \le t\le \theta_1$,} \\
    \cF_{t,1} \times_1 Z_1 + \cF_{t,2} \times_1 Z_2 + \cE_t  & \text{for $\theta_1 + 1 \le t\le T$,} 
    \end{array}
    \right.
\end{align*}
where $Z_1\in\R^{p_1\times r_1}$ and $Z_2\in\R^{p_1\times (r_2 -r_1)}$, which we re-formulate as
\begin{align*}
\cX_t = \left\{
    \begin{array}{ll}
	(\cF_{t,1}^\trans, \cF_{t,2}^\trans)^\trans \times_1 (Z_1, 0_{p_1\times (r_2 -r_1)}) + \cE_t  & \text{for $1 \le t \le \theta_1$,} \\
    (\cF_{t,1}^\trans, \cF_{t,2}^\trans)^\trans \times_1 (Z_1, Z_2) + \cE_t  & \text{for $\theta_1 + 1 \le t \le T$,} 
    \end{array}
    \right.
\end{align*}
i.e.\ $\Lambda_1 = (Z_1, Z_2) \in\R^{p_1\times r_2}$ with transformation matrix $A_{1,1}$ being a diagonal matrix with 1 on the first $r_1$ elements and zeros otherwise, and $A_{2,1}=I_{r_2}$. A decrease in the number of factors can be formulated similarly.

Next, we consider a change in the column space of the loading matrix:
\begin{align*}
\cX_t = \left\{
    \begin{array}{ll}
	\cF_t \times_1 Z_1 + \cE_t  & \text{for $1 \le t \le \theta_1$,} \\
    \cF_t \times_1 Z_2 + \cE_t  & \text{for $\theta_1 + 1 \le t \le T$,} 
    \end{array}
    \right.
\end{align*}
where $Z_1, Z_2\in\R^{p_1\times u}$ such that $\text{col}(Z_1) \cap \text{col}(Z_2) =\emptyset$. This leads to an interesting re-formulation as
\begin{align*}
\cX_t = \left\{
    \begin{array}{ll}
	(\cF_{t}^\trans, \cG_{t}^\trans)^\trans \times_1 (Z_1, 0_{p_1\times u}) + \cE_t  & \text{for $1 \le t \le \theta_1$,} \\
    (\cF_{t}^\trans, \cG_{t}^\trans)^\trans \times_1 (Z_2, 0_{p_1\times u})  + \cE_t  & \text{for $\theta_1 + 1 \le t \le T$,} 
    \end{array}
    \right.
\end{align*}
for some hypothetical $\cG_t$ as an independent copy of $\cF_t$ so that the core factor requirement Assumption~\ref{assum: core_factor} can be satisfied. The core factor rank is therefore $r_1=2u$ according to our formulation in~\eqref{eqn: tfm_change}. This introduction of $\cG_t$ is needed to have the global loading $\Lambda_1=(Z_1,Z_2)$, which it is not possible for $r_1<2u$ due to the different column spaces $Z_1$ and $Z_2$ span. The transformation matrices are then read as
\[
A_{1,1} = \begin{bmatrix}
    I_{u} & 0_{u\times u} \\
    0_{u\times u} & 0_{u\times u}
\end{bmatrix} ,\quad
A_{2,1} = \begin{bmatrix}
    0_{u\times u} & 0_{u\times u} \\
    I_{u} & 0_{u\times u}
\end{bmatrix} .
\]
Note that $A_{1,1}$ and $A_{2,1}$ easily satisfy Assumption~\ref{assum: trans_mat_alt}~(iii). 

To conclude all the previous discussion, the formulation~\eqref{eqn: tfm_change} or, equivalently, that in~\eqref{eqn: tfm_change_rewrite} is general enough to cover most, if not all (depending on the scope of interests), types of changes in the factor structure. One might argue that changes in the serial correlation in the core factors cannot be formulated, but the serial correlation is not manifested in the static factor representation anyway. This implies the merit of investigating structural changes in dynamic factor models (cf.\ \citeauthor{cho2024high}, \citeyear{cho2024high}, for the vector time series case), which we leave for the future study.

\subsection{An anatomy of mode-identifiable changes}\label{sec: anatomy_mode_id}

We complement Theorem~\ref{thm: identification_change} by categorizing mode-identifiable changes into three types. 

\begin{proposition}[Three types of mode-identifiable change]\label{prop: identification_change_type}
Consider any change from $A_{j,k}$ to $A_{j+1,k}$ for $j\in[q]$ and $k\in[K]$, and denote by $W_{j, k} \in \R^{r_k \times r_k}$ the diagonal matrix of singular values of $A_{j,k}$ in descending order.
Let us define the types of changes as follows.
\begin{enumerate}[itemsep=0pt, label = (\roman*), left = 0pt]
    \item Rank change: $A_{j,k}$ and $A_{j+1,k}$ have different numbers of non-zero singular values.
    \item Scale change: The change is not a rank change, there does not exist a constant $c$ such that $W_{j + 1,k} =c W_{j,k}$,
    and the space spanned by the non-zero singular values of $A_{j,k}$ is the same as that of $A_{j+1,k}$.
    \item Space change: The change is not a rank change nor a scale change, and for any $i \in [r_k]$, the left singular space of $A_{j,k}$ corresponding to $(W_{j,k})_{ii}$ is different from that of $A_{j + 1,k}$ corresponding to $(W_{j + 1,k})_{ii}$.
\end{enumerate}
Then the change is mode-identifiable if and only if it falls into one of the three types above.
\end{proposition}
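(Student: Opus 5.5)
Via Theorem~\ref{thm: identification_change}, I will reduce the proposition to the statement that the three types are exactly the complement of the set $\{A_{j+1,k} = cA_{j,k}Q\}$. By that theorem, the mode-$k$ change is mode-unidentifiable if and only if $A_{j+1,k} = c A_{j,k} Q$ for some scalar $c$ and orthogonal $Q$. So it suffices to show two things: (a) if $A_{j+1,k} = cA_{j,k}Q$, then the change is none of (i)--(iii); (b) if the change is none of (i)--(iii), then such $c, Q$ exist.

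For (a), I will write the SVD $A_{j,k} = U W_{j,k} V^\trans$. If $c \neq 0$, then $A_{j+1,k} = U (|c| W_{j,k}) (\mathrm{sign}(c) Q^\trans V)^\trans$ is an SVD of $A_{j+1,k}$. Hence the ranks agree, so (i) fails. Also $W_{j+1,k} = |c| W_{j,k}$, so (ii) fails. The left singular vectors coincide, so for each $i$ the left singular space for $(W_{j+1,k})_{ii} = |c|(W_{j,k})_{ii}$ equals that for $(W_{j,k})_{ii}$, and (iii) fails. The case $c=0$ needs separate treatment. Then $A_{j+1,k}=0$, and since Assumption~\ref{assum: trans_mat_alt}(i) forces $\|A_{j,k}\|_F>0$, this is a rank change. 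I will therefore treat it as excluded or degenerate, noting that the unidentifiability characterization in Theorem~\ref{thm: identification_change} implicitly has $c\neq0$ under that assumption.

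For (b), suppose the change is neither a rank, a scale, nor a space change. Not a rank change means the ranks agree. Not a space change means there is some $i$ at which the left singular spaces agree. I expect the main obstacle here: the negation of (iii) as literally stated gives agreement at only one index $i$. Likewise, the negation of (ii) gives only that either $W_{j+1,k} = cW_{j,k}$ or the nonzero singular spaces differ. To close this, I would read the definitions as nested, with ``not scale'' supplying $W_{j+1,k} = cW_{j,k}$ and ``not space'' supplying agreement of the left singular spaces at every singular level. Under that reading, for each distinct singular value the left singular subspaces coincide. I can then choose a common $U$ such that $A_{j,k} = U W_{j,k} V_1^\trans$ and $A_{j+1,k} = U (cW_{j,k}) V_2^\trans$, with $c>0$.

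Setting $Q = V_1^\trans V_2$ gives $A_{j+1,k} = c U W_{j,k} V_1^\trans V_1 V_2^\trans = cA_{j,k}\,(V_1 V_2^\trans)^\trans$, with the choice made consistent with $Q$ orthogonal. This works because $V_1$ and $V_2$ are full $r_k\times r_k$ orthogonal matrices, completed arbitrarily on the null space. Repeated singular values require care: within an eigenspace of $A A^\trans$ the choice of $U$ block is free. Since $A_{j,k}A_{j,k}^\trans$ and $c^{-2}A_{j+1,k}A_{j+1,k}^\trans$ share eigenvalues and eigenspaces, they are equal. I would then invoke the standard fact (Lemma~\ref{lemma: AA=cBB}-type argument) that $AA^\trans = BB^\trans$ for square $A,B$ implies $B = AQ$ for some orthogonal $Q$. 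This yields $A_{j+1,k} = cA_{j,k}Q$, and by Theorem~\ref{thm: identification_change} the change is mode-unidentifiable, completing the equivalence.
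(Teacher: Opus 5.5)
Your proposal is correct and follows essentially the same route as the paper. The paper also reduces via Theorem~\ref{thm: identification_change}, reads ``not of any type'' as giving agreement of the left singular spaces at every level together with $W_{j+1,k}=cW_{j,k}$ (its proof makes exactly the reinterpretation of the negations that you flag), deduces $A_{j+1,k}A_{j+1,k}^\trans=c^2A_{j,k}A_{j,k}^\trans$, and concludes with Lemma~\ref{lemma: AA=cBB}; your explicit SVD check of the other direction and your $c=0$ remark fill in what the paper calls straightforward.

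One bookkeeping slip: from $A_{j,k}=UW_{j,k}V_1^\trans$ and $A_{j+1,k}=cUW_{j,k}V_2^\trans$, the orthogonal factor is $Q=V_1V_2^\trans$, giving $A_{j+1,k}=cA_{j,k}V_1V_2^\trans$. It is not $V_1^\trans V_2$ or $(V_1V_2^\trans)^\trans$, though your fallback to Lemma~\ref{lemma: AA=cBB} makes this immaterial.
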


Proposition~\ref{prop: identification_change_type} provides a taxonomy of structural changes in tensor factor models, which is comparable with
that given by \cite{Duanetal2025} in the context of vector time series factor modeling.
We remark that the distinctions between scale and space changes are ad-hoc, and they only serve to provide more insight on the possible changes in a tensor factor model.
It is technically difficult
to identify a scale change or a space change, due to the possibility of repeated singular values in the transformation matrix, which cannot be ruled out.
On the other hand, estimating the factor numbers \citep[e.g.][]{Lam2021, Hanetal2022, barigozzi2026statistical} 
can lead to the identification of rank changes.

\begin{proof}[Proof of Proposition~\ref{prop: identification_change_type}]
It is straightforward that when the change falls into any of the three types described in Proposition~\ref{prop: identification_change_type}, it is mode-identifiable according to Theorem~\ref{thm: identification_change}. It suffices to show if a change is mode-identifiable, then it must fall into one type (note that the types are mutually exclusive by how they are defined), or equivalently, if a change does not fall into any type, then it is mode-unidentifiable.

For any $j=1,\dots,q + 1$, we denote the singular value decomposition as
\begin{equation}
\label{eqn: re-write_A_jk}
A_{j,k} = Q_{j,k} W_{j,k} R_{j,k}^\trans ,
\end{equation}
for some orthogonal matrices $Q_{j,k}, R_{j,k} \in\R^{r_k\times r_k}$ and diagonal matrix $W_{j,k}$ of the singular values of $A_{j,k}$ in descending order, such that $A_{j,k} A_{j,k}^\trans = Q_{j,k} W_{j,k}^2 Q_{j,k}^\trans$. Then for a change from $A_{j,k}$ to $A_{j+1,k}$ not falling into any type, we may write
\[
A_{j,k} = (Q_{j,k}^\dagger W_{j,k}^\dagger, 0_{r_k\times w}) R_{j,k}^\trans, \quad
A_{j+1,k} = (Q_{j+1,k}^\dagger W_{j+1,k}^\dagger, 0_{r_k\times w}) R_{j+1,k}^\trans ,
\]
where $0_{r_k\times w}$ is a zero matrix of dimension $r_k\times w$ with $0 \le w < r_k$, $W_{j,k}^\dagger$ represents the diagonal sub-matrix consisting of only non-zero entries in $W_{j,k}$, $Q_{j,k}^\dagger$ represents the sub-matrix of $Q_{j, k}$ with the columns corresponding the non-zero diagonal entries of $W_{j, k}$, and analogously for $W_{j+1,k}^\dagger$ and $Q_{j+1,k}^\dagger$. Since the change is not of any type, we have
\[
\col(Q_{j,k,i}^\dagger) = \col(Q_{j+1,k,i}^\dagger) \text{ \ for all \ } 1 \le i \le r_k - w, \text{ \ and \ } W_{j+1,k}^\dagger = c W_{j,k}^\dagger ,
\]
where $Q_{j,k,i}^\dagger$ denotes the column(s) of $Q_{j,k}^\dagger$ corresponding to the $i$-th largest element in $W_{j,k}^\dagger$. 
This implies there exists some block diagonal matrix $Q$, where each block is an orthogonal matrix corresponding to a repeated singular value, or $\pm 1$ for a unique singular value, such that $Q_{j+1,k}^\dagger = Q_{j,k}^\dagger Q$. Thus,
\begin{align*}
    &\;\quad
    A_{j+1,k} A_{j+1,k}^\trans = (Q_{j+1,k}^\dagger W_{j+1,k}^\dagger, 0_{r_k\times w})
    \begin{bmatrix}
        (W_{j+1,k}^\dagger)^\trans (Q_{j+1,k}^\dagger)^\trans \\ 0_{r_k\times w}
    \end{bmatrix}
    = Q_{j+1,k}^\dagger (W_{j+1,k}^\dagger)^2 (Q_{j+1,k}^\dagger)^\trans \\
    &=
    c^2 \cdot Q_{j,k}^\dagger Q (W_{j,k}^\dagger)^2 Q^\trans (Q_{j,k}^\dagger)^\trans 
    = c^2 \cdot Q_{j,k}^\dagger (W_{j,k}^\dagger)^2 (Q_{j,k}^\dagger)^\trans
    = c^2 \cdot A_{j,k} A_{j,k}^\trans ,
\end{align*}
where the second last equality used the fact that either the orthogonal blocks in $Q$ correspond to a scale multiple of identity matrices in the diagonal matrix $(W_{j,k}^\dagger)^2$, or the blocks in $Q$ are diagonal with $\pm 1$ corresponding to unique diagonal entries in $(W_{j,k}^\dagger)^2$.

Applying Lemma~\ref{lemma: AA=cBB} to the above, and with Theorem~\ref{thm: identification_change}, the change is mode-unidentifiable, as desired. This completes the proof of the theorem.
\end{proof}

\subsection{Relaxation of Assumption~\ref{assum: trans_mat_alt}~(iv)}
\label{sec:assum:lin:space} 

Assumption~\ref{assum: trans_mat_alt}~(iv) requires the spacing of the change points to be linear in $T$.
We discuss how this assumption may be relaxed, by presenting an alternative assumption in Appendix~\ref{sec:trans_mat}, and discussing an alteration to TFMseg and its impact on the theoretical results in Appendix~\ref{sec:mod:tfmseg}.

\subsubsection{Alternative of Assumption~\ref{assum: trans_mat_alt}}
\label{sec:trans_mat}

In place of Assumption~\ref{assum: trans_mat_alt}, let us consider the following assumption.

\begin{assumptionp}{\ref*{assum: trans_mat_alt}'}[Alternative of Assumption~\ref{assum: trans_mat_alt}]\label{assum: trans_mat}
The transformation matrices satisfy:
\begin{enumerate}[itemsep=0pt, label = (\roman*), left = 0pt]
    \item For each $j\in[q + 1]$ and $k\in[K]$, we have $\|A_{j,k}\|_F$ bounded away from zero and infinity.
    \item For each $k\in[K]$, there is $\Sigma_{A,k}$ such that
    $T^{-1} \sum_{j=1}^{q+1} \big\|\otimes_{\ell = K, \, \ell \ne k}^1 A_{j,\ell} \big\|_F^2 \cdot (\theta_j - \theta_{j-1}) \cdot A_{j,k} A_{j,k}^\trans \to \Sigma_{A,k}$ as $\min(T,p_1,\dots, p_K) \to\infty$.
    \item The matrix $\Sigma_{A,k}$ is positive definite.
\end{enumerate}
\end{assumptionp}

In fact, Proposition~\ref{prop: consistency_proj} holds under this condition replacing Assumption~\ref{assum: trans_mat_alt} (see Lemma~\ref{lemma: consistency_proj}).
Lemma~\ref{lemma: eigenvalue_PCA_consistency}~(i) in Appendix~\ref{sec:pf:loading} shows that parts~(iii) in both Assumptions~\ref{assum: trans_mat_alt} and~\ref{assum: trans_mat} are in fact equivalent if all change points are linearly spaced, \text{viz.} Assumption~\ref{assum: trans_mat_alt}~(iv). 
This indicates that Assumption~\ref{assum: trans_mat_alt} is mildly stronger than Assumption~\ref{assum: trans_mat}. 
Below we further discuss the scenarios where Assumption~\ref{assum: trans_mat}~(iii) holds even though Assumption~\ref{assum: trans_mat_alt}~(iv) does not. 

In what follows, we write 
\[
\bar{\Sigma}_{A, k} = \frac{1}{T} \sum_{j=1}^{q+1} \big\|A_{j, \text{-}k} \big\|_F^2 \cdot (\theta_j - \theta_{j-1}) \cdot A_{j,k} A_{j,k}^\trans,
\]
where $A_{j, \text{-}k} = \otimes_{\ell = K, \, \ell \ne k}^1 A_{j,\ell}$.
Focusing on the mode $1$, we consider the case with $q = 1$ and $r_1 = 2$. 
\begin{enumerate}[label = (\alph*)]
\item Setting $A_{1,1}=I_2$ and $A_{2,1}=2 I_2$, we have
\[
\bar{\Sigma}_{A, k} = \frac{1}{T} \left\{
    \begin{bmatrix}
        \theta_1 & 0 \\ 0 & \theta_1
    \end{bmatrix} + \begin{bmatrix}
        4(T-\theta_1) & 0 \\ 0 &  4(T-\theta_1)
    \end{bmatrix}
    \right\} =
    \l(4 - \frac{3\theta_1}{T}\r) \cdot I_2
\]
which always converges to some positive definite matrix, regardless of the order of $\theta_1$ relative to $T$.

\item Consider the following with $a \neq 0$ (otherwise there is no change):
\begin{align*}
    A_{1,1} &= \begin{bmatrix}
        1 & 0 \\ 0 & 1
    \end{bmatrix} \text{ \ and \ }
    A_{2,1} = \begin{bmatrix}
        1 & 0 \\ a & 1
    \end{bmatrix}.
\end{align*}
Then,
\begin{align*}
    \bar{\Sigma}_{A, k} &= \frac{1}{T}\left\{
    \begin{bmatrix}
        \theta_1 & 0 \\ 0 & \theta_1
    \end{bmatrix} + \begin{bmatrix}
        T-\theta_1 & a(T-\theta_1) \\ a(T-\theta_1) & (a^2 + 1)(T- \theta_1)
    \end{bmatrix}
    \right\}
    = \begin{bmatrix}
        1 & \frac{a(T-\theta_1)}{T} \\ \frac{a(T-\theta_1)}{T} & 1 + \frac{a^2(T - \theta_1)}{T}
    \end{bmatrix} ,
\end{align*}
which also always converges to some positive definite matrix. Indeed, it suffices to ensure the determinant is nonzero, and the determinant is $1+a^2 z(1-z)$ with $z=(T-\theta_1)/T \in [0,1]$.

\item We now demonstrate some scenarios where the linear spacing condition is necessary. Consider
\begin{align*}
    A_{1,1} &= \begin{bmatrix}
        1 & 0 \\ 0 & 0
    \end{bmatrix} \text{ \ and \ }
    A_{2,1} = \begin{bmatrix}
        0 & 0 \\ 0 & 1
    \end{bmatrix}.
\end{align*}
Then, for
\begin{align*}
    \bar{\Sigma}_{A, k} &= \frac{1}{T}\left\{
    \begin{bmatrix}
        \theta_1 & 0 \\ 0 & 0
    \end{bmatrix} + \begin{bmatrix}
        0 & 0 \\ 0 & T-\theta_1
    \end{bmatrix}
    \right\}
    = \begin{bmatrix}
        \theta_1/T & 0 \\ 0 &  1-\theta_1/T
    \end{bmatrix} ,
\end{align*}
to be positive definite, it requires $\min(\theta_1, T - \theta_1)$ to be of order $T$. 
\end{enumerate}
We now consider the case with $q = 2$:
\begin{enumerate}[label = (\alph*)]
\setcounter{enumi}{3}
\item Let
\begin{align*}
    A_{1,1} &= \begin{bmatrix}
        1 & 0 \\ 0 & 0
    \end{bmatrix} , \,
    A_{2,1} = \begin{bmatrix}
        0 & 0 \\ 0 & 1
    \end{bmatrix} \text{ \ and \ }
    A_{3,1} = \begin{bmatrix}
        1 & 0 \\ 0 & 1
    \end{bmatrix}.
\end{align*}
Then, we have
\begin{align*}
    \bar{\Sigma}_{A, k} &= \frac{1}{T}\left\{
    \begin{bmatrix}
        \theta_1 & 0 \\ 0 & 0
    \end{bmatrix} + \begin{bmatrix}
        0 & 0 \\ 0 & \theta_2-\theta_1
    \end{bmatrix} + \begin{bmatrix}
        T-\theta_2 & 0 \\ 0 & T-\theta_2
    \end{bmatrix}
    \right\}
    = \begin{bmatrix}
        1-\frac{(\theta_2-\theta_1)}{T} & 0 \\ 0 &  1-\frac{\theta_1}{T}
    \end{bmatrix},
\end{align*}
which can be positive definite even when $\theta_1/T=o(1)$ as long as $T- \theta_2 \asymp T$. 
\end{enumerate}

We conclude this subsection by presenting the following proposition which encompasses all the examples discussed previously.

\begin{proposition}\label{prop: nonlinear_spacing}
Let Assumptions~\ref{assum: trans_mat}~(i)--(ii) hold. Then given any index set of segments $\cQ \subset [q+1]$ such that $\sum_{j\in \cQ} (\theta_j - \theta_{j-1})/T =o(1)$, Assumption~\ref{assum: trans_mat}~(iii) is equivalent to
\[
\frac{1}{T} \sum_{j\in [q+1]\setminus\cQ} \big\| \otimes_{\ell = K, \, \ell \ne k}^1 A_{j,\ell} \big\|_F^2 \cdot (\theta_j - \theta_{j-1}) \cdot A_{j,k} A_{j,k}^\trans \to \Sigma_{A,k}^\ast ,
\]
for some positive definite matrix $\Sigma_{A,k}^\ast$.
\end{proposition}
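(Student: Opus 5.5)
The plan is to write $\bar\Sigma_{A,k} = S_{\cQ} + S_{\cQ^c}$, where
\[
S_{\cQ} = \frac{1}{T}\sum_{j\in\cQ} \big\|A_{j,\text{-}k}\big\|_F^2 (\theta_j-\theta_{j-1}) A_{j,k}A_{j,k}^\trans ,
\]
and $S_{\cQ^c}$ is the analogous sum over $[q+1]\setminus\cQ$. I will show that $S_{\cQ}\to 0$. Then $\bar\Sigma_{A,k}$ and $S_{\cQ^c}$ have the same limit, and positive definiteness of one limit is equivalent to that of the other.

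\textbf{Step 1: the short segments contribute nothing.} By Assumption~\ref{assum: trans_mat}~(i), each $\|A_{j,\ell}\|_F$ is bounded above by some constant $C$. Since the Frobenius norm is multiplicative over Kronecker products, $\|A_{j,\text{-}k}\|_F^2 = \prod_{\ell\ne k}\|A_{j,\ell}\|_F^2 \le C^{2(K-1)}$, and $\|A_{j,k}A_{j,k}^\trans\|_F \le \|A_{j,k}\|_F^2 \le C^2$. Hence
\[
\|S_{\cQ}\|_F \le C^{2K}\sum_{j\in\cQ}(\theta_j-\theta_{j-1})/T = o(1).
\]
This bound uses the total length of the $\cQ$-segments directly, so it does not depend on $|\cQ|$ staying bounded.

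\textbf{Step 2: convergence.} By Assumption~\ref{assum: trans_mat}~(ii), $\bar\Sigma_{A,k}\to\Sigma_{A,k}$. Therefore $S_{\cQ^c} = \bar\Sigma_{A,k} - S_{\cQ} \to \Sigma_{A,k}$. So the displayed convergence holds with $\Sigma_{A,k}^\ast = \Sigma_{A,k}$. This limit is unique, so for the convergence in the statement any valid $\Sigma^\ast_{A,k}$ must equal $\Sigma_{A,k}$.

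\textbf{Step 3: equivalence of positive definiteness.} If (iii) holds, set $\Sigma^\ast_{A,k}=\Sigma_{A,k}$; it is positive definite. Conversely, if $S_{\cQ^c}\to\Sigma^\ast_{A,k}$ with $\Sigma^\ast_{A,k}$ positive definite, then by Step 2 $\Sigma^\ast_{A,k}=\Sigma_{A,k}$, so $\Sigma_{A,k}$ is positive definite.

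The argument is routine. The only point needing care is the uniform bound on the Kronecker product norms in Step 1, which comes from Assumption~\ref{assum: trans_mat}~(i) with fixed $K$.
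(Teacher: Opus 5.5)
Your proof is correct and follows the paper's argument: split off the $\cQ$-segments, show their contribution is $o(1)$ using Assumption~\ref{assum: trans_mat}~(i) together with the vanishing total length, and conclude that $\Sigma_{A,k}^\ast=\Sigma_{A,k}$. The paper appeals to Weyl's inequality in the ``$\Leftarrow$'' direction, but your appeal to uniqueness of the limit settles the same point more directly.
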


\begin{proof}[Proof of Proposition~\ref{prop: nonlinear_spacing}]
Note that $\cQ$ can be empty, and the equivalence is trivial. Suppose that $\cQ$ is non-empty. Noting that $\cQ \ne [q+1]$ (otherwise it contradicts that $\sum_{j\in \cQ} (\theta_j - \theta_{j-1})/T =o(1)$), both directions are shown as follows.
\begin{itemize}[leftmargin=35pt]
    \item [``$\Rightarrow$'':] We show the implication using (in fact, the unique choice) $\Sigma_{A,k}^\ast =\Sigma_{A,k}$ from Assumption~\ref{assum: trans_mat}~(ii), which is direct by
    \begin{align*}
        \Sigma_{A,k} &= \lim_{T,p_1,\dots,p_K\to\infty} \frac{1}{T} \sum_{j\in [q+1]} \big\|\otimes_{\ell = K, \, \ell \ne k}^1 A_{j,\ell} \big\|_F^2 \cdot (\theta_j - \theta_{j-1}) \cdot A_{j,k} A_{j,k}^\trans \\
        &=
        \lim_{T,p_1,\dots,p_K\to\infty} \frac{1}{T} \sum_{j\in [q+1]\setminus \cQ} \big\|\otimes_{\ell = K, \, \ell \ne k}^1 A_{j,\ell} \big\|_F^2 \cdot (\theta_j - \theta_{j-1}) \cdot A_{j,k} A_{j,k}^\trans \\
        &\quad
        + \lim_{T,p_1,\dots,p_K\to\infty} \frac{1}{T} \sum_{j\in\cQ} \big\|\otimes_{\ell = K, \, \ell \ne k}^1 A_{j,\ell} \big\|_F^2 \cdot (\theta_j - \theta_{j-1}) \cdot A_{j,k} A_{j,k}^\trans \\
        &= \lim_{T,p_1,\dots,p_K\to\infty} \frac{1}{T} \sum_{j\in [q+1]\setminus \cQ} \big\|\otimes_{\ell = K, \, \ell \ne k}^1 A_{j,\ell} \big\|_F^2 \cdot (\theta_j - \theta_{j-1}) \cdot A_{j,k} A_{j,k}^\trans ,
    \end{align*}
    where the last equality used $\sum_{j\in \cQ} (\theta_j - \theta_{j-1})/T =o(1)$ and Assumption~\ref{assum: trans_mat}~(i).
    \item [``$\Leftarrow$'':] Similar to the above, we have
    \begin{align*}
        &\quad \frac{1}{T} \sum_{j\in [q+1]} \big\|\otimes_{\ell = K, \, \ell \ne k}^1 A_{j,\ell} \big\|_F^2 \cdot (\theta_j - \theta_{j-1}) \cdot A_{j,k} A_{j,k}^\trans \\
        &\to
        \Sigma_{A,k}^\ast + \frac{1}{T} \sum_{j\in\cQ} \big\|\otimes_{\ell = K, \, \ell \ne k}^1 A_{j,\ell} \big\|_F^2 \cdot (\theta_j - \theta_{j-1}) \cdot A_{j,k} A_{j,k}^\trans 
        =: \Sigma_{A,k}^\ast + \Sigma_{A,k}^\cQ.
    \end{align*}
    Combining $\sum_{j\in \cQ} (\theta_j - \theta_{j-1})/T =o(1)$ and Assumption~\ref{assum: trans_mat}~(i), we have $\|\Sigma_{A,k}^\cQ\|=o(1)$ and hence $\Sigma_{A,k} = \Sigma_{A,k}^\ast$ by Weyl's inequality and the definition of $\Sigma_{A,k}$ from Assumption~\ref{assum: trans_mat}~(ii). Thus Assumption~\ref{assum: trans_mat}~(iii) is true.
\end{itemize}
The equivalence is then concluded and so is the proof of this proposition.
\end{proof}

\subsubsection{Modification of TFMseg}
\label{sec:mod:tfmseg}

In the absence of Assumption~\ref{assum: trans_mat_alt}~(iv), it is required to expand the search space for the change points, to ensure that for each $\theta_j, \, j \in [q]$, there exists $(a_\ell, b_\ell] \in \M$ containing $\theta_j$ (and no other change point) whose length is sufficiently large for detecting the change point therein.
It is achieved by setting $\mu_T \asymp \log_2(T)$ and 
$\varpi_T \asymp \psi_T^2$, where $\psi_T = \max\{T^{2/(\nu-2)}, \sqrt{\log(T)}\}$
for $\nu > 6$, see Assumptions~\ref{assum: core_factor} and~\ref{assum: noise}.
Then, upon carefully inspecting the proof of Theorem~\ref{thm: asymp_consistency_detection}, provided that $\min_{j \in [q]} \omega_j^2 \Delta_j/\psi_T^2 \to \infty$, we have detection consistency ($\P\{ \wh q = q \} \to 1$) as well as
\[
\max_{j\in[q]} \omega_j^2 \vert \hat\theta_j-\theta_j \vert = \cO_P\l(\psi_T^2\r).
\]
This shows that Assumption~\ref{assum: trans_mat_alt}~(iv) enables considerably improving the theoretical performance of TFMseg, with Assumption~\ref{assum: signal-to-noise} that only requires $\min_{j \in [q]} \omega_j^2 \Delta_j/\log(T) \to \infty$, and the sharper rate of estimation.

\clearpage 

\section{Proofs}

We define some notations used throughout the proof in addition to those given in Introduction.

For a random variable $X$ and $\nu> 0$, write $\|X\|_\nu = [\E(|X|^\nu)]^{1/\nu}$.
Unless specified otherwise, we denote vectors, matrices and tensors by lower-case letters, capital letters, and calligraphic letters, i.e.\ $x$, $X$, and $\cX$, respectively. We also use $x_i, X_{ij}, X_{i\cdot}, X_{\cdot i}$ to denote, respectively, the $i$-th element of a vector $x$, the $(i,j)$-th element of $X$, the $i$-th row vector (as a column vector) of $X$, and the $i$-th column vector of $X$. 
We use $a\lesssim b$ to denote $a=\cO(b)$, $a\gtrsim b$ to denote $b=\cO(a)$, and $a\asymp b$ to denote $a=\cO(b)$ and $b=\cO(a)$; their versions of stochastic order, i.e.\ with $\cO(\cdot)$ replaced by $\cO_P(\cdot)$, are respectively denoted by $a\lesssim_P b$, $b\gtrsim_P a$, and $a \asymp_P b$.
The $i$-th largest eigenvalue of a matrix $A$ is denoted by $\lambda_i(A)$.
We write $\rank(A)$ for the rank of a matrix $A$. For a square matrix $A$, we write $\tr(A)$ for its trace.
Also, for a matrix $A \in \R^{m \times m}$, $\vec(A)$ denotes the vector in $\R^{mn}$ obtained by stacking the columns of $A$ in column-major order, with $|A|_a = \|\vec(A)\|_a$ for $a\in\{1,2,\infty\}$. 
We use $\otimes$ to represent the Kronecker product. By convention, the total Kronecker product for an index set is computed in descending index.
We write $r= \prod_{k=1}^K r_k$ and $\rmk = r/r_k$.

\subsection{Proof of Theorem~\ref{thm: identification_change}}

\begin{lemma}\label{lemma: AA=cBB}
Given any matrices $A,B\in \R^{r\times r}$, not necessarily of full rank, such that $AA^\trans = c BB^\trans$ for some nonzero scalar $c$, we can express $B=c^{-1/2} AQ$ for some orthogonal matrix $Q\in \R^{r\times r}$.
\end{lemma}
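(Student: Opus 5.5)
The plan is to note first that $c>0$ is forced (unless both sides vanish), then reduce to $c=1$, and then build the orthogonal matrix $Q$ from singular value decompositions of $A$ and $B$.

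\textbf{Sign of $c$ and the degenerate case.} Both $AA^\trans$ and $BB^\trans$ are positive semidefinite. If $A\neq 0$, take traces: $\|A\|_F^2 = c\|B\|_F^2$ with $\|A\|_F^2>0$, so $B\neq 0$ and $c>0$. If $A=0$, then $BB^\trans=0$, so $B=0$ and the claim holds with any $Q$, say $Q=I_r$. From now on $A\neq 0$ and $c>0$.

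\textbf{Reduction to $c=1$.} Set $\tilde B = c^{1/2}B$. Then $AA^\trans=\tilde B\tilde B^\trans$, and it suffices to find an orthogonal $Q$ with $\tilde B = AQ$, since then $B = c^{-1/2}AQ$.

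\textbf{Constructing $Q$.} Write the common matrix as $AA^\trans=\tilde B\tilde B^\trans = U D^2 U^\trans$, with $U$ orthogonal and $D=\diag(d_1,\dots,d_s,0,\dots,0)$, where $d_i>0$ and $s$ is the common rank. Every matrix $M$ with $MM^\trans = UD^2U^\trans$ admits an SVD of the form $M = U D V_M^\trans$ for some orthogonal $V_M$. To see this, let $U_s$ be the first $s$ columns of $U$ and $D_s=\diag(d_1,\dots,d_s)$. Define $V_{M,s} = M^\trans U_s D_s^{-1}$. Its columns are orthonormal, because
\[
D_s^{-1}U_s^\trans MM^\trans U_s D_s^{-1} = I_s .
\]
Complete $V_{M,s}$ to an orthogonal matrix $V_M$. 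The identity $M=UDV_M^\trans$ then holds on $\col(U_s)$ by construction. It also holds on the orthogonal complement of $\col(U_s)$: there $M^\trans$ annihilates vectors, because $\|M^\trans x\|^2 = x^\trans MM^\trans x = 0$. Hence $M = U_sU_s^\trans M = U_s D_s V_{M,s}^\trans = UDV_M^\trans$.

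Applying this to both matrices gives $A = UDV_A^\trans$ and $\tilde B = UDV_B^\trans$. Set $Q = V_A V_B^\trans$, which is orthogonal. Then
\[
AQ = UDV_A^\trans V_A V_B^\trans = UDV_B^\trans = \tilde B,
\]
as required.

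The only step needing care is the one that handles rank deficiency: completing $V_{M,s}$ to an orthogonal matrix, and showing that $M$ vanishes on the null directions of $U$. The argument via $\|M^\trans x\|^2 = 0$ settles this, so I expect no real obstacle.
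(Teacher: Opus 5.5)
Your proposal is correct and follows essentially the same route as the paper. Both arguments take the common eigendecomposition $AA^\trans=U\Sigma^2U^\trans$, write $A$ and the rescaled $B$ with the same left factor $U$ and the same singular values, and then set $Q$ equal to the product of the right factors ($Q=VW$ in the paper, $Q=V_AV_B^\trans$ in yours). Your version also justifies two points the paper leaves implicit: that $c>0$ unless $A=B=0$, and that $B$ admits an SVD with the \emph{same} $U$ and $\Sigma$ even under rank deficiency or repeated singular values, which your construction $V_{M,s}=M^\trans U_sD_s^{-1}$ together with the argument that $M^\trans$ annihilates $\col(U_s)^\perp$ establishes.
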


\begin{proof}[Proof of Lemma~\ref{lemma: AA=cBB}]
Both $AA^\trans$ and $BB^\trans$ are positive semi-definite, and the fact that $AA^\trans = c BB^\trans$ implies that they have the same eigenvalues (up to scaling by $c$) and eigenspaces. Let $A=U\Sigma V^\trans$ be the SVD of $A$, where $U$ and $V$ are orthogonal matrices and $\Sigma$ is diagonal. Then $AA^\trans = U\Sigma^2 U^\trans$, and hence $BB^\trans = c^{-1} U\Sigma^2 U^\trans$. The SVD of $B$ is then $B=c^{-1/2}U\Sigma W$ for some orthogonal matrix $W$, and with $AV=U\Sigma$, we conclude that $B=c^{-1/2} A(VW)$ where $VW$ is orthogonal, as required.
\end{proof}

\begin{lemma}\label{lemma: identifiable}
Under the model in~\eqref{eqn: tfm_change_rewrite} and Condition~\ref{cond:factor}, $A_{j,k}$ can only be identified up to an orthogonal transformation for all $j \in [q + 1]$ and $k \in [K]$.
\end{lemma}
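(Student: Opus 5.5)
We need to prove Lemma~\ref{lemma: identifiable}: under~\eqref{eqn: tfm_change_rewrite} and Condition~\ref{cond:factor}, each $A_{j,k}$ is identifiable only up to an orthogonal transformation. The statement has two directions, and we treat them separately.

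\emph{Non-uniqueness.} For any orthogonal $Q_k \in \R^{r_k\times r_k}$, define $\cF_t' = \cF_t \times_{k=1}^K Q_k^\trans$ and $A_{j,k}' = A_{j,k} Q_k$ for all $j$ and $k$. Then $\cF_t' \times_{k=1}^K A'_{j,k} = \cF_t \times_{k=1}^K A_{j,k}$, so $\cG_t$, and hence $\cX_t$, is unchanged. We also need $\cF_t'$ to satisfy Condition~\ref{cond:factor}. Writing $\vec(\cF_t') = (\otimes_{k=K}^1 Q_k^\trans)\vec(\cF_t)$, the Kronecker product of orthogonal matrices is orthogonal, so $\cov(\vec(\cF_t')) = I_r$ and the mean is still zero. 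Thus any orthogonal right-multiplication of the $A_{j,k}$ is observationally equivalent. A single orthogonal $Q_k$ for each mode, common across $j$, suffices to exhibit the non-uniqueness.

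\emph{Identification up to orthogonal transforms.} We use the second-moment structure. Fix $\Lambda_k$ (identified up to the normalisation in Assumption~\ref{assum: loadings}, or taken as given, since the lemma concerns $A_{j,k}$ given the representation). Within segment $j$, Condition~\ref{cond:factor} gives
\[
\cov(\vec(\cG_t)) = (\otimes_{k=K}^1 A_{j,k})(\otimes_{k=K}^1 A_{j,k})^\trans = \otimes_{k=K}^1 A_{j,k}A_{j,k}^\trans .
\]
Suppose a second representation $\{A'_{j,k}\}$ with a factor $\cF'_t$ satisfying Condition~\ref{cond:factor} generates the same distribution of second moments. Then the Kronecker products $\otimes_k A_{j,k}A_{j,k}^\trans$ and $\otimes_k A'_{j,k}A'^\trans_{j,k}$ coincide. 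A Kronecker factorisation of a nonzero matrix is unique up to scalars whose product is one. This is obtained, for instance, by viewing the mode-$k$ partial traces: the mode-$k$ covariance computed as in~\eqref{eqn: cov_G_signal} equals $\tr(A_{j,\text{-}k}^\trans A_{j,\text{-}k})\, A_{j,k}A_{j,k}^\trans$. It follows that $A'_{j,k}A'^\trans_{j,k} = c_{j,k} A_{j,k}A_{j,k}^\trans$ with $\prod_k c_{j,k}=1$. Lemma~\ref{lemma: AA=cBB} then gives $A'_{j,k} = c_{j,k}^{1/2} A_{j,k} Q_{j,k}$ for some orthogonal $Q_{j,k}$.

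The scalars $c_{j,k}$ are the inherent Tucker scale indeterminacy. They can be absorbed across modes, for example by normalising $\|A_{j,k}\|_F$ as is implicit in the paper's convention. Once this is done, what remains is exactly an orthogonal transformation.

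\emph{Main obstacle.} The delicate step is the Kronecker factor uniqueness, particularly when some $A_{j,k}$ are rank-deficient and a factor could appear to vanish. Assumption~\ref{assum: trans_mat_alt}(i) keeps every $\|A_{j,k}\|_F>0$, so every Kronecker factor is nonzero. The scalar argument then goes through by comparing entrywise blocks $(M_1)_{ab}M_2$ of $M_1\otimes M_2$ with a nonzero block. We would also state clearly that \enquote{identified up to orthogonal transformation} is meant modulo this cross-mode scaling, consistent with the scalar $c$ appearing in Theorem~\ref{thm: identification_change}.
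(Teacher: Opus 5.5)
Your proposal is correct and follows the paper's argument: you equate the mode-$k$ covariances $\tr(A_{j,\text{-}k}^\trans A_{j,\text{-}k})\,A_{j,k}A_{j,k}^\trans$ of the two representations to get $\wt{A}_{j,k}\wt{A}_{j,k}^\trans = c\,A_{j,k}A_{j,k}^\trans$, apply Lemma~\ref{lemma: AA=cBB}, and remove the scalar $c$ by a normalization. Two of your additions tighten the paper's version: you verify the converse explicitly (orthogonal rotations preserve Condition~\ref{cond:factor}), and you normalize only Frobenius norms, whereas the paper assumes $A_{j,k}A_{j,k}^\trans = \wt{A}_{j,k}\wt{A}_{j,k}^\trans$, which essentially presupposes the conclusion.
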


\begin{proof}
Suppose there are two sets of parameters $\{\cF_t, A_{j,1}, \dots, A_{j,K}\}$ and $\{\wt\cF_t, \wt{A}_{j,1}, \dots, \wt{A}_{j,K}\}$ such that $\cF_t \times_{k=1}^K A_{j,k} =\wt\cF_t \times_{k=1}^K \wt{A}_{j,k}$. For identification, without loss of generality, fix $A_{j,k}A_{j,k}^\trans = \wt{A}_{j,k} \wt{A}_{j,k}^\trans = \Sigma_{j,k}$ for some positive semi-definite matrix $\Sigma_{j,k}$. Taking any mode-$k$ unfolding on both sides, we have
\[
A_{j,k} \mat_k(\cF_t) \Big( \otimes_{i = K, i \ne k}^1 A_{j,i} \Big)^\trans = \wt{A}_{j,k} \mat_k(\wt\cF_t) \Big( \otimes_{i = K, i \ne k}^1 \wt{A}_{j,i} \Big)^\trans.
\]
Thus, using Lemma~\ref{lemma: FMF_expectation},
\begin{align*}
    &\quad
    A_{j,k} \E\Big\{ \mat_k(\cF_t) \Big( \otimes_{i = K, i \ne k}^1 A_{j,i} \Big)^\trans \Big( \otimes_{i = K, i \ne k}^1 A_{j,i} \Big) \mat_k(\cF_t)^\trans \Big\} A_{j,k}^\trans \\
    &=
    \tr\Big\{ \Big( \otimes_{i = K, i \ne k}^1 A_{j,i} \Big)^\trans \Big( \otimes_{i = K, i \ne k}^1 A_{j,i} \Big) \Big\} \cdot A_{j,k} A_{j,k}^\trans \\
    &=
    \tr\Big\{ \Big( \otimes_{i = K, i \ne k}^1 \wt{A}_{j,i} \Big)^\trans \Big( \otimes_{i = K, i \ne k}^1 \wt{A}_{j,i} \Big) \Big\} \cdot \wt{A}_{j,k} \wt{A}_{j,k}^\trans .
\end{align*}
Under the trivial condition that any $A_{j,i}$ for $j \in [q + 1]$ and $i\in[K]$, is not a zero matrix, the above shows that we can write $\wt{A}_{j,k} \wt{A}_{j,k}^\trans = c A_{j,k} A_{j,k}^\trans$ for some nonzero constant $c$. Using Lemma~\ref{lemma: AA=cBB}, we have $\wt{A}_{j,k} = c^{1/2} A_{j,k} Q$ for some orthogonal matrix $Q$. Then we conclude that $c = 1$ since
\[
\Sigma_{j,k} = \wt{A}_{j,k} \wt{A}_{j,k}^\trans = c A_{j,k} A_{j,k}^\trans = c \Sigma_{j,k}.
\]
This shows that $A_{j,k}$ can only be identified up to an orthogonal transformation.
\end{proof}

\begin{proof}[Proof of Theorem~\ref{thm: identification_change}]
First, by inspecting the proof of Lemma~\ref{lemma: FMF_expectation}, we note that statement~\eqref{eqn: lemma_FMF_expectation_1} remains to hold without Assumption~\ref{assum: core_factor}---all we need is Condition~\ref{cond:factor}.
Also by Lemma~\ref{lemma: identifiable}, $A_{j,k}$ can only be identified up to an orthogonal transformation.

Next, for any $j\in[q]$, we show that a change of the form $A_{j,\ell} = c A_{j + 1,\ell}$, for some positive constant~$c$ (without loss of generality, due to the identification), is mode-unidentifiable. This is trivial by noting that we can write
\[
\cF_t \times_{k\in[K]\setminus\{\ell\}} A_{j,k} \times_\ell (c A_{j,\ell}) = \cF_t \times_{k\in[K]\setminus\{\ell\}} (c^{1/(K-1)} A_{j + 1,k}) \times_\ell A_{j,\ell}.
\]

Finally, let a change from $A_{j,k}$ to $A_{j+1,k}$ be an arbitrary mode-unidentifiable change. It remains to show the change is in the form of a scalar multiple. By Definition~\ref{def: classification_change}, there exists a set of parameters $\{B_\ell\}_{\ell \in [K] \setminus \{k\}}$ 
such that
\begin{align*}
    &\quad
    A_{j + 1,k} \E\Big\{ \mat_k(\cF_t) \Big( \otimes_{\ell \in[K] \setminus \{k\}} A_{j + 1, \ell} \Big)^\trans \Big( \otimes_{\ell \in[K] \setminus \{k\}} A_{j + 1, \ell} \Big) \mat_k(\cF_t)^\trans \Big\} A_{j + 1, k}^\trans \\
    &=
    \tr\Big\{ \otimes_{\ell \in[K] \setminus \{k\}} A_{j + 1, \ell} \Big)^\trans \Big( \otimes_{\ell \in[K] \setminus \{k\}} A_{j + 1, \ell} \Big) \Big\} \cdot A_{j + 1, k} A_{j + 1, k}^\trans \\
    &=
    \tr\Big\{ \Big( \otimes_{\ell \in [K] \setminus \{k\}} B_\ell \Big)^\trans \Big( \otimes_{\ell \in [K] \setminus \{k\}} B_\ell \Big) \Big\} \cdot A_{j, k} A_{j, k}^\trans ,
\end{align*}
which again follows from Lemma~\ref{lemma: FMF_expectation}. Hence by Lemma~\ref{lemma: AA=cBB}, we conclude that $A_{j + 1, k} = c A_{j, k} Q$ for some nonzero constant $c$ and orthogonal matrix $Q$, and this completes the proof of the theorem by noting that any $A_{j, k}$ is identified up to an arbitrary rotation.
\end{proof}


\subsection{Consistency in loading estimation}
\label{sec:pf:loading}

Throughout, we denote the global mode-$k$ covariance matrix for the pseudo-factor $\cG_t$ and its sample version, and the (scaled) covariance matrices for $\cX_t$ as follows.
\begin{align*}
&{\Gamma}_G^{(k)} = \frac{1}{T} \sum_{t=1}^T \E\Big\{ \mat_k(\cG_t) \mat_k(\cG_t)^\trans \Big\} ,  \quad    
\wh{\Gamma}_G^{(k)} = \frac{1}{T} \sum_{t=1}^T \mat_k(\cG_t) \mat_k(\cG_t)^\trans
,
\\
&{\Gamma}_X^{(k)} = \frac{1}{Tp} \sum_{t=1}^T \E\Big\{ \mat_k(\cX_t) \mat_k(\cX_t)^\trans \Big\} 
.
\end{align*}

\subsubsection{Supporting lemmas}

Throughout this subsection, anytime we refer to Assumptions~\ref{assum: core_factor} and~\ref{assum: noise}, we assume $(\nu,\beta)=(4,0)$.

\begin{lemma}\label{lemma: FMF_expectation}
Let Assumption~\ref{assum: core_factor} hold. Then for any $k\in[K]$, $t\in[T]$, and constant matrix $M\in\R^{\rmk\times \rmk}$, we have
\begin{equation}
\label{eqn: lemma_FMF_expectation_1}
\E\Big\{ \mat_k(\cF_t) \, M \, \mat_k(\cF_t)^\trans \Big\} = \tr(M) \, I_{r_k} .
\end{equation}
For any interval of timestamps $\cS$, if it further holds that $\|M\|_F^2= o(|\cS|)$, then
\[
\frac{1}{|\cS|} \sum_{t\in\cS} \mat_k(\cF_t) \, M \, \mat_k(\cF_t)^\trans \xrightarrow{P} \tr(M) \, I_{r_k} .
\]
\end{lemma}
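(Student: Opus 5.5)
The plan is to prove the two claims in turn. The first is an exact second-moment computation that uses only Condition~\ref{cond:factor}. The second is a weak law of large numbers that uses the linear-process structure from Assumption~\ref{assum: core_factor}.

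\textbf{The identity~\eqref{eqn: lemma_FMF_expectation_1}.} Write $F = \mat_k(\cF_t) \in \R^{r_k\times \rmk}$. The $(a,b)$ entry of $F M F^\trans$ is $\sum_{i,j} F_{ai} M_{ij} F_{bj}$. Since $\vec(\cF_t)$ is a permutation of $\vec(F)$, Condition~\ref{cond:factor} (implied by Assumption~\ref{assum: core_factor}) gives $\E(F_{ai}F_{bj}) = \mathbb{I}_{\{a=b,\,i=j\}}$. Hence $\E(FMF^\trans)_{ab} = \mathbb{I}_{\{a=b\}}\sum_i M_{ii}$, which is $\tr(M) I_{r_k}$.

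\textbf{The convergence in probability.} Set $Z_t = \mat_k(\cF_t) M \mat_k(\cF_t)^\trans - \tr(M) I_{r_k}$. By the first part each $Z_t$ has mean zero, and $r_k$ is fixed. It therefore suffices to show, entrywise, that $\var\bigl(|\cS|^{-1}\sum_{t\in\cS}(Z_t)_{ab}\bigr) \to 0$ and then apply Chebyshev. Each entry is a quadratic form $f_t^\trans D f_t$ in $f_t = \vec(\cF_t)$, where $D = e_b e_a^\trans \otimes M$ up to the permutation relating $\vec(F)$ and $f_t$, so $\|D\|_F = \|M\|_F$. By Definition~\ref{def: general_linear_tensor}, $f_t = \sum_{h\ge0} a_h x_{t-h}$ with $x_s$ having independent, unit-variance entries with bounded fourth moments. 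Then
\[
f_t^\trans D f_t = \sum_{h,h'} a_h a_{h'} x_{t-h}^\trans D x_{t-h'} .
\]

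To bound the covariance $\cov(f_t^\trans D f_t, f_s^\trans D f_s)$, I will expand both forms in the innovations. A term pairs $(x_{t-h}, x_{t-h'})$ with $(x_{s-g}, x_{s-g'})$, and it has nonzero covariance only when the innovation time indices match up, either pairwise or with all four equal. For such a matched term, the standard bound for quadratic forms in independent entries gives $|\cov(x^\trans D y, x'^\trans D y')| \lesssim \|D\|_F^2$, where the fourth moment enters only through diagonal terms and is controlled by $C_4$. Summing the coefficients gives
\[
|\cov(f_t^\trans D f_t, f_s^\trans D f_s)| \lesssim \|M\|_F^2 \,\gamma(|t-s|) ,
\]
with $\gamma(m) = \bigl(\sum_h |a_h a_{h+m}|\bigr)^2$ up to cross terms of the same type.

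Because $\sum_h|a_h|\le c$, we get $\sum_m \gamma(m) \le c\sum_m\sum_h|a_h||a_{h+m}| \le c^3 < \infty$. Consequently
\[
\var\Bigl(|\cS|^{-1}\sum_{t\in\cS}(Z_t)_{ab}\Bigr) \lesssim \|M\|_F^2/|\cS| = o(1),
\]
using the hypothesis $\|M\|_F^2 = o(|\cS|)$. Chebyshev's inequality then gives convergence in probability of each entry, and hence of the full $r_k\times r_k$ matrix.

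\textbf{Main obstacle.} The delicate step is the covariance bookkeeping for the doubly infinite sums over $(h,h',g,g')$. The key point is to organize the terms by which innovation indices coincide and to factor out $\|D\|_F^2$ uniformly. The resulting coefficient sum must be summable in the lag using only $\sum_h|a_h|\le c$, and must not require $\beta>0$. This is consistent with the standing choice $(\nu,\beta)=(4,0)$. Fourth moments are needed only for the all-indices-equal (diagonal) terms.
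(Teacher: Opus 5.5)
Your proposal is correct and follows essentially the same route as the paper. The identity comes from the same entrywise second-moment computation, which uses only uncorrelated, unit-variance entries. The convergence comes from expanding $\cF_t$ as a linear process and bounding the variance of each entry of the average by a constant times $\|M\|_F^2/|\cS|$, using the pairwise-matched innovation terms plus the fourth-moment diagonal term, whose lag coefficients are summable. Chebyshev's inequality then finishes the argument. Writing each entry as a quadratic form $f_t^\trans D f_t$ with $\|D\|_F = \|M\|_F$ is a tidier way of organizing the bookkeeping that the paper carries out coordinatewise.
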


\begin{proof}[Proof of Lemma~\ref{lemma: FMF_expectation}]
Consider the $(i,j)$-th entry of $\mat_k(\cF_t) \, M \, \mat_k(\cF_t)^\trans$. If $i\neq j$, then
\begin{align*}
    \E\Big[ \big\{ \mat_k(\cF_t) \, M \, \mat_k(\cF_t)^\trans \big\}_{i,j} \Big] = \E\Big[ \mat_k(\cF_t)_{i\cdot}^\trans \, M \, \mat_k(\cF_t)_{j\cdot} \Big] = 0,
\end{align*}
where the last equality used Assumption~\ref{assum: core_factor} that entries of $\cF_t$ are cross-sectionally uncorrelated. For the diagonal entries, i.e.\ $i=j$, we have
\begin{align*}
    &\quad
    \E\Big[ \big\{ \mat_k(\cF_t) \, M \, \mat_k(\cF_t)^\trans \big\}_{i,i} \Big] = \E\Big[ \mat_k(\cF_t)_{i\cdot}^\trans \, M \, \mat_k(\cF_t)_{i\cdot} \Big] \\
    &=
    \var(\cF_t)_{i,1} M_{1,1} +\dots+ \var(\cF_t)_{i,r_k} M_{r_k,r_k}
    = \tr(M),
\end{align*}
where the second line also used Assumption~\ref{assum: core_factor}. Hence~\eqref{eqn: lemma_FMF_expectation_1} is true.

To show the second result, it suffices to show the result for $\cS=[T]$ and from its proof the similar argument follows for any interval $\cS$. To this end, note that by Assumption~\ref{assum: core_factor}, there exist innovation $\cX_{f,t}$ and coefficients $a_{f,w}$ as in Definition~\ref{def: general_linear_tensor} such that $\cF_t = \sum_{w\geq 0} a_{f,w} \cX_{f,t-w}$. Then consider first that for any $t$,
\begin{equation}
\label{eqn: Ft_coef_bound}
\begin{split}
    &\quad
    \sum_{s=1}^T \Big(\sum_{w\geq 0} a_{f,w} a_{f,w-(t-s)} \Big) \Big(\sum_{z\geq 0} a_{f,z} a_{f,z-(t-s)} \Big) \\
    &\leq
    \Big(\sum_{s=1}^T \sum_{w\geq 0} |a_{f,w}| |a_{f,w-(t-s)}| \Big) \Big(\sum_{z\geq 0} |a_{f,z}| \Big) \cdot \max_z |a_{f,z}| =
    \cO(1) \cdot \Big( \sum_{w\geq 0} |a_{f,w}| \Big)^2 =\cO(1),
\end{split}
\end{equation}
where the last two equalities used the decaying coefficients in Definition~\ref{def: general_linear_tensor}. Similarly, it holds that
\begin{equation}
\label{eqn: Ft_coef_bound2}
\sum_{s=1}^T \sum_{w\geq 0} a_{f,w}^2 a_{f,w-(t-s)}^2 \leq \Big(\sum_{w\geq 0} a_{f,w}^2 \Big)^2 =\cO(1).
\end{equation}

Finally, with the notation $X_{f,t,(k)} =\mat_k(\cX_{f,t})$, for any $i,j\in [r_k]$,
\begin{align*}
    &\quad
    \var\Big( \frac{1}{T} \sum_{t=1}^T \mat_k(\cF_t)_{i\cdot}^\trans \, M \, \mat_k(\cF_t)_{j\cdot} \Big) \\
    &=
    \var\Big( \frac{1}{T} \sum_{t=1}^T \sum_{l=1}^{\rmk} \sum_{v=1}^{\rmk} \mat_k(\cF_t)_{il} M_{lv} \mat_k(\cF_t)_{jv} \Big) \\
    &=
    \frac{1}{T^2} \cov\Big( \sum_{t=1}^T \sum_{l=1}^{\rmk} \sum_{v=1}^{\rmk} \sum_{w\geq 0} \sum_{z\geq 0} a_{f,w} a_{f,z} X_{f,t-w,(k),il} M_{lv} X_{f,t-z,(k),jv}, \\
    &\quad \quad
    \sum_{s=1}^T \sum_{g=1}^{\rmk} \sum_{u=1}^{\rmk} \sum_{h\geq 0} \sum_{m\geq 0} a_{f,h} a_{f,m} X_{f,s-h,(k),ig} M_{gu} X_{f,s-m,(k),ju} \Big) \\
    &=
    \frac{1}{T^2} \sum_{t=1}^T \sum_{s=1}^T \Big(\sum_{w\geq 0} a_{f,w} a_{f,w-(t-s)} \Big) \Big(\sum_{z\geq 0} a_{f,z} a_{f,z-(t-s)} \Big) \Big(\sum_{l=1}^{\rmk} \sum_{v=1}^{\rmk} M_{lv}^2\Big) \\
    &\quad
    + \frac{1}{T^2} \sum_{t=1}^T \sum_{w\geq 0} \sum_{z\geq 0} \sum_{s=1}^T \sum_{h\geq 0} \sum_{m\geq 0} a_{f,w} a_{f,z} a_{f,h} a_{f,m} \Big(\sum_{l=1}^{\rmk} M_{ll}^2\Big) \\
    &\quad
    \cdot \cov\Big( X_{f,t-w,(k),il} X_{f,t-z,(k),il}, X_{f,s-h,(k),il} X_{f,s-m,(k),il} \Big) \\
    &=
    \cO\Big( \frac{1}{T}\Big) \cdot \Big(\sum_{l=1}^{\rmk} \sum_{v=1}^{\rmk} M_{lv}^2\Big) + \cO\Big( \frac{1}{T}\Big) \cdot \Big(\sum_{l=1}^{\rmk} M_{ll}^2\Big) = \cO\Big( \frac{1}{T}\Big) \cdot \|M \|_F^2 =o(1),
\end{align*}
where the third equality considered $i=j$ and $i\neq j$ separately, and the fourth used~\eqref{eqn: Ft_coef_bound} and~\eqref{eqn: Ft_coef_bound2}. Combining the above with~\eqref{eqn: lemma_FMF_expectation_1}, the proof of the lemma is completed.
\end{proof}

\begin{lemma}
\label{lemma: inequality_sandwich}
\begin{enumerate}[itemsep=0pt, label = (\roman*), left = 0pt]
    \item For any square matrix $M$ and sequence of matrices $A_t, \, t \in [T]$, of compatible dimensions, we have
    \[
    \left\| \sum_{t=1}^T A_t M A_t^\trans \right\|_F^2 \leq \|M\|^2 \cdot \left\| \sum_{t=1}^T A_t A_t^\trans \right\|_F^2 .
    \]
    Note that this inequality is tight, which can be seen by taking $M$ as the identity matrix.
    \item (Lemma~B.16 in \cite{Barigozzietal2025_robust}) For any sequence of matrices $A_t=[A_{t,ij}]$, $B_t$ for $t\in[T]$, and any matrix $M$ of compatible dimensions, we have
    \[
    \left\| \sum_{t=1}^T A_t M B_t^\trans \right\|_F^2 \leq \|M\|_F^2 \cdot \sum_{i,j} \left\| \sum_{t=1}^T A_{t,ij} B_t \right\|_F^2 .
    \]
\end{enumerate}
\end{lemma}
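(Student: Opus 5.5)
We must prove Lemma (inequality_sandwich): (i) $\|\sum_t A_t M A_t^\trans\|_F^2 \le \|M\|^2 \|\sum_t A_tA_t^\trans\|_F^2$, and (ii) is cited from the literature. Part (ii) needs no proof since it is Lemma~B.16 of \cite{Barigozzietal2025_robust}; we would only check it directly via Cauchy--Schwarz if desired. We focus on (i).

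For (i), the first point to settle is that for a general (possibly non-symmetric, indefinite) $M$ the inequality can fail. Take $T=2$, scalar $A_1=1$, $A_2=i$-type constructions are unavailable over the reals, so instead take $A_t$ to be $1\times 2$ vectors and $M$ antisymmetric: then each $A_tMA_t^\trans=0$ and the inequality holds trivially. A failure would require cancellation in $\sum_t A_tA_t^\trans$ while $\sum_tA_tMA_t^\trans$ stays large, which is impossible since $\sum_tA_tA_t^\trans\succeq 0$. This suggests the inequality is true in general, and we would reduce to the positive semi-definite case.

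\textbf{Step 1 (symmetrization).} Write $M=S+N$ with $S=(M+M^\trans)/2$ and $N$ antisymmetric. Then $A_tNA_t^\trans$ is antisymmetric while $A_tSA_t^\trans$ is symmetric, so the two sums are Frobenius-orthogonal. This alone does not kill the antisymmetric part, so we handle $M$ through a dilation instead.

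\textbf{Step 2 (vectorized operator bound).} Set $P=\sum_t A_tA_t^\trans\succeq0$ and define the linear map $\Phi(X)=\sum_t A_tXA_t^\trans$, which is completely positive. Use the Kraus/Stinespring form: with $\mathbf{A}=[A_1,\dots,A_T]$ we have $\Phi(M)=\mathbf{A}(I_T\otimes M)\mathbf{A}^\trans$ and $P=\mathbf{A}\mathbf{A}^\trans$. Take the SVD $\mathbf{A}=U D V^\trans$, so that $\Phi(M)=UD\,[V^\trans(I_T\otimes M)V]\,DU^\trans$. Write $C=V^\trans(I_T\otimes M)V$; then $\|C\|\le\|M\|$. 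We have $\|\Phi(M)\|_F=\|DCD\|_F$ and $\|P\|_F=\|D^2\|_F$. The key inequality to prove is $\|DCD\|_F\le\|C\|\,\|D^2\|_F$ for diagonal $D\succeq0$. Entrywise, $(DCD)_{ij}=d_id_jC_{ij}$. Hence
\[
\sum_{i,j}d_i^2d_j^2C_{ij}^2 \le \sum_{i}d_i^4\sum_j C_{ij}^2
\]
by $d_i^2d_j^2\le(d_i^4+d_j^4)/2$ together with symmetry of the index roles. Since $\sum_jC_{ij}^2\le\|C\|^2$ (each row norm is bounded by the operator norm), the column version is handled the same way and the two halves add to give exactly $\|C\|^2\sum_id_i^4=\|C\|^2\|P\|_F^2$.

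\textbf{Step 3 (conclusion).} Combining, $\|\Phi(M)\|_F^2\le\|M\|^2\|P\|_F^2$. Tightness at $M=I$ is immediate. The main obstacle is Step 2: seeing that the naive bound $\|A_tMA_t^\trans\|_F\le\|M\|\|A_tA_t^\trans\|_F$ followed by the triangle inequality loses nothing only because the $A_tA_t^\trans$ are positive semidefinite. The clean route is the dilation $\mathbf{A}(I_T\otimes M)\mathbf{A}^\trans$ followed by the entrywise AM--GM argument on the diagonalized form.
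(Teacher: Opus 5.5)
Your proof of (i) is correct, but it takes a different route from the paper's. The paper works with the sum directly: it expands $\|\sum_t A_tMA_t^\trans\|_F^2=\sum_{s,t}\tr(A_{st}MA_{st}^\trans M^\trans)$ with $A_{st}=A_s^\trans A_t$. It then bounds each term by Cauchy--Schwarz, $\langle A_{st}M, MA_{st}\rangle_F\le\|M\|^2\|A_{st}\|_F^2$, and resums using $\sum_{s,t}\|A_{st}\|_F^2=\|\sum_tA_tA_t^\trans\|_F^2$.

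You instead absorb the sum into the dilation $\mathbf{A}(I_T\otimes M)\mathbf{A}^\trans$, which reduces everything to a single-term inequality with $\|I_T\otimes M\|=\|M\|$. You then compress and diagonalize through the SVD and finish with entrywise AM--GM. The two arguments are closer than they look. The paper's $A_{st}$ are exactly the blocks of the Gram matrix $\mathbf{A}^\trans\mathbf{A}$. Your bound $\sum_{i,j}d_i^2d_j^2C_{ij}^2\le\tfrac12(\|D^2C\|_F^2+\|CD^2\|_F^2)$ is just the AM--GM relaxation of the Cauchy--Schwarz step $\langle D^2C, CD^2\rangle_F\le\|D^2C\|_F\|CD^2\|_F$.

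Your dilation makes it transparent that the sum over $t$ plays no role beyond $\|I_T\otimes M\|=\|M\|$. The paper's route is shorter and needs neither the SVD nor entrywise bookkeeping. In fact, once you have the dilation, the paper's one-line trace Cauchy--Schwarz applied to $\mathbf{A}(I_T\otimes M)\mathbf{A}^\trans$ finishes the proof without diagonalizing.

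Two things need fixing in the write-up.
\begin{itemize}
\item The displayed inequality $\sum_{i,j}d_i^2d_j^2C_{ij}^2\le\sum_id_i^4\sum_jC_{ij}^2$ is false as stated when $C$ is not symmetric, which can happen as soon as $M$ is not. For example, $C=e_2e_1^\trans$ with $d_1>d_2>0$ gives $d_1^2d_2^2>d_2^4$. What AM--GM actually gives is $\tfrac12\sum_id_i^4\sum_jC_{ij}^2+\tfrac12\sum_jd_j^4\sum_iC_{ij}^2$, with row norms and column norms both bounded by $\|C\|$. This is what your following sentence uses, so the conclusion is unaffected.
\item Your closing remark is backwards. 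The termwise bound plus the triangle inequality only gives $\|M\|\sum_t\|A_tA_t^\trans\|_F$, which can strictly exceed $\|M\|\,\|\sum_tA_tA_t^\trans\|_F$ even though each $A_tA_t^\trans\succeq0$. For $A_1=e_1e_1^\trans$ and $A_2=e_2e_2^\trans$ the two quantities are $2\|M\|$ and $\sqrt2\|M\|$. This is exactly why the cross terms $A_s^\trans A_t$, or equivalently your dilation, are needed.
\end{itemize}
The heuristic preamble and the abandoned Step~1 can be dropped.
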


\begin{proof}[Proof of Lemma~\ref{lemma: inequality_sandwich}]
Part~(ii) is a direct quotation of Lemma~B.16 in \cite{Barigozzietal2025_robust}, so it is enough to show part~(i) below. Expanding the squared Frobenius norm, we have
\begin{equation}
\label{eqn: sum_AMA_decomp}
\begin{split}
    \left\| \sum_{t=1}^T A_t M A_t^\trans \right\|_F^2 &= \tr\left\{ \left( \sum_{t=1}^T A_t M A_t^\trans \right) \left( \sum_{s=1}^T A_s M^\trans A_s^\trans \right) \right\} = \sum_{t=1}^T \sum_{s=1}^T \tr(A_t M A_t^\trans A_s M^\trans A_s^\trans ) \\
    &=
    \sum_{t=1}^T \sum_{s=1}^T \tr(A_s^\trans A_t M A_t^\trans A_s M^\trans )
    =: \sum_{t=1}^T \sum_{s=1}^T \tr(A_{st} M A_{st}^\trans M^\trans ) ,
\end{split}
\end{equation}
where the second line used the cyclic property of trace, and the definition $A_{st}= A_s^\trans A_t$. For any $t,s\in[T]$, we may read $\tr(A_{st} M A_{st}^\trans M^\trans)$ as a Frobenius inner product and apply the Cauchy--Schwarz inequality such that
\begin{equation}
\label{eqn: trace_AMAM}
\tr(A_{st} M A_{st}^\trans M^\trans) = \langle A_{st} M, M A_{st} \rangle_F \leq \|A_{st} M\|_F \cdot \|M A_{st}\|_F \leq \|M\|^2 \|A_{st}\|_F^2 ,
\end{equation}
where the last inequality used the fact that for any matrix products $AB$, we have $\|AB\|_F=\sqrt{\sum_{i=1}^n \|A_{i\cdot}^\trans B\|^2} \leq \sqrt{\sum_{i=1}^n \|A_{i\cdot}^\trans\|^2 \|B\|^2} = \|A\|_F \cdot \|B\|$.

Combining~\eqref{eqn: sum_AMA_decomp} and~\eqref{eqn: trace_AMAM}, we have
\begin{align*}
    &\quad
    \left\| \sum_{t=1}^T A_t M A_t^\trans \right\|_F^2 = \sum_{t=1}^T \sum_{s=1}^T \tr(A_{st} M A_{st}^\trans M^\trans)
    \leq \|M\|^2 \cdot \sum_{t=1}^T \sum_{s=1}^T \|A_{st}\|_F^2 \\
     &=
     \|M\|^2 \sum_{t=1}^T \sum_{s=1}^T \tr(A_s^\trans A_t A_t^\trans A_s)
     = \|M\|^2 \cdot \tr\left\{ \left( \sum_{t=1}^T A_t A_t^\trans \right) \left( \sum_{s=1}^T A_s A_s^\trans \right) \right\}
     = \|M\|^2 \left\| \sum_{t=1}^T A_t A_t^\trans \right\|_F^2 ,
\end{align*}
as desired. This completes the proof of this lemma.
\end{proof}

For ease of notation, for any $t\in[T]$, $k\in[K]$, $j\in[q+1]$, denote
\begin{equation}
\label{eqn: set_of_notation}
\begin{split}
    & X_{k,t} = \mat_k(\cX_t), \quad
    G_{k,t} = \mat_k(\cG_t), \quad
    E_{k,t} = \mat_k(\cE_t), \quad
    F_{k,t} = \mat_k(\cF_t), \\
    & \Lambdamk = \otimes_{\ell\in[K]\setminus\{k\}} \Lambda_\ell , \quad
    \Ajmk{j} = \otimes_{\ell\in[K]\setminus\{k\}} A_{j,\ell} .
\end{split}
\end{equation}

\begin{lemma} 
\label{lemma: prelim_rate}
Under Assumptions~\ref{assum: core_factor}, \ref{assum: loadings}, \ref{assum: trans_mat}, and~\ref{assum: noise},
we have the following for any $k\in[K]$ and $j\in[q+1]$.
\begin{enumerate}[itemsep=0pt, label = (\roman*), left = 0pt]
    \item $\Big\| \sum_{t = a + 1}^b \Lambda_k G_{k,t} (\otimes_{\ell\in[K]\setminus\{k\}} \Lambda_\ell)^\trans E_{k,t}^\trans \Big\|_F^2 = 
    \cO_P\Big\{ (b-a) p_k^2 \pmk \Big\}$ for any integers $a\leq b$; in particular, $\Big\| \sum_{t = \theta_{j-1} + 1}^{ \theta_j} (\Lambda_k A_{j,k}) F_{k,t} (\otimes_{\ell\in[K]\setminus\{k\}} \Lambda_\ell A_{j,\ell})^\trans E_{k,t}^\trans \Big\|_F^2 = 
    \cO_P\Big\{ (\theta_j -\theta_{j-1}) p_k^2 \pmk \Big\}$;
    \item $\Big\| \sum_{t=1}^T E_{k,t} E_{k,t}^\trans \Big\|_F^2 = \cO_P\Big(Tp_k^2 \pmk + T^2 p_k \pmk^2 \Big)$, where in particular the first term is attributed to the bound on $\E\Big\{\big\|\big( \sum_{t=1}^T \big\{E_{k,t} E_{k,t}^\trans - \E(E_{k,t} E_{k,t}^\trans)\big\} \big)^2 \big\|_F\Big\}$, and the second term on $\Big\|\E\Big( \sum_{t=1}^T E_{k,t} E_{k,t}^\trans \Big)\Big\|_F^2$; moreover, $\Big\|\E\Big( \sum_{t=1}^T E_{k,t} E_{k,t}^\trans \Big)\Big\|^2 \leq c T^2 \pmk^2$ for some constant $c >0$;
    \item $\Big\| \sum_{i=1}^{p_k} \Lambda_{k,i\cdot} \sum_{t=1}^T E_{k,t,i\cdot}^\trans E_{k,t,z\cdot} \Big\|^2 = \cO_P\Big( Tp + T^2 \pmk^2 \Big)$ for any $z\in[p_k]$;
    \item $\Big\| \sum_{t=1}^T E_{k,t} VV^\trans E_{k,t}^\trans \Lambda_k \Big\|_F^2 = \cO_P\Big\{ \E(\|V\|_F^4) \Big( T p_k^2 + T^2 p_k \Big) \Big\}$ for any $V$ independent of $\cE_t$;
    \item $\Big\| \sum_{t=1}^T \Lambda_{k,i\cdot}^\trans G_{k,t} \Lambda_{\text{-}k,z\cdot} E_{k,t} \Big\|_F^2 =\cO_P(Tp)$ for any $i\in[p_k]$, $z\in[\pmk]$.
\end{enumerate}
\end{lemma}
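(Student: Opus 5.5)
The plan is to prove each of the five bounds by a second-moment computation, followed by Markov's inequality. Every quantity is a (bi)linear form in the innovations of $\{\cF_t\}$, $\{\cF_{e,t}\}$ and $\{\epsilon_t\}$. By Assumptions~\ref{assum: core_factor} and~\ref{assum: noise} with $(\nu,\beta)=(4,0)$, these innovations have independent entries with bounded fourth moments. The temporal sums are controlled by the coefficient bounds \eqref{eqn: Ft_coef_bound}--\eqref{eqn: Ft_coef_bound2}, which give summable autocovariances. Throughout, write $E_{k,t}=A_{e,k}F_{e,k,t}\Aemk^\trans+\mat_k(\Sigma_\epsilon\circ\epsilon_t)$, and treat the factor-driven and idiosyncratic parts separately. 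For the factor-driven part, $\|A_{e,k}\|_1,\|A_{e,k}\|_\infty=O(1)$ gives $\|A_{e,k}\|=O(1)$, so it behaves like a weakly cross-correlated array.

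\emph{Part (i).} By independence of $\cG_t$ (built from $\cF_t$) and $\cE_t$, $\E\|\sum_t \Lambda_k G_{k,t}\Lambdamk^\trans E_{k,t}^\trans\|_F^2$ reduces to
\[
\sum_{t,s}\tr\bigl\{\E(\Lambda_k G_{k,t}\Lambdamk^\trans\Lambdamk G_{k,s}^\trans\Lambda_k^\trans)\,\E(E_{k,s}^\trans E_{k,t})\bigr\}.
\]
By Assumption~\ref{assum: loadings}, $\|\Lambda_k\|^2\asymp p_k$ and $\|\Lambdamk\|^2\asymp\pmk$. The factor autocovariance $\E(G_{k,t}\otimes G_{k,s})$ has summable lags. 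It is bounded in norm because Assumption~\ref{assum: trans_mat}~(i) bounds $\|A_{j,k}\|_F$ and Condition~\ref{cond:factor} applies. Finally, $\tr\E(E_{k,s}^\trans E_{k,t})\lesssim p\,|\gamma(t-s)|$. These give the order $(b-a)p_k\pmk\cdot p_k=(b-a)p_k^2\pmk$. The ``in particular'' statement is the same computation with $G_{k,t}$ replaced by $A_{j,k}F_{k,t}\Ajmk{j}^\trans$ on the $j$-th segment.

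\emph{Part (ii).} Use $\|\sum_tE_{k,t}E_{k,t}^\trans\|_F^2\le 2\|\sum_t(E_{k,t}E_{k,t}^\trans-\E E_{k,t}E_{k,t}^\trans)\|_F^2+2\|\E\sum_tE_{k,t}E_{k,t}^\trans\|_F^2$. For the mean term, each column of $E_{k,t}$ has covariance of bounded operator norm, so $\|\E\sum_tE_{k,t}E_{k,t}^\trans\|\le cT\pmk$. Then $\|\cdot\|_F^2\le p_k\|\cdot\|^2\lesssim T^2p_k\pmk^2$. For the centred term, expand entrywise. The variance of $\sum_t (E_{k,t}E_{k,t}^\trans)_{il}$ is $O(T\pmk)$ by the fourth-moment and serial-summability bounds; this is the step that uses $\nu\ge4$. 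Summing over the $p_k^2$ entries gives $Tp_k^2\pmk$.

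\emph{Parts (iii)--(v).} These follow the same template. For (iii), the vector $\sum_i\Lambda_{k,i\cdot}\sum_tE_{k,t,i\cdot}^\trans E_{k,t,z\cdot}$ splits into a centred part with second moment $O(Tp_k\pmk)=O(Tp)$ and a mean part $\sum_i\Lambda_{k,i\cdot}\E\sum_t E_{k,t,i\cdot}^\trans E_{k,t,z\cdot}$. By weak cross-sectional dependence the mean part is $O(T\pmk)$, which gives $T^2\pmk^2$. For (iv), condition on $V$ and apply Lemma~\ref{lemma: inequality_sandwich}~(ii) with $M=VV^\trans$, so that $\|M\|_F^2\le\|V\|_F^4$. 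This reduces the bound to moments of $\sum_tE_{k,t,ij}E_{k,t}^\trans\Lambda_k$, which split as before into $Tp_k^2$ (centred) and $T^2p_k$ (mean). For (v), use independence once more: the second moment is $\sum_{t,s}\E(\Lambda_{k,i\cdot}^\trans G_{k,t}\Lambda_{\text{-}k,z\cdot}\,\Lambda_{\text{-}k,z\cdot}^\trans G_{k,s}^\trans\Lambda_{k,i\cdot})\,\tr\E(E_{k,t}^\trans E_{k,s})$. The loadings have bounded entries, and the sum is $O(T\cdot p)$.

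The main obstacle is the fourth-moment cumulant bookkeeping in (ii) and (iv). The covariance of products of linear processes involves the filters $a_{e,h}$ and the noise loadings $A_{e,k}$, and it must be shown that the cross-sectional weak dependence does not inflate the rates beyond $Tp_k^2\pmk$. I would handle this by expressing everything through the independent innovations. The variance then splits into a pairing term, controlled by \eqref{eqn: Ft_coef_bound}, and a fourth-cumulant term, controlled by \eqref{eqn: Ft_coef_bound2}, exactly as in the proof of Lemma~\ref{lemma: FMF_expectation}. The norm bounds $\|A_{e,k}\|_1,\|A_{e,k}\|_\infty=O(1)$ absorb the cross-sectional sums.
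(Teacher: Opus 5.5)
\paragraph{Gap in part (iv).} Your reduction through Lemma~\ref{lemma: inequality_sandwich}~(ii) cannot deliver the stated rate.

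With $A_t=E_{k,t}$, $M=VV^\trans$ and $B_t=\Lambda_k^\trans E_{k,t}$, that lemma bounds the target by $\|VV^\trans\|_F^2\sum_{i\in[p_k],\,j\in[\pmk]}\|\sum_t E_{k,t,ij}\Lambda_k^\trans E_{k,t}\|_F^2$. This is a sum over all $p=p_k\pmk$ entries of $E_{k,t}$. For each fixed $(i,j)$:
\begin{itemize}
\item the centred part of $\sum_tE_{k,t,ij}\Lambda_k^\trans E_{k,t}$ has $r_k\pmk$ entries, each with variance of order $Tp_k$, so its second moment is of order $Tp$;
\item its mean is of order $T$ in Frobenius norm (essentially $T\Lambda_{k,i\cdot}$ placed in column $j$).
\end{itemize}
These orders are attained already for i.i.d.\ Gaussian noise. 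So the route gives $\E(\|V\|_F^4)(Tp^2+T^2p)$ and nothing better. This overshoots the claim $\E(\|V\|_F^4)(Tp_k^2+T^2p_k)$ by factors $\pmk^2$ and $\pmk$. Your assertion that the terms ``split as before into $Tp_k^2$ and $T^2p_k$'' is where the argument breaks.

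The loss is built into that lemma. It applies Cauchy--Schwarz over all $\pmk^2$ entries of $M$. That discards two facts: the centred bilinear terms are nearly uncorrelated across those entries, and their means are supported near the diagonal.

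The loss matters downstream. Take $V=\Ddot{\Lambda}_{\text{-}k}$, so $\E\|V\|_F^4\asymp\pmk^2$. Then the term $\wt{\cIV}_c$ in the proof of Lemma~\ref{lemma: consistency_proj} would only be $\cO_P\{1/(Tp_k)+1/(p_kp)\}$. This dominates the rate in Proposition~\ref{prop: consistency_proj} whenever $p_k\ll\pmk$.

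What is needed is a direct conditional second-moment computation, which is what the paper does. Given $V$, set $x_{ij}=\sum_tE_{k,t,i\cdot}^\trans VV^\trans E_{k,t,j\cdot}$ for $i,j\in[p_k]$. Their means have row sums over $j$ of order $T\,\E\|V\|_F^2$, by $\|A_{e,k}\|_1,\|A_{e,k}\|_\infty=O(1)$. Their variances are of order $T\,\E\|VV^\trans\|_F^2\le T\,\E\|V\|_F^4$. Aggregate these against the bounded rows of $\Lambda_k$, using that the centred $x_{ij}$ are essentially uncorrelated across $j$. This yields $T^2p_k(\E\|V\|_F^2)^2+Tp_k^2\,\E\|V\|_F^4$.

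\paragraph{The other parts.} Apart from (iv), your plan is sound and is more self-contained than the paper's route. The paper imports (i), (ii), (iii) and (v) from Lemmas~3 and~5 of \cite{CenLam2025_KronProd} and Proposition~1.1 of \cite{CenLam2025}. It first checks that rank-deficient loadings $\Lambda_\ell A_{j,\ell}$, segment-wise lengths and non-identically distributed innovations are permitted. You instead redo the pairing and fourth-cumulant computations from the linear-process representation. That costs more bookkeeping but makes the dependence on $\|A_{e,k}\|_1$, $\|A_{e,k}\|_\infty$ and the coefficient summability explicit.

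In (i), fix the bookkeeping. Your displayed trace multiplies a $p_k\times p_k$ matrix by a $\pmk\times\pmk$ one. Also, combining $\|\Lambda_k\|^2\|\Lambdamk\|^2\asymp p$ with $\tr\E(E_{k,s}^\trans E_{k,t})\lesssim p$ would give $p^2$ per lag. The correct expansion is $\sum_{t,s}\tr\{\E(\Lambdamk G_{k,t}^\trans\Lambda_k^\trans\Lambda_kG_{k,s}\Lambdamk^\trans)\,\E(E_{k,s}^\trans E_{k,t})\}$. The first factor has rank at most $\rmk$ and operator norm of order $p_k\pmk$. The second has operator norm of order $p_k$. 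Both carry summable lag weights, which gives $(b-a)p_k^2\pmk$.
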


\begin{proof}[Proof of Lemma~\ref{lemma: prelim_rate}]
For part~(i), we only show the special case
\[
\Bigg\| \sum_{t = \theta_{j-1} + 1}^{\theta_j} (\Lambda_k A_{j,k}) F_{k,t} (\otimes_{\ell\in[K]\setminus\{k\}} \Lambda_\ell A_{j,\ell})^\trans E_{k,t}^\trans \Bigg\|_F^2 = \cO_P\Big\{ (\theta_j -\theta_{j-1}) p_k^2 \pmk \Big\} ,
\]
while the general statement follows in exactly the same way. To do this, we apply the first result of Lemma~3 in \cite{CenLam2025_KronProd} with all factors being pervasive therein, observing that: (1) their results continue to hold even if the loading matrices have deficient column rank ($\Lambda_\ell A_{j,\ell}$ in our case); (2) the factor model is considered segment by segment here and hence the total length is replaced by each interval length $\theta_j -\theta_{j-1}$; and (3) all other related assumptions are satisfied by our Assumptions~\ref{assum: core_factor}, \ref{assum: loadings}, \ref{assum: noise} and \ref{assum: trans_mat}, except that we only need independent innovation process in our Definition~\ref{def: general_linear_tensor} but their results remain to hold from the proof. Then result is then direct. Similarly, the first claim in part~(ii) is by applying the second result of Lemma~3 in \cite{CenLam2025_KronProd}, together with its proof. For the second claim in part~(ii), note that
\[
\Big\|\E\Big( \sum_{t=1}^T E_{k,t} E_{k,t}^\trans \Big)\Big\|_1^2 = \Big\|\E\Big( \sum_{t=1}^T E_{k,t} E_{k,t}^\trans \Big)\Big\|_\infty^2 = \cO\Big(T^2 \pmk^2 \Big) ,
\]
where the second equality is direct from Proposition~1.1 in the supplement of \cite{CenLam2025}.
Then the claim holds by Hölder's inequality.

Consider part~(iii), and for simplicity, let $\cIV_{\Lambda,j} = \Big\| \sum_{i=1}^{p_k} \Lambda_{k,i\cdot} \sum_{t=1}^T E_{k,t,i\cdot}^\trans E_{k,t,j\cdot} \Big\|^2$. We will leverage the proof of Lemma~5 in \cite{CenLam2025_KronProd}, where their term $\cI_{3,j}$ is exactly our $\cIV_{\Lambda,j}$ scaled by $1/p_k$ due to the different normalization of the loading matrix. To verify the related conditions are fulfilled, note that our set of Assumption~\ref{assum: loadings} and Assumption~\ref{assum: noise} are the same as their Assumptions~(L1), (L2), (E1), and (E2), except that we only require independent innovation process in our Definition~\ref{def: general_linear_tensor} while they require \text{i.i.d.} innovation in their parallel conditions. However, from investigating their proof of Lemma~5, the results remain to hold if the innovations are not of the same distribution. Together with the fact that all our factors are strong and hence their Assumption~(R1) trivially holds, we may directly apply their result on $\cI_{3,j}$, so that as desired:
\[
\cIV_{\Lambda,j} = p_k \cdot \cO_P\Big( T\pmk + T^2 p_k^{-1} \pmk^2 \Big) = \cO_P\Big( Tp + T^2 \pmk^2 \Big) .
\]

Next, consider part~(iv). Note that by Assumption~\ref{assum: loadings}~(i), we have
\begin{equation}
\label{eqn: ELambdaLambdaE_Lambda}
\begin{split}
    \Big\| \sum_{t=1}^T E_{k,t} VV^\trans E_{k,t}^\trans \Lambda_k \Big\|_F^2 &= \sum_{i=1}^{p_k} \Big\| \sum_{j=1}^{p_k} \sum_{t=1}^T E_{k,t,i\cdot}^\trans VV^\trans E_{k,t,j\cdot} \Lambda_{k,j\cdot}^\trans \Big\|^2 \\
    &\lesssim
    \sum_{j=1}^{p_k} \sum_{i=1}^{p_k} \Big( \sum_{t=1}^T E_{k,t,i\cdot}^\trans VV^\trans E_{k,t,j\cdot} \Big)^2 .
\end{split}
\end{equation}
For any $i,j\in[p_k]$, consider first $\E\big( \sum_{t=1}^T E_{k,t,i\cdot}^\trans VV^\trans E_{k,t,j\cdot} \big)$. 
Under Assumption~\ref{assum: noise}, there exist innovation processes $\{\cX_{e,t}\}$, $\{\cX_{\epsilon,t}\}$ and coefficients $\{a_{e,w}\}$, $\{a_{\epsilon,h}\}$ as in Definition~\ref{def: general_linear_tensor} such that
\begin{equation}
\cE_t = \Big( \sum_{w\geq 0} a_{e,w} \cX_{e,t-w} \Big) \times_{k=1}^K A_{e,k} + \Sigma_{\epsilon} \circ \Big( \sum_{h\geq 0} a_{\epsilon,h} \cX_{\epsilon,t-h} \Big). \nonumber
\end{equation}
Then it holds immediately for any $k\in[K]$,
\begin{align*}
    & E_{k,t} = \sum_{w\geq 0} a_{e,w} A_{e,k} X_{e,t-w,k} \Aemk^\trans + \sum_{h\geq 0} a_{\epsilon,h} \Big( \Sigma_{\epsilon,k} \circ X_{\epsilon,t-h,k} \Big) , 
    \quad \text{where} \\
    X_{e,t-w,k} &= \mat_k(\cX_{e,t-w}), \;
    X_{\epsilon,t-h,k} = \mat_k(\cX_{\epsilon,t-h}), \;
    \Aemk = \otimes_{\ell\in [K]\setminus \{k\}} A_{e,\ell}, \;
    \Sigma_{\epsilon,k} = \mat_k(\Sigma_{\epsilon}) .
\end{align*}
Then we have
\begin{align*}
    &\quad
    \E\Big( \sum_{t=1}^T E_{k,t,i\cdot}^\trans VV^\trans E_{k,t,j\cdot} \Big) \\
    &=
    \E\Bigg\{ \sum_{t=1}^T \Big( \sum_{w\geq 0} a_{e,w} A_{e,k,i\cdot}^\trans X_{e,t-w,k} \Aemk^\trans + \sum_{h\geq 0} a_{\epsilon,h} \Sigma_{\epsilon,k,i\cdot}^\trans \circ X_{\epsilon,t-h,k,i\cdot}^\trans \Big) VV^\trans \\
    &\quad
    \cdot \Big( \sum_{w\geq 0} a_{e,w} \Aemk X_{e,t-w,k}^\trans A_{e,k,j\cdot} + \sum_{h\geq 0} a_{\epsilon,h} \Sigma_{\epsilon,k,j\cdot} \circ X_{\epsilon,t-h,k,j\cdot} \Big) \Bigg\} \\
    &=
    \E\Bigg( \sum_{t=1}^T \sum_{w\geq 0} \sum_{m\geq 0} a_{e,w} a_{e,m} A_{e,k,i\cdot}^\trans X_{e,t-w,k} \Aemk^\trans VV^\trans \Aemk X_{e,t-m,k}^\trans A_{e,k,j\cdot} \Bigg) \\
    &\quad
    + \E\Bigg( \sum_{t=1}^T \sum_{h\geq 0} \sum_{\ell\geq 0} a_{\epsilon,h} a_{\epsilon,\ell} (\Sigma_{\epsilon,k,i\cdot}^\trans \circ X_{\epsilon,t-h,k,i\cdot}^\trans) VV^\trans (\Sigma_{\epsilon,k,j\cdot} \circ X_{\epsilon,t-\ell,k,j\cdot}) \Bigg) \\
    &\lesssim
    \E[\tr(\Aemk^\trans VV^\trans \Aemk)] \cdot T \cdot A_{e,k,i\cdot}^\trans A_{e,k,j\cdot}
    + \E[\tr(VV^\trans)] \cdot T \cdot \mathbbm{I}_{\{i=j\}} ,
\end{align*}
so that together with the sparsity of $A_{e,\ell}$, $\ell\in[K]$, from Assumption~\ref{assum: noise},
\begin{equation}
\label{eqn: ELambdaLambdaE_mean}
\sum_{j=1}^{p_k} \sum_{i=1}^{p_k} \Big\{ \E\Big( \sum_{t=1}^T E_{k,t,i\cdot}^\trans VV^\trans E_{k,t,j\cdot} \Big) \Big\}^2 
\lesssim T^2 \cdot [\E(\|V\|_F^2)]^2 \cdot p_k .
\end{equation}
Next, for any $i,j\in[p_k]$, consider $\var\big( \sum_{t=1}^T E_{k,t,i\cdot}^\trans VV^\trans E_{k,t,j\cdot} \big)$ and we may simplify
\begin{align*}
    &\quad
    \var\Big( \sum_{t=1}^T E_{k,t,i\cdot}^\trans VV^\trans E_{k,t,j\cdot} \Big)
    = \var\Big( \sum_{t=1}^T \sum_{\ell=1}^{\pmk} \sum_{h=1}^{\pmk} E_{k,t,i\ell} (VV^\trans)_{\ell h} E_{k,t,jh} \Big) \\
    &=
    \var\Big\{ \sum_{t=1}^T \sum_{\ell=1}^{\pmk} \sum_{h=1}^{\pmk} (VV^\trans)_{\ell h} \Big( \sum_{w\geq 0} a_{e,w} A_{e,k,i\cdot}^\trans X_{e,t-w,k} A_{e,\text{-}k,\ell\cdot} + \sum_{m\geq 0} a_{\epsilon,m} \Sigma_{\epsilon,k,i\ell} X_{\epsilon,t-m,k,i\ell} \Big) \\
    &\quad
    \cdot \Big( \sum_{c\geq 0} a_{e,c} A_{e,k,j\cdot}^\trans X_{e,t-c,k} A_{e,\text{-}k,h\cdot} + \sum_{z\geq 0} a_{\epsilon,z} \Sigma_{\epsilon,k,jh} X_{\epsilon,t-z,k,jh} \Big) \Big\} \\
    &\lesssim
    T \cdot \sum_{\ell=1}^{\pmk} \sum_{h=1}^{\pmk} \E[(VV^\trans)_{\ell h}^2] \left( \sum_{w\geq 0} a_{e,w}^2 \left\Vert A_{e,k,i\cdot}\right\Vert^{2} \left\Vert A_{e,\text{-}k,\ell\cdot}\right\Vert^{2} + \sum_{m\geq 0} a_{\epsilon,m}^2 \Sigma_{\epsilon,k,i\ell}^{2} \right) \\
    &\quad
    \cdot \left( \sum_{c\geq 0} a_{e,c}^2 \left\Vert A_{e,k,j\cdot}\right\Vert^{2} \left\Vert A_{e,\text{-}k,h\cdot}\right\Vert^{2} + \sum_{z\geq 0} a_{\epsilon,z}^2 \Sigma_{\epsilon,k,jh}^{2} \right)
    \lesssim T \cdot \E(\|VV^\trans\|_F^2)
    \leq T \cdot \E(\|V\|_F^4) .
\end{align*}
Hence, we have
\begin{equation}
\label{eqn: ELambdaLambdaE_var}
\begin{split}
    &\quad
    \sum_{j=1}^{p_k} \sum_{i=1}^{p_k} \var\Big( \sum_{t=1}^T E_{k,t,i\cdot}^\trans VV^\trans E_{k,t,j\cdot} \Big)
    \lesssim T p_k^2 \cdot \E(\|V\|_F^4) .
\end{split}
\end{equation}

Combining~\eqref{eqn: ELambdaLambdaE_Lambda},~\eqref{eqn: ELambdaLambdaE_mean} and~\eqref{eqn: ELambdaLambdaE_var}, we have the desired:
\[
\E\Big(\Big\| \sum_{t=1}^T E_{k,t} VV^\trans E_{k,t}^\trans \Lambda_k \Big\|_F^2 \Big)
\lesssim
T^2 p_k \cdot [\E(\|V\|_F^2)]^2 + T p_k^2 \cdot \E(\|V\|_F^4) 
\leq
(T^2 p_k + T p_k^2) \cdot \E(\|V\|_F^4) .
\]
This completes the proof of claim (iv). For part~(v), it is similar to part~(i) and direct from the proof of Lemma~3 in \cite{CenLam2025_KronProd}.
\end{proof}

\begin{lemma}\label{lemma: eigenvalue_PCA_consistency}
We have the following. 
\begin{enumerate}[itemsep=0pt, label = (\roman*), left = 0pt]
\item Let Assumption~\ref{assum: trans_mat_alt}~(i)--(ii) hold. Then Assumption~\ref{assum: trans_mat_alt}~(iii)--(iv) implies Assumption~\ref{assum: trans_mat}~(iii). In fact, conditional on Assumption~\ref{assum: trans_mat_alt}~(iv), its part~(iii) is a necessary and sufficient condition for $\Sigma_{A,k}$ to be positive definite.
    \item Let Assumptions~\ref{assum: core_factor}, \ref{assum: loadings} and~\ref{assum: noise} hold. Further let Assumption~\ref{assum: trans_mat} 
    hold. For any $k\in[K]$, let $\wt{D}_k = \wt\Lambda_k^\trans \wh{\Gamma}_X^{(k)} \wt\Lambda_k /p_k$ be the diagonal matrix consisting of the leading $r_k$ eigenvalues of $\wh{\Gamma}_X^{(k)}$. Then as $\min\{T,p_1,\dots, p_K\} \to \infty$, the eigenvalues of $\wt{D}_k$ satisfy $\lambda_i(\wt{D}_k) = \lambda_i(\Sigma_{A,k}) + o_P(1)$ for any $i\leq r_k$.
\end{enumerate}
\end{lemma}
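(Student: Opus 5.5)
For part~(i), I will write $\Sigma_{A,k}$ as a nonnegative combination of the positive semi-definite matrices $A_{j,k}A_{j,k}^\trans$ and compare its kernel with that of the stacked matrix $[A_{1,k},\dots,A_{q+1,k}]$. Set $w_j := T^{-1}\|\Ajmk{j}\|_F^2(\theta_j-\theta_{j-1})$. Assumption~\ref{assum: trans_mat_alt}~(i) gives $\|\Ajmk{j}\|_F^2=\prod_{\ell\ne k}\|A_{j,\ell}\|_F^2$ bounded away from zero and infinity. Assumption~\ref{assum: trans_mat_alt}~(iv) gives $c_\Delta\le(\theta_j-\theta_{j-1})/T\le 1$. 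Hence $c\le w_j\le C$ for every $j$, uniformly in $T$. Since $q$ is then bounded ($q\le 1/c_\Delta$), we obtain the sandwich
\[
c\sum_j A_{j,k}A_{j,k}^\trans \preceq \sum_j w_j A_{j,k}A_{j,k}^\trans \preceq C\sum_j A_{j,k}A_{j,k}^\trans .
\]
The middle term converges to $\Sigma_{A,k}$ by part~(ii) of the assumption. The outer sum equals $[A_{1,k},\dots,A_{q+1,k}][A_{1,k},\dots,A_{q+1,k}]^\trans$, whose smallest eigenvalue is positive if and only if the stacked matrix has full row rank. I will read ``full row rank as $T\to\infty$'' as this smallest eigenvalue staying bounded away from zero. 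Then $\lambda_{\min}(\Sigma_{A,k})>0$ follows by passing to the limit in the lower bound. Conversely, if $\Sigma_{A,k}\succ0$, the upper bound forces the stacked Gram matrix to be eventually nondegenerate. This gives the claimed necessity and sufficiency under~(iv).

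For part~(ii), I will use Weyl's inequality on the population and sample mode-$k$ covariances. First I decompose
\[
\wh\Gamma_X^{(k)}=\frac{1}{Tp}\sum_t\big(\Lambda_kG_{k,t}\Lambdamk^\trans+E_{k,t}\big)\big(\Lambda_kG_{k,t}\Lambdamk^\trans+E_{k,t}\big)^\trans .
\]
For the signal part, Assumption~\ref{assum: loadings}~(ii) gives $\Lambdamk^\trans\Lambdamk=\pmk I$, so the signal equals $p_k^{-1}\Lambda_k\big(T^{-1}\sum_tG_{k,t}G_{k,t}^\trans\big)\Lambda_k^\trans$. Its nonzero eigenvalues are those of $T^{-1}\sum_tG_{k,t}G_{k,t}^\trans$, because $p_k^{-1/2}\Lambda_k$ has orthonormal columns. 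On segment $j$ we have $G_{k,t}G_{k,t}^\trans=A_{j,k}F_{k,t}\Ajmk{j}^\trans\Ajmk{j}F_{k,t}^\trans A_{j,k}^\trans$. Applying Lemma~\ref{lemma: FMF_expectation} segmentwise with $M=\Ajmk{j}^\trans\Ajmk{j}$, which has bounded Frobenius norm, gives
\[
T^{-1}\sum_tG_{k,t}G_{k,t}^\trans=\sum_j w_jA_{j,k}A_{j,k}^\trans+o_P(1)\to\Sigma_{A,k}.
\]
Segments of length $o(T)$ contribute $o(1)$ trivially. Thus the top $r_k$ eigenvalues of the signal part converge to those of $\Sigma_{A,k}$.

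The remaining step, which I expect to be the main obstacle, is to show that the cross and noise terms have operator norm $o_P(1)$. For the cross term, Lemma~\ref{lemma: prelim_rate}~(i) with $(a,b)=(0,T)$ gives Frobenius norm $(Tp)^{-1}\cO_P\{(Tp_k^2\pmk)^{1/2}\}=\cO_P\{(Tp_k\pmk)^{-1/2}\cdot p_k^{1/2}\}\cdot p_k^{-1/2}$, i.e.\ of order $(T\pmk)^{-1/2}=o_P(1)$. For the noise term, Lemma~\ref{lemma: prelim_rate}~(ii) splits it into a centred part of Frobenius norm $(Tp)^{-1}\cO_P\{(Tp_k^2\pmk)^{1/2}\}=o_P(1)$ and a mean part. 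The mean part has operator norm at most $(Tp)^{-1}\sqrt{c}\,T\pmk=\cO(1/p_k)=o(1)$, using the operator-norm bound in part~(ii). The care needed is to use the operator norm, not the Frobenius norm, for the mean part, since the latter would only give an $\cO(p_k^{-1/2})$ bound. Finally, Weyl's inequality yields $\lambda_i(\wt D_k)=\lambda_i(\wh\Gamma_X^{(k)})=\lambda_i(\Sigma_{A,k})+o_P(1)$ for $i\le r_k$. Here $\wt D_k$ is exactly the diagonal of the leading eigenvalues, by construction of $\wt\Lambda_k$ with $\wt\Lambda_k^\trans\wt\Lambda_k=p_kI$.
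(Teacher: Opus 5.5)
Your proposal is correct and follows essentially the same route as the paper. In part~(i) you write $\Sigma_{A,k}$ as a combination of the $A_{j,k}A_{j,k}^\trans$ with weights bounded away from zero under Assumption~\ref{assum: trans_mat_alt}~(iv); your Loewner sandwich is a tidier form of the paper's quadratic-form argument. In part~(ii) you use the same signal/cross/noise decomposition of $\wh\Gamma_X^{(k)}$, with Lemma~\ref{lemma: FMF_expectation} for the signal, Lemma~\ref{lemma: prelim_rate} for the remainders, and Weyl's inequality.

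Two small remarks. First, the trailing factor $p_k^{-1/2}$ in your cross-term display is a slip; the quantity is of order $(T\pmk)^{-1/2}$, as you conclude. Second, the operator-norm care for the mean noise part is not needed, since the Frobenius bound $\cO(p_k^{-1/2})$ used in the paper is already $o(1)$.
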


\begin{proof}[Proof of Lemma~\ref{lemma: eigenvalue_PCA_consistency}]
We first show result~(i), for which it is sufficient to show that given Assumption~\ref{assum: trans_mat_alt}~(iv), its part~(iii) is a necessary and sufficient condition for $\Sigma_{A,k}$ to be positive definite. To see this, note
\begin{equation}
\label{eqn: linear_combination_AA}
\frac{1}{T} \sum_{j=1}^{q+1} \|\Ajmk{j}\|_F^2 \cdot (\theta_j - \theta_{j-1}) \cdot A_{j,k} A_{j,k}^\trans = \sum_{j=1}^{q+1} c_j A_{j,k} A_{j,k}^\trans
\end{equation}
is a linear combination of $A_{j,k} A_{j,k}^\trans$ over $j\in[q+1]$ where $c_j =\|\Ajmk{j}\|_F^2 \cdot (\theta_j - \theta_{j-1})/T$ which is a positive constant. Then conditional on part~(iv) in Assumption~\ref{assum: trans_mat_alt}, we have Assumption~\ref{assum: trans_mat_alt}~(iii) if and only if Assumption~\ref{assum: trans_mat}~(iii) holds, as shown below.
\begin{itemize}[leftmargin=35pt]
    \item [``$\Rightarrow$'':] Fix any $k\in[K]$, the quadratic form of~\eqref{eqn: linear_combination_AA} satisfies, for any nonzero $u\in \R^{r_k}$, 
    \begin{align*}
    &\quad u^\trans \Big( \sum_{j=1}^{q+1} c_j A_{j,k} A_{j,k}^\trans \Big) u \\
    &=
    u^\trans [A_{1,k}, A_{2,k}, \dots, A_{q + 1,k}] \, \diag(c_1, c_2, \dots, c_{q + 1}) \, \big\{ u^\trans [A_{1,k}, A_{2,k}, \dots, A_{q + 1,k}] \big\}^\trans ,
    \end{align*}
    which is positive as $\min\{T,p_1,\dots,p_K\} \to\infty$, since $[A_{1,k}, \dots, A_{q + 1,k}] \in \R^{r_k \times (q+1)r_k}$ is asymptotically of full row rank from Assumption~\ref{assum: trans_mat_alt}~(iii), and $c_j$ is uniformly bounded away from zero over $j\in[q+1]$ from Assumption~\ref{assum: trans_mat}~(i) and Assumption~\ref{assum: trans_mat_alt}~(iv). Thus, $\Sigma_{A,k}$ is positive definite.
    \item [``$\Leftarrow$'':] As $\min\{T,p_1,\dots,p_K\} \to\infty$, if $[A_{1,k}, \dots, A_{q + 1,k}]$ does not have full row rank, then there must exists some nonzero $u\in \R^{r_k}$ making the quadratic form in the above ``$\Rightarrow$'' part zero, and hence $\Sigma_{A,k}$ cannot be positive definite.
\end{itemize}

Next, we show result~(ii) of the lemma. By~\eqref{eqn: tfm_change_rewrite} and the definition of $\wh{\Gamma}_X^{(k)}$, we have
\begin{equation}
\label{eqn: whGamma_X_decomp}
\begin{split}
    \wh{\Gamma}_X^{(k)} &= \frac{1}{Tp} \sum_{t=1}^T X_{k,t} X_{k,t}^\trans
    = \frac{1}{Tp} \sum_{t=1}^T (\Lambda_k G_{k,t} \Lambdamk^\trans + E_{k,t}) (\Lambda_k G_{k,t} \Lambdamk^\trans + E_{k,t})^\trans \\
    &=
    \frac{1}{Tp} \sum_{t=1}^T \Lambda_k G_{k,t} \Lambdamk^\trans \Lambdamk G_{k,t}^\trans \Lambda_k^\trans + \frac{1}{Tp} \sum_{t=1}^T \Lambda_k G_{k,t} \Lambdamk^\trans E_{k,t}^\trans \\
    &\quad
    + \frac{1}{Tp} \sum_{t=1}^T E_{k,t} \Lambdamk G_{k,t}^\trans \Lambda_k^\trans + \frac{1}{Tp} \sum_{t=1}^T E_{k,t} E_{k,t}^\trans
    =:
    \cI + \cII + \cIII + \cIV .
\end{split}
\end{equation}
We first show find the eigenvalues of $\cI$. Using~\eqref{eqn: tfm_change_rewrite}, any mode-$k$ unfolding of $\cG_t$ is read as
\begin{align*}
    G_{k,t} &= \sum_{j=1}^{q+1} A_{j,k} F_{k,t} \Ajmk{j}^\trans \cdot \mathbb{I}_{\{\theta_{j-1} <t\le \theta_j\}} ,
\end{align*}
so that immediately, as $\min\{T,p_1,\dots,p_K\} \to\infty$,
\begin{align}
    &\quad
    \frac{1}{T\pmk} \sum_{t=1}^T G_{k,t} \Lambdamk^\trans \Lambdamk G_{k,t}^\trans = \frac{1}{T\pmk} \sum_{t=1}^T \sum_{j=1}^{q+1} A_{j,k} F_{k,t} \Ajmk{j}^\trans \Lambdamk^\trans \Lambdamk \Ajmk{j} F_{k,t}^\trans A_{j,k}^\trans \cdot \mathbb{I}_{\{\theta_{j-1} <t\le \theta_j\}} \nonumber \\
    &=
    \frac{1}{T} \sum_{j=1}^{q+1} (\theta_j - \theta_{j-1}) A_{j,k} \Big\{ \frac{1}{\theta_j - \theta_{j-1}} \sum_{t = \theta_{j-1} + 1}^{\theta_j} F_{k,t} \Ajmk{j}^\trans \Big( \frac{\Lambdamk^\trans \Lambdamk}{\pmk} \Big) \Ajmk{j} F_{k,t}^\trans \Big\} A_{j,k}^\trans \nonumber \\
    &\xrightarrow{P}
    \frac{1}{T} \sum_{j=1}^{q+1} \|\Ajmk{j}\|_F^2 \cdot (\theta_j - \theta_{j-1}) \cdot A_{j,k} A_{j,k}^\trans
    \to \Sigma_{A,k},
\label{eqn: GLambdaLambdaG}
\end{align}
where the last line used Assumption~\ref{assum: loadings}, Assumption~\ref{assum: trans_mat}~(i)--(ii), and Lemma~\ref{lemma: FMF_expectation}. Together with Assumption~\ref{assum: trans_mat}~(iii) (or Assumption~\ref{assum: trans_mat_alt}~(iii)--(iv), according to part~(a) of this lemma), we have
\[
\cI \xrightarrow{P} \frac{\Lambda_k \Sigma_{A,k} \Lambda_k^\trans}{p_k} ,
\]
which has the leading $r_k$ eigenvalues asymptotically equal to those of $\Sigma_{A,k}$ by Assumption~\ref{assum: loadings} again. Hence $\lambda_i(\cI) = \lambda_i(\Sigma_{A,k}) + o_P(1)$ for all $i\leq r_k$, while $\lambda_i(\cI) =0$ for $i> r_k$ by $\rank(\cI)\leq r_k$.

Next, consider the squared Frobenius norm of $\cII$ in~\eqref{eqn: whGamma_X_decomp}. If we re-write
\begin{align*}
    \|\cII\|_F^2 &= \Big\| \frac{1}{Tp} \sum_{t=1}^T \Lambda_k G_{k,t} \Lambdamk^\trans E_{k,t}^\trans \Big\|_F^2 \\
    &=
    \Big\| \frac{1}{Tp} \sum_{t=1}^T \sum_{j=1}^{q+1} (\Lambda_k A_{j,k}) F_{k,t} (\otimes_{\ell\in[K]\setminus\{k\}} \Lambda_\ell A_{j,\ell})^\trans E_{k,t}^\trans \cdot \mathbb{I}_{\{\theta_{j-1} <t\le \theta_j\}} \Big\|_F^2 \\
    &=
    \frac{1}{(Tp)^2} \sum_{j=1}^{q+1} \Big\| \sum_{t = \theta_{j-1} + 1}^{\theta_j} (\Lambda_k A_{j,k}) F_{k,t} (\otimes_{\ell\in[K]\setminus\{k\}} \Lambda_\ell A_{j,\ell})^\trans E_{k,t}^\trans \Big\|_F^2 \\
    &=
    \frac{1}{(Tp)^2} \sum_{j=1}^{q+1} \cO_P\Big\{ (\theta_j -\theta_{j-1}) p_k^2 \pmk \Big\} = \cO_P\Big\{ \frac{Tp p_k}{(Tp)^2} \Big\} = \cO_P\Big( \frac{1}{T\pmk} \Big) ,
\end{align*}
where the last line used Lemma~\ref{lemma: prelim_rate}~(i). The above is also the same rate for $\|\cIII\|_F^2$ by exactly the same arguments. Similarly, we have
\begin{align*}
    \|\cIV\|_F^2 &= \Big\| \frac{1}{Tp} \sum_{t=1}^T E_{k,t} E_{k,t}^\trans \Big\|_F^2 = \frac{1}{(Tp)^2} \Big\| \sum_{t=1}^T E_{k,t} E_{k,t}^\trans \Big\|_F^2 \\
    &=
    \frac{1}{(Tp)^2} \cdot \cO_P\Big(Tp_k^2 \pmk + T^2 p_k \pmk^2 \Big)
    = \cO_P\Big( \frac{1}{T\pmk} + \frac{1}{p_k} \Big) ,
\end{align*}
where the last line used Lemma~\ref{lemma: prelim_rate}~(ii). Combining all the above and applying Weyl's inequality iteratively, the first $r_k$ eigenvalues of $\wh{\Gamma}_X^{(k)}$ are led by those of $\cI$, among the decomposition in~\eqref{eqn: whGamma_X_decomp}. This completes the proof of the lemma.
\end{proof}

\begin{lemma}\label{lemma: consistency_PCA}
Let all the assumptions in Lemma~\ref{lemma: eigenvalue_PCA_consistency}~(ii) hold. For any $k\in[K]$, with $\wt{D}_k$ from Lemma~\ref{lemma: eigenvalue_PCA_consistency}, define
\[
\wt{H}_k =\frac{1}{Tp} \Big(\sum_{t=1}^T \mat_k(\cG_t) \Lambdamk^\trans \Lambdamk \mat_k(\cG_t)^\trans \Big) \Lambda_k^\trans \wt\Lambda_k \wt{D}_k^{-1} .
\]
Then $\wt{H}_k$ is asymptotically invertible as $\min\{T,p_1,\dots, p_K\} \to \infty$ and, moreover, satisfies $\wt{H}_k \wt{H}_k^\trans = I_{r_k} + \cO_P( 1/\sqrt{T\pmk} + 1/p_k)$. Also, the initial estimator $\wt\Lambda_k$ satisfies that
\[
\frac{1}{p_k}\Big\| \wt\Lambda_k - \Lambda_k \wt{H}_k \Big\|_F^2 = \cO_P\Big( \frac{1}{T\pmk} + \frac{1}{p_k^2} \Big) .
\]
\end{lemma}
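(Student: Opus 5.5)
This is the standard PCA argument in the style of Bai (2003), built on the decomposition \eqref{eqn: whGamma_X_decomp}. By definition $\wh\Gamma_X^{(k)}\wt\Lambda_k = \wt\Lambda_k \wt D_k$, and Lemma~\ref{lemma: eigenvalue_PCA_consistency}~(ii) gives that $\wt D_k$ is invertible with high probability, with $\|\wt D_k^{-1}\| = \cO_P(1)$ because $\Sigma_{A,k}$ is positive definite. Substituting the decomposition, I would write
\[
\wt\Lambda_k - \Lambda_k\wt H_k = (\cII+\cIII+\cIV)\,\wt\Lambda_k \wt D_k^{-1},
\]
since $\cI\,\wt\Lambda_k\wt D_k^{-1}$ is exactly $\Lambda_k\wt H_k$ by the definition of $\wt H_k$. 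It then suffices to bound $p_k^{-1}\|(\cII+\cIII+\cIV)\wt\Lambda_k\|_F^2$ term by term, using $\|\wt\Lambda_k\|_F^2 = r_k p_k$.

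For $\cII$, the crude bound $\|\cII\|_F^2\|\wt\Lambda_k\|^2/p_k$ together with $\|\cII\|_F^2=\cO_P(1/(T\pmk))$ already gives the rate $1/(T\pmk)$. For $\cIII = (Tp)^{-1}\sum_t E_{k,t}\Lambdamk G_{k,t}^\trans\Lambda_k^\trans$, the same crude bound also works, because $\|\cIII\|_F=\|\cII\|_F$. The main obstacle is $\cIV$. A crude bound gives $\|\cIV\|_F^2 = \cO_P(1/(T\pmk) + 1/p_k)$, which is too weak by a factor of $p_k$ in the second term. So I would instead split $\wt\Lambda_k = \Lambda_k\wt H_k + (\wt\Lambda_k-\Lambda_k\wt H_k)$.

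For the first piece, $p_k^{-1}\|\cIV\Lambda_k\|_F^2$, I would split $\sum_t E_{k,t}E_{k,t}^\trans$ into its mean and a centered part. Using $\|\E\sum_tE_{k,t}E_{k,t}^\trans\|\lesssim T\pmk$ from Lemma~\ref{lemma: prelim_rate}~(ii), the mean part contributes $(Tp)^{-2}p_k^{-1}\,T^2\pmk^2\,\|\Lambda_k\|_F^2\asymp 1/p_k^2$. For the centered part I would apply Lemma~\ref{lemma: prelim_rate}~(iii) row by row over $z\in[p_k]$. This gives $(Tp)^{-2}p_k^{-1}\,p_k(Tp+T^2\pmk^2) = \cO_P(1/(Tp) + 1/p_k^2)$, which sits within the target rate.

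For the second piece, I would use $\|\cIV\|\le\|\cIV\|_F = \cO_P\big((T\pmk)^{-1/2} + p_k^{-1/2}\big)$. This turns it into $o_P(1)\cdot p_k^{-1}\|\wt\Lambda_k-\Lambda_k\wt H_k\|_F^2$, which I would absorb into the left-hand side. If the $p_k^{-1/2}$ part is not small enough for this, I would instead bound the operator norm sharply: the mean part has operator norm $\lesssim \pmk/p = 1/p_k$, and the centered part has Frobenius norm $\lesssim (T\pmk)^{-1/2}$. Together these give $\|\cIV\|=o_P(1)$, which is all the absorption needs.

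For $\wt H_k$ itself, Lemma~\ref{lemma: eigenvalue_PCA_consistency} and the limit \eqref{eqn: GLambdaLambdaG} show that $\wt H_k$ is bounded. The estimation bound then yields $p_k^{-1}\wt\Lambda_k^\trans\Lambda_k = \wt H_k^\trans + \cO_P\big((T\pmk)^{-1/2}+p_k^{-1}\big)$. Using $p_k^{-1}\wt\Lambda_k^\trans\wt\Lambda_k = I$ and $p_k^{-1}\Lambda_k^\trans\Lambda_k=I$ from Assumption~\ref{assum: loadings}, expanding $p_k^{-1}\wt\Lambda_k^\trans\wt\Lambda_k$ gives
\[
I = \wt H_k^\trans\wt H_k + \cO_P\big((T\pmk)^{-1/2}+p_k^{-1}\big).
\]
This implies asymptotic invertibility, and hence also $\wt H_k\wt H_k^\trans = I + \cO_P(\cdot)$, since $\wt H_k$ is square.
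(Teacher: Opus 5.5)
Your proposal is correct and follows essentially the same route as the paper: the identity $\wt\Lambda_k - \Lambda_k\wt H_k = (\cII+\cIII+\cIV)\wt\Lambda_k\wt D_k^{-1}$, splitting off $\Lambda_k\wt H_k$ and absorbing the remainder via $\|\cIV\|_F = o_P(1)$, Lemma~\ref{lemma: prelim_rate}~(iii) to bound $p_k^{-1}\|\cIV\Lambda_k\|_F^2$, and expanding $p_k^{-1}\wt\Lambda_k^\trans\wt\Lambda_k = I_{r_k}$ to control $\wt H_k$. Your mean/centered split of $\cIV\Lambda_k$ is harmless but unnecessary, because Lemma~\ref{lemma: prelim_rate}~(iii) already bounds the uncentered sum (its $T^2\pmk^2$ term is the mean contribution); likewise your fallback operator-norm argument is not needed, since $p_k^{-1/2}\to 0$ already makes $\|\cIV\|_F = o_P(1)$.
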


\begin{proof}[Proof of Lemma~\ref{lemma: consistency_PCA}]
By the definitions of eigenvalues and eigenvectors, we have $\wt\Lambda_k \wt{D}_k =\wh{\Gamma}_X^{(k)} \wt\Lambda_k$ and hence $\wt\Lambda_k =\wh{\Gamma}_X^{(k)} \wt\Lambda_k \wt{D}_k^{-1}$ by the fact that $\wt{D}_k$ is asymptotically invertible from Lemma~\ref{lemma: eigenvalue_PCA_consistency}, which also spells out $\|\wt{D}_k\|_F=\cO_P(1)$ and $\|\wt{D}_k^{-1}\|_F =\cO_P(1)$. Moreover, note that with $\Lambdamk^\trans \Lambdamk$ replaced by $\wtLambdamk^\trans \wtLambdamk$ in~\eqref{eqn: GLambdaLambdaG}, the arguments therein remain exactly the same
and hence for the matrix $\wt{H}_k$, we have $\|\wt{H}_k\|_F \asymp_P \|\Lambda_k^\trans \wt\Lambda_k\|_F /p_k = \cO_P(1)$ which used $\|\Lambda_k\|_F =\cO(p_k^{1/2})$ from Assumption~\ref{assum: loadings} and $\|\wt\Lambda_k\|_F =\cO_P(p_k^{1/2})$ from its definition. All the above auxiliary results are summarized below:
\begin{equation}
\label{eqn: aux_rates}
\textcolor{black}{
\max_{1 \le k \le K} \l\{
    \|\wt{D}_k\|_F ,\; \|\wt{D}_k^{-1}\|_F ,\; \|\wt{H}_k\|_F \r\} = \cO_P(1)
    \text{ \ and \ }
    \max_{1 \le k \le K} \l\{ \|\Lambda_k\|_F ,\; \|\wt\Lambda_k\|_F \r\} = \cO_P(p_k^{1/2}).
}
\end{equation}
With~\eqref{eqn: whGamma_X_decomp}, we have $\wt\Lambda_k - \Lambda_k \wt{H}_k = (\cII + \cIII + \cIV) \wt\Lambda_k \wt{D}_k^{-1}$. Thus using~\eqref{eqn: aux_rates},
\begin{equation}
\label{eqn: PCA_decomp}
\begin{split}
    &\quad \frac{1}{p_k}\Big\| \wt\Lambda_k - \Lambda_k \wt{H}_k \Big\|_F^2 = \frac{1}{p_k} \Big\|(\cII + \cIII + \cIV) \wt\Lambda_k \wt{D}_k^{-1} \Big\|_F^2 \\
    &\lesssim_P
    \frac{1}{p_k} \Big\|(\cII + \cIII + \cIV) \Lambda_k \wt{H}_k \Big\|_F^2 + \frac{1}{p_k} \Big\|(\cII + \cIII + \cIV) (\wt\Lambda_k - \Lambda_k \wt{H}_k) \Big\|_F^2 \\
    &=
    \cO_P\Big\{ \frac{1}{p_k} \Big\|(\cII + \cIII + \cIV) \Lambda_k \Big\|_F^2 \Big\} + o_P\Big( \frac{1}{p_k}\Big\| \wt\Lambda_k - \Lambda_k \wt{H}_k \Big\|_F^2 \Big) ,
\end{split}
\end{equation}
where the last equality used the rates for $\|\cII\|_F^2$, $\|\cIII\|_F^2$, and $\|\cIV\|_F^2$ in the proof of Lemma~\ref{lemma: eigenvalue_PCA_consistency}.

For the first term in the last line of~\eqref{eqn: PCA_decomp}, note immediately by the rates for $\|\cII\|_F^2$ and $\|\cIII\|_F^2$ in the proof of Lemma~\ref{lemma: eigenvalue_PCA_consistency} and~\eqref{eqn: aux_rates} that
\begin{equation}
\label{eqn: cII_cIII_Lambda}
\frac{1}{p_k} \Big\|(\cII + \cIII) \Lambda_k \Big\|_F^2 \lesssim \Big(\|\cII\|_F^2 + \|\cIII\|_F^2\Big) \cdot \frac{1}{p_k} \|\Lambda_k\|_F^2 = \cO_P\Big( \frac{1}{T\pmk} \Big) .
\end{equation}
Next, we study $\|\cIV \cdot \Lambda_k\|_F^2 /p_k$. To this end, we write
\begin{equation}
\label{eqn: cIV_Lambda}
\begin{split}
    \frac{1}{p_k} \|\cIV  \cdot \Lambda_k \|_F^2 &= \frac{1}{p_k} \Big\| \frac{1}{Tp} \sum_{t=1}^T E_{k,t} E_{k,t}^\trans \Lambda_k \Big\|_F^2
    = \frac{1}{p_k} \sum_{z=1}^{p_k} \Big\| \frac{1}{Tp} \sum_{t=1}^T (E_{k,t} E_{k,t}^\trans \Lambda_{k})_{z\cdot} \Big\|^2 \\
    &=
    \frac{1}{(Tp)^2 p_k} \sum_{z=1}^{p_k} \Big\| \sum_{i=1}^{p_k} \Lambda_{k,i\cdot} \sum_{t=1}^T E_{k,t,i\cdot}^\trans E_{k,t,z\cdot} \Big\|^2 \\
    &=
    \frac{1}{(Tp)^2 p_k} \sum_{z=1}^{p_k} \cO_P\Big( Tp + T^2 \pmk^2 \Big)
    = \cO_P\Big( \frac{1}{Tp} + \frac{1}{p_k^2} \Big).
\end{split}
\end{equation}
where the second last equality used Lemma~\ref{lemma: prelim_rate}~(iii).

Combining~\eqref{eqn: PCA_decomp},~\eqref{eqn: cII_cIII_Lambda} and~\eqref{eqn: cIV_Lambda}, we conclude $\| \wt\Lambda_k - \Lambda_k \wt{H}_k \|_F^2 /p_k =\cO_P(1/(T\pmk) + 1/p_k^2)$, as desired. To end the proof of the lemma, it remains to show $\wt{H}_k \wt{H}_k^\trans = I_{r_k} + \cO_P( 1/\sqrt{T\pmk} + 1/p_k)$ and hence $\wt{H}_k \wt{H}_k^\trans \xrightarrow{P} I_{r_k}$, which follows from that
\begin{align*}
    I_{r_k} &= \frac{1}{p_k} \wt\Lambda_k^\trans \wt\Lambda_k = \frac{1}{p_k} \wt\Lambda_k^\trans (\wt\Lambda_k - \Lambda_k \wt{H}_k) + \frac{1}{p_k} \wt\Lambda_k^\trans \Lambda_k \wt{H}_k \\
    &=
    \frac{1}{p_k} \wt\Lambda_k^\trans (\wt\Lambda_k - \Lambda_k \wt{H}_k) + \frac{1}{p_k} (\wt\Lambda_k - \Lambda_k \wt{H}_k)^\trans \Lambda_k \wt{H}_k + \frac{1}{p_k} \wt{H}_k^\trans \Lambda_k^\trans \Lambda_k \wt{H}_k \\
    &=
    \wt{H}_k \wt{H}_k^\trans + \cO_P\Big( \frac{1}{\sqrt{T\pmk}} + \frac{1}{p_k} \Big),
\end{align*}
where the last equality used Assumption~\ref{assum: loadings},~\eqref{eqn: aux_rates} and the rate of $\|\wt\Lambda_k - \Lambda_k \wt{H}_k\|_F$.
\end{proof}

\begin{lemma}\label{lemma: population_cov_PCA_consistency}
Let all the assumptions in Lemma~\ref{lemma: eigenvalue_PCA_consistency}~(ii) hold. For any $k\in[K]$, with $\cI$ from~\eqref{eqn: whGamma_X_decomp}, define
\begin{equation}
\label{eqn: DdotGamma_X_decomp}
\begin{split}
    \Ddot{\Gamma}_X^{(k)} &= \frac{1}{Tp} \sum_{t=1}^T \Lambda_k G_{k,t} \Lambdamk^\trans \Lambdamk G_{k,t}^\trans \Lambda_k^\trans + \frac{1}{Tp} \sum_{t=1}^T \E(E_{k,t} E_{k,t}^\trans)
    =: \cI + {\Gamma}_E^{(k)} .
\end{split}
\end{equation}
Let $\Ddot{\Lambda}_k$ be $\sqrt{p_k}$ times the $r_k$ leading eigenvectors of $\Ddot{\Gamma}_X^{(k)}$. Then as $\min\{T,p_1,\dots, p_K\} \to \infty$, there exists orthogonal matrix $J_k\in \R^{r_k\times r_k}$ such that
\[
\frac{1}{p_k} \Big\| \wt\Lambda_k - \Ddot{\Lambda}_k J_k \Big\|_F^2 = \cO_P\Big(\frac{1}{T\pmk} \Big).
\]
\end{lemma}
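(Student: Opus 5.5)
This is an eigenvector-perturbation argument comparing $\wh\Gamma_X^{(k)}$ with $\Ddot\Gamma_X^{(k)}$. By~\eqref{eqn: whGamma_X_decomp} and~\eqref{eqn: DdotGamma_X_decomp},
\[
\wh\Gamma_X^{(k)} - \Ddot\Gamma_X^{(k)} = \cII + \cIII + \Big(\cIV - {\Gamma}_E^{(k)}\Big),
\]
and the centred term $\cIV - \Gamma_E^{(k)} = (Tp)^{-1}\sum_{t}\{E_{k,t}E_{k,t}^\trans - \E(E_{k,t}E_{k,t}^\trans)\}$ no longer carries the $1/p_k$ bias that appears in $\|\cIV\|_F^2$. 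I will apply the Davis--Kahan $\sin\Theta$ theorem in its Frobenius-norm form to the two symmetric matrices $\wh\Gamma_X^{(k)}$ and $\Ddot\Gamma_X^{(k)}$. This gives
\[
\frac{1}{p_k}\big\|\wt\Lambda_k - \Ddot\Lambda_k J_k\big\|_F^2 \lesssim \frac{\|\wh\Gamma_X^{(k)} - \Ddot\Gamma_X^{(k)}\|_F^2}{\delta_k^2},
\]
where $J_k$ is the orthogonal Procrustes rotation between the two eigenvector blocks (the usual $\sqrt2$ factor is absorbed) and $\delta_k$ is the eigengap.

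The first step is the eigengap. From the proof of Lemma~\ref{lemma: eigenvalue_PCA_consistency}, $\cI \xrightarrow{P} p_k^{-1}\Lambda_k\Sigma_{A,k}\Lambda_k^\trans$, so its top $r_k$ eigenvalues converge to those of the positive definite $\Sigma_{A,k}$ and the rest are zero. Lemma~\ref{lemma: prelim_rate}~(ii) gives $\|\Gamma_E^{(k)}\| \le (Tp)^{-1}\sqrt{c}\,T\pmk = \cO(1/p_k) = o(1)$. By Weyl's inequality, $\lambda_{r_k}(\Ddot\Gamma_X^{(k)})$ is bounded away from zero and $\lambda_{r_k+1}(\Ddot\Gamma_X^{(k)}) = o(1)$. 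The rates below show $\|\wh\Gamma_X^{(k)}-\Ddot\Gamma_X^{(k)}\| = o_P(1)$, so the same gap transfers to $\wh\Gamma_X^{(k)}$ and $\delta_k \gtrsim_P 1$.

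The second step bounds the perturbation. The proof of Lemma~\ref{lemma: eigenvalue_PCA_consistency} already gives $\|\cII\|_F^2 = \|\cIII\|_F^2 = \cO_P(1/(T\pmk))$. For the centred noise term, Lemma~\ref{lemma: prelim_rate}~(ii) identifies the $Tp_k^2\pmk$ part of its bound as the fluctuation contribution, so
\[
\big\|\cIV - \Gamma_E^{(k)}\big\|_F^2 = (Tp)^{-2}\,\cO_P\big(Tp_k^2\pmk\big) = \cO_P\big(1/(T\pmk)\big).
\]
Combining the three terms with the gap bound yields the stated rate. The second piece of Lemma~\ref{lemma: prelim_rate}~(ii), the $T^2p_k\pmk^2$ term, was exactly $\|\E\sum_t E_{k,t}E_{k,t}^\trans\|_F^2$; because it is absorbed into $\Ddot\Gamma_X^{(k)}$, the $1/p_k^2$ term of Lemma~\ref{lemma: consistency_PCA} does not appear here.

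The main obstacle is making sure the fluctuation bound from Lemma~\ref{lemma: prelim_rate}~(ii) really controls $\|\sum_t\{E_{k,t}E_{k,t}^\trans - \E(E_{k,t}E_{k,t}^\trans)\}\|_F^2$ in probability, rather than only the uncentred quantity. I would verify this directly with a second-moment computation. Use the linear representation of $E_{k,t}$ from the proof of Lemma~\ref{lemma: prelim_rate}~(iv), and sum the variances of the entries as in the variance bound there with $VV^\trans = I$. This uses $\nu\ge4$, the bounds $\|A_{e,k}\|_1,\|A_{e,k}\|_\infty = \cO(1)$, and the coefficient bounds~\eqref{eqn: Ft_coef_bound}--\eqref{eqn: Ft_coef_bound2}, and it gives order $Tp_k^2\pmk$. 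Markov's inequality then finishes the argument.
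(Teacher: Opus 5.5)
Your argument is correct and is essentially the paper's proof. The paper verifies conditions (C1)--(C3) of Lemma~B.4(i) in \cite{Barigozzietal2025_robust}: the leading $r_k$ eigenvalues of $\cI$ are bounded away from zero and infinity, $\|\Ddot\cIV+\cII+\cIII\|_F^2=\cO_P(1/(T\pmk))$ by the first claim of Lemma~\ref{lemma: prelim_rate}~(ii), and $\|p_k\Gamma_E^{(k)}\|=\cO(1)$. These are exactly your eigengap and perturbation steps, except that you apply the Davis--Kahan $\sin\Theta$ bound with a Procrustes rotation directly rather than through that packaged lemma, and you re-derive the centred fluctuation bound by a second-moment computation instead of quoting it.
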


\begin{proof}[Proof of Lemma~\ref{lemma: population_cov_PCA_consistency}]
For our proof, we will adapt Lemma~B.4~(i) in \cite{Barigozzietal2025_robust}, by equating our $p_k \wh{\Gamma}_X^{(k)}$, $p_k \Ddot{\Gamma}_X^{(k)}$, and $p_k \cI$ as their $\wh{\mathbf{S}}$, $\wt{\mathbf{S}}$, and $\mathbf{S}$, respectively. Instead of verifying the exact condition (C1) in \cite{Barigozzietal2025_robust}, we note that the eigenvalues required therein need not be necessarily distinct, in which case the diagonal matrix $\wt{\mathbf{J}}$ in their Lemma~B.4~(i) is compromised to some orthogonal matrix, say matrix $J_k$. It suffices to show $p_k^{-1}$ times the first $r_k$ eigenvalues of $p_k \cI$, or equivalently, the first $r_k$ eigenvalues of $\cI$, are bounded away from zero and infinity, which is true from the proof of Lemma~\ref{lemma: eigenvalue_PCA_consistency}~(ii).

It remains to verify the conditions (C2) and (C3) in Lemma~B.4 in \cite{Barigozzietal2025_robust}. For the condition (C2), using the decomposition of $\wh{\Gamma}_X^{(k)}$ in~\eqref{eqn: whGamma_X_decomp}, we have
\begin{align*}
\begin{split}
    p_k^{-2} \Big\| p_k\wh{\Gamma}_X^{(k)} - p_k \Ddot{\Gamma}_X^{(k)} \Big\|_F^2 &= \Bigg\| \frac{1}{Tp} \sum_{t=1}^T \Big\{ E_{k,t} E_{k,t}^\trans - \E(E_{k,t} E_{k,t}^\trans) \Big\}
    + \cII + \cIII \Bigg\|_F^2 \\
    &=: \Big\| \Ddot\cIV + \cII + \cIII \Big\|_F^2.
\end{split}
\end{align*}
By the first claim in Lemma~\ref{lemma: prelim_rate}~(ii), we have $\|\Ddot\cIV\|_F^2 = \cO_P(1/T\pmk)$, the rate of which applies also to $\|\cII\|_F^2$ and $\|\cIII\|_F^2$ from the proof of Lemma~\ref{lemma: eigenvalue_PCA_consistency}. Thus, the term $\zeta_{n,p}$ in condition (C2) is $1/\sqrt{T\pmk}$ here. Lastly, condition (C3) is direct by noting that~\eqref{eqn: DdotGamma_X_decomp} gives
\[
\Big\| p_k \Ddot{\Gamma}_X^{(k)} -p_k \cI \Big\| = \frac{1}{T\pmk} \Big\| \sum_{t=1}^T \E(E_{k,t} E_{k,t}^\trans) \Big\| = \frac{1}{T\pmk} \cO\Big(T \pmk \Big) = \cO(1) ,
\]
where the second equality used the second claim in Lemma~\ref{lemma: prelim_rate}~(ii). With the relevant conditions in Lemma~B.4 of \cite{Barigozzietal2025_robust} verified, and with the fact that $\wt\Lambda_k$ and $\Ddot{\Lambda}_k$ are $\sqrt{p_k}$ times the first $r_k$ eigenvectors of $p_k \wh{\Gamma}_X^{(k)}$ and $p_k \Ddot{\Gamma}_X^{(k)}$ or equivalently, eigenvectors of $\wh{\Gamma}_X^{(k)}$ and $\Ddot{\Gamma}_X^{(k)}$, we conclude that there exists orthogonal matrix $J_k\in \R^{r_k\times r_k}$ such that
\[
\frac{1}{\sqrt{p_k}} \Big\| \wt\Lambda_k - \Ddot{\Lambda}_k J_k \Big\|_F = \cO_P\Big(\frac{1}{\sqrt{T\pmk}} \Big) ,
\]
as desired. This completes the proof of the lemma.
\end{proof}

\begin{lemma}\label{lemma: eigenvalue_proj_consistency}
Let all the assumptions in Lemma~\ref{lemma: eigenvalue_PCA_consistency}~(ii) hold. 
For any $k\in[K]$, recall $\wh{D}_k$ from Lemma~\ref{lemma: consistency_proj}.
Then as $\min\{T,p_1,\dots, p_K\} \to \infty$, the eigenvalues of $\wh{D}_k$ satisfy $\lambda_i(\wh{D}_k) = \lambda_i(\Sigma_{A,k}) + o_P(1)$ for any $i\leq r_k$.
\end{lemma}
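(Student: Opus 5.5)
The plan is to mirror the proof of Lemma~\ref{lemma: eigenvalue_PCA_consistency}~(ii). We replace $\wh\Gamma_X^{(k)}$ by the projected matrix $\wh\Gamma_Y^{(k)}$ and take $\wh D_k = \wh\Lambda_k^\trans \wh\Gamma_Y^{(k)} \wh\Lambda_k/p_k$ to be the diagonal matrix of its $r_k$ leading eigenvalues. We decompose $\wh\Gamma_Y^{(k)}$ into a signal term plus remainders, show that the signal term has its leading $r_k$ eigenvalues converging to those of $\Sigma_{A,k}$, and show that the remainders vanish in Frobenius norm. Weyl's inequality then concludes.

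Using $X_{k,t} = \Lambda_k G_{k,t}\Lambdamk^\trans + E_{k,t}$ in~\eqref{eqn: est_lam}, write $M_k := \Lambdamk^\trans \wtLambdamk/\pmk = \otimes_{\ell\ne k} (\Lambda_\ell^\trans\wt\Lambda_\ell/p_\ell)$. Then
\[
\wh\Gamma_Y^{(k)} = \frac{1}{Tp_k}\sum_{t=1}^T \Big(\Lambda_k G_{k,t} M_k + \tfrac{1}{\pmk}E_{k,t}\wtLambdamk\Big)\Big(\Lambda_k G_{k,t} M_k + \tfrac{1}{\pmk}E_{k,t}\wtLambdamk\Big)^\trans =: \cI_Y + \cII_Y + \cIII_Y + \cIV_Y ,
\]
with $\cI_Y = p_k^{-1}\Lambda_k \{T^{-1}\sum_t G_{k,t}M_kM_k^\trans G_{k,t}^\trans\}\Lambda_k^\trans$. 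By Lemma~\ref{lemma: consistency_PCA} and Assumption~\ref{assum: loadings}, we have $\Lambda_\ell^\trans\wt\Lambda_\ell/p_\ell = \wt H_\ell + o_P(1)$ and $\wt H_\ell\wt H_\ell^\trans = I_{r_\ell}+o_P(1)$, so $M_kM_k^\trans = I_{\rmk} + o_P(1)$. We split $T^{-1}\sum_t G_{k,t}M_kM_k^\trans G_{k,t}^\trans = T^{-1}\sum_t G_{k,t}G_{k,t}^\trans + T^{-1}\sum_t G_{k,t}(M_kM_k^\trans - I)G_{k,t}^\trans$. The first piece converges to $\Sigma_{A,k}$ exactly as in~\eqref{eqn: GLambdaLambdaG} (with $\Lambdamk^\trans\Lambdamk/\pmk = I$), via Lemma~\ref{lemma: FMF_expectation} and Assumption~\ref{assum: trans_mat}. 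The second piece is handled by Lemma~\ref{lemma: inequality_sandwich}~(i), which gives Frobenius norm at most $\|M_kM_k^\trans - I\|\cdot\|T^{-1}\sum_t G_{k,t}G_{k,t}^\trans\|_F = o_P(1)\cdot\cO_P(1)$. Since $p_k^{-1}\Lambda_k^\trans\Lambda_k = I_{r_k}$, the leading $r_k$ eigenvalues of $\cI_Y$ are $\lambda_i(\Sigma_{A,k}) + o_P(1)$, and the remaining eigenvalues are zero.

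The main obstacle is that $\wtLambdamk$ is data-dependent, so Lemma~\ref{lemma: prelim_rate}~(iv) cannot be applied to $\cIV_Y$ directly. I would avoid this by again using Lemma~\ref{lemma: inequality_sandwich}~(i) with the random middle matrix $\wtLambdamk\wtLambdamk^\trans/\pmk$. Its operator norm is at most $\prod_{\ell\ne k}\|\wt\Lambda_\ell\|^2/p_\ell = 1$, since $\wt\Lambda_\ell^\trans \wt\Lambda_\ell = p_\ell I$. This gives $\|\cIV_Y\|_F \le \|(Tp)^{-1}\sum_t E_{k,t}E_{k,t}^\trans\|_F = \cO_P\{(T\pmk)^{-1/2} + p_k^{-1/2}\} = o_P(1)$ by Lemma~\ref{lemma: prelim_rate}~(ii). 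For the cross terms $\cII_Y$ and $\cIII_Y$, the trace Cauchy--Schwarz inequality $\|\sum_t A_tB_t^\trans\|_F \le \|\sum_t A_tA_t^\trans\|_F^{1/2}\|\sum_t B_tB_t^\trans\|_F^{1/2}$ (applied to the stacked matrices $[A_1,\dots,A_T]$ and $[B_1,\dots,B_T]$, using $\|AB^\trans\|_F\le \|A\|\,\|B\|_F$) bounds each by $\|\cI_Y\|_F^{1/2}\|\cIV_Y\|_F^{1/2} = \cO_P(1)\cdot o_P(1)$. If sharper rates were needed, one could instead substitute $\wtLambdamk = \Lambdamk(\otimes\wt H_\ell) + (\wtLambdamk - \Lambdamk\otimes\wt H_\ell)$ and use Lemma~\ref{lemma: prelim_rate}~(i),(v) together with Lemma~\ref{lemma: consistency_PCA}. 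However, only $o_P(1)$ is required here.

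Finally, iterating Weyl's inequality over $\cI_Y + \cII_Y + \cIII_Y + \cIV_Y$ shows that $\lambda_i(\wh\Gamma_Y^{(k)}) = \lambda_i(\cI_Y) + o_P(1) = \lambda_i(\Sigma_{A,k}) + o_P(1)$ for $i \le r_k$. These are exactly the diagonal entries of $\wh D_k$, which completes the argument.
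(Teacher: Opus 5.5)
Your proof is correct. Its skeleton is the paper's: the four-term decomposition of $\wh\Gamma_Y^{(k)}$ (your $\cI_Y,\dots,\cIV_Y$ coincide with the paper's $\wt\cI,\dots,\wt\cIV$), convergence of the signal term to $p_k^{-1}\Lambda_k\Sigma_{A,k}\Lambda_k^\trans$, and Weyl's inequality. The difference is in how the remainders are controlled.

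\begin{itemize}
\item \emph{Noise term.} You apply Lemma~\ref{lemma: inequality_sandwich}~(i) directly, with the data-dependent projection $\wtLambdamk\wtLambdamk^\trans/\pmk$ as the middle matrix. This is legitimate because that inequality is deterministic and holds for any middle matrix. The paper first swaps in $\Lambdamk\Lambdamk^\trans/\pmk$ and then arrives at the same bound $\|\cIV\|_F$. The same device also makes explicit the replacement of $M_kM_k^\trans$ by the identity in the signal term, which the paper passes over with a limit arrow.
\item \emph{Cross terms.} This is where the routes genuinely diverge. You bound $\|\wt\cII\|_F\le\|\wt\cI\|_F^{1/2}\|\wt\cIV\|_F^{1/2}$. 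The paper instead replaces $\wtLambdamk$ by the $E$-independent eigenvectors of Lemma~\ref{lemma: population_cov_PCA_consistency}, and combines Lemma~\ref{lemma: inequality_sandwich}~(ii) with Lemma~\ref{lemma: prelim_rate}~(v) to get $\|\wt\cII\|_F^2=\cO_P\{(T\pmk)^{-1}+\wmk/T\}$.
\end{itemize}

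Your route is shorter and sidesteps the data-dependence of $\wtLambdamk$ entirely. However, it only yields $\|\wt\cII\|_F=\cO_P\{(T\pmk)^{-1/4}+p_k^{-1/4}\}$. That suffices for the $o_P(1)$ eigenvalue claim here. But the proof of Lemma~\ref{lemma: consistency_proj} explicitly reuses the sharper rates for $\wt\cII$ and $\wt\cIII$ derived in this proof. So the paper's heavier argument is doing double duty, and the refinement you sketch at the end would be needed there.

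One small correction to your justification. The cross-term inequality is true, but it follows from
\[
\|\mathbf{A}\mathbf{B}^\trans\|_F^2=\tr\{(\mathbf{A}^\trans\mathbf{A})(\mathbf{B}^\trans\mathbf{B})\}\le\|\mathbf{A}^\trans\mathbf{A}\|_F\,\|\mathbf{B}^\trans\mathbf{B}\|_F,
\]
with $\mathbf{A}=[A_1,\dots,A_T]$ and $\mathbf{B}=[B_1,\dots,B_T]$. That is Cauchy--Schwarz for the Frobenius inner product of the Gram matrices. It does not follow from $\|AB^\trans\|_F\le\|A\|\,\|B\|_F$. That inequality would instead produce $\{\tr(\wt\cIV)\}^{1/2}$, which your operator-norm bound on the middle matrix only controls at order $\cO_P(1)$.
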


\begin{proof}[Proof of Lemma~\ref{lemma: eigenvalue_proj_consistency}]
From the definition of $Y_{k,t}$ in~\eqref{eqn: est_lam}, we have
\begin{align}
    \wh{\Gamma}_Y^{(k)} &= \frac{1}{Tp_k} \sum_{t=1}^T Y_{k,t} Y_{k,t}^\trans
    = \frac{1}{Tp_k\pmk^2} \sum_{t=1}^T \mat_{k}(\cX_t) \wtLambdamk \wtLambdamk^\trans \mat_{k}(\cX_t)^\trans \notag \\
    &=
    \frac{1}{Tp\pmk} \sum_{t=1}^T (\Lambda_k G_{k,t} \Lambdamk^\trans + E_{k,t}) \wtLambdamk \wtLambdamk^\trans (\Lambda_k G_{k,t} \Lambdamk^\trans + E_{k,t})^\trans \notag \\
    &=
    \frac{1}{Tp\pmk} \sum_{t=1}^T \Lambda_k G_{k,t} \Lambdamk^\trans \wtLambdamk \wtLambdamk^\trans \Lambdamk G_{k,t}^\trans \Lambda_k^\trans
    + \frac{1}{Tp\pmk} \sum_{t=1}^T \Lambda_k G_{k,t} \Lambdamk^\trans \wtLambdamk \wtLambdamk^\trans E_{k,t}^\trans \notag \\
    &\quad
    + \frac{1}{Tp\pmk} \sum_{t=1}^T E_{k,t} \wtLambdamk \wtLambdamk^\trans \Lambdamk G_{k,t}^\trans \Lambda_k^\trans
    + \frac{1}{Tp\pmk} \sum_{t=1}^T E_{k,t} \wtLambdamk \wtLambdamk^\trans E_{k,t}^\trans
    \notag \\
    &=:
    \wt\cI + \wt\cII + \wt\cIII + \wt\cIV .
    \label{eqn: whGamma_Y_decomp}
\end{align}
We first present a preliminary result based on Lemma~\ref{lemma: consistency_PCA} and Assumption~\ref{assum: loadings}. That is, for any $\ell\in[K]$, we have
\begin{equation}
\label{eqn: wtLambda_wtLambda}
\begin{split}
    &\quad
    \frac{1}{p_{\ell}} \wt\Lambda_{\ell} \wt\Lambda_{\ell}^\trans = \frac{1}{p_{\ell}} (\wt\Lambda_{\ell} - \Lambda_{\ell} \wt{H}_{\ell}) \wt\Lambda_{\ell}^\trans + \frac{1}{p_{\ell}} \Lambda_{\ell} \wt{H}_{\ell} \wt\Lambda_{\ell}^\trans \\
    &=
    \frac{1}{p_{\ell}} (\wt\Lambda_{\ell} - \Lambda_{\ell} \wt{H}_{\ell}) \wt\Lambda_{\ell}^\trans
    + \frac{1}{p_{\ell}} \Lambda_{\ell} \wt{H}_{\ell} (\wt\Lambda_{\ell} - \Lambda_{\ell} \wt{H}_{\ell})^\trans
    + \frac{1}{p_{\ell}} \Lambda_{\ell} \wt{H}_{\ell} \wt{H}_{\ell}^\trans \Lambda_{\ell}^\trans
    = \frac{1}{p_{\ell}} \Lambda_{\ell} \Lambda_{\ell}^\trans + o_P(1) ,
\end{split}
\end{equation}
noting that
$\Lambda_{\ell} \Lambda_{\ell}^\trans /p_{\ell}$ has the first $r_k$ eigenvalues being 1's and the rest being zero. Immediately, we also have $\wtLambdamk \wtLambdamk^\trans /\pmk = \Lambdamk \Lambdamk^\trans /\pmk + o_P(1)$. Hence for $\wt\cI$, similar to~\eqref{eqn: GLambdaLambdaG},
\begin{align*}
    &\quad
    \frac{1}{T\pmk^2} \sum_{t=1}^T G_{k,t} \Lambdamk^\trans \wtLambdamk \wtLambdamk^\trans \Lambdamk G_{k,t}^\trans \\
    &=
    \frac{1}{T} \sum_{j=1}^{q+1} (\theta_j - \theta_{j-1}) A_{j,k} \Big\{ \frac{1}{\theta_j - \theta_{j-1}} \sum_{t = \theta_{j-1} + 1}^{\theta_j} F_{k,t} \Ajmk{j}^\trans \Big( \frac{\Lambdamk^\trans \wtLambdamk \wtLambdamk^\trans \Lambdamk}{\pmk^2} \Big) \Ajmk{j} F_{k,t}^\trans \Big\} A_{j,k}^\trans \\
    &\xrightarrow{P}
    \frac{1}{T} \sum_{j=1}^{q+1} (\theta_j - \theta_{j-1}) A_{j,k} \Big\{ \frac{1}{\theta_j - \theta_{j-1}} \sum_{t = \theta_{j-1} + 1}^{\theta_j} F_{k,t} \Ajmk{j}^\trans \Ajmk{j} F_{k,t}^\trans \Big\} A_{j,k}^\trans \\
    &\xrightarrow{P}
    \frac{1}{T} \sum_{j=1}^{q+1} \|\Ajmk{j}\|_F^2 \cdot (\theta_j - \theta_{j-1}) \cdot A_{j,k} A_{j,k}^\trans
    \to \Sigma_{A,k} .
\end{align*}
The remaining arguments on $\wt\cI$ is exactly the same as those on $\cI$ in Lemma~\ref{lemma: eigenvalue_PCA_consistency}, so that $\lambda_i(\wt\cI) = \lambda_i(\Sigma_{A,k}) + o_P(1)$ for all $i\leq r_k$, while $\lambda_i(\wt\cI) =0$ for $i> r_k$ by $\rank(\wt\cI)\leq r_k$.

Before working on $\wt\cII$ and $\wt\cIII$, with the notation in Lemma~\ref{lemma: population_cov_PCA_consistency}, define $\Ddot{\Lambda}_{\text{-}k} = \otimes_{\ell \in[K] \setminus\{k\}} \Ddot{\Lambda}_\ell$ and $J_{\text{-}k} = \otimes_{\ell \in[K] \setminus\{k\}} J_\ell$. Then similar to~\eqref{eqn: wtLambda_wtLambda}, it holds for any $k\in[K]$ that
\begin{equation}
\label{eqn: wtLambda_wtLambda_Ddot}
\begin{split}
    \frac{1}{\pmk} \wtLambdamk \wtLambdamk^\trans &= \frac{1}{\pmk} (\wtLambdamk - \Ddot{\Lambda}_{\text{-}k} J_{\text{-}k}) \wtLambdamk^\trans + \frac{1}{\pmk} \Ddot{\Lambda}_{\text{-}k} J_{\text{-}k} \wtLambdamk^\trans \\
    &=
    \frac{1}{\pmk} (\wtLambdamk - \Ddot{\Lambda}_{\text{-}k} J_{\text{-}k}) \wtLambdamk^\trans
    + \frac{1}{\pmk} \Ddot{\Lambda}_{\text{-}k} J_{\text{-}k} (\wtLambdamk - \Ddot{\Lambda}_{\text{-}k} J_{\text{-}k})^\trans
    + \frac{1}{\pmk} \Ddot{\Lambda}_{\text{-}k} \Ddot{\Lambda}_{\text{-}k}^\trans \\
    &= \frac{1}{\pmk} \Ddot{\Lambda}_{\text{-}k} \Ddot{\Lambda}_{\text{-}k}^\trans + \cO_P(\sqrt{\wmk}),
\end{split}
\end{equation}
where in the last equality, we used $\wmk = \sum_{\ell\in [K]\setminus\{k\}} 1/(Tp_{\text{-}\ell})$ and applied Lemma~\ref{lemma: population_cov_PCA_consistency} to
\[
\Bigg\| \frac{\wtLambdamk - \Ddot{\Lambda}_{\text{-}k} J_{\text{-}k}}{\sqrt{\pmk}} \Bigg\|_F^2
= \Bigg\| \Bigg( \otimes_{h\in[K]\setminus\{k\}} \frac{\wt\Lambda_h}{\sqrt{p_h}} \Bigg) - \Bigg( \otimes_{\ell\in[K]\setminus\{k\}} \frac{\Ddot{\Lambda}_{\ell} J_{\ell}}{\sqrt{p_\ell}} \Bigg) \Bigg\|_F^2
= \sum_{\ell\in[K]\setminus\{k\}} \frac{1}{p_\ell} \Big\| \wt\Lambda_{\ell} - \Ddot{\Lambda}_{\ell} J_{\ell} \Big\|_F^2 ,
\]
where the second equality can be easily shown and details are omitted here (see \text{e.g.} the induction argument in the proof of Lemma~6 in \citeauthor{CenLam2025_KronProd}, \citeyear{CenLam2025_KronProd}).

Then consider $\wt\cII$, for which we can also write
\begin{align*}
    \|\wt\cII\|_F^2 &= \Bigg\| \frac{1}{Tp} \sum_{t=1}^T \Lambda_k G_{k,t} \Lambdamk^\trans \frac{\wtLambdamk \wtLambdamk^\trans}{\pmk} E_{k,t}^\trans \Bigg\|_F^2 \\
    &\lesssim
    \Bigg\| \frac{1}{Tp} \sum_{t=1}^T \Lambda_k G_{k,t} \Lambdamk^\trans \frac{\Ddot{\Lambda}_{\text{-}k} \Ddot{\Lambda}_{\text{-}k}^\trans}{\pmk} E_{k,t}^\trans \Bigg\|_F^2
    + \wmk \cdot \frac{1}{(Tp)^2} \sum_{i=1}^{p_k} \sum_{z=1}^{\pmk} \Bigg\| \sum_{t=1}^T \Lambda_{k,i\cdot}^\trans G_{k,t} \Lambda_{\text{-}k,z\cdot} E_{k,t} \Bigg\|_F^2 \\
    &\lesssim_P
    \Bigg\| \frac{1}{Tp} \sum_{t=1}^T \Lambda_k G_{k,t} \Lambdamk^\trans \frac{\Ddot{\Lambda}_{\text{-}k} \Ddot{\Lambda}_{\text{-}k}^\trans}{\pmk} E_{k,t}^\trans \Bigg\|_F^2
    + \wmk \cdot \frac{1}{T}
    = \cO_P\Big( \frac{1}{T\pmk} + \frac{\wmk}{T} \Big) ,
\end{align*}
where the second line used~\eqref{eqn: wtLambda_wtLambda_Ddot} and Lemma~\ref{lemma: inequality_sandwich}~(ii), and the third used Lemma~\ref{lemma: prelim_rate}~(v) and the proof of Proposition~1.2 (in their supplement) in \cite{CenLam2025} with Hölder's inequality and Assumption~\ref{assum: loadings}~(i). Similarly, we have $\|\wt\cIII\|_F^2 =\cO_P(1/T\pmk + \wmk/T)$.

For $\wt\cIV$, using~\eqref{eqn: wtLambda_wtLambda}, we have 
\begin{align*}
    \|\wt\cIV\|_F^2 &= \Bigg\| \frac{1}{Tp} \sum_{t=1}^T E_{k,t} \frac{\wtLambdamk \wtLambdamk^\trans}{\pmk} E_{k,t}^\trans \Bigg\|_F^2 \\
    &\lesssim
    \Bigg\| \frac{1}{Tp} \sum_{t=1}^T E_{k,t} \frac{\Lambdamk \Lambdamk^\trans}{\pmk} E_{k,t}^\trans \Bigg\|_F^2
    + o_P\Bigg(\Bigg\| \frac{1}{Tp} \sum_{t=1}^T E_{k,t} E_{k,t}^\trans \Bigg\|_F^2\Bigg) \\
    &\lesssim_P
    \Bigg\| \frac{1}{Tp} \sum_{t=1}^T E_{k,t} \Big( \frac{\Lambdamk \Lambdamk^\trans}{\pmk} - I_{\rmk} \Big) E_{k,t}^\trans \Bigg\|_F^2 + \Bigg\| \frac{1}{Tp} \sum_{t=1}^T E_{k,t} E_{k,t}^\trans \Bigg\|_F^2 \\
    &\lesssim
    \Bigg\| \frac{1}{Tp} \sum_{t=1}^T E_{k,t} E_{k,t}^\trans \Bigg\|_F^2
    = \|\cIV\|_F^2 =  \cO_P\Big( \frac{1}{T\pmk} + \frac{1}{p_k} \Big) ,
\end{align*}
where the last line used Lemma~\ref{lemma: inequality_sandwich}~(i), $\|\Lambdamk \Lambdamk^\trans/\pmk - I_{\rmk}\| =\cO(1)$, and the result on $\cIV$ in the proof of Lemma~\ref{lemma: eigenvalue_PCA_consistency}. Combining Weyl's inequality and the above results on the decomposition of $\wh\Gamma_Y^{(k)}$ in~\eqref{eqn: whGamma_Y_decomp}, the proof of this lemma is complete.
\end{proof}

\subsubsection{Proof of Proposition~\ref{prop: consistency_proj}}

To show Proposition~\ref{prop: consistency_proj}, it suffices to prove the following lemma thanks to Lemma~\ref{lemma: eigenvalue_PCA_consistency}~(i); the lemma below also provides the explicit expression for $\wh{H}_k$.

\begin{lemma}\label{lemma: consistency_proj}
Let Assumptions~\ref{assum: core_factor}, \ref{assum: loadings}, \ref{assum: trans_mat} and~\ref{assum: noise} hold, with $\nu\geq 4$ and $\beta\geq 0$. 
For any $k\in[K]$, let $\wh{D}_k := \wh\Lambda_k^\trans \wh{\Gamma}_Y^{(k)} \wh\Lambda_k /p_k$ be the diagonal matrix consisting of the leading $r_k$ eigenvalues of $\wh{\Gamma}_Y^{(k)}$. 
Define
\[
\wh{H}_k :=\frac{1}{Tp\pmk} \Big(\sum_{t=1}^T \mat_k(\cG_t) \Lambdamk^\trans \wtLambdamk \wtLambdamk^\trans \Lambdamk \mat_k(\cG_t)^\trans \Big) \Lambda_k^\trans \wh\Lambda_k \wh{D}_k^{-1} .
\]
Then $\wh{H}_k$ is asymptotically invertible such that as $\min\{T,p_1,\dots, p_K\} \to \infty$, we have
\[
\wh{H}_k \wh{H}_k^\trans = I_{r_k} + \cO_P\Bigg( \Big\{ \frac{1}{T\pmk} + \frac{1}{p^2} + \Big( \sum_{\ell\in [K]\setminus\{k\}} \frac{1}{Tp_{\text{-}\ell}} \Big) \Big( \frac{1}{T} + \frac{1}{p_k} \Big) \Big\}^{1/2} \Bigg) .
\]
Moreover, the projection estimator $\wh\Lambda_k$ satisfies that
\[
\frac{1}{p_k}\Big\| \wh\Lambda_k - \Lambda_k \wh{H}_k \Big\|_F^2 = \cO_P\l( \frac{1}{T\pmk} + \frac{1}{p^2} + \Big( \sum_{\ell\in [K]\setminus\{k\}} \frac{1}{Tp_{\text{-}\ell}} \Big) \Big( \frac{1}{T} + \frac{1}{p_k} \Big) \r).
\]
\end{lemma}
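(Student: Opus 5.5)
The argument mirrors the proof of Lemma~\ref{lemma: consistency_PCA}, with $\wh\Gamma_X^{(k)}$ replaced by $\wh\Gamma_Y^{(k)}$ and its decomposition~\eqref{eqn: whGamma_Y_decomp}.

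\textbf{Step 1 (identity for the error).} The eigen-equation gives $\wh\Lambda_k = \wh\Gamma_Y^{(k)} \wh\Lambda_k \wh D_k^{-1}$, and $\wh D_k$ is asymptotically invertible with $\|\wh D_k\|_F, \|\wh D_k^{-1}\|_F = \cO_P(1)$ by Lemma~\ref{lemma: eigenvalue_proj_consistency} and Assumption~\ref{assum: trans_mat}~(iii). Since $\wt\cI\wh\Lambda_k\wh D_k^{-1} = \Lambda_k \wh H_k$ by the definition of $\wh H_k$, we get
\[
\wh\Lambda_k - \Lambda_k\wh H_k = (\wt\cII+\wt\cIII+\wt\cIV)\wh\Lambda_k\wh D_k^{-1}.
\]
Write $\wh\Lambda_k = \Lambda_k\wh H_k + (\wh\Lambda_k-\Lambda_k\wh H_k)$ on the right. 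From the proof of Lemma~\ref{lemma: eigenvalue_proj_consistency}, $\|\wt\cII\|_F,\|\wt\cIII\|_F,\|\wt\cIV\|_F = o_P(1)$, so the self-referential piece is absorbed into the left-hand side. It then suffices to bound $p_k^{-1}\|(\wt\cII+\wt\cIII+\wt\cIV)\Lambda_k\|_F^2$, using $\|\wh H_k\|_F=\cO_P(1)$.

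\textbf{Step 2 (cross terms).} Bound $\|\wt\cII\Lambda_k\|_F^2\le \|\wt\cII\|_F^2\|\Lambda_k\|_F^2$. By the proof of Lemma~\ref{lemma: eigenvalue_proj_consistency}, $\|\wt\cII\|_F^2 = \cO_P(1/(T\pmk) + \wmk/T)$. Hence $\wt\cII$ and $\wt\cIII$ contribute $\cO_P(1/(T\pmk) + \wmk/T)$, which matches the $1/(T\pmk)$ term and the $\wmk T^{-1}$ part of the target rate. For $\wt\cIII\Lambda_k$, it may be sharper to use $\Lambda_k^\trans\Lambda_k/p_k=I$ directly, but the crude bound already suffices.

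\textbf{Step 3 (noise term, the main obstacle).} Here we need
\[
p_k^{-1}\|\wt\cIV\Lambda_k\|_F^2 = \cO_P\big(1/(T\pmk) + p^{-2} + \wmk/p_k\big).
\]
The crude bound through $\|\wt\cIV\|_F$ only gives $1/p_k$, which is too weak, so the structure must be exploited. Using~\eqref{eqn: wtLambda_wtLambda_Ddot}, split
\[
\wtLambdamk\wtLambdamk^\trans/\pmk = \Ddot\Lambda_{\text{-}k}\Ddot\Lambda_{\text{-}k}^\trans/\pmk + R,
\]
where $R$ collects the terms involving $\wtLambdamk-\Ddot\Lambda_{\text{-}k}J_{\text{-}k}$.

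For the main part, $\Ddot\Lambda_{\text{-}k}$ is deterministic given $\{\cG_t\}$ and independent of $\cE_t$. Apply Lemma~\ref{lemma: prelim_rate}~(iv) with $V=\Ddot\Lambda_{\text{-}k}/\sqrt{\pmk}$, for which $\|V\|_F^2 = \cO(1)$. This yields
\[
\frac{1}{p_k}\cdot\frac{1}{(Tp)^2}\,\cO_P(Tp_k^2 + T^2p_k) = \cO_P\big(1/(Tp\,\pmk) + 1/p^2\big)\cdot\cO(1),
\]
which is within the target $1/(T\pmk)+p^{-2}$.

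For the remainder $R$, use Lemma~\ref{lemma: inequality_sandwich}~(ii) together with Lemma~\ref{lemma: population_cov_PCA_consistency}, which gives $\|R\|_F^2 = \cO_P(\wmk)$. Combined with row-wise noise bounds in the style of Lemma~\ref{lemma: prelim_rate}~(iii) and the sparsity of $A_{e,\ell}$, this should give a contribution $\cO_P(\wmk(1/T+1/p_k))$. Controlling $R$ is delicate, because $\wt\Lambda_\ell$ depends on $\cE_t$, so independence cannot be used. The plan is to bound $R$ purely through its Frobenius size and deterministic operator norms of $\sum_t \E(E_{k,t}\,\cdot\,E_{k,t}^\trans)$. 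Lemma~\ref{lemma: prelim_rate}~(ii) supplies the $T\pmk$ scaling, which would give the $\wmk/p_k$ term.

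\textbf{Step 4 ($\wh H_k\wh H_k^\trans$).} Expand, as in Lemma~\ref{lemma: consistency_PCA},
\[
I = p_k^{-1}\wh\Lambda_k^\trans\wh\Lambda_k = \wh H_k^\trans\wh H_k + \cO_P\big(p_k^{-1/2}\|\wh\Lambda_k-\Lambda_k\wh H_k\|_F\big).
\]
Since $\wh H_k$ is square and asymptotically invertible, $\wh H_k^\trans\wh H_k\approx I$ implies $\wh H_k\wh H_k^\trans\approx I$ at the same rate, which is the square root of the rate from Step~3.
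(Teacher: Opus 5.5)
Your Steps 1, 2 and 4, and your treatment of the main piece $\Ddot{\Lambda}_{\text{-}k}\Ddot{\Lambda}_{\text{-}k}^\trans/\pmk$ in Step 3, follow the paper's proof. That means the same identity, the same absorption, the same split of $\wtLambdamk$ around $\Ddot{\Lambda}_{\text{-}k}J_{\text{-}k}$, and Lemma~\ref{lemma: prelim_rate}~(iv) applied to a $V$ depending only on $\{\cF_t\}$. Your passage from $\wh H_k^\trans\wh H_k$ to $\wh H_k\wh H_k^\trans$ is also fine.

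The gap is the remainder $R$: you only state a plan, and neither tool you name closes it as stated. Lemma~\ref{lemma: inequality_sandwich}~(ii) would need a bound on $\sum_{a,c}\|\sum_t E_{k,t,ac}\Lambda_k^\trans E_{k,t}\|_F^2$. That is not the column-contracted quantity in Lemma~\ref{lemma: prelim_rate}~(iii), and bounding it would require a new fourth-moment computation. Operator norms of the deterministic $\sum_t \E(E_{k,t}\,\cdot\,E_{k,t}^\trans)$ control only the mean of the random map $M\mapsto \sum_t E_{k,t} M E_{k,t}^\trans$. That leaves its fluctuation uncontrolled, and you cannot handle the fluctuation by conditioning, precisely because $R$ depends on $\cE_t$.

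The missing observation is that no independence is needed. Lemma~\ref{lemma: inequality_sandwich}~(i) is a pathwise inequality, valid for random $M$. Using $\|\Lambda_k\|^2 = p_k$ it gives
\[
\frac{1}{p_k}\Big\|\frac{1}{Tp}\sum_{t=1}^T E_{k,t} R E_{k,t}^\trans \Lambda_k\Big\|_F^2 \le \|R\|^2\,\|\cIV\|_F^2 = \cO_P(\wmk)\cdot\cO_P\Big(\frac{1}{T\pmk}+\frac{1}{p_k}\Big),
\]
with $\|\cIV\|_F^2$ taken from the proof of Lemma~\ref{lemma: eigenvalue_PCA_consistency}. This lies within the target term $\wmk(1/T+1/p_k)$. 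You set aside the crude $\|\cIV\|_F$ bound as too weak. That is right for the main piece but not for the remainder, where the extra factor $\|R\|^2=\cO_P(\wmk)$ makes the crude bound exactly sufficient. This is how the paper handles $\wt\cIV_a$ and $\wt\cIV_b$.
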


\begin{proof}
Similar to the proof of Lemma~\ref{lemma: consistency_PCA}, we have $\wh\Lambda_k =\wh{\Gamma}_Y^{(k)} \wh\Lambda_k \wh{D}_k^{-1}$ from the definition, and hence $\wh\Lambda_k - \Lambda_k \wh{H}_k = (\wt\cII + \wt\cIII + \wt\cIV) \wh\Lambda_k \wh{D}_k^{-1}$ from~\eqref{eqn: whGamma_Y_decomp}. Similar to~\eqref{eqn: aux_rates},
\begin{equation}
\label{eqn: aux_rates_proj}
\begin{split}
    \max\left\{ \|\wh{D}_k\|_F ,\; \|\wh{D}_k^{-1}\|_F ,\; \|\wh{H}_k\|_F \right\} = \cO_P(1) ,
    \quad \text{and} \quad
    \|\wh\Lambda_k\|_F = \cO_P(p_k^{1/2}) &,
\end{split}
\end{equation}
where $\|\wh{H}_k\|_F$ is similar to $\|\wt{H}_k\|_F$ from Lemma~\ref{lemma: consistency_PCA}, since we have $\wtLambdamk \wtLambdamk^\trans /\pmk = \Lambdamk \Lambdamk^\trans /\pmk + o_P(1)$ by applying~\eqref{eqn: wtLambda_wtLambda} on all but the $k$-th modes. Thus,
\begin{equation}
\label{eqn: proj_decomp}
\begin{split}
    &\quad \frac{1}{p_k}\Big\| \wh\Lambda_k - \Lambda_k \wh{H}_k \Big\|_F^2 = \frac{1}{p_k} \Big\|(\wt\cII + \wt\cIII + \wt\cIV) \wh\Lambda_k \wh{D}_k^{-1} \Big\|_F^2 \\
    &\lesssim_P
    \frac{1}{p_k} \Big\|(\wt\cII + \wt\cIII + \wt\cIV) \Lambda_k \wh{H}_k \Big\|_F^2 + \frac{1}{p_k} \Big\|(\wt\cII + \wt\cIII + \wt\cIV) (\wh\Lambda_k - \Lambda_k \wh{H}_k) \Big\|_F^2 \\
    &=
    \cO_P\l( \frac{1}{p_k} \Big\|(\wt\cII + \wt\cIII + \wt\cIV) \Lambda_k \Big\|_F^2 \r) + o_P\Big( \frac{1}{p_k}\Big\| \wh\Lambda_k - \Lambda_k \wh{H}_k \Big\|_F^2 \Big) .
\end{split}
\end{equation}

With the rates derived for $\|\wt\cII\|_F^2$ and $\|\wt\cIII\|_F^2$ in the proof of Lemma~\ref{lemma: eigenvalue_proj_consistency}, and the similar arguments in the proof of Lemma~\ref{lemma: consistency_PCA}, we have $\|(\wt\cII + \wt\cIII) \Lambda_k \|_F^2 /p_k =\cO_P(1/T\pmk +\wmk/T)$. Hence for~\eqref{eqn: proj_decomp}, it remains to study $\|\wt\cIV  \cdot \Lambda_k \|_F^2 /p_k$. To this end, from the proof of Lemma~\ref{lemma: eigenvalue_proj_consistency}, recall the definition $\Ddot{\Lambda}_{\text{-}k} = \otimes_{\ell \in[K] \setminus\{k\}} \Ddot{\Lambda}_\ell$ and $J_{\text{-}k} = \otimes_{\ell \in[K] \setminus\{k\}} J_\ell$. Then we have
\begin{align*}
    \frac{1}{p_k} \|\wt\cIV  \cdot \Lambda_k \|_F^2 &= \frac{1}{p_k} \Bigg\| \frac{1}{Tp\pmk} \sum_{t=1}^T E_{k,t} \wtLambdamk \wtLambdamk^\trans E_{k,t}^\trans \Lambda_k \Bigg\|_F^2 \\
    &\lesssim
    \frac{1}{p_k} \Bigg\| \frac{1}{Tp\pmk} \sum_{t=1}^T E_{k,t} (\wtLambdamk - \Ddot{\Lambda}_{\text{-}k} J_{\text{-}k}) \wtLambdamk^\trans E_{k,t}^\trans \Lambda_k \Bigg\|_F^2 \\
    &\quad
    + \frac{1}{p_k} \Bigg\| \frac{1}{Tp\pmk} \sum_{t=1}^T E_{k,t} \Ddot{\Lambda}_{\text{-}k} J_{\text{-}k} (\wtLambdamk - \Ddot{\Lambda}_{\text{-}k} J_{\text{-}k} )^\trans E_{k,t}^\trans \Lambda_k \Bigg\|_F^2 \\
    &\quad
    + \frac{1}{p_k} \Bigg\| \frac{1}{Tp\pmk} \sum_{t=1}^T E_{k,t} \Ddot{\Lambda}_{\text{-}k} J_{\text{-}k} J_{\text{-}k}^\trans \Ddot{\Lambda}_{\text{-}k}^\trans E_{k,t}^\trans \Lambda_k \Bigg\|_F^2
    =: \wt\cIV_a + \wt\cIV_b + \wt\cIV_c .
\end{align*}

Consider $\wt\cIV_a$ first. Define $\wmk = \sum_{\ell\in [K]\setminus\{k\}} 1/Tp_{\text{-}\ell}$. It then holds that
\begin{equation}
\label{eqn: tilde_IV_a}
\begin{split}
    \wt\cIV_a &= \frac{1}{p_k} \Bigg\| \frac{1}{Tp} \sum_{t=1}^T E_{k,t} \Bigg( \frac{\wtLambdamk - \Ddot{\Lambda}_{\text{-}k} J_{\text{-}k}}{\sqrt{\pmk}} \frac{\wtLambdamk^\trans}{\sqrt{\pmk}} \Bigg) E_{k,t}^\trans \Lambda_k \Bigg\|_F^2 \\
    &\lesssim
    \Bigg\| \frac{\wtLambdamk - \Ddot{\Lambda}_{\text{-}k} J_{\text{-}k}}{\sqrt{\pmk}} \frac{\wtLambdamk^\trans}{\sqrt{\pmk}} \Bigg\|_F^2 \cdot \frac{1}{p_k} \|\Lambda_k \|_F^2 \cdot \Bigg\| \frac{1}{Tp} \sum_{t=1}^T E_{k,t} E_{k,t}^\trans \Bigg\|_F^2 \\
    &=
    \cO_P(\wmk) \cdot \cO_P(1) \cdot \cO(1) \cdot \cO_P\Big( \frac{1}{Tp} + \frac{1}{p_k} \Big)
    = \cO_P\Big\{ \wmk \Big( \frac{1}{Tp} + \frac{1}{p_k} \Big) \Big\} ,
\end{split}
\end{equation}
where Lemma~\ref{lemma: inequality_sandwich}~(i) is used in the second line in which the last term appears to be exactly $\|\cIV\|_F^2$ in the proof of Lemma~\ref{lemma: eigenvalue_PCA_consistency}, whereas the first term used $\wtLambdamk^\trans \wtLambdamk/\pmk = I_{\rmk}$ and the same argument in~\eqref{eqn: wtLambda_wtLambda_Ddot}. Similar procedures follow for $\wt\cIV_b$, by noting that $J_{\text{-}k} =\cO_P(1)$. That is, we have
\begin{equation}
\label{eqn: tilde_IV_b}
\begin{split}
    \wt\cIV_b &= \cO_P\Big\{ \wmk \Big( \frac{1}{Tp} + \frac{1}{p_k} \Big) \Big\} .
\end{split}
\end{equation}

Lastly, consider $\wt\cIV_c$. Note that Lemma~\ref{lemma: consistency_PCA} indicates $J_{\text{-}k} J_{\text{-}k}^\trans = I_{\rmk}$, and hence
\begin{equation}
\label{eqn: tilde_IV_c}
\begin{split}
    \wt\cIV_c &= \frac{1}{p_k} \Bigg\| \frac{1}{Tp\pmk} \sum_{t=1}^T E_{k,t} \Ddot{\Lambda}_{\text{-}k} \Ddot{\Lambda}_{\text{-}k}^\trans E_{k,t}^\trans \Lambda_k \Bigg\|_F^2
    = \cO_P\Big( \frac{1}{Tp\pmk} + \frac{1}{p^2} \Big),
\end{split}
\end{equation}
where the last equality used Lemma~\ref{lemma: prelim_rate}~(iv) by equating $V =\Ddot{\Lambda}_{\text{-}k}$ therein and $\|\Ddot{\Lambda}_{\text{-}k}\|_F^4 =\pmk^2$. Combining~\eqref{eqn: tilde_IV_a},~\eqref{eqn: tilde_IV_b} and~\eqref{eqn: tilde_IV_c}, we hence have
\[
\frac{1}{p_k} \|\wt\cIV  \cdot \Lambda_k \|_F^2 = \cO_P\Big\{ \wmk \Big( \frac{1}{Tp} + \frac{1}{p_k} \Big) + \frac{1}{Tp\pmk} + \frac{1}{p^2} \Big\} .
\]
Thus, we may finally conclude for~\eqref{eqn: proj_decomp} that
\begin{align*}
    \frac{1}{p_k}\Big\| \wh\Lambda_k - \Lambda_k \wh{H}_k \Big\|_F^2 &=  \cO_P\Big\{ \frac{1}{T\pmk} + \Big( \sum_{\ell\in [K]\setminus\{k\}} \frac{1}{Tp_{\text{-}\ell}} \Big) \Big( \frac{1}{T} + \frac{1}{Tp} + \frac{1}{p_k} \Big) + \frac{1}{Tp\pmk} + \frac{1}{p^2} \Big\} \\
    &=
    \cO_P\Big\{ \frac{1}{T\pmk} + \frac{1}{p^2} + \Big( \sum_{\ell\in [K]\setminus\{k\}} \frac{1}{Tp_{\text{-}\ell}} \Big) \Big( \frac{1}{T} + \frac{1}{p_k} \Big) \Big\} .
\end{align*}
It remains to show the convergence result on $\wh{H}_k \wh{H}_k^\trans$, but given the above consistency result on $\wh\Lambda_k$, this follows the similar arguments on $\wt{H}_k$ in the proof of Lemma~\ref{lemma: consistency_PCA} and hence omitted here. This completes the proof of the lemma.
\end{proof}

\clearpage

\subsection{Consistency in change point detection}

\subsubsection{Supporting lemmas}

Let us define
\begin{align}
& \alpha_{T,p,k}^2 =\frac{1}{T\pmk} + \frac{1}{p^2} + \Big( \sum_{\ell\in [K]\setminus\{k\}} \frac{1}{Tp_{\text{-}\ell}} \Big) \Big( \frac{1}{T} + \frac{1}{p_k} \Big),
\nonumber \\
& \alpha_{T,p,\text{-}k}  = \sum_{w\ne k} \alpha_{T,p,w}
\text{ \ and \ }
\alpha_{T,p} = \sum_{k\in[K]} \alpha_{T,p,k}.
\label{eq:alpha:rate}
\end{align}

\paragraph{Lemmas on functional dependence measures.}
We define the functional dependence measures of tensor-valued time series, which extend the definitions given by \citet{zhang2021convergence} for vector-valued time series. 

\begin{definition}[Functional dependence measures]\label{def: fdc}
Let $\{\bxi_t\}_{t\in\Z}$ be a sequence of i.i.d.\ random elements with zero mean, unit variance, and uniformly bounded fourth moments. Suppose a tensor-valued time series $\{\cY_t\}_{t\in\Z}$ taking values in $\R^{p_1\times\cdots\times p_K}$, admits the causal representation $\cY_t = g_t(\bxi_t,\bxi_{t-1},\ldots)$, where $g_t$ is a (possibly time-varying) measurable map.
We further define the coupled process $\cY_t^{(s)} = g_t(\bxi_t,\ldots,\bxi_{t-s+1}, \bxi_{t-s}',\bxi_{t-s-1},\ldots)$, for $s\ge0$,
where $\bxi_{t-s}'$ is an i.i.d.\ copy of $\bxi_0$.  
For a multi-index $\bi=(i_1,\dots,i_K)\in[p_1]\times\cdots\times[p_K]$, let us denote by $\cY_{t,\bi}$ the corresponding entry of $\cY_t$. 
Then we define the element-wise functional dependence coefficients of $\{\cY_t\}_{t \in \Z}$ as
\[
\delta_{s,\nu,\bi}(\cY) = \sup_{t\in\Z}\big\|\cY_{t,\bi}-\cY_{t,\bi}^{(s)}\big\|_{\nu},
\]
for some $\nu\in[2,\infty]$, and the dependence-adjusted norms (d.a.n.) as
\[
\|\cY_{\cdot,\bi}\|_{\nu,\beta} = \sup_{m\ge0} \, (m+1)^{\beta}\Delta_{m,\nu,\bi}(\cY) \text{ \ with \ }
\Delta_{m,\nu,\bi}(\cY)\! = \!\sum_{s=m}^{\infty}\delta_{s,\nu,\bi}(\cY)
\]
for some $\beta \ge 0$.
Finally, we denote by $\Phi_{\nu,\beta}(\cY) = \max_{\bi\in[p_1]\times\cdots\times[p_K]}\|\cY_{\cdot,\bi}\|_{\nu,\beta}$ the uniform d.a.n.
\end{definition}

\begin{lemma}
\label{lemma: fdc_finite}
Let $\{\cY_t\}_{t\in\Z}$ be a $(\nu,\beta)$-general linear tensor time series as given in Definition~\ref{def: general_linear_tensor}, with $\nu>4$ and $\beta>0$. 
For every multi-index $\bi=(i_1,\ldots,i_K)\in[p_1]\times\cdots\times[p_K]$, write $\cY_{t,\bi}$ and $\cX_{y,t,\bi}$ for the $\bi$-th entries of $\cY_t$ and $\cX_{y,t}$, respectively. 
For $\{\cY_t\}_{t\in\Z}$, we define
\begin{align}\label{eqn: C_nu}
 C_\nu(\cY) = \max_{\bi \in [p_1]\times \ldots \times [p_K]} \E(\vert \cX_{y, t, \bi} \vert^\nu) \le  C_\nu <\infty,
\end{align}
with $C_{\nu}$ denoted in Definition~\ref{def: general_linear_tensor}~(i).
Then for the uniform d.a.n. of $\{\cY_t\}$, we have
\begin{align*}
\Phi_{\nu,\beta}(\cY) \le 2 (C_{\nu}(\cY))^{1/\nu}\,c_{\beta}<\infty.
\end{align*}
\end{lemma}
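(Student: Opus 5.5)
We compute the functional dependence coefficients of a linear process explicitly. Take $\bxi_t = \cX_{y,t}$ as the i.i.d.-type innovation sequence of Definition~\ref{def: fdc}; entries are independent with zero mean and unit variance, and Definition~\ref{def: general_linear_tensor}~(i) gives uniformly bounded $\nu$-th moments. Independence rather than identical distribution suffices for the coupling, which only needs $\bxi'_{t-s}$ to be an independent copy with the same law. For each multi-index $\bi$, we have $\cY_{t,\bi} = \sum_{h\ge0} a_{y,h}\cX_{y,t-h,\bi}$. Replacing $\bxi_{t-s}$ by its independent copy changes only the term with $h=s$, so
\[
\cY_{t,\bi} - \cY^{(s)}_{t,\bi} = a_{y,s}\bigl(\cX_{y,t-s,\bi} - \cX'_{y,t-s,\bi}\bigr).
\]

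Next we bound the $\nu$-norm. By Minkowski's inequality, $\|\cX_{y,t-s,\bi} - \cX'_{y,t-s,\bi}\|_\nu \le 2\|\cX_{y,t-s,\bi}\|_\nu \le 2 (C_\nu(\cY))^{1/\nu}$, using \eqref{eqn: C_nu}. Here $C_\nu(\cY)$ is read as the supremum over $t$ as well, which is consistent with the uniform bound in Definition~\ref{def: general_linear_tensor}. Hence $\delta_{s,\nu,\bi}(\cY) \le 2|a_{y,s}|(C_\nu(\cY))^{1/\nu}$ uniformly in $t$ and $\bi$.

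Summing gives $\Delta_{m,\nu,\bi}(\cY) \le 2(C_\nu(\cY))^{1/\nu}\sum_{s\ge m}|a_{y,s}|$. Multiplying by $(m+1)^\beta$ and taking the supremum over $m\ge0$, Definition~\ref{def: general_linear_tensor}~(iii) yields
\[
\|\cY_{\cdot,\bi}\|_{\nu,\beta} \le 2(C_\nu(\cY))^{1/\nu}c_\beta.
\]
The right-hand side does not depend on $\bi$, so taking the maximum over $\bi$ gives $\Phi_{\nu,\beta}(\cY)\le 2(C_\nu(\cY))^{1/\nu}c_\beta<\infty$.

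There is no real obstacle. The only point needing care is the first step: the coupling affects only one summand, and the infinite series converges in $L^\nu$ because $\sum_h|a_{y,h}|\le c$, so the difference can be computed termwise.
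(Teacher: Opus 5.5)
Your proposal is correct and follows essentially the same argument as the paper: isolate the single summand $a_{y,s}(\cX_{y,t-s,\bi}-\cX'_{y,t-s,\bi})$ changed by the coupling, bound its $\nu$-norm by $2(C_\nu(\cY))^{1/\nu}|a_{y,s}|$, then sum over $s\ge m$ using Definition~\ref{def: general_linear_tensor}~(iii) and take the maximum over $\bi$. Your two side remarks are sensible clarifications of points the paper leaves implicit: reading $C_\nu(\cY)$ as uniform in $t$, and noting that independent (rather than i.i.d.) innovations suffice for the coupling.
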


\begin{proof}[Proof of Lemma~\ref{lemma: fdc_finite}]
Fix a multi–index $\bi\in [p_1]\times \ldots \times [p_K]$.
By Definition~\ref{def: general_linear_tensor}, the coordinate process satisfies
\[
\cY_{t,\bi} = \sum_{h\ge0} a_{y,h} \cX_{y,t-h,\bi}, \quad t\in\Z.
\]
For some $s \ge 0$, define the coupled version $\cY_{t,\bi}^{(s)}$ by replacing $\cX_{y,t-s,\bi}$ with an i.i.d.\ copy $\cX'_{y,t-s,\bi}$. Then
\[
\cY_{t,\bi}-\cY_{t,\bi}^{(s)} = a_{y,s}\big(\cX_{y,t-s,\bi}-\cX'_{y,t-s,\bi}\big).
\]
By the triangle inequality, for some $\nu > 4$, then we have
\[
\delta_{s,\nu,\bi}(\cY) = \big\Vert \cY_{t,\bi}-\cY_{t,\bi}^{(s)} \big\Vert_\nu \le 2 (C_{\nu}(\cY))^{1/\nu}|a_{y,s}|.
\]
Hence, for any integer $m \ge 0$, by Definition~\ref{def: general_linear_tensor}~(iii), 
\begin{align*}
\Delta_{m,\nu,\bi}(\cY)&=\sum_{s = m}^{\infty} \delta_{s,\nu,\bi}(\cY) \le 2 (C_{\nu}(\cY))^{1/\nu} \sum_{s=m}^\infty |a_{y,s}| \le \frac{2 (C_{\nu}(\cY))^{1/\nu} c_{\beta}}{(m+1)^{\beta}}<\infty, \text{ \ which gives}
\\
\Vert \cY_{\cdot,\bi}\Vert_{\nu,\beta}
   &= \sup_{m\ge0}(m+1)^{\beta}\Delta_{m,\nu,\bi}(\cY)
   \le 2(C_{\nu}(\cY))^{1/\nu}\,c_{\beta}<\infty.    
\end{align*}
Taking the maximum over $\bi$, we have $\Phi_{\nu,\beta}(\cY)\le 2(C_{\nu}(\cY))^{1/\nu}\,c_{\beta}<\infty$.
\end{proof} 

\begin{lemma}
\label{lemma: fdc_Et}
Under Assumption~\ref{assum: noise} with $\nu>4$ and $\beta\geq 0$, with $C_\nu(\cdot)$ defined in~\eqref{eqn: C_nu} and some constant $C(\cE) > 0$, we have
\[
\Phi_{\nu,\alpha}(\cE)\le
C(\cE)\bigl(2(C_{\nu}(\cF_e))^{1/\nu} c_{\beta}+2(C_{\nu}(\epsilon))^{1/\nu} c_{\beta}\bigr) <\infty.
\]
\end{lemma}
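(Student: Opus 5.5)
The plan is to represent $\cE_t$ entrywise as a causal linear function of the stacked innovations and then bound the coupled differences directly. Under Assumption~\ref{assum: noise}, and as in the proof of Lemma~\ref{lemma: prelim_rate}~(iv), we write
\[
\cE_t = \Big(\sum_{w\ge0} a_{e,w}\cX_{e,t-w}\Big)\times_{k=1}^K A_{e,k} + \Sigma_\epsilon\circ\Big(\sum_{h\ge0} a_{\epsilon,h}\cX_{\epsilon,t-h}\Big).
\]
We take the driving innovation $\bxi_t$ in Definition~\ref{def: fdc} to be the pair $(\cX_{e,t},\cX_{\epsilon,t})$; these are independent of each other, so the coupling replaces both at lag $s$. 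For a fixed multi-index $\bi$, the entry is
\[
\cE_{t,\bi} = \sum_{w\ge0} a_{e,w}\sum_{\bm j} \Big(\prod_{k=1}^K (A_{e,k})_{i_k j_k}\Big)\cX_{e,t-w,\bm j} + \Sigma_{\epsilon,\bi}\sum_{h\ge0}a_{\epsilon,h}\cX_{\epsilon,t-h,\bi}.
\]
Replacing $\bxi_{t-s}$ by an independent copy gives
\[
\cE_{t,\bi}-\cE^{(s)}_{t,\bi} = a_{e,s}\sum_{\bm j}\Big(\prod_k (A_{e,k})_{i_kj_k}\Big)\big(\cX_{e,t-s,\bm j}-\cX'_{e,t-s,\bm j}\big) + \Sigma_{\epsilon,\bi}\,a_{\epsilon,s}\big(\cX_{\epsilon,t-s,\bi}-\cX'_{\epsilon,t-s,\bi}\big).
\]

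Next we apply Minkowski's inequality in $L^\nu$. The second term is bounded by $\max|\Sigma_\epsilon|\cdot 2(C_\nu(\epsilon))^{1/\nu}|a_{\epsilon,s}|$, exactly as in Lemma~\ref{lemma: fdc_finite}. For the first term we use the triangle inequality over $\bm j$, which gives the bound $2(C_\nu(\cF_e))^{1/\nu}|a_{e,s}|\sum_{\bm j}\prod_k|(A_{e,k})_{i_kj_k}|$. The sum factorizes as $\prod_k\sum_{j_k}|(A_{e,k})_{i_kj_k}|\le\prod_k\|A_{e,k}\|_\infty=\cO(1)$, uniformly in $\bi$. This uniformity is the key point, and it is where the condition $\|A_{e,k}\|_\infty=\cO(1)$ from Assumption~\ref{assum: noise} enters. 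Using the triangle inequality avoids any need for a Rosenthal-type bound on the weighted sum of independent innovations, so the crude argument suffices.

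We then set $C(\cE)=\max\{\prod_k\|A_{e,k}\|_\infty,\ \max_{\bi}|\Sigma_{\epsilon,\bi}|\}$, which is a constant. Summing the bound on $\delta_{s,\nu,\bi}(\cE)$ over $s\ge m$ and invoking Definition~\ref{def: general_linear_tensor}~(iii) for both coefficient sequences bounds $(m+1)^\beta\Delta_{m,\nu,\bi}(\cE)$ by $C(\cE)\big(2(C_\nu(\cF_e))^{1/\nu}c_\beta+2(C_\nu(\epsilon))^{1/\nu}c_\beta\big)$. Taking the supremum over $m$ and the maximum over $\bi$ yields the claim, where the index $\alpha$ in the statement is read as $\beta$. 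The main obstacle is purely bookkeeping: confirming that the coupled process for the sum of two independent linear processes is obtained by perturbing both innovations at lag $s$. Since $\{\cF_{e,t}\}$ and $\{\epsilon_t\}$ are independent, stacking them into a single i.i.d.\ innovation is legitimate.
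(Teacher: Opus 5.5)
Your proposal is correct and follows essentially the same route as the paper. The paper first bounds $\delta_{s,\nu,\bi}(\cE)$ by $C(\cE)\{\max_{\bi'}\delta_{s,\nu,\bi'}(\cF_e)+\delta_{s,\nu,\bi}(\epsilon)\}$, using Minkowski, the factorization $\prod_k\sum_{i'_k}|(A_{e,k})_{i_k i'_k}|\le\prod_k\|A_{e,k}\|_\infty$ and the same constant $C(\cE)$, and then invokes the argument of Lemma~\ref{lemma: fdc_finite}; you merge these two steps by writing the coupled difference directly in terms of the lag-$s$ innovations, and your final bound (reading $\alpha$ as $\beta$) is the same as the paper's.
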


\begin{proof}[Proof of Lemma~\ref{lemma: fdc_Et}]
Fix a multi–index $\bi\in[p_1]\times\cdots\times[p_K]$ and $s\ge0$.  
From Assumption~\ref{assum: noise}, 
for any $\bi$,
\[
\cE_{t,\bi}-\cE^{(s)}_{t,\bi}
= \bigl((\cF_{e,t}\times_{k=1}^K A_{e,k})_{\bi} - (\cF_{e,t}^{(s)}\times_{k=1}^K A_{e,k})_{\bi}\bigr)
 + (\Sigma_{\epsilon}\circ (\epsilon_t-\epsilon_t^{(s)}))_{\bi} .
\]
By Minkowski’s inequality,
\[
\delta_{s,\nu,\bi} (\cE) = \l\|\cE_{t,\bi}-\cE^{(s)}_{t,\bi} \r\|_\nu \le \l\|\l(\big(\cF_{e,t} - \cF_{e,t}^{(s)}\big)\times_{k=1}^K A_{e,k}\r)_{\bi}\r\|_\nu + \vert (\Sigma_{\epsilon})_{\bi} \vert \delta_{s,\nu,\bi}(\epsilon).
\]
For $\bi'\in[r_{e,1}]\times \ldots \times [r_{e,K}]$, we have
\begin{align*}
&\quad\;\l\|\l(\big(\cF_{e,t} - \cF_{e,t}^{(s)}\big)\times_{k=1}^K A_{e,k}\r)_{\bi}\r\|_\nu \\
&\le \sum_{\bi'\in[r_{e,1}]\times \ldots \times [r_{e,K}]}\delta_{s,\nu,\bi'}(\cF_e) \prod_{k=1}^K \l\vert (A_{e,k})_{i_k,i'_k} \r\vert\\
&\le \max_{\bi'\in[r_{e,1}]\times \ldots \times [r_{e,K}] }\delta_{s,\nu,\bi'}(\cF_e) \sum_{\bi'\in[r_{e,1}]\times \ldots \times [r_{e,K}]} \prod_{k=1}^K \l\vert (A_{e,k})_{i_k,i'_k} \r\vert\\
&= \max_{\bi'\in[r_{e,1}]\times \ldots \times [r_{e,K}] }\delta_{s,\nu,\bi'}(\cF_e)  \prod_{k=1}^K \sum_{i'_k =1}^{r_{e,k}} \l\vert (A_{e,k})_{i_k,i'_k} \r\vert.
\end{align*} 
Since $\sum_{i'_k =1}^{r_{e,k}} \l\vert (A_{e,k})_{i_k,i'_k} \r\vert 
\le \Vert A_{e,k} \Vert_\infty$,
we see
\begin{align*}
\sup_{t\in\Z}\l\|\l(\big(\cF_{e,t} - \cF_{e,t}^{(s)}\big)\times_{k=1}^K A_{e,k}\r)_{\bi}\r\|_\nu \le \max_{\bi'\in[r_{e,1}]\times \ldots \times [r_{e,K}] }\delta_{s,\nu,\bi'}(\cF_e)  \prod_{k=1}^K \l\Vert A_{e,k}\r\Vert_{\infty}.
\end{align*}

Let $C(\cE) = \max\l\{\prod_{k=1}^K \l\Vert A_{e,k}\r\Vert_{\infty} , \max_{\bi\in[p_1]\times\ldots \times[p_K]}\vert (\Sigma_{\epsilon})_{\bi} \vert\r\}$, by Assumption~\ref{assum: noise}, we have $C(\cE)<\infty$. Then
\begin{align*}
\delta_{s,\nu,\bi} (\cE) &\le  \l(\prod_{k=1}^K \l\Vert A_{e,k}\r\Vert_{\infty} \r)  \cdot \max_{\bi'\in[r_{e,1}]\times \ldots \times [r_{e,K}] }\delta_{s,\nu,\bi'}(\cF_e)  +  \l(\max_{\bi\in[p_1]\times\ldots \times[p_K]}\vert (\Sigma_{\epsilon})_{\bi} \vert \r)\,\delta_{s,\nu,\bi}(\epsilon)\\
&\le C(\cE) \, \l\{\max_{\bi'\in[r_{e,1}]\times \ldots \times [r_{e,K}] }\delta_{s,\nu,\bi'}(\cF_e) +  \delta_{s,\nu,\bi}(\epsilon)\r\}.
\end{align*}

Applying the arguments in the proof of Lemma~\ref{lemma: fdc_finite}, with general linear tensor series $\{\cF_{e,t}\}$ and $\{\epsilon_t\}$, for the given $\beta>0$, 
\begin{align*}
\Vert \cF_{e,\cdot,\bi'} \Vert_{\nu,\beta} &= \sup_{m\ge 0}(m+1)^{\beta}\sum_{s=m}^\infty \delta_{s,\nu,\bi'}(\cF_e)\le 2(C_{\nu}(\cF_e))^{1/\nu} c_{\beta}, \text{\ and }\\
\Vert \epsilon_{\cdot,\bi} \Vert_{\nu,\beta} &= \sup_{m\ge 0}(m+1)^{\beta}\sum_{s=m}^\infty \delta_{s,\nu,\bi}(\epsilon)\le 2(C_{\nu}(\epsilon))^{1/\nu} c_{\beta},
\end{align*}
uniformly over $\bi\in[p_{1}]\times \ldots \times [p_{K}]$ and $\bi'\in[r_{e,1}]\times \ldots \times [r_{e,K}]$. 
Hence, the d.a.n.\ of $\{\cE_t\}$ satisfies
\begin{align*}
\Vert \cE_{\cdot,\bi} \Vert_{\nu,\beta} &= \sup_{m\ge 0}(m+1)^{\beta}\sum_{s=m}^\infty \delta_{s,\nu,\bi}(\cE)\\
&\le C(\cE)\l\{\sup_{m\ge 0}(m+1)^{\beta}\sum_{s=m}^\infty \max_{\bi'\in[r_{e,1}]\times \ldots \times [r_{e,K}] }\delta_{s,\nu,\bi'}(\cF_e) + \sup_{m\ge 0}(m+1)^{\beta}\sum_{s=m}^\infty \delta_{s,\nu,\bi}(\epsilon) \r\}\\
&\le C(\cE)\l\{\max_{\bi'\in[r_{e,1}]\times \ldots \times [r_{e,K}] } 2(C_{\nu}(\cF_e))^{1/\nu} c_{\beta} + 2(C_{\nu}(\epsilon))^{1/\nu} c_{\beta} \r\}\\
&= C(\cE)\l\{ 2 (C_{\nu}(\cF_e))^{1/\nu} c_{\beta} + 2 (C_{\nu}(\epsilon))^{1/\nu} c_{\beta} \r\}.
\end{align*}
Taking the maximum over $\bi\in[p_{1}]\times \ldots \times [p_{K}]$, for the uniform d.a.n., we see
\begin{align*}
\Phi_{\nu,\beta}(\cE) \le  C(\cE)\bigl(2 (C_{\nu}(\cF_e))^{1/\nu} c_{\beta} + 2 (C_{\nu}(\epsilon))^{1/\nu} c_{\beta}\bigr) <\infty.
\end{align*}

\end{proof}

\begin{lemma}\label{lemma: fdc_common}
Let Assumptions~\ref{assum: core_factor}, \ref{assum: loadings} and~\ref{assum: trans_mat_alt}~(i) hold, with $\nu>4$ and $\beta\geq 0$. Then for the uniform d.a.n. of $\{\cC_t\}$, we have 
\begin{align*}
\Phi_{\nu,\beta}(\cC)\le 2c_{\lambda}^{K}r^{K}\,C(A) (C_{\nu}(\cF))^{1/\nu}\,c_{\beta}<\infty,
\end{align*}
where $C(A)$ is some finite positive constant, and $C_\nu(\cdot)$ is defined in~\eqref{eqn: C_nu}.
\end{lemma}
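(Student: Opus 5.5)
We follow the same coupling argument as in the proofs of Lemmas~\ref{lemma: fdc_finite} and~\ref{lemma: fdc_Et}, now applied to the common component $\cC_t = \cG_t \times_{k=1}^K \Lambda_k$, where $\cG_t = \sum_{j=1}^{q+1} \cF_t \times_{k=1}^K A_{j,k} \cdot \mathbb{I}_{\{\theta_{j-1} < t \le \theta_j\}}$ is taken from~\eqref{eqn: tfm_change_rewrite}.

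Fix a multi-index $\bi = (i_1,\ldots,i_K)$ and a lag $s \ge 0$. The coupled process $\cC_t^{(s)}$ is obtained by replacing the innovation $\cX_{f,t-s}$ in $\cF_t = \sum_{h\ge 0} a_{f,h}\cX_{f,t-h}$ with an i.i.d.\ copy. The time-varying map $g_t$ is deterministic: it is fixed by the segment $j$ containing $t$. Coupling therefore gives
\[
\cC_{t,\bi} - \cC^{(s)}_{t,\bi} = \sum_{\bi'} \big(\cF_{t,\bi'} - \cF^{(s)}_{t,\bi'}\big) \prod_{k=1}^K (\Lambda_k A_{j,k})_{i_k i'_k},
\]
where the sum runs over $\bi' \in [r_1]\times\cdots\times[r_K]$.

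Applying Minkowski's inequality to this finite sum bounds $\|\cC_{t,\bi} - \cC^{(s)}_{t,\bi}\|_\nu$ by
\[
\max_{\bi'} \delta_{s,\nu,\bi'}(\cF) \sum_{\bi'} \prod_k |(\Lambda_k A_{j,k})_{i_k i'_k}|.
\]
We bound each entry as
\[
|(\Lambda_k A_{j,k})_{i_k i'_k}| \le \sum_{m=1}^{r_k} |(\Lambda_k)_{i_k m}|\,|(A_{j,k})_{m i'_k}| \le c_\lambda\, r_k \max_{m,i'}|(A_{j,k})_{m i'}| \le c_\lambda\, r_k\, \|A_{j,k}\|_F,
\]
using Assumption~\ref{assum: loadings}~(i). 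The sum over $\bi'$ factorizes into $K$ sums of $r_k$ terms each. This gives the bound
\[
\prod_{k}\big(c_\lambda r_k^2 \|A_{j,k}\|_F\big) \le c_\lambda^K r^K \, C(A).
\]
Here $r^K$ is a crude upper bound for $\prod_k r_k^2$, since $r_k \le r$. We set $C(A) := \max_{j\in[q+1]}\prod_k \|A_{j,k}\|_F$, which is finite uniformly in $T$ and the $p_k$ by Assumption~\ref{assum: trans_mat_alt}~(i).

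The bound is uniform in $t$ and in $j$, so taking $\sup_t$ gives
\[
\delta_{s,\nu,\bi}(\cC) \le c_\lambda^K r^K C(A) \max_{\bi'}\delta_{s,\nu,\bi'}(\cF).
\]
By the computation in the proof of Lemma~\ref{lemma: fdc_finite}, $\delta_{s,\nu,\bi'}(\cF) \le 2 (C_\nu(\cF))^{1/\nu} |a_{f,s}|$, uniformly over $\bi'$. Summing over $s \ge m$, multiplying by $(m+1)^\beta$ and using Definition~\ref{def: general_linear_tensor}~(iii) yields
\[
\|\cC_{\cdot,\bi}\|_{\nu,\beta} \le 2 c_\lambda^K r^K C(A)(C_\nu(\cF))^{1/\nu} c_\beta.
\]
Maximizing over $\bi$ gives the claimed bound on $\Phi_{\nu,\beta}(\cC)$.

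The only delicate point is that $\cC_t$ is not a single general linear process: its loadings switch at the change points. We handle this by noting that Definition~\ref{def: fdc} allows a time-varying map $g_t$, and that every bound above is uniform over the segments $j$. This uniformity is why $C(A)$ is defined as a maximum over $j \in [q+1]$, and its finiteness is exactly where Assumption~\ref{assum: trans_mat_alt}~(i) enters.
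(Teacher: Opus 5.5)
Your argument is correct and is essentially the paper's coupling argument. The paper goes in two steps: $\cC$ from $\cG$ via the bounded entries of $\Lambda_k$, then $\cG$ from $\cF$ via $A_{j,k}$, uniformly over segments $j$. You compose these into a single step with the composite loadings $\Lambda_k A_{j,k}$, and you invoke the per-lag bound $\delta_{s,\nu,\bi'}(\cF)\le 2(C_\nu(\cF))^{1/\nu}|a_{f,s}|$ explicitly, which is exactly what the summation over $s\ge m$ requires.

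One bookkeeping slip: $\prod_k r_k^2 = r^2\le r^K$ holds only for $K\ge 2$, and it does not follow from ``$r_k\le r$''. For $K=1$, simply absorb the rank factors into $C(A)$, as the paper does with its factors $\sqrt{r_k}$; the statement leaves $C(A)$ unspecified, so nothing is lost.
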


\begin{proof}[Proof of Lemma~\ref{lemma: fdc_common}]
For $\bu\in[r_1]\times\ldots\times[r_K]$, we write $g_{t,\bu} = (\cG_t)_{\bu}$.
Recalling that $\cC_t \;=\; \cG_t \times_{k=1}^K \Lambda_k$, and fixing $\bi\in[p_1]\times\cdots\times[p_K]$, by multilinearity, each entry of $\cC_t$ is a finite linear combination of entries of $\cG_t$,
\[
\cC_{t,\bi} =\sum_{\bi\in[p_1]\times\cdots\times[p_K]}\sum_{\bu\in [r_1]\times\ldots[r_K]} \lambda_{i_1,u_1}\ldots\lambda_{i_K,u_K}\  g_{t,\bu},
\]
For the coupled process $\{\cC^{(s)}_{t,\bi},s\ge 0\}$, we have
\[
\cC_{t,\bi}-\cC^{(s)}_{t,\bi} = \sum_{\bi\in[p_1]\times\cdots\times[p_K]}\sum_{\bu\in [r_1]\times\ldots[r_K]} \lambda_{i_1,u_1}\ldots\lambda_{i_K,u_K}\big(g_{t,u_1,\ldots,u_K}-g^{(s)}_{t,u_1,\ldots,u_K}\big).
\]
By triangle inequality and Assumption~\ref{assum: loadings}~(i),
\[
\delta_{s,\nu,\bi}(\cC)  = \sup_{t\in\Z}\big\|\cC_{t,\bi}-\cC^{(s)}_{t,\bi}\big\|_\nu \le  c_{\lambda}^{K} r^K \,\delta_{s,\nu}(\cG).
\]
Recalling that
\[
\cG_t = \sum_{j=1}^{q+1} \Bigl(\cF_t \times_{k=1}^K A_{j,k}\Bigr)\,\mathbb{I}_{\{\theta_{j-1}<t\le \theta_j\}},
\]
the coupling only changes $\cF_t$ to $\cF_t^{(s)}$, so for $j\in[q+1]$, we have
\[
\cG_t - \cG_t^{(s)} = \l(\cF_t - \cF_t^{(s)}\r)\times_{k=1}^K A_{j,k}.
\]
Then for each $\bu\in[r_1]\times\ldots \times[r_K]$, we apply the triangle inequality to have
\begin{align*}
\delta_{s,\nu,\bu}(\cG) &\le \sum_{\bu'\in [r_1]\times\ldots \times[r_K]} \l\vert \prod_{k=1}^K (A_{j,k})_{u_k,u'_k}\r\vert\delta_{s,\nu,\bu'}(\cF)\\
&\le \l(\max_{\bu'\in [r_1]\times\ldots \times[r_K]} \delta_{s,\nu,\bu'}(\cF) \r) \cdot \prod_{k=1}^K\sum_{u'_k=1}^{r_k} \l\vert (A_{j,k})_{u_k,u'_k} \r\vert.
\end{align*}
Since for $j\in[q+1]$, we have
\[
\sum_{u'_k=1}^{r_k} \l\vert (A_{j,k})_{u_k,u'_k} \r\vert \le \sqrt{r_k} \l\{\sum_{u'_k=1}^{r_k} (A_{j,k})_{u_k,u'_k}^2 \r\}^{1/2} \le \sqrt{r_k} \Vert A_{j,k} \Vert_F,
\]
therefore,
\[
\delta_{s,\nu,\bu}(\cG) \le \l(\prod_{k=1}^K\sqrt{r_k} \Vert A_{j,k} \Vert_F \r) \max_{\bu'\in [r_1]\times\ldots \times[r_K]}\delta_{s,\nu,\bu'}(\cF).
\]
By Assumption~\ref{assum: trans_mat_alt}~(i), 
we set $C(A)=\max_{j\in[q+1]}\prod_{k=1}^K\sqrt{r_k} \Vert A_{j,k} \Vert_F< \infty$ .
Applying Lemma~\ref{lemma: fdc_finite} to the general linear tensor series $\{\cF_t\}$ gives
\[
\Vert \cG_{\cdot,\bu} \Vert_{\nu,\beta} = \sup_{m\ge 0} (m+1)^\beta \sum_{s=m}^{\infty}
\delta_{s,\nu,\bu}(\cG)\le 2C(A)(C_{\nu}(\cF))^{1/\nu}c_{\beta}<\infty,
\]
taking the maximum over $\bu\in[r_1]\times\ldots\times[r_K]$, we have
\[
\Phi_{\nu,\beta}(\cG)\le 2C(A)(C_{\nu}(\cF))^{1/\nu}c_{\beta}<\infty.
\]
Finally we conclude
\[
\Phi_{\nu,\beta}(\cC)\le 2c_{\lambda}^K r^KC(A)(C_{\nu}(\cF))^{1/\nu}c_{\beta}<\infty.
\]
\end{proof}

\begin{lemma}\label{lemma: fdc_Xkt}
Under the assumptions of Lemmas~\ref{lemma: fdc_Et} and~\ref{lemma: fdc_common}, the uniform d.a.n. of $\{\cX_t\}$ satisfies
\begin{align*}
\Phi_{\nu,\beta}(\cX)\le 2c_{\lambda}^{K}r^{K}\,C(A) (C_{\nu}(\cF))^{1/\nu}\,c_{\beta}  + C(\cE)\bigl(2(C_{\nu}(\cF_e))^{1/\nu} c_{\beta}+2(C_{\nu}(\epsilon))^{1/\nu} c_{\beta}\bigr)<\infty,
\end{align*}
where $C_\nu(\cdot)$ is defined in~\eqref{eqn: C_nu}, $C(\cE)$ and $C(A)$ are the same as in Lemma~\ref{lemma: fdc_Et} and Lemma~\ref{lemma: fdc_common}, respectively.
\end{lemma}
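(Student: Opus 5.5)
The bound follows by decomposing $\cX_t = \cC_t + \cE_t$ and showing that the uniform dependence-adjusted norm is subadditive over sums of processes driven by the same (joint) innovation sequence. First I will fix a common i.i.d.\ innovation sequence $\bxi_t$ that collects the innovations of $\cF_t$, $\cF_{e,t}$ and $\epsilon_t$. Under Assumptions~\ref{assum: core_factor} and~\ref{assum: noise} these three processes are mutually independent. Hence $\cX_t$, $\cC_t$ and $\cE_t$ all admit causal representations $g_t(\bxi_t,\bxi_{t-1},\ldots)$ of the form required in Definition~\ref{def: fdc}, with possibly time-varying maps because $\cC_t$ depends on the segment through $A_{j,k}$.

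The coupled process is formed by replacing $\bxi_{t-s}$ with an independent copy $\bxi_{t-s}'$. Because $\cX_t$ is additive in $\cC_t$ and $\cE_t$, the coupling acts componentwise, so $\cX^{(s)}_t = \cC^{(s)}_t + \cE^{(s)}_t$. For each multi-index $\bi$, Minkowski's inequality then gives
\[
\delta_{s,\nu,\bi}(\cX) \le \delta_{s,\nu,\bi}(\cC) + \delta_{s,\nu,\bi}(\cE),
\]
uniformly in $t$ after taking suprema.

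Summing over $s \ge m$ and multiplying by $(m+1)^\beta$ yields
\[
(m+1)^\beta\Delta_{m,\nu,\bi}(\cX) \le (m+1)^\beta\Delta_{m,\nu,\bi}(\cC) + (m+1)^\beta\Delta_{m,\nu,\bi}(\cE).
\]
Taking the supremum over $m$ on the left is bounded by the sum of the separate suprema, so $\Vert\cX_{\cdot,\bi}\Vert_{\nu,\beta} \le \Vert\cC_{\cdot,\bi}\Vert_{\nu,\beta} + \Vert\cE_{\cdot,\bi}\Vert_{\nu,\beta}$. Maximizing over $\bi$ gives $\Phi_{\nu,\beta}(\cX)\le\Phi_{\nu,\beta}(\cC)+\Phi_{\nu,\beta}(\cE)$. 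Plugging in the bounds of Lemma~\ref{lemma: fdc_common} and Lemma~\ref{lemma: fdc_Et} produces exactly the stated constant, which is finite since every ingredient is bounded under the assumptions.

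The only delicate point is the first step. The coupling in Definition~\ref{def: fdc} is defined with respect to a single innovation sequence, whereas Lemmas~\ref{lemma: fdc_Et} and~\ref{lemma: fdc_common} were each proved with respect to their own innovations. I must check that stacking the innovations does not change those bounds. This holds because replacing the joint $\bxi_{t-s}$ only perturbs the $\cF$-part of $\cC_t$ and the $(\cF_e,\epsilon)$-part of $\cE_t$, which is exactly the coupling analysed in those lemmas. Once that is noted, the remaining steps are routine triangle inequalities.
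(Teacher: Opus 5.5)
Your proof is correct and follows the paper's argument: the paper likewise uses $\cX_t=\cC_t+\cE_t$ and Minkowski's inequality to get $\delta_{s,\nu,\bi}(\cX)\le\delta_{s,\nu,\bi}(\cC)+\delta_{s,\nu,\bi}(\cE)$, then combines the bounds of Lemmas~\ref{lemma: fdc_common} and~\ref{lemma: fdc_Et}. Your remark on stacking the innovations into one joint sequence makes explicit a point the paper leaves implicit. It is sound, because each $\delta$ depends only on the joint law of a process and its own coupled copy.
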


\begin{proof}[Proof of Lemma~\ref{lemma: fdc_Xkt}]
The conclusion follows from noting that $\cX_t=\cC_t+\cE_t$, 
\[
\delta_{s,\nu,\bi}(\cX)=\big\|\cX_{t,\bi}-\cX^{(s)}_{t,\bi}\big\|_\nu \le\big\|\cC_{t,\bi}-\cC^{(s)}_{t,\bi}\big\|_\nu + \big\|\cE_{t,\bi}-\cE^{(s)}_{t,\bi}\big\|_\nu = \delta_{s,\nu,\bi}(\cC)+\delta_{s,\nu,\bi}(\cE).
\]
\end{proof}

\paragraph{Lemmas on $\cM$-dependent processes.}

To facilitate our proof, we show several results on weakly $\cM$-dependent processes in \cite{Berkesetal2011}, whose definition is quoted below for completeness.

\begin{definition}[Definition~1 of \cite{Berkesetal2011}]
\label{def: weakly_M_dependent}
Let $\{Y_k\}_{k\in\Z}$ be a stochastic process, let $\nu\geq 1$ and $\delta(m)\to 0$. We say that $\{Y_k\}_{k\in\Z}$ is weakly $\cM$-dependent in $L^\nu$\footnote{A random variable $X$ is said to be in $L^\nu$ if and only if $\E[|X|^\nu] <\infty$.} with rate function $\delta(\cdot)$ if:
\begin{itemize}
    \item [(i)] For any $k\in\Z$, $m\in\N$ one can find a random variable $Y_k^{(m)}$ with finite $\nu$-th moment such that $\|Y_k - Y_k^{(m)}\| \leq \delta(m)$.
    \item [(ii)] For any disjoint intervals $I_1,\dots,I_\ell$ $(\ell\in\N)$ of integers and any positive integers $m_1,\dots,m_\ell$, the vectors $\{Y_j^{(m_1)}\}_{j\in I_1}, \dots, \{Y_j^{(m_\ell)}\}_{j\in I_\ell}$ are independent provided $\inf\{|a-b|: a\in I_i, b\in I_j\} > \max\{m_i,m_j\}$ for $1\leq i<j \leq \ell$.
\end{itemize}
\end{definition}

\begin{lemma}\label{lemma: weakly_M_dependent}
(Useful results of weakly $\cM$-dependent linear processes). 
\begin{itemize}
    \item [(i)] (Linear combination).
    Consider processes $\{Y_{t,i}\}$ for $i\in[n]$ that are weakly $\cM$-dependent in $L^\nu$, each with rate function $\delta_i(m)$. Their linear combination, i.e.\ $\{\sum_{i=1}^n a_i Y_{t,i}\}$ for some given constant coefficients $a_i$, $i\in[n]$, is also weakly $\cM$-dependent in $L^\nu$, with the rate function
    \[
    \delta(m) = \sum_{i=1}^n |a_i| \delta_i(m) .
    \]
    \item [(ii)] (Product of linear processes).
    Consider two (one-sided) linear processes $Y_t = \sum_{j\geq 0} a_j \epsilon_{t-j}$ and $X_t = \sum_{i\geq 0} b_i \varepsilon_{t-j}$, with i.i.d.\ innovations {$\{\epsilon_t\}_{t \in \mathbb{Z}}$ and $\{\varepsilon_t\}_{t \in \mathbb{Z}}$ which are mutually independent at all lags.} 
    Assume that $\{Y_t\}$ satisfies
    \begin{itemize}
    \item [(a)] $\E(|\epsilon_0|^{2\nu}) <\infty$ 
    for some $\nu\geq 1$;
    \item [(b)] There exists some constant $c_\beta > 0$ depending on $\beta\geq 0$ such that
    \[
    \sup_{m \ge 0} (m + 1)^\beta \sum_{s = m}^\infty \vert a_{s} \vert \le c_{\beta} .
    \]
    \end{itemize}
    Assume analogous conditions for $\{X_t\}$ (with the same $\nu$ and $\beta$ without loss of generality). Then $\{Y_t\}$ and $\{X_t\}$ are weakly $\cM$-dependent in $L^{2\nu}$, and $\{Y_t X_t\}$ is weakly $\cM$-dependent in $L^\nu$, all with the rate function $\delta(m) = m^{-\beta}$.
    \item [(iii)] (Squared linear process).
    Consider $\{Y_t\}$ defined in (ii) above. Let $\beta>0$. Then the process $\{Y_t^2\}$ is also weakly $\cM$-dependent in $L^\nu$ with the rate function $\delta(m) = m^{-\beta}$.
\end{itemize}
\end{lemma}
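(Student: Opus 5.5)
Each of the three claims reduces to constructing explicit $m$-dependent approximations and controlling their $L^\nu$ distance through the coefficient tails. For the linear processes themselves I will use the truncation $Y_t^{(m)} = \sum_{j=0}^{m} a_j \epsilon_{t-j}$. By construction $Y_t^{(m)}$ depends only on $\epsilon_{t-m},\dots,\epsilon_t$. So for disjoint index blocks $I_1,\dots,I_\ell$ with gaps exceeding $\max(m_i,m_{i'})$, the innovation windows used by different blocks are disjoint. Independence of the i.i.d.\ innovations then gives condition (ii) of Definition~\ref{def: weakly_M_dependent}.

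\textbf{Tail bound for $Y_t$.} For condition (i), Minkowski's inequality gives
\[
\|Y_t - Y_t^{(m)}\|_{2\nu} \le \|\epsilon_0\|_{2\nu} \sum_{j>m}|a_j| \le \|\epsilon_0\|_{2\nu}\, c_\beta\,(m+2)^{-\beta} \lesssim m^{-\beta}.
\]
This uses assumptions (a) and (b). The constant is absorbed in the rate function $\delta(m)=m^{-\beta}$, up to a multiplicative constant, which is the usual convention. The same argument applies to $X_t$, with $X_t^{(m)}$ using $\varepsilon$.

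\textbf{Part (i).} Set $(\sum_i a_iY_{t,i})^{(m)} := \sum_i a_i Y_{t,i}^{(m)}$. The $L^\nu$ bound follows from Minkowski's inequality and yields the rate $\sum_i|a_i|\delta_i(m)$. For independence there is a subtlety: the approximations for different $i$ must be jointly independent across separated blocks. I will state this as implicitly assumed, namely that the processes share a common innovation filtration, as in all our applications. Each component is a function of innovations within the window, so independence across separated blocks is inherited.

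\textbf{Products and squares (parts (ii) and (iii)).} For the product, take $(Y_tX_t)^{(m)} := Y_t^{(m)}X_t^{(m)}$. This is a function of $(\epsilon_s,\varepsilon_s)_{t-m\le s\le t}$, and since $\{\epsilon_t\}$ and $\{\varepsilon_t\}$ are mutually independent and i.i.d., the pairs $(\epsilon_s,\varepsilon_s)$ are i.i.d. Block independence follows as before. For the distance, write
\[
Y_tX_t - Y_t^{(m)}X_t^{(m)} = (Y_t-Y_t^{(m)})X_t + Y_t^{(m)}(X_t-X_t^{(m)}).
\]
Cauchy--Schwarz (Hölder with exponents $2\nu,2\nu$) then gives
\[
\|\cdot\|_\nu \le \|Y_t-Y_t^{(m)}\|_{2\nu}\|X_t\|_{2\nu} + \|Y_t^{(m)}\|_{2\nu}\|X_t-X_t^{(m)}\|_{2\nu}.
\]
The moments $\|X_t\|_{2\nu}$ and $\|Y^{(m)}_t\|_{2\nu}$ are bounded uniformly by $\|\epsilon_0\|_{2\nu}\sum_j|a_j|$, which is finite by (b) with $m=0$. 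Hence the rate is $\lesssim m^{-\beta}$. Part (iii) is the special case $X_t=Y_t$; the same decomposition works verbatim with $Y_t^{(m)}$ used in both factors, no independence between factors is needed, and $\beta>0$ ensures $\delta(m)\to0$.

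\textbf{Main obstacle.} The analytic bounds are routine. The main obstacle is bookkeeping: making sure the coupled variables for different components and factors are built from the same innovation windows, so that the joint independence in Definition~\ref{def: weakly_M_dependent}(ii) holds. I will handle it by always defining approximations as measurable functions of the innovation vectors within the lag-$m$ window.
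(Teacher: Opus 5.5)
Your proof is correct and follows the paper's route: you build lag-truncated approximations from the innovation window, use a H\"older split in $L^{2\nu}$, and control the coefficient tail sums via (b). Truncating at lag $m$ rather than the paper's $\lfloor m/2\rfloor$, and using the two-term telescoping $(Y_t-Y_t^{(m)})X_t+Y_t^{(m)}(X_t-X_t^{(m)})$ instead of the paper's three-term and diagonal/off-diagonal splits, are harmless (and slightly cleaner) for one-sided processes.

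Your caveat in part (i) is a genuinely needed extra hypothesis, not just bookkeeping, and the paper's ``straightforward from the definition'' silently relies on it. For example, with $\epsilon_t$ i.i.d.\ Rademacher and $\xi$ an independent Rademacher variable, $Y_{t,1}=\epsilon_t$ and $Y_{t,2}=\xi\epsilon_t$ are each i.i.d.\ (so $\delta_i\equiv 0$). Yet $|Y_{t,1}+Y_{t,2}|=|1+\xi|$ does not depend on $t$, so the sum is not weakly $\cM$-dependent unless the approximations are built jointly from common innovations, as they are in the paper's applications.
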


\begin{proof}[Proof of Lemma~\ref{lemma: weakly_M_dependent}]
Part~(i) is straightforward from Definition~\ref{def: weakly_M_dependent}.

For part~(ii), the statement for the processes $\{Y_t\}$ and $\{X_t\}$ is direct from Section~3.3 in \cite{Berkesetal2011}. For the product process $\{Y_t X_t\}$, define $Z_t= Y_t X_t$ and
\[
Z_t^{(m)}= \sum_{j=0}^{\left\lfloor m/2 \right\rfloor} \sum_{i=0}^{\left\lfloor m/2 \right\rfloor} a_j b_i \epsilon_{t-j} \varepsilon_{t-i} .
\]
Then by Minkowski's inequality, H\"{o}lder's inequality and the independence between $\epsilon_t$ and $\varepsilon_s$ for any $t,s$, we have
\begin{align*}
    \left\| Z_t - Z_t^{(m)} \right\|_\nu &=
    \left\| \sum_{j=0}^{\infty} \sum_{i=0}^{\infty} a_j b_i \epsilon_{t-j} \varepsilon_{t-i} - \sum_{j=0}^{\left\lfloor m/2 \right\rfloor} \sum_{i=0}^{\left\lfloor m/2 \right\rfloor} a_j b_i \epsilon_{t-j} \varepsilon_{t-i} \right\|_\nu \\
    &\leq
    \left\| \left( \sum_{j\geq 0} a_j \epsilon_{t-j} \right) \left( \sum_{i> \left\lfloor m/2 \right\rfloor} b_i \varepsilon_{t-i} \right) \right\|_\nu 
    + \left\| \left( \sum_{j> \left\lfloor m/2 \right\rfloor} a_j \epsilon_{t-j} \right) \left( \sum_{i\geq 0} b_i \varepsilon_{t-i} \right) \right\|_\nu \\
    &\;\quad
    + \left\| \left( \sum_{j> \left\lfloor m/2 \right\rfloor} a_j \epsilon_{t-j} \right) \left( \sum_{i> \left\lfloor m/2 \right\rfloor} b_i \varepsilon_{t-i} \right) \right\|_\nu \\
    &\leq
    \left\| \sum_{j\geq 0} a_j \epsilon_{t-j} \right\|_{2\nu} \left\| \sum_{i> \left\lfloor m/2 \right\rfloor} b_i \varepsilon_{t-i} \right\|_{2\nu}
    + \left\| \sum_{j> \left\lfloor m/2 \right\rfloor} a_j \epsilon_{t-j} \right\|_{2\nu} \left\| \sum_{i\geq 0} b_i \varepsilon_{t-i} \right\|_{2\nu} \\
    &\;\quad
    + \left\| \sum_{j> \left\lfloor m/2 \right\rfloor} a_j \epsilon_{t-j} \right\|_{2\nu} \left\| \sum_{i> \left\lfloor m/2 \right\rfloor} b_i \varepsilon_{t-i} \right\|_{2\nu} \\
    &\lesssim
    \left( \sum_{j\geq 0} |a_j| \right) \left( \sum_{i> \left\lfloor m/2 \right\rfloor} |b_i| \right)
    + \left( \sum_{j> \left\lfloor m/2 \right\rfloor} |a_j| \right) \left( \sum_{i\geq 0} |b_i| \right) \\
    &\;\quad
    + \left( \sum_{j> \left\lfloor m/2 \right\rfloor} |a_j| \right) \left( \sum_{i> \left\lfloor m/2 \right\rfloor} |b_i| \right)
    \lesssim m^{-\beta} ,
\end{align*}
where the last two lines used similar arguments in Section~3.3 of \cite{Berkesetal2011} and condition~(b) which also implies $\sum_{j\geq 0} |a_j| + \sum_{i\geq 0} |b_i| \leq c$ for some constant $c>0$. Noting that the definition of $Z_t^{(m)}$ automatically fulfills (ii) of Definition~\ref{def: weakly_M_dependent}, the proof for part~(ii) is complete.

Lastly, for part~(iii), define $W_t= Y_t^2$, and
\[
W_t^{(m)} = \sum_{j=0}^{\left\lfloor m/2 \right\rfloor} \sum_{i=0}^{\left\lfloor m/2 \right\rfloor} a_j a_i \epsilon_{t-j} \epsilon_{t-i}
= \sum_{j=0}^{\left\lfloor m/2 \right\rfloor} a_j^2 \epsilon_{t-j}^{2} + \sum_{j=0}^{\left\lfloor m/2 \right\rfloor} \sum_{i=0; i\neq j}^{\left\lfloor m/2 \right\rfloor} a_j a_i \epsilon_{t-j} \epsilon_{t-i} .
\]
Note that we can also write
\[
W_t = \sum_{j=0}^{\infty} a_j^2 \epsilon_{t-j}^{2} + \sum_{j=0}^{\infty} \sum_{i=0; i\neq j}^{\infty} a_j a_i \epsilon_{t-j} \epsilon_{t-i} .
\]
Hence, similar to the proof for part~(ii) above,
\begin{align*}
    \left\| W_t - W_t^{(m)} \right\|_{\nu} &\leq
    \left\| \sum_{j=0}^{\infty} a_j^2 \epsilon_{t-j}^{2} - \sum_{j=0}^{\left\lfloor m/2 \right\rfloor} a_j^2 \epsilon_{t-j}^{2} \right\|_{\nu}
    \\
    & \quad    + \left\| \sum_{j=0}^{\infty} \sum_{i=0; i\neq j}^{\infty} a_j a_i \epsilon_{t-j} \epsilon_{t-i} - \sum_{j=0}^{\left\lfloor m/2 \right\rfloor} \sum_{i=0; i\neq j}^{\left\lfloor m/2 \right\rfloor} a_j a_i \epsilon_{t-j} \epsilon_{t-i} \right\|_{\nu} \\
    &\lesssim
    \sum_{j> \left\lfloor m/2 \right\rfloor} a_j^2
    + m^{-\beta} 
    \lesssim m^{-2\beta} + m^{-\beta} ,
\end{align*}
where the last line used the repetitive arguments as for part~(ii) in the proof of this lemma, together with similar arguments in Section~3.3 of \cite{Berkesetal2011}. This concludes the proof of this lemma.
\end{proof}


\paragraph{Lemmas for controlling the partial sums.}

\begin{lemma}\label{lemma: loading_prod_rate}
Suppose that all the assumptions made in Lemma~\ref{lemma: consistency_proj} hold.
\begin{enumerate}[label = (\roman*)]
\item For all $k \in [K]$, we have
\begin{align*}
\frac{1}{p_k}\bnorm{\wh{\Lambda}_k\wh{\Lambda}_k^\trans- \Lambda_k\Lambda_k^\trans}_F=\cO_P\bigl(\alpha_{T,p,k}\bigl),\text{\ and\ }
\frac{1}{\pmk}\bnorm{\whLambdamk\whLambdamk^\trans- \Lambdamk\Lambdamk^\trans}_F=\cO_P(\alpha_{T,p,\text{-}k}).
\end{align*}

\item Let us write
\begin{align*}
&H_1 = \max_{k\in[K]} \Vert \wh{H}_k \Vert_, \quad H_2 = \max_{k\in[K]}\l\Vert \wh{H}_k \wh{H}_k^\trans  \r\Vert,
\text{\ and \ }H_{3} = \max_{k\in[K]}\frac{1}{\sqrt{p_k}}\l\Vert \wh{\Lambda}_k -\Lambda_k \wh{H}_k\r\Vert_F.
\end{align*}
With some finite constant $C_H>0$ and $\alpha_{T, p}$ in~\eqref{eq:alpha:rate}, let us define 
\begin{align*}
\cH_{T, p} = \l\{ \vert H_1 - 1 \vert \ge C_H\alpha_{T,p} \r\} \cup \l\{ \vert H_2 - 1 \vert \ge C_H\alpha_{T,p} \r\} \cup \l\{ H_3 \ge C_H\alpha_{T,p} \r\}. 
\end{align*}
Then we have $\P(\cH_{T,p})\to 0$ as $\min(T,p_1,\ldots,p_K)\to \infty$.
\end{enumerate}
\end{lemma}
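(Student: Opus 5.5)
For part~(i), I will start from the decomposition
\[
\wh\Lambda_k\wh\Lambda_k^\trans-\Lambda_k\Lambda_k^\trans
=(\wh\Lambda_k-\Lambda_k\wh H_k)\wh\Lambda_k^\trans+\Lambda_k\wh H_k(\wh\Lambda_k-\Lambda_k\wh H_k)^\trans+\Lambda_k(\wh H_k\wh H_k^\trans-I_{r_k})\Lambda_k^\trans .
\]
Dividing by $p_k$ and using $\|AB\|_F\le\|A\|_F\|B\|$, the three terms are bounded as follows. The first two are bounded by $p_k^{-1/2}\|\wh\Lambda_k-\Lambda_k\wh H_k\|_F$ times $p_k^{-1/2}\|\wh\Lambda_k\|=1$ and $p_k^{-1/2}\|\Lambda_k\|\|\wh H_k\|=\cO_P(1)$, respectively (Assumption~\ref{assum: loadings} and~\eqref{eqn: aux_rates_proj}). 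The third is bounded by $\|\wh H_k\wh H_k^\trans-I_{r_k}\|_F$. Lemma~\ref{lemma: consistency_proj} gives $p_k^{-1/2}\|\wh\Lambda_k-\Lambda_k\wh H_k\|_F=\cO_P(\alpha_{T,p,k})$ and $\wh H_k\wh H_k^\trans=I_{r_k}+\cO_P(\alpha_{T,p,k})$, so the first claim follows.

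For the Kronecker statement, I will use $\whLambdamk\whLambdamk^\trans=\otimes_{\ell\ne k}\wh\Lambda_\ell\wh\Lambda_\ell^\trans$ and set $P_\ell=\Lambda_\ell\Lambda_\ell^\trans/p_\ell$ and $\wh P_\ell=\wh\Lambda_\ell\wh\Lambda_\ell^\trans/p_\ell$. I then telescope
\[
\otimes_{\ell}\wh P_\ell-\otimes_\ell P_\ell=\sum_{m\ne k}\Big(\otimes_{\ell<m}\wh P_\ell\Big)\otimes(\wh P_m-P_m)\otimes\Big(\otimes_{\ell>m}P_\ell\Big),
\]
with the products taken in the paper's ordering. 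Each summand has Frobenius norm at most $\|\wh P_m-P_m\|_F\prod_{\ell\ne m}\|\cdot\|_F$. Each $\|\wh P_\ell\|_F$ and $\|P_\ell\|_F$ equals $\sqrt{r_\ell}$, since both are rank-$r_\ell$ projections. The total is therefore $\cO_P(\sum_{m\ne k}\alpha_{T,p,m})=\cO_P(\alpha_{T,p,\text{-}k})$, using $\|A\otimes B\|_F=\|A\|_F\|B\|_F$.

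For part~(ii), I will first bound $|H_1-1|$ through $|H_2-1|$: since $\|\wh H_k\|^2=\|\wh H_k\wh H_k^\trans\|$, we have $|\|\wh H_k\|-1|\le|\|\wh H_k\|^2-1|$. Next, $|\|\wh H_k\wh H_k^\trans\|-1|\le\|\wh H_k\wh H_k^\trans-I\|=\cO_P(\alpha_{T,p,k})$. The term $H_3$ is directly $\cO_P(\max_k\alpha_{T,p,k})$ by Lemma~\ref{lemma: consistency_proj}. Since $K$ is fixed, the maximum over $k$ of $\cO_P$ terms is still $\cO_P(\alpha_{T,p})$, and $\alpha_{T,p,k}\le\alpha_{T,p}$. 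Hence each of the three random quantities divided by $\alpha_{T,p}$ is tight.

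The main obstacle is the statement as written: a fixed constant $C_H$ only gives $\limsup\P(\cH_{T,p})\le\varepsilon$, not convergence to zero. I would resolve this by reading $C_H$ as chosen, for any given $\varepsilon>0$, large enough that $\P(\cH_{T,p})<\varepsilon$ eventually, which is exactly tightness. Alternatively, I would strengthen the conclusion to $o_P$ where the rates in Lemma~\ref{lemma: consistency_proj} allow this, or let $C_H\to\infty$ arbitrarily slowly. A union bound over the three events and over the $K$ modes then completes the argument.
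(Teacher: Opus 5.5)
Your proposal is correct and follows the paper's proof essentially step by step. Part~(i) uses the same three-term decomposition with Lemma~\ref{lemma: consistency_proj}, and then the same Kronecker telescoping with $\|A\otimes B\|_F=\|A\|_F\|B\|_F$; you normalize by $p_\ell$ so that each factor has norm $\sqrt{r_\ell}$, where the paper uses $\|\Lambda_\ell\Lambda_\ell^\trans\|_F=\cO(p_\ell)$. Part~(ii) rests on the same appeal to the $\cO_P$ rates. Your caveat about $C_H$ is well taken: the paper just says ``choosing $C_H$ large enough''. Your reading makes that step rigorous: for each $\varepsilon$, pick $C_H$ with $\limsup\P(\cH_{T,p})\le\varepsilon$, or let $C_H\to\infty$ slowly with $C_H\alpha_{T,p}\to0$, which is all the later proofs need.
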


\begin{proof}[Proof of Lemma~\ref{lemma: loading_prod_rate}~(i)]
For any $k\in[K]$, we have
\begin{align*}
\frac{1}{p_k}\left(\wh{\Lambda}_k\wh{\Lambda}_k^\trans- \Lambda_k\Lambda_k^\trans\right)
=\frac{1}{p_k}\left(\wh{\Lambda}_k-\Lambda_k\wh{H}_k \right)\wh{\Lambda}_k^\trans 
+ \frac{1}{p_k}\Lambda_k\wh{H}_k\left(\wh{\Lambda}_k-\Lambda_k\wh{H}_k\right)^\trans + \frac{1}{p_k}\Lambda_k\left(\wh{H}_k\wh{H}_k^\trans - I_{r_k} \right)\Lambda_k^\trans.
\end{align*}
Lemma~\ref{lemma: consistency_proj} shows that $\big\|\wh{\Lambda}_k-\Lambda_k\wh{H}_k \big\|_F=\cO_P\bigl(\sqrt{p_k}\alpha_{T,p,k}\bigr)$, and since $\bnorm{\wh{\Lambda}_k}_F = \sqrt{r_kp_k}$ and $\bnorm{\wh{\Lambda}_k}=\sqrt{p_k}$, we see
\begin{align*}
\frac{1}{p_k}\Big\|\left(\wh{\Lambda}_k-\Lambda_k\wh{H}_k \right)\wh{\Lambda}_k^\trans\Big\|_F=\cO_P\bigl(\alpha_{T,p,k}\bigr),\text{\ and \ } \frac{1}{p_k}\Big\|\Lambda_k\wh{H}_k\bigl(\wh{\Lambda}_k-\Lambda_k\wh{H}_k\bigr)^\trans \Big\|_F=\cO_P\bigl(\alpha_{T,p,k}\bigr).
\end{align*}
Also, by Lemma~\ref{lemma: consistency_proj}, we have
$\wh{H}_k \wh{H}_k^\trans = I_{r_k} + \cO_P\bigl(\alpha_{T,p,k}\bigr)$, so that,
\begin{align*}
\frac{1}{p_k}\Big\|\Lambda_k\left(\wh{H}_k\wh{H}_k^\trans - I_{r_k} \right)\Lambda_k^\trans\Big\|_F\le 
\frac{1}{p_k}\bnorm{\Lambda_k}_F^2\bnorm{\wh{H}_k\wh{H}_k^\trans-I_{r_k}}_F=\cO_P\bigl(\alpha_{T,p,k}\bigr).
\end{align*}
Combining the above, we have
\begin{equation}\label{eqn: loading_prod_rate}
\frac{1}{p_k}\Big\|\wh{\Lambda}_k\wh{\Lambda}_k^\trans- \Lambda_k\Lambda_k^\trans\Big\|_F=\cO_P\bigl(\alpha_{T,p,k}\bigr). 
\end{equation}

For the second claim, it suffices to prove the result for $k=1$,  
for which
\[
\wh{\Lambda}_{-1}\wh{\Lambda}_{-1}^\trans
= \otimes_{\ell=2}^K \wh{\Lambda}_\ell\wh{\Lambda}_\ell^\trans, \text{\ and \ } \Lambda_{-1}\Lambda_{-1}^\trans
= \otimes_{\ell=2}^K \Lambda_\ell\Lambda_\ell^\trans .
\]
By successively adding and subtracting intermediate terms, we obtain
\begin{align}
\otimes_{\ell=2}^K \wh{\Lambda}_\ell\wh{\Lambda}_\ell^\trans - \otimes_{\ell=2}^K \Lambda_\ell\Lambda_\ell^\trans
= \sum_{h=2}^K \Big(\otimes_{\ell=2}^{h-1} \Lambda_\ell\Lambda_\ell^\trans\Big) \otimes \Big(\wh{\Lambda}_h\wh{\Lambda}_h^\trans-\Lambda_h\Lambda_h^\trans\Big) \otimes \Big(\otimes_{\ell=h+1}^{K} \wh{\Lambda}_\ell\wh{\Lambda}_\ell^\trans\Big).
\nonumber 
\end{align}
Using $\|A\otimes B\|_{\F}=\|A\|_{F}\|B\|_{F}$ and the triangle inequality, we have
\begin{align*}
\frac{1}{p_{-1}}\big\|\wh{\Lambda}_{-1}\wh{\Lambda}_{-1}^\trans-\Lambda_{-1}\Lambda_{-1}^\trans\big\|_{F}
&\le \frac{1}{p_{-1}}\sum_{h=2}^K \Big(\prod_{\ell=2}^{h-1}\|\Lambda_\ell\Lambda_\ell^\trans\|_{F}\Big) \, \big\|\wh{\Lambda}_h\wh{\Lambda}_h^\trans-\Lambda_h\Lambda_h^\trans\big\|_{F}\, \Big(\prod_{\ell=h+1}^{K}\|\wh{\Lambda}_\ell\wh{\Lambda}_\ell^\trans\|_{F}\Big)
\\
&=\cO_P\Big(\sum_{h=2}^K\alpha_{T,p,h}\Big)
=\cO_P(\alpha_{T,p,-1}),
\end{align*}
from~\eqref{eqn: loading_prod_rate} and that $\|\Lambda_\ell\Lambda_\ell^\trans\|_{F}=\cO(p_\ell)$ under Assumption~\ref{assum: loadings}~(ii), and $\|\wh{\Lambda}_\ell\wh{\Lambda}_\ell^\trans\|_{F}=\cO(p_\ell)$. 
\end{proof}

\begin{proof}[Proof of Lemma~\ref{lemma: loading_prod_rate}~(ii)]
By Lemma~\ref{lemma: consistency_proj}, we have
\begin{align*}
\l\Vert \wh{H}_k \wh{H}_k^\trans \r\Vert  &= 1+ \cO_P(\alpha_{T,p,k}) = 1+ \cO_P(\alpha_{T,p}), \text{ \ and }\\
\Vert \wh{H}_k \Vert &= 1+ \cO_P(\alpha_{T,p,k} ) = 1+ \cO_P(\alpha_{T,p}).
\end{align*}
With $K$ being finite, it follows that $\vert H_1-1 \vert = \cO_P(\alpha_{T,p})$, $\vert H_2-1 \vert = \cO_P(\alpha_{T,p})$, and $H_3 = \cO_P(\alpha_{T,p})$ by Lemma~\ref{lemma: consistency_proj}, so choosing $C_H>0$ large enough gives $\P(\cH_{T,p})\to 0$ as $\min(T,p_1,\ldots,p_K)\to \infty$.
\end{proof}

\begin{lemma}\label{lemma: E(xx)}
Suppose that all the assumptions made in Lemma~\ref{lemma: consistency_proj} hold.
Then we have
\begin{enumerate}[label = (\roman*)]
\item $p^{-1}\big\| \E\big(X_{k,t}X_{k,t}^\trans\big) \big\|_F =\cO(1)$. 
\item ${(p\pmk)}^{-1}\big\| \E\big(E_{k,t}\Lambdamk\Lambdamk^\trans E_{k,t}^\trans \big) \big\|_F=\cO(1)$.
\item $(p\pmk)^{-1}\big\| \E\big(X_{k,t}\Lambdamk\Lambdamk^\trans X_{k,t}^\trans\big) \big\|_F =\cO(1)$.
\item For any $0\le s<\tau<e\le T$, we have
\[
\frac{1}{p\,\pmk} \l(\frac{1}{e-\tau}\sum_{t=\tau+1}^{e} \E\big(E_{k,t}\Lambdamk\Lambdamk^{\trans}E_{k,t}^{\trans}\big) - \frac{1}{\tau-s}\sum_{t=s+1}^{\tau} \E\big(E_{k,t}\Lambdamk\Lambdamk^{\trans}E_{k,t}^{\trans}\big)\r) = \mathbf{O}.
\]
\end{enumerate}
\end{lemma}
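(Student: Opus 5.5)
The plan is to prove the four parts in order, using the representation $X_{k,t} = \Lambda_k G_{k,t}\Lambdamk^\trans + E_{k,t}$ with $G_{k,t} = A_{j,k}F_{k,t}\Ajmk{j}^\trans$ on the $j$-th segment, together with the independence of $\{\cF_t\}$ and $\{\cE_t\}$ (Assumption~\ref{assum: noise}). The cross terms between factor and noise have zero mean, so every expectation splits into a common part and a noise part.

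For (i), the common part is $\Lambda_k A_{j,k}\E(F_{k,t}\Ajmk{j}^\trans\Lambdamk^\trans\Lambdamk\Ajmk{j}F_{k,t}^\trans)A_{j,k}^\trans\Lambda_k^\trans$. By Lemma~\ref{lemma: FMF_expectation} this equals $\pmk\,\|\Ajmk{j}\|_F^2\,\Lambda_k A_{j,k}A_{j,k}^\trans\Lambda_k^\trans$, using $\Lambdamk^\trans\Lambdamk = \pmk I$ from Assumption~\ref{assum: loadings}. Its Frobenius norm is at most $\pmk\|\Ajmk{j}\|_F^2\|A_{j,k}\|_F^2\|\Lambda_k\|^2 \lesssim \pmk p_k = p$, where the $\|A_{j,\ell}\|_F$ are bounded by Assumption~\ref{assum: trans_mat_alt}(i). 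For the noise part, write $\E(E_{k,t}E_{k,t}^\trans)$ entrywise via the decomposition in Assumption~\ref{assum: noise}. Arguing as in the proof of Lemma~\ref{lemma: prelim_rate}(ii), via Proposition~1.1 of \cite{CenLam2025}, its $\|\cdot\|_1$ and $\|\cdot\|_\infty$ norms are $\cO(\pmk)$ (this is the per-$t$ version of $\|\E\sum_t E_{k,t}E_{k,t}^\trans\|_1 = \cO(T\pmk)$). Hence $\|\E(E_{k,t}E_{k,t}^\trans)\|_F \le \sqrt{p_k}\,\|\cdot\| \lesssim \sqrt{p_k}\,\pmk \le p$.

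For (ii), I would bound $\E(E_{k,t}\Lambdamk\Lambdamk^\trans E_{k,t}^\trans)$ entrywise, as in the mean computation of Lemma~\ref{lemma: prelim_rate}(iv) with $VV^\trans = \Lambdamk\Lambdamk^\trans$ deterministic. The $(i,j)$ entry is $\lesssim \tr(\Aemk^\trans\Lambdamk\Lambdamk^\trans\Aemk)\,|A_{e,k,i\cdot}^\trans A_{e,k,j\cdot}| + \tr(\Lambdamk\Lambdamk^\trans)\mathbb{I}_{\{i=j\}}$ up to $\Sigma_\epsilon$ weights. Both traces are $\cO(\pmk)$: $\|\Lambdamk\|^2 = \pmk$, the $\Aemk$ have bounded column dimension, and $\|\Aemk\|\le \prod(\|A_{e,\ell}\|_1\|A_{e,\ell}\|_\infty)^{1/2} = \cO(1)$. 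Summing squares over $(i,j)$ and using the sparsity $\|A_{e,k}\|_1,\|A_{e,k}\|_\infty = \cO(1)$ gives a Frobenius norm $\lesssim \pmk\sqrt{p_k} \le p\pmk$.

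For (iii), expand as in (i). The common part equals $\|\Lambdamk\Ajmk{j}\|_F^2$-type quantities times $\Lambda_k A_{j,k}A_{j,k}^\trans\Lambda_k^\trans$; concretely, via Lemma~\ref{lemma: FMF_expectation} it is $\tr(\Ajmk{j}^\trans(\Lambdamk^\trans\Lambdamk)^2\Ajmk{j})\,\Lambda_kA_{j,k}A_{j,k}^\trans\Lambda_k^\trans$, which is $\cO(\pmk^2 p_k) = \cO(p\pmk)$. The noise part is (ii), and the cross terms vanish.

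For (iv), the key observation is that $\E(E_{k,t}\Lambdamk\Lambdamk^\trans E_{k,t}^\trans)$ does not depend on $t$. The noise in Assumption~\ref{assum: noise} is built from general linear processes with unit-variance independent innovations and time-invariant coefficients and loadings, so the second moments are $\sum_h a_{e,h}^2=1$ times fixed quantities. Consequently both averages equal the same matrix and their difference is $\mathbf{O}$. The main care point is confirming this stationarity of second moments, in particular that the innovation variances are exactly one for all $t$ (Definition~\ref{def: general_linear_tensor}(i)). The main obstacle overall is the noise bound in (i)–(ii), which needs careful bookkeeping of the $\Sigma_\epsilon\circ\epsilon_t$ and sparse-loading parts.
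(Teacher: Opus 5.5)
Your proof is correct and follows the paper's skeleton. You split $X_{k,t}$ into its common and noise parts, with the cross terms vanishing by independence. Part (iii) then reduces to (ii) plus the common part, and (iv) follows from the time-invariance of the noise second moments.

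The genuine difference is in how you bound the noise terms in (i)--(ii).

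\emph{Your route.} You compute $\E(E_{k,t}ME_{k,t}^\trans)$ entrywise from the structure $\cF_{e,t}\times_{k=1}^K A_{e,k}+\Sigma_\epsilon\circ\epsilon_t$. You then control it through $\ell_1/\ell_\infty$ sparsity of the $A_{e,\ell}$ and a per-$t$ analogue of Proposition~1.1 of \cite{CenLam2025}.

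\emph{The paper's route.} It uses only Jensen's inequality:
\[
\bigl\|\E(E_{k,t}ME_{k,t}^\trans)\bigr\|_F \le \E\bigl\|E_{k,t}ME_{k,t}^\trans\bigr\|_F \le \|M\|\,\E\|E_{k,t}\|_F^2, \qquad \E\|E_{k,t}\|_F^2=\sum_{i,\ell}\E(E_{k,t,i\ell}^2)=\cO(p),
\]
with $M=I$ in (i) and $M=\Lambdamk\Lambdamk^\trans$ in (ii), so that $\|M\|=\pmk$. This needs nothing beyond uniformly bounded entry variances of $\cE_t$.

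The noise bookkeeping you flag as the main obstacle is therefore avoidable. What your computation buys is sharper bounds, $\cO(\sqrt{p_k}\,\pmk)$ in both (i) and (ii) instead of $\cO(p)$ and $\cO(p\pmk)$, which the lemma does not need.

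One small correction in (ii): bounded column dimension of $\Aemk$ is not among the assumptions, and you do not need it. Since $\rmk$ is fixed,
\[
\tr(\Aemk^\trans\Lambdamk\Lambdamk^\trans\Aemk)=\|\Lambdamk^\trans\Aemk\|_F^2\le\|\Lambdamk\|_F^2\,\|\Aemk\|^2=\rmk\pmk\cdot\cO(1).
\]
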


\begin{proof}[Proof of Lemma~\ref{lemma: E(xx)}~(i)]
Using $\E(G_{k,t})=\E(E_{k,t})=0$ and the independence of $\{G_{k,t}\}$ and $\{E_{k,t}\}$ from Assumptions~\ref{assum: core_factor} and~\ref{assum: noise}, 
\[
\Big\| \E(X_{k,t}X_{k,t}^\trans) \Big\|_F \le \Big\| \Lambda_k\,\E\big(G_{k,t}\,\Lambdamk^\trans\Lambdamk\,G_{k,t}^\trans\big)\,\Lambda_k^\trans \Big\|_F
+ \Big\| \E(E_{k,t}E_{k,t}^\trans) \Big\|_F =: U_1 + U_2.
\]
By Assumption~\ref{assum: loadings}~(ii), we have $\Lambdamk^\trans\Lambdamk = \otimes_{\ell\ne k}(\Lambda_\ell^\trans\Lambda_\ell)
= \otimes_{\ell\neq k} (p_\ell I_{r_\ell})
= \pmk\, I_{\rmk}$. Therefore
\[
\Lambda_k\,\E\big(G_{k,t}\Lambdamk^\trans\Lambdamk G_{k,t}^\trans\big)\,\Lambda_k^\trans = \Lambda_k\big[\pmk\,\E(G_{k,t}G_{k,t}^\trans)\big]\Lambda_k^\trans.
\]
By sub-multiplicativity of the Frobenius norm and Jensen's inequality, and recalling that the mode-$k$ unfolding of $\cG_t$ is $G_{k,t} = \sum_{j=1}^{q+1} A_{j,k} F_{k,t} \Ajmk{j}^\trans \cdot \mathbb{I}_{\{\theta_{j-1} <t\le \theta_j\}}$, we have
\begin{align*}
\frac{1}{p}\Big\|\Lambda_k\big[\pmk\,\E(G_{k,t}G_{k,t}^\trans)\big]\Lambda_k^\trans\Big\|_F &\le \big\|\E(G_{k,t}G_{k,t}^\trans)\big\|_F \le \E\big(\|G_{k,t}G_{k,t}^\trans\|_F\big) \le  \E\big(\|G_{k,t}\|_F^2\big)
\\
&\le  \sum_{j=1}^{q+1} \Vert A_{j - 1, k} \Vert_F^2 \E(\Vert F_{k, t} \Vert_F^2) \Vert A_{j - 1, -k} \Vert_F^2 \cdot \mathbb{I}_{\{\theta_{j-1} <t\le \theta_j\}}.
\end{align*}
Then by Assumptions~\ref{assum: core_factor} and~\ref{assum: trans_mat}~(i), the LHS is of order $\cO(r)$, and hence $U_1 = \cO(p)$.
As for $U_2$, by Assumption~\ref{assum: noise}, each entry of $E_{k,t}$ has uniformly bounded variance and hence by Jensen's inequality,
\begin{align}
U_2 = \big\|\E(E_{k,t}E_{k,t}^\trans)\big\|_F \le  \E\big(\|E_{k,t}\|_F^2\big) =\sum_{i=1}^{p_k}\sum_{\ell=1}^{\pmk}\E\big((E_{k,t})_{i \ell}^2\big) = \cO(p). \label{eqn: E(EE)}
\end{align}
\end{proof}

\begin{proof}[Proof of Lemma~\ref{lemma: E(xx)}~(ii)]
By Assumption~\ref{assum: loadings}~(ii), by Jensen's inequality, we have 
\begin{align*}
\frac{1}{p\pmk} \big\Vert\E(E_{k,t}\Lambdamk\Lambdamk^{\trans}E_{k,t}^{\trans}) \big\Vert_F &\le \frac{1}{p\pmk} \E\Big(\big\Vert E_{k,t}\Lambdamk\Lambdamk^{\trans}E_{k,t}^{\trans}\big\Vert_F \Big) \le \frac{1}{p\pmk} \big\Vert \Lambdamk\Lambdamk^{\trans} \big\Vert \E(\Vert E_{k,t}\Vert_F^2) 
\\
&= \frac{1}{p} \E(\Vert E_{k,t}\Vert_F^2) = \cO(1),
\end{align*}
where the last equality follows from~\eqref{eqn: E(EE)}.
\end{proof}

\begin{proof}[Proof of Lemma~\ref{lemma: E(xx)}~(iii)]
Since $\E(G_{k,t})=\E(E_{k,t})=0$, and from the independence of $\{G_{k,t}\}$ and $\{E_{k,t}\}$, we have
\begin{align*}
&\quad\;\frac{1}{p\pmk}\E\big(X_{k,t}\Lambdamk\Lambdamk^\trans X_{k,t}^\trans\big) \\
&= \frac{1}{p_k}\Lambda_k \E\big(G_{k,t} G_{k,t}^\trans \big) \Lambda_k^\trans + \frac{1}{p}\Lambda_k \E\big(G_{k,t}\Lambdamk^\trans E_{k,t}^\trans \big) + \frac{1}{p}\E\big(E_{k,t}\Lambdamk G_{k,t}^\trans \big)\Lambda_k^\trans + \frac{1}{p\pmk}\E\big(E_{k,t}\Lambdamk\Lambdamk^\trans E_{k,t}^\trans\big)\\
&= \frac{1}{p_k}\Lambda_k \E\big(G_{k,t} G_{k,t}^\trans \big) \Lambda_k^\trans  + \frac{1}{p\pmk}\E\big(E_{k,t}\Lambdamk\Lambdamk^\trans E_{k,t}^\trans\big).
\end{align*}
Recalling the arguments in the proof of Lemma~\ref{lemma: E(xx)}~(i), using Assumption~\ref{assum: loadings} and Lemma~\ref{lemma: E(xx)}~(ii), we have
\begin{align*}
&\quad\; \frac{1}{p\pmk} \l\Vert \E\big(X_{k,t}\Lambdamk\Lambdamk^\trans X_{k,t}^\trans\big) \r\Vert_F \\
&\le \frac{1}{p_k} \l\Vert\Lambda_k \E\big(G_{k,t} G_{k,t}^\trans \big) \Lambda_k^\trans \r\Vert_F  + \frac{1}{p\pmk}\l\Vert\E\big(E_{k,t}\Lambdamk\Lambdamk^\trans E_{k,t}^\trans\big)\r\Vert_F\\
&\le \frac{1}{p_k}\Vert \Lambda_k ^\trans\Lambda_k  \Vert \l\Vert\E\big(G_{k,t} G_{k,t}^\trans \big)  \r\Vert_F  + \frac{1}{p\pmk}\l\Vert\E\big(E_{k,t}\Lambdamk\Lambdamk^\trans E_{k,t}^\trans\big)\r\Vert_F=\cO(1),
\end{align*}
which completes the proof.
\end{proof}

\begin{proof}[Proof of Lemma~\ref{lemma: E(xx)}~(iv)]
For any $i,j\in[p_k]$, we have
\begin{align*}
\l[\E\big(E_{k,t}\Lambdamk\Lambdamk^{\trans}E_{k,t}^{\trans}\big)\r]_{ij} &= \sum_{u=1}^{p_k}\sum_{v=1}^{p_k} \l(\Lambdamk\Lambdamk^{\trans} \r)_{uv} \E\big((E_{k,t})_{iu} (E_{k,t}^{\trans})_{jv}\big).
\end{align*}
Under Assumption~\ref{assum: noise}, by the stationarity of $\{E_{k,t}\}$, we have $\E\big((E_{k,t})_{iu} (E_{k,t}^{\trans})_{jv}\big)$ is constant in $t$, for all $i,j,u,v$. Therefore, for any $\tau\in(s,e)$, we have
\[
\frac{1}{p\,\pmk} \l(\frac{1}{e-\tau}\sum_{t=\tau+1}^{e} \E\big(E_{k,t}\Lambdamk\Lambdamk^{\trans}E_{k,t}^{\trans}\big) - \frac{1}{\tau-s}\sum_{t=s+1}^{\tau} \E\big(E_{k,t}\Lambdamk\Lambdamk^{\trans}E_{k,t}^{\trans}\big)\r) = \mathbf{O}.
\] 
\end{proof}

Let us define
\begin{align}
\wh{V}_t = \begin{bmatrix}
\wh{V}^{(1)}_t \\ \vdots \\ \wh{V}^{(K)}_t   
\end{bmatrix} 
= \begin{bmatrix}
\Vech\l( \wh{G}_{1,t} \wh{G}_{1,t}^\trans \r) \\
\vdots \\
\Vech\l( \wh{G}_{K,t} \wh{G}_{K,t}^\trans \r)
\end{bmatrix}\text{\ and\ }
V_t = \begin{bmatrix}
V^{(1)}_t \\ \vdots \\ V^{(K)}_t   
\end{bmatrix}
= \begin{bmatrix}
\Vech\l( \wh{H}_1^\trans \E({G}_{1,t} {G}_{1,t}^\trans) \wh{H}_1 \r) \\
\vdots \\
\Vech\l( \wh{H}_K^\trans \E({G}_{K,t} {G}_{K,t}^\trans) \wh{H}_K \r)
\end{bmatrix}.
\label{eqn: Vt_def}
\end{align}
For any $0\le s<\tau <e \le T$, define
\begin{align}
&\wh{\cV}^{(k)}_{s, \tau, e} = \sqrt{\frac{(\tau - s)(e - \tau)}{e - s}} \, \Vech\left(\wh{\Gamma}_{G,\tau,e}^{(k)} - \wh{\Gamma}_{G,s,\tau}^{(k)}\right), \nonumber \\
&{\cV}^{(k)}_{s, \tau, e} = \sqrt{\frac{(\tau - s)(e - \tau)}{e - s}} \, \Vech\left\{ \wh{H}_k^\trans\left({\Gamma}_{G,\tau,e}^{(k)}- {\Gamma}_{G,s,\tau}^{(k)}\right)\wh{H}_k\right\}, \label{eq:vk:cusum}
\end{align}
and
\begin{align*}
\wh{\mathcal{V}}_{s, \tau, e} &= \begin{bmatrix}
\wh{\cV}^{(1)}_{s, \tau, e} \\ \vdots \\ \wh{\cV}^{(K)}_{s, \tau, e}
\end{bmatrix} = \sqrt{\frac{(\tau - s)(e - \tau)}{e - s}} \l( \frac{1}{e - \tau} \sum_{t = \tau + 1}^{e} \wh{V}_t - \frac{1}{\tau - s} \sum_{t = s + 1}^\tau \wh{V}_t \r),
\\
\mathcal{V}_{s, \tau, e} &= \begin{bmatrix}
{\cV}^{(1)}_{s, \tau, e} \\ \vdots \\ {\cV}^{(K)}_{s, \tau, e}
\end{bmatrix} = \sqrt{\frac{(\tau - s)(e - \tau)}{e - s}} \l( \frac{1}{e - \tau} \sum_{t = \tau + 1}^{e} V_t - \frac{1}{\tau - s} \sum_{t = s + 1}^\tau V_t \r).
\end{align*}

\begin{lemma}\label{lemma: GG-EGG_generic_decomp}
Let Assumptions~\ref{assum: loadings} and \ref{assum: noise} hold, 
and recall that $\Theta$ denotes the set of true change points.
Then we have the following decomposition
\begin{align*}
&\quad\;\big \vert \wh{\cV}^{(k)}_{s, \tau, e} - {\cV}^{(k)}_{s, \tau, e}\big \vert_2 \\
&\le  \sqrt{\frac{(\tau - s)(e - \tau)}{e-s}}\l(\Big\Vert\cJ_{k,\tau,e}^{(1)}-\cJ_{k,s,\tau}^{(1)} \Big\Vert_F+ \Big\Vert\cJ_{k,\tau,e}^{(2)}-\cJ_{k,s,\tau}^{(2)}\Big\Vert_F\r) \\
&\quad \, +\,  2\sqrt{2}\Vert\wh{H}_k\Vert \l\Vert \frac{1}{\sqrt{p_k}}\l(\wh{\Lambda}_k-\Lambda_k\wh{H}_k \r) \r\Vert_F \cdot  \sqrt{\frac{(\tau - s)(e - \tau)}{e-s}}\cdot \l\vert  \Vech\l(\Gamma_{G,\tau,e}^{(k)} - \Gamma_{G,s, \tau}^{(k)} \r)\r\vert_2,
\end{align*}
where for any $0\le a \le b\le T$,
\begin{align*}
&\cJ_{k,a,b}^{(1)}  = \frac{1}{p\pmk}\frac{1}{b-a}\sum_{t=a+1}^{b} \left[X_{k,t}\Lambdamk\Lambdamk^\trans X_{k,t}^\trans-\E\left(X_{k,t}\Lambdamk\Lambdamk^\trans X_{k,t}^\trans \right) \right],\\
&\cJ_{k,a,b}^{(2)} =  \frac{1}{p\pmk}\frac{1}{b-a}\sum_{t=a+1}^{b} X_{k,t}\left(\whLambdamk\whLambdamk^\trans-\Lambdamk\Lambdamk^\trans \right)X_{k,t}^\trans.
\end{align*}
\end{lemma}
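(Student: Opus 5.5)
The plan is a direct algebraic computation. First express $\wh{\cV}^{(k)}_{s,\tau,e}$ through $X_{k,t}$, then split it into a centred stochastic part, a loading-replacement part and a deterministic part. Finally compare the deterministic part with ${\cV}^{(k)}_{s,\tau,e}$.

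Since $\wh\cG_t = p^{-1}\cX_t\times_{k=1}^K\wh\Lambda_k^\trans$, we have $\wh G_{k,t} = p^{-1}\wh\Lambda_k^\trans X_{k,t}\whLambdamk$. Using $p^{-2} = (p_k\cdot p\pmk)^{-1}$, this gives
\[
\wh\Gamma^{(k)}_{G,\tau,e}-\wh\Gamma^{(k)}_{G,s,\tau} = \frac{1}{p_k}\wh\Lambda_k^\trans \cD\,\wh\Lambda_k ,
\]
where $\cD$ is the difference of the two window averages of $(p\pmk)^{-1}X_{k,t}\whLambdamk\whLambdamk^\trans X_{k,t}^\trans$. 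Adding and subtracting $\Lambdamk\Lambdamk^\trans$ and the expectations gives
\[
\cD = \bigl(\cJ^{(1)}_{k,\tau,e}-\cJ^{(1)}_{k,s,\tau}\bigr)+\bigl(\cJ^{(2)}_{k,\tau,e}-\cJ^{(2)}_{k,s,\tau}\bigr)+\cD_0 .
\]
Here $\cD_0$ is the difference of window averages of $(p\pmk)^{-1}\E(X_{k,t}\Lambdamk\Lambdamk^\trans X_{k,t}^\trans)$.

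As in the proof of Lemma~\ref{lemma: E(xx)}~(iii), the cross terms vanish by independence and zero mean. Also $\Lambdamk^\trans\Lambdamk=\pmk I$ by Assumption~\ref{assum: loadings}~(ii). So $\cD_0 = p_k^{-1}\Lambda_k\Delta\Lambda_k^\trans$ plus a noise-only difference, where $\Delta := \Gamma^{(k)}_{G,\tau,e}-\Gamma^{(k)}_{G,s,\tau}$. The noise-only difference is exactly zero by Lemma~\ref{lemma: E(xx)}~(iv).

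For the two stochastic pieces, note $\|\wh\Lambda_k\|=\sqrt{p_k}$ by construction. Hence $\|p_k^{-1}\wh\Lambda_k^\trans J\wh\Lambda_k\|_F\le\|J\|_F$. Moreover, $|\Vech(A)|_2\le\|A\|_F$, and the triangle inequality applies. Multiplying by the CUSUM weight $\sqrt{(\tau-s)(e-\tau)/(e-s)}$ then yields the first line of the bound.

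For the deterministic piece, write $\wh\Lambda_k=\Lambda_k\wh H_k+R$ and $N:=p_k^{-1}\Lambda_k^\trans\wh\Lambda_k = \wh H_k + M$, where $M=p_k^{-1}\Lambda_k^\trans R$. By Assumption~\ref{assum: loadings}~(ii), $\|M\|\le\|R\|_F/\sqrt{p_k}$ and $\|N\|\le 1$. Then
\[
N^\trans\Delta N-\wh H_k^\trans\Delta\wh H_k = \wh H_k^\trans\Delta M + M^\trans\Delta N ,
\]
so its Frobenius norm is at most $(\|\wh H_k\|+1)\,\|M\|\,\|\Delta\|_F$. Then pass from $\|\Delta\|_F$ to $|\Vech(\Delta)|_2$. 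The off-diagonal entries are counted twice in the Frobenius norm, so $\|\Delta\|_F\le\sqrt2\,|\Vech(\Delta)|_2$. This produces the constant $2\sqrt2$, and ${\cV}^{(k)}_{s,\tau,e}$ is exactly the vech of the weighted $\wh H_k^\trans\Delta\wh H_k$.

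The main obstacle I expect is matching the stated constant $2\sqrt2\|\wh H_k\|$ rather than $\sqrt2(\|\wh H_k\|+1)$. I would handle this by bounding the middle factor symmetrically via $\|N\|\le1$, and absorbing $1\le\|\wh H_k\|$. If $\|\wh H_k\|$ could fall below $1$, the latter needs care. Failing that, I would state the bound with $\max(1,\|\wh H_k\|)$, which is harmless on the complement of $\cH_{T,p}$ from Lemma~\ref{lemma: loading_prod_rate}~(ii).
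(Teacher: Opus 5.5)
Your proposal is correct and follows the paper's proof essentially line by line. The paper uses the same $\cJ^{(1)}$, $\cJ^{(2)}$ pieces, a noise-expectation piece $\cJ^{(3)}$ that cancels by stationarity (Lemma~\ref{lemma: E(xx)}~(iv)), and a piece $\cJ^{(4)}$ that it expands exactly as your $\wh{H}_k^\trans\Delta M + M^\trans\Delta N$. Your worry about the constant is well founded and applies equally to the paper: its step to $2\|\wh{H}_k\|$ tacitly bounds $\|p_k^{-1}\Lambda_k^\trans\wh{\Lambda}_k\|\le 1$ by $\|\wh{H}_k\|$, so the rigorous deterministic factor is your $\sqrt{2}(1+\|\wh{H}_k\|)\le 2\sqrt{2}\max(1,\|\wh{H}_k\|)$, which is immaterial downstream since only $\|\wh{H}_k\| = 1+\cO_P(\alpha_{T,p})$ from Lemma~\ref{lemma: loading_prod_rate}~(ii) is ever used.
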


\begin{proof}[Proof of Lemma~\ref{lemma: GG-EGG_generic_decomp}]
Let us denote by $C_{k,t}$ the mode-$k$ unfolding of $\cC_t$. 
For any $0\le a<b\le T$, we have
\begin{align*}
{\Gamma}_{G,a,b}^{(k)} &=\frac{1}{b-a}\frac{1}{p^2}\sum_{t=a+1}^b \E\left({\Lambda}_k^\trans C_{k,t}\Lambdamk\Lambdamk^\trans C_{k,t}^\trans{\Lambda}_k \right),
\end{align*}
by Assumption~\ref{assum: loadings}, and
\begin{align*}
\wh{\Gamma}_{G,a,b}^{(k)} &=\frac{1}{b-a}\sum_{t=a+1}^b\wh{G}_{k,t}\wh{G}_{k,t}^\trans = \frac{1}{p^2}\frac{1}{b-a}\sum_{t=a+1}^b\wh{\Lambda}_k^\trans X_{k,t}\whLambdamk\whLambdamk^\trans X_{k,t}^\trans\wh{\Lambda}_k\\
&= \frac{1}{p^2}\frac{1}{b-a}\sum_{t=a+1}^b\Big\{ \wh{\Lambda}_k^\trans\left(X_{k,t}\Lambdamk\Lambdamk^\trans X_{k,t}^\trans-\E\left(X_{k,t}\Lambdamk\Lambdamk^\trans X_{k,t}^\trans \right) \right)\wh{\Lambda}_k\Big\}\\
&\quad +\frac{1}{p^2}\frac{1}{b-a}\sum_{t=a+1}^b\Big\{\wh{\Lambda}_k^\trans\left[X_{k,t}\left(\whLambdamk\whLambdamk^\trans-\Lambdamk\Lambdamk^\trans \right)X_{k,t}^\trans \right]\wh{\Lambda}_k \Big\}\\
&\quad +\frac{1}{p^2}\frac{1}{b-a}\sum_{t=a+1}^b\Big\{ \wh{\Lambda}_k^\trans\Big\{\E\left(X_{k,t}\Lambdamk\Lambdamk^\trans X_{k,t}^\trans \right) - \E\left(C_{k,t}\Lambdamk\Lambdamk^\trans C_{k,t}^\trans \right) \Big\}\wh{\Lambda}_k\Big\}\\
&\quad+ \frac{1}{p^2}\frac{1}{b-a}\sum_{t=a+1}^b\Big\{\wh{\Lambda}_k^\trans\E\left(C_{k,t}\Lambdamk\Lambdamk^\trans C_{k,t}^\trans \right)\wh{\Lambda}_k -(\Lambda_k\wh{H}_k)^\trans\E\left(C_{k,t}\Lambdamk\Lambdamk^\trans C_{k,t}^\trans \right)(\Lambda_k\wh{H}_k) \Big\}\\
&\quad+\frac{1}{p^2}\frac{1}{b-a}\sum_{t=a+1}^b\Big\{(\Lambda_k\wh{H}_k)^\trans\E\left(C_{k,t}\Lambdamk\Lambdamk^\trans C_{k,t}^\trans \right)(\Lambda_k\wh{H}_k)\Big\}\\
&= \frac{1}{p_k}\wh{\Lambda}_k^\trans\cJ_{k,a,b}^{(1)}\wh{\Lambda}_k + \frac{1}{p_k}\wh{\Lambda}_k^\trans\cJ_{k,a,b}^{(2)}\wh{\Lambda}_k  + \frac{1}{p_k}\wh{\Lambda}_k^\trans\cJ_{k,a,b}^{(3)}\wh{\Lambda}_k  + \cJ_{k,a,b}^{(4)}  + \wh{H}_k^\trans\Gamma_{G,a,b}^{(k)} \wh{H}_k,
\end{align*}
where
\begin{align*}
&\cJ_{k,a,b}^{(3)} = \frac{1}{p\pmk}\frac{1}{b-a}\sum_{t=a+1}^b  \Big\{\E\left(X_{k,t}\Lambdamk\Lambdamk^\trans X_{k,t}^\trans \right) - \E\left(C_{k,t}\Lambdamk\Lambdamk^\trans C_{k,t}^\trans \right) \Big\}, \text{\ and }\\
&\cJ_{k,a,b}^{(4)} = \frac{1}{p^2}\frac{1}{b-a}\sum_{t=a+1}^b\Big\{\wh{\Lambda}_k^\trans\E\left(C_{k,t}\Lambdamk\Lambdamk^\trans C_{k,t}^\trans \right)\wh{\Lambda}_k -(\Lambda_k\wh{H}_k)^\trans\E\left(C_{k,t}\Lambdamk\Lambdamk^\trans C_{k,t}^\trans \right)(\Lambda_k\wh{H}_k) \Big\}.
\end{align*}
Then, for any $0\le s < \tau <e\le T$, we have
\begin{align*}
&\quad\; \l\vert \wh{\cV}_{s,\tau,e}^{(k)}- {\cV}_{s,\tau,e}^{(k)} \r\vert_2\\
&= \sqrt{\frac{(\tau - s)(e - \tau)}{e-s}} \left\vert \frac{1}{e - \tau} \sum_{t = \tau + 1}^{e} \l( \wh{V}_t^{(k)} - V_t^{(k)} \r) - \frac{1}{\tau - s} \sum_{t = s + 1}^\tau \l( \wh{V}_t^{(k)} - V_t^{(k)} \r) \right\vert_2\\
&\le\left\Vert \frac{1}{e-\tau} \sum_{t= \tau + 1}^{e} \left[\wh{G}_{k,t} \wh{G}_{k,t}^\trans - \wh{H}_k^\trans {\Gamma}_{G,\tau,e}^{(k)} \wh{H}_k\right] - \frac{1}{\tau-s} \sum_{t= s + 1}^{\tau} \left[\wh{G}_{k,t} \wh{G}_{k,t}^\trans - \wh{H}_k^\trans {\Gamma}_{G,s,\tau}^{(k)}\wh{H}_k\right]\right\Vert_F\\
&= \left\Vert \left( \wh{\Gamma}_{G,\tau,e}^{(k)} - \wh{H}_k^\trans \, {\Gamma}_{G,\tau,e}^{(k)}\, \wh{H}_k \right) - \left(\wh{\Gamma}_{G,s,\tau}^{(k)} - \wh{H}_k^\trans \, {\Gamma}_{G,s,\tau}^{(k)}\, \wh{H}_k\right)\right\Vert_F\\
&\le \left\Vert \cJ_{k,\tau,e}^{(1)} -\cJ_{k,s,\tau}^{(1)}\right\Vert_F  + \left\Vert \cJ_{k,\tau,e}^{(2)} - \cJ_{k,s,\tau}^{(2)}\right\Vert_F + \left\Vert \cJ_{k,\tau,e}^{(3)} - \cJ_{k,s,\tau}^{(3)}\right\Vert_F  + \left\Vert \cJ_{k,\tau,e}^{(4)} - \cJ_{k,s,\tau}^{(4)}\right\Vert_F.
\end{align*}

Since $\{E_{k,t}\}$ and $\{C_{k,t}\}$ are independent (Assumption~\ref{assum: noise}),
we have
\begin{align}
\E\left(X_{k,t}\Lambdamk\Lambdamk^\trans X_{k,t}^\trans \right) - \E\left(C_{k,t}\Lambdamk\Lambdamk^\trans C_{k,t}^\trans \right) = \E\left(E_{k,t}\Lambdamk\Lambdamk^\trans E_{k,t}^\trans \right).\nonumber
\end{align}

Then for $\big\Vert \cJ_{k,\tau,e}^{(3)} -\cJ_{k,s,\tau}^{(3)}\big\Vert_F$, by Lemma~\ref{lemma: E(xx)}~(iv),
\begin{align*}
&\quad\;\big\Vert \cJ_{k,\tau,e}^{(3)} -\cJ_{k,s,\tau}^{(3)}\big\Vert_F   \\
&=\Bigg\Vert \frac{1}{p^2}\frac{1}{e-\tau}\sum_{t=\tau+1}^{e} \Big\{\E\left(X_{k,t}\Lambdamk\Lambdamk^\trans X_{k,t}^\trans \right) - \E\left(C_{k,t}\Lambdamk\Lambdamk^\trans C_{k,t}^\trans \right) \Big\}\\
&\quad\qquad - \frac{1}{p^2}\frac{1}{\tau-s}\sum_{t=s+1}^\tau \Big\{\E\left(X_{k,t}\Lambdamk\Lambdamk^\trans X_{k,t}^\trans \right) - \E\left(C_{k,t}\Lambdamk\Lambdamk^\trans C_{k,t}^\trans \right) \Big\}\Bigg\Vert_F \\
&=\frac{1}{p\pmk}\Bigg\|\frac{1}{e-\tau}\sum_{t=\tau+1}^{e}\E\left(E_{k,t}\Lambdamk\Lambdamk^\trans E_{k,t}^\trans \right) - \frac{1}{\tau-s}\sum_{t=s+1}^\tau\E\left(E_{k,t}\Lambdamk\Lambdamk^\trans E_{k,t}^\trans \right) \Bigg\Vert_F = 0.
\end{align*}

By Assumption~\ref{assum: loadings}, 
\begin{align*}
&\quad\;\big\Vert \cJ_{k,\tau,e}^{(4)} -\cJ_{k,s,\tau}^{(4)}\big\Vert_F\\
&\le \Bigg\Vert \frac{1}{p_k^2}\frac{1}{e-\tau}\sum_{t=\tau+1}^{e} \big(\wh{\Lambda}_k-\Lambda_k\wh{H}_k\big)^\trans\Lambda_k\E\big(G_{k,t}G_{k,t}^\trans \big)\Lambda_k^\trans\wh{\Lambda}_k\\
&\quad\quad- \frac{1}{p_k^2}\frac{1}{\tau-s}\sum_{t=s+1}^\tau \big(\wh{\Lambda}_k-\Lambda_k\wh{H}_k\big)^\trans\Lambda_k\E\big(G_{k,t}G_{k,t}^\trans \big)\Lambda_k^\trans\wh{\Lambda}_k\Bigg\Vert_F\\
&\quad + \Bigg\Vert  \frac{1}{p_k^2}\frac{1}{e-\tau}\sum_{t=\tau+1}^{e} \big(\Lambda_k\wh{H}_k\big)^\trans\Lambda_k\E\big(G_{k,t}G_{k,t}^\trans \big)\Lambda_k^\trans\big(\wh{\Lambda}_k-\Lambda_k\wh{H}_k\big)\\
&\quad\quad\quad-\frac{1}{p_k^2}\frac{1}{\tau-s}\sum_{t=s+1}^\tau \big(\Lambda_k\wh{H}_k\big)^\trans\Lambda_k\E\big(G_{k,t}G_{k,t}^\trans \big)\Lambda_k^\trans\big(\wh{\Lambda}_k-\Lambda_k\wh{H}_k\big)\Bigg\Vert_F\\
& \le 2\Vert\wh{H}_k\Vert \l\Vert \frac{1}{\sqrt{p_k}}\l(\wh{\Lambda}_k-\Lambda_k\wh{H}_k \r) \r\Vert_F \cdot \Bigg\Vert \frac{1}{e-\tau}\sum_{t=\tau+1}^{e} \E\big(G_{k,t}G_{k,t}^\trans \big) - \frac{1}{\tau-s}\sum_{t=s+1}^\tau \E\big(G_{k,t}G_{k,t}^\trans \big)\Bigg\Vert_F\\
& = 2\Vert\wh{H}_k\Vert \l\Vert \frac{1}{\sqrt{p_k}}\l(\wh{\Lambda}_k-\Lambda_k\wh{H}_k \r) \r\Vert_F \cdot \Big\Vert  \Gamma_{G,\tau,e}^{(k)} - \Gamma_{G,s, \tau}^{(k)}\Big\Vert_F\\
&\le 2\sqrt{2}\Vert\wh{H}_k\Vert \l\Vert \frac{1}{\sqrt{p_k}}\l(\wh{\Lambda}_k-\Lambda_k\wh{H}_k \r) \r\Vert_F \cdot \l\vert  \Vech\l(\Gamma_{G,\tau,e}^{(k)} - \Gamma_{G,s, \tau}^{(k)} \r)\r\vert_2.
\end{align*}

Thus, we have
\begin{align*}
&\quad\;\big \vert \wh{\cV}^{(k)}_{s, \tau, e} - {\cV}^{(k)}_{s, \tau, e}\big \vert_2 \\
&= \sqrt{\frac{(\tau - s)(e - \tau)}{e-s}} \left\vert \frac{1}{e - \tau} \sum_{t = \tau + 1}^{e} \l( \wh{V}_t^{(k)} - V_t^{(k)} \r) - \frac{1}{\tau - s} \sum_{t = s + 1}^\tau \l( \wh{V}_t^{(k)} - V_t^{(k)} \r) \right\vert_2
\\
&\le \sqrt{\frac{(\tau - s)(e - \tau)}{e-s}}\l(\Big\Vert\cJ_{k,\tau,e}^{(1)}-\cJ_{k,s,\tau}^{(1)} \Big\Vert_F+ \Big\Vert\cJ_{k,\tau,e}^{(2)}-\cJ_{k,s,\tau}^{(2)}\Big\Vert_F\r) \\
&\quad \, +\,  2\sqrt{2}\Vert\wh{H}_k\Vert \l\Vert \frac{1}{\sqrt{p_k}}\l(\wh{\Lambda}_k-\Lambda_k\wh{H}_k \r) \r\Vert_F \cdot  \sqrt{\frac{(\tau - s)(e - \tau)}{e-s}}\cdot \l\vert  \Vech\l(\Gamma_{G,\tau,e}^{(k)} - \Gamma_{G,s, \tau}^{(k)} \r)\r\vert_2.
\end{align*}
\end{proof}

\begin{lemma}\label{lemma: xx-Exx}
Let Assumptions~\ref{assum: core_factor}, \ref{assum: loadings}, \ref{assum: trans_mat_alt} and~\ref{assum: noise} hold, with $\nu>8$ and $\beta\geq 0$.
For $\M$ given in~\eqref{def: seeded_interval}, 
let us re-index its elements as $\M = \{ (s_\ell, e_\ell], \, \ell \in \vert \M \vert\}$, and define
\begin{align*}
\cI := \l\{(\sll,\tau,\el): \ell \in \vert \M\vert,\, \min(\tau - s_\ell, e_\ell - \tau) > \varpi_T\r\}.
\end{align*}
Then we have
\begin{align}
\frac{1}{p} \max_{(\sll,\tau,\el) \in \cI} \max\left\{ \frac{1}{\sqrt{\tau - s_\ell}} \left\Vert \sum_{t = s_\ell + 1}^\tau \big[ (X_{k, t}X_{k, t}^\trans - \E(X_{k, t}X_{k, t}^\trans) \big] \right\Vert_F, \right. & \nonumber
\\
\left. \frac{1}{\sqrt{e_\ell - \tau}} \left\Vert \sum_{t = \tau + 1}^{e_\ell} \big[ (X_{k, t}X_{k, t}^\trans - \E(X_{k, t}X_{k, t}^\trans) \big] \right\Vert_F \right\} &= \cO_P\left( \sqrt{\log(T)} \right).\nonumber
\end{align}
\end{lemma}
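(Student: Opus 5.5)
The bound is a uniform maximal inequality over a set of size $\cO(T\log T)$: $\cO(\log T)$ seeded intervals times at most $T$ split points. I would reduce it to a maximal inequality for partial sums of a weakly dependent, heavy-tailed scalar sequence, followed by a union bound.

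\emph{Reduction to scalar entries.} Write $Z_t = p^{-1}\{X_{k,t}X_{k,t}^\trans - \E(X_{k,t}X_{k,t}^\trans)\}$. The Frobenius norm has $p_k^2$ entries, so entrywise control alone loses a factor $p_k$; I will not argue entrywise. Instead I use the factor-plus-noise structure of $X_{k,t}=\Lambda_kG_{k,t}\Lambdamk^\trans+E_{k,t}$ and split $Z_t$ into four pieces:
\begin{itemize}
\item the common piece $p^{-1}\Lambda_k\{G_{k,t}\Lambdamk^\trans\Lambdamk G_{k,t}^\trans-\E(\cdot)\}\Lambda_k^\trans$;
\item the two cross pieces $p^{-1}\Lambda_kG_{k,t}\Lambdamk^\trans E_{k,t}^\trans$ and its transpose;
\item the noise piece $p^{-1}\{E_{k,t}E_{k,t}^\trans-\E(\cdot)\}$.
\end{itemize}

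\emph{Common piece.} By Assumption~\ref{assum: loadings}, $\Lambdamk^\trans\Lambdamk=\pmk I$, so this piece equals $p_k^{-1}\Lambda_k\{G_{k,t}G_{k,t}^\trans-\E(\cdot)\}\Lambda_k^\trans$. Its Frobenius norm is bounded by $\|G_{k,t}G_{k,t}^\trans-\E\|_F$, which involves only the fixed number $r_k^2$ of scalar processes. Within each segment these are quadratic forms in $\cF_t$ with coefficients bounded by Assumption~\ref{assum: trans_mat_alt}~(i). By Lemma~\ref{lemma: weakly_M_dependent}~(ii)--(iii) and Lemma~\ref{lemma: fdc_finite}, each such product process is weakly $\cM$-dependent in $L^{\nu/2}$ with $\nu/2>4$.

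\emph{Cross and noise pieces.} I would bound the second moment of the normalised partial sums using Lemma~\ref{lemma: prelim_rate}~(i)--(ii), applied to an arbitrary interval. The cross terms give $\E\|\cdot\|_F^2\lesssim (b-a)p_k^2\pmk/p^2=(b-a)/\pmk$. The centred noise term, whose bound is the first claim in Lemma~\ref{lemma: prelim_rate}~(ii), gives $(b-a)p_k^2\pmk/p^2$. After dividing by $\sqrt{b-a}$, all three pieces are $\cO_P(1)$ pointwise. To make this uniform I would not use a crude union bound with second moments, since that loses a polynomial factor in $T$. Instead I apply a Fuk--Nagaev-type inequality for functional dependence measures (Zhang and Wu, 2021, extended to tensors via Definition~\ref{def: fdc}). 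The uniform d.a.n.\ from Lemmas~\ref{lemma: fdc_Et}--\ref{lemma: fdc_Xkt} feeds into its Frobenius-norm (Hilbert-space) version, applied to the vectorised $\vec(Z_t)$. This gives, for each $(s,\tau)$ and $x>0$,
\[
\P\Bigl(\bigl\|\textstyle\sum_{t=s+1}^\tau Z_t\bigr\|_F > x\Bigr)\lesssim \frac{(\tau-s)\,\Phi^{\nu/2}}{x^{\nu/2}} + \exp\Bigl(-\frac{c\,x^2}{(\tau-s)}\Bigr).
\]
The moment index is $\nu/2>4$, because the summands are quadratic in data with finite $\nu$-th moments.

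\emph{Union bound.} Take $x=C\sqrt{(\tau-s)\log T}$. Sum over the $\cO(T\log T)$ triples in $\cI$, using $\tau-s>\varpi_T\gtrsim T/\log T$.
\begin{itemize}
\item The Gaussian parts sum to $T\log T\cdot T^{-cC^2}\to0$ once $C$ is large.
\item The polynomial parts sum to at most $T\log T\cdot (\tau-s)^{1-\nu/4}(\log T)^{-\nu/4}\lesssim T^{2-\nu/4}(\log T)^{c'}$, which vanishes because $\nu>8$.
\end{itemize}
This is exactly where $\nu>8$ enters. The sum over $(\tau,e]$ is handled identically.

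\emph{Main obstacle.} The delicate step is obtaining the Fuk--Nagaev inequality in Frobenius norm with a variance proxy of order $(\tau-s)$, not $(\tau-s)p_k^2$, under the normalisation by $p$. Doing it entrywise would cost a factor $p_k^2$. I would handle this by working in the Hilbert space $\R^{p_k\times p_k}$ and controlling the long-run second moment through the Lemma~\ref{lemma: prelim_rate} computations. The $\nu/2$-moment of $\|Z_t\|_F$ must also be bounded dimension-free, and I would derive that from Lemma~\ref{lemma: E(xx)}~(i) together with Rosenthal-type bounds for the linear tensor processes.
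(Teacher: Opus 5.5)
The load-bearing step of your argument is not established. You need a Fuk--Nagaev inequality for the Frobenius norm of $\sum_t Z_t$, that is, for Hilbert-space-valued partial sums under functional dependence with dimension-free constants. You assert this rather than prove it. The paper uses only the scalar tail bound of Proposition~6.5 in \citet{zhang2021convergence}, applied to a single scalar partial sum, and a Hilbert-space version is not something you can quote from there. Its natural input would also be the dependence measure of $\|Z_t-Z_t^{(s)}\|_F$, not the entrywise $\Phi_{\nu,\beta}(\cX)$ of Lemma~\ref{lemma: fdc_Xkt} that you say ``feeds into'' it. Your ``main obstacle'' paragraph effectively concedes this.

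More importantly, your reason for avoiding the entrywise route is a miscount. Entry $(i,j)$ of $X_{k,t}X_{k,t}^\trans$ is a sum of $\pmk$ products $X_{k,t,iu}X_{k,t,ju}$. Suppose each centred, $\sqrt{\tau-s}$-normalised product sum is of order $\sqrt{\log T}$. The triangle inequality over $u$ costs a factor $\pmk$, and the $p_k^2$ entries cost a factor $p_k$ in Frobenius norm. The total is $p_k\pmk\sqrt{\log T}=p\sqrt{\log T}$, which is exactly the target after dividing by $p$. Nothing is lost, and nothing could be gained: the common part $\pmk\Lambda_k\{G_{k,t}G_{k,t}^\trans-\E(\cdot)\}\Lambda_k^\trans$ alone already has Frobenius norm of order $p$. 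The same crude count also gives $\E\|\sum_t Z_t\|_F^2\lesssim \tau-s$ and a dimension-free Frobenius-norm dependence measure for your $Z_t$. So the ``delicate'' variance proxy is not delicate, and your four-way decomposition and the Lemma~\ref{lemma: prelim_rate} computations are unnecessary here.

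The missing idea is how the paper handles the $p_k^2\pmk^2$ index quadruples without a union bound over them. If ``entrywise'' meant a union bound over entries, you were right to worry: with polynomial tails and $p$ unrestricted relative to $T$, that would fail. The paper proceeds as follows.
\begin{enumerate}
\item For each fixed $(i,j,u,v)$, apply the scalar inequality to $S_{k,t}(i,j;u,v)$ and take the union bound only over the $\cO(T\log T)$ triples in $\cI$.
\item Integrate the tail to get $\E\{\max_{\cI}(\tau-s_\ell)^{-1}|\sum_t S_{k,t}(i,j;u,v)|^2\}\lesssim\log T$, with a constant free of $(i,j,u,v)$. Your choice $x=C\sqrt{(\tau-s)\log T}$ and your count $T^{2-\nu/4}$ are exactly what appear at this step.
\item Apply Cauchy--Schwarz over $u$, which gives a bound of order $\pmk^2\log T$ per entry.
\item Use $\E\max_{\cI}\|\cdot\|_F^2\le\sum_{i,j}\E\max_{\cI}|(\cdot)_{ij}|^2\lesssim p^2\log T$. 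This absorbs the entry dimension by linearity of expectation rather than by a union bound.
\item Conclude with Markov's inequality.
\end{enumerate}
With this, the scalar Fuk--Nagaev bound you already have suffices. Your Hilbert-space route could in principle be completed, for example via $m$-dependent approximation and a Pinelis-type martingale bound. That would first require proving an inequality that buys nothing for this lemma.
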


\begin{proof}[Proof of Lemma~\ref{lemma: xx-Exx}]
Recalling that the collection $\M$ contains intervals grouped into levels $h = 1, \ldots, \lfloor \mu_T \rfloor$, we write $\M = \cup_{h=1}^{\lfloor \mu_T \rfloor} \M_{h}$ where $\vert \M_{h} \vert = \lfloor T/m_{h}\rfloor - 1 \asymp 2^{h}$ and each interval therein has length $2m_{h}$, with $m_{h}= T/ 2^{h}$. 
Each interval contributes $\cO(m_{h})$ admissible split points, so the total number of admissible triples $(\sll, \tau,\el)$ across all intervals is bounded by $\sum_{h=1}^{\lfloor \mu_T \rfloor} \cO(2^{h}m_{h}) = \cO(T\log\log(T))$.

For $i,j\in[p_k]$, we have
\begin{align*}
[X_{k, t} X_{k, t}^\trans - \E(X_{k, t} X_{k, t}^\trans)]_{i j} 
= \pmk^{-2} \sum_{u=1 }^{\pmk} [ X_{k, t, i u} X_{k, t, j u} - \E(X_{k, t, i u} X_{k, t, j u})].
\end{align*}
Define $S_{k,t}(i,j;u,v)=X_{k, t, i u} X_{k, t, j v} - \E(X_{k, t, i u} X_{k, t, j v})$.
By Proposition~6.5 of \cite{zhang2021convergence} (with $B=0$ and $T=\tau-\sll$ therein) yields, for any $z>0$, $i, j \in [p_k]$ and $u \in [\pmk]$,
\begin{align*}
&\quad\; \P\Bigl(\frac{1}{\sqrt{\tau-\sll}} \Big|\sum_{t=\sll + 1}^{\tau}S_{k,t}(i,j;u,v)\Big|\ge z\Bigr) \\
&\le
C_{\nu,\beta}[\Phi_{\nu,\beta}(\cX)]^{\nu} (\tau-\sll)^{-\nu/4+1} z^{-\nu/2} + C \exp\left(-\frac{z^2}{C_\beta [\Phi_{4,\beta}(\cX)]^4}\right) ,
\end{align*}
where $C_{\nu,\beta}, C$ and $C_\beta$ are constants depending only on $\nu$ and $\beta$, and the uniform d.a.n.\ $\Phi_{\nu,\beta}(\cX)$ is finite by Lemma~\ref{lemma: fdc_Xkt}.

Let $L=\min_{\ell\in [\vert \M \vert]:\sll < \tau < \el}\{\tau-\sll, \el-\tau\}$. By construction, $L \ge \varpi_T \asymp T/\log(T)$. 
Since the number of admissible triples is $\cO(T\log(T))$, by Bonferroni correction, we have
\begin{align*}
&\quad\; \P\left(\max_{(\sll,\tau,\el) \in \cI}\Big|\sum_{t=\sll + 1}^{\tau}S_{k,t}(i,j;u,v)\Big|\ge z\right) \\
&\le
T \log(T)\left\{C_{\nu,\beta}[\Phi_{\nu,\beta}(\cX)]^{\nu} (\tau-\sll)^{-\nu/4+1} z^{-\nu/2} + C \exp\left(-\frac{z^2}{C_\beta [\Phi_{4,\beta}(\cX)]^4}\right)\right\} .
\end{align*}
Then for any $\delta > 0$,
\begin{align*}
&\quad\; \E\left( \left\{\max_{(\sll,\tau,\el) \in \cI} \Big|\sum_{t=\sll +1}^{\tau}S_{k,t}(i,j;u,v)\Big|\right\}^2 \right)\\
&= \int_{0}^{\infty} \P\left(\max_{(\sll,\tau,\el) \in \cI}\Big|\sum_{t=\sll + 1}^{\tau}S_{k,t}(i,j;u,v)\Big|\ge \sqrt{z} \right)\d z\\
&\le \delta^2 + \int_{\delta^2}^{\infty} \P\left(\max_{(\sll,\tau,\el) \in \cI}\Big|\sum_{t=\sll+1}^{\tau}S_{k,t}(i,j;u,v)\Big|\ge \sqrt{z} \right)\d z\\
&\le \delta^2 + T\log(T)\cdot C_{\nu,\beta}[\Phi_{\nu,\beta}(\cX)]^{\nu} L^{-\nu/4+1} \int_{\delta^2}^\infty z^{-\nu/4}\d z 
\\
& \qquad + T\log(T)\cdot C\int_{\delta^2}^\infty \exp\left(-\frac{z}{C_{\beta}[\Phi_{4,\beta}(\cX)]^4} \right) \d z\\
&\lesssim \delta^2 + \frac{4}{\nu-4} T\log(T) L^{-\nu/4+1} \delta^{2-\nu/2} + T\log(T) \exp\left(-\frac{\delta^2}{C_{\beta}[\Phi_{4,\beta}(\cX)]^4} \right) .
\end{align*}
Setting $\delta^2 \ge 2 C_{\beta}[\Phi_{4,\beta}(\cX)]^4\log(T)$, we have
\[
T \log(T)\cdot L^{-\nu/4+1} \delta^{2-\nu/2} \asymp  T^{-\nu/4+2}\log(T).
\]
Hence, with $\nu>8$, the expectation is bounded by
\begin{align*}
&\quad\; \E\left( \left\{\max_{(\sll,\tau,\el) \in \cI} \Big|\sum_{t=\sll+1}^{\tau}S_{k,t}(i,j;u,v)\Big|\right\}^2 \right)\\
&\lesssim \delta^2 + T^{2-\nu/4} \log(T) + T\log(T)\cdot \exp\left(-\log(T) \right) \lesssim \log(T),
\end{align*}
where the hidden constant does not depend on $i$, $j$, $u$ or $v$.
Thus we can derive by Cauchy--Schwarz inequality,
\begin{align*}
&\quad\;\max_{(\sll,\tau,\el) \in \cI} \frac{1}{\sqrt{\tau-\sll}} \bigg|\sum_{t=\sll+1}^{\tau} \left[X_{k,t}X_{k,t}^\trans - \E(X_{k,t}X_{k,t}^\trans) \right]_{i j} \bigg|\\ 
&=\max_{(\sll,\tau,\el) \in \cI} \frac{1}{\sqrt{\tau-\sll}}\Bigg\vert   \sum_{t=\sll + 1}^{\tau} \sum_{u=1}^{\pmk}  S_{k,t}(i,j;u,u)  \Bigg\vert\\
&\le \, \sum_{u=1}^{\pmk}  \max_{(\sll,\tau,\el) \in \cI} \frac{1}{\sqrt{\tau-\sll}}\Bigg\vert  \sum_{t=\sll + 1}^{\tau} S_{k,t}(i,j;u,u)  \Bigg\vert\\
&\le \, \sqrt{\pmk}\l( \sum_{u=1}^{\pmk}  \l\{\max_{(\sll,\tau,\el) \in \cI} \frac{1}{\sqrt{\tau-\sll}}\Bigg\vert  \sum_{t=\sll + 1}^{\tau} S_{k,t}(i,j;u,u)  \Bigg\vert \r\}^2 \r)^{1/2},
\end{align*}
from which we have
\begin{align*}
\E\l(\l\{\max_{(\sll,\tau,\el) \in \cI} \frac{1}{\sqrt{\tau-\sll}} \sum_{t=\sll+1}^{\tau} \left[X_{k,t}X_{k,t}^\trans - \E(X_{k,t}X_{k,t}^\trans) \right]_{ij}  \r\}^2 \r) \lesssim \pmk^2 \log(T),
\end{align*}
where the hidden constant does not depend on $i$ or $j$.
Therefore, we have
\begin{align*}
\frac{1}{p^2} \E\l(\max_{(\sll,\tau,\el) \in \cI}\left\Vert\frac{1}{\sqrt{\tau - s_\ell}} \sum_{t = s_\ell + 1}^\tau \big[ (X_{k, t}X_{k, t}^\trans - \E(X_{k, t}X_{k, t}^\trans) \big] \right\Vert_F^2 \r) \lesssim \log(T).
\end{align*}
By Markov's inequality, for any constant $C>0$, we have
\begin{align*}
&\quad\;\P\l(\frac{1}{p} \max_{(\sll,\tau,\el) \in \cI} \left\Vert \frac{1}{\sqrt{\tau - s_\ell}}  \sum_{t = s_\ell + 1}^\tau \big[ (X_{k, t}X_{k, t}^\trans - \E(X_{k, t}X_{k, t}^\trans) \big] \right\Vert_F  \ge C\sqrt{\log(T)} \r)\\
&\le {\frac{1}{p^2} \E\l(\max_{(\sll,\tau,\el) \in \cI} \left\Vert\frac{1}{\sqrt{\tau - s_\ell}} \sum_{t = s_\ell + 1}^\tau \big[ (X_{k, t}X_{k, t}^\trans - \E(X_{k, t}X_{k, t}^\trans) \big] \right\Vert_F^2 \r)} (C^2\log(T))^{-1} \lesssim\frac{1}{C^2},
\end{align*}
which shows that
\[
{\frac{1}{p} \max_{(\sll,\tau,\el) \in \cI} \left\Vert\frac{1}{\sqrt{\tau - s_\ell}} \sum_{t = s_\ell + 1}^\tau \big[ (X_{k, t}X_{k, t}^\trans - \E(X_{k, t}X_{k, t}^\trans) \big] \right\Vert_F } = \cO_P(\sqrt{\log(T)}).
\]
Analogous arguments apply to the sums over $\{\tau + 1, \ldots, e_\ell\}$ uniformly over $\cI$ so that the conclusion follows.
\end{proof}

\begin{lemma}\label{lemma: imp_maximal_deviation_mat}
Let all assumptions in Lemma~\ref{lemma: xx-Exx} hold. Define $Z_{k, t} = p_{-k}^{-1} X_{k, t} \Lambda_{-k}$ and for $\cI$ defined in Lemma~\ref{lemma: xx-Exx},
\begin{align*}
\cS_{T,p}^{(1)} = \l\{\max_{(\sll,\tau,\el) \in \cI} \max\l(\sqrt{\tau-\sll} \Big\vert \cJ_{\sll,\tau}^{(1)}\Big\vert_2,\, \sqrt{\el-\tau}\Big\vert \cJ_{\tau,\el}^{(1)}\Big\vert_2 \r) \ge C_1\sqrt{\log(T)}\r\},
\end{align*}
with some finite constant $C_1>0$, where for any $0\le a< b\le T$, 
\begin{align*}
\cJ_{a,b}^{(1)}=\begin{bmatrix}
\Vech \l(\cJ_{1,a,b}^{(1)} \r)\\
\vdots\\
\Vech \l(\cJ_{K,a,b}^{(1)} \r)
\end{bmatrix}=\begin{bmatrix}
\Vech\l(\frac{1}{pp_{-1}}\frac{1}{b-a} \sum_{t=a+1}^{b}  Z_{1, t}Z_{1, t}^\trans - \E(Z_{1, t}Z_{1, t}^\trans)\r)\\
\vdots\\
\Vech\l(\frac{1}{pp_{-K}}\frac{1}{b-a} \sum_{t=a+1}^{b} Z_{K, t}Z_{K, t}^\trans - \E(Z_{K, t}Z_{K, t}^\trans)\r)
\end{bmatrix}.
\end{align*}
Then we have
\begin{align*}
\P\l(\cS_{T,p}^{(1)} \r) \to 0  \text{\ as\ } \min(T,p_1,\ldots,p_K)\to \infty.
\end{align*}
\end{lemma}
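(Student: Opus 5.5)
The plan is to reduce the statement to Lemma~\ref{lemma: xx-Exx}, run for each mode $k\in[K]$ separately with $X_{k,t}$ replaced by $Z_{k,t}=\pmk^{-1}X_{k,t}\Lambdamk$; finiteness of $K$ then lets us combine the modes by a union bound. Since $|\cJ^{(1)}_{a,b}|_2^2=\sum_k|\Vech(\cJ^{(1)}_{k,a,b})|_2^2\le\sum_k\|\cJ^{(1)}_{k,a,b}\|_F^2$, it suffices to show that for each $k$,
\[
\max_{(\sll,\tau,\el)\in\cI}\frac{1}{p\pmk}\frac{1}{\sqrt{\tau-\sll}}\Big\|\sum_{t=\sll+1}^{\tau}\{Z_{k,t}Z_{k,t}^\trans-\E(Z_{k,t}Z_{k,t}^\trans)\}\Big\|_F=\cO_P(\sqrt{\log T}),
\]
together with the same bound on the right segment. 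The normalisations will be matched so that the scaling of $\cJ^{(1)}_{k,a,b}$ in Lemma~\ref{lemma: GG-EGG_generic_decomp} is reproduced, possibly up to a factor that only helps. We then pick $C_1$ large and apply Markov's inequality exactly as in the last step of Lemma~\ref{lemma: xx-Exx}.

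The key structural observation is that $Z_{k,t}=\pmk^{-1}X_{k,t}\Lambdamk$ is the mode-$k$ unfolding of the projected tensor $\cZ_t=\pmk^{-1}\cX_t\times_{\ell\ne k}\Lambda_\ell^\trans\in\R^{p_k\times r_{-k}}$. Its column dimension is the fixed $\rmk$ rather than $\pmk$. Each entry of $\cZ_t$ is a weighted average $\pmk^{-1}\sum_u(\Lambdamk)_{u v}X_{k,t,iu}$.

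We split this entry into a common part and an idiosyncratic part. For the common part, Assumption~\ref{assum: loadings}(ii) gives $\pmk^{-1}\Lambdamk^\trans\Lambdamk=I$, so the contribution is a fixed linear combination of the $\cG_t$ entries. For the idiosyncratic part, the sum $\pmk^{-1}\sum_u(\Lambdamk)_{uv}E_{k,t,iu}$ is also a general linear process in the innovations. By Assumption~\ref{assum: noise} with $\|A_{e,k}\|_1,\|A_{e,k}\|_\infty=O(1)$ and bounded $\Lambda$ entries, its functional dependence coefficients are bounded by a constant times those of $\cE$. The argument mirrors Lemma~\ref{lemma: fdc_Et} and Lemma~\ref{lemma: fdc_common}, with the coefficient sum $\pmk^{-1}\sum_u|(\Lambdamk)_{uv}|\le c_\lambda^{K-1}$ controlling the weights. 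Hence $\Phi_{\nu,\beta}(\cZ)<\infty$ uniformly in $p$, and in fact the idiosyncratic part has variance $O(\pmk^{-1})$.

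With that in hand, we repeat the proof of Lemma~\ref{lemma: xx-Exx} entry by entry. For each $(i,j,v,w)$ we apply Proposition~6.5 of \cite{zhang2021convergence} to $S_t=Z_{k,t,iv}Z_{k,t,jw}-\E(\cdot)$, which has bounded $\nu/2$-moments since $\nu>8$. A Bonferroni bound over the $\cO(T\log T)$ admissible triples, with $L\ge\varpi_T\asymp T/\log T$, and the integration trick give $\E\max(\cdot)^2\lesssim\log T$ uniformly over indices. Summing over the $p_k^2\rmk^2$ entries gives $\E\max\|\sum(\cdot)\|_F^2/(\tau-\sll)\lesssim p_k^2\log T$. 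This is well within the normalisation $(p\pmk)^{-2}\cdot\pmk^4$ once the $\pmk^{-1}$ factors in $Z$ are accounted for, where the power of $p_k$ is compensated because $p\pmk\ge p_k$. The right segment $(\tau,\el]$ is handled symmetrically.

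The main obstacle is the first structural step: verifying that the projected entries form a process whose uniform dependence-adjusted norm is bounded independently of $p$, despite summing $\pmk$ terms. This needs the $\ell_1/\ell_\infty$ sparsity of $A_{e,\ell}$ and the boundedness of $\Lambda$ entries. If that bound fails, the fallback is to bound $\|\sum Z Z^\trans-\E\|_F\le\|\Lambdamk\|^2\pmk^{-2}\|\sum X_{k,t}X_{k,t}^\trans\text{-type}\|$ directly via Lemma~\ref{lemma: inequality_sandwich}(i). Concretely, we would write $\sum_t Z_{k,t}Z_{k,t}^\trans=\pmk^{-2}\sum_t X_{k,t}\Lambdamk\Lambdamk^\trans X_{k,t}^\trans$, use $\|\Lambdamk\Lambdamk^\trans\|=\pmk$, and then invoke Lemma~\ref{lemma: xx-Exx}. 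That route requires the sandwich inequality to be applied to centered sums, which is the delicate point.
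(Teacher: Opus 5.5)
Your main route is correct, but it differs from the paper's in where the projection by $\Lambdamk$ is absorbed.

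\textbf{The paper's route.} It never introduces a dependence measure for $Z_{k,t}$. It expands $[Z_{k,t}Z_{k,t}^\trans-\E(Z_{k,t}Z_{k,t}^\trans)]_{ij}=\pmk^{-2}\sum_{h,u,v}\lambda_{-k,uh}\lambda_{-k,vh}S_{k,t}(i,j;u,v)$ into raw products of $\cX$-entries. It then reuses the bound $\E[\{\max_{\cI}(\tau-\sll)^{-1/2}|\sum_{t}S_{k,t}(i,j;u,v)|\}^2]\lesssim\log(T)$, uniform in $(i,j,u,v)$, from the proof of Lemma~\ref{lemma: xx-Exx}. This is transferred by Minkowski's inequality in $L^2$ (the maximum of a sum is at most the sum of the maxima). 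That works because Assumption~\ref{assum: loadings}~(i) gives $\pmk^{-2}\sum_{h,u,v}|\lambda_{-k,uh}\lambda_{-k,vh}|\le\rmk c_\lambda^{2(K-1)}=\cO(1)$.

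\textbf{Your route.} You push the projection into the process, bound the dependence-adjusted norm of $\mathcal{Z}_t$, and rerun the Zhang--Wu, Bonferroni and integration steps on products of its entries.

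\textbf{The step you flag as the main obstacle is immediate.} We have $\delta_{s,\nu,(i,v)}(\mathcal{Z})\le\pmk^{-1}\sum_u|(\Lambdamk)_{uv}|\,\delta_{s,\nu,(i,u)}(\cX)$. The proofs of Lemmas~\ref{lemma: fdc_finite}--\ref{lemma: fdc_Xkt} bound $\delta_{s,\nu,\bi}(\cX)$ by index-free multiples of $|a_{f,s}|$, $|a_{e,s}|$ and $|a_{\epsilon,s}|$. So the conditions on $A_{e,\ell}$ enter only through Lemma~\ref{lemma: fdc_Et}.

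\textbf{Trade-off.} The paper's version is shorter because it recycles Lemma~\ref{lemma: xx-Exx} verbatim. Yours works with an object of fixed column dimension $\rmk$ and makes the absence of any $\pmk$-dependence transparent.

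\textbf{Normalisation.} Your count $p_k^2\log(T)$ is exactly what is needed under the $1/p_k$ scaling the paper's proof actually uses, which is the one matching Lemma~\ref{lemma: GG-EGG_generic_decomp}. The literal $1/(p\pmk)$ scaling in the statement leaves an extra factor $\pmk^{-2}$ to spare.

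\textbf{Drop the fallback.} Lemma~\ref{lemma: inequality_sandwich}~(i) has no centred analogue. Take $T=1$, $A_1=(\cos\phi,\sin\phi)$ with $\phi$ uniform on $[0,2\pi)$, and $M=e_1e_1^\trans$. The centred right-hand side vanishes identically, while the left-hand side equals $|\cos^2\phi-1/2|$. So that route would not close.

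\textbf{Final Markov step.} Yours, like the paper's, only yields $\limsup\P(\cS^{(1)}_{T,p})\lesssim C_1^{-2}$. To get the literal convergence to zero for a fixed $C_1$, truncate each squared entry at $C'^2\log(T)$. By the per-entry Bonferroni tail bound, the part above that level has expectation $o(1)$ uniformly in the indices when $\nu>8$ and $C'$ is large.
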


\begin{proof}[Proof of Lemma~\ref{lemma: imp_maximal_deviation_mat}]
Note that with $\Lambdamk = [\lambda_{-k, u v}, \, u \in [\pmk], v \in [\rmk]]$, we have
\begin{align*}
[Z_{k, t} Z_{k, t}^\trans - \E(Z_{k, t} Z_{k, t}^\trans)]_{i, j} 
= p_{-k}^{-2} \sum_{u,v=1 }^{\pmk} \sum_{\ell=1}^{\rmk} \lambda_{-k, u \ell} \lambda_{-k, v \ell}\, [ X_{k, t, i u} X_{k, t, j v} - \E(X_{k, t, i u} X_{k, t, j v})].
\end{align*}
Recall $S_{k,t}(i,j;u,v)=X_{k, t, i u} X_{k, t, j v} - \E(X_{k, t, i u} X_{k, t, j v})$ defined in the proof of Lemma~\ref{lemma: xx-Exx}. 
For any $i,j\in[p_k]$, we have
\begin{align*}
&\quad\;\max_{(\sll,\tau,\el) \in \cI} \frac{1}{\sqrt{\tau-\sll}} \bigg|\sum_{t=\sll+1}^{\tau} \left[Z_{k,t}Z_{k,t}^\trans - \E(Z_{k,t}Z_{k,t}^\trans) \right]_{i,j} \bigg|\\ 
&=\max_{(\sll,\tau,\el) \in \cI} \frac{1}{\sqrt{\tau-\sll}}\Bigg\vert \pmk^{-2} \sum_{h=1}^{\rmk}\sum_{u=1}^{\pmk}\sum_{v=1}^{\pmk} \lambda_{-k,uh}\lambda_{-k,vh} \sum_{t=\sll + 1}^{\tau} S_{k,t}(i,j;u,v)  \Bigg\vert\\
&\le \pmk^{-2} \sum_{h=1}^{\rmk}\sum_{u=1}^{\pmk}\sum_{v=1}^{\pmk} \big\vert \lambda_{-k,uh}\lambda_{-k,vh} \big\vert \cdot \max_{(\sll,\tau,\el) \in \cI} \frac{1}{\sqrt{\tau-\sll}}\Bigg\vert  \sum_{t=\sll + 1}^{\tau} S_{k,t}(i,j;u,v)  \Bigg\vert.
\end{align*}
By Assumption~\ref{assum: loadings}~(i), we have
\[
\pmk^{-2} \sum_{h=1}^{\rmk}\sum_{u=1}^{\pmk}\sum_{v=1}^{\pmk} \big\vert \lambda_{-k,uh}\lambda_{-k,vh} \big\vert \le \pmk^{-2} \rmk c^2 \pmk^2 = \cO(1).
\]

Moreover, by Lemma~\ref{lemma: xx-Exx}, for any $i,j\in[p_k]$ and any $u,v\in[\pmk]$,
\begin{align*}
\E\left( \left[\max_{(\sll,\tau,\el) \in \cI} \Big|\sum_{t=\sll+1}^{\tau}S_{k,t}(i,j;u,v)\Big|\right]^2 \right) \lesssim \log(T),
\end{align*}
where the implied constant is independent of $i,j,u$ or $v$. Combining the above, by Hölder inequality, we have
\begin{align*}
&\quad\;\E\l(\l[\max_{(\sll,\tau,\el) \in \cI} \frac{1}{\sqrt{\tau-\sll}} \bigg|\sum_{t=\sll+1}^{\tau} \left[Z_{k,t}Z_{k,t}^\trans - \E(Z_{k,t}Z_{k,t}^\trans) \right]_{i j} \bigg| \r]^2 \r)\\
&\le \l(\pmk^{-2} \sum_{h=1}^{\rmk}\sum_{u=1}^{\pmk}\sum_{v=1}^{\pmk} \big\vert \lambda_{-k,uh}\lambda_{-k,vh} \big\vert \r)^2 \max_{u,v\in[\pmk]}\E\left( \left[\max_{(\sll,\tau,\el) \in \cI} \Big|\sum_{t=\sll+1}^{\tau}S_{k,t}(i,j;u,v)\Big|\right]^2 \right)\\
&\lesssim \log(T),
\end{align*}
where the hidden constant does not depend on $i$ and $j$, so that
\begin{align*}
\frac{1}{p_k^2} \E\l(\max_{(\sll,\tau,\el) \in \cI}\left\Vert\frac{1}{\sqrt{\tau - s_\ell}} \sum_{t = s_\ell + 1}^\tau \big[ (Z_{k, t}Z_{k, t}^\trans - \E(Z_{k, t}Z_{k, t}^\trans) \big] \right\Vert_F^2 \r) \lesssim \log(T).
\end{align*}
Further, by Markov's inequality, 
\[
{\frac{1}{p_k} \max_{(\sll,\tau,\el) \in \cI} \left\Vert\frac{1}{\sqrt{\tau - s_\ell}} \sum_{t = s_\ell + 1}^\tau \big[ (Z_{k, t}Z_{k, t}^\trans - \E(Z_{k, t}Z_{k, t}^\trans) \big] \right\Vert_F } = \cO_P(\sqrt{\log(T)}).
\]
The same bound holds for the sums over $\{\tau+1, \ldots, e_\ell\}$ uniformly over $\cI$, so we conclude
\begin{align}
\frac{1}{p_k} \max_{(\sll,\tau,\el) \in \cI}  \max\left\{ \frac{1}{\sqrt{\tau - s_\ell}} \left\Vert \sum_{t = s_\ell + 1}^\tau \big[ Z_{k, t}Z_{k, t}^\trans - \E(Z_{k, t}Z_{k, t}^\trans) \big] \right\Vert_F, \right. & \nonumber
\\
\left. \frac{1}{\sqrt{e_\ell - \tau}} \left\Vert \sum_{t = \tau + 1}^{e_\ell} \big[ Z_{k, t}Z_{k, t}^\trans - \E(Z_{k, t}Z_{k, t}^\trans) \big] \right\Vert_F \right\} &= \cO_P\left( \sqrt{\log(T)} \right). \label{eqn: imp_maximal_deviation_mat}
\end{align}

From the above, for $\tau-\sll > \varpi_T$, we have
\begin{align*}
&\quad\; \max_{(\sll,\tau,\el) \in \cI} \l\vert\sqrt{\tau - s_\ell}\cdot \Vech\l(\cJ_{k,\sll,\tau}^{(1)} \r)\r\vert_2 \\
&\le{\frac{1}{p_k} \max_{(\sll,\tau,\el) \in \cI} \left\Vert\frac{1}{\sqrt{\tau - s_\ell}} \sum_{t = s_\ell + 1}^\tau \big[ (Z_{k, t}Z_{k, t}^\trans - \E(Z_{k, t}Z_{k, t}^\trans) \big] \right\Vert_F } = \cO_P(\sqrt{\log(T)}),
\end{align*}
which gives
\begin{align*}
&\quad\; \max_{(\sll,\tau,\el) \in \cI} \l\vert\sqrt{\tau - s_\ell}\cdot \cJ_{\sll,\tau}^{(1)} \r\vert_2 \\
&\le \l(\sum_{k=1}^K \frac{1}{p_k} \max_{(\sll,\tau,\el) \in \cI} {\left\Vert  \frac{1}{\sqrt{\tau - s_\ell}} \sum_{t = s_\ell + 1}^\tau \big[ (Z_{k, t}Z_{k, t}^\trans - \E(Z_{k, t}Z_{k, t}^\trans) \big] \right\Vert_F^2 }\r)^{1/2} = \cO_P(\sqrt{\log(T)}).
\end{align*}
The same bound holds for the sums over $\{\tau+1, \ldots, e_\ell\}$ uniformly over $\cI$, so that we have $\P(\cS_{T,p}^{(1)}) \to 0$.
\end{proof}

\begin{lemma}\label{lemma: X(Lam-Lam)X}
Let all assumptions in Lemma~\ref{lemma: xx-Exx} hold. For some finite constant $C_2>0$, and with $\cI$ defined in Lemma~\ref{lemma: xx-Exx} and $\alpha_{T,p}$ defined in \eqref{eq:alpha:rate}, we define
\begin{align*}
&\cS_{T,p}^{(2)} = \l\{\max_{(\sll,\tau,\el) \in \cI} \l(\Big\vert\cJ_{\sll,\tau}^{(2)}\Big\vert_2,\, \Big\vert\cJ_{\tau,\el}^{(2)} \Big\vert_2\r)\ge C_2\alpha_{T,p} \r\},
\end{align*}
where for any $0\le a<b\le T$, 
\begin{align*}
\cJ_{a,b}^{(2)} = 
\begin{bmatrix}
\Vech \l(\cJ_{1,a,b}^{(2)} \r)\\
\vdots\\
\Vech \l(\cJ_{K,a,b}^{(2)} \r)
\end{bmatrix}
=\begin{bmatrix}
\Vech\l(\frac{1}{pp_{-1}}\frac{1}{b-a} \sum_{t=a+1}^{b}  X_{1,t}\left(\wh{\Lambda}_{-1}\wh{\Lambda}_{-1}^\trans -{\Lambda}_{-1}{\Lambda}_{-1}^\trans \right)  X_{1,t}^\trans\r)\\
\vdots\\
\Vech\l(\frac{1}{pp_{-K}}\frac{1}{b-a} \sum_{t=a+1}^{b}    X_{K,t}\left(\wh{\Lambda}_{-K}\wh{\Lambda}_{-K}^\trans -{\Lambda}_{-K}{\Lambda}_{-K}^\trans \right)  X_{K,t}^\trans\r)
\end{bmatrix}.
\end{align*}
then we have
\begin{align*}
\P\big(\cS_{T,p}^{(2)} \big) \to 0 \text{\ as\ } \min(T,p_1,\ldots,p_K)\to \infty.
\end{align*}
\end{lemma}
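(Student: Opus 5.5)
We need to show that $\max_{(\sll,\tau,\el)\in\cI}$ of $|\cJ^{(2)}_{\sll,\tau}|_2$ and $|\cJ^{(2)}_{\tau,\el}|_2$ is $\cO_P(\alpha_{T,p})$. It suffices to treat a single mode $k$ and the left segment $(\sll,\tau]$, since $K$ is finite and the right segment is symmetric. The plan is to split $\cJ^{(2)}_{k,a,b}$ into a deterministic-in-$\Lambda$ perturbation times a uniformly bounded random average.

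\emph{Step 1 (pull out the loading error).} Write $D_{\text{-}k}=\pmk^{-1}(\whLambdamk\whLambdamk^\trans-\Lambdamk\Lambdamk^\trans)$, so that
\[
\cJ^{(2)}_{k,a,b}=\frac{1}{p}\frac{1}{b-a}\sum_{t=a+1}^b X_{k,t}D_{\text{-}k}X_{k,t}^\trans .
\]
The matrix $D_{\text{-}k}$ does not depend on $t$ or on the interval. Apply Lemma~\ref{lemma: inequality_sandwich}~(i) with $A_t=X_{k,t}$ and $M=D_{\text{-}k}$, using $\|D_{\text{-}k}\|\le\|D_{\text{-}k}\|_F$:
\[
\bigl\|\cJ^{(2)}_{k,a,b}\bigr\|_F\le \|D_{\text{-}k}\|_F\cdot\Bigl\|\frac{1}{p(b-a)}\sum_{t=a+1}^b X_{k,t}X_{k,t}^\trans\Bigr\|_F .
\]
This bound holds simultaneously for all $(a,b)$. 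By Lemma~\ref{lemma: loading_prod_rate}~(i), $\|D_{\text{-}k}\|_F=\cO_P(\alpha_{T,p,\text{-}k})\le\cO_P(\alpha_{T,p})$.

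\emph{Step 2 (uniform control of the sample second moment).} Decompose
\[
\frac{1}{p(b-a)}\sum_{t=a+1}^b X_{k,t}X_{k,t}^\trans=\frac{1}{p(b-a)}\sum_{t=a+1}^b\E(X_{k,t}X_{k,t}^\trans)+\frac{1}{p(b-a)}\sum_{t=a+1}^b\bigl[X_{k,t}X_{k,t}^\trans-\E(X_{k,t}X_{k,t}^\trans)\bigr].
\]
The first term is $\cO(1)$ uniformly by Lemma~\ref{lemma: E(xx)}~(i). For the second, Lemma~\ref{lemma: xx-Exx} gives, uniformly over $\cI$, a bound of $\cO_P\bigl(\sqrt{\log(T)/(b-a)}\bigr)$. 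Every admissible segment has length at least $\varpi_T\asymp T/\log(T)$, so this is $\cO_P(\log(T)/\sqrt T)=o_P(1)$. Hence $\max_{\cI}\|p^{-1}(b-a)^{-1}\sum_t X_{k,t}X_{k,t}^\trans\|_F=\cO_P(1)$.

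\emph{Step 3 (combine).} Since $|\Vech(A)|_2\le\|A\|_F$, the two steps give
\[
\max_{\cI}\max\bigl(|\cJ^{(2)}_{\sll,\tau}|_2,|\cJ^{(2)}_{\tau,\el}|_2\bigr)\le\sqrt K\max_k\|D_{\text{-}k}\|_F\cdot\cO_P(1)=\cO_P(\alpha_{T,p}).
\]
Choosing $C_2$ large then gives $\P(\cS^{(2)}_{T,p})\to0$. To make the choice of $C_2$ explicit, intersect with the complement of the event that either $\max_k\|D_{\text{-}k}\|_F>C\alpha_{T,p}$ or the Step 2 maximum exceeds $C$. Both of these events have small probability for large $C$.

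The main obstacle is Step 2, where the random average must be controlled uniformly over the $\cO(T\log\log T)$ triples rather than at a fixed interval. This is where the $\nu>8$ moment condition enters, through Lemma~\ref{lemma: xx-Exx}. Since that lemma is already available, the remaining care is only to confirm that the minimal segment length $\varpi_T$ makes the fluctuation term $o_P(1)$, and that the $\Vech$ and Frobenius norm conversions introduce only constants.
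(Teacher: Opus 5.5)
Your proposal is correct and follows the paper's proof essentially step for step. You factor out $\pmk^{-1}\|\whLambdamk\whLambdamk^\trans-\Lambdamk\Lambdamk^\trans\|_F=\cO_P(\alpha_{T,p,\text{-}k})$ via Lemma~\ref{lemma: inequality_sandwich}~(i) and Lemma~\ref{lemma: loading_prod_rate}~(i). You then bound $\max_{\cI}p^{-1}(\tau-\sll)^{-1}\|\sum_{t}X_{k,t}X_{k,t}^\trans\|_F=\cO_P(1)$ using Lemma~\ref{lemma: E(xx)}~(i), Lemma~\ref{lemma: xx-Exx} and the lower bound $\varpi_T\asymp T/\log(T)$ on admissible segment lengths, exactly as the paper does.

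As in the paper, the argument yields only an $\cO_P(\alpha_{T,p})$ bound. Strictly, this gives $\limsup\P(\cS^{(2)}_{T,p})\le\epsilon$ for $C_2=C_2(\epsilon)$, or $\P(\cS^{(2)}_{T,p})\to0$ with $C_2$ diverging arbitrarily slowly, rather than convergence to zero for a single fixed $C_2$. Your remark that these events have ``small probability for large $C$'' is the accurate statement.
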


\begin{proof}[Proof of Lemma~\ref{lemma: X(Lam-Lam)X}]
For any $(\sll,\tau,\el)\in\cI$ with $\varpi_T \asymp T/\log(T)$,  
by Lemma~\ref{lemma: E(xx)}~(i) and Equation~\eqref{lemma: xx-Exx}, we have
\begin{align}
& \max_{(\sll,\tau,\el) \in \cI}
\frac{1}{p} \frac{1}{\tau-\sll} \Bigg\|\sum_{t=\sll+1}^{\tau} X_{k,t}X_{k,t}^\trans\Bigg\|_F
\nonumber \\
&\le \max_{(\sll,\tau,\el) \in \cI} \frac{1}{p} \frac{1}{\tau-\sll} \Bigg\|\sum_{t=\sll+1}^{\tau} \left( X_{k,t}X_{k,t}^\trans - \E\big(X_{k,t}X_{k,t}^\trans  \big) \right) \Bigg\|_F  
\nonumber \\
& \qquad +\max_{(\sll,\tau,\el) \in \cI} \frac{1}{p} \frac{1}{\tau-\sll} \Bigg\|\sum_{t=\sll+1}^{\tau}  \E\big(X_{k,t}X_{k,t}^\trans  \big) \Bigg\|_F
\nonumber \\
&\le  \max_{(\sll,\tau,\el) \in \cI} \frac{1}{p} \frac{1}{\tau-\sll} \Bigg\|\sum_{t=\sll+1}^{\tau} \left( X_{k,t}X_{k,t}^\trans - \E\big(X_{k,t}X_{k,t}^\trans  \big) \right) \Bigg\|_F  
\nonumber \\
& \qquad + \max_{(\sll,\tau,\el) \in \cI} \frac{1}{p} \frac{1}{\tau-\sll} \sum_{t=\sll+1}^{\tau}  \Big\|  \E\big( X_{k,t}X_{k,t}^\trans  \big) \Big\|_F
\nonumber \\
&= \cO_P\l( \sqrt{\frac{\log(T)}{\varpi_T}} \r) + \cO(1) = \cO_P(1).
\label{eq:max:xx:frob}
\end{align}
Combining the above with Lemmas~\ref{lemma: inequality_sandwich}~(i) and Lemma~\ref{lemma: loading_prod_rate}, we obtain
\begin{align*}      
& \max_{(\sll,\tau,\el) \in \cI} \frac{1}{p\pmk}\frac{1}{\tau-\sll} \Bigg\|\sum_{t=\sll+1}^{\tau}  X_{k,t}\left(\whLambdamk\whLambdamk^\trans -\Lambdamk\Lambdamk^\trans \right) X_{k,t}^\trans\Bigg\|_F\\
&\le \frac{1}{\pmk}\Big\| {\whLambdamk\whLambdamk^\trans} -\Lambdamk\Lambdamk^\trans\Big\|_F\cdot \max_{(\sll,\tau,\el) \in \cI} \frac{1}{p}\frac{1}{\tau-\sll} \Bigg\|\sum_{t=\sll+1}^{\tau}  X_{k,t}X_{k,t}^\trans\Bigg\|_F=\cO_P(\alpha_{T,p,\text{-}k}).
\end{align*}
Analogous arguments apply to the sums over $\{\tau + 1, \ldots, e_\ell\}$ uniformly over $\cI$. Therefore, we conclude
\begin{align}
\max_{(\sll,\tau,\el) \in \cI} \, &\l\{\Big\Vert \cJ_{k,\sll,\tau}^{(2)}\Big\Vert_F, \, \Big\Vert \cJ_{k,\tau, \el}^{(2)}\Big\Vert_F \r\} \nonumber\\
=\max_{(\sll,\tau,\el) \in \cI} & \l\{ \frac{1}{p\pmk}\frac{1}{\tau-\sll}\Bigg\|\sum_{t=\sll+1}^{\tau} X_{k,t}\left(\whLambdamk\whLambdamk^\trans -\Lambdamk\Lambdamk^\trans \right)X_{k,t}^\trans \Bigg\|_F \r. \nonumber\\
& \quad \l. \frac{1}{p\pmk}\frac{1}{\el-\tau}\Bigg\|\sum_{t=\tau+1}^{\el} X_{k,t}\left(\whLambdamk\whLambdamk^\trans -\Lambdamk\Lambdamk^\trans \right)X_{k,t}^\trans \Bigg\|_F \r\}=\cO_P(\alpha_{T,p,\text{-}k}). \label{eqn: X(Lam-Lam)X}
\end{align}

Moreover, we have
\begin{align*}
&\quad\;\l\vert\Vech\l(\frac{1}{p\pmk}\frac{1}{\tau-\sll} \sum_{t=\sll+1}^{\tau}  X_{k,t}\left(\whLambdamk\whLambdamk^\trans -\Lambdamk\Lambdamk^\trans \right)  X_{k,t}^\trans\r)\r\vert_2 \\
&\le \Bigg\|\frac{1}{p\pmk}\frac{1}{\tau-\sll} \sum_{t=\sll+1}^{\tau}  X_{k,t}\left(\whLambdamk\whLambdamk^\trans -\Lambdamk\Lambdamk^\trans \right) X_{k,t}^\trans\Bigg\|_F,
\end{align*}
and thus
\begin{align*}
& \max_{(\sll,\tau,\el) \in \cI} \Big\vert\cJ_{\sll,\tau}^{(2)}\Big\vert_2
\\
\le & \, \max_{(\sll,\tau,\el) \in \cI} \l(\sum_{k=1}^K \Bigg\|\frac{1}{p\pmk}\frac{1}{\tau-\sll} \sum_{t=\sll+1}^{\tau}  X_{k,t}\left(\whLambdamk\whLambdamk^\trans -\Lambdamk\Lambdamk^\trans \right) X_{k,t}^\trans\Bigg\|_F^2 \r)^{1/2}= \cO_P(\alpha_{T,p}),
\end{align*}
from which we conclude that $\P\big(\cS_{T,p}^{(2)} \big) \to 0$.
\end{proof}

\begin{lemma}\label{lemma: ZZ_2+epsilon}
Let Assumptions~\ref{assum: core_factor}, \ref{assum: loadings}, \ref{assum: trans_mat} and \ref{assum: noise} hold, with $\nu>4$ and $\beta> 1$.
Recalling that $Z_{k, t} = p_{-k}^{-1} X_{k, t} \Lambda_{-k}$, for the constant $\epsilon$ satisfying $\epsilon \in (0, \nu/2 - 2)$, we have for any $k\in[K]$, $i,j\in[p_k]$:
\begin{equation*}
\E\l( \l\vert \sum_{t = s + 1}^e \l( Z_{k, t, i \cdot}^\trans Z_{k, t, j \cdot} - \E( Z_{k, t, i \cdot}^\trans Z_{k, t, j \cdot}) \r) \r\vert^{2 + \epsilon} \r) \le c_0 (e - s)^{1 + \epsilon/2} ,
\end{equation*}
where $c_0 >0 $ is some constant that depends only on $\nu$.
\end{lemma}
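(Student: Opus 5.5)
We need a Rosenthal-type moment bound for the partial sums of $Y_t := Z_{k,t,i\cdot}^\trans Z_{k,t,j\cdot} - \E(Z_{k,t,i\cdot}^\trans Z_{k,t,j\cdot})$ with exponent $2+\epsilon<\nu/2$. The plan is to write $Y_t$ as a fixed, bounded linear combination of centred products of linear processes, bound the dependence of each product via functional dependence measures in $L^{\nu/2}$, and then apply a Burkholder/Rosenthal inequality for stationary-type causal processes with summable dependence. This gives the stated rate $(e-s)^{1+\epsilon/2}$.

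\emph{Reduction to a low-dimensional linear process.} Since $Z_{k,t}=\pmk^{-1}X_{k,t}\Lambdamk$, each row $Z_{k,t,i\cdot}\in\R^{\rmk}$ equals
\[
\pmk^{-1}\Lambdamk^\trans X_{k,t,i\cdot}=\pmk^{-1}\Lambdamk^\trans\bigl(\Lambdamk G_{k,t}^\trans\Lambda_{k,i\cdot}+E_{k,t,i\cdot}\bigr).
\]
By Assumption~\ref{assum: loadings}~(ii) the common part is $G_{k,t}^\trans\Lambda_{k,i\cdot}$, which has bounded entries since the entries of $\Lambda_k$ are bounded by $c_\lambda$ and $r_k$ is fixed. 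It is a linear process in the innovations of $\cF_t$ with coefficients $a_{f,h}$ multiplied by bounded matrices; the coefficients are piecewise in $t$ through $A_{j,\ell}$, which are bounded by Assumption~\ref{assum: trans_mat}~(i).

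The noise part $\pmk^{-1}\Lambdamk^\trans E_{k,t,i\cdot}$ is a linear process in the innovations of $\cF_{e,t}$ and $\epsilon_t$. We check that its spatial weights are uniformly $\ell_2$-bounded.
\begin{itemize}
\item For the $\epsilon_t$ component, the weights are $\pmk^{-1}\lambda_{-k,u\ell}(\Sigma_\epsilon)_{iu}$. Their squared $\ell_2$ norm is $\cO(\pmk^{-1})$.
\item For the $\cF_{e,t}$ component, the weights are $\pmk^{-1}\Lambdamk^\trans \Aemk$, times a row of $A_{e,k}$. Their $\ell_1$ norm is bounded using $\|A_{e,\ell}\|_1,\|A_{e,\ell}\|_\infty=\cO(1)$ from Assumption~\ref{assum: noise}.
\end{itemize}
Hence each coordinate of $Z_{k,t,i\cdot}$ is a linear process in independent innovations with uniformly bounded $\nu$-th moments. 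By Minkowski (or Rosenthal for the $\epsilon$ part), its $L^\nu$ norm is uniformly bounded. Its functional dependence coefficient satisfies $\delta_{s,\nu}\lesssim |a_{\cdot,s}|$, so its dependence-adjusted norm with exponent $\beta$ is bounded as in Lemmas~\ref{lemma: fdc_finite}--\ref{lemma: fdc_common}.

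\emph{Dependence of the product.} $Y_t$ is a sum of $\rmk$ products of two such coordinates, and $\rmk$ is fixed. For a product $UV$, Hölder gives
\[
\delta_{s,\nu/2}(UV)\le \|U\|_\nu\,\delta_{s,\nu}(V)+\|V\|_\nu\,\delta_{s,\nu}(U),
\]
which is $\lesssim|a_s|$ uniformly in $i,j,k$. Therefore $\sum_{s\ge0}\delta_{s,\nu/2}(Y)<\infty$. Moreover $\sum_{s\ge m}\delta_{s,\nu/2}(Y)\lesssim (m+1)^{-\beta}$ with $\beta>1$, which is more than needed.

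\emph{Moment inequality.} Apply the Burkholder-type inequality for causal processes (Wu 2005, Theorem 2; equivalently Proposition/Theorem in \cite{zhang2021convergence}) with $q=2+\epsilon\le\nu/2$:
\[
\Bigl\|\sum_{t=s+1}^e Y_t\Bigr\|_{q}\le C_q\,(e-s)^{1/2}\sum_{s'\ge0}\delta_{s',q}(Y).
\]
Using $\delta_{s',q}\le\delta_{s',\nu/2}$ and raising to the power $2+\epsilon$ yields the bound $c_0(e-s)^{1+\epsilon/2}$.

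The main obstacle is that $Y_t$ is not stationary, because $G_{k,t}$ switches regime at the $\theta_j$. The projection/martingale decomposition still works with $\sup_t$ in the definition of $\delta_s$, as in Definition~\ref{def: fdc}. A second technical point is uniformity of the constant in $i,j,\pmk$. This holds because every bound above depends only on $c_\lambda$, $C_\nu$, $c_\beta$, $r$ and the $\ell_1/\ell_\infty$ norms of $A_{e,\ell}$.
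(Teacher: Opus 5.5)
Your proof is correct, but it takes a different route from the paper.

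\textbf{The paper's route.} The paper splits $Z_{k,t}Z_{k,t}^\trans$ into four pieces: $\Lambda_k G_{k,t}G_{k,t}^\trans\Lambda_k^\trans$, the two cross terms, and $\pmk^{-2}E_{k,t}\Lambdamk\Lambdamk^\trans E_{k,t}^\trans$. It bounds each piece separately via Minkowski and writes the entries as finite sums of products of linear processes. It then uses its lemma on weakly $\cM$-dependent linear processes (products and squares of linear processes are weakly $\cM$-dependent with rate $m^{-\beta}$). Finally it quotes Proposition~4 of \cite{Berkesetal2011} for the $(2+\epsilon)$-th moment bound; this is the only place $\beta>1$ is used.

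\textbf{Your route.} You keep each coordinate of $Z_{k,t,i\cdot}=G_{k,t}^\trans\Lambda_{k,i\cdot}+\pmk^{-1}\Lambdamk^\trans E_{k,t,i\cdot}$ as one linear process in the pooled innovations. You control the product through the H\"older bound on $\delta_{s,\nu/2}$. You then apply the projection/Burkholder inequality for causal processes, with $\sup_t$ in the functional dependence measure.

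\textbf{What your route buys.}
\begin{itemize}
\item It needs only $\sum_s|a_s|<\infty$, so $\beta\ge 0$ would suffice.
\item It handles products of two correlated coordinates directly. The paper's product lemma is stated for mutually independent innovations or for squares, so it implicitly needs a polarization step for $G_{k,t,u\ell}G_{k,t,v\ell}$.
\item It accommodates the regime-switching non-stationarity of $G_{k,t}$ explicitly, whereas \cite{Berkesetal2011} is formulated for stationary sequences.
\item Your $\ell_1$/$\ell_2$ weight computations make the uniformity of $c_0$ in $i,j,k$ and in the dimensions transparent.
\end{itemize}

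\textbf{What the paper's route buys.} It is brief and stays within the weak $\cM$-dependence machinery the paper uses elsewhere. Its four-term split is also reused in Lemma~\ref{lemma: XX-EXX_zz}.

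\textbf{A point to state explicitly.} The innovations in Definition~\ref{def: general_linear_tensor} are independent but not necessarily identically distributed over $t$. The coupling should therefore replace each innovation by an independent copy with the same law. With that choice, the bound $\|P_{t-m}Y_t\|_q\le\delta_{m,q}$ and Burkholder's inequality go through unchanged.
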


\begin{proof}[Proof of Lemma~\ref{lemma: ZZ_2+epsilon}]
Using the notations in~\eqref{eqn: set_of_notation}, we have
\begin{align*}
    &\;\quad
    Z_{k, t} Z_{k, t}^\trans = \pmk^{-2} X_{k,t} \Lambdamk \Lambdamk^\trans X_{k,t}^\trans
    = \pmk^{-2} (\Lambda_k G_{k,t} \Lambdamk^\trans + E_{k,t}) \Lambdamk \Lambdamk^\trans (\Lambda_k G_{k,t} \Lambdamk^\trans + E_{k,t})^\trans \\
    &=
    \Lambda_k G_{k,t} G_{k,t}^\trans \Lambda_k^\trans
    + \pmk^{-1} \Lambda_k G_{k,t} \Lambdamk^\trans E_{k,t}^\trans
    + \pmk^{-1} E_{k,t} \Lambdamk G_{k,t}^\trans \Lambda_k^\trans
    + \pmk^{-2} E_{k,t} \Lambdamk \Lambdamk^\trans E_{k,t}^\trans ,
\end{align*}
so that this lemma holds true by Minkowski's inequality if we can show
\[
\cI_Z,\; \cII_Z,\; \cIII_Z,\; \cIV_Z \leq c_0 (e-s)^{1+\epsilon/2} ,
\]
where 
\begin{align*}
    \cI_Z &= \E\l( \l\vert \sum_{t = s + 1}^e \l( \Lambda_{k,i\cdot}^\trans G_{k,t} G_{k,t}^\trans \Lambda_{k,j\cdot} - \E( \Lambda_{k,i\cdot}^\trans G_{k,t} G_{k,t}^\trans \Lambda_{k,j\cdot} ) \r) \r\vert^{2 + \epsilon} \r) ,\\
    \cII_Z &= \E\l( \l\vert \sum_{t = s + 1}^e \l( \pmk^{-1} \Lambda_{k,i\cdot}^\trans G_{k,t} \Lambdamk^\trans E_{k,t,j\cdot} - \E( \pmk^{-1} \Lambda_{k,i\cdot}^\trans G_{k,t} \Lambdamk^\trans E_{k,t,j\cdot} ) \r) \r\vert^{2 + \epsilon} \r) ,\\
    \cIII_Z &= \E\l( \l\vert \sum_{t = s + 1}^e \l( \pmk^{-1} E_{k,t,i\cdot}^\trans \Lambdamk G_{k,t}^\trans \Lambda_{k,j\cdot} - \E( \pmk^{-1} E_{k,t,i\cdot}^\trans \Lambdamk G_{k,t}^\trans \Lambda_{k,j\cdot} ) \r) \r\vert^{2 + \epsilon} \r) ,\\
    \cIV_Z &= \E\l( \l\vert \sum_{t = s + 1}^e \l( \pmk^{-2} E_{k,t,i\cdot}^\trans \Lambdamk \Lambdamk^\trans E_{k,t,j\cdot} - \E( \pmk^{-2} E_{k,t,i\cdot}^\trans \Lambdamk \Lambdamk^\trans E_{k,t,j\cdot} ) \r) \r\vert^{2 + \epsilon} \r) .
\end{align*}

We only prove the result for $\cI_Z$, since all others can be shown in analogous arguments. Note that by Assumption~\ref{assum: core_factor}, for any $t\in[T]$,
\begin{equation}
\label{eqn: cI_Z1_component}
\begin{split}
    &\;\quad
    \Lambda_{k,i\cdot}^\trans G_{k,t} G_{k,t}^\trans \Lambda_{k,j\cdot} = \sum_{u, v \in [r_k]} \sum_{\ell \in [\rmk]} \Lambda_{k,iu} \Lambda_{k,jv} G_{k,t,u\ell} G_{k,t,v\ell} .
\end{split}
\end{equation}
Recall also by definition that
\begin{align*}
    G_{k,t} &= \sum_{j=1}^{q+1} A_{j,k} F_{k,t} \Ajmk{j}^\trans \cdot \mathbb{I}_{\{\theta_{j-1} <t\le \theta_j\}} ,
\end{align*}
which, combined with~\eqref{eqn: cI_Z1_component}, Lemma~\ref{lemma: weakly_M_dependent}
(with $\beta >1$ therein so as to satisfy the condition in Proposition~4 of \cite{Berkesetal2011}),
Assumptions~\ref{assum: core_factor}, \ref{assum: loadings}~(ii), and \ref{assum: trans_mat}~(i), we can immediately conclude $\cI_Z\leq c_0(e-s)^{1+\epsilon/2}$ for some constant $c_0$ by applying Proposition~4 in \cite{Berkesetal2011}.
For $\cII_Z$, $\cIII_Z$, and $\cIV_Z$, similar results follow by also using Lemma~\ref{lemma: weakly_M_dependent} and applying Proposition~4 in \cite{Berkesetal2011}, except that we require Assumption~\ref{assum: noise} in addition. This concludes the proof of this lemma.
\end{proof}

\begin{lemma}\label{lemma: max_ZZ_2+epsilon}
Let all assumptions in Lemma~\ref{lemma: ZZ_2+epsilon} hold, except that Assumption~\ref{assum: trans_mat} is replaced by Assumption~\ref{assum: trans_mat_alt}.
Then for the same constant $\epsilon$ in Lemma~\ref{lemma: ZZ_2+epsilon}, we have for any $k\in[K]$ and $\kappa_T\to\infty$ arbitrarily slowly,
\begin{equation}
\label{eqn: P_max_ZZ_2+epsilon}
\P\l\{ \max_{j\in[q+1]} \max_{\omega_j^{-2} \kappa_T^2 \le \ell \le \theta_j - \theta_{j - 1}} \frac{\sqrt{\omega_j^{-2}\kappa_T^2}}{p_k \ell} \l\Vert  \sum_{t = \theta_j - \ell + 1}^{\theta_j} \l( Z_{k, t} Z_{k, t}^\trans - \E( Z_{k, t} Z_{k, t}^\trans) \r) \r\Vert_F \ge \kappa_T \r\} = o(1) .
\end{equation}
\end{lemma}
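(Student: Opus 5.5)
Fix $k$ and write $S_\ell := \sum_{t=\theta_j-\ell+1}^{\theta_j}\{Z_{k,t}Z_{k,t}^\trans-\E(Z_{k,t}Z_{k,t}^\trans)\}$. Since $q$ is fixed (Assumption~\ref{assum: trans_mat_alt}~(iv) gives $q\le 1/c_\Delta$) and $p_k^2$ is the number of entries, a union bound over $j\in[q+1]$ and over $(i,j')\in[p_k]^2$ is affordable only after normalising. I will therefore bound the mean-square of $p_k^{-1}\|S_\ell\|_F$ and never union bound over entries. Concretely, $p_k^{-2}\|S_\ell\|_F^2=p_k^{-2}\sum_{i,i'}(S_{\ell,ii'})^2$. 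Any uniform-in-$(i,i')$ maximal moment bound on the scalar partial sums $S_{\ell,ii'}$ therefore transfers directly to the Frobenius quantity by averaging, exactly as in the proofs of Lemmas~\ref{lemma: xx-Exx} and~\ref{lemma: imp_maximal_deviation_mat}.

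The core step is a weighted maximal inequality for the scalar partial sums read backwards from $\theta_j$. By Lemma~\ref{lemma: ZZ_2+epsilon}, for each entry and any block $(s,e]$ lying inside a segment, $\E|\sum_{t=s+1}^e(\cdot)|^{2+\epsilon}\le c_0(e-s)^{1+\epsilon/2}$. This is a moment bound of superadditive type in the sense of Móricz, so by the Móricz maximal inequality (as in Proposition~4 of \cite{Berkesetal2011}) I get $\E\max_{\ell\le L}|S_{\ell,ii'}|^{2+\epsilon}\le C L^{1+\epsilon/2}$ uniformly in $(i,i')$. The reversed-time indexing is harmless because the bound depends only on the block length.

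I then peel dyadically. Set $L_0=\omega_j^{-2}\kappa_T^2$ and blocks $\ell\in[2^mL_0,2^{m+1}L_0)$ for $m\ge 0$. On block $m$ the event in \eqref{eqn: P_max_ZZ_2+epsilon} requires $\max_{\ell<2^{m+1}L_0}\|S_\ell\|_F\ge p_k\kappa_T 2^mL_0/\sqrt{L_0}=p_k\kappa_T2^m\sqrt{L_0}$. By Markov with power $2+\epsilon$, combined with Jensen/Hölder to pass from the average over $p_k^2$ entries of $|S_{\ell,ii'}|^{2}$ to the $(2+\epsilon)$ moment, that is $(p_k^{-2}\sum(\cdot)^2)^{1+\epsilon/2}\le p_k^{-2}\sum|\cdot|^{2+\epsilon}$, the probability is at most $C(2^{m+1}L_0)^{1+\epsilon/2}/(\kappa_T2^m\sqrt{L_0})^{2+\epsilon}\lesssim \kappa_T^{-(2+\epsilon)}2^{-m\epsilon/2}\cdot$const. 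Summing over $m$ gives $O(\kappa_T^{-2-\epsilon})$, and summing over the finitely many $j$ and $k$ gives $o(1)$ since $\kappa_T\to\infty$.

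The main obstacle is the maximal inequality step. Lemma~\ref{lemma: ZZ_2+epsilon} must hold for every subinterval with a constant uniform in location, which it does (it is stated for arbitrary $s<e$). Móricz's theorem needs exponent $1+\epsilon/2>1$ and a superadditive $g(s,e)=e-s$, both of which hold here. A subtlety is that the summands are not stationary across change points, but all intervals here stay within $(\theta_{j-1},\theta_j]$, where $G_{k,t}$ is a fixed linear transform of $F_{k,t}$; $\E(Z_{k,t}Z_{k,t}^\trans)$ is constant there by Lemma~\ref{lemma: E(xx)}~(iv), so no centering drift appears. Finally, $L_0$ may exceed $\theta_j-\theta_{j-1}$; in that case the inner range is empty and the event is vacuous.
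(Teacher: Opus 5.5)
Your proof is correct and follows essentially the same route as the paper. Both arguments use the $(2+\epsilon)$-moment bound of Lemma~\ref{lemma: ZZ_2+epsilon}, a maximal inequality for the weighted backward partial sums, Jensen to pass from entries to $p_k^{-1}\|\cdot\|_F$, Markov with power $2+\epsilon$, and a union bound over the finitely many $j$. The only difference is packaging. The paper invokes a ready-made H\'ajek--R\'enyi-type inequality (Theorem~B.3 of \cite{Kirch2006}) to get $\E[(\sqrt{L_0}\max_{\ell\ge L_0}\ell^{-1}|S_{\ell,ii'}|)^{2+\epsilon}]=\cO(1)$ in one step, whereas you rebuild this from the M\'oricz maximal inequality by dyadic peeling. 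Your per-block bound is in fact $\kappa_T^{-(2+\epsilon)}2^{-m(1+\epsilon/2)}$, which is better than the $2^{-m\epsilon/2}$ you state, so the summation over $m$ goes through either way.
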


\begin{proof}[Proof of Lemma~\ref{lemma: max_ZZ_2+epsilon}]

By Minkowski's inequality, we have for any $k\in[K]$, $w\in[q+1]$, and $i,j\in[p_k]$,
\begin{align*}
    &\;\quad
    \E\l[ \l( \max_{\omega_w^{-2} \kappa_T^2 \le \ell \le \theta_w - \theta_{w - 1}} \frac{\sqrt{\omega_w^{-2}\kappa_T^2}}{\ell} \l\vert  \sum_{t = \theta_w - \ell + 1}^{\theta_w} \l( Z_{k, t, i\cdot}^\trans Z_{k, t, j\cdot} - \E( Z_{k, t, i\cdot}^\trans Z_{k, t, j\cdot}) \r) \r\vert \r)^{2 + \epsilon} \r] \\
    &\leq
    \E\l[ \l( \max_{\omega_w^{-2} \kappa_T^2 \le \ell \le \theta_w - \theta_{w - 1}} \frac{\sqrt{\omega_w^{-2}\kappa_T^2}}{\ell} \l\vert  \sum_{t = \theta_w - \ell + 1}^{\theta_w - \omega_w^{-2}\kappa_T^2 + 1} \l( Z_{k, t, i\cdot}^\trans Z_{k, t, j\cdot} - \E( Z_{k, t, i\cdot}^\trans Z_{k, t, j\cdot}) \r) \r\vert  \r)^{2 + \epsilon} \r] \\
    &\;\quad
    + \E\l[ \l( \max_{\omega_w^{-2} \kappa_T^2 \le \ell \le \theta_w - \theta_{w - 1}} \frac{\sqrt{\omega_w^{-2}\kappa_T^2}}{\ell} \l\vert  \sum_{t = \theta_w - \omega_w^{-2}\kappa_T^2 + 2}^{\theta_w} \l( Z_{k, t, i\cdot}^\trans Z_{k, t, j\cdot} - \E( Z_{k, t, i\cdot}^\trans Z_{k, t, j\cdot}) \r) \r\vert  \r)^{2 + \epsilon} \r] \\
    &\lesssim
    \left( \omega_w^{-2}\kappa_T^2 \right)^{1+\epsilon/2} \sum_{\ell= \omega_w^{-2}\kappa_T^2}^{\theta_w - \theta_{w - 1}} \frac{1}{\ell^{2+\epsilon}} \cdot \ell^{\epsilon/2}
    + \left\{ \left( \omega_w^{-2}\kappa_T^2 \right)^{-1} \left( \omega_w^{-2}\kappa_T^2 -1 \right) \right\}^{1+\epsilon/2}
    = \cO(1) ,
\end{align*}
where the last line used Lemma~\ref{lemma: ZZ_2+epsilon} in applying Theorem~B.3 in \cite{Kirch2006}; the first term in the last line can be upper bounded by an integral inequality; and the second term is trivially bounded, with $\cO(1)$ that does not depend on $i$, $j$, $w$ or $k$. This shows that
\begin{equation}
\label{eqn: max_ZZ_2+epsilon_entry}
\E\l[ \l( \max_{\omega_w^{-2} \kappa_T^2 \le \ell \le \theta_w - \theta_{w - 1}} \frac{\sqrt{\omega_w^{-2}\kappa_T^2}}{\ell} \l\vert  \sum_{t = \theta_w - \ell + 1}^{\theta_w} \l( Z_{k, t, i\cdot}^\trans Z_{k, t, j\cdot} - \E( Z_{k, t, i\cdot}^\trans Z_{k, t, j\cdot}) \r) \r\vert \r)^{2 + \epsilon} \r] = \cO(1).
\end{equation}
Next, by Jensen's inequality, we have
\begin{align*}
    &\;\quad
    \E\l[ \l( \max_{\omega_w^{-2} \kappa_T^2 \le \ell \le \theta_w - \theta_{w - 1}} \frac{\sqrt{\omega_w^{-2}\kappa_T^2}}{p_k \ell} \l\Vert \sum_{t = \theta_w - \ell + 1}^{\theta_w} \l( Z_{k, t} Z_{k, t}^\trans - \E( Z_{k, t} Z_{k, t}^\trans) \r) \r\Vert_F  \r)^{2 + \epsilon} \r] \\
    &=\!
    \E\l[ \l( \max_{\omega_w^{-2} \kappa_T^2 \le \ell \le \theta_w - \theta_{w - 1}} \frac{1}{p_k^2} \sum_{i, j \in [p_k]} \l\vert \frac{\sqrt{\omega_w^{-2}\kappa_T^2}}{\ell} \sum_{t = \theta_w - \ell + 1}^{\theta_w} \l( Z_{k, t, i\cdot}^\trans Z_{k, t, j\cdot} - \E( Z_{k, t, i\cdot}^\trans Z_{k, t, j\cdot}) \r) \r\vert^2 \r)^{1 + \epsilon/2} \r] \\
    &\leq \!
    \frac{1}{p_k^2} \sum_{i, j \in [p_k]} \E\l[ \max_{\omega_w^{-2} \kappa_T^2 \le \ell \le \theta_w - \theta_{w - 1}} \l\vert \frac{\sqrt{\omega_w^{-2}\kappa_T^2}}{\ell} \sum_{t = \theta_w - \ell + 1}^{\theta_w} \l( Z_{k, t, i\cdot}^\trans Z_{k, t, j\cdot} - \E( Z_{k, t, i\cdot}^\trans Z_{k, t, j\cdot}) \r) \r\vert^{2 + \epsilon} \r]
    = \cO(1) ,
\end{align*}
where the last equality used~\eqref{eqn: max_ZZ_2+epsilon_entry}. Finally, it holds by the Markov's inequality that
\begin{align*}
    &\;\quad
    \P\l\{ \max_{j\in[q+1]} \max_{\omega_j^{-2} \kappa_T^2 \le \ell \le \theta_j - \theta_{j - 1}} \frac{\sqrt{\omega_j^{-2}\kappa_T^2}}{p_k \ell} \l\Vert  \sum_{t = \theta_j - \ell + 1}^{\theta_j} \l( Z_{k, t} Z_{k, t}^\top - \E( Z_{k, t} Z_{k, t}^\top) \r) \r\Vert_F \ge \kappa_T \r\} \\
    &\leq
    \frac{q}{(\kappa_T)^{2+\epsilon}} \max_{j\in[q+1]} \E\l\{ \l( \max_{\omega_j^{-2} \kappa_T^2 \le \ell \le \theta_j - \theta_{j - 1}} \frac{\sqrt{\omega_j^{-2}\kappa_T^2}}{p_k \ell} \l\Vert  \sum_{t = \theta_j - \ell + 1}^{\theta_j} \l( Z_{k, t} Z_{k, t}^\top - \E( Z_{k, t} Z_{k, t}^\top) \r) \r\Vert_F  \r)^{2 + \epsilon} \r\} \\
    &\lesssim
    \frac{q}{(\kappa_T)^{2+\epsilon}} = o(1),
\end{align*}
where the last equality holds as $q$ is fixed due to linearly spaced change points and $\kappa_T$ goes to infinity (arbitrarily slowly). This completes the proof of this lemma.
\end{proof}


\begin{lemma}\label{lemma: XX-EXX_zz}
Let Assumptions~\ref{assum: core_factor}, \ref{assum: loadings}, \ref{assum: trans_mat_alt} and~\ref{assum: noise} hold, with $\nu>8$ and $\beta\geq 1$. Then for $\kappa_T\to\infty$ arbitrarily slowly, with $\cJ_{a,b}^{(1)}$ defined in Lemma~\ref{lemma: imp_maximal_deviation_mat}, we have
\begin{align*}
\P\l(\wt{\cS}_{T,p}^{(1)}\r) \to 0 \text{ \ as \ } \min(T,p_1,\ldots,p_K)\to \infty,
\end{align*}
where
\begin{align*}
\wt{\cS}_{T,p}^{(1)}&=\l\{ \max_{1\le j\le q+1}\max_{\omega_j^{-2} \kappa_T^2 \le \ell \le \theta_j - \theta_{j - 1}}\sqrt{\omega_j^{-2}\kappa_T^2}\Big\vert \cJ_{\thj-\ell,\thj}^{(1)}\Big\vert_2  \ge \kappa_T  \r\}\\
& \quad\quad \bigcap \l\{ \max_{0\le j\le q}\max_{\omega_j^{-2} \kappa_T^2 \le \ell \le \theta_{j+1} - \theta_j}\sqrt{\omega_j^{-2}\kappa_T^2}\Big\vert  \cJ_{\thj,\thj+\ell}^{(1)} \Big\vert_2 \ge \kappa_T \r\}.
\end{align*}

Also, with $\cJ_{a,b}^{(2)}$ defined as in Lemma~\ref{lemma: X(Lam-Lam)X}, we have
\begin{align*}
\P\l(\wt{\cS}_{T,p}^{(2)}\r) \to 0 \text{ \ as \ } \min(T,p_1,\ldots,p_K)\to \infty,
\end{align*}
where
\begin{align*}
\wt{\cS}_{T,p}^{(2)}&=\l\{ \max_{1\le j\le q+1}\max_{\omega_j^{-2} \kappa_T^2 \le \ell \le \theta_j - \theta_{j - 1}} \Big\vert \cJ_{\thj-\ell,\thj}^{(2)} \Big\vert_2 \ge \wt{C}_2 \alpha_{T,p} \r\}\\
& \qquad\quad \bigcap \l\{ \max_{0\le j\le q}\max_{\omega_j^{-2} \kappa_T^2 \le \ell \le \theta_{j+1} - \theta_j}\Big\vert \cJ_{\thj,\thj+\ell}^{(2)} \Big\vert_2 \ge \wt{C}_2\alpha_{T,p}  \r\},
\end{align*}
with some finite constant $\wt C_2>0$.
\end{lemma}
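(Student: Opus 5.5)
The lemma has two parts, and I handle them separately, reusing the machinery behind Lemmas~\ref{lemma: max_ZZ_2+epsilon} and~\ref{lemma: X(Lam-Lam)X}. Note that each $\wt{\cS}$ is written as an intersection of two events. It suffices, and is more informative, to show that each of the two constituent events (left-anchored and right-anchored windows) has vanishing probability; a union bound then covers the union as well.

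\emph{First part.} By definition, $\vert\cJ^{(1)}_{a,b}\vert_2^2=\sum_{k=1}^K\vert\Vech(\cJ^{(1)}_{k,a,b})\vert_2^2$. For each $k$,
\[
\vert\Vech(\cJ^{(1)}_{k,a,b})\vert_2\le (p_k(b-a))^{-1}\Big\Vert\sum_{t=a+1}^b\{Z_{k,t}Z_{k,t}^\trans-\E(Z_{k,t}Z_{k,t}^\trans)\}\Big\Vert_F,
\]
after matching the normalisation $1/(p\pmk)$ with $Z_{k,t}=\pmk^{-1}X_{k,t}\Lambdamk$. Hence
\[
\sqrt{\omega_j^{-2}\kappa_T^2}\,\vert\cJ^{(1)}_{\theta_j-\ell,\theta_j}\vert_2\le\sum_{k=1}^K \frac{\sqrt{\omega_j^{-2}\kappa_T^2}}{p_k\ell}\Big\Vert\sum_{t=\theta_j-\ell+1}^{\theta_j}(\cdots)\Big\Vert_F .
\]
Lemma~\ref{lemma: max_ZZ_2+epsilon} bounds the maximum over $j$ and over $\omega_j^{-2}\kappa_T^2\le\ell\le\theta_j-\theta_{j-1}$ of each summand by $\kappa_T/K$ with probability tending to one. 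To get this, apply it with $\kappa_T/K$ in place of $\kappa_T$ (still divergent), or simply note that its Markov bound is $\lesssim K^{2+\epsilon}q\kappa_T^{-2-\epsilon}\to0$. The right-anchored windows $(\theta_j,\theta_j+\ell]$ are handled by the mirror-image argument. This requires rerunning the Kirch-type maximal inequality (Theorem~B.3 of \cite{Kirch2006}) with the moment bound of Lemma~\ref{lemma: ZZ_2+epsilon} for forward partial sums, which is symmetric in the direction of summation. Finally, $K$ is fixed and $q$ is bounded under Assumption~\ref{assum: trans_mat_alt}~(iv), so a union bound over $k$ and over the two sides gives $\P(\wt\cS^{(1)}_{T,p})\to0$.

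\emph{Second part.} I follow the proof of Lemma~\ref{lemma: X(Lam-Lam)X}. By Lemma~\ref{lemma: inequality_sandwich}~(i),
\[
\Vert\cJ^{(2)}_{k,a,b}\Vert_F\le\pmk^{-1}\Vert\whLambdamk\whLambdamk^\trans-\Lambdamk\Lambdamk^\trans\Vert_F\cdot \frac{1}{p(b-a)}\Big\Vert\sum_{t=a+1}^bX_{k,t}X_{k,t}^\trans\Big\Vert_F .
\]
Lemma~\ref{lemma: loading_prod_rate}~(i) gives that the first factor is $\cO_P(\alpha_{T,p,\text{-}k})$, and this factor does not depend on $(a,b)$. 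It therefore remains to show that the second factor is $\cO_P(1)$ uniformly over the windows $(\theta_j-\ell,\theta_j]$ and $(\theta_j,\theta_j+\ell]$ with $\ell\ge\omega_j^{-2}\kappa_T^2$. I split $X_{k,t}X_{k,t}^\trans$ into its mean and the centred part. The mean part is $\cO(1)$ by Lemma~\ref{lemma: E(xx)}~(i). For the centred part, I would reuse the argument of Lemma~\ref{lemma: max_ZZ_2+epsilon}, with $X_{k,t}$ in place of $Z_{k,t}$: the $(2+\epsilon)$-moment bound of Lemma~\ref{lemma: ZZ_2+epsilon} holds equally for entries of $X_{k,t}X_{k,t}^\trans$, since the same weakly $\cM$-dependent product structure applies. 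This gives $p^{-1}\ell^{-1}\Vert\sum(\cdots)\Vert_F=\cO_P(\kappa_T\,\omega_j/\kappa_T)=\cO_P(\omega_j)$ uniformly, and $\omega_j$ is bounded by Assumption~\ref{assum: trans_mat_alt}~(i), so this term is $\cO_P(1)$. Summing over $k$ then yields a bound $\cO_P(\alpha_{T,p})$, and choosing $\wt C_2$ large gives $\P(\wt\cS^{(2)}_{T,p})\to0$.

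\emph{Main obstacle.} The delicate step is the uniform control over short windows, of length down to $\omega_j^{-2}\kappa_T^2$, which may be much smaller than the $\varpi_T$ used in Lemma~\ref{lemma: xx-Exx}. The Bonferroni and $\sqrt{\log T}$ approach of Lemma~\ref{lemma: xx-Exx} therefore cannot be reused, and the Hájek--Rényi/Kirch-type maximal inequality must be used instead. Verifying its moment hypothesis for the uncentred $X_{k,t}$-products, and checking that the entrywise averaging over $p_k^2$ entries through Jensen's inequality is uniform in $(i,j)$, is where I expect most of the care to be needed.
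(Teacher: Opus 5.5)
Your proposal is correct and follows essentially the paper's route. The second part is the same argument: Lemma~\ref{lemma: inequality_sandwich}~(i) with Lemma~\ref{lemma: loading_prod_rate}~(i) for the window-free loading factor, then Lemma~\ref{lemma: E(xx)}~(i) for the mean and the Kirch-type maximal inequality for the $\pmk^{-1}$-normalised $X$-products. For the first part you apply Lemma~\ref{lemma: max_ZZ_2+epsilon} directly to the $Z$-based $\cJ^{(1)}_{k,a,b}$, which is slightly more direct than the paper's passage through $X_{k,t}X_{k,t}^\trans$.

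One small slip: substituting $\kappa_T/K$ for $\kappa_T$ in Lemma~\ref{lemma: max_ZZ_2+epsilon} achieves nothing, because the scaling $\sqrt{\omega_j^{-2}\kappa_T^2}$ and the threshold shrink by the same factor. Rely instead on your second justification: the proof of that lemma bounds the $(2+\epsilon)$-th moment of the maximum by $\cO(1)$, so Markov's inequality at level $\kappa_T/K$ gives $\lesssim qK^{2+\epsilon}\kappa_T^{-2-\epsilon}\to 0$.
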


\begin{proof}[Proof of Lemma~\ref{lemma: XX-EXX_zz}]
Following the same line of reasoning as in the proof of Lemma~\ref{lemma: ZZ_2+epsilon}, for the constant $\epsilon \in (0, \nu/2 - 2)$, we have for any $k\in[K]$ and $i,j\in[p_k]$,
\[
\E\l( \l\vert \sum_{t = s + 1}^e \l( \pmk^{-1} E_{k,t,i\cdot}^\trans  E_{k,t,j\cdot} - \E( \pmk^{-1} E_{k,t,i\cdot}^\trans E_{k,t,j\cdot} ) \r) \r\vert^{2 + \epsilon} \r) \leq c_0 (e-s)^{1+\epsilon/2}  ,
\]
with some constant $c_0$ depending on $\nu$.
Noting that
\begin{align*}
\pmk^{-1} X_{k, t} X_{k, t}^\trans =
\Lambda_k G_{k,t} G_{k,t}^\trans \Lambda_k^\trans
+  \pmk^{-1}\Lambda_k G_{k,t} \Lambdamk^\trans E_{k,t}^\trans
+ \pmk^{-1}E_{k,t} \Lambdamk G_{k,t}^\trans \Lambda_k^\trans
+  \pmk^{-1}E_{k,t} E_{k,t}^\trans ,
\end{align*}
together with $\cI_Z,\; \cII_Z,\; \cIII_Z \le c_0 (e-s)^{1+\epsilon/2} $ derived in the proof of Lemma~\ref{lemma: ZZ_2+epsilon}, we have
\begin{align*}
\E\l( \l\vert \frac{1}{\pmk} \sum_{t = s + 1}^e \l( X_{k, t, i \cdot}^\trans X_{k, t, j \cdot} - \E( X_{k, t, i \cdot}^\trans X_{k, t, j \cdot}) \r) \r\vert^{2 + \epsilon} \r) \le c_0 (e - s)^{1 + \epsilon/2} .
\end{align*}
Then, by the same argument as in the proof of Lemma~\ref{lemma: max_ZZ_2+epsilon}, for $\kappa_T\to\infty$ arbitrarily slowly, we have
\begin{align*}
\P\l\{ \max_{1\le j\le q+1} \max_{\omega_j^{-2} \kappa_T^2 \le \ell \le \theta_j - \theta_{j - 1}} \frac{\sqrt{\omega_j^{-2}\kappa_T^2}}{p\,  \ell} \l\Vert  \sum_{t = \theta_j - \ell + 1}^{\theta_j} \l( X_{k, t} X_{k, t}^\trans - \E( X_{k, t} X_{k, t}^\trans) \r) \r\Vert_F \ge \kappa_T \r\} = o(1),
\end{align*}
and 
\begin{align}
\max_{1\le j\le q+1} \max_{\omega_j^{-2} \kappa_T^2 \le \ell \le \theta_j - \theta_{j - 1}} \frac{\omega_j^{-1}}{p\, \ell} \l\Vert  \sum_{t = \theta_j - \ell + 1}^{\theta_j} \l( X_{k, t} X_{k, t}^\trans - \E( X_{k, t} X_{k, t}^\trans) \r) \r\Vert_F = \cO_P(1). \label{eqn: xx-Exx_zz}
\end{align}
Analogous arguments apply to the sums over $\{\thj + 1, \ldots, \thj+\ell\}$ and show
\begin{align*}
\P\l\{ \max_{0\le j\le q} \max_{\omega_j^{-2} \kappa_T^2 \le \ell \le \theta_{j + 1} - \theta_j} \frac{\sqrt{\omega_j^{-2}\kappa_T^2}}{p\,  \ell} \l\Vert  \sum_{t = \theta_j + 1}^{\theta_j + \ell} \l( X_{k, t} X_{k, t}^\trans - \E( X_{k, t} X_{k, t}^\trans) \r) \r\Vert_F \ge \kappa_T \r\} = o(1).
\end{align*}
Then, noting that $\cJ_{a, b}^{(1)}$ is obtained by stacking $\Vech(\cJ_{k, a, b}^{(1)})$, for $ k\in[K]$, we have
\begin{align*}
\P\l(\wt{\cS}_{T,p}^{(1)}\r) \to 0 \text{ \ as \ } \min(T,p_1,\ldots,p_K)\to \infty.
\end{align*}

We now prove the second claim. For any $j\in[q+1]$, by Lemma~\ref{lemma: E(xx)}~(i) and~\eqref{eqn: xx-Exx_zz},
\begin{align}
& \max_{\omega_j^{-2} \kappa_T^2 \le \ell \le \theta_j - \theta_{j - 1}}
\frac{1}{p\,  \ell} \Bigg\|\sum_{t=\thj -\ell+1}^{\thj} X_{k,t}X_{k,t}^\trans\Bigg\|_F
\nonumber \\
&\le \max_{\omega_j^{-2} \kappa_T^2 \le \ell \le \theta_j - \theta_{j - 1}} \frac{1}{p\,  \ell} \Bigg\|\sum_{t=\thj -\ell+1}^{\thj} \left( X_{k,t}X_{k,t}^\trans - \E\big(X_{k,t}X_{k,t}^\trans  \big) \right) \Bigg\|_F + 
\nonumber \\
& \qquad \max_{\omega_j^{-2} \kappa_T^2 \le \ell \le \theta_j - \theta_{j - 1}}  \frac{1}{p\,  \ell} \Bigg\|\sum_{t=\thj -\ell+1}^{\thj} \E\big(X_{k,t}X_{k,t}^\trans  \big) \Bigg\|_F
\nonumber \\
&\le  \max_{\omega_j^{-2} \kappa_T^2 \le \ell \le \theta_j - \theta_{j - 1}} \frac{1}{p\,  \ell} \Bigg\|\sum_{t=\thj -\ell+1}^{\thj} \left( X_{k,t}X_{k,t}^\trans - \E\big(X_{k,t}X_{k,t}^\trans  \big) \right) \Bigg\|_F + 
\nonumber \\
& \qquad \max_{\omega_j^{-2} \kappa_T^2 \le \ell \le \theta_j - \theta_{j - 1}}  \frac{1}{p\,  \ell} \sum_{t=\thj -\ell+1}^{\thj} \Big\|  \E\big( X_{k,t}X_{k,t}^\trans  \big) \Big\|_F
\nonumber \\
&= \cO_P\l( \omega_j \r) + \cO(1) = \cO_P(1),\nonumber
\end{align}
where the last equality follows from the fact that
\begin{align}
\omega_j \le 2\max_{j'\in\{j,j-1\}, \ j\in[q+1]}\l\vert
\begin{bmatrix}
\Vech\l(\Gamma_{G,(j')}^{(1)}\r)\\
\vdots\\
\Vech\l(\Gamma_{G,(j')}^{(K)}\r)
\end{bmatrix}
 \r\vert_2=\cO(1). \label{eqn: omega_j}
\end{align}

Combining the above with Lemmas~\ref{lemma: inequality_sandwich}~(i) and Lemma~\ref{lemma: loading_prod_rate}, we obtain
\begin{align*}      
& \max_{\omega_j^{-2} \kappa_T^2 \le \ell \le \theta_j - \theta_{j - 1}} \frac{1}{p \pmk\,  \ell}\Bigg\|\sum_{t=\thj-\ell+1}^{\thj}  X_{k,t}\left(\whLambdamk\whLambdamk^\trans -\Lambdamk\Lambdamk^\trans \right) X_{k,t}^\trans\Bigg\|_F\\
&\le \frac{1}{\pmk}\Big\| {\whLambdamk\whLambdamk^\trans} -\Lambdamk\Lambdamk^\trans\Big\|_F\cdot \max_{\omega_j^{-2} \kappa_T^2 \le \ell \le \theta_j - \theta_{j - 1}} \frac{1}{p \,  \ell}\Bigg\|\sum_{t=\thj-\ell+1}^{\thj} X_{k,t}X_{k,t}^\trans\Bigg\|_F=\cO_P(\alpha_{T,p,\text{-}k}), 
\end{align*}
where the LHS does not depend on $j\in[q+1]$, hence
\[
\max_{1\le j \le q+1}\max_{\omega_j^{-2} \kappa_T^2 \le \ell \le \theta_j - \theta_{j - 1}} \frac{1}{p \pmk\,  \ell}\Bigg\|\sum_{t=\thj-\ell+1}^{\thj}  X_{k,t}\left(\whLambdamk\whLambdamk^\trans -\Lambdamk\Lambdamk^\trans \right) X_{k,t}^\trans\Bigg\|_F=\cO_P(\alpha_{T,p,\text{-}k}).
\]
Moreover, we have
\begin{align*}
&\quad\;\l\vert\Vech\l(  \frac{1}{p \pmk\,  \ell}\sum_{t=\thj-\ell+1}^{\thj}   X_{k,t}\left(\whLambdamk\whLambdamk^\trans -\Lambdamk\Lambdamk^\trans \right)  X_{k,t}^\trans\r)\r\vert_2 \\
&\le \Bigg\|\frac{1}{p \pmk\,  \ell}\sum_{t=\thj-\ell+1}^{\thj}  X_{k,t}\left(\whLambdamk\whLambdamk^\trans -\Lambdamk\Lambdamk^\trans \right) X_{k,t}^\trans\Bigg\|_F,
\end{align*}
and thus
\begin{align*}
& \max_{1\le j \le q+1}\max_{\omega_j^{-2} \kappa_T^2 \le \ell \le \theta_j - \theta_{j - 1}}  \Big\vert\cJ_{\thj-\ell,\thj}^{(2)}\Big\vert_2
\\
\le & \, \max_{1\le j \le q+1}\max_{\omega_j^{-2} \kappa_T^2 \le \ell \le \theta_j - \theta_{j - 1}}  \!\!\l(\sum_{k=1}^K \l\|\frac{1}{p \pmk\,  \ell}\sum_{t=\thj-\ell+1}^{\thj}  X_{k,t}\left(\whLambdamk\whLambdamk^\trans -\Lambdamk\Lambdamk^\trans \right) X_{k,t}^\trans\r\|_F^2 \r)^{1/2} \!\! = \cO_P(\alpha_{T,p}).
\end{align*}
Analogous arguments apply to the sums over $\{\thj + 1, \ldots, \thj+\ell\}$ from which we conclude that $\P\big(\wt{\cS}_{T,p}^{(2)} \big) \to 0$.
\end{proof}

\begin{lemma}
\label{lemma: imp_ppl_T_true}
Under Model~\eqref{eqn: tfm_change_rewrite}, consider $(s, e)$ which satisfies $0\le s<e\le T$ and $\{\theta_j\}\subset\{s{+}1,\ldots,e{-}1\}\cap\Theta\subset\{\theta_j,\theta_{j+1}\}$ for some $j\in[q]$. Define
\begin{align*}
\bar{V}_t &= \begin{bmatrix}
\bar{V}^{(1)}_t \\ \vdots \\ \bar{V}^{(K)}_t   
\end{bmatrix} 
= \begin{bmatrix}
\Vech\l( \E(G_{1,t}G_{1,t}^\trans)\r) \\
\vdots \\
\Vech\l( \E(G_{K,t}G_{K,t}^\trans) \r)
\end{bmatrix}, \quad
\Omega_j = \begin{bmatrix}
\Vech\big(\Omega_j^{(1)}\big) \\ \vdots \\ \Vech\big(\Omega_j^{(K)}\big) 
\end{bmatrix}  \text{ \ for each \ } j\in[q ],
\end{align*}
and 
\begin{align*}
\bar{\cV}_{s,\tau,e} = \begin{bmatrix}
\bar{\cV}^{(1)}_{s,\tau,e} \\ \vdots \\ \bar{\cV}^{(K)}_{s,\tau,e}
\end{bmatrix} = \sqrt{\frac{(\tau - s)(e - \tau)}{e - s}}\left( \frac{1}{e - \tau} \sum_{t = \tau + 1}^{e}  \bar{V}_t  - \frac{1}{\tau - s} \sum_{t = s + 1}^\tau  \bar{V}_t\right),
\end{align*}
with which we denote 
\[
\cT^{*}_{s,\tau,e} = \begin{bmatrix}
\cT^{*(1)}_{s,\tau,e} \\ \vdots \\ \cT^{*(K)}_{s,\tau,e}
\end{bmatrix} = \l\vert \bar{\cV}_{s,\tau,e} \r\vert_2.
\]
Then we have $\argmax_{s<\tau<e}\cT^{*}_{s,\tau,e}\in\{\theta_j,\theta_{j+1}\} \cap \{s+1,\ldots,e-1 \}$, and
\begin{align*}
\max_{s<\tau<e}\cT^{*}_{s,\tau,e}
=\max\Bigg\{&
\sqrt{\frac{(\theta_j-s)(e-\theta_j)}{e-s}}\,
\Big|
\Omega_j
+\frac{e-\theta_{j+1}}{e-\theta_j}\,\Omega_{j+1}\,
\mathbb{I}_{\{\theta_{j+1} < e)\}}
\Big|_2,\\
&
\sqrt{\frac{(\theta_{j-1}-s)(e-\theta_{j-1})}{e-s}}\,
\Big|
\Omega_{j+1}
+\frac{\theta_j-s}{\theta_{j+1}-s}\,\Omega_j
\Big|_2\,
\mathbb{I}_{\{\theta_{j+1} <e)\}}
\Bigg\}.
\end{align*}
\end{lemma}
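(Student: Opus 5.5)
The plan is to exploit that $\bar V_t$ is deterministic and piecewise constant, so everything reduces to an explicit computation of a CUSUM of a step function. By Definition~\ref{def: jump} and Condition~\ref{cond:factor}, $\bar V_t$ takes a constant value $\mu_0$ on $(s,\theta_j]$ and $\mu_1$ on $(\theta_j,\min(\theta_{j+1},e)]$. If $\theta_{j+1}<e$, it takes a third value $\mu_2$ on $(\theta_{j+1},e]$. Here $\mu_1-\mu_0=\Omega_j$ and $\mu_2-\mu_1=\Omega_{j+1}$ (with $\Omega_{j+1}$ replaced by $0$ when $\theta_{j+1}\ge e$). The CUSUM $\bar\cV_{s,\tau,e}$ is invariant to adding a constant vector to all $\bar V_t$, so I would set $\mu_0=0$ throughout. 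I would also note that for $j=q$ the second change point is absent, so that case is covered by putting $\Omega_{j+1}=0$.

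First, I treat the outer regions. For $s<\tau\le\theta_j$, a direct computation gives
\[
\bar\cV_{s,\tau,e}=\sqrt{\tfrac{\tau-s}{(e-s)(e-\tau)}}\,D,\qquad D=(e-\theta_j)\Omega_j+(e-\theta_{j+1})\Omega_{j+1}\mathbb{I}_{\{\theta_{j+1}<e\}}.
\]
Here $D$ does not depend on $\tau$, and the scalar prefactor is increasing in $\tau$. Hence on this region the norm is maximised at $\tau=\theta_j$. Symmetrically, for $\theta_{j+1}\le\tau<e$ the CUSUM equals a $\tau$-free vector times $\sqrt{(e-\tau)/((e-s)(\tau-s))}$, which is decreasing in $\tau$, so the maximum on that region is at $\tau=\theta_{j+1}$. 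If only one change point lies in $(s,e)$, these two cases already finish the argmax claim.

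Second, I treat the middle region $\theta_j\le\tau\le\theta_{j+1}$, which is the main step. Writing the two partial means explicitly, one gets $\bar\cV_{s,\tau,e}=\{(e-\tau)u+(\tau-s)w\}/\sqrt{(\tau-s)(e-\tau)(e-s)}$ for fixed vectors $u,w$ determined by $\theta_j-s$, $e-\theta_{j+1}$, $\Omega_j$ and $\Omega_{j+1}$. Set $x=(\tau-s)/(e-\tau)$, which is strictly increasing in $\tau$. Then
\[
(e-s)\,|\bar\cV_{s,\tau,e}|_2^2=\tfrac{e-s}{e-s}\Big(|u|_2^2\,x^{-1}+|w|_2^2\,x+2\langle u,w\rangle\Big)\cdot c
\]
for a positive constant $c$, i.e.\ $|\bar\cV_{s,\tau,e}|_2^2$ is an affine combination of $x^{-1}$ and $x$ with nonnegative coefficients. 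This is convex in $x>0$, so its maximum over the interval $x\in[x(\theta_j),x(\theta_{j+1})]$ is attained at an endpoint. Combining this with the first step gives $\argmax_{s<\tau<e}\cT^*_{s,\tau,e}\in\{\theta_j,\theta_{j+1}\}\cap\{s+1,\ldots,e-1\}$. I expect this step to be the only real obstacle, namely getting the algebra into the "$a/x+bx+c$" form cleanly. The fallback is the classical one-dimensional argument of Venkatraman, applied to $\langle\bar\cV,v\rangle$ for every unit $v$ and then taking the supremum over $v$; this works because a supremum of functions each maximised at an endpoint is maximised at an endpoint.

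Finally, I substitute $\tau=\theta_j$ and $\tau=\theta_{j+1}$ into the region formulas to get the two displayed values. At $\tau=\theta_j$, the CUSUM is $\sqrt{(\theta_j-s)(e-\theta_j)/(e-s)}\,\{\Omega_j+\frac{e-\theta_{j+1}}{e-\theta_j}\Omega_{j+1}\mathbb{I}_{\{\theta_{j+1}<e\}}\}$. At $\tau=\theta_{j+1}$ (present only when $\theta_{j+1}<e$), it is the analogous expression with prefactor $\sqrt{(\theta_{j+1}-s)(e-\theta_{j+1})/(e-s)}$ and vector $\Omega_{j+1}+\frac{\theta_j-s}{\theta_{j+1}-s}\Omega_j$. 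The maximum of the two norms yields the claimed identity. I would remark in the proof that the index $\theta_{j-1}$ appearing in the second prefactor of the statement should read $\theta_{j+1}$, since that is what the computation produces.
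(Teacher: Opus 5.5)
Your proof is correct, but it follows a different route from the paper.

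\textbf{The paper's route.} The paper computes nothing explicitly. It writes $\cT^*_{s,\tau,e}=\sup_{|g|_2=1}g^\trans\bar\cV_{s,\tau,e}$, fixes a maximiser $\tau^*$ and a unit vector $g^*$ attaining the norm there, and observes that $\tau^*$ then also maximises the scalar CUSUM of the piecewise-constant sequence $(g^*)^\trans\bar V_t$. It then invokes Lemma~8 of \cite{wang2018high}, a Venkatraman-type univariate result, to place $\tau^*$ at a change point. The closed-form maximum is simply asserted to follow by evaluation.

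\textbf{Your route.} You compute the CUSUM on the three regions directly. On the outer pieces you use monotonicity of the scalar prefactor; on the middle piece you use convexity in $x=(\tau-s)/(e-\tau)$. The clean identity there is $(e-s)|\bar\cV_{s,\tau,e}|_2^2=|u|_2^2x^{-1}+|w|_2^2x+2\langle u,w\rangle$, with $u=(\theta_j-s)\Omega_j$ and $w=(e-\theta_{j+1})\Omega_{j+1}$. The extra ratio $(e-s)/(e-s)$ and the constant $c$ in your display are therefore superfluous and should be removed.

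\textbf{What each route buys.} Your argument is self-contained and delivers both displayed values at once. That is exactly how you correctly spot that the second prefactor must involve $\theta_{j+1}$ rather than $\theta_{j-1}$: the hypothesis forces $\theta_{j-1}\le s$, so the printed factor is not even well defined. The paper's reduction is shorter and applies verbatim to any number of change points in $(s,e)$, at the cost of outsourcing the key step. Your fallback — take $\sup_v\langle\bar\cV_{s,\tau,e},v\rangle$ and interchange the two suprema — is essentially the paper's argument in a slightly tidier form. It only needs each projected CUSUM to attain its maximum at a change point, whereas the paper's version needs every maximiser of the projected CUSUM to be one.

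\textbf{One small addition.} Strictly, your main argument shows that the maximum is attained at $\theta_j$ or $\theta_{j+1}$. To conclude that every maximiser lies there, as the statement's phrasing suggests, add one observation. The condition $\Omega_j\neq0$ makes the middle-piece function strictly convex and the maximum strictly positive. Together with your outer-piece formulas, this rules out maximisers anywhere else.
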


\begin{proof}[Proof of Lemma~\ref{lemma: imp_ppl_T_true}]
Under~\eqref{eqn: tfm_change_rewrite}, the sequence $\{\bar{V}_t\}_{t=1}^T$ is piecewise constant with all the change points belonging to $\Theta$.
Using the dual characterization of the Euclidean norm, we have 
\begin{align*}
\cT^{*}_{s,\tau,e} = \vert \bar{\cV}_{s,\tau,e} \vert_2 =  \sup_{\vert \bg\vert_2=1} \bg^\trans \bar{\cV}_{s,\tau,e}.
\end{align*}
Let $\tau^*\in \argmax_{s<\tau<e} \cT^{*}_{s,\tau,e}$ and choose $\bg^*$ with $\vert \bg^* \vert_2=1$ such that
\begin{align*}
\cT^{*}_{s,\tau^*,e} =  (\bg^*)^\trans \bar{\cV}_{s,\tau^*,e}.
\end{align*}
Then $\tau^*$ also maximizes $(\bg^*)^\trans \bar{\cV}_{s,\tau,e}$ over $s<\tau<e$; otherwise there exists $\tau^{**}$ with $(\bg^*)^\trans \bar{\cV}_{s,\tau^{**},e}>(\bg^*)^\trans \bar{\cV}_{s,\tau^{*},e} = \cT^{*}_{s,\tau^*,e}$, which implies $\cT^{*}_{s,\tau^{**},e} \ge (\bg^*)^\trans \bar{\cV}_{s,\tau^{**},e} > \cT^{*}_{s,\tau^{*},e} $, which is a contradiction.
Note that for fixed $\bg^*$, the scalar sequence $\{(\bg^*)^\trans \bar{V}_{t}\}_{t=1}^T$ is piecewise constant with all its change points within $\Theta$. 
Therefore, by Lemma~8 of \cite{wang2018high}, we have
\begin{align*}
\tau^*  \in\{\theta_j,\theta_{j+1}\} \cap \{s+1,\ldots,e-1 \}.
\end{align*}
The second statement follows from evaluating $\cT^{*}_{s,\tau,e}$ at $\tau=\theta_j$ and $\tau = \theta_{j+1}$ (in case $\theta_{j+1} < e)$.
\end{proof}

\begin{lemma}\label{lemma: imp_Vhat-V-k}
Let all assumptions in Lemma~\ref{lemma: xx-Exx} hold.
Recalling that $\Theta$ denotes the set of true change points, 
with the definition of $V_t$ from \eqref{eqn: Vt_def},
we have
\begin{align*}
&\quad\; \big \vert \wh{\cV}_{\sll, \tau, \el} - {\cV}_{\sll, \tau, \el}\big \vert_2\\
&=\sqrt{\frac{(\tau - s_\ell)(\el - \tau)}{\el - \sll}} \left\vert \frac{1}{e_\ell - \tau} \sum_{t = \tau + 1}^{e_\ell} \l( \wh{V}_t - V_t \r) - \frac{1}{\tau - s_\ell} \sum_{t = s_\ell + 1}^\tau \l( \wh{V}_t - V_t \r) \right\vert_2
\\
&=\l(\sqrt{\frac{(\tau - s_\ell)(\el - \tau)}{\el - \sll}} + \cT^{*}_{\sll,\tau,\el}  \r) \cdot \cO_P(\alpha_{T,p}) +  \cO_P\big(\sqrt{\log(T)}\big),
\end{align*}
where $\alpha_{T,p}$ is defined in~\eqref{eq:alpha:rate} and $\cT^*_{\sll,\tau,\el}$ in Lemma~\ref{lemma: imp_ppl_T_true}; throughout, the $\cO_P$-bounds hold uniformly over $(\sll,\tau,\el)\in\cI$, with $\cI$ as defined in Lemma~\ref{lemma: xx-Exx}.
Further, suppose that Assumption~\ref{assum: dim} holds. Then the result simplifies to
\begin{align*}
\big \vert \wh{\cV}_{\sll, \tau, \el} - {\cV}_{\sll, \tau, \el}\big \vert_2 = \cO_P\big(\sqrt{\log(T)}\big).
\end{align*}
\end{lemma}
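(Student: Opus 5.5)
We will prove the bound mode by mode using the decomposition of Lemma~\ref{lemma: GG-EGG_generic_decomp}, and then stack the $K$ blocks. Since $|\wh{\cV}_{\sll,\tau,\el} - \cV_{\sll,\tau,\el}|_2^2 = \sum_{k=1}^K |\wh{\cV}^{(k)}_{\sll,\tau,\el} - \cV^{(k)}_{\sll,\tau,\el}|_2^2$ with $K$ fixed, it suffices to bound each block uniformly over $\cI$. Write $w_\ell(\tau) = \sqrt{(\tau-\sll)(\el-\tau)/(\el-\sll)}$. Lemma~\ref{lemma: GG-EGG_generic_decomp} then gives three pieces: a $\cJ^{(1)}$ piece, a $\cJ^{(2)}$ piece, and a bias piece involving $\Vech(\Gamma^{(k)}_{G,\tau,\el} - \Gamma^{(k)}_{G,\sll,\tau})$.

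\textbf{The $\cJ^{(1)}$ piece.} We use $w_\ell(\tau) \le \min(\sqrt{\tau-\sll}, \sqrt{\el-\tau})$, which gives
\[
w_\ell(\tau)\,\|\cJ^{(1)}_{k,\tau,\el} - \cJ^{(1)}_{k,\sll,\tau}\|_F \le \sqrt{\el-\tau}\,\|\cJ^{(1)}_{k,\tau,\el}\|_F + \sqrt{\tau-\sll}\,\|\cJ^{(1)}_{k,\sll,\tau}\|_F.
\]
The term $\cJ^{(1)}_{k,a,b}$ is the centred average of $p_k^{-1}Z_{k,t}Z_{k,t}^\trans$. By Lemma~\ref{lemma: imp_maximal_deviation_mat} (specifically~\eqref{eqn: imp_maximal_deviation_mat} and the event $\cS^{(1)}_{T,p}$), this piece is $\cO_P(\sqrt{\log T})$ uniformly over $\cI$. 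The one place to be careful is that the Frobenius norm dominates $\sqrt2$ times the $\Vech$ norm only up to constants; since constants are harmless, this is fine.

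\textbf{The $\cJ^{(2)}$ piece.} Here we get $w_\ell(\tau)\,(\|\cJ^{(2)}_{k,\tau,\el}\|_F + \|\cJ^{(2)}_{k,\sll,\tau}\|_F)$. By~\eqref{eqn: X(Lam-Lam)X} in Lemma~\ref{lemma: X(Lam-Lam)X}, this is $w_\ell(\tau)\cdot\cO_P(\alpha_{T,p})$ uniformly.

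\textbf{The bias piece.} This is $2\sqrt2\,\|\wh H_k\|\,p_k^{-1/2}\|\wh\Lambda_k - \Lambda_k\wh H_k\|_F$ multiplied by $w_\ell(\tau)\,|\Vech(\Gamma^{(k)}_{G,\tau,\el} - \Gamma^{(k)}_{G,\sll,\tau})|_2$. The latter factor is exactly $|\bar\cV^{(k)}_{\sll,\tau,\el}|_2 \le \cT^*_{\sll,\tau,\el}$. On the complement of $\cH_{T,p}$ from Lemma~\ref{lemma: loading_prod_rate}(ii), the prefactor is $\cO(\alpha_{T,p})$, and this bound does not depend on $(\sll,\tau,\el)$. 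Summing the three pieces over $k$ yields the first display.

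\textbf{Simplification under Assumption~\ref{assum: dim}.} We bound $w_\ell(\tau) \le \sqrt T$. We also have $\cT^*_{\sll,\tau,\el} \lesssim \sqrt T$, because $\bar V_t$ is uniformly bounded by~\eqref{eqn: omega_j} and Assumption~\ref{assum: trans_mat_alt}(i). It then suffices to show $\sqrt T\,\alpha_{T,p} = \cO(\sqrt{\log T})$. This is the step I expect to be the main obstacle, since it requires checking each term of $\alpha_{T,p,k}$:
\begin{itemize}
\item $\sqrt T\,(T\pmk)^{-1/2} = \pmk^{-1/2} = o(1)$;
\item $\sqrt T/p = \cO(\sqrt{\log T})$, directly by Assumption~\ref{assum: dim};
\item $\sqrt T\,\{(Tp_{\text{-}\ell})^{-1}(T^{-1} + p_k^{-1})\}^{1/2} \le p_{\text{-}\ell}^{-1/2}(T^{-1/2} + p_k^{-1/2}) = o(1)$.
\end{itemize}
Hence every term is $\cO(\sqrt{\log T})$, and the conclusion follows.
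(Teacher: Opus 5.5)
Your proposal is correct and follows the paper's proof essentially step for step. You use the decomposition of Lemma~\ref{lemma: GG-EGG_generic_decomp}, bound the $\cJ^{(1)}$ and $\cJ^{(2)}$ terms via \eqref{eqn: imp_maximal_deviation_mat} and \eqref{eqn: X(Lam-Lam)X}, handle the bias term with Lemma~\ref{lemma: loading_prod_rate}~(ii), and stack over $k$. Your explicit checks that $\cT^*_{\sll,\tau,\el} \lesssim \sqrt{T}$ and $\sqrt{T}\,\alpha_{T,p} = \cO(\sqrt{\log(T)})$ fill in details the paper leaves implicit in the final simplification. The $\sqrt{\log(T)}$ rate is what that step actually requires; the paper's written $\cO(\log(T))$ would not suffice.
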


\begin{proof}[Proof of Lemma~\ref{lemma: imp_Vhat-V-k}]
Recall from Lemma~\ref{lemma: GG-EGG_generic_decomp} that
\begin{align*}
&\quad\;\big\vert \wh{\cV}_{\sll, \tau, \el}^{(k)} - {\cV}_{\sll, \tau, \el}^{(k)}\big\vert_2 \\
&\le \sqrt{\frac{(\tau - \sll)(\el - \tau)}{\el-\sll}}\l( \Big\Vert\cJ_{k,\tau,\el}^{(1)}\Big\Vert_F + \Big\Vert \cJ_{k,\sll,\tau}^{(1)} \Big\Vert_F+ \Big\Vert\cJ_{k,\tau,\el}^{(2)}\Big\Vert_F +  \Big\Vert\cJ_{k,\sll,\tau}^{(2)}\Big\Vert_F\r) \\
&\quad \, +\,  2\sqrt{2}\Vert\wh{H}_k\Vert \l\Vert \frac{1}{\sqrt{p_k}}\l(\wh{\Lambda}_k-\Lambda_k\wh{H}_k \r) \r\Vert_F \cdot  \sqrt{\frac{(\tau - s)(e - \tau)}{e-s}}\cdot \l\vert  \Vech\l(\Gamma_{G,\tau,e}^{(k)} - \Gamma_{G,s, \tau}^{(k)} \r)\r\vert_2,
\\
&=: U_{k,1} + U_{k,2}.
\end{align*}
To control for $U_{k,1}$, first note that by \eqref{eqn: imp_maximal_deviation_mat}, 
\begin{align*}
&\quad\;\Big\Vert\cJ_{k,\tau,\el}^{(1)}\Big\Vert_F +  \Big\Vert\cJ_{k,\sll,\tau}^{(1)} \Big\Vert_F \\
&= \frac{1}{\sqrt{\el-\tau}}\Bigg\|\frac{1}{p\pmk}\frac{1}{\sqrt{\el-\tau}}\sum_{t=\tau+1}^{\el}\left[X_{k,t}\Lambdamk\Lambdamk^\trans X_{k,t}^\trans-\E\left(X_{k,t}\Lambdamk\Lambdamk^\trans X_{k,t}^\trans \right) \right] \Bigg\|_F\\
&\quad + \frac{1}{\sqrt{\tau-\sll}}\Bigg\|\frac{1}{p\pmk}\frac{1}{\sqrt{\tau-\sll}}\sum_{t=\sll+1}^\tau\left[X_{k,t}\Lambdamk\Lambdamk^\trans X_{k,t}^\trans-\E\left(X_{k,t}\Lambdamk\Lambdamk^\trans X_{k,t}^\trans \right) \right] \Bigg\|_F\\
&= \left(\frac{1}{\sqrt{\el-\tau}} +\frac{1}{\sqrt{\tau-\sll}}\right) \cdot\cO_P\big(\sqrt{\log(T)}\big),
\end{align*}
while by~\eqref{eqn: X(Lam-Lam)X}, 
\begin{align*}
\Big\Vert\cJ_{k,\tau,\el}^{(2)}\Big\Vert_F +  \Big\Vert\cJ_{k,\sll,\tau}^{(2)} \Big\Vert_F
&= \Bigg\|\frac{1}{p\pmk}\frac{1}{\el-\tau}\sum_{t=\tau+1}^{\el} X_{k,t}\left(\whLambdamk\whLambdamk^\trans-\Lambdamk\Lambdamk^\trans \right)X_{k,t}^\trans \Bigg\|_F\\
&\quad + \, \Bigg\|\frac{1}{p\pmk}\frac{1}{\tau-\sll}\sum_{t=\sll+1}^\tau X_{k,t}\left(\whLambdamk\whLambdamk^\trans-\Lambdamk\Lambdamk^\trans \right)X_{k,t}^\trans \Bigg\|_F\\
&=\cO_P(\alpha_{T,p,\text{-}k}),
\end{align*}
all uniformly over $(\sll,\tau,\el)\in\cI$.
This gives 
\begin{align*}
U_{k,1}= \cO_P\l(\sqrt{\frac{(\tau - s_\ell)(\el - \tau)}{\el - \sll}} \cdot \alpha_{T,p} +  \sqrt{\log(T)}\r). 
\end{align*} 
Also, recalling $\cT^{*(k)}_{\sll,\tau,\el}$ defined in Lemma~\ref{lemma: imp_ppl_T_true}, we can write
\begin{align*}
U_{k,2} &\le 2\sqrt{2} \Vert\wh{H}_k\Vert \l\Vert \frac{1}{\sqrt{p_k}}\l(\wh{\Lambda}_k-\Lambda_k\wh{H}_k \r) \r\Vert_F  \cdot  \sqrt{\frac{(\tau - \sll)(\el - \tau)}{\el-\sll}}\l\vert  \Vech\l(\Gamma_{G,\tau,e}^{(k)} - \Gamma_{G,s, \tau}^{(k)} \r)\r\vert_2
\\
&= \cT^{*(k)}_{\sll,\tau,\el} \cdot \cO_P(\alpha_{T,p,k}) 
\end{align*}
uniformly for $(\sll,\tau,\el)\in \cI$, by Lemma~\ref{lemma: loading_prod_rate}~(ii).
Combining the bounds on $U_{k,1}$ and $U_{k,2}$, and stacking $\wh{\cV}_{\sll, \tau, \el}^{(k)}$, for $ k \in [K]$,
\begin{align*}
&\quad\;\big \vert \wh{\cV}_{\sll, \tau, \el} - {\cV}_{\sll, \tau, \el}\big \vert_2 \\
&= \sqrt{\frac{(\tau - s_\ell)(\el - \tau)}{\el - \sll}} \cdot\left\{\sum_{k=1}^K \left\vert \frac{1}{e_\ell - \tau} \sum_{t = \tau + 1}^{e_\ell} \l( \wh{V}^{(k)}_t - V^{(k)}_t \r) - \frac{1}{\tau - s_\ell} \sum_{t = s_\ell + 1}^\tau \l( \wh{V}^{(k)}_t - V^{(k)}_t \r)\right\vert_2^2\right\}^{1/2}\\
&= \l(\sqrt{\frac{(\tau - s_\ell)(\el - \tau)}{\el - \sll}} + \cT^{*}_{\sll,\tau,\el}  \r) \cdot \cO_P(\alpha_{T,p}) +  \cO_P\big(\sqrt{\log(T)}\big) .
\end{align*}
Further, under Assumption~\ref{assum: dim}, it holds that $\sqrt{T}\alpha_{T,p} = \cO(\log(T))$, from which the simplified statement follows.
\end{proof}

\begin{lemma}\label{lemma: imp_h_pop_h_agg}
Let the assumptions of Lemma~\ref{lemma: imp_Vhat-V-k} hold. Uniformly over $(\sll,\tau,\el)\in\cI$, with $\cI$ defined in Lemma~\ref{lemma: xx-Exx} and $V_t$ defined in \eqref{eqn: Vt_def}, we have
\begin{align*}
\vert \cV_{\sll,\tau,\el} \vert_2=
\sqrt{\frac{(\tau-\sll)(\el-\tau)}{\el-\sll}}\left\vert \frac{1}{\el - \tau} \sum_{t=\tau + 1}^{\el} V_t  -\frac{1}{\tau - \sll} \sum_{t=\sll + 1}^{\tau} V_t \right\vert_2  = \cT^{*}_{\sll,\tau,\el}  \cdot (1+\cO_P(\alpha_{T,p})),
\end{align*}
where the $\cO_P$-bound depends on a quantity that does not vary with $(\sll,\tau,\el)$.

\end{lemma}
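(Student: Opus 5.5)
The plan is to compare $\cV_{\sll,\tau,\el}$ with $\bar\cV_{\sll,\tau,\el}$ block by block. By~\eqref{eqn: Vt_def} and~\eqref{eq:vk:cusum}, the $k$-th block of $\cV_{\sll,\tau,\el}$ is $\Vech\{\wh H_k^\trans D_k \wh H_k\}$, where
\[
D_k := \sqrt{(\tau-\sll)(\el-\tau)/(\el-\sll)}\,\bigl(\Gamma^{(k)}_{G,\tau,\el}-\Gamma^{(k)}_{G,\sll,\tau}\bigr)
\]
is a deterministic symmetric matrix. The $k$-th block of $\bar\cV_{\sll,\tau,\el}$ is $\Vech(D_k)$. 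The whole dependence on $(\sll,\tau,\el)$ therefore sits in $D_k$, and the only random ingredient is $\wh H_k$, which does not vary with the triple. This is why the $\cO_P$ term can be taken uniform over $\cI$.

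First, write
\[
\wh H_k^\trans D_k \wh H_k - D_k = (\wh H_k - I)^\trans D_k \wh H_k + D_k(\wh H_k - I).
\]
This requires $\wh H_k$ to be close to the identity, but Lemma~\ref{lemma: consistency_proj} only gives $\wh H_k\wh H_k^\trans = I_{r_k}+\cO_P(\alpha_{T,p,k})$, so $\wh H_k$ is close to an orthogonal matrix rather than to $I$. I therefore use the polar decomposition $\wh H_k = O_k P_k$, where $O_k$ is orthogonal and $P_k=(\wh H_k^\trans\wh H_k)^{1/2}$. Because $\wh H_k^\trans\wh H_k$ and $\wh H_k\wh H_k^\trans$ share eigenvalues, $\|P_k - I\| = \cO_P(\alpha_{T,p,k})$.

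Second, note that $\|O_k^\trans D_k O_k\|_F=\|D_k\|_F$. Writing $\wh H_k^\trans D_k\wh H_k = P_k (O_k^\trans D_k O_k) P_k$, the decomposition above gives
\[
\bigl|\|\wh H_k^\trans D_k \wh H_k\|_F - \|D_k\|_F\bigr| \le (2\|P_k-I\|+\|P_k-I\|^2)\|D_k\|_F = \|D_k\|_F\cdot\cO_P(\alpha_{T,p,k}).
\]
Alternatively, Lemma~\ref{lemma: loading_prod_rate}~(ii) bounds $|H_1-1|$ and $|H_2-1|$ by $C_H\alpha_{T,p}$ on $\cH_{T,p}^c$, which gives $\|\wh H_k^\trans D_k\wh H_k\|_F\le (1+C\alpha_{T,p})\|D_k\|_F$ directly, with the matching lower bound coming from the smallest singular value of $\wh H_k$ being $1-\cO_P(\alpha_{T,p})$.

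Third, I pass from Frobenius norms to $\Vech$ norms. For symmetric $M$ we have $|\Vech(M)|_2\le\|M\|_F\le\sqrt2|\Vech(M)|_2$. The constant $\sqrt 2$ is not compatible with a multiplicative $1+o_P(1)$ comparison, so this is where care is needed, and it is the step I expect to be the main obstacle. My fix is to work throughout with the weighted norm $\|M\|_F^2 = \sum_{i}M_{ii}^2+2\sum_{i>j}M_{ij}^2$ applied to the perturbation only. Concretely,
\[
|\Vech(\wh H_k^\trans D_k\wh H_k)-\Vech(D_k)|_2 \le \|\wh H_k^\trans D_k\wh H_k - D_k\|_F \lesssim \|D_k\|_F\,\cO_P(\alpha_{T,p,k}) \le \sqrt2\,|\Vech(D_k)|_2\,\cO_P(\alpha_{T,p,k}).
\]
This perturbation bound only holds after replacing $\wh H_k$ by its orthogonal part $O_k$. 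Since $O_k$ cancels in the norm of the scalar statistic only at the Frobenius level, I would either absorb $O_k$ into the definition as the orthogonal indeterminacy, or accept the constant loss, noting that it only affects how the result is used downstream, where $\cT^*$ enters up to constants.

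Finally, stacking over $k\in[K]$ and using the triangle inequality gives $|\cV_{\sll,\tau,\el}|_2 = \cT^*_{\sll,\tau,\el}(1+\cO_P(\alpha_{T,p}))$. The $\cO_P$ factor is $\max_k\|P_k - I\|$ (or $\max(|H_1-1|,|H_2-1|)$), which is free of $(\sll,\tau,\el)$, so the bound holds uniformly over $\cI$.
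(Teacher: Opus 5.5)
Your Frobenius-level comparison is correct: with $\wh H_k=O_kP_k$ and $\|P_k-I\|=\cO_P(\alpha_{T,p,k})$ you get $\|\wh H_k^\trans D_k\wh H_k\|_F=\|D_k\|_F\,(1+\cO_P(\alpha_{T,p,k}))$, with a factor that does not depend on $(\sll,\tau,\el)$. The gap is the step you flagged yourself, and the proposal leaves it open. The middle inequality of your third display assumes $\wh H_k\approx I$, which you had just rejected. What the polar argument actually gives is $|\Vech(\wh H_k^\trans D_k\wh H_k)|_2=|\Vech(O_k^\trans D_kO_k)|_2\,(1+\cO_P(\alpha_{T,p,k}))$. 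The discrepancy between $|\Vech(O_k^\trans D_kO_k)|_2$ and $|\Vech(D_k)|_2$ lies in the leading term, so reweighting only the perturbation cannot remove it, and your final ``stacking'' sentence does not follow.

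No argument can close this, because with leading constant $1$ the statement is false in general. Consider the reparametrisation $(\Lambda_k,A_{j,k})\mapsto(\Lambda_kR,R^\trans A_{j,k})$ with $R$ orthogonal. It preserves the data and all assumptions. It also leaves $|\cV_{\sll,\tau,\el}|_2$ unchanged: for mode $k$ one has $\wh H_k\mapsto R^\trans\wh H_k$ and $\Gamma^{(k)}_{G,a,b}\mapsto R^\trans\Gamma^{(k)}_{G,a,b}R$, and the other blocks are unaffected. It does change $\cT^*$. Take a single change in mode $k$ with $A_{1,k}=\diag(1,\sqrt2)$ and $A_{2,k}=\diag(\sqrt2,1)$, all other modes fixed. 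Then $\Omega_1^{(k)}\propto\diag(1,-1)$ and $\Omega_1^{(\ell)}=0$ for $\ell\neq k$. For $R$ the rotation by $\pi/4$, $R^\trans\Omega_1^{(k)}R$ is purely off-diagonal, so on intervals containing only $\theta_1$ the value of $\cT^*$ changes by the factor $\sqrt2$. Hence $|\cV|_2/\cT^*$ cannot tend to $1$ in both parametrisations. Put differently, $\wh H_k$ approaches the sign-adjusted eigenvector matrix of $\Sigma_{A,k}$, not $I$.

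The paper's proof does not resolve this either. It chains $|\Vech(\wh H_k^\trans D_k\wh H_k)|_2\le\|\wh H_k\|^2\|D_k\|_F\le\sqrt2\,\|\wh H_k\|^2|\Vech(D_k)|_2$ and then equates the right-hand side with $\cT^{*(k)}(1+\cO_P(\alpha_{T,p,k}))$, dropping both the $\sqrt2$ and any lower bound. Your two-sided bounds already give the true version, $2^{-1/2}(1-\cO_P(\alpha_{T,p}))\,\cT^*\le|\cV|_2\le 2^{1/2}(1+\cO_P(\alpha_{T,p}))\,\cT^*$, which is more than the paper establishes.

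Of your two fallbacks, ``accepting the constant loss'' is not enough downstream. Step~2 in the proof of Theorem~\ref{thm: asymp_consistency_detection} needs $|\cV_{\alc,\theta_j,\blc}|_2-|\cV_{\alc,\thh,\blc}|_2$ to exceed a $(1-\sqrt{3/4})$-fraction of the leading term, and separate $\sqrt2$-sandwiches on the two terms destroy that margin.

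Absorbing $O_k$ is the right repair, and your argument proves it with constant $1$. Define $\wt\cT^*_{\sll,\tau,\el}$ like $\cT^*_{\sll,\tau,\el}$ but with each $\Gamma^{(k)}_{G,a,b}$ replaced by $O_k^\trans\Gamma^{(k)}_{G,a,b}O_k$. Then:
\begin{itemize}
\item $|\cV_{\sll,\tau,\el}|_2=\wt\cT^*_{\sll,\tau,\el}(1+\cO_P(\alpha_{T,p}))$ holds uniformly over $\cI$.
\item Since $O_k$ does not depend on $t$, $\wt\cT^*$ is the CUSUM of a sequence that is piecewise constant with change points in $\Theta$, so Lemma~\ref{lemma: imp_ppl_T_true} applies to it verbatim.
\item Its jump vectors have norms between $\omega_j/\sqrt2$ and $\omega_j$, so every downstream bound stated in terms of $\omega_j$ survives up to constants.
\end{itemize}

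Your weighted-norm idea also works, but only if applied to the whole statistic rather than just the perturbation. Taking $W^{-1}$ diagonal with entries $2$ at the off-diagonal coordinates of each $\Vech$ block and $1$ elsewhere is permitted by the theorem's condition on $W$. This turns every $\Vech$-norm into a Frobenius norm, which is invariant under $O_k$. Your proof then gives the lemma exactly as stated, with $\cT^*$ defined in the same weighting.
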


\begin{proof}[Proof of Lemma~\ref{lemma: imp_h_pop_h_agg}]
Recalling $\cV^{(k)}_{\sll,\tau,\el}$ defined in~\eqref{eq:vk:cusum}, we have 
\begin{align}
\vert \cV^{(k)}_{\sll,\tau,\el} \vert_2 &= \sqrt{\frac{(\tau-\sll)(\el-\tau)}{\el-\sll}}\bigg|\Vech\left\{\wh H_k^\trans
\Bigl(\Gamma^{(k)}_{G,\tau,\el}-\Gamma^{(k)}_{G,\sll,\tau}\Bigr)
\wh H_k\right\}\bigg|_2 \nonumber
\\
&\le\sqrt{\frac{(\tau-\sll)(\el-\tau)}{\el-\sll}}\Big\|\widehat H_k^\trans
\bigl(\Gamma^{(k)}_{G,\tau,\el}-\Gamma^{(k)}_{G,\sll,\tau}\bigr)
\widehat H_k\Big\|_F \nonumber\\
&\le \sqrt{\frac{(\tau-\sll)(\el-\tau)}{\el-\sll}}\big\|\widehat H_k\big\|^2 \Big\|
\Gamma^{(k)}_{G,\tau,\el}-\Gamma^{(k)}_{G,\sll,\tau} \Big\|_F \nonumber\\
&\le  \sqrt{\frac{(\tau-\sll)(\el-\tau)}{\el-\sll}}\sqrt{2}\Big| 
\Vech\big(\Gamma^{(k)}_{G,\tau,\el}-\Gamma^{(k)}_{G,\sll,\tau}\big) \Big|_2 \cdot \big\|\widehat H_k\big\|^2 \label{eqn: imp_h_pop_h_agg}\\
&= \cT^{*(k)}_{\sll,\tau,\el}  \cdot (1+\cO_P(\alpha_{T,p,k})) , \nonumber
\end{align}
where $\cO_P(\alpha_{T,p,k})$ is due to the bounding of $\Vert\wh{H}_k\Vert$ by Lemma~\ref{lemma: loading_prod_rate}~(ii), and thus does not depend on $(s_\ell, \tau, e_\ell)$.
This carries over with $\cV^{(k)}_{\sll,\tau,\el}$, for $ k \in [K]$, stacked over, so that uniformly over $(\sll, \tau, \el)\in\cI$, 
\begin{align*}
\vert \cV_{\sll,\tau,\el} \vert_2 = \cT^{*}_{\sll,\tau,\el}  \cdot (1+\cO_P(\alpha_{T,p})). 
\end{align*}
\end{proof}

\subsubsection{Proof of Theorem~\ref{thm: asymp_consistency_detection}}

By assumption, $W_{\tau} = W$ in~\eqref{eqn: test_stat_agg_std} is a positive definite matrix with bounded eigenvalues.
WLOG, we regard $W = I$ so that the detector statistic becomes $\cT_{s,\tau,e} =  \big\vert \wh{\cV}_{s,\tau, e} \big\vert_2$.
Throughout, we regard that $\upsilon_T = \kappa_T^2$ for the sequence $\kappa_T \to \infty$ from Lemma~\ref{lemma: max_ZZ_2+epsilon}, and that $\kappa_T$ satisfies
\begin{align}
\kappa_T^2 = o\left\{ \min\left( \min_{j \in [q]} \omega_j^2 \Delta_j /\log(T), \, \log(T) \right) \right\},
\label{eq:kappa:req}
\end{align}
which is permitted as it is only required to diverge arbitrarily slowly.

For all $j\in[q]$, recall that $\Delta_j =\min(\theta_j - \theta_{j-1}, \theta_{j+1} - \theta_j)$. Supposing that $\Delta_j >12$, we define the following sets
\begin{align*}
& \underline{\cI}_j =\left\{\theta_j - \floor[\Big]{\frac{\Delta_j}{3}}, \theta_j - \floor[\Big]{\frac{\Delta_j}{3}} + 1, \dots, \theta_j - \ceil[\Big]{\frac{\Delta_j}{12}} \right\}, \\
& \overline{\cI}_j =\left\{\theta_j + \ceil[\Big]{\frac{\Delta_j}{12}}, \theta_j + \ceil[\Big]{\frac{\Delta_j}{12}} + 1, \dots, \theta_j + \floor[\Big]{\frac{\Delta_j}{3}} \right\}.
\end{align*}
Then under Assumption~\ref{assum: trans_mat_alt}~(iv), we always have the following event hold:
\begin{equation}
\label{eqn: M_{T,M}}
\cM_{T,M} = \Big\{ \text{For each $j\in[q]$, there is some $(a,b]\in\M$ such that $(a,b)\in \underline{\cI}_j \times \overline{\cI}_j$} \Big\} ,
\end{equation}
due to the reasoning sketched as follows. For simplicity, we treat floor and ceiling functions as identity transforms. By construction of $\M$ in~\eqref{def: seeded_interval}, there exists $\ell \in [\mu_T]$ 
such that $2m_\ell \in [\Delta_j/6, 2\Delta_j/3]$ with $m_\ell=T/ 2^\ell$, or equivalently, $\Delta_j \in [3m_\ell, 12m_\ell]$, noticing that $m_\ell =2 m_{\ell+1}$.
At such level $\ell$, we identify $(a,b]\in\M$ with $a\in \underline{\cI}_j$ and $b\in \overline{\cI}_j$ while $b = a + 2m_\ell$, so that
\begin{align*}
    \theta_j - \frac{\Delta_j}{3} \leq a &\leq \theta_j - \frac{\Delta_j}{12} \text{\ and\ }
    \theta_j + \frac{\Delta_j}{12} \leq b \leq \theta_j + \frac{\Delta_j}{3}.
\end{align*}
Re-arranging the above, we identify $a$ satisfying
\[
a\in \left[ \theta_j - \frac{\Delta_j}{3}, \theta_j - \frac{\Delta_j}{12} \right] \cap \left[ \theta_j + \frac{\Delta_j}{12} - 2m_\ell, \theta_j + \frac{\Delta_j}{3} - 2m_\ell \right] ,
\]
where the intersection is non-empty if
\[
\theta_j - \frac{\Delta_j}{3} \leq \theta_j + \frac{\Delta_j}{3} - 2m_\ell \text{\ and\ }
\theta_j + \frac{\Delta_j}{12} - 2m_\ell \leq \theta_j - \frac{\Delta_j}{12} ,
\]
which is fulfilled by $2m_\ell \in [\Delta_j/6, 2\Delta_j/3]$. It remains to match this $a$ to the left endpoint of intervals in $\M$. To this end, roughly speaking (up to integer rounding), we can set $i=a/m_\ell +1$ so that $( \floor[]{(i-1) m_\ell}, \ceil[]{(i+1) m_\ell} ] = (a,a+2m_\ell]$ which is in $\M$ since $i\in [T/m_\ell -1]$ due to $a+2m_\ell \in[T]$. This completes the sketch of proof that~\eqref{eqn: M_{T,M}} holds.

Throughout the proof, we regard all $\cO_P$-bounds as holding deterministically on the event $\big(\cH_{T,p} \cup \cS^{(1)}_{T,p} \cup \cS^{(2)}_{T,p} \cup \wt{\cS}^{(1)}_{T,p} \cup \wt{\cS}^{(2)}_{T,p}\big)^c$ defined in Lemmas~\ref{lemma: loading_prod_rate}~(ii), \ref{lemma: imp_maximal_deviation_mat}, \ref{lemma: X(Lam-Lam)X} and~\ref{lemma: XX-EXX_zz}. 
Let us consider some $(s,e)\subset[0,T]$ which satisfy the following conditions:
\begin{enumerate}[label=(S\arabic*)]
    \item\label{S1}
    The set 
    $\mathbb{C}_{s,e} \ne \emptyset$, where
    \begin{align*}
        \mathbb{C}_{s,e}=\left\{1 \le j \le q : \,  \exists\,(a,b]\in\mathbb{M} \text{ \ such that \ } s\le a<b\le e \text{ and } (a,b)\in\underline{I}_j\times\overline{I}_j \right\}.
    \end{align*}
    \item\label{S2}
    There exists some $j\in\{0,\ldots,q\}$ and $j'\in\{1,\ldots,q+1\}$, such that
    \begin{align*}
        \max\left\{ \omega_j^2|s-\theta_j|, \,  \omega_{j'}^2|e-\theta_{j'}| \right\} = \cO_P(\kappa_T^2).
    \end{align*}
\end{enumerate}
Then we show that for $(s,e)$ meeting \ref{S1} and \ref{S2}:
\begin{enumerate}[label=(R\arabic*)]
    \item \label{R1} There exists at least one $\ell\in \L_{s,e}$ for which $\cT_\ell > \pi_{T, p}$, where $$ \L_{s,e} = \left\{\ell\in {[\vert \M \vert]}:\ (a_\ell,b_\ell] \subseteq (s,e] \right\}. $$ 
    \item The estimation of $\widehat{\theta}=\tau_{\ell^{\circ}}$ with $\ell^{\circ}$ defined in Algorithm~\ref{alg: mad}, satisfies $\omega_j^2|\widehat{\theta}-\theta_j| = \cO_P(\kappa_T^2)$ for some $j \in \mathbb{C}_{s,e}$. \label{R2}
\end{enumerate}

At the beginning of Algorithm~\ref{alg: mad} initialized with $\widehat{\Theta} = \emptyset$, we have $(s,e)=(0,T)$ meet \ref{S1}--\ref{S2} such that by \ref{R1}--\ref{R2}, we add $\thh$ to $\widehat{\Theta}$ which, for some $j\in\mathbb{C}_{0,T} = \{1, \ldots, q\}$, consistently estimates the location~$\theta_j$. Then, we no longer have the index~$j$ in $\C_{s,e}$ for the subsequently considered $(s,e)$, since either $\thj\notin\{s+1,\ldots,e-1\}$ or, even so, it has been detected by either $s$ or $e$ such that $\min(\thj-s,e-\thj) = \cO_P(\omega_j^{-2} \kappa_T^2) = o_P(\Delta_j)$ under Assumption~\ref{assum: signal-to-noise} and the condition on $\kappa_T$ in~\eqref{eq:kappa:req}. Thus, once a change point is detected, its proximity to the boundary in subsequent interval splits prevents it from being detected twice, avoiding any duplicate estimator. 

Once $|\widehat{\Theta}|=q$, for any $(s,e)$ defined by two neighboring points in $\{0,T\}\,\cup\, \widehat{\Theta}$, we have all $\ell\in\L_{s,e}$ satisfy $|\Theta\cap\{a_{\ell}+1,\ldots,b_{\ell}-1\}|\le 2$. For any $\ell\in\mathbb{L}_{s,e}$ satisfying $\theta_j\in\{a_{\ell}+1,\ldots,b_{\ell}-1 \}$, we have either $\min_{\tau_{\ell}\in\{a_{\ell},b_{\ell}\}}\omega_j^2|\tau_{\ell}-\theta_j| = \cO_P(\kappa_T^2)$ and $\theta_{j + 1} \notin (a_\ell, b_\ell)$, or $\max\{ \omega_j^2(\theta_j - a_\ell), \omega_{j +  1}^2(b_\ell - \theta_{j + 1}) \} = \cO_P(\kappa_T^2)$ such that by Lemmas~\ref{lemma: imp_ppl_T_true}--\ref{lemma: imp_h_pop_h_agg}, we have
\begin{align}
\cT_{\ell} &= \cT_{a_{\ell},\tau,b_{\ell}}
=\big\vert \wh{\cV}_{\al, \tau, \bl} \big\vert_2 \nonumber\\
&\le \big\vert \wh{\cV}_{\al,\tau, \bl} - {\cV}_{\al,\tau, \bl}\big\vert_2 + \big\vert{\cV}_{\al,\tau, \bl} \big\vert_2\nonumber\\
&\le \ \cO_P\big(\sqrt{\log(T)}\big) + \cT^*_{\al,\tau,\bl} \cdot(1+\cO_P\left(\alpha_{T,p} \right))  \tag{Lemmas~\ref{lemma: imp_Vhat-V-k} and~\ref{lemma: imp_h_pop_h_agg}}\nonumber\\
&\le 
\left\{ \mathbb{I}_{\{ \theta_{j + 1} \notin (a_\ell, b_\ell] \}}\cdot \cT^*_{\al,\thj,\bl} + 2 \, {\mathbb{I}_{\{ \theta_{j + 1} \in (a_\ell, b_\ell] \}}\cdot \max_{j' \in \{j, j + 1\}}} \cT^*_{\al,\theta_{j'},\blc} \right\} (1+\cO_P(\alpha_{T,p} )) \tag{Lemma~\ref{lemma: imp_ppl_T_true}} \nonumber \\
&\quad+  \cO_P\big(\sqrt{\log(T)}\big) \nonumber\\
&= \cO_P\big(\kappa_T ( 1+ \alpha_{T,p} ) \big)  +  \cO_P\big(\sqrt{\log(T)}\big) = o_P(\pi_{T, p}), \nonumber
\end{align}
where the last equality follows from~\eqref{eq:kappa:req} and the conditions on $\pi_{T, p}$ in Theorem~\ref{thm: asymp_consistency_detection}.
This indicates that we do not have any $T_{\ell}, \, \ell\in\mathbb{L}_{s,e}$, exceed $\pi_{T, p}$, and thus the algorithm terminates.

\paragraph{Proof of~\ref{R1}.} 
The set $\mathbb{C}_{s,e}$ is not empty by~\ref{S1}, then for every $j\in\mathbb{C}_{s,e}$, there exists $\ell=\ell(j)\in\L_{s,e}$, such that
\begin{align}
\Delta_j/12\le \min(\theta_j-a_{\ell}, b_{\ell}-\theta_j)\le \max(\theta_j-a_{\ell}, b_{\ell}-\theta_j)\le \Delta_j/3
\label{eq:balanced}
\end{align}
{on $\cM_{T,M}$ defined in~\eqref{eqn: M_{T,M}}.} 
By the construction of {$(a_\ell, b_\ell)$}, the change point $\thj$ lies in the interior of the interval in the sense that $\min(\thj - a_\ell, b_\ell - \thj) \ge \Delta_j/12 > \varpi_T$ for large enough $T$, under Assumption~\ref{assum: trans_mat_alt}~(iv).
Then from the definition of $\cT_\ell$, 
\begin{align}
\cT_\ell&\ge \cT_{a_{\ell},\theta_j,b_{\ell}}\nonumber\\
&\ge \vert \cV_{\al,\thj,\bl} \vert_2  -  \vert \wh{\cV}_{\al,\thj,\bl}- {\cV}_{\al,\thj,\bl} \vert_2  \nonumber\\
&= \cT^*_{\al,\thj,\bl}\cdot(1+\cO_P( \alpha_{T,p})) +  \cO_P(\sqrt{\log(T)}) \tag{Lemmas~\ref{lemma: imp_Vhat-V-k} and~\ref{lemma: imp_h_pop_h_agg} }\nonumber\\
&\ge \frac{1}{2\sqrt{6}}\sqrt{{\Delta_j}}\omega_j (1+\cO_P(\alpha_{T,p})) + \cO_P(\sqrt{\log(T)} ) \nonumber\\
&= {\frac{\sqrt{{\Delta_j}}\omega_j}{2\sqrt{6}}}\cdot\l\{ 1  +  \cO_P\l(\alpha_{T,p} + \frac{\sqrt{\log(T)}}{\sqrt{{\Delta_j}}\omega_j} \r)\r\} \nonumber\\
&= {\frac{\sqrt{{\Delta_j}}\omega_j}{2\sqrt{6}}}\cdot (1 + o_P(1)).\tag{Assumption~\ref{assum: signal-to-noise}}  \nonumber
\end{align}
Then, from the condition on $\pi_{T, p}$ in Theorem~\ref{thm: asymp_consistency_detection}, we conclude that $\cT_{\ell} > \pi_{T, p}$.

\paragraph{Proof of~\ref{R2}.}
It remains to show that $\vert \wh{\theta}-\theta_j\vert = \cO_P(\omega_j^{-2}\kappa_T^2)$ for some $j \in \mathbb{C}_{s,e}$.
From the arguments in the proof of \ref{R1}, \eqref{eq:balanced} in particular, we have $b_{\ell^{\circ}}-\alc\le\min_{j \in \C_{s,e}} 2\Delta_j/3$, which also indicates that $\vert (\alc, \blc) \cap \Theta\vert=1$.
Let us write $(\alc, \blc] \cap \Theta = \{\theta_j\}$, and $\delc = \min(\thj - \alc, \blc - \thj)$.
WLOG, we suppose that $\wh{\theta} \le \theta_j$; the case $\wh{\theta} > \theta_j$ is handled analogously.

First, notice that from Lemmas~\ref{lemma: imp_ppl_T_true}--\ref{lemma: imp_h_pop_h_agg},
\begin{align*}
\pi_{T, p} &< \cT_{\ell^\circ} \le \vert \cV_{\alc, \wh\theta, \blc} \vert_2 + \vert \wh{\cV}_{\alc, \wh\theta, \blc} - \cV_{\alc, \wh\theta, \blc} \vert_2
\le \vert \cV_{\alc, \theta_j, \blc} \vert_2 + \cO_P(\sqrt{\log(T)}).
\end{align*}
This, from the condition on $\pi_{T, p}$, implies that 
\begin{align}
\label{eq:delc:lb}
\pi_{T, p}(1 + o_P(1)) < \sqrt{\frac{(\theta_j - \alc)(\blc - \theta_j)}{\blc - \alc}} \omega_j (1 + \cO_P(\alpha_{T, p})) \le \sqrt{\delc} \omega_j (1 + \cO_P(\alpha_{T, p})),
\end{align}
where $o_P$- and $\cO_P$-bounds do not depend on $\alc, \blc$ or $j$.
\medskip

\underline{Step~1.} We first establish that $\thj$ remains after trimming of the boundaries by $\varpi_T$ in the maximization defining $\cT_{\lc}$.
If this is not the case, we can find another interval, say $(a_\ell, b_\ell] \in \M$, such that $\blc - \alc = b_\ell - a_\ell$ and $(a_\ell, b_\ell] \cap \Theta = \{\theta_j\}$.
Further, noting that $\min(\theta_j - \alc, \blc - \theta_j) \le \varpi_T$, we have $\min(\theta_j - a_\ell, b_\ell - \theta_j) \ge (b_\ell - a_\ell)/2 - \varpi_T > \varpi_T$, from the construction of $\M$ and the condition on $\varpi_T$.
From this, we derive that
\begin{align}
& \vert \cV_{a_\ell, \theta_j, b_\ell} \vert_2 - \vert \cV_{\alc, \theta_j, \blc} \vert_2 = \l( \sqrt{\frac{(\theta_j - a_\ell)(b_\ell - \theta_j)}{b_\ell - a_\ell}} - \sqrt{\frac{(\theta_j - \alc)(\blc - \theta_j)}{\blc - \alc}} \r) \omega_j (1 + \cO_P(\alpha_{T, p}))
\nonumber \\
\ge & \, \l( \sqrt{\frac{((\blc - \alc)/2 - \varpi_T)((\blc - \alc)/2 + \varpi_T)}{\blc - \alc}} - \sqrt{\frac{\varpi_T(\blc - \alc - \varpi_T)}{\blc - \alc}} \r) \omega_j (1 + \cO_P(\alpha_{T, p}))
\nonumber \\
\ge & \, \l( \sqrt{\frac{(\blc - \alc)/2 - \varpi_T}{2}} - \sqrt{\varpi_T} \r) \omega_j (1 + \cO_P(\alpha_{T, p}))
\gtrsim_P \sqrt{\delc} \omega_j,
\label{eq:trim:diff}
\end{align}
thanks to Lemma~\ref{lemma: imp_h_pop_h_agg} for small enough $c_\varpi$, 
where the $\cO_P$-bound depends on a quantity that does not vary with $a_\ell, b_\ell, \alc, \blc$ or $j$.
Then, recalling that $\tau_\ell = \argmax_{a_\ell + \varpi_T < \tau < b_\ell - \varpi_T} \vert \wh{\cV}_{a_\ell, \tau, b_\ell} \vert_2$, 
it follows that 
\begin{align*}
& \vert \wh{\cV}_{a_\ell, \tau_\ell, b_\ell} \vert_2 - \vert \wh{\cV}_{\alc, \wh\theta, \blc} \vert_2 \ge \vert \wh{\cV}_{a_\ell, \theta_j, b_\ell} \vert_2 - \vert \wh{\cV}_{\alc, \wh\theta, \blc} \vert_2
\\
\ge &\, \vert \cV_{a_\ell, \theta_j, b_\ell} \vert_2 - \vert \wh{\cV}_{a_\ell, \theta_j, b_\ell} - \cV_{a_\ell, \theta_j, b_\ell} \vert_2
- \l( \vert \cV_{\alc, \wh\theta, \blc} \vert_2 + \vert \wh{\cV}_{\alc, \wh\theta, \blc} - \cV_{\alc, \wh\theta, \blc} \vert_2 \r)
\\
\ge &\, \vert \cV_{a_\ell, \theta_j, b_\ell} \vert_2 - \vert \wh{\cV}_{a_\ell, \theta_j, b_\ell} - \cV_{a_\ell, \theta_j, b_\ell} \vert_2
- \l( \vert \cV_{\alc, \theta_j, \blc} \vert_2 + \vert \wh{\cV}_{\alc, \wh\theta, \blc} - \cV_{\alc, \wh\theta, \blc} \vert_2 \r)
\\
\gtrsim_{P} & \, \sqrt{\delc} \omega_j + \cO_P(\sqrt{\log(T)}) ,
\end{align*}
from Lemma~\ref{lemma: imp_Vhat-V-k} and~\eqref{eq:trim:diff}, indicating that $\cT_{\ell^\circ} < \cT_\ell$ due to~\eqref{eq:delc:lb}.
This contradicts the definition of $\ell^\circ$ in Algorithm~\ref{alg: mad}, i.e.\ we have $\min(\theta_j - \alc, \blc - \theta_j) > \varpi_T$.
Also, together with~\eqref{eq:kappa:req}, we have neither $\alc$ or $\blc$ estimate $\theta_j$ in the sense that the following does not hold: $\min_{\tau \in \{\alc, \blc\}} \omega_j^2 \vert \tau - \theta_j \vert = \cO_P(\kappa_T^2)$.
\medskip

\underline{Step~2.} Next, we show that 
\begin{equation}\label{eqn: theta_bound_1}
\big|\thh-\thj\big|\le \frac{1}{4}\Delta^{\circ}.
\end{equation}
If not, by Lemma~\ref{lemma: imp_h_pop_h_agg} and Lemma~S3.5 in \cite{Choetal2025},
\begin{align}
& \big\vert \cV_{\alc,\thj,\blc} \big\vert_2 - \big\vert \cV_{\alc,\thh,\blc} \big\vert_2 \nonumber\\
\ge& \sqrt{\frac{(\thj-\alc)(\blc-\thj)}{\blc-\alc}}\left(1-\sqrt{\frac{1-|\thh-\thj|/(\thj-\alc)}{1+|\thh-\thj|/(\blc-\thj)}} \right)\omega_j \cdot (1+\cO_P(\alpha_{T,p})) \tag{Lemma~\ref{lemma: imp_h_pop_h_agg}} \nonumber\\
\ge& \left(1-\sqrt{\frac{3}{4}} \right)\sqrt{\frac{(\thj-\alc)(\blc-\thj)}{\blc-\alc}}\omega_j\cdot (1+\cO_P(\alpha_{T,p}))
\gtrsim \sqrt{\delc}\omega_j (1+o_P(1)),
\label{eqn: 1/4delta}
\end{align}
On the other hand, since $\thj$ is admissible from Step~1, we see from that $\cT_{\lc} = \cT_{\alc, \widehat{\theta}, \blc} \ge \cT_{\alc, \theta_j \blc}$ by construction, and that $\cT^*_{\alc, \widehat{\theta}, \blc} \le \cT^*_{\alc, \theta_j, \blc}$ by Lemma~\ref{lemma: imp_ppl_T_true}. 
Then from Lemma~\ref{lemma: imp_Vhat-V-k},
\begin{align}
\big\vert {\cV}_{\alc,\thj,\blc} \big\vert_2 - \big\vert {\cV}_{\alc,\thh,\blc} \big\vert_2 
\le & \,\big\vert \wh{\cV}_{\alc,\thj,\blc} -  {\cV}_{\alc,\thj,\blc} \big\vert_2  +  \big\vert \wh{\cV}_{\alc,\thh,\blc} -  {\cV}_{\alc,\thh,\blc} \big\vert_2 \nonumber\\
= &\,  \cO_P\big(\sqrt{\log(T)}\big) 
= \,{o_P(\sqrt{\delc}\omega_j)},\nonumber
\end{align}
where the last equation follows from~\eqref{eq:delc:lb}.
This contradicts~\eqref{eqn: 1/4delta}. 
\medskip

\underline{Step~3.} We show that
\begin{align*}
(\gc)^\trans{\cV}_{\alc,\thh,\blc} \ge 0, \text{ \ where \ }
\gc &:= \frac{\wh{\cV}_{\alc,\thh,\blc}}{\big\vert \wh{\cV}_{\alc,\thh,\blc}\big\vert_2}\in\R^{d},
\end{align*}
with $\vert \gc \vert_2 = 1$ (recall that $d=\sum_{k\in[K]} r_k(r_k + 1)/2$). It trivially holds that
\[
\cT_{\lc}=\l\vert \wh{\cV}_{\alc, \thh, \blc} \r\vert_2 = (\gc)^\trans \wh{\cV}_{\alc, \thh, \blc} > 0,
\]
while for $(\gc)^\trans \wh{\cV}_{\alc, \thj, \blc}$, we have $(\gc)^\trans \wh{\cV}_{\alc, \thj, \blc}\le \vert \wh{\cV}_{\alc, \thj, \blc} \vert_2 = \cT_{\alc,\thj,\blc}$. 
By construction,
\begin{align*}
0 &\le \cT_{\lc}-\cT_{\alc,\thj,\blc}\\
&\le (\gc)^\trans \wh{\cV}_{\alc,\thh,\blc} - (\gc)^\trans \wh{\cV}_{\alc,\thj,\blc}\\
&\le (\gc)^\trans \left( \wh{\cV}_{\alc,\thh,\blc} - {\cV}_{\alc,\thh,\blc} - \wh{\cV}_{\alc,\thj,\blc} + {\cV}_{\alc,\thj,\blc}\right) -(\gc)^\trans \left( {\cV}_{\alc,\thj,\blc} -{\cV}_{\alc,\thh,\blc}\right) \\
&=: U_1 - U_2.
\end{align*}
First, for $U_2$, let us write
\begin{align*}
-U_2  &= -(\gc)^\trans \left( {\cV}_{\alc,\thj,\blc} -{\cV}_{\alc,\thh,\blc}\right)\\
&=\sqrt{\frac{(\thh-\alc)(\blc - \thh)}{\blc - \alc}}\left(\frac{1}{\blc-\thh} \sum_{t=\thh+1}^{\blc} (\gc)^\trans V_t - \frac{1}{\thh-\alc} \sum_{t=\alc+1}^{\thh} (\gc)^\trans V_t \right)\\
&\quad -\sqrt{\frac{(\thj-\alc)(\blc - \thj)}{\blc - \alc}}\left(\frac{1}{\blc-\thj} \sum_{t=\thj+1}^{\blc} (\gc)^\trans V_t - \frac{1}{\thj-\alc} \sum_{t=\alc+1}^{\thj} (\gc)^\trans V_t \right) \\
&=\sqrt{\frac{(\thh-\alc)(\blc - \thh)}{\blc - \alc}} (\gc)^\trans\left[\frac{1}{\blc-\thh} \sum_{t=\thh+1}^{\blc} V_t - \frac{1}{\thh-\alc} \sum_{t=\alc+1}^{\thh} V_t \right.\\ 
&\qquad\qquad\qquad\qquad\qquad\qquad\qquad- \left.\frac{\blc-\thj}{\blc-\thh} \left(\frac{1}{\blc-\thj} \sum_{t=\thj+1}^{\blc} V_t - \frac{1}{\thj-\alc} \sum_{t=\alc+1}^{\thj} V_t \right)\right]\\
&\quad - \left\{\sqrt{\frac{(\thj-\alc)(\blc-\thj)}{\blc-\alc}} - \sqrt{\frac{(\thh-\alc)(\blc-\thh)}{\blc-\alc}}\frac{\blc-\thj}{\blc-\thh} \right\}\\
&\qquad\qquad\qquad\qquad\qquad\qquad\qquad\qquad\cdot(\gc)^\trans\left(\frac{1}{\blc-\thj} \sum_{t=\thj+1}^{\blc} V_t - \frac{1}{\thj-\alc} \sum_{t=\alc+1}^{\thj} V_t \right)\\
&=: U_{21} - U_{22}.
\end{align*}
Firstly, note that
\begin{align*}
& \frac{1}{\blc-\thh}\! \sum_{t=\thh+1}^{\blc}\! V^{(k)}_t - \frac{1}{\thh-\alc}\! \sum_{t=\alc+1}^{\thh}\! V^{(k)}_t - \frac{\blc-\thj}{\blc-\thh} \left(\frac{1}{\blc-\thj} \!\sum_{t=\thj+1}^{\blc}\! V^{(k)}_t - \frac{1}{\thj-\alc}\! \sum_{t=\alc+1}^{\thj}\! V^{(k)}_t \right)
\\
= & \,
\Vech\l\{\!\wh{H}_k^\trans \! \l(\frac{\blc-\thj}{\blc-\thh} \Gamma_{G,(j)}^{(k)} + \frac{\thj-\thh}{\blc-\thh} \Gamma_{G,(j-1)}^{(k)} - \!\Gamma_{G,(j-1)}^{(k)}  \!- \frac{\blc-\thj}{\blc-\thh} \Gamma_{G,(j)}^{(k)} + \frac{\blc-\thj}{\blc-\thh} \Gamma_{G,(j-1)}^{(k)} \r) \!\wh{H}_k \!\r\} \\
= & \, 0,
\end{align*}
for all $k \in [K]$, which gives $U_{21} = 0$.
By Lemma~\ref{lemma: imp_Vhat-V-k}, we have $\vert \wh{\cV}_{\alc,\thh,\blc} - {\cV}_{\alc,\thh,\blc} \vert_2 =\cO_P(\sqrt{\log(T)})$.
Recall that by Lemma~\ref{lemma: imp_h_pop_h_agg}, we have $\vert \cV_{\alc,\thh,\blc} \vert_2=\cT^{*}_{\alc,\thh,\blc}  \cdot (1+\cO_P(\alpha_{T,p}))$, where $\cO_P$-bound holds uniformly over all the triplets of the indices in consideration, which gives
\begin{align*}
\l\vert \wh{\cV}_{\alc,\thh,\blc} - {\cV}_{\alc,\thh,\blc} \r\vert_2 \le \l\vert {\cV}_{\alc,\thh,\blc} \r\vert_2. 
\end{align*}
Then, by Cauchy--Schwarz inequality, we have
\begin{align*}
\wh{\cV}_{\alc,\thh,\blc}^\trans \cV_{\alc,\thh,\blc} &= \l\vert \cV_{\alc,\thh,\blc} \r\vert_2^2 + \l(\wh{\cV}_{\alc,\thh,\blc} -\cV_{\alc,\thh,\blc}\r)^\trans \cV_{\alc,\thh,\blc}\\
&\ge \l\vert \cV_{\alc,\thh,\blc} \r\vert_2^2 - \l\vert \wh{\cV}_{\alc,\thh,\blc} - {\cV}_{\alc,\thh,\blc} \r\vert_2 \l\vert {\cV}_{\alc,\thh,\blc} \r\vert_2\\
&= \l\vert {\cV}_{\alc,\thh,\blc} \r\vert_2 \l(\l\vert {\cV}_{\alc,\thh,\blc} \r\vert_2 - \l\vert \wh{\cV}_{\alc,\thh,\blc} - {\cV}_{\alc,\thh,\blc} \r\vert_2 \r)\ge0,
\end{align*}
which implies
\begin{align*}
(\gc)^\trans{\cV}_{\alc,\thh,\blc} = \frac{\wh{\cV}_{\alc,\thh,\blc}^\trans}{\l\vert \wh{\cV}_{\alc,\thh,\blc}  \r\vert_2} {\cV}_{\alc,\thh,\blc}\ge0.
\end{align*}

\underline{Step~4.} We are now ready to derive the refined rate of estimation. 
From Lemma~\ref{lemma: imp_ppl_T_true}, we know that $\vert \cV_{\alc,\tau,\blc}\vert_2$ attains its maximum at $\tau=\thj$ within $(\alc,\blc)$.
By similar arguments, $\vert (\gc)^\trans{\cV}_{\alc,\tau,\blc}\vert$ attains its maximum at $\tau = \thj$. Therefore, we have $(\gc)^\trans{\cV}_{\alc,\thj,\blc} \ge (\gc)^\trans{\cV}_{\alc,\thh,\blc} \ge 0$.
Then together with Lemma~\ref{lemma: imp_Vhat-V-k}, we have
\begin{align*}
\l\vert \wh{\cV}_{\alc, \thh,\blc}\r\vert_2 = (\gc)^\trans \wh{\cV}_{\alc, \thh,\blc} \le (\gc)^\trans {\cV}_{\alc, \thh,\blc} +\cO_P(\sqrt{\log(T)}) \le (\gc)^\trans {\cV}_{\alc, \thj,\blc}+\cO_P(\sqrt{\log(T)}).
\end{align*}
On the other hand, due to the construction of $\cT_{\lc}$, again by using Lemma~\ref{lemma: imp_Vhat-V-k}, we have
\begin{align*}
\l\vert \wh{\cV}_{\alc, \thh,\blc}\r\vert_2 \ge \l\vert \wh{\cV}_{\alc, \thj,\blc}\r\vert_2 &\gtrsim \l\vert{\cV}_{\alc, \thj,\blc}\r\vert_2 +  \cO_P(\sqrt{\log(T)}).
\end{align*}
Combining the above, by Assumption~\ref{assum: signal-to-noise} and Lemma~\ref{lemma: imp_h_pop_h_agg}, we see
\begin{align*}
(\gc)^\trans {\cV}_{\alc, \thj,\blc} &\gtrsim \l\vert{\cV}_{\alc, \thj,\blc}\r\vert_2 + \cO_P(\sqrt{\log(T)}) \\
&= \cT^*_{\alc,\thj,\blc}(1+\cO_P(\alpha_{T,p})) + \cO_P(\sqrt{\log(T)})\tag{Lemma~\ref{lemma: imp_h_pop_h_agg}}\\
&=\sqrt{\frac{(\thj-\alc)(\blc-\thj)}{\blc-\alc}} \omega_j \cdot \cO_P\l(1+
\alpha_{T,p}+ {\frac{\sqrt{\log(T)}}{\sqrt{\delc}\omega_j}}\r)\\
&=\sqrt{\frac{(\thj-\alc)(\blc-\thj)}{\blc-\alc}} \omega_j \cdot (1+o_P(1)). \tag{from~\eqref{eq:delc:lb}}
\end{align*}

Then for $U_{22}$, by Lemma~7 of \cite{wang2018high}, 
\begin{align}
U_{22} &= \left\{\sqrt{\frac{(\thj-\alc)(\blc-\thj)}{\blc-\alc}} - \sqrt{\frac{(\thh-\alc)(\blc-\thh)}{\blc-\alc}}\frac{\blc-\thj}{\blc-\thh} \right\} \nonumber\\
&\qquad\qquad\qquad\qquad\qquad\qquad\cdot(\gc)^\trans\left(\frac{1}{\blc-\thj} \sum_{t=\thj+1}^{\blc} V_t - \frac{1}{\thj-\alc} \sum_{t=\alc+1}^{\thj} V_t \right)\nonumber\\
&\ge  \left\{\sqrt{\frac{(\thj-\alc)(\blc-\thj)}{\blc-\alc}} - \sqrt{\frac{(\thh-\alc)(\blc-\thh)}{\blc-\alc}}\frac{\blc-\thj}{\blc-\thh} \right\}\nonumber\\
&\quad\qquad\qquad\qquad \cdot {\left(\sqrt{\frac{(\thj-\alc)(\blc-\thj)}{\blc-\alc}}\right)^{-1}} \sqrt{\frac{(\thj-\alc)(\blc-\thj)}{\blc-\alc}}\omega_j \cdot(1+o_P(1)) \nonumber\\
&=  \left\{\sqrt{\frac{(\thj-\alc)(\blc-\thj)}{\blc-\alc}} - \sqrt{\frac{(\thh-\alc)(\blc-\thh)}{\blc-\alc}}\frac{\blc-\thj}{\blc-\thh} \right\}\, \omega_j \cdot(1+o_P(1)) \label{eqn: U2 : intermediate} \\
&\ge \, \frac{2\big\vert \thh - \thj \big\vert}{3\sqrt{6}\sqrt{\delc}} \;\omega_j \cdot(1+o_P(1)). \tag{Lemma~7 of \cite{wang2018high}} \nonumber
\end{align}
Altogether, we have
\begin{align}
U_2 \ge \frac{2\big\vert \thh - \thj \big\vert}{3\sqrt{6}\sqrt{\delc}} \;\omega_j \cdot(1+o_P(1)).
\label{eqn: U2}
\end{align}
As for $U_1$, by Lemmas~\ref{lemma: loading_prod_rate}~(ii) and \ref{lemma: GG-EGG_generic_decomp}, and Equation~(C.9) in \cite{Choetal2025}, we have
\begin{align}
\vert U_1 \vert_2 &\le\Big\vert  \left( \wh{\cV}_{\alc,\thh,\blc} - {\cV}_{\alc,\thh,\blc} \right) -\left(\wh{\cV}_{\alc,\thj,\blc} - {\cV}_{\alc,\thj,\blc}\right) \Big\vert_2
\nonumber \\
&= \l\vert \sqrt{\frac{(\blc-\thh)(\thh-\alc)}{\blc-\alc}}\l\{\frac{1}{\blc-\thh}\sum_{t= \thh +1}^{\blc} (\wh{V}_t-V_t) - \frac{1}{\thh-\alc}\sum_{t=\alc+1}^{\thh}(\wh{V}_t-V_t)\r\}  \r. 
\nonumber \\
&\quad\quad - \l. \sqrt{\frac{(\blc-\thj)(\thj-\alc)}{\blc-\alc}}\l\{\frac{1}{\blc-\thj}\sum_{t= \thj +1}^{\blc} (\wh{V}_t-V_t) - \frac{1}{\thj-\alc}\sum_{t=\alc+1}^{\thj}(\wh{V}_t-V_t)\r\} \r\vert_2
\nonumber \\
&\le \l\vert \sqrt{\frac{(\blc-\thh)(\thh-\alc)}{\blc-\alc}}\l( \cJ_{\thh,\blc}^{(1)}-\cJ_{\alc,\thh}^{(1)} + \cJ_{\thh,\blc}^{(2)}-\cJ_{\alc,\thh}^{(2)} \r)  \r.
\nonumber \\
&\quad\quad - \l. \sqrt{\frac{(\blc-\thj)(\thj-\alc)}{\blc-\alc}}\l( \cJ_{\thj,\blc}^{(1)}-\cJ_{\alc,\thj}^{(1)} + \cJ_{\thj,\blc}^{(2)}-\cJ_{\alc,\thj}^{(2)}\r) \r\vert_2 
\nonumber \\
&\quad\quad  + \l\vert \sqrt{\frac{(\blc-\thh)(\thh-\alc)}{\blc-\alc}} \cdot \frac{\blc-\thj}{\blc-\thh} - \sqrt{\frac{(\blc-\thj)(\thj-\alc)}{\blc-\alc}} \r\vert \vert {\Omega_j} \vert_2 H_1 H_3 
\nonumber \\
&\le \frac{\blc-\thj}{\sqrt{\blc-\alc}}\l\vert \sqrt{\frac{\wh\theta - \alc}{\blc - \wh\theta}} - \sqrt{\frac{\theta_j - \alc}{\blc - \theta_j}} \r\vert \l(\big\vert\cJ_{\thj,\blc}^{(1)}\big\vert_2 + \big\vert\cJ_{\thj,\blc}^{(2)} \big\vert_2\r) 
\nonumber \\
&\quad\quad + \frac{\thj-\alc}{\sqrt{\blc-\alc}}\l\vert  \sqrt{\frac{\blc - \wh\theta}{\wh\theta - \alc}} - \sqrt{\frac{\blc - \theta_j}{\theta_j - \alc}} \r\vert \l(\big\vert\cJ_{\alc,\thj}^{(1)}\big\vert_2 + \big\vert\cJ_{\alc,\thj}^{(2)}\big\vert_2\r) 
\nonumber \\
&\quad\quad + \frac{\thj-\thh}{\sqrt{\blc-\alc}} \cdot \sqrt{\frac{\blc - \wh\theta}{\wh\theta - \alc}} \l(\big\vert\cJ_{\thh,\thj}^{(1)}\big\vert_2 + \big\vert\cJ_{\thh,\thj}^{(2)}\big\vert_2 \r) 
\nonumber \\
&\quad\quad + \frac{\thj-\thh}{\sqrt{\blc-\alc}} \cdot \sqrt{\frac{\thh -\alc}{\blc-\thh}} \l(\big\vert\cJ_{\thh,\thj}^{(1)}\big\vert_2 + \big\vert\cJ_{\thh,\thj}^{(2)} \big\vert_2 \r)
\nonumber \\
&\quad\quad + \l\vert\sqrt{\frac{(\blc-\thh)(\thh-\alc)}{\blc-\alc}}\frac{\blc-\thj}{\blc-\thh} - \sqrt{\frac{(\blc-\thj)(\thj-\alc)}{\blc-\alc}}\r\vert\omega_j H_1 H_3  
\nonumber \\
&=: U_{11} + U_{12} + U_{13} + U_{14}+ U_{15}.
\label{eq:U1:decomp}
\end{align}

Let $\wt{\cT}_{\alc,\tau,\blc} = \cT_{\alc,\tau,\blc} - \cT_{\alc,\thj\,\blc}$, then $\wt{\cT}_{\alc,\thj,\blc}=0 $. Since $\thh = \argmax_{\alc\le \tau \le \blc} \cT_{\alc,\tau,\blc}<\thj$ by construction and from~\eqref{eqn: theta_bound_1}, it follows that for some fixed constant $C>0$ and $\kappa_T \to \infty$ arbitrarily slow, 
\begin{align*}
&\quad\;\l\{\omega_j^2 ({\thh - \thj}) \le -{C}\kappa_T^2 \r\} \\
&\subset \l\{\max_{\thj-\delc/4 +1 \le \thh\le \thj - C\omega_j^{-2}\kappa_T^2 } \wt{\cT}_{\alc,\thh,\blc} \ge \max_{\thj - C\omega_j^{-2}\kappa_T^2 +1 \le \thh\le \thj+\delc/4 } \wt{\cT}_{\alc,\thh,\blc} \r\}\\
&\subset \l\{\max_{\thj-\delc/4 +1 \le \thh\le \thj - C\omega_j^{-2}\kappa_T^2 } \wt{\cT}_{\alc,\thh,\blc} \ge 0 \r\} 
{\subset \l\{ \max_{\thj-\delc/4 +1 \le \thh\le \thj - C\omega_j^{-2}\kappa_T^2 } \vert U_1 \vert - U_2 \ge 0  \r\}.}
\end{align*}
Then from~\eqref{eqn: U2} and~\eqref{eq:U1:decomp},
\begin{align*}
&\quad\; \P\l\{\max_{\thj-\delc/4 + 1\le \thh\le \thj - C\omega_j^{-2}\kappa_T^2} {\vert U_{1} \vert}-{U_2}\ge 0 \r\}\\
&=\P\l\{\max_{\thj-\delc/4 + 1\le \thh\le \thj - C\omega_j^{-2}\kappa_T^2} \frac{U_{11}+U_{12}+U_{13}+U_{14}+U_{15}}{U_2}\ge 1 \r\}\\
&\le \P\l\{\max_{\thj-\delc/4 + 1\le \thh\le \thj - C\omega_j^{-2}\kappa_T^2} \frac{U_{11}}{U_2}\ge \frac{1}{5} \r\} + \P\l\{\max_{\thj-\delc/4 + 1\le \thh\le \thj - C\omega_j^{-2}\kappa_T^2} \frac{U_{12}}{U_2}\ge \frac{1}{5} \r\}\\
&\quad+\P\l\{\max_{\thj-\delc/4 + 1\le \thh\le \thj - C\omega_j^{-2}\kappa_T^2} \frac{U_{13}}{U_2}\ge \frac{1}{5}\r\} +\P\l\{\max_{\thj-\delc/4 + 1\le \thh\le \thj - C\omega_j^{-2}\kappa_T^2} \frac{U_{14}}{U_2}\ge \frac{1}{5}\r\} \\
&\quad + \P\l\{\max_{\thj-\delc/4 + 1\le \thh\le \thj - C\omega_j^{-2}\kappa_T^2} \frac{U_{15}}{U_2}\ge \frac{1}{5}\r\} =: W_1 + W_2 + W_3 + W_4 + W_5.
\end{align*}
Firstly, for $W_1$, by Lemma~S3.5 of \cite{Choetal2025}, and by~\eqref{eqn: theta_bound_1}, 
\begin{align*}
U_{11} &= \frac{\blc-\thj}{\sqrt{\blc-\alc}}\l\vert \sqrt{\frac{\wh\theta - \alc}{\blc - \wh\theta}} - \sqrt{\frac{\theta_j - \alc}{\blc - \theta_j}} \r\vert \l(\big\vert\cJ_{\thj,\blc}^{(1)}\big\vert_2 + \big\vert\cJ_{\thj,\blc}^{(2)}\big\vert_2 \r) \\
&\le \frac{\blc-\thj}{\sqrt{\blc-\alc}}\l\vert \sqrt{\frac{\thj-\alc}{\blc-\thj}} \frac{(\thj-\thh)(\blc-\alc)}{(\thj-\alc)(\blc-\thj)} \r\vert\l(\big\vert\cJ_{\thj,\blc}^{(1)}\big\vert_2 + \big\vert\cJ_{\thj,\blc}^{(2)}\big\vert_2 \r)  \\
&\le \sqrt{\frac{\blc-\alc}{(\thj-\alc)(\blc-\thj)}}\cdot \vert \thj-\thh \vert \l(\big\vert\cJ_{\thj,\blc}^{(1)}\big\vert_2 + \big\vert\cJ_{\thj,\blc}^{(2)} \big\vert_2 \r).
\end{align*}
In addition, we have
\begin{align}
\sqrt{\frac{\thj-\alc}{\blc-\alc}} \cdot \frac{\blc-\thj}{\sqrt{\delc}} &\ge {\sqrt{\frac{\delc}{2}}}, \label{eq:delc:1}
\\
\sqrt{\frac{(\thj-\alc)(\blc-\thj)}{\blc-\alc}} \cdot \frac{1}{\sqrt{\delc}} &\ge {\sqrt{\frac{1}{2}}}. \label{eq:delc:2}
\end{align}
Then, by Assumption~\ref{assum: signal-to-noise}, Lemmas~\ref{lemma: loading_prod_rate}~(ii), \ref{lemma: imp_maximal_deviation_mat}, and~\ref{lemma: X(Lam-Lam)X}, and~\eqref{eqn: U2},
\begin{align*}
W_1
&\le \P\l\{\max_{\thj-\delc/4+1\le \thh \le \thj+\delc/4} \frac{U_{11}}{U_2}\ge \frac{1}{5} ,\, \cH_{T,p}^c \r\} + \P(\cH_{T,p}) \\
&\le \P\l\{ \sqrt{\frac{\blc-\alc}{\thj-\alc}}\frac{1}{\blc-\thj} \cdot \sqrt{\blc-\thj}  \big\vert \cJ_{\thj,\blc}^{(1)} \big\vert_2 \ge \frac{3\sqrt{6} \omega_j}{10 \sqrt{\delc}} \big(1+C_H\alpha_{T,p} \big) \r\} \\
&\quad + \P\l\{ \sqrt{\frac{\blc-\alc}{(\thj-\alc)(\blc-\thj)}} \big\vert \cJ_{\thj,\blc}^{(2)}\big\vert_2 \ge \frac{3\sqrt{6} \omega_j}{10 \sqrt{\delc}} \big(1+C_H\alpha_{T,p} \big) \r\}  + o(1)  \tag{Lemma~\ref{lemma: loading_prod_rate}~(ii)}\\
&\le \P\l\{ \sqrt{\blc-\thj} \big\vert \cJ_{\thj,\blc}^{(1)} \big\vert_2 \ge {\frac{3\sqrt{3}}{10}  \sqrt{\delc} \omega_j} \cdot \big(1+C_H\alpha_{T,p} \big) \r\} \tag{from~\eqref{eq:delc:1}}\\
&\quad + \P\l\{ \big\vert \cJ_{\thj,\blc}^{(2)}\big\vert_2 \ge {\frac{3 \sqrt{3} }{10}\omega_j } \cdot\big(1+C_H\alpha_{T,p} \big) \r\}  + o(1) \tag{from~\eqref{eq:delc:2}}\\
&= o(1).\tag{Lemmas~\ref{lemma: imp_maximal_deviation_mat}, \ref{lemma: X(Lam-Lam)X} and~\eqref{eq:delc:lb}} 
\end{align*}
We handle $W_2$ analogously. 
As for $W_3$, by~\eqref{eqn: theta_bound_1}, we have
\begin{align*}
\sqrt{\frac{\blc - \wh\theta}{(\wh\theta - \alc)(\blc-\alc)}} \le 
{\sqrt{\frac{4}{3\delc}}},
\end{align*}
which gives
\begin{align*}
U_{13} \le {\frac{2}{\sqrt{3}}}\frac{ \vert\thj-\thh \vert}{\sqrt{\delc}}  \l(\big\vert \cJ_{\thh,\thj}^{(1)} \big\vert_2 +\big\vert \cJ_{\thh,\thj}^{(2)} \big\vert_2 \r).
\end{align*}
Then by Assumption~\ref{assum: signal-to-noise}, Lemmas~\ref{lemma: loading_prod_rate}~(ii), \ref{lemma: XX-EXX_zz} and~\eqref{eqn: U2}, with $\kappa_T$ satisfying~\eqref{eq:kappa:req}, we have
\begin{align*}
W_3 &\le \P\l\{\max_{\thj-\delc/4 + 1\le \thh\le \thj - C\omega_j^{-2}\kappa_T^2} {\frac{2}{\sqrt{3}}}\frac{1}{\sqrt{\delc}} \l(\big\vert \cJ_{\thh,\thj}^{(1)} \big\vert_2 +\big\vert \cJ_{\thh,\thj}^{(2)} \big\vert_2 \r) \ge \frac{3\sqrt{6} \omega_j}{10\sqrt{\delc}} \big(1+C_H\alpha_{T,p} \big)\r\} \\
&\quad\,+ \P(\cH_{T,p}) \tag{Lemma~\ref{lemma: loading_prod_rate}~(ii)}\\
&\le \P\l\{\max_{\thj-\delc/4 + 1\le \thh\le \thj - C\omega_j^{-2}\kappa_T^2} \sqrt{\omega_j^{-2}\kappa_T^2} \big\vert \cJ_{\thh,\thj}^{(1)} \big\vert_2 \ge {\frac{9\sqrt{2}}{40}}\kappa_T\cdot  \big(1+C_H\alpha_{T,p} \big)\r\} 
\\
&\quad + {\P\l\{\max_{\thj-\delc/4 + 1\le \thh\le \thj - C\omega_j^{-2}\kappa_T^2} \big\vert \cJ_{\thh,\thj}^{(2)} \big\vert_2 \ge {\frac{9\sqrt{2}}{40}} \omega_j\cdot \big(1+C_H\alpha_{T,p} \big)\r\}}
+ o(1)  \tag{Lemma~\ref{lemma: loading_prod_rate}~(ii)}\\
&= o(1).\tag{Lemma~\ref{lemma: XX-EXX_zz} and Assumption~\ref{assum: signal-to-noise}}
\end{align*}
We can handle $W_4$ analogously. 
Finally, for $W_5$, noting that
\begin{align*}
U_{15} &= \l\vert\sqrt{\frac{(\blc-\thj)(\thj-\alc)}{\blc-\alc}} - \sqrt{\frac{(\blc-\thh)(\thh-\alc)}{\blc-\alc}}\frac{\blc-\thj}{\blc-\thh}\r\vert\omega_j \cdot H_1H_3,
\end{align*}
we invoke~\eqref{eqn: U2 : intermediate} and the arguments used in deriving~\eqref{eqn: U2} to have
\begin{align*}
W_5 &\le 
\P\l\{\max_{\thj-\delc/4 + 1\le \thh\le \thj - C\omega_j^{-2}\kappa_T^2} H_1H_3 \ge \frac{1}{5}(1+o(1)) \r\} + \P\l( \cS^{(1)}_{T,p} \cup \cS^{(2)}_{T,p} \cup \wt{\cS}^{(1)}_{T,p} \cup \wt{\cS}^{(2)}_{T,p}\r)\\
&\le \P(\cH_{T, p}) + \P\l( \cS^{(1)}_{T,p} \cup \cS^{(2)}_{T,p} \cup \wt{\cS}^{(1)}_{T,p} \cup \wt{\cS}^{(2)}_{T,p}\r) = o(1),
\end{align*}
by Lemma~\ref{lemma: loading_prod_rate}~(ii).
Combining the bounds on $W_1$--$W_5$, we have
\begin{align*}
\P\l\{\omega_j^2 (\thj-\thh) \le - C \kappa_T^2 \r\} \le \P\l\{\max_{\thj-\delc/4 +1 \le \thh\le \thj - C\omega_j^{-2}\kappa_T^2 } \wt{\cT}_{\alc,\thh,\blc} \ge 0 \r\} =o(1).
\end{align*}
Analogous arguments apply to the case where $\thh>\thj $. Hence, concluding \ref{R1}--\ref{R2}, the proof is complete.

\clearpage

\subsection{Mode-identification}

\subsubsection{Supporting lemmas}

Let us define for $j \in [q]$,
\begin{align}
\begin{split}
\Theta_j^{-} &= \left\{a_\ell : \, (a_\ell, b_\ell] \in \M^*, \, b_\ell \in [\theta_j - 3T/2^{\mu_T + 1}, \theta_j + T/2^{\mu_T}] \right\},
\\
\Theta_j^{+} &= \left\{b_\ell : \, (a_\ell, b_\ell] \in \M^*, \, a_\ell \in [\theta_j - T/2^{\mu_T}, \theta_j + 3 T/2^{\mu_T + 1} ] \right\},
\end{split}
\label{eq:Theta:pm}
\end{align}
with $\Theta_0^\pm = \{0\}$ and $\Theta_{q + 1}^\pm = \{T\}$.
By construction of $\M^*$, all the intervals therein have length $T/2^{\mu_T}$ such that under Assumption~\ref{assum: trans_mat_alt}~(iv), for $T$ large enough,
\begin{enumerate}[label = (\alph*)]
\item \label{Theta:one} $\vert \Theta_j^\pm \vert \le 4$ for all $j$, 
\item \label{Theta:two} for any $a \in \Theta_j^-$ (resp.\ $b \in \Theta_j^+$), we have $a \in (\theta_{j - 1}, \theta_j]$ (resp.\ $b \in [\theta_j, \theta_{j + 1})$), and
\item \label{Theta:three} for any $j \in [q + 1]$, $a \in \Theta^+_{j - 1}$ and $b \in \Theta^-_j$, we have $b - a \ge c_\Delta T - 5 T/2^{\mu_T} \ge c_\Delta T/2$.
\end{enumerate}

\begin{lemma}[]\label{lemma: gen_sample_Hpopu_error}
Let Assumptions~\ref{assum: core_factor}, \ref{assum: loadings}, \ref{assum: trans_mat_alt} and \ref{assum: noise} hold, with $\nu>8$ and $\beta\geq 0$.
\begin{itemize}
\item [(i)] For any $a<b$ and $c<d$, with the the definitions in \eqref{eqn: gamma_G} and the notations in Lemma~\ref{lemma: GG-EGG_generic_decomp}, we have
\begin{align*}
&\;\quad
\left\| \wh{\Gamma}_{G,a, b}^{(k)} - \wh{\Gamma}_{G,c, d}^{(k)}
- \left( \wh{H}_k^\trans  {\Gamma}_{G,a,b}^{(k)} \wh{H}_k - \wh{H}_k^\trans {\Gamma}_{G,c,d}^{(k)} \wh{H}_k \right) \right\|  \\
&\leq
2\, \Big\Vert\cJ_{k, a, b}^{(1)}-\cJ_{k, c, d}^{(1)} \Big\Vert_F
+ 2\, \Big\Vert\cJ_{k, a, b}^{(2)}-\cJ_{k, c, d}^{(2)}\Big\Vert_F  \\
&\;\quad
+ 4\sqrt{2} \cdot \Vert\wh{H}_k\Vert \l\Vert \frac{1}{\sqrt{p_k}}\l(\wh{\Lambda}_k-\Lambda_k\wh{H}_k \r) \r\Vert_F \cdot \l\vert  \Vech\l(\Gamma_{G, a, b}^{(k)} - \Gamma_{G, c, d}^{(k)} \r)\r\vert_2 .
\end{align*}
\item [(ii)] We have
\[
\max_{j \in [q]} \max_{\substack{a_j \in \Theta_j^+ \\ b_j \in \Theta_j^-}} \max_{k\in[K]} 
\left\| \wh{\Gamma}_{G, a_j, b_{j + 1}}^{(k)} \!\! - \wh{\Gamma}_{G, a_{j - 1}, b_j}^{(k)}
\!\! - \wh{H}_k^\trans \left(  {\Gamma}_{G, a_j, b_{j + 1}}^{(k)}\!\! - {\Gamma}_{G, a_{j - 1}, b_j}^{(k)} \right) \wh{H}_k  \right\|
= \cO_P\left( \frac{1}{\sqrt{T}} + \frac{1}{p} \right) .
\]
\end{itemize}
\end{lemma}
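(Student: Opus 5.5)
For part~(i), I would reuse the algebraic decomposition from the proof of Lemma~\ref{lemma: GG-EGG_generic_decomp}, which holds for an arbitrary interval $(a,b]$. There we showed
\[
\wh{\Gamma}_{G,a,b}^{(k)} = p_k^{-1}\wh\Lambda_k^\trans\big(\cJ_{k,a,b}^{(1)}+\cJ_{k,a,b}^{(2)}+\cJ_{k,a,b}^{(3)}\big)\wh\Lambda_k + \cJ_{k,a,b}^{(4)} + \wh H_k^\trans \Gamma_{G,a,b}^{(k)}\wh H_k .
\]
I would take the difference of this identity for $(a,b]$ and $(c,d]$. The $\cJ^{(3)}$ terms cancel exactly, since stationarity of $\cE_t$ gives that $\E(E_{k,t}\Lambdamk\Lambdamk^\trans E_{k,t}^\trans)$ is constant in $t$ (the argument of Lemma~\ref{lemma: E(xx)}~(iv) with arbitrary intervals). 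For the $\cJ^{(1)},\cJ^{(2)}$ terms, I would use $\|p_k^{-1}\wh\Lambda_k^\trans M\wh\Lambda_k\|\le \|M\|\,\|\wh\Lambda_k\|^2/p_k=\|M\|\le \|M\|_F$. The factor $2$ in the statement leaves room, so I would not need to track constants tightly. For $\cJ^{(4)}$, I would follow the chain in the earlier proof: write $\wh\Lambda_k=\Lambda_k\wh H_k+(\wh\Lambda_k-\Lambda_k\wh H_k)$ in both sandwiches. This gives two cross terms and a quadratic remainder, each bounded by $\|\wh H_k\|\,p_k^{-1/2}\|\wh\Lambda_k-\Lambda_k\wh H_k\|_F\,\|\Gamma^{(k)}_{G,a,b}-\Gamma^{(k)}_{G,c,d}\|_F$. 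The last factor is at most $\sqrt2\,|\Vech(\cdot)|_2$, and I would absorb the quadratic remainder using $\|\wh\Lambda_k\|=\sqrt{p_k}$, which yields the constant $4\sqrt2$.

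For part~(ii), I would bound the three terms of (i) uniformly over the finitely many choices. Properties \ref{Theta:one}--\ref{Theta:three} give at most $4^2$ pairs $(a,b)$ per $j$, with $q$ and $K$ fixed, and all intervals have length at least $c_\Delta T/2$. Properties \ref{Theta:two}--\ref{Theta:three} also guarantee that $(a_{j-1},b_j]$ and $(a_j,b_{j+1}]$ each lie within a single segment. I do not need this for the bound, but it keeps $\Gamma_{G,\cdot,\cdot}^{(k)}$ equal to a single $\Gamma_{G,(j)}^{(k)}$, which is $\cO(1)$ by \eqref{eqn: omega_j}.

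The $\cJ^{(1)}$ term, for a fixed interval of length $n\asymp T$, should be $\cO_P(n^{-1/2})=\cO_P(T^{-1/2})$. I would obtain this from the $(2+\epsilon)$-moment bound in Lemma~\ref{lemma: ZZ_2+epsilon}, or equivalently from the uniform $\sqrt{\log T}$ bound of Lemma~\ref{lemma: imp_maximal_deviation_mat} applied pointwise without the log. Since the number of intervals is bounded, Markov's inequality applies directly with no union-bound loss. This step is the main obstacle: the earlier lemmas are stated for seeded triples in $\cI$ with a $\sqrt{\log T}$ factor, so I would rederive the fixed-interval $n^{-1/2}$ rate from Lemma~\ref{lemma: ZZ_2+epsilon} after rescaling by $p_k$ and summing over the $p_k^2$ entries via Jensen, exactly as in the proof of Lemma~\ref{lemma: max_ZZ_2+epsilon}.

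The $\cJ^{(2)}$ term is $\cO_P(\alpha_{T,p,\text{-}k})$ by the argument of \eqref{eqn: X(Lam-Lam)X}, combining Lemma~\ref{lemma: inequality_sandwich}~(i) with Lemma~\ref{lemma: loading_prod_rate}~(i). The last term is $\cO_P(\alpha_{T,p,k})$ by Lemma~\ref{lemma: loading_prod_rate}~(ii). It then remains to check that $\alpha_{T,p}\lesssim T^{-1/2}+p^{-1}$. Each summand of $\alpha_{T,p,k}^2$, namely $1/(T\pmk)$, $1/p^2$, and $(Tp_{\text{-}\ell})^{-1}(T^{-1}+p_k^{-1})$, is at most $T^{-1}+p^{-2}$ up to constants, since $K\ge1$ and $p_{-\ell},\pmk\ge1$. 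This completes the rate $\cO_P(T^{-1/2}+p^{-1})$.
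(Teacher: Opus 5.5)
Your proposal follows the paper's proof essentially step by step. Part~(i) re-runs the decomposition of Lemma~\ref{lemma: GG-EGG_generic_decomp} for two arbitrary intervals. Part~(ii) uses the fact that only a bounded number of intervals of length $\gtrsim c_\Delta T$ are involved, so the $\cJ^{(1)}$ term is $\cO_P(T^{-1/2})$ without the $\sqrt{\log T}$ loss, the $\cJ^{(2)}$ and loading-error terms are $\cO_P(\alpha_{T,p})$, and $\alpha_{T,p}\lesssim T^{-1/2}+p^{-1}$ (a step the paper leaves implicit and you check explicitly).

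One caveat concerns the $\cJ^{(1)}$ rate. Your first suggested route, via Lemma~\ref{lemma: ZZ_2+epsilon}, needs $\beta>1$ rather than the $\beta\ge 0$ assumed in this lemma. The alternative you also mention, re-running the argument of Lemmas~\ref{lemma: xx-Exx} and~\ref{lemma: imp_maximal_deviation_mat} on finitely many intervals, matches the stated hypotheses and is what the paper does.
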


\begin{proof}[Proof of Lemma~\ref{lemma: gen_sample_Hpopu_error}]

Part~(i) follows directly from carefully checking the proof of Lemma~\ref{lemma: GG-EGG_generic_decomp}.

For part~(ii), fix any $k\in[K]$. 
By~\ref{Theta:two}--\ref{Theta:three}, we have all $\Gamma^{(k)}_{G, a_j, b_{j + 1}}$ for $a_j \in \Theta^+_j$ and $b_{j + 1} \in \Theta^-_{j + 1}$ well-defined and
$\Gamma^{(k)}_{G, a_j, b_{j + 1}} = \Gamma^{(k)}_{G, j + 1}$, for all $j = 0, \ldots, q$.

Note that $\|X\|_F \leq 2 \cdot \vert \Vech(X)\vert_2$ for any symmetric matrix~$X$.
By part~(i), we have
\begin{align}
    &\;\quad
    \left\| \wh{\Gamma}_{G, a_j, b_{j + 1}}^{(k)} - \wh{\Gamma}_{G, a_{j - 1}, b_j}^{(k)}
    - \left( \wh{H}_k^\trans  {\Gamma}_{G, a_j, b_{j + 1}}^{(k)} \wh{H}_k - \wh{H}_k^\trans {\Gamma}_{G, a_{j - 1}, b_j}^{(k)} \wh{H}_k \right) \right\| \notag \\
    &\leq
    2\, \Big\Vert\cJ_{k, a_j, b_{j + 1}}^{(1)}-\cJ_{k, a_{j - 1}, b_j}^{(1)} \Big\Vert_F
    + 2\, \Big\Vert\cJ_{k, a_j, b_{j + 1}}^{(2)}-\cJ_{k, a_{j - 1}, b_j}^{(2)}\Big\Vert_F \notag \\
    &\;\quad
    + 4\sqrt{2} \cdot \Vert\wh{H}_k\Vert \l\Vert \frac{1}{\sqrt{p_k}}\l(\wh{\Lambda}_k-\Lambda_k\wh{H}_k \r) \r\Vert_F \cdot \l\vert  \Vech\l(\Gamma_{G, a_j, b_{j + 1}}^{(k)} - \Gamma_{G, a_{j - 1}, b_j}^{(k)} \r)\r\vert_2 .
    \label{eqn: hat_Gamma_diff_step1}
\end{align}

Following the arguments in the proof of Lemmas~\ref{lemma: xx-Exx} and \ref{lemma: imp_maximal_deviation_mat}, while noticing that by~\ref{Theta:one}, there are only finitely many tuples of $(a_j, b_{j + 1})$ in consideration, and that $b_{j + 1} - a_j \ge c_\Delta T/2$ by~\ref{Theta:three}, we can analogously show that as $\min(T,p_1,\ldots,p_K)\to \infty$,
\begin{equation}
\label{eqn: hat_Gamma_diff_step2}
    \P
    \l\{ \max_{0 \le j \le q} \max_{a_j \in \Theta_j^+} \max_{b_j \in \Theta_j^-} \max_{k\in[K]} \sqrt{b_{j + 1} - a_j} \Big\vert \cJ_{a_j, b_{j + 1}}^{(1)} \Big\vert_2 \geq C_1' \r\}
     \to 0 ,
\end{equation}
for some finite constant $C_1'>0$.
Similarly, we have from the proof of Lemma~\ref{lemma: X(Lam-Lam)X} that as $\min(T,p_1,\ldots,p_K)\to \infty$, 
\begin{equation}
\label{eqn: hat_Gamma_diff_step3}
    \P 
    \l\{ \max_{0 \le j \le q} \max_{a_j \in \Theta_j^+} \max_{b_j \in \Theta_j^-} \max_{k\in[K]} \Big\vert \cJ_{a_j, b_{j + 1}}^{(2)} \Big\vert_2 \geq C_2' \alpha_{T,p} \r\}
    \to 0 ,
\end{equation}
for some finite constant $C_2'>0$, and $\alpha_{T,p}$ is defined in \eqref{eq:alpha:rate}.
Further note that by Assumptions~\ref{assum: core_factor} and \ref{assum: trans_mat_alt}, Lemma~\ref{lemma: consistency_proj} and \eqref{eq:alpha:rate}, we have
\begin{align}
    & \max_{k\in[K]} \Vert\wh{H}_k\Vert \l\Vert \frac{1}{\sqrt{p_k}}\l(\wh{\Lambda}_k-\Lambda_k\wh{H}_k \r) \r\Vert_F \times \nonumber \\
    & \qquad \max_{j \in [q]} \max_{a_j \in \Theta_j^+} \max_{b_j \in \Theta_j^-} \l\vert  \Vech\l(\Gamma_{G, a_j, b_{j+1}}^{(k)} - \Gamma_{G, a_{j-1}, b_j}^{(k)} \r)\r\vert_2 
    = \cO_P(\alpha_{T,p}) . \label{eqn: hat_Gamma_diff_step4}
\end{align}
Altogether, we conclude the proof of part~(ii) by
combining \eqref{eqn: hat_Gamma_diff_step1}, \eqref{eqn: hat_Gamma_diff_step2}, \eqref{eqn: hat_Gamma_diff_step3} and \eqref{eqn: hat_Gamma_diff_step4} with~\ref{Theta:three}.
\end{proof}

\begin{lemma}\label{lemma: rate_mode_change}
Let Assumptions~\ref{assum: core_factor}, \ref{assum: loadings}, \ref{assum: trans_mat_alt} and \ref{assum: noise} hold, with $\nu>8$ and $\beta\geq 0$.
For each $j \in [q]$, we define
\begin{align*}
\wh\Xi^{(k)}_{a_{j - 1}, b_j, a_j, b_{j + 1}} = \tr(\wh{\Gamma}_{G, a_j, b_{j + 1}}^{(k)}) \wh{\Gamma}_{G, a_j, b_{j + 1}}^{(k)} - \tr(\wh{\Gamma}_{G, a_{j-1}, b_j}^{(k)}) \wh{\Gamma}_{G, a_{j-1}, b_j}^{(k)}.
\end{align*}
Then, we have
\begin{align*}
    \max_{j \in [q]} \max_{a_j \in \Theta^+_j} \max_{b_j \in \Theta^-_j} \max_{k \in [K]}
     \left\Vert \wh\Xi^{(k)}_{a_{j - 1}, b_j, a_j, b_{j + 1}} - \wh{H}_k^\trans \Xi_j^{(k)} \wh{H}_k \right\Vert = \cO_P\left( \frac{1}{\sqrt{T}} + \frac{1}{p} \right) .
\end{align*}
\end{lemma}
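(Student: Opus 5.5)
The plan is to treat each segment separately and then pass the segment-wise errors through the trace-normalisation map. A single segment's error is $\cO_P(T^{-1/2}+p^{-1})$, the normalisation map is Lipschitz near the population covariances, and everything else follows.

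Fix $j\in[q]$ and endpoints $a_{j-1}\in\Theta^+_{j-1}$, $b_j\in\Theta^-_j$. By properties \ref{Theta:two}--\ref{Theta:three}, $(a_{j-1},b_j]$ lies inside the $j$-th segment and has length at least $c_\Delta T/2$. Hence $\Gamma^{(k)}_{G,a_{j-1},b_j}=\Gamma^{(k)}_{G,(j)}$. By \ref{Theta:one}, only finitely many tuples $(j,a,b,k)$ enter the maximum, so it suffices to prove each bound for a single tuple, uniformly in the sense of Lemmas~\ref{lemma: xx-Exx}--\ref{lemma: X(Lam-Lam)X}.

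\emph{Step 1: single-segment bound.} I would write $\wh\Gamma^{(k)}_{G,a,b}=\wh H_k^\trans\Gamma^{(k)}_{G,(j)}\wh H_k+R_{k,a,b}$ and aim to show $\|R_{k,a,b}\|=\cO_P(T^{-1/2}+p^{-1})$. The starting point is the decomposition in the proof of Lemma~\ref{lemma: GG-EGG_generic_decomp}, which expresses the remainder as
\[
p_k^{-1}\wh\Lambda_k^\trans\bigl(\cJ^{(1)}_{k,a,b}+\cJ^{(2)}_{k,a,b}+\cJ^{(3)}_{k,a,b}\bigr)\wh\Lambda_k+\cJ^{(4)}_{k,a,b}.
\]
The terms are handled as follows.
\begin{itemize}
\item The $\cJ^{(1)}$ term is $\cO_P((b-a)^{-1/2})=\cO_P(T^{-1/2})$, arguing as in \eqref{eqn: hat_Gamma_diff_step2}.
\item The $\cJ^{(2)}$ term is $\cO_P(\alpha_{T,p})$ by \eqref{eqn: hat_Gamma_diff_step3}.
\item The $\cJ^{(4)}$ term is $\cO_P(\alpha_{T,p})$ by Lemma~\ref{lemma: loading_prod_rate}~(ii) and $\|\Gamma^{(k)}_{G,(j)}\|=\cO(1)$.
\item The definition of $\alpha_{T,p}$ in \eqref{eq:alpha:rate} gives $\alpha_{T,p}\lesssim T^{-1/2}+p^{-1}$, so the previous two terms are also of the target order.
\end{itemize}
The term $\cJ^{(3)}$ no longer cancels, because we take no difference across the split point. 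In Lemma~\ref{lemma: gen_sample_Hpopu_error} it vanished through such a difference, and Lemma~\ref{lemma: E(xx)}~(ii) alone only gives $\cO(1)$.

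\emph{Step 2: the noise-bias term, which I expect to be the main obstacle.} I would replace $\wh\Lambda_k$ by $\Lambda_k\wh H_k$; the cost is controlled by $H_3$ from Lemma~\ref{lemma: loading_prod_rate}~(ii) times $\|\cJ^{(3)}\|_F=\cO(1)$, i.e.\ $\cO_P(\alpha_{T,p})$. It then remains to bound
\[
p^{-2}\Lambda_k^\trans\E\bigl(E_{k,t}\Lambdamk\Lambdamk^\trans E_{k,t}^\trans\bigr)\Lambda_k=p^{-2}\E(g_tg_t^\trans),
\qquad g_t=\mat_k\bigl(\cE_t\times_{\ell=1}^K\Lambda_\ell^\trans\bigr).
\]
Writing $\vec(g_t)=(\otimes_\ell\Lambda_\ell)^\trans\vec(\cE_t)$, its covariance has norm at most $\|\otimes_\ell\Lambda_\ell\|^2\,\|\cov(\vec(\cE_t))\|\lesssim p$. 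To justify the last factor, I would use Assumption~\ref{assum: noise}: $\|A_{e,k}\|_1,\|A_{e,k}\|_\infty=\cO(1)$ and $\Sigma_\epsilon$ is bounded, so $\|\cov(\vec\cE_t)\|=\cO(1)$. This gives $\cO(1/p)$, so that $\|R_{k,a,b}\|=\cO_P(T^{-1/2}+p^{-1})$.

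\emph{Step 3: normalisation.} By Assumption~\ref{assum: trans_mat_alt}~(i), $\tr(\Gamma^{(k)}_{G,(j)})=\|A_{j,\text{-}k}\|_F^2\|A_{j,k}\|_F^2$ is bounded away from zero and infinity. Since $\wh H_k\wh H_k^\trans=I+\cO_P(\alpha_{T,p})$,
\[
\tr(\wh H_k^\trans\Gamma\wh H_k)=\tr(\Gamma\wh H_k\wh H_k^\trans)=\tr(\Gamma)+\cO_P(\alpha_{T,p}).
\]
Consequently $\tr(\wh\Gamma^{(k)}_{G,a,b})=\tr(\Gamma^{(k)}_{G,(j)})+\cO_P(T^{-1/2}+p^{-1})$, and it is bounded away from zero with probability tending to one. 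The trace scaling is then a smooth map near $\Gamma^{(k)}_{G,(j)}$, so it preserves the rate. This holds whether the factor is $\tr(\cdot)^{-1}$ as in \eqref{eqn: scaled_mode_change} or $\tr(\cdot)$ as written in the lemma; the argument is the same. The map then carries $\wh H_k^\trans\Gamma\wh H_k$ to $\wh H_k^\trans(\text{scaled }\Gamma)\wh H_k$ up to an $\cO_P(\alpha_{T,p})$ error.

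Finally, I would take the difference of the two scaled segment matrices (segments $j$ and $j+1$), which gives $\wh H_k^\trans\Xi_j^{(k)}\wh H_k+\cO_P(T^{-1/2}+p^{-1})$. Since only finitely many tuples $(j,a,b,k)$ are involved, the maximum over them inherits the same rate.
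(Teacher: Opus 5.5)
Your argument is correct, but it is organised differently from the paper's, and the difference matters.

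\textbf{The paper's route.} It never bounds a single segment. It first controls the error of the \emph{difference} $\wh\Gamma^{(k)}_{G,a_j,b_{j+1}}-\wh\Gamma^{(k)}_{G,a_{j-1},b_j}$ against $\wh H_k^\trans(\Gamma^{(k)}_{G,(j+1)}-\Gamma^{(k)}_{G,(j)})\wh H_k$ (Lemma~\ref{lemma: gen_sample_Hpopu_error}~(ii)). There the noise-bias term $\cJ^{(3)}$ cancels exactly by stationarity (Lemma~\ref{lemma: E(xx)}~(iv)). It then pushes this bound through the trace normalisation using the auxiliary inequality \eqref{eqn: trace_diff_aux}.

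\textbf{Your route.} You bound each segment separately. This forces you to show that the common bias $N:=p_k^{-1}\wh\Lambda_k^\trans\cJ^{(3)}_{k,a,b}\wh\Lambda_k$ is itself $\cO_P(T^{-1/2}+p^{-1})$. Your Step~2 does this: it swaps $\wh\Lambda_k$ for $\Lambda_k\wh H_k$ and then uses $\|\cov(\vec(\cE_t))\|=\cO(1)$ together with $\|\otimes_\ell\Lambda_\ell\|^2=p$. Passing from $\cov(\vec(g_t))$ to the partial trace $\E(g_tg_t^\trans)$ costs only the constant factor $\rmk$.

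\textbf{Why the extra step is needed.} Inequality \eqref{eqn: trace_diff_aux} does not hold in the generality stated. After dividing by two different traces, a common additive term no longer cancels. Take $C_1=\diag(1,0)$, $C_2=I_2$, $N=\diag(0,1)$ and $B_i=C_i+N$. Then $(B_1-C_1)-(B_2-C_2)=0$, yet
\[
\Bigl(\tfrac{B_1}{\tr(B_1)}-\tfrac{C_1}{\tr(C_1)}\Bigr)-\Bigl(\tfrac{B_2}{\tr(B_2)}-\tfrac{C_2}{\tr(C_2)}\Bigr)=\diag(-1/3,1/3).
\]
So the paper's difference-then-normalise route implicitly requires $\|N\|$ to be small, which is exactly your Step~2.

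Your Step~3 also makes explicit the replacement of $\tr(\wh H_k^\trans\Gamma\wh H_k)$ by $\tr(\Gamma)$ via $\wh H_k\wh H_k^\trans=I_{r_k}+\cO_P(\alpha_{T,p})$, which the paper only alludes to. The paper's organisation buys reuse of Lemma~\ref{lemma: gen_sample_Hpopu_error} with no new covariance computation. Yours buys a self-contained and valid passage through the nonlinear trace normalisation.
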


\begin{proof}[Proof of Lemma~\ref{lemma: rate_mode_change}]
As an auxiliary result, for any symmetric and positive semi-definite matrices $B$ and $C$ of fixed dimensions and with $\tr(B), \tr(C) \asymp_P 1$,
we have
\begin{align*}
    \left\| \frac{B}{\tr(B)}- \frac{C}{\tr(C)} \right\| = \left\| \frac{B-C}{\tr(B)} + C\left(\frac{1}{\tr(B)} - \frac{1}{\tr(C)}\right) \right\| 
    \lesssim_P \|B-C\| .
\end{align*}
Similarly, for any symmetric and positive semi-definite matrices $B_1$, $B_2$, $C_1$ and $C_2$ of fixed dimensions and with $\tr(B_1) \asymp_P \tr(B_2) \asymp_P 1$ and $\tr(C_1)$ and $\tr(C_2)$ are both positive with probability approaching one:
\begin{align}
    &\;\quad
    \left\| \frac{B_1}{\tr(B_1)}- \frac{C_1}{\tr(C_1)} + \frac{B_2}{\tr(B_2)}- \frac{C_2}{\tr(C_2)} \right\| \notag \\
    &=
    \left\| \frac{B_1-C_1}{\tr(B_1)} + C_1\left(\frac{1}{\tr(B_1)} - \frac{1}{\tr(C_1)}\right) + \frac{B_2-C_2}{\tr(B_2)} + C_2\left(\frac{1}{\tr(B_2)} - \frac{1}{\tr(C_2)}\right) \right\| \notag \\
    &\lesssim_P
    \|B_1 - C_1 + B_2 - C_2\| .
    \label{eqn: trace_diff_aux}
\end{align}

Next, fix any $k \in [K]$.
Notice that by~\ref{Theta:two}--\ref{Theta:three}, Definition~\ref{def: jump},~\eqref{eqn: cov_G_signal} and~\eqref{eqn: gamma_G}, we have, for all $j \in [q + 1]$, $a_j \in \Theta^{-}_j$ and $b_j \in \Theta^+_j$,
\begin{align}
\Gamma_{G, a_{j-1}, b_j}^{(k)} &= \Gamma_{G,(j)}^{(k)} 
= \E\Big\{ \mat_k(\cG_{\theta_j}) \mat_k(\cG_{\theta_j})^\trans \Big\}
\nonumber \\
&= \tr\Big\{ \Big( \otimes_{i = K, i \ne k}^1 A_{j,i} \Big)^\trans \Big( \otimes_{i = K, i \ne k}^1 A_{j,i} \Big) \Big\} \cdot A_{j,k} A_{j,k}^\trans .
\nonumber 
\end{align}
Hence from~\eqref{eqn: scaled_mode_change}, we may write, for $j\in[q]$,
\begin{align*}
    \Xi_j^{(k)} 
    &=
    \tr\Big( {\Gamma}_{G, a_j, b_{j+1}}^{(k)} \Big)^{-1} {\Gamma}_{G, a_j, b_{j+1}}^{(k)} - \tr\Big( {\Gamma}_{G, a_{j - 1}, b_j}^{(k)} \Big)^{-1} {\Gamma}_{G, a_{j - 1}, b_j}^{(k)} .
\end{align*}
Then we have
\begin{align*}
    &\; \quad \max_{j \in [q]} \max_{a_j \in \Theta^+_j} \max_{b_j \in \Theta^-_j} \max_{k \in [K]}
    \left\| \wh\Xi^{(k)}_{a_{j - 1}, b_j, a_j, b_{j + 1}} -\wh{H}_k^\trans \Xi_j^{(k)} \wh{H}_k \right\| \\
    &= \max_{j \in [q]} \max_{a_j \in \Theta^+_j} \max_{b_j \in \Theta^-_j} \max_{k \in [K]}
    \Bigg\| \left\{ \tr\Big( \wh{\Gamma}_{G,a_j, b_{j + 1}}^{(k)} \Big)^{-1} \wh{\Gamma}_{G,a_j, b_{j + 1}}^{(k)} - \tr\Big( \wh{\Gamma}_{G, a_{j - 1}, b_j}^{(k)} \Big)^{-1} \wh{\Gamma}_{G, a_{j - 1}, b_j}^{(k)} \right\} \\
    &\;\quad 
    - \wh{H}_k^\trans \left\{ \tr\Big( {\Gamma}_{G,a_j, b_{j + 1}}^{(k)} \Big)^{-1} {\Gamma}_{G,a_j, b_{j + 1}}^{(k)} - \tr\Big( {\Gamma}_{G, a_{j - 1}, b_j}^{(k)} \Big)^{-1} {\Gamma}_{G, a_{j - 1}, b_j}^{(k)} \right\} \wh{H}_k \Bigg\| \\
    &\lesssim_P \max_{j \in [q]} \max_{a_j \in \Theta^+_j} \max_{b_j \in \Theta^-_j} \max_{k \in [K]} \left\| \wh{\Gamma}_{G, a_j, b_{j+1}}^{(k)} - \wh{\Gamma}_{G, a_{j-1}, b_j}^{(k)}
    - \left( \wh{H}_k^\trans  {\Gamma}_{G, a_j, b_{j+1}}^{(k)} \wh{H}_k - \wh{H}_k^\trans {\Gamma}_{G, a_{j-1}, b_j}^{(k)} \wh{H}_k \right) \right\| \notag \\
    &=
    \cO_P\left( \frac{1}{\sqrt{T}} + \frac{1}{p} \right),  
\end{align*}
where we make use of~\eqref{eqn: trace_diff_aux}, Assumption~\ref{assum: trans_mat_alt} and the fact that $\wh{H}_k$ is asymptotically orthogonal according to Lemma~\ref{lemma: consistency_proj}, for the first inequality, and the last equality follows from Lemma~\ref{lemma: gen_sample_Hpopu_error}~(ii), which completes the proof.
\end{proof}

\subsubsection{Proof of Theorem~\ref{thm: mode_identify}}

\begin{proof}[Proof of Theorem~\ref{thm: mode_identify}]
Throughout this proof, we use that for $T$ large enough, we have
$\upsilon_T = \kappa_T^2 \le \min_{j \in [q]} \omega_j^2 \Delta_j / 2^{\mu_T}$, which is in line with~\eqref{eq:kappa:req}.
By Theorem~\ref{thm: asymp_consistency_detection}, we have
\begin{align*}
    &\;\quad
    \P\l(\wh q = q, \, \max_{j\in[q]}\, \vert \thh-\thj\vert \le \frac{T}{2^{\mu_T}} \r) 
    \geq \P\l(\wh q = q, \, \max_{j\in[q]}\, \omega_j^{2} \vert \thh-\thj\vert \le \min_{j \in [q]} \omega_j^2 \cdot \frac{T}{2^{\mu_T}} \r) \\
    &\geq
    \P\l(\wh q = q, \, \max_{j\in[q]}\, \omega_j^{2} \vert \thh-\thj\vert \le \kappa_T^2 \r) \to 1 
\end{align*}
as $\min(T, p_1, \ldots, p_K) \to \infty$.
Then, it follows that on the event $\{\wh q = q, \, \max_{j \in [q]} \omega_j^2 \vert \wh\theta_j - \theta_j \vert \le \kappa_T^2 \}$, we have 
$\wh\theta_j^{-1} \in \Theta_j^{-1}$ and $\wh\theta_j^+ \in \Theta_j^+$ for all $j \in [q]$, by their construction in~\eqref{eqn: seeded_adj_cp} and~\eqref{eq:Theta:pm}. Therefore,
\begin{align*}
    &\;\quad
    \frac{\zeta_{T, p}}{\min_{j \in [q]} \min_{k \in \cK_j} \|\wh\Xi_j^{(k)} \| } \\
    &\leq 
    \frac{\zeta_{T, p}}{\min_{j \in [q]} \min_{k \in \cK_j} \|\wh{H}_k^\trans \Xi_j^{(k)} \wh{H}_k \| - \max_{j \in [q]} \max_{k \in \cK_j} \|\wh\Xi_j^{(k)} -\wh{H}_k^\trans \Xi_j^{(k)} \wh{H}_k\|} \\
    &=
    \frac{\zeta_{T, p}}{\min_{j \in [q]} \min_{k \in \cK_j} \|\wh{H}_k^\trans \Xi_j^{(k)} \wh{H}_k \| + \cO_P(T^{-1/2} + p^{-1})} \\
    &\asymp_P 
    \frac{\zeta_{T, p}}{\min_{j \in [q]} \min_{k \in \cK_j} \|\Xi_j^{(k)} \| } = o(1) ,
\end{align*}
where the first equality holds thanks to Lemma~\ref{lemma: rate_mode_change}, the second under Assumption~\ref{assum: mode-id}, and the final under the condition imposed on $\zeta_{T, p}$. 
Similarly, it holds that
\begin{align*}
    \max_{j \in [q]} \max_{k \in [K] \setminus \cK_j} \|\wh\Xi_j^{(k)} \| 
    &\leq 
    \max_{j \in [q]} \max_{k \in [K]} \|\wh\Xi_j^{(k)} -\wh{H}_k^\trans \Xi_j^{(k)} \wh{H}_k\| = o_P(\zeta_{T, p}),
\end{align*}
which completes the proof.
\end{proof}

\clearpage

\section{Numerical experiments for threshold selection}

\subsection{Threshold \texorpdfstring{$\pi_{T,p}$}{pi(T,p)} for change point detection}
\label{app: num_det_oracle}

We simulate tensor time series under the null model~\ref{s:null} with no change points in Appendix~\ref{app: num_setting}.
Varying the dimensions and core factor tensor ranks as
\begin{align*}
T&\in\{400, 1200, 2000, 2800, 3200, 4000, 4800, 5600\},\\
(p_1,p_2,p_3)&\in\{(10,10,10), (10,10,100), (10,20,40), (20,20,20)\}, \\
(r_1,r_2,r_3) &\in \{(2,2,2), (2,3,2), (2,3,3), (3,3,3)\},
\end{align*}
while fixing $\rho_f = 0.7$, we generate $100$ realizations for each combination.
Next, for each realization, we take $\max_{(s, e] \in \M} \max_{s < \tau < e} \cT_{s, \tau, e}$, the maximum detector statistic over the collection of seeded intervals, and record the $0.9$-empirical quantiles of these maxima for each setting.
Then, we regress them (contained in the \verb+R+ object \verb+y+) onto the corresponding on functions of $T$ and $d = \sum_{k = 1}^K r_k(r_k + 1) / 2$, whose choices are motivated by the approximation of the asymptotic null distribution made in \citet{cho2026multivariate}, 
using the \verb+R+ function \verb+lm+:
\begin{verbatim}
lm(y ~ sqrt(d) + sqrt(log(T)) + log(log(T))/sqrt(log(T)) + 1/sqrt(log(T)))
\end{verbatim}
The fitted model has $R^2_{\text{adj}} = 0.9397$, and Figure~\ref{fig:det_reg} displays diagnostic plots which show that the model provides a good fit to the data. 
As the model depends only on $T$ and $d$, we use it for determining the threshold $\pi_{T, p}$.

\begin{figure}[h!t!b!]
\centering
\includegraphics[width=0.9\linewidth]{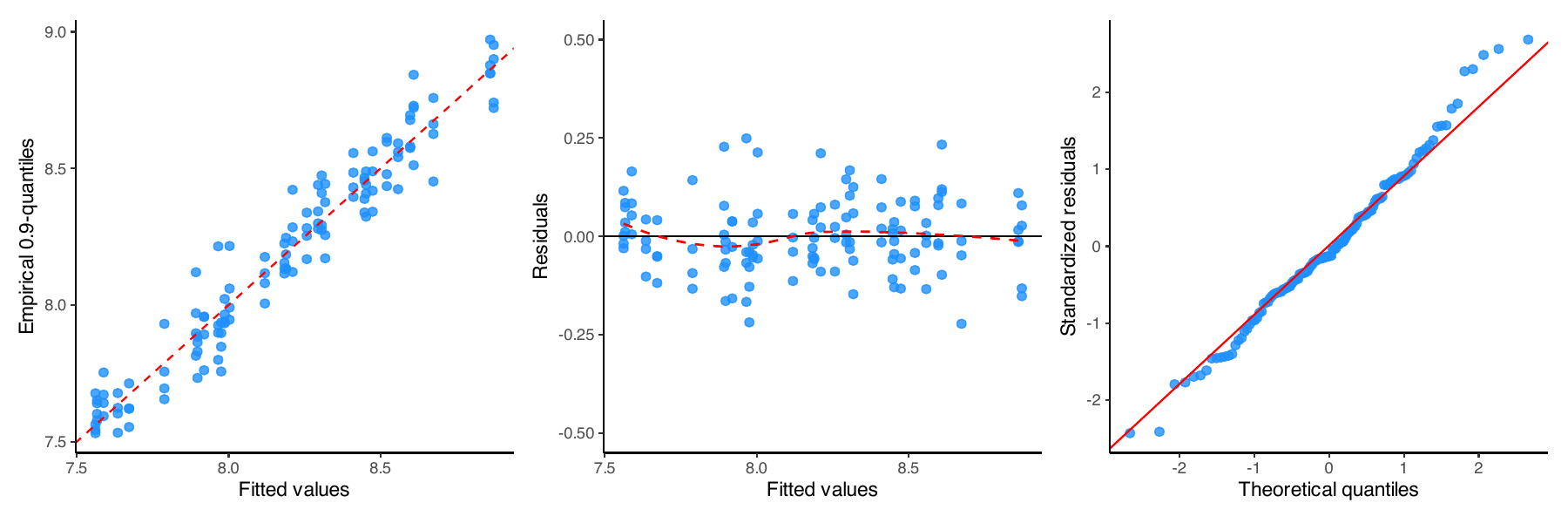}
\caption{Diagnostic plots for the model regressing the empirical 0.9-quantiles of the maximal detector statistics on the functions of $T$ and $d$. 
Empirical 0.9-quantiles vs.\ the fitted values (left), residuals vs.\ the fitted values (middle), and the normal Q-Q plot of the standardized residuals.}
\label{fig:det_reg}
\end{figure}


\subsection{Threshold \texorpdfstring{$\zeta_{T,p}$}{zeta(T,p)} for mode-identification}
\label{app: num_idt_oracle}

We calibrate the threshold for mode-identification through numerical experiments using the true change point locations; see Figures~\ref{fig: idt_trueCP_s1}--\ref{fig: idt_trueCP_s2}. Specifically, under~\ref{s:one} and~\ref{s:two}, we examine the values of $\wh\Xi_j^{(k)}/(T^{-1/2}+p^{-1})$ computed with the knowledge of the true change point locations (i.e.\ with $\wh\theta^\pm_j = \theta_j$), over varying $T$ and $(p_1, p_2, p_3)$. The empirical $0.99$-quantile of the pooled statistics corresponding to the modes for which the changes are not mode-identifiable, is approximately $3.47$ under~\ref{s:one} and $3.55$ under~\ref{s:two}. 
This motivates use to set $\zeta_{T,p} = 3.5 (T^{-1/2}+p^{-1}) $ and declare mode $k$ to be identified whenever $\Vert \wh\Xi_j^{(k)} \Vert \ge \zeta_{T,p}$.

\begin{figure}[h!t!b!]
\centering
\includegraphics[width=1\linewidth]{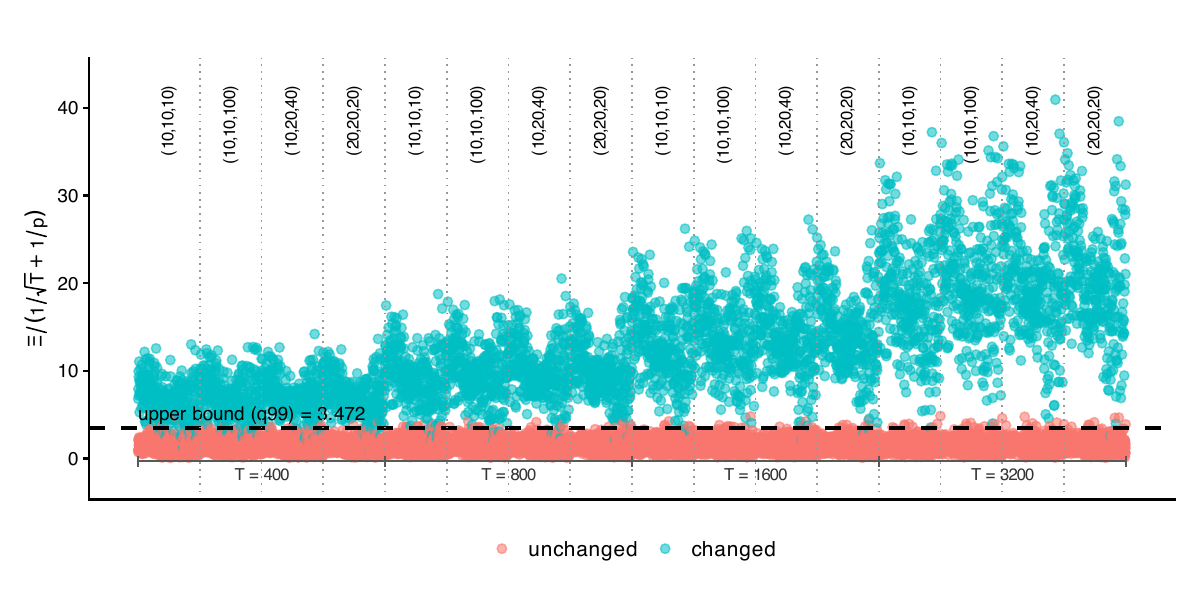}
\caption{\ref{s:one} Pooled mode-identification results using the true change point locations. For each true change point and each mode, the quantity $\Vert \wh\Xi_j^{(k)}\Vert / (T^{-1/2} + p^{-1})$ is plotted after pooling across all three change points, sample sizes $T \in \{400, 800, 1600, 3200\}$, and tensor dimensions $(p_1,p_2,p_3) \in \{(10,10,10), (10,10,100), (10,20,40), (20,20,20)\}$. Points are colored according to whether the corresponding mode $k$ is mode-identifiable at the change point. The dashed horizontal line denotes the empirical $0.99$-quantile of the pooled statistics from the modes for which the change is not mode-identifiable.}
\label{fig: idt_trueCP_s1}
\end{figure}

\begin{figure}[h!t!b!]
\centering
\includegraphics[width=1\linewidth]{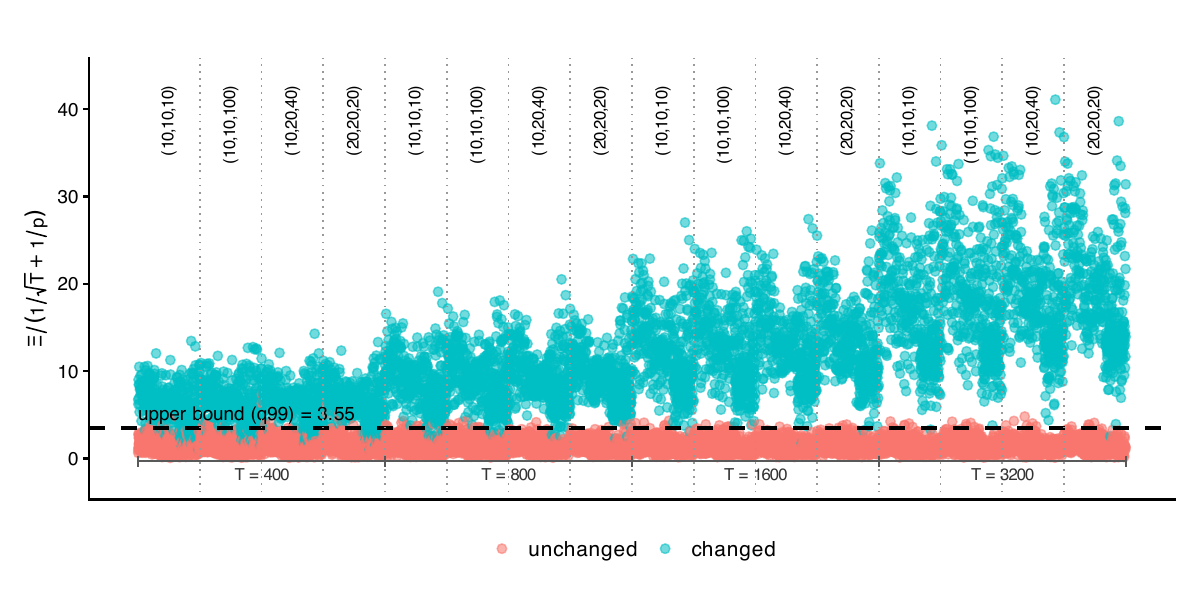}
\caption{\ref{s:two} Pooled mode-identification results using the true change point locations. For each true change point and each mode, the quantity $\Vert \wh\Xi_j^{(k)}\Vert / (T^{-1/2} + p^{-1})$ is plotted after pooling across all three change points, sample sizes $T \in \{400, 800, 1600, 3200\}$, and tensor dimensions $(p_1,p_2,p_3) \in \{(10,10,10), (10,10,100), (10,20,40), (20,20,20)\}$. Points are colored according to whether the corresponding mode $k$ is mode-identifiable at the change point. The dashed horizontal line denotes the empirical $0.99$-quantile of the pooled statistics from the modes for which the change is not mode-identifiable.}
\label{fig: idt_trueCP_s2}
\end{figure}

\clearpage

\section{Complete numerical experiments}\label{app: num}

\subsection{Simulation scenarios}\label{app: num_setting}

Throughout, we consider order-$3$ tensor time series $\{ \cX_t \}_{t = 1}^T$ generated under~\eqref{eqn: tfm_change}--\eqref{eqn: tfm_change_rewrite} with $(r_1,r_2,r_3) = (3,3,3)$.
Also, $\Lambda_{1, k}, \, k \in [3]$, have their entries generated independently from the uniform distribution $\cU(-1, 1)$.
The factor process is generated as
$\vec(\cF_t) =  \rho_f\,\vec(\cF_{t-1}) + \varepsilon_t$ with $\varepsilon_t \stackrel{\mathrm{i.i.d.}}{\sim}
\cN_r({0},\, (1- \rho_f^{2})I_r)$, 
and the idiosyncratic tensor $\cE_t$ has i.i.d.\ $\cN(0,1)$ entries, with $\rho_f \in \{0, 0.7\}$ in scenarios~\ref{s:null}--\ref{s:two} below, while we focus on the case where $\rho_f = 0$ under~\ref{s:three}.

\begin{enumerate}[label = (S\arabic*)]
\setcounter{enumi}{-1}
\item \label{s:null} \textbf{Null scenario.}
We consider the tensor factor model as in~\eqref{eqn: tfm_change} with $\Theta = \emptyset$, i.e.\ the loading matrix is time-invariant.

\item \label{s:one} \textbf{Single-mode change at each change point.}
Extending the data generating processes from \cite{Duanetal2023} and \cite{barigozzi2025moving} to the tensor setting, we consider order-$3$ tensor time series with $q = 3$ change points and the following transformation matrices (see~\eqref{eqn: tfm_change_rewrite}):
\begin{equation*}
A_{1,1}=\begin{bmatrix}
0.5&0&0\\
a_{1,21}&1 &0\\
a_{1,31}&a_{1,32}&1.5
\end{bmatrix},\quad A_{2,2}=\begin{bmatrix}
1&0&0\\
0&1&0\\
0&0&0
\end{bmatrix}, \text{\ and\ } A_{3,3}= \begin{bmatrix} a_{3, 11} & a_{3, 12} & a_{3, 13} \\
a_{3, 21} & a_{3, 22} & a_{3, 23} \\
a_{3, 31} & a_{3, 32} & a_{3, 33} 
\end{bmatrix},
\end{equation*}
where $a_{1,\ell h} \stackrel{\mathrm{i.i.d.}}{\sim} \cN(0,\, 1)$ for $(\ell, h) \in \{(2,1), (3,1), (3,2)\}$ and $a_{3,\ell h} \stackrel{\mathrm{i.i.d.}}{\sim} \cN(0,\, 1/3)$ for $\ell ,h \in[3]$. Then at each $j \in [3]$,  we set $\Lambda_{j + 1, k} = \Lambda_{j, k} A_{j, k}$ for $k = j$ and $\Lambda_{j + 1, k} = \Lambda_{j, k}, \, k \ne j$, i.e.\ $\cK_j = \{j\}$ for all $j \in [3]$.
We present the results when $\Theta = \{\lfloor 0.25T\rfloor,\lfloor 0.5T\rfloor,\lfloor 0.625T\rfloor\}$ with serial correlation $\rho_f = 0.7$ in the main text (Section~\ref{sec: num_detect}).

\item \label{s:two} \textbf{Multi-mode change at each change point.}
We modify the scenario in~\ref{s:one} by allowing multiple modes to change at $\theta_1$ and $\theta_3$, including mode-(un)identifiable ones. For this, we additionally set $\Lambda_{2,3} = \Lambda_{1,3} \cdot 3I_3$ and $\Lambda_{4, 2} = \Lambda_{3,2}A_{3,2}$ with $A_{3,2} =\diag(1, 0.6, 0.2)$, while all other loading matrices are generated as in~\ref{s:one}. 
Then, we have $\cK_1 = \{1\}, \, \cK_2 = \{2\}$ and $\cK_3 = \{2, 3\}$. 

\item \label{s:three} \textbf{Single change point detection.}
To ease the investigation of the downstream task of mode-wise loading space estimation after change point detection and mode-identification in Appendix~\ref{app: num_reest}, we consider the situation where there is a single change point. We apply the same rank-deficient transformation matrix as in~\ref{s:one} to the first mode only, so that $\Lambda_{2,1} = \Lambda_{1,1} A_{2,2}$ while $\Lambda_{2,k} = \Lambda_{1,k} I_3$ for $k\in \{2,3\}$, with $\cK_1 = \{1\}$. 
\end{enumerate}

With varying $T \in \{400, 800, 1600, 3200\}$, for~\ref{s:null}--\ref{s:two}, we consider dimensions $$(p_1,p_2,p_3) \in \{(10,10,10),(10,10,100),(10,20,40),(20,20,20)\},$$ while for~\ref{s:three}, we consider $$(p_1,p_2,p_3) \in \{(10,10,10),(10,10,100),(10,100,10),(100,10,10)\}.$$
We consider both the cases when the change points are equally ($\Theta = \{ \lfloor 0.25T \rfloor,\lfloor 0.5T \rfloor,\lfloor 0.75T \rfloor \}$) and unequally ($\Theta =\{ \lfloor 0.25T \rfloor,\lfloor 0.5T \rfloor,\lfloor 0.625T \rfloor \}$) spaced in~\ref{s:one} and~\ref{s:two}.
In~\ref{s:three}, we set $\Theta = \{ \lfloor 0.5T \rfloor \}$.

Throughout, we generate $N = 100$ realizations for each setting.

\subsection{Results for change point detection}\label{app: num_det}

For each $n \in [N]$, denote by $\wh{q}^{(n)}$ and $\wh{\theta}_j^{(n)}$ the number of change point estimators and their locations in the $n$-th replication. We report the distribution of $\wh{q}^{(n)} - q$, as well as the accuracy in estimating each $\theta_j$ measured as the proportion of replications over which it has an estimator within the distance $2\log(T)$, namely
\begin{align}
\text{Accuracy}_j = \frac{1}{N}\sum_{n=1}^N \mathbb{I}\left\{ \min_{1\le \ell \le \wh{q}^{(n)}}\Big|\widehat{\theta}_{\ell}^{(n)}-\theta_j \Big| \le 2\log(T) \right\} \nonumber,
\end{align}
following \cite{li2023detection}. This criterion is defined point-wise for each true change point and does not rely on the estimated number of change points being the same as the true number.
We additionally report the accuracy of the change point estimators conditional on the successful detection of all three change points, by considering the realizations where $\wh q^{(n)} = \wh q$, and $\wh\theta^{(n)}_j \in ((\theta_{j-1} + \theta_j)/2, (\theta_j + \theta_{j + 1})/2]$ for all $j \in [q]$; we denote the index set of those realizations by $\wh{\cD} \subset [N]$.
We also report the runtime in the format of ``average $
\pm$ standard deviation'' over $100$ realizations per configuration.
As in Section~\ref{sec: simulation}, we report the results from applying the binary segmentation procedure of \cite{Baietal2024}, referred to as \enquote{LR}, which recursively performs likelihood-ratio test, to the vectorized time series data.

\subsubsection{\ref{s:null} Null scenarios}\label{app: num_det_null}

Tables~\ref{tab: CPnull_T400}--\ref{tab: CPnull_T3200} report the summary of (spuriously) detected change points under the null scenario where there is no change point.
We observe that while its performance is influenced by the degree of serial correlations, generally TFMseg returns few false positives across all the settings in consideration. 

\begin{table}[h!t!b!]
\caption{\ref{s:null} Summary of change point estimators returned by TFMseg and LR when $T = 400$ with varying $(p_1,p_2,p_3)$, based on $100$ realizations.}
\label{tab: CPnull_T400}
\centering
\setlength{\tabcolsep}{5pt}
\begin{tabular}[t]{clcccccccc}
\toprule
\multicolumn{2}{c}{ } & \multicolumn{4}{c}{$\rho_f = 0.7$} & \multicolumn{4}{c}{$\rho_f = 0$} \\
\cmidrule(l{3pt}r{3pt}){3-6} \cmidrule(l{3pt}r{3pt}){7-10}
\multicolumn{2}{c}{ } & \multicolumn{3}{c}{$\wh q - q$} &  &\multicolumn{3}{c}{$\wh q - q$} \\
Dimensions & Method & $0$ & $1$ & $\geq 2$ & Runtime (s) & $0$ & $1$ & $\geq 2$ & Runtime (s)\\
\cmidrule(lr){1-2} \cmidrule(lr){3-5} \cmidrule(lr){6-6} \cmidrule(lr){7-9} \cmidrule(lr){10-10}
(10,10,10) & TFMseg & 0.87 & 0.12 & 0.01 & 1.26 $\pm$ 0.22 & 1 & 0 & 0 & 1.23 $\pm$ 0.11\\
 & LR & 0.99 & 0.01 & 0 & 52.22 $\pm$ 5.28 & 0.99 & 0.01 & 0 & 50.65 $\pm$ 4.04\\
(10,10,100) & TFMseg & 0.90 & 0.10 & 0 & 4.95 $\pm$ 0.45 & 1 & 0 & 0 & 5.38 $\pm$ 0.34\\
 & LR & 0.98 & 0.01 & 0.01 & 52.60 $\pm$ 4.85 & 1 & 0 & 0 & 53.22 $\pm$ 3.08\\
(10,20,40) & TFMseg & 0.89 & 0.10 & 0.01 & 4.61 $\pm$ 0.33 & 0.99 & 0.01 & 0 & 4.80 $\pm$ 0.45\\
 & LR & 0.98 & 0 & 0.02 & 52.38 $\pm$ 4.74 & 1 & 0 & 0 & 53.20 $\pm$ 3.62\\
(20,20,20) & TFMseg & 0.86 & 0.14 & 0 & 4.46 $\pm$ 0.40 & 0.99 & 0.01 & 0 & 4.63 $\pm$ 0.39\\
 & LR & 0.99 & 0 & 0.01 & 51.86 $\pm$ 4.69 & 0.99 & 0.01 & 0 & 52.24 $\pm$ 2.16\\
\bottomrule
\end{tabular}
\end{table}

\begin{table}[h!t!b!]
\caption{\ref{s:null} Summary of change point estimators returned by TFMseg and LR when $T = 800$ with varying $(p_1,p_2,p_3)$, based on $100$ realizations.}
\label{tab: CPnull_T800}
\centering
\setlength{\tabcolsep}{5pt}
\begin{tabular}[t]{clcccccccc}
\toprule
\multicolumn{2}{c}{ } & \multicolumn{4}{c}{$\rho_f = 0.7$} & \multicolumn{4}{c}{$\rho_f = 0$} \\
\cmidrule(l{3pt}r{3pt}){3-6} \cmidrule(l{3pt}r{3pt}){7-10}
\multicolumn{2}{c}{ } & \multicolumn{3}{c}{$\wh q - q$} &  &\multicolumn{3}{c}{$\wh q - q$} \\
Dimensions & Method & $0$ & $1$ & $\geq 2$ & Runtime (s) & $0$ & $1$ & $\geq 2$ & Runtime (s)\\
\cmidrule(lr){1-2} \cmidrule(lr){3-5} \cmidrule(lr){6-6} \cmidrule(lr){7-9} \cmidrule(lr){10-10}
(10,10,10) & TFMseg & 0.96 & 0.04 & 0 & 2.22 $\pm$ 0.24 & 0.97 & 0.03 & 0 & 2.29 $\pm$ 0.21\\
 & LR & 0.99 & 0 & 0.01 & 51.90 $\pm$ 5.86 & 0.99 & 0.01 & 0 & 53.58 $\pm$ 5.04\\
(10,10,100) & TFMseg & 0.97 & 0.03 & 0 & 8.83 $\pm$ 1.03 & 1 & 0 & 0 & 9.57 $\pm$ 0.61\\
 & LR & 0.99 & 0.01 & 0 & 59.56 $\pm$ 4.91 & 1 & 0 & 0 & 57.79 $\pm$ 3.71\\
(10,20,40) & TFMseg & 0.95 & 0.05 & 0 & 8.25 $\pm$ 0.58 & 0.99 & 0.01 & 0 & 8.56 $\pm$ 0.78\\
 & LR & 0.98 & 0 & 0.02 & 57.11 $\pm$ 4.91 & 1 & 0 & 0 & 60.96 $\pm$ 4.95\\
(20,20,20) & TFMseg & 0.94 & 0.06 & 0 & 8.07 $\pm$ 0.79 & 1 & 0 & 0 & 8.49 $\pm$ 0.83\\
 & LR & 0.99 & 0 & 0.01 & 58.36 $\pm$ 5.27 & 1 & 0 & 0 & 58.02 $\pm$ 3.34\\
\bottomrule
\end{tabular}
\end{table}

\begin{table}[h!t!b!]
\caption{\ref{s:null} Summary of change point estimators returned by TFMseg and LR when $T = 1600$ with varying $(p_1,p_2,p_3)$, based on $100$ realizations.}
\label{tab: CPnull_T1600} 
\centering
\setlength{\tabcolsep}{5pt}
\begin{tabular}[t]{clcccccccc}
\toprule
\multicolumn{2}{c}{ } & \multicolumn{4}{c}{$\rho_f = 0.7$} & \multicolumn{4}{c}{$\rho_f = 0$} \\
\cmidrule(l{3pt}r{3pt}){3-6} \cmidrule(l{3pt}r{3pt}){7-10}
\multicolumn{2}{c}{ } & \multicolumn{3}{c}{$\wh q - q$} &  &\multicolumn{3}{c}{$\wh q - q$} \\
Dimensions & Method & $0$ & $1$ & $\geq 2$ & Runtime (s) & $0$ & $1$ & $\geq 2$ & Runtime (s)\\
\cmidrule(lr){1-2} \cmidrule(lr){3-5} \cmidrule(lr){6-6} \cmidrule(lr){7-9} \cmidrule(lr){10-10}
(10,10,10) & TFMseg & 0.96 & 0.03 & 0.01 & 3.75 $\pm$ 0.36 & 0.97 & 0.03 & 0 & 4.03 $\pm$ 0.37\\
 & LR & 1 & 0 & 0 & 58.58 $\pm$ 6.66 & 1 & 0 & 0 & 56.59 $\pm$ 3.86\\
(10,10,100) & TFMseg & 0.97 & 0.03 & 0 & 17.23 $\pm$ 2.09 & 0.97 & 0.03 & 0 & 18.60 $\pm$ 1.20\\
 & LR & 0.99 & 0 & 0.01 & 82.15 $\pm$ 7.71 & 1 & 0 & 0 & 79.15 $\pm$ 5.30\\
(10,20,40) & TFMseg & 0.96 & 0.04 & 0 & 14.81 $\pm$ 0.99 & 0.99 & 0.01 & 0 & 15.31 $\pm$ 1.26\\
 & LR & 0.99 & 0.01 & 0 & 79.96 $\pm$ 7.10 & 1 & 0 & 0 & 80.04 $\pm$ 7.32\\
(20,20,20) & TFMseg & 0.93 & 0.07 & 0 & 14.76 $\pm$ 1.07 & 0.97 & 0.03 & 0 & 15.79 $\pm$ 1.28\\
 & LR & 1 & 0 & 0 & 80.06 $\pm$ 6.81 & 1 & 0 & 0 & 80.73 $\pm$ 5.87\\
\bottomrule
\end{tabular}
\end{table}

\begin{table}[h!t!b!]
\caption{\ref{s:null} Summary of change point estimators returned by TFMseg and LR when $T = 3200$ with varying $(p_1,p_2,p_3)$, based on $100$ realizations.}
\label{tab: CPnull_T3200} 
\centering
\setlength{\tabcolsep}{5pt}
\begin{tabular}[t]{clcccccccc}
\toprule
\multicolumn{2}{c}{ } & \multicolumn{4}{c}{$\rho_f = 0.7$} & \multicolumn{4}{c}{$\rho_f = 0$} \\
\cmidrule(l{3pt}r{3pt}){3-6} \cmidrule(l{3pt}r{3pt}){7-10}
\multicolumn{2}{c}{ } & \multicolumn{3}{c}{$\wh q - q$} &  &\multicolumn{3}{c}{$\wh q - q$} \\
Dimensions & Method & $0$ & $1$ & $\geq 2$ & Runtime (s) & $0$ & $1$ & $\geq 2$ & Runtime (s)\\
\cmidrule(lr){1-2} \cmidrule(lr){3-5} \cmidrule(lr){6-6} \cmidrule(lr){7-9} \cmidrule(lr){10-10}
(10,10,10) & TFMseg & 0.92 & 0.08 & 0 & 6.30 $\pm$ 0.60 & 0.98 & 0.01 & 0.01 & 6.58 $\pm$ 0.51\\
 & LR & 1 & 0 & 0 & 79.17 $\pm$ 8.99 & 1 & 0 & 0 & 105.38 $\pm$ 76.81\\
(10,10,100) & TFMseg & 0.95 & 0.05 & 0 & 33.94 $\pm$ 7.34 & 0.98 & 0.02 & 0 & 34.75 $\pm$ 2.08\\
 & LR & 1 & 0 & 0 & 213.72 $\pm$ 23.36 & 0.99 & 0.01 & 0 & 215.61 $\pm$ 17.45\\
(10,20,40) & TFMseg & 0.90 & 0.10 & 0 & 26.82 $\pm$ 1.82 & 1 & 0 & 0 & 27.49 $\pm$ 3.59\\
 & LR & 1 & 0 & 0 & 206.41 $\pm$ 22.64 & 1 & 0 & 0 & 210.97 $\pm$ 24.85\\
(20,20,20) & TFMseg & 0.93 & 0.06 & 0.01 & 27.55 $\pm$ 1.96 & 0.99 & 0.01 & 0 & 28.42 $\pm$ 2.30\\
 & LR & 0.99 & 0.01 & 0 & 207.49 $\pm$ 21.87 & 1 & 0 & 0 & 216.98 $\pm$ 22.20\\
\bottomrule
\end{tabular}
\end{table}

\clearpage 

\subsubsection{\ref{s:one} Single-mode change with equal-spaced change points}\label{app: num_det_bal}

We consider when the change points are equally spaced with $\Theta = \big\{\lfloor 0.25T \rfloor, \lfloor 0.5T \rfloor, \lfloor 0.75T \rfloor\big\}$ under~\ref{s:one}, see Tables~\ref{tab: bal_rnull_T400}--\ref{tab: bal_rnull_T3200} and Figure~\ref{fig: subplot_rnull} for the case of serially dependent data with $\rho_f = 0.7$, and Tables~\ref{tab: bal_rnull_indep_T400}--\ref{tab: bal_rnull_indep_T3200} and Figure~\ref{fig: subplot_rnull_indep} for the case when the data are serially uncorrelated.

Figure~\ref{fig:runtime} compares the runtime of TFMseg and LR. To facilitate visual comparison across settings with substantially different computational costs, the $y$-axis is displayed on a logarithmic scale, where it is evident that the runtime of TFMseg is a fraction of the runtime of LR.


\begin{table}[h!t!b!]
\caption{\ref{s:one} Summary of change point estimators returned by TFMseg and LR when $\Theta  = \big\{\lfloor 0.25T \rfloor, \lfloor 0.5T \rfloor, \lfloor 0.75T \rfloor\big\}$, $\rho_f = 0.7$,  $T=400$ and varying $(p_1,p_2,p_3)$, based on 100 realizations.}
\label{tab: bal_rnull_T400} 
\centering
\setlength{\tabcolsep}{5pt}
\begin{tabular}[t]{clccccccccc}
\toprule
\multicolumn{2}{c}{ } & \multicolumn{5}{c}{$\wh{q}-q$} & \multicolumn{3}{c}{Accuracy} & \multicolumn{1}{c}{ } \\
\cmidrule(lr){3-7} \cmidrule(lr){8-10} 
Dimensions & Method & $\leq -2$ & $-1$ & $0$ & $1$ & $\geq 2$ & $j=1$ & $j=2$ & $j=3$ & Runtime (s)\\
\cmidrule(lr){1-2} \cmidrule(lr){3-7} \cmidrule(lr){8-10} \cmidrule(lr){11-11} 
(10,10,10) & TFMseg & 0.21 & 0.53 & 0.26 & 0 & 0 & 0.86 & 0.57 & 0.38 & 1.18 $\pm$ 0.18\\
 & LR & 0.64 & 0.30 & 0.06 & 0 & 0 & 0.22 & 0.76 & 0.42 & 50.95 $\pm$ 5.25\\
(10,10,100) & TFMseg & 0.07 & 0.60 & 0.33 & 0 & 0 & 0.87 & 0.71 & 0.40 & 4.70 $\pm$ 0.19\\
 & LR & 0.67 & 0.24 & 0.09 & 0 & 0 & 0.26 & 0.58 & 0.52 & 51.40 $\pm$ 3.93\\
(10,20,40) & TFMseg & 0.15 & 0.54 & 0.31 & 0 & 0 & 0.87 & 0.57 & 0.46 & 4.23 $\pm$ 0.24\\
 & LR & 0.61 & 0.32 & 0.06 & 0 & 0.01 & 0.21 & 0.76 & 0.44 & 51.32 $\pm$ 3.26\\
(20,20,20) & TFMseg & 0.12 & 0.54 & 0.34 & 0 & 0 & 0.92 & 0.62 & 0.45 & 4.22 $\pm$ 0.17\\
 & LR & 0.59 & 0.40 & 0.01 & 0 & 0 & 0.21 & 0.77 & 0.42 & 52.78 $\pm$ 2.59\\
\bottomrule
\end{tabular}
\end{table}

\begin{table}[h!t!b!]
\caption{\ref{s:one} Summary of change point estimators returned by TFMseg and LR when $\Theta  = \big\{\lfloor 0.25T \rfloor, \lfloor 0.5T \rfloor, \lfloor 0.75T \rfloor\big\}$, $\rho_f = 0.7$,  $T=800$ and varying $(p_1,p_2,p_3)$, based on 100 realizations.}
\label{tab: bal_rnull_T800}
\centering
\setlength{\tabcolsep}{5pt}
\begin{tabular}[t]{clccccccccc}
\toprule
\multicolumn{2}{c}{ } & \multicolumn{5}{c}{$\wh{q}-q$} & \multicolumn{3}{c}{Accuracy} & \multicolumn{1}{c}{ } \\
\cmidrule(lr){3-7} \cmidrule(lr){8-10} 
Dimensions & Method & $\leq -2$ & $-1$ & $0$ & $1$ & $\geq 2$ & $j=1$ & $j=2$ & $j=3$ & Runtime (s)\\
\cmidrule(lr){1-2} \cmidrule(lr){3-7} \cmidrule(lr){8-10} \cmidrule(lr){11-11} 
(10,10,10) & TFMseg & 0.03 & 0.39 & 0.58 & 0 & 0 & 0.89 & 0.76 & 0.59 & 2.04 $\pm$ 0.08\\
 & LR & 0.23 & 0.55 & 0.22 & 0 & 0 & 0.54 & 0.92 & 0.53 & 52.57 $\pm$ 2.32\\
(10,10,100) & TFMseg & 0.01 & 0.39 & 0.60 & 0 & 0 & 0.92 & 0.80 & 0.61 & 8.45 $\pm$ 0.45\\
 & LR & 0.28 & 0.49 & 0.23 & 0 & 0 & 0.58 & 0.75 & 0.62 & 56.69 $\pm$ 3.59\\
(10,20,40) & TFMseg & 0.01 & 0.26 & 0.73 & 0 & 0 & 0.96 & 0.90 & 0.70 & 7.39 $\pm$ 0.30\\
 & LR & 0.27 & 0.51 & 0.22 & 0 & 0 & 0.50 & 0.87 & 0.57 & 55.66 $\pm$ 3.53\\
(20,20,20) & TFMseg & 0 & 0.29 & 0.70 & 0 & 0.01 & 0.95 & 0.81 & 0.66 & 7.38 $\pm$ 0.34\\
 & LR & 0.15 & 0.62 & 0.23 & 0 & 0 & 0.56 & 0.91 & 0.60 & 57.87 $\pm$ 2.98\\
\bottomrule
\end{tabular}
\end{table}

\begin{table}[h!t!b!]
\caption{\ref{s:one} Summary of change point estimators returned by TFMseg and LR when $\Theta  = \big\{\lfloor 0.25T \rfloor, \lfloor 0.5T \rfloor, \lfloor 0.75T \rfloor\big\}$, $\rho_f = 0.7$,  $T=1600$ and varying $(p_1,p_2,p_3)$, based on 100 realizations.}
\label{tab: bal_rnull_T1600}
\centering
\setlength{\tabcolsep}{5pt}
\begin{tabular}[t]{clccccccccc}
\toprule
\multicolumn{2}{c}{ } & \multicolumn{5}{c}{$\wh{q}-q$} & \multicolumn{3}{c}{Accuracy} & \multicolumn{1}{c}{ } \\
\cmidrule(lr){3-7} \cmidrule(lr){8-10} 
Dimensions & Method & $\leq -2$ & $-1$ & $0$ & $1$ & $\geq 2$ & $j=1$ & $j=2$ & $j=3$ & Runtime (s)\\
\cmidrule(lr){1-2} \cmidrule(lr){3-7} \cmidrule(lr){8-10} \cmidrule(lr){11-11} 
(10,10,10) & TFMseg & 0 & 0.11 & 0.86 & 0.03 & 0 & 0.97 & 0.78 & 0.78 & 3.57 $\pm$ 0.19\\
 & LR & 0.05 & 0.42 & 0.53 & 0 & 0 & 0.82 & 0.93 & 0.72 & 60.78 $\pm$ 4.36\\
(10,10,100) & TFMseg & 0 & 0.09 & 0.88 & 0.03 & 0 & 0.99 & 0.89 & 0.77 & 15.91 $\pm$ 0.81\\
 & LR & 0.10 & 0.47 & 0.43 & 0 & 0 & 0.79 & 0.81 & 0.72 & 79.72 $\pm$ 5.40\\
(10,20,40) & TFMseg & 0 & 0.05 & 0.91 & 0.04 & 0 & 0.98 & 0.87 & 0.86 & 13.53 $\pm$ 0.98\\
 & LR & 0.04 & 0.51 & 0.45 & 0 & 0 & 0.78 & 0.91 & 0.72 & 76.70 $\pm$ 4.70\\
(20,20,20) & TFMseg & 0 & 0.07 & 0.89 & 0.04 & 0 & 0.98 & 0.89 & 0.88 & 13.94 $\pm$ 0.95\\
 & LR & 0.01 & 0.47 & 0.52 & 0 & 0 & 0.83 & 0.95 & 0.73 & 79.59 $\pm$ 4.39\\
\bottomrule
\end{tabular}
\end{table}

\begin{table}[h!t!b!]
\caption{\ref{s:one} Summary of change point estimators returned by TFMseg and LR when $\Theta  = \big\{\lfloor 0.25T \rfloor, \lfloor 0.5T \rfloor, \lfloor 0.75T \rfloor\big\}$, $\rho_f = 0.7$,  $T=3200$ and varying $(p_1,p_2,p_3)$, based on 100 realizations.}
\label{tab: bal_rnull_T3200}
\centering
\setlength{\tabcolsep}{5pt}
\begin{tabular}[t]{clccccccccc}
\toprule
\multicolumn{2}{c}{ } & \multicolumn{5}{c}{$\wh{q}-q$} & \multicolumn{3}{c}{Accuracy} & \multicolumn{1}{c}{ } \\
\cmidrule(lr){3-7} \cmidrule(lr){8-10} 
Dimensions & Method & $\leq -2$ & $-1$ & $0$ & $1$ & $\geq 2$ & $j=1$ & $j=2$ & $j=3$ & Runtime (s)\\
\cmidrule(lr){1-2} \cmidrule(lr){3-7} \cmidrule(lr){8-10} \cmidrule(lr){11-11} 
(10,10,10) & TFMseg & 0 & 0.05 & 0.80 & 0.13 & 0.02 & 0.99 & 0.95 & 0.86 & 6.05 $\pm$ 0.37\\
 & LR & 0 & 0.25 & 0.75 & 0 & 0 & 0.95 & 0.97 & 0.83 & 87.52 $\pm$ 6.03\\
(10,10,100) & TFMseg & 0 & 0.02 & 0.87 & 0.10 & 0.01 & 1 & 0.96 & 0.83 & 30.01 $\pm$ 2.77\\
 & LR & 0.02 & 0.30 & 0.68 & 0 & 0 & 0.91 & 0.85 & 0.87 & 213.33 $\pm$ 12.49\\
(10,20,40) & TFMseg & 0 & 0.01 & 0.91 & 0.07 & 0.01 & 1 & 0.96 & 0.91 & 24.55 $\pm$ 2.11\\
 & LR & 0 & 0.29 & 0.71 & 0 & 0 & 0.93 & 0.93 & 0.85 & 210.31 $\pm$ 13.20\\
(20,20,20) & TFMseg & 0 & 0.03 & 0.86 & 0.10 & 0.01 & 1 & 1 & 0.89 & 26.00 $\pm$ 2.39\\
 & LR & 0.01 & 0.25 & 0.74 & 0 & 0 & 0.93 & 0.94 & 0.86 & 209.98 $\pm$ 13.07\\
\bottomrule
\end{tabular}
\end{table}

\begin{figure}[h!t!b!p!]
\centering
\includegraphics[width=0.68\linewidth]{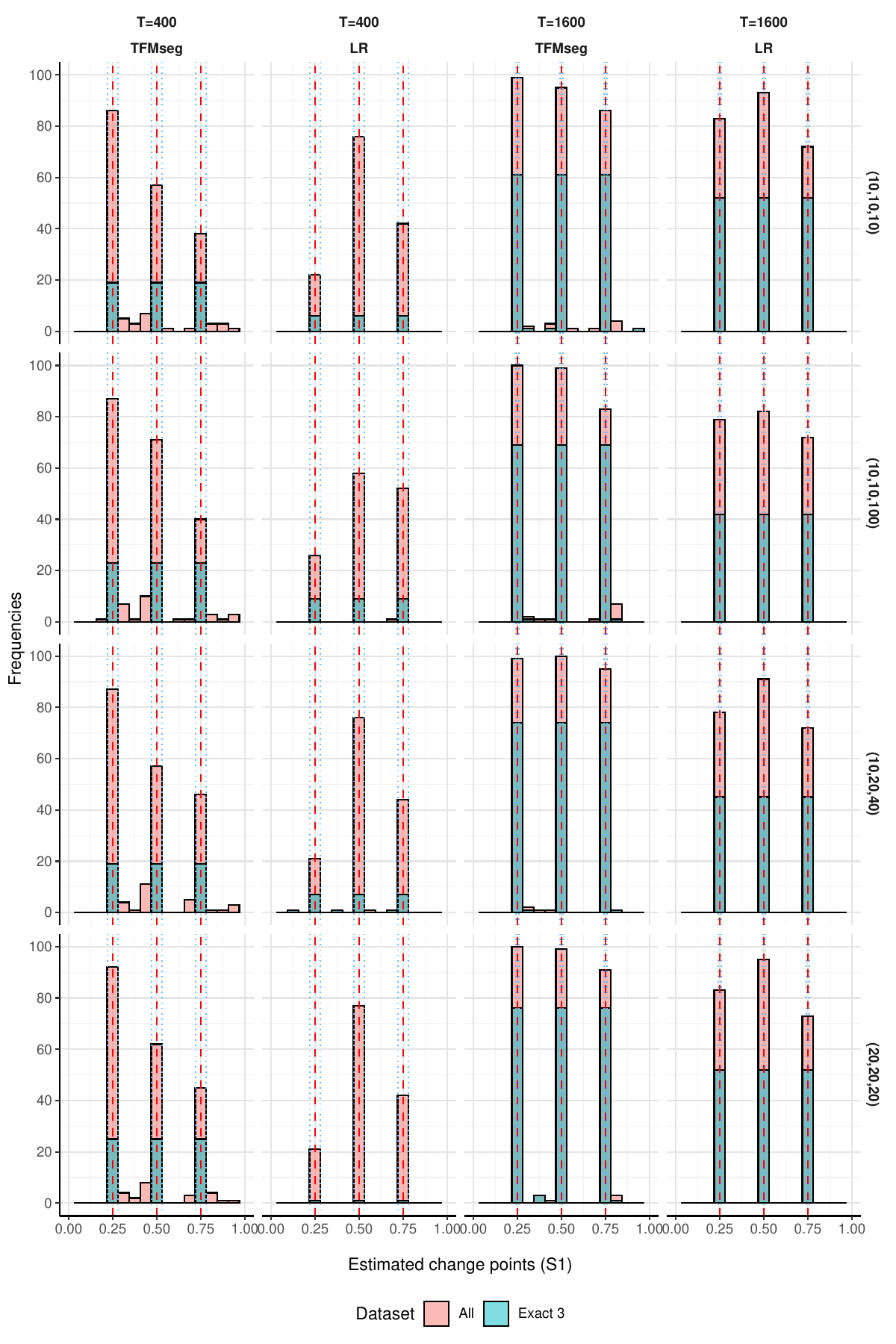}
\caption{\ref{s:one} Barplots of the scaled change point estimators $\{ \wh\theta^{(n)}_j/T, \, j \in [\wh q^{(n)}]\}$ returned by TFMseg and LR  when $\Theta = \big\{\lfloor 0.25T \rfloor, \lfloor 0.5T \rfloor, \lfloor 0.75T \rfloor\big\}$, $\rho_f = 0.7$, $T \in \{400, 1600\}$ and varying $(p_1,p_2,p_3)$ (top to bottom) over $100$ realizations. The red bars give the total frequency of estimated change points for all $n \in [N]$, while the blue bars give the frequency from the subset of realizations $n \in \wh{\cD}$.}
\label{fig: subplot_rnull}
\end{figure}

\clearpage

\begin{table}[h!t!b!p!]
\caption{\ref{s:one} Summary of change point estimators returned by TFMseg and LR when $\Theta  = \big\{\lfloor 0.25T \rfloor, \lfloor 0.5T \rfloor, \lfloor 0.75T \rfloor\big\}$, $\rho_f = 0$,  $T=400$ and varying $(p_1,p_2,p_3)$, based on 100 realizations.}
\label{tab: bal_rnull_indep_T400}
\centering
\setlength{\tabcolsep}{5pt}
\begin{tabular}[t]{clccccccccc}
\toprule
\multicolumn{2}{c}{ } & \multicolumn{5}{c}{$\wh{q}-q$} & \multicolumn{3}{c}{Accuracy} & \multicolumn{1}{c}{ } \\
\cmidrule(lr){3-7} \cmidrule(lr){8-10} 
Dimensions & Method & $\leq -2$ & $-1$ & $0$ & $1$ & $\geq 2$ & $j=1$ & $j=2$ & $j=3$ & Runtime (s)\\
\cmidrule(lr){1-2} \cmidrule(lr){3-7} \cmidrule(lr){8-10} \cmidrule(lr){11-11} 
(10,10,10) & TFMseg & 0.05 & 0.47 & 0.48 & 0 & 0 & 0.95 & 0.81 & 0.62 & 1.23 $\pm$ 0.11\\
 & LR & 0.30 & 0.49 & 0.21 & 0 & 0 & 0.50 & 0.87 & 0.54 & 49.06 $\pm$ 5.10\\
(10,10,100) & TFMseg & 0.03 & 0.50 & 0.47 & 0 & 0 & 0.94 & 0.85 & 0.60 & 5.32 $\pm$ 0.59\\
 & LR & 0.38 & 0.44 & 0.18 & 0 & 0 & 0.42 & 0.76 & 0.61 & 51.54 $\pm$ 4.34\\
(10,20,40) & TFMseg & 0.04 & 0.38 & 0.58 & 0 & 0 & 0.96 & 0.85 & 0.69 & 4.69 $\pm$ 0.47\\
 & LR & 0.26 & 0.53 & 0.21 & 0 & 0 & 0.44 & 0.89 & 0.60 & 51.50 $\pm$ 2.44\\
(20,20,20) & TFMseg & 0.01 & 0.43 & 0.55 & 0.01 & 0 & 0.99 & 0.84 & 0.70 & 4.66 $\pm$ 0.43\\
 & LR & 0.25 & 0.57 & 0.18 & 0 & 0 & 0.49 & 0.89 & 0.55 & 46.70 $\pm$ 1.94\\
\bottomrule
\end{tabular}
\end{table}

\begin{table}[h!t!b!p!]
\caption{\ref{s:one} Summary of change point estimators returned by TFMseg and LR when $\Theta  = \big\{\lfloor 0.25T \rfloor, \lfloor 0.5T \rfloor, \lfloor 0.75T \rfloor\big\}$, $\rho_f = 0$,  $T=800$ and varying $(p_1,p_2,p_3)$, based on 100 realizations.}
\label{tab: bal_rnull_indep_T800}
\centering
\setlength{\tabcolsep}{5pt}
\begin{tabular}[t]{clccccccccc}
\toprule
\multicolumn{2}{c}{ } & \multicolumn{5}{c}{$\wh{q}-q$} & \multicolumn{3}{c}{Accuracy} & \multicolumn{1}{c}{ } \\
\cmidrule(lr){3-7} \cmidrule(lr){8-10} 
Dimensions & Method & $\leq -2$ & $-1$ & $0$ & $1$ & $\geq 2$ & $j=1$ & $j=2$ & $j=3$ & Runtime (s)\\
\cmidrule(lr){1-2} \cmidrule(lr){3-7} \cmidrule(lr){8-10} \cmidrule(lr){11-11} 
(10,10,10) & TFMseg & 0.01 & 0.21 & 0.78 & 0 & 0 & 0.98 & 0.95 & 0.81 & 2.28 $\pm$ 0.26\\
 & LR & 0.08 & 0.45 & 0.47 & 0 & 0 & 0.76 & 0.94 & 0.69 & 53.42 $\pm$ 3.71\\
(10,10,100) & TFMseg & 0.01 & 0.16 & 0.83 & 0 & 0 & 0.98 & 0.95 & 0.86 & 9.54 $\pm$ 0.93\\
 & LR & 0.15 & 0.45 & 0.40 & 0 & 0 & 0.72 & 0.84 & 0.69 & 59.69 $\pm$ 4.05\\
(10,20,40) & TFMseg & 0 & 0.08 & 0.91 & 0.01 & 0 & 1 & 0.98 & 0.91 & 8.44 $\pm$ 0.91\\
 & LR & 0.07 & 0.50 & 0.43 & 0 & 0 & 0.75 & 0.92 & 0.69 & 53.51 $\pm$ 3.48\\
(20,20,20) & TFMseg & 0 & 0.09 & 0.90 & 0.01 & 0 & 1 & 0.99 & 0.88 & 8.38 $\pm$ 0.77\\
 & LR & 0.04 & 0.53 & 0.43 & 0 & 0 & 0.74 & 0.93 & 0.72 & 53.29 $\pm$ 2.78\\
\bottomrule
\end{tabular}
\end{table}

\begin{table}[h!t!b!p!]
\caption{\ref{s:one} Summary of change point estimators returned by TFMseg and LR when $\Theta  = \big\{\lfloor 0.25T \rfloor, \lfloor 0.5T \rfloor, \lfloor 0.75T \rfloor\big\}$, $\rho_f = 0$,  $T=1600$ and varying $(p_1,p_2,p_3)$, based on 100 realizations.}
\label{tab: bal_rnull_indep_T1600}
\centering
\setlength{\tabcolsep}{5pt}
\begin{tabular}[t]{clccccccccc}
\toprule
\multicolumn{2}{c}{ } & \multicolumn{5}{c}{$\wh{q}-q$} & \multicolumn{3}{c}{Accuracy} & \multicolumn{1}{c}{ } \\
\cmidrule(lr){3-7} \cmidrule(lr){8-10} 
Dimensions & Method & $\leq -2$ & $-1$ & $0$ & $1$ & $\geq 2$ & $j=1$ & $j=2$ & $j=3$ & Runtime (s)\\
\cmidrule(lr){1-2} \cmidrule(lr){3-7} \cmidrule(lr){8-10} \cmidrule(lr){11-11} 
(10,10,10) & TFMseg & 0 & 0.05 & 0.95 & 0 & 0 & 1 & 0.96 & 0.93 & 3.93 $\pm$ 0.39\\
 & LR & 0.02 & 0.39 & 0.59 & 0 & 0 & 0.84 & 0.96 & 0.77 & 57.77 $\pm$ 4.08\\
(10,10,100) & TFMseg & 0 & 0.04 & 0.96 & 0 & 0 & 0.99 & 0.97 & 0.96 & 18.16 $\pm$ 1.95\\
 & LR & 0.05 & 0.39 & 0.56 & 0 & 0 & 0.86 & 0.85 & 0.80 & 88.59 $\pm$ 7.76\\
(10,20,40) & TFMseg & 0 & 0.02 & 0.98 & 0 & 0 & 0.99 & 0.97 & 0.96 & 14.96 $\pm$ 1.45\\
 & LR & 0.02 & 0.43 & 0.55 & 0 & 0 & 0.85 & 0.93 & 0.74 & 78.27 $\pm$ 7.08\\
(20,20,20) & TFMseg & 0 & 0.03 & 0.96 & 0.01 & 0 & 1 & 0.99 & 0.97 & 15.57 $\pm$ 1.60\\
 & LR & 0.02 & 0.35 & 0.63 & 0 & 0 & 0.89 & 0.93 & 0.79 & 78.01 $\pm$ 3.33\\
\bottomrule
\end{tabular}
\end{table}

\begin{table}[h!t!b!p!]
\caption{\ref{s:one} Summary of change point estimators returned by TFMseg and LR when $\Theta  = \big\{\lfloor 0.25T \rfloor, \lfloor 0.5T \rfloor, \lfloor 0.75T \rfloor\big\}$, $\rho_f = 0$,  $T=3200$ and varying $(p_1,p_2,p_3)$, based on 100 realizations.}
\label{tab: bal_rnull_indep_T3200}
\centering
\setlength{\tabcolsep}{5pt}
\begin{tabular}[t]{clccccccccc}
\toprule
\multicolumn{2}{c}{ } & \multicolumn{5}{c}{$\wh{q}-q$} & \multicolumn{3}{c}{Accuracy} & \multicolumn{1}{c}{ } \\
\cmidrule(lr){3-7} \cmidrule(lr){8-10} 
Dimensions & Method & $\leq -2$ & $-1$ & $0$ & $1$ & $\geq 2$ & $j=1$ & $j=2$ & $j=3$ & Runtime (s)\\
\cmidrule(lr){1-2} \cmidrule(lr){3-7} \cmidrule(lr){8-10} \cmidrule(lr){11-11} 
(10,10,10) & TFMseg & 0 & 0.03 & 0.97 & 0 & 0 & 1 & 0.97 & 0.96 & 6.60 $\pm$ 0.52\\
 & LR & 0.01 & 0.23 & 0.76 & 0 & 0 & 0.97 & 0.95 & 0.83 & 85.14 $\pm$ 6.04\\
(10,10,100) & TFMseg & 0 & 0.01 & 0.99 & 0 & 0 & 1 & 0.98 & 0.97 & 34.09 $\pm$ 3.95\\
 & LR & 0.04 & 0.28 & 0.68 & 0 & 0 & 0.93 & 0.86 & 0.85 & 256.22 $\pm$ 31.91\\
(10,20,40) & TFMseg & 0 & 0.01 & 0.97 & 0.02 & 0 & 1 & 1 & 0.98 & 27.39 $\pm$ 2.75\\
 & LR & 0 & 0.27 & 0.73 & 0 & 0 & 0.93 & 0.94 & 0.86 & 219.45 $\pm$ 31.11\\
(20,20,20) & TFMseg & 0 & 0.01 & 0.99 & 0 & 0 & 1 & 1 & 0.96 & 28.19 $\pm$ 2.99\\
 & LR & 0.01 & 0.22 & 0.77 & 0 & 0 & 0.96 & 0.95 & 0.85 & 212.17 $\pm$ 12.83\\
\bottomrule
\end{tabular}
\end{table}

\clearpage
\begin{figure}[h!t!b!p!]
\centering
\includegraphics[width=0.68\linewidth]{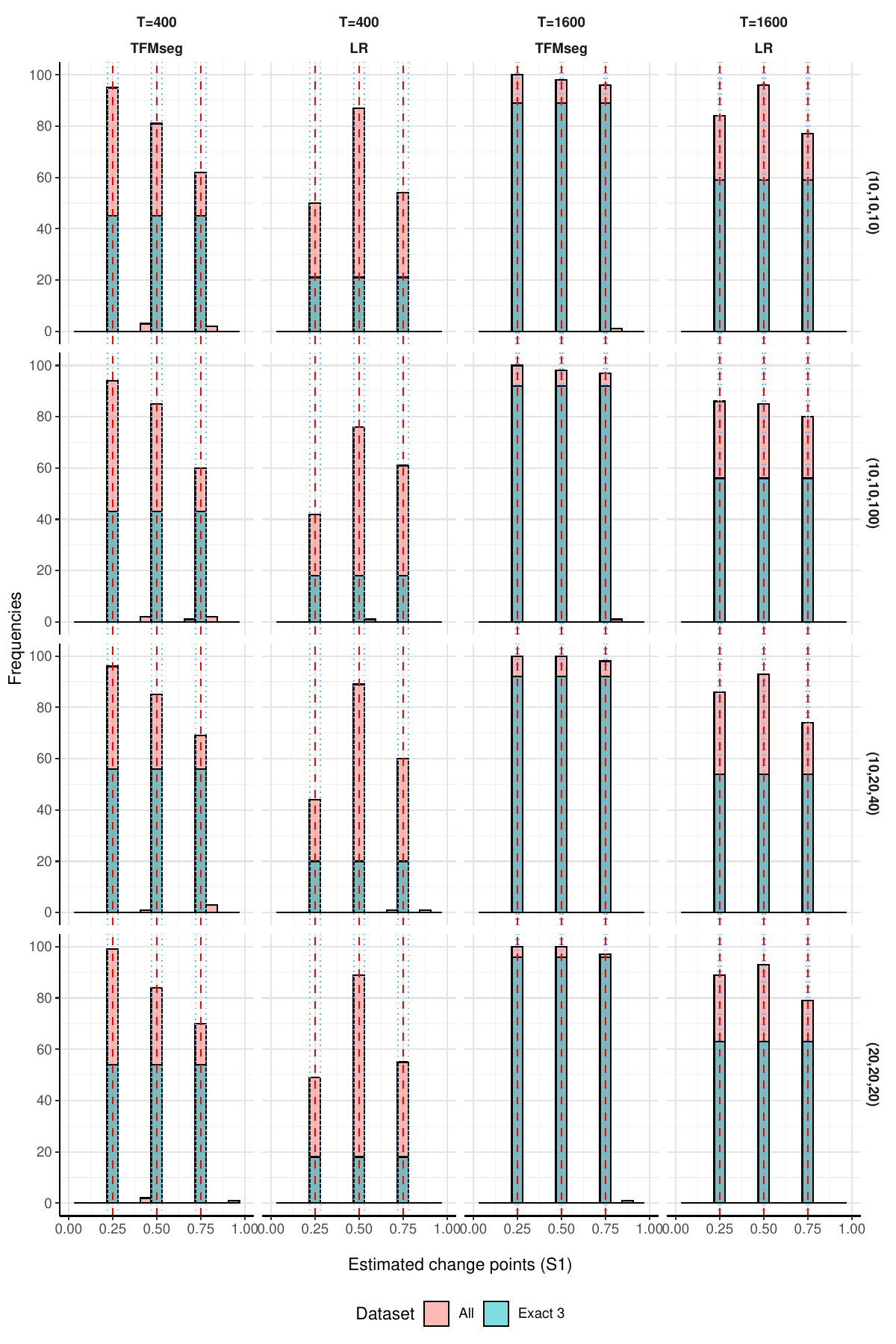}
\caption{\ref{s:one} Barplots of the scaled change point estimators $\{ \wh\theta^{(n)}_j/T, \, j \in [\wh q^{(n)}]\}$ returned by TFMseg and LR  when $\Theta = \big\{\lfloor 0.25T \rfloor, \lfloor 0.5T \rfloor, \lfloor 0.75T \rfloor\big\}$, $\rho_f = 0$, $T \in \{400, 1600\}$ and varying $(p_1,p_2,p_3)$ (top to bottom) over $100$ realizations. The red bars give the total frequency of estimated change points for all $n \in [N]$, while the blue bars give the frequency from the subset of realizations $n \in \wh{\cD}$.}
\label{fig: subplot_rnull_indep}
\end{figure}

\begin{figure}[h!t!b!p!]
\centering
\includegraphics[width=1\linewidth]{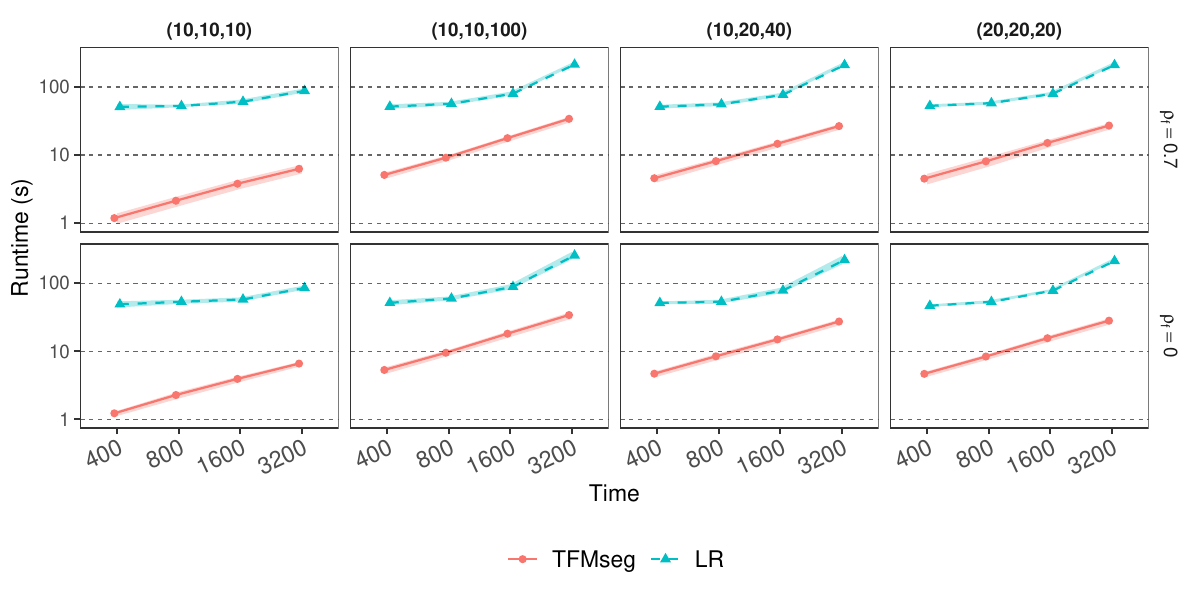}
\caption{\ref{s:one}
Average runtime of TFMseg and LR in seconds, computed from $100$ repetitions, for varying $T$ and $(p_1,p_2,p_3)$. The two rows correspond to $\rho_f = 0.7$ and $\rho_f = 0$, respectively. The runtime is shown on a logarithmic scale. Shaded bands indicate the empirical standard deviation.}
\label{fig:runtime}
\end{figure}

\clearpage 

\subsubsection{\ref{s:one} Single-mode change with unequally spaced change points}\label{app: num_det_unbal}

We consider when the change points are unequally spaced with $\Theta = \big\{\lfloor 0.25T \rfloor, \lfloor 0.5T \rfloor, \lfloor 0.625T \rfloor\big\}$  under~\ref{s:one}, see Tables~\ref{tab: unbal_rnull_T400}--\ref{tab: unbal_rnull_T3200} and Figure~\ref{fig: subplot_unbal_rnull} for the case of serially dependent data with $\rho_f = 0.7$, and Tables~\ref{tab: unbal_rnull_indep_T400}--\ref{tab: unbal_rnull_indep_T3200} and Figure~\ref{fig: subplot_unbal_rnull_indep} for the case when the data are serially uncorrelated.

\begin{table}[h!t!b!p!]
\caption{\ref{s:one} Summary of change point estimators returned by TFMseg and LR when $\Theta  = \big\{\lfloor 0.25T \rfloor, \lfloor 0.5T \rfloor, \lfloor 0.625T \rfloor\big\}$, $\rho_f = 0.7$,  $T=400$ and varying $(p_1,p_2,p_3)$, based on 100 realizations.}
\label{tab: unbal_rnull_T400}
\centering
\setlength{\tabcolsep}{5pt}
\begin{tabular}[t]{clccccccccc}
\toprule
\multicolumn{2}{c}{ } & \multicolumn{5}{c}{$\wh{q}-q$} & \multicolumn{3}{c}{Accuracy} & \multicolumn{1}{c}{ } \\
\cmidrule(lr){3-7} \cmidrule(lr){8-10} 
Dimensions & Method & $\leq -2$ & $-1$ & $0$ & $1$ & $\geq 2$ & $j=1$ & $j=2$ & $j=3$ & Runtime (s)\\
\cmidrule(lr){1-2} \cmidrule(lr){3-7} \cmidrule(lr){8-10} \cmidrule(lr){11-11} 
(10,10,10) & TFMseg & 0.25 & 0.60 & 0.15 & 0 & 0 & 0.86 & 0.55 & 0.25 & 1.20 $\pm$ 0.12\\
 & LR & 0.82 & 0.16 & 0.02 & 0 & 0 & 0.17 & 0.51 & 0.47 & 51.52 $\pm$ 6.58\\
(10,10,100) & TFMseg & 0.08 & 0.77 & 0.15 & 0 & 0 & 0.84 & 0.73 & 0.30 & 5.22 $\pm$ 0.42\\
 & LR & 0.87 & 0.13 & 0 & 0 & 0 & 0.15 & 0.36 & 0.59 & 52.77 $\pm$ 5.07\\
(10,20,40) & TFMseg & 0.17 & 0.68 & 0.15 & 0 & 0 & 0.90 & 0.58 & 0.35 & 4.65 $\pm$ 0.48\\
 & LR & 0.84 & 0.15 & 0.01 & 0 & 0 & 0.13 & 0.43 & 0.56 & 51.97 $\pm$ 4.28\\
(20,20,20) & TFMseg & 0.15 & 0.64 & 0.21 & 0 & 0 & 0.92 & 0.58 & 0.35 & 4.72 $\pm$ 0.40\\
 & LR & 0.80 & 0.17 & 0.03 & 0 & 0 & 0.19 & 0.47 & 0.53 & 51.74 $\pm$ 5.34\\
\bottomrule
\end{tabular}
\end{table}

\begin{table}[h!t!b!p!]
\caption{\ref{s:one} Summary of change point estimators returned by TFMseg and LR when $\Theta  = \big\{\lfloor 0.25T \rfloor, \lfloor 0.5T \rfloor, \lfloor 0.625T \rfloor\big\}$, $\rho_f = 0.7$,  $T=800$ and varying $(p_1,p_2,p_3)$, based on 100 realizations.}
\label{tab: unbal_rnull_T800}
\centering
\setlength{\tabcolsep}{5pt}
\begin{tabular}[t]{clccccccccc}
\toprule
\multicolumn{2}{c}{ } & \multicolumn{5}{c}{$\wh{q}-q$} & \multicolumn{3}{c}{Accuracy} & \multicolumn{1}{c}{ } \\
\cmidrule(lr){3-7} \cmidrule(lr){8-10} 
Dimensions & Method & $\leq -2$ & $-1$ & $0$ & $1$ & $\geq 2$ & $j=1$ & $j=2$ & $j=3$ & Runtime (s)\\
\cmidrule(lr){1-2} \cmidrule(lr){3-7} \cmidrule(lr){8-10} \cmidrule(lr){11-11} 
(10,10,10) & TFMseg & 0.06 & 0.49 & 0.45 & 0 & 0 & 0.90 & 0.69 & 0.50 & 2.26 $\pm$ 0.19\\
 & LR & 0.55 & 0.36 & 0.09 & 0 & 0 & 0.44 & 0.60 & 0.47 & 53.03 $\pm$ 5.36\\
(10,10,100) & TFMseg & 0.02 & 0.60 & 0.38 & 0 & 0 & 0.90 & 0.75 & 0.43 & 9.37 $\pm$ 0.74\\
 & LR & 0.54 & 0.37 & 0.09 & 0 & 0 & 0.39 & 0.55 & 0.57 & 59.47 $\pm$ 4.95\\
(10,20,40) & TFMseg & 0.03 & 0.45 & 0.52 & 0 & 0 & 0.93 & 0.85 & 0.54 & 8.35 $\pm$ 0.84\\
 & LR & 0.49 & 0.41 & 0.10 & 0 & 0 & 0.36 & 0.65 & 0.58 & 57.59 $\pm$ 4.60\\
(20,20,20) & TFMseg & 0.01 & 0.54 & 0.43 & 0.02 & 0 & 0.94 & 0.78 & 0.49 & 8.55 $\pm$ 0.72\\
 & LR & 0.42 & 0.47 & 0.11 & 0 & 0 & 0.43 & 0.64 & 0.58 & 58.94 $\pm$ 5.18\\
\bottomrule
\end{tabular}
\end{table}

\begin{table}[h!t!b!p!]
\caption{\ref{s:one} Summary of change point estimators returned by TFMseg and LR when $\Theta  = \big\{\lfloor 0.25T \rfloor, \lfloor 0.5T \rfloor, \lfloor 0.625T \rfloor\big\}$, $\rho_f = 0.7$,  $T=1600$ and varying $(p_1,p_2,p_3)$, based on 100 realizations.}
\label{tab: unbal_rnull_T1600}
\centering
\setlength{\tabcolsep}{5pt}
\begin{tabular}[t]{clccccccccc}
\toprule
\multicolumn{2}{c}{ } & \multicolumn{5}{c}{$\wh{q}-q$} & \multicolumn{3}{c}{Accuracy} & \multicolumn{1}{c}{ } \\
\cmidrule(lr){3-7} \cmidrule(lr){8-10} 
Dimensions & Method & $\leq -2$ & $-1$ & $0$ & $1$ & $\geq 2$ & $j=1$ & $j=2$ & $j=3$ & Runtime (s)\\
\cmidrule(lr){1-2} \cmidrule(lr){3-7} \cmidrule(lr){8-10} \cmidrule(lr){11-11} 
(10,10,10) & TFMseg & 0 & 0.23 & 0.73 & 0.04 & 0 & 0.97 & 0.70 & 0.77 & 3.97 $\pm$ 0.36\\
 & LR & 0.16 & 0.55 & 0.29 & 0 & 0 & 0.70 & 0.79 & 0.61 & 58.74 $\pm$ 6.22\\
(10,10,100) & TFMseg & 0 & 0.20 & 0.75 & 0.05 & 0 & 0.99 & 0.82 & 0.72 & 17.87 $\pm$ 1.40\\
 & LR & 0.19 & 0.49 & 0.32 & 0 & 0 & 0.72 & 0.71 & 0.68 & 85.32 $\pm$ 7.17\\
(10,20,40) & TFMseg & 0 & 0.16 & 0.81 & 0.03 & 0 & 0.98 & 0.84 & 0.83 & 15.07 $\pm$ 1.39\\
 & LR & 0.07 & 0.60 & 0.33 & 0 & 0 & 0.68 & 0.87 & 0.68 & 80.79 $\pm$ 6.78\\
(20,20,20) & TFMseg & 0 & 0.18 & 0.77 & 0.05 & 0 & 0.98 & 0.85 & 0.80 & 15.62 $\pm$ 1.17\\
 & LR & 0.06 & 0.57 & 0.37 & 0 & 0 & 0.75 & 0.89 & 0.63 & 79.97 $\pm$ 7.78\\
\bottomrule
\end{tabular}
\end{table}

\begin{table}[h!t!b!p!]
\caption{\ref{s:one} Summary of change point estimators returned by TFMseg and LR when $\Theta  = \big\{\lfloor 0.25T \rfloor, \lfloor 0.5T \rfloor, \lfloor 0.625T \rfloor\big\}$, $\rho_f = 0.7$,  $T=3200$ and varying $(p_1,p_2,p_3)$, based on 100 realizations.}
\label{tab: unbal_rnull_T3200}
\centering
\setlength{\tabcolsep}{5pt}
\begin{tabular}[t]{clccccccccc}
\toprule
\multicolumn{2}{c}{ } & \multicolumn{5}{c}{$\wh{q}-q$} & \multicolumn{3}{c}{Accuracy} & \multicolumn{1}{c}{ } \\
\cmidrule(lr){3-7} \cmidrule(lr){8-10} 
Dimensions & Method & $\leq -2$ & $-1$ & $0$ & $1$ & $\geq 2$ & $j=1$ & $j=2$ & $j=3$ & Runtime (s)\\
\cmidrule(lr){1-2} \cmidrule(lr){3-7} \cmidrule(lr){8-10} \cmidrule(lr){11-11} 
(10,10,10) & TFMseg & 0 & 0.08 & 0.82 & 0.09 & 0.01 & 1 & 0.95 & 0.90 & 6.81 $\pm$ 0.57\\
 & LR & 0.04 & 0.31 & 0.65 & 0 & 0 & 0.90 & 0.92 & 0.79 & 86.18 $\pm$ 10.13\\
(10,10,100) & TFMseg & 0 & 0.04 & 0.88 & 0.08 & 0 & 1 & 0.97 & 0.86 & 34.34 $\pm$ 2.86\\
 & LR & 0.04 & 0.40 & 0.56 & 0 & 0 & 0.89 & 0.82 & 0.78 & 237.48 $\pm$ 25.47\\
(10,20,40) & TFMseg & 0 & 0.03 & 0.89 & 0.07 & 0.01 & 1 & 0.96 & 0.91 & 27.83 $\pm$ 2.35\\
 & LR & 0.01 & 0.38 & 0.61 & 0 & 0 & 0.88 & 0.93 & 0.79 & 223.92 $\pm$ 23.95\\
(20,20,20) & TFMseg & 0 & 0.03 & 0.86 & 0.10 & 0.01 & 1 & 1 & 0.94 & 28.20 $\pm$ 2.17\\
 & LR & 0.01 & 0.31 & 0.68 & 0 & 0 & 0.89 & 0.97 & 0.80 & 226.95 $\pm$ 24.25\\
\bottomrule
\end{tabular}
\end{table}


\clearpage
\begin{table}[h!t!b!p!]
\caption{\ref{s:one} Summary of change point estimators returned by TFMseg and LR when $\Theta  = \big\{\lfloor 0.25T \rfloor, \lfloor 0.5T \rfloor, \lfloor 0.625T \rfloor\big\}$, $\rho_f = 0$,  $T=400$ and varying $(p_1,p_2,p_3)$, based on 100 realizations.}
\label{tab: unbal_rnull_indep_T400}
\centering
\setlength{\tabcolsep}{5pt}
\begin{tabular}[t]{clccccccccc}
\toprule
\multicolumn{2}{c}{ } & \multicolumn{5}{c}{$\wh{q}-q$} & \multicolumn{3}{c}{Accuracy} & \multicolumn{1}{c}{ } \\
\cmidrule(lr){3-7} \cmidrule(lr){8-10} 
Dimensions & Method & $\leq -2$ & $-1$ & $0$ & $1$ & $\geq 2$ & $j=1$ & $j=2$ & $j=3$ & Runtime (s)\\
\cmidrule(lr){1-2} \cmidrule(lr){3-7} \cmidrule(lr){8-10} \cmidrule(lr){11-11} 
(10,10,10) & TFMseg & 0.06 & 0.71 & 0.23 & 0 & 0 & 0.95 & 0.78 & 0.42 & 1.17 $\pm$ 0.12\\
 & LR & 0.57 & 0.29 & 0.14 & 0 & 0 & 0.39 & 0.64 & 0.54 & 51.42 $\pm$ 5.52\\
(10,10,100) & TFMseg & 0.04 & 0.73 & 0.23 & 0 & 0 & 0.93 & 0.82 & 0.42 & 5.04 $\pm$ 0.55\\
 & LR & 0.58 & 0.34 & 0.08 & 0 & 0 & 0.33 & 0.54 & 0.63 & 51.57 $\pm$ 4.70\\
(10,20,40) & TFMseg & 0.08 & 0.56 & 0.36 & 0 & 0 & 0.95 & 0.79 & 0.53 & 4.59 $\pm$ 0.49\\
 & LR & 0.53 & 0.40 & 0.07 & 0 & 0 & 0.29 & 0.63 & 0.61 & 52.04 $\pm$ 3.97\\
(20,20,20) & TFMseg & 0.04 & 0.64 & 0.32 & 0 & 0 & 0.99 & 0.81 & 0.47 & 4.35 $\pm$ 0.39\\
 & LR & 0.56 & 0.37 & 0.07 & 0 & 0 & 0.33 & 0.57 & 0.61 & 52.92 $\pm$ 3.71\\
\bottomrule
\end{tabular}
\end{table}

\begin{table}[h!t!b!p!]
\caption{\ref{s:one} Summary of change point estimators returned by TFMseg and LR when $\Theta  = \big\{\lfloor 0.25T \rfloor, \lfloor 0.5T \rfloor, \lfloor 0.625T \rfloor\big\}$, $\rho_f = 0$,  $T=800$ and varying $(p_1,p_2,p_3)$, based on 100 realizations.}
\label{tab: unbal_rnull_indep_T800}
\centering
\setlength{\tabcolsep}{5pt}
\begin{tabular}[t]{clccccccccc}
\toprule
\multicolumn{2}{c}{ } & \multicolumn{5}{c}{$\wh{q}-q$} & \multicolumn{3}{c}{Accuracy} & \multicolumn{1}{c}{ } \\
\cmidrule(lr){3-7} \cmidrule(lr){8-10} 
Dimensions & Method & $\leq -2$ & $-1$ & $0$ & $1$ & $\geq 2$ & $j=1$ & $j=2$ & $j=3$ & Runtime (s)\\
\cmidrule(lr){1-2} \cmidrule(lr){3-7} \cmidrule(lr){8-10} \cmidrule(lr){11-11} 
(10,10,10) & TFMseg & 0.01 & 0.43 & 0.56 & 0 & 0 & 0.97 & 0.90 & 0.66 & 2.17 $\pm$ 0.21\\
 & LR & 0.33 & 0.49 & 0.18 & 0 & 0 & 0.58 & 0.69 & 0.56 & 54.74 $\pm$ 2.65\\
(10,10,100) & TFMseg & 0.01 & 0.39 & 0.60 & 0 & 0 & 0.98 & 0.92 & 0.67 & 9.20 $\pm$ 1.02\\
 & LR & 0.35 & 0.42 & 0.23 & 0 & 0 & 0.55 & 0.69 & 0.64 & 57.65 $\pm$ 4.13\\
(10,20,40) & TFMseg & 0 & 0.31 & 0.69 & 0 & 0 & 1 & 0.90 & 0.77 & 8.31 $\pm$ 1.05\\
 & LR & 0.29 & 0.49 & 0.22 & 0 & 0 & 0.49 & 0.80 & 0.64 & 56.43 $\pm$ 4.17\\
(20,20,20) & TFMseg & 0 & 0.27 & 0.72 & 0.01 & 0 & 1 & 0.89 & 0.82 & 7.77 $\pm$ 0.66\\
 & LR & 0.18 & 0.56 & 0.26 & 0 & 0 & 0.62 & 0.84 & 0.62 & 59.25 $\pm$ 4.05\\
\bottomrule
\end{tabular}
\end{table}

\begin{table}[h!t!b!p!]
\caption{\ref{s:one} Summary of change point estimators returned by TFMseg and LR when $\Theta  = \big\{\lfloor 0.25T \rfloor, \lfloor 0.5T \rfloor, \lfloor 0.625T \rfloor\big\}$, $\rho_f = 0$,  $T=1600$ and varying $(p_1,p_2,p_3)$, based on 100 realizations.}
\label{tab: unbal_rnull_indep_T1600}
\centering
\setlength{\tabcolsep}{5pt}
\begin{tabular}[t]{clccccccccc}
\toprule
\multicolumn{2}{c}{ } & \multicolumn{5}{c}{$\wh{q}-q$} & \multicolumn{3}{c}{Accuracy} & \multicolumn{1}{c}{ } \\
\cmidrule(lr){3-7} \cmidrule(lr){8-10} 
Dimensions & Method & $\leq -2$ & $-1$ & $0$ & $1$ & $\geq 2$ & $j=1$ & $j=2$ & $j=3$ & Runtime (s)\\
\cmidrule(lr){1-2} \cmidrule(lr){3-7} \cmidrule(lr){8-10} \cmidrule(lr){11-11} 
(10,10,10) & TFMseg & 0 & 0.11 & 0.89 & 0 & 0 & 1 & 0.96 & 0.91 & 3.72 $\pm$ 0.36\\
 & LR & 0.09 & 0.53 & 0.38 & 0 & 0 & 0.79 & 0.85 & 0.64 & 61.05 $\pm$ 3.12\\
(10,10,100) & TFMseg & 0 & 0.19 & 0.81 & 0 & 0 & 0.99 & 0.92 & 0.87 & 17.81 $\pm$ 2.15\\
 & LR & 0.14 & 0.43 & 0.43 & 0 & 0 & 0.75 & 0.81 & 0.73 & 82.43 $\pm$ 6.98\\
(10,20,40) & TFMseg & 0 & 0.06 & 0.94 & 0 & 0 & 1 & 0.96 & 0.95 & 14.86 $\pm$ 1.86\\
 & LR & 0.07 & 0.54 & 0.39 & 0 & 0 & 0.72 & 0.91 & 0.69 & 78.08 $\pm$ 6.91\\
(20,20,20) & TFMseg & 0 & 0.13 & 0.87 & 0 & 0 & 1 & 0.98 & 0.89 & 14.93 $\pm$ 1.42\\
 & LR & 0.02 & 0.56 & 0.42 & 0 & 0 & 0.79 & 0.92 & 0.68 & 82.49 $\pm$ 6.45\\
\bottomrule
\end{tabular}
\end{table}

\begin{table}[h!t!b!p!]
\caption{\ref{s:one} Summary of change point estimators returned by TFMseg and LR when $\Theta  = \big\{\lfloor 0.25T \rfloor, \lfloor 0.5T \rfloor, \lfloor 0.625T \rfloor\big\}$, $\rho_f = 0$,  $T=3200$ and varying $(p_1,p_2,p_3)$, based on 100 realizations.}
\label{tab: unbal_rnull_indep_T3200}
\centering
\setlength{\tabcolsep}{5pt}
\begin{tabular}[t]{clccccccccc}
\toprule
\multicolumn{2}{c}{ } & \multicolumn{5}{c}{$\wh{q}-q$} & \multicolumn{3}{c}{Accuracy} & \multicolumn{1}{c}{ } \\
\cmidrule(lr){3-7} \cmidrule(lr){8-10} 
Dimensions & Method & $\leq -2$ & $-1$ & $0$ & $1$ & $\geq 2$ & $j=1$ & $j=2$ & $j=3$ & Runtime (s)\\
\cmidrule(lr){1-2} \cmidrule(lr){3-7} \cmidrule(lr){8-10} \cmidrule(lr){11-11} 
(10,10,10) & TFMseg & 0 & 0.04 & 0.96 & 0 & 0 & 1 & 0.98 & 0.95 & 6.19 $\pm$ 0.54\\
 & LR & 0.04 & 0.26 & 0.70 & 0 & 0 & 0.94 & 0.92 & 0.80 & 89.77 $\pm$ 9.17\\
(10,10,100) & TFMseg & 0 & 0.02 & 0.98 & 0 & 0 & 1 & 0.98 & 0.98 & 33.73 $\pm$ 4.63\\
 & LR & 0.08 & 0.34 & 0.58 & 0 & 0 & 0.86 & 0.84 & 0.79 & 226.89 $\pm$ 26.30\\
(10,20,40) & TFMseg & 0 & 0.01 & 0.98 & 0.01 & 0 & 1 & 1 & 0.99 & 27.38 $\pm$ 3.35\\
 & LR & 0.01 & 0.34 & 0.65 & 0 & 0 & 0.90 & 0.96 & 0.78 & 214.80 $\pm$ 23.02\\
(20,20,20) & TFMseg & 0 & 0.02 & 0.98 & 0 & 0 & 1 & 1 & 0.97 & 27.51 $\pm$ 3.82\\
 & LR & 0.01 & 0.31 & 0.68 & 0 & 0 & 0.91 & 0.97 & 0.79 & 230.28 $\pm$ 30.43\\
\bottomrule
\end{tabular}
\end{table}

\clearpage
\begin{figure}[h!t!b!p!]
\centering
\includegraphics[width=0.68\linewidth]{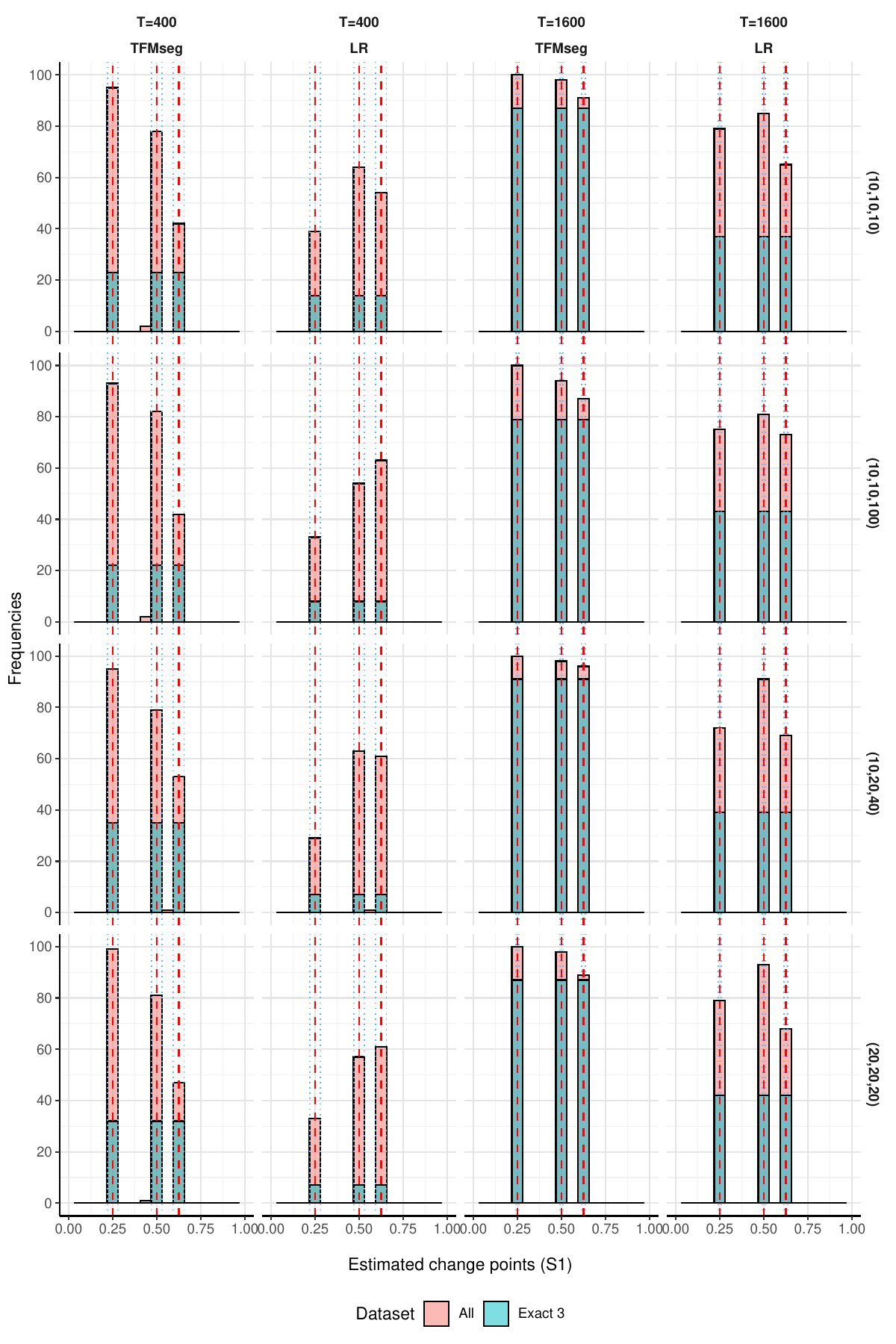}
\caption{\ref{s:one} Barplots of the scaled change point estimators $\{ \wh\theta^{(n)}_j/T, \, j \in [\wh q^{(n)}]\}$ returned by TFMseg and LR  when $\Theta = \big\{\lfloor 0.25T \rfloor, \lfloor 0.5T \rfloor, \lfloor 0.625T \rfloor\big\}$, $\rho_f = 0$, $T \in \{400, 1600\}$ and varying $(p_1,p_2,p_3)$ (top to bottom) over $100$ realizations. The red bars give the total frequency of estimated change points for all $n \in [N]$, while the blue bars give the frequency from the subset of realizations $n \in \wh{\cD}$.}
\label{fig: subplot_unbal_rnull_indep}
\end{figure}

\clearpage
\subsubsection{\ref{s:two} Multi-mode change with equal-spaced change points}\label{app: num_det_bal_si}

We consider when the change points are equally spaced with $\Theta = \big\{\lfloor 0.25T \rfloor, \lfloor 0.5T \rfloor, \lfloor 0.75T \rfloor\big\}$  under~\ref{s:two}, see Tables~\ref{tab: bal_rnull_si_T400}--\ref{tab: bal_rnull_si_T3200} together with Figure~\ref{fig: subplot_rnull_si} for the case of serially dependent data with $\rho_f = 0.7$, and Tables~\ref{tab: bal_rnull_indep_si_T400}--\ref{tab: bal_rnull_indep_si_T3200} and Figure~\ref{fig: subplot_rnull_indep_si} for the case when the data are serially uncorrelated.
Comparing the results to those obtained under~\ref{s:one}, we note that the increased size of change as measured by $\omega_j$ in Definition~\ref{def: jump}, thanks to the additional mode undergoing a shift under~\ref{s:two}, benefits TFMseg and leads to its improved performance; this is not the case for LR as vectorization of the data does not preserve the increased change size.

\begin{table}[h!t!b!p!]
\caption{\ref{s:two} Summary of change point estimators returned by TFMseg and LR when $\Theta  = \big\{\lfloor 0.25T \rfloor, \lfloor 0.5T \rfloor, \lfloor 0.75T \rfloor\big\}$, $\rho_f = 0.7$,  $T=400$ and varying $(p_1,p_2,p_3)$, based on 100 realizations.}
\label{tab: bal_rnull_si_T400}
\centering
\setlength{\tabcolsep}{5pt}
\begin{tabular}[t]{clccccccccc}
\toprule
\multicolumn{2}{c}{ } & \multicolumn{5}{c}{$\wh{q}-q$} & \multicolumn{3}{c}{Accuracy} & \multicolumn{1}{c}{ } \\
\cmidrule(lr){3-7} 
\cmidrule(lr){8-10} 
Dimensions & Method & $\leq -2$ & $-1$ & $0$ & $1$ & $\geq 2$ & $j=1$ & $j=2$ & $j=3$ & Runtime (s)\\
\cmidrule(lr){1-2} \cmidrule(lr){3-7} \cmidrule(lr){8-10} \cmidrule(lr){11-11} 
(10,10,10) & TFMseg & 0.04 & 0.50 & 0.46 & 0 & 0 & 0.99 & 0.73 & 0.47 & 1.17 $\pm$ 0.08\\
 & LR & 0.06 & 0.91 & 0.03 & 0 & 0 & 1 & 0.59 & 0.38 & 52.79 $\pm$ 2.05\\
(10,10,100) & TFMseg & 0 & 0.46 & 0.52 & 0.02 & 0 & 0.99 & 0.84 & 0.55 & 4.71 $\pm$ 0.21\\
 & LR & 0.16 & 0.83 & 0.01 & 0 & 0 & 1 & 0.41 & 0.44 & 52.13 $\pm$ 3.22\\
(10,20,40) & TFMseg & 0.04 & 0.42 & 0.51 & 0.03 & 0 & 0.99 & 0.70 & 0.59 & 4.30 $\pm$ 0.32\\
 & LR & 0.11 & 0.84 & 0.05 & 0 & 0 & 1 & 0.52 & 0.41 & 50.53 $\pm$ 3.58\\
(20,20,20) & TFMseg & 0.06 & 0.44 & 0.50 & 0 & 0 & 0.99 & 0.65 & 0.60 & 4.15 $\pm$ 0.17\\
 & LR & 0.13 & 0.82 & 0.05 & 0 & 0 & 1 & 0.54 & 0.38 & 50.96 $\pm$ 3.06\\
\bottomrule
\end{tabular}
\end{table}

\begin{table}[h!t!b!p!]
\caption{\ref{s:two} Summary of change point estimators returned by TFMseg and LR when $\Theta  = \big\{\lfloor 0.25T \rfloor, \lfloor 0.5T \rfloor, \lfloor 0.75T \rfloor\big\}$, $\rho_f = 0.7$,  $T=800$ and varying $(p_1,p_2,p_3)$, based on 100 realizations.}
\label{tab: bal_rnull_si_T800}
\centering
\setlength{\tabcolsep}{5pt}
\begin{tabular}[t]{clccccccccc}
\toprule
\multicolumn{2}{c}{ } & \multicolumn{5}{c}{$\wh{q}-q$} & \multicolumn{3}{c}{Accuracy} & \multicolumn{1}{c}{ } \\
\cmidrule(lr){3-7} 
\cmidrule(lr){8-10} 
Dimensions & Method & $\leq -2$ & $-1$ & $0$ & $1$ & $\geq 2$ & $j=1$ & $j=2$ & $j=3$ & Runtime (s)\\
\cmidrule(lr){1-2} \cmidrule(lr){3-7} \cmidrule(lr){8-10} \cmidrule(lr){11-11} 
(10,10,10) & TFMseg & 0 & 0.15 & 0.84 & 0.01 & 0 & 0.99 & 0.81 & 0.80 & 2.22 $\pm$ 0.25\\
 & LR & 0.01 & 0.81 & 0.18 & 0 & 0 & 1 & 0.71 & 0.46 & 55.13 $\pm$ 3.11\\
(10,10,100) & TFMseg & 0 & 0.14 & 0.86 & 0 & 0 & 0.99 & 0.78 & 0.82 & 8.50 $\pm$ 0.46\\
 & LR & 0.05 & 0.79 & 0.16 & 0 & 0 & 1 & 0.56 & 0.54 & 59.47 $\pm$ 3.41\\
(10,20,40) & TFMseg & 0 & 0.15 & 0.85 & 0 & 0 & 1 & 0.92 & 0.80 & 7.58 $\pm$ 0.52\\
 & LR & 0.01 & 0.74 & 0.25 & 0 & 0 & 1 & 0.69 & 0.55 & 56.72 $\pm$ 3.73\\
(20,20,20) & TFMseg & 0 & 0.14 & 0.82 & 0.03 & 0.01 & 1 & 0.83 & 0.83 & 7.48 $\pm$ 0.55\\
 & LR & 0.03 & 0.68 & 0.29 & 0 & 0 & 1 & 0.71 & 0.54 & 56.74 $\pm$ 3.93\\
\bottomrule
\end{tabular}
\end{table}

\begin{table}[h!t!b!p!]
\caption{\ref{s:two} Summary of change point estimators returned by TFMseg and LR when $\Theta  = \big\{\lfloor 0.25T \rfloor, \lfloor 0.5T \rfloor, \lfloor 0.75T \rfloor\big\}$, $\rho_f = 0.7$,  $T=1600$ and varying $(p_1,p_2,p_3)$, based on 100 realizations.}
\label{tab: bal_rnull_si_T1600}
\centering
\setlength{\tabcolsep}{5pt}
\begin{tabular}[t]{clccccccccc}
\toprule
\multicolumn{2}{c}{ } & \multicolumn{5}{c}{$\wh{q}-q$} & \multicolumn{3}{c}{Accuracy} & \multicolumn{1}{c}{ } \\
\cmidrule(lr){3-7} 
\cmidrule(lr){8-10} 
Dimensions & Method & $\leq -2$ & $-1$ & $0$ & $1$ & $\geq 2$ & $j=1$ & $j=2$ & $j=3$ & Runtime (s)\\
\cmidrule(lr){1-2} \cmidrule(lr){3-7} \cmidrule(lr){8-10} \cmidrule(lr){11-11} 
(10,10,10) & TFMseg & 0 & 0 & 0.88 & 0.11 & 0.01 & 1 & 0.83 & 0.94 & 3.65 $\pm$ 0.38\\
 & LR & 0 & 0.46 & 0.54 & 0 & 0 & 1 & 0.88 & 0.66 & 59.61 $\pm$ 3.46\\
(10,10,100) & TFMseg & 0 & 0 & 0.92 & 0.08 & 0 & 1 & 0.91 & 0.98 & 16.97 $\pm$ 1.98\\
 & LR & 0.02 & 0.55 & 0.43 & 0 & 0 & 1 & 0.75 & 0.66 & 83.38 $\pm$ 5.88\\
(10,20,40) & TFMseg & 0 & 0.01 & 0.93 & 0.06 & 0 & 1 & 0.87 & 0.94 & 13.50 $\pm$ 0.89\\
 & LR & 0 & 0.52 & 0.48 & 0 & 0 & 1 & 0.82 & 0.66 & 79.60 $\pm$ 5.57\\
(20,20,20) & TFMseg & 0 & 0 & 0.90 & 0.10 & 0 & 1 & 0.89 & 0.95 & 14.11 $\pm$ 0.79\\
 & LR & 0 & 0.45 & 0.55 & 0 & 0 & 1 & 0.85 & 0.69 & 79.11 $\pm$ 6.42\\
\bottomrule
\end{tabular}
\end{table}

\begin{table}[h!t!b!p!]
\caption{\ref{s:two} Summary of change point estimators returned by TFMseg and LR when $\Theta  = \big\{\lfloor 0.25T \rfloor, \lfloor 0.5T \rfloor, \lfloor 0.75T \rfloor\big\}$, $\rho_f = 0.7$,  $T=3200$ and varying $(p_1,p_2,p_3)$, based on 100 realizations.}
\label{tab: bal_rnull_si_T3200}
\centering
\setlength{\tabcolsep}{5pt}
\begin{tabular}[t]{clccccccccc}
\toprule
\multicolumn{2}{c}{ } & \multicolumn{5}{c}{$\wh{q}-q$} & \multicolumn{3}{c}{Accuracy} & \multicolumn{1}{c}{ } \\
\cmidrule(lr){3-7} 
\cmidrule(lr){8-10} 
Dimensions & Method & $\leq -2$ & $-1$ & $0$ & $1$ & $\geq 2$ & $j=1$ & $j=2$ & $j=3$ & Runtime (s)\\
\cmidrule(lr){1-2} \cmidrule(lr){3-7} \cmidrule(lr){8-10} \cmidrule(lr){11-11} 
(10,10,10) & TFMseg & 0 & 0.01 & 0.80 & 0.18 & 0.01 & 1 & 0.99 & 0.96 & 6.00 $\pm$ 0.38\\
 & LR & 0 & 0.31 & 0.69 & 0 & 0 & 1 & 0.93 & 0.76 & 86.88 $\pm$ 6.06\\
(10,10,100) & TFMseg & 0 & 0 & 0.91 & 0.09 & 0 & 1 & 0.99 & 0.97 & 31.88 $\pm$ 3.73\\
 & LR & 0 & 0.40 & 0.60 & 0 & 0 & 1 & 0.82 & 0.78 & 231.11 $\pm$ 15.39\\
(10,20,40) & TFMseg & 0 & 0 & 0.89 & 0.10 & 0.01 & 1 & 0.97 & 1 & 25.44 $\pm$ 2.39\\
 & LR & 0 & 0.34 & 0.66 & 0 & 0 & 1 & 0.90 & 0.76 & 225.43 $\pm$ 22.62\\
(20,20,20) & TFMseg & 0 & 0 & 0.87 & 0.10 & 0.03 & 1 & 0.98 & 0.97 & 25.65 $\pm$ 1.88\\
 & LR & 0 & 0.27 & 0.73 & 0 & 0 & 1 & 0.93 & 0.80 & 211.48 $\pm$ 15.53\\
\bottomrule
\end{tabular}
\end{table}

\clearpage
\begin{figure}[h!t!b!p!]
\centering
\includegraphics[width=0.68\linewidth]{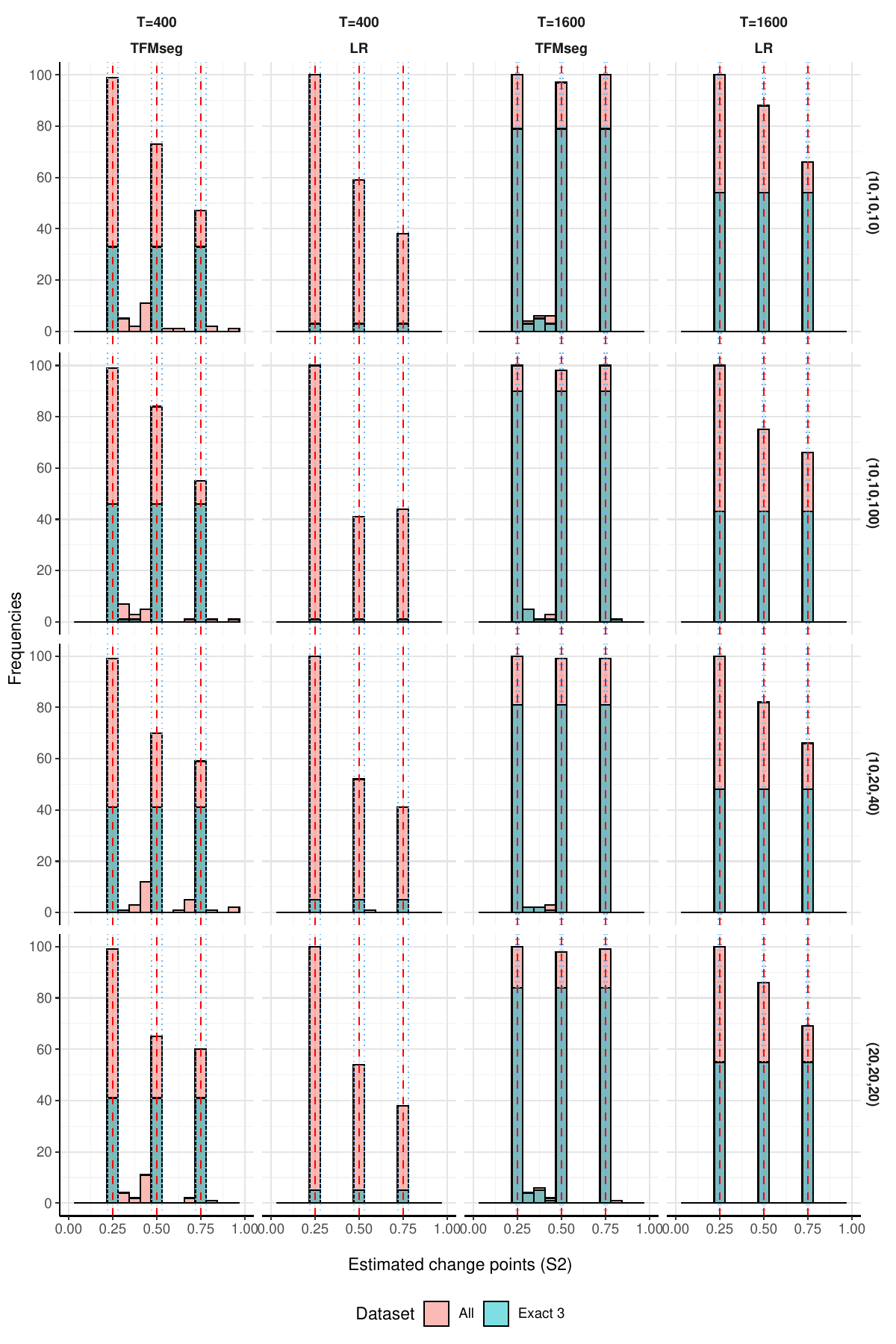}
\caption{\ref{s:two} Barplots of the scaled change point estimators $\{ \wh\theta^{(n)}_j/T, \, j \in [\wh q^{(n)}]\}$ returned by TFMseg and LR  when $\Theta = \big\{\lfloor 0.25T \rfloor, \lfloor 0.5T \rfloor, \lfloor 0.75T \rfloor\big\}$, $\rho_f = 0.7$, $T \in \{400, 1600\}$ and varying $(p_1,p_2,p_3)$ (top to bottom) over $100$ realizations. The red bars give the total frequency of estimated change points for all $n \in [N]$, while the blue bars give the frequency from the subset of realizations $n \in \wh{\cD}$.}
\label{fig: subplot_rnull_si}
\end{figure}

\clearpage
\begin{table}[h!t!b!p!]
\caption{\ref{s:two} Summary of change point estimators returned by TFMseg and LR when $\Theta  = \big\{\lfloor 0.25T \rfloor, \lfloor 0.5T \rfloor, \lfloor 0.75T \rfloor\big\}$, $\rho_f = 0$,  $T=400$ and varying $(p_1,p_2,p_3)$, based on 100 realizations.}
\label{tab: bal_rnull_indep_si_T400}
\centering
\setlength{\tabcolsep}{5pt}
\begin{tabular}[t]{clccccccccc}
\toprule
\multicolumn{2}{c}{ } & \multicolumn{5}{c}{$\wh{q}-q$} & \multicolumn{3}{c}{Accuracy} & \multicolumn{1}{c}{ } \\
\cmidrule(lr){3-7} 
\cmidrule(lr){8-10} 
Dimensions & Method & $\leq -2$ & $-1$ & $0$ & $1$ & $\geq 2$ & $j=1$ & $j=2$ & $j=3$ & Runtime (s)\\
\cmidrule(lr){1-2} \cmidrule(lr){3-7} \cmidrule(lr){8-10} \cmidrule(lr){11-11} 
(10,10,10) & TFMseg & 0 & 0.29 & 0.71 & 0 & 0 & 1 & 0.88 & 0.82 & 1.21 $\pm$ 0.13\\
 & LR & 0.04 & 0.89 & 0.07 & 0 & 0 & 1 & 0.66 & 0.37 & 50.37 $\pm$ 4.17\\
(10,10,100) & TFMseg & 0 & 0.32 & 0.68 & 0 & 0 & 1 & 0.90 & 0.75 & 5.03 $\pm$ 0.63\\
 & LR & 0.09 & 0.84 & 0.07 & 0 & 0 & 1 & 0.48 & 0.50 & 55.78 $\pm$ 3.20\\
(10,20,40) & TFMseg & 0.01 & 0.28 & 0.71 & 0 & 0 & 1 & 0.84 & 0.84 & 4.75 $\pm$ 0.52\\
 & LR & 0.04 & 0.81 & 0.15 & 0 & 0 & 1 & 0.68 & 0.43 & 51.98 $\pm$ 3.19\\
(20,20,20) & TFMseg & 0 & 0.25 & 0.75 & 0 & 0 & 1 & 0.84 & 0.88 & 4.70 $\pm$ 0.52\\
 & LR & 0.06 & 0.78 & 0.16 & 0 & 0 & 1 & 0.66 & 0.44 & 53.52 $\pm$ 3.39\\
\bottomrule
\end{tabular}
\end{table}

\begin{table}[h!t!b!p!]
\caption{\ref{s:two} Summary of change point estimators returned by TFMseg and LR when $\Theta  = \big\{\lfloor 0.25T \rfloor, \lfloor 0.5T \rfloor, \lfloor 0.75T \rfloor\big\}$, $\rho_f = 0$,  $T=800$ and varying $(p_1,p_2,p_3)$, based on 100 realizations.}
\label{tab: bal_rnull_indep_si_T800}
\centering
\setlength{\tabcolsep}{5pt}
\begin{tabular}[t]{clccccccccc}
\toprule
\multicolumn{2}{c}{ } & \multicolumn{5}{c}{$\wh{q}-q$} & \multicolumn{3}{c}{Accuracy} & \multicolumn{1}{c}{ } \\
\cmidrule(lr){3-7} 
\cmidrule(lr){8-10} 
Dimensions & Method & $\leq -2$ & $-1$ & $0$ & $1$ & $\geq 2$ & $j=1$ & $j=2$ & $j=3$ & Runtime (s)\\
\cmidrule(lr){1-2} \cmidrule(lr){3-7} \cmidrule(lr){8-10} \cmidrule(lr){11-11} 
(10,10,10) & TFMseg & 0 & 0.03 & 0.97 & 0 & 0 & 1 & 0.98 & 0.98 & 2.23 $\pm$ 0.24\\
 & LR & 0 & 0.67 & 0.33 & 0 & 0 & 1 & 0.84 & 0.49 & 53.26 $\pm$ 3.34\\
(10,10,100) & TFMseg & 0 & 0.06 & 0.94 & 0 & 0 & 1 & 0.99 & 0.93 & 9.28 $\pm$ 1.28\\
 & LR & 0.03 & 0.71 & 0.26 & 0 & 0 & 1 & 0.66 & 0.57 & 62.36 $\pm$ 4.33\\
(10,20,40) & TFMseg & 0 & 0.02 & 0.98 & 0 & 0 & 1 & 0.99 & 0.97 & 8.31 $\pm$ 0.94\\
 & LR & 0.01 & 0.63 & 0.36 & 0 & 0 & 1 & 0.82 & 0.53 & 59.49 $\pm$ 5.12\\
(20,20,20) & TFMseg & 0 & 0.05 & 0.95 & 0 & 0 & 1 & 0.96 & 0.96 & 8.41 $\pm$ 0.90\\
 & LR & 0.01 & 0.62 & 0.37 & 0 & 0 & 1 & 0.80 & 0.56 & 58.17 $\pm$ 3.78\\
\bottomrule
\end{tabular}
\end{table}

\begin{table}[h!t!b!p!]
\caption{\ref{s:two} Summary of change point estimators returned by TFMseg and LR when $\Theta  = \big\{\lfloor 0.25T \rfloor, \lfloor 0.5T \rfloor, \lfloor 0.75T \rfloor\big\}$, $\rho_f = 0$,  $T=1600$ and varying $(p_1,p_2,p_3)$, based on 100 realizations.}
\label{tab: bal_rnull_indep_si_T1600}
\centering
\setlength{\tabcolsep}{5pt}
\begin{tabular}[t]{clccccccccc}
\toprule
\multicolumn{2}{c}{ } & \multicolumn{5}{c}{$\wh{q}-q$} & \multicolumn{3}{c}{Accuracy} & \multicolumn{1}{c}{ } \\
\cmidrule(lr){3-7} 
\cmidrule(lr){8-10} 
Dimensions & Method & $\leq -2$ & $-1$ & $0$ & $1$ & $\geq 2$ & $j=1$ & $j=2$ & $j=3$ & Runtime (s)\\
\cmidrule(lr){1-2} \cmidrule(lr){3-7} \cmidrule(lr){8-10} \cmidrule(lr){11-11} 
(10,10,10) & TFMseg & 0 & 0.01 & 0.99 & 0 & 0 & 1 & 0.99 & 0.98 & 3.86 $\pm$ 0.46\\
 & LR & 0 & 0.43 & 0.57 & 0 & 0 & 1 & 0.91 & 0.66 & 60.82 $\pm$ 3.96\\
(10,10,100) & TFMseg & 0 & 0 & 1 & 0 & 0 & 1 & 1 & 0.99 & 17.83 $\pm$ 2.54\\
 & LR & 0.02 & 0.51 & 0.47 & 0 & 0 & 1 & 0.78 & 0.67 & 89.90 $\pm$ 7.83\\
(10,20,40) & TFMseg & 0 & 0 & 1 & 0 & 0 & 1 & 1 & 1 & 13.98 $\pm$ 2.19\\
 & LR & 0 & 0.46 & 0.54 & 0 & 0 & 1 & 0.90 & 0.64 & 84.49 $\pm$ 4.69\\
(20,20,20) & TFMseg & 0 & 0 & 0.99 & 0.01 & 0 & 1 & 0.99 & 1 & 15.84 $\pm$ 1.73\\
 & LR & 0 & 0.42 & 0.58 & 0 & 0 & 1 & 0.89 & 0.69 & 84.23 $\pm$ 3.91\\
\bottomrule
\end{tabular}
\end{table}

\begin{table}[h!t!b!p!]
\caption{\ref{s:two} Summary of change point estimators returned by TFMseg and LR when $\Theta  = \big\{\lfloor 0.25T \rfloor, \lfloor 0.5T \rfloor, \lfloor 0.75T \rfloor\big\}$, $\rho_f = 0$,  $T=3200$ and varying $(p_1,p_2,p_3)$, based on 100 realizations.}
\label{tab: bal_rnull_indep_si_T3200}
\centering
\setlength{\tabcolsep}{5pt}
\begin{tabular}[t]{clccccccccc}
\toprule
\multicolumn{2}{c}{ } & \multicolumn{5}{c}{$\wh{q}-q$} & \multicolumn{3}{c}{Accuracy} & \multicolumn{1}{c}{ } \\
\cmidrule(lr){3-7} 
\cmidrule(lr){8-10} 
Dimensions & Method & $\leq -2$ & $-1$ & $0$ & $1$ & $\geq 2$ & $j=1$ & $j=2$ & $j=3$ & Runtime (s)\\
\cmidrule(lr){1-2} \cmidrule(lr){3-7} \cmidrule(lr){8-10} \cmidrule(lr){11-11} 
(10,10,10) & TFMseg & 0 & 0 & 0.99 & 0.01 & 0 & 1 & 1 & 1 & 6.57 $\pm$ 0.72\\
 & LR & 0 & 0.31 & 0.69 & 0 & 0 & 1 & 0.94 & 0.75 & 91.09 $\pm$ 7.55\\
(10,10,100) & TFMseg & 0 & 0 & 1 & 0 & 0 & 1 & 1 & 1 & 32.87 $\pm$ 3.31\\
 & LR & 0 & 0.41 & 0.59 & 0 & 0 & 1 & 0.83 & 0.76 & 244.14 $\pm$ 29.41\\
(10,20,40) & TFMseg & 0 & 0 & 0.99 & 0.01 & 0 & 1 & 1 & 1 & 26.72 $\pm$ 4.71\\
 & LR & 0 & 0.31 & 0.69 & 0 & 0 & 1 & 0.91 & 0.78 & 249.51 $\pm$ 24.13\\
(20,20,20) & TFMseg & 0 & 0 & 0.99 & 0.01 & 0 & 1 & 1 & 1 & 28.19 $\pm$ 3.41\\
 & LR & 0 & 0.26 & 0.74 & 0 & 0 & 1 & 0.93 & 0.81 & 235.18 $\pm$ 25.46\\
\bottomrule
\end{tabular}
\end{table}

\clearpage
\begin{figure}[h!t!b!p!]
\centering
\includegraphics[width=0.68\linewidth]{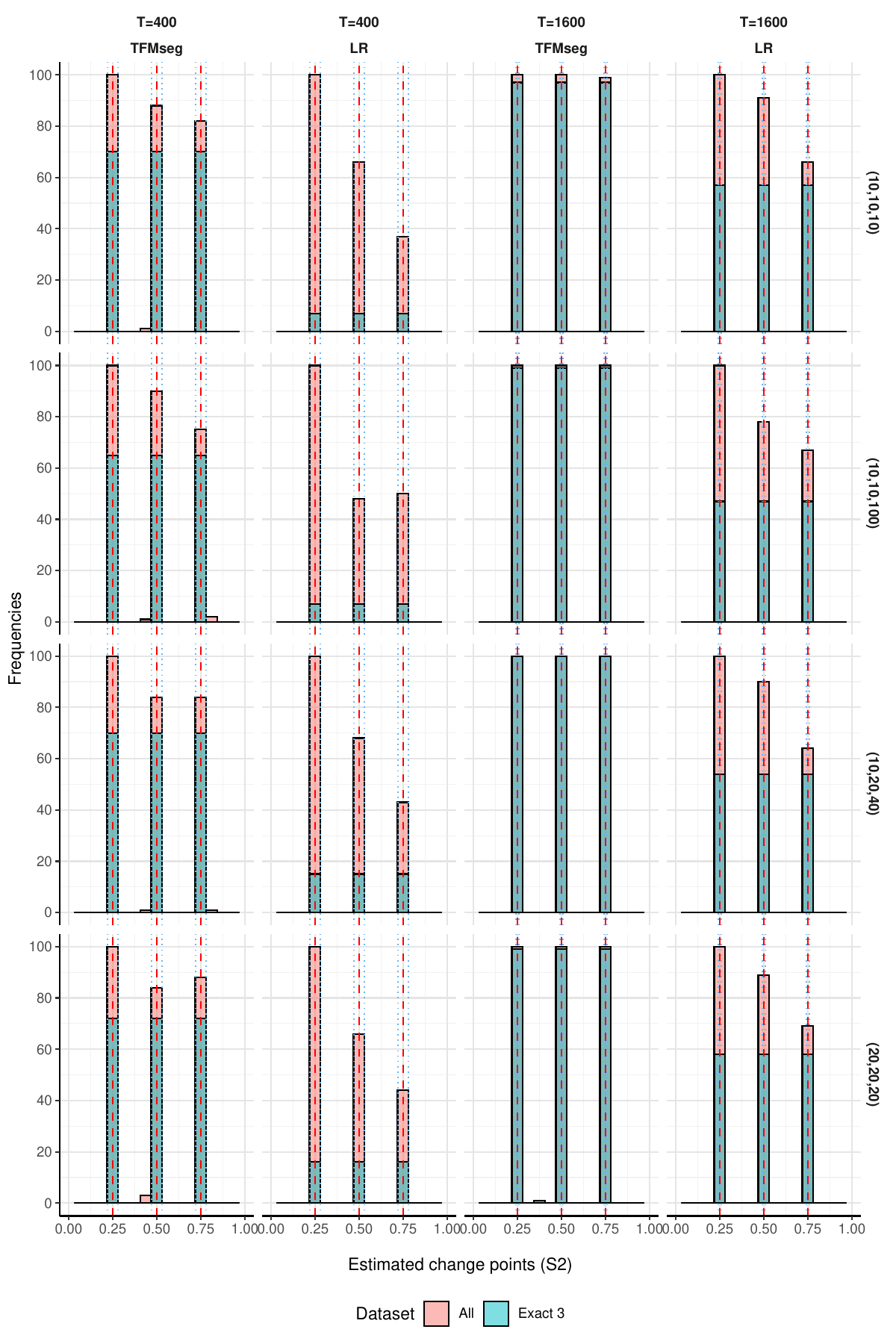}
\caption{\ref{s:two} Barplots of the scaled change point estimators $\{ \wh\theta^{(n)}_j/T, \, j \in [\wh q^{(n)}]\}$ returned by TFMseg and LR  when $\Theta = \big\{\lfloor 0.25T \rfloor, \lfloor 0.5T \rfloor, \lfloor 0.75T \rfloor\big\}$, $\rho_f = 0$, $T \in \{400, 1600\}$ and varying $(p_1,p_2,p_3)$ (top to bottom) over $100$ realizations. The red bars give the total frequency of estimated change points for all $n \in [N]$, while the blue bars give the frequency from the subset of realizations $n \in \wh{\cD}$.}
\label{fig: subplot_rnull_indep_si}
\end{figure}

\clearpage 

\subsubsection{\ref{s:two} Multi-mode change with unequally spaced change points}\label{app: num_det_unbal_si}

We consider when the change points are unequally spaced with $\Theta = \big\{\lfloor 0.25T \rfloor, \lfloor 0.5T \rfloor, \lfloor 0.625T \rfloor\big\}$  under~\ref{s:two}, see Tables~\ref{tab: unbal_rnull_si_T400}--\ref{tab: unbal_rnull_si_T3200} together with Figure~\ref{fig: subplot_unbal_rnull_si} for the case of serially dependent data with $\rho_f = 0.7$, and Tables~\ref{tab: unbal_rnull_indep_si_T400}--\ref{tab: unbal_rnull_indep_si_T3200} and Figure~\ref{fig: subplot_unbal_rnull_indep_si} for the case when the data are serially uncorrelated.

\begin{table}[h!t!b!p!]
\caption{\ref{s:two} Summary of change point estimators returned by TFMseg and LR when $\Theta  = \big\{\lfloor 0.25T \rfloor, \lfloor 0.5T \rfloor, \lfloor 0.625T \rfloor\big\}$, $\rho_f = 0.7$,  $T=400$ and varying $(p_1,p_2,p_3)$, based on 100 realizations.}
\label{tab: unbal_rnull_si_T400}
\centering
\setlength{\tabcolsep}{5pt}
\begin{tabular}[t]{clccccccccc}
\toprule
\multicolumn{2}{c}{ } & \multicolumn{5}{c}{$\wh{q}-q$} & \multicolumn{3}{c}{Accuracy} & \multicolumn{1}{c}{ } \\
\cmidrule(lr){3-7} 
\cmidrule(lr){8-10} 
Dimensions & Method & $\leq -2$ & $-1$ & $0$ & $1$ & $\geq 2$ & $j=1$ & $j=2$ & $j=3$ & Runtime (s)\\
\cmidrule(lr){1-2} \cmidrule(lr){3-7} \cmidrule(lr){8-10} \cmidrule(lr){11-11} 
(10,10,10) & TFMseg & 0.03 & 0.59 & 0.37 & 0.01 & 0 & 0.98 & 0.70 & 0.39 & 1.22 $\pm$ 0.10\\
 & LR & 0.06 & 0.94 & 0 & 0 & 0 & 0.98 & 0.39 & 0.56 & 50.33 $\pm$ 4.11\\
(10,10,100) & TFMseg & 0.01 & 0.58 & 0.37 & 0.04 & 0 & 0.99 & 0.85 & 0.39 & 5.37 $\pm$ 0.58\\
 & LR & 0.15 & 0.85 & 0 & 0 & 0 & 0.96 & 0.29 & 0.59 & 54.13 $\pm$ 5.31\\
(10,20,40) & TFMseg & 0.09 & 0.51 & 0.36 & 0.04 & 0 & 0.99 & 0.68 & 0.48 & 4.75 $\pm$ 0.49\\
 & LR & 0.06 & 0.94 & 0 & 0 & 0 & 0.99 & 0.30 & 0.61 & 51.19 $\pm$ 3.66\\
(20,20,20) & TFMseg & 0.06 & 0.61 & 0.32 & 0.01 & 0 & 0.99 & 0.69 & 0.41 & 4.65 $\pm$ 0.46\\
 & LR & 0.08 & 0.92 & 0 & 0 & 0 & 0.98 & 0.35 & 0.59 & 50.03 $\pm$ 4.13\\
\bottomrule
\end{tabular}
\end{table}

\begin{table}[h!t!b!p!]
\caption{\ref{s:two} Summary of change point estimators returned by TFMseg and LR when $\Theta  = \big\{\lfloor 0.25T \rfloor, \lfloor 0.5T \rfloor, \lfloor 0.625T \rfloor\big\}$, $\rho_f = 0.7$,  $T=800$ and varying $(p_1,p_2,p_3)$, based on 100 realizations.}
\label{tab: unbal_rnull_si_T800}
\centering
\setlength{\tabcolsep}{5pt}
\begin{tabular}[t]{clccccccccc}
\toprule
\multicolumn{2}{c}{ } & \multicolumn{5}{c}{$\wh{q}-q$} & \multicolumn{3}{c}{Accuracy} & \multicolumn{1}{c}{ } \\
\cmidrule(lr){3-7} 
\cmidrule(lr){8-10} 
Dimensions & Method & $\leq -2$ & $-1$ & $0$ & $1$ & $\geq 2$ & $j=1$ & $j=2$ & $j=3$ & Runtime (s)\\
\cmidrule(lr){1-2} \cmidrule(lr){3-7} \cmidrule(lr){8-10} \cmidrule(lr){11-11} 
(10,10,10) & TFMseg & 0.01 & 0.27 & 0.69 & 0.03 & 0 & 0.99 & 0.73 & 0.71 & 2.29 $\pm$ 0.22\\
 & LR & 0 & 0.99 & 0.01 & 0 & 0 & 1 & 0.41 & 0.57 & 51.56 $\pm$ 4.36\\
(10,10,100) & TFMseg & 0 & 0.27 & 0.71 & 0.02 & 0 & 0.98 & 0.75 & 0.69 & 9.71 $\pm$ 1.08\\
 & LR & 0.03 & 0.95 & 0.02 & 0 & 0 & 1 & 0.31 & 0.68 & 60.65 $\pm$ 6.15\\
(10,20,40) & TFMseg & 0 & 0.25 & 0.72 & 0.03 & 0 & 1 & 0.86 & 0.72 & 8.40 $\pm$ 0.90\\
 & LR & 0 & 0.97 & 0.03 & 0 & 0 & 1 & 0.39 & 0.63 & 57.01 $\pm$ 4.51\\
(20,20,20) & TFMseg & 0 & 0.31 & 0.65 & 0.03 & 0.01 & 0.99 & 0.74 & 0.71 & 8.30 $\pm$ 0.83\\
 & LR & 0.02 & 0.96 & 0.02 & 0 & 0 & 1 & 0.32 & 0.67 & 55.30 $\pm$ 3.63\\
\bottomrule
\end{tabular}
\end{table}

\begin{table}[h!t!b!p!]
\caption{\ref{s:two} Summary of change point estimators returned by TFMseg and LR when $\Theta  = \big\{\lfloor 0.25T \rfloor, \lfloor 0.5T \rfloor, \lfloor 0.625T \rfloor\big\}$, $\rho_f = 0.7$,  $T=1600$ and varying $(p_1,p_2,p_3)$, based on 100 realizations.}
\label{tab: unbal_rnull_si_T1600}
\centering
\setlength{\tabcolsep}{5pt}
\begin{tabular}[t]{clccccccccc}
\toprule
\multicolumn{2}{c}{ } & \multicolumn{5}{c}{$\wh{q}-q$} & \multicolumn{3}{c}{Accuracy} & \multicolumn{1}{c}{ } \\
\cmidrule(lr){3-7} 
\cmidrule(lr){8-10} 
Dimensions & Method & $\leq -2$ & $-1$ & $0$ & $1$ & $\geq 2$ & $j=1$ & $j=2$ & $j=3$ & Runtime (s)\\
\cmidrule(lr){1-2} \cmidrule(lr){3-7} \cmidrule(lr){8-10} \cmidrule(lr){11-11} 
(10,10,10) & TFMseg & 0 & 0.10 & 0.76 & 0.13 & 0.01 & 1 & 0.80 & 0.90 & 3.80 $\pm$ 0.46\\
 & LR & 0 & 0.84 & 0.16 & 0 & 0 & 1 & 0.56 & 0.60 & 57.40 $\pm$ 4.85\\
(10,10,100) & TFMseg & 0 & 0.07 & 0.81 & 0.12 & 0 & 1 & 0.87 & 0.90 & 18.56 $\pm$ 1.84\\
 & LR & 0 & 0.82 & 0.18 & 0 & 0 & 1 & 0.49 & 0.68 & 87.14 $\pm$ 9.50\\
(10,20,40) & TFMseg & 0 & 0.06 & 0.85 & 0.08 & 0.01 & 1 & 0.87 & 0.89 & 15.12 $\pm$ 1.49\\
 & LR & 0 & 0.77 & 0.23 & 0 & 0 & 1 & 0.56 & 0.65 & 81.57 $\pm$ 6.84\\
(20,20,20) & TFMseg & 0 & 0.08 & 0.79 & 0.13 & 0 & 1 & 0.82 & 0.89 & 15.55 $\pm$ 1.41\\
 & LR & 0 & 0.79 & 0.21 & 0 & 0 & 1 & 0.55 & 0.66 & 78.25 $\pm$ 6.37\\
\bottomrule
\end{tabular}
\end{table}

\begin{table}[h!t!b!p!]
\caption{\ref{s:two} Summary of change point estimators returned by TFMseg and LR when $\Theta  = \big\{\lfloor 0.25T \rfloor, \lfloor 0.5T \rfloor, \lfloor 0.625T \rfloor\big\}$, $\rho_f = 0.7$,  $T=3200$ and varying $(p_1,p_2,p_3)$, based on 100 realizations.}
\label{tab: unbal_rnull_si_T3200}
\centering
\setlength{\tabcolsep}{5pt}
\begin{tabular}[t]{clccccccccc}
\toprule
\multicolumn{2}{c}{ } & \multicolumn{5}{c}{$\wh{q}-q$} & \multicolumn{3}{c}{Accuracy} & \multicolumn{1}{c}{ } \\
\cmidrule(lr){3-7} 
\cmidrule(lr){8-10} 
Dimensions & Method & $\leq -2$ & $-1$ & $0$ & $1$ & $\geq 2$ & $j=1$ & $j=2$ & $j=3$ & Runtime (s)\\
\cmidrule(lr){1-2} \cmidrule(lr){3-7} \cmidrule(lr){8-10} \cmidrule(lr){11-11} 
(10,10,10) & TFMseg & 0 & 0 & 0.82 & 0.16 & 0.02 & 1 & 0.99 & 0.96 & 6.57 $\pm$ 0.67\\
 & LR & 0 & 0.47 & 0.53 & 0 & 0 & 1 & 0.83 & 0.70 & 84.11 $\pm$ 6.91\\
(10,10,100) & TFMseg & 0 & 0 & 0.83 & 0.17 & 0 & 1 & 1 & 0.96 & 34.95 $\pm$ 3.82\\
 & LR & 0 & 0.59 & 0.41 & 0 & 0 & 1 & 0.66 & 0.74 & 239.90 $\pm$ 30.55\\
(10,20,40) & TFMseg & 0 & 0 & 0.85 & 0.12 & 0.03 & 1 & 0.97 & 0.94 & 27.73 $\pm$ 2.46\\
 & LR & 0 & 0.49 & 0.51 & 0 & 0 & 1 & 0.82 & 0.69 & 227.08 $\pm$ 25.53\\
(20,20,20) & TFMseg & 0 & 0 & 0.80 & 0.18 & 0.02 & 1 & 0.98 & 0.95 & 28.15 $\pm$ 2.25\\
 & LR & 0 & 0.44 & 0.56 & 0 & 0 & 1 & 0.82 & 0.74 & 224.52 $\pm$ 26.60\\
\bottomrule
\end{tabular}
\end{table}

\begin{figure}[h!t!b!p!]
\centering
\includegraphics[width=0.68\linewidth]{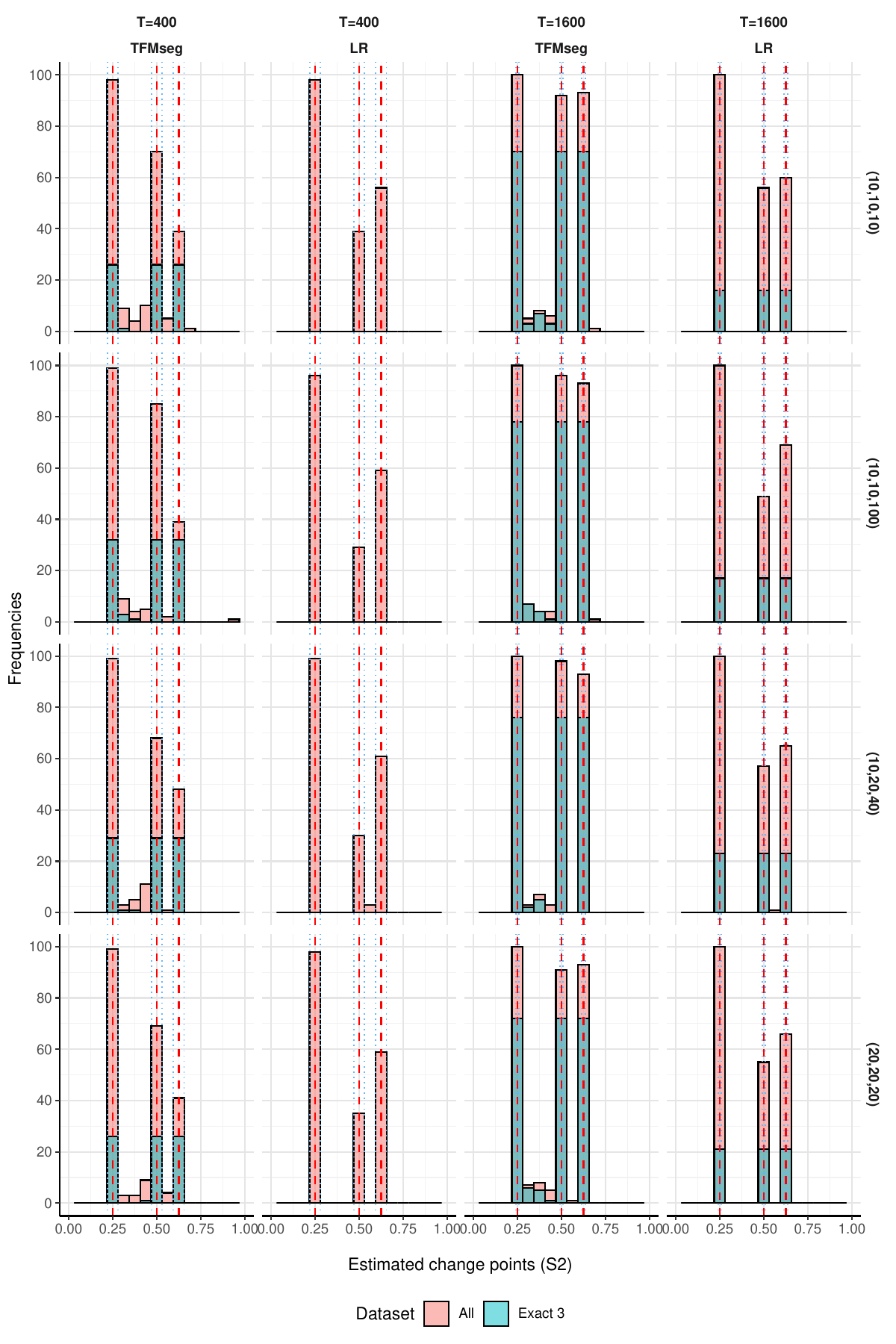}
\caption{\ref{s:two} Barplots of the scaled change point estimators $\{ \wh\theta^{(n)}_j/T, \, j \in [\wh q^{(n)}]\}$ returned by TFMseg and LR  when $\Theta = \big\{\lfloor 0.25T \rfloor, \lfloor 0.5T \rfloor, \lfloor 0.625T \rfloor\big\}$, $\rho_f = 0.7$, $T \in \{400, 1600\}$ and varying $(p_1,p_2,p_3)$ (top to bottom) over $100$ realizations. The red bars give the total frequency of estimated change points for all $n \in [N]$, while the blue bars give the frequency from the subset of realizations $n \in \wh{\cD}$.}
\label{fig: subplot_unbal_rnull_si}
\end{figure}

\clearpage
\begin{table}[h!t!b!p!]
\caption{\ref{s:two} Summary of change point estimators returned by TFMseg and LR when $\Theta  = \big\{\lfloor 0.25T \rfloor, \lfloor 0.5T \rfloor, \lfloor 0.625T \rfloor\big\}$, $\rho_f = 0$,  $T=400$ and varying $(p_1,p_2,p_3)$, based on 100 realizations.}
\label{tab: unbal_rnull_indep_si_T400}
\centering
\setlength{\tabcolsep}{5pt}
\begin{tabular}[t]{clccccccccc}
\toprule
\multicolumn{2}{c}{ } & \multicolumn{5}{c}{$\wh{q}-q$} & \multicolumn{3}{c}{Accuracy} & \multicolumn{1}{c}{ } \\
\cmidrule(lr){3-7} 
\cmidrule(lr){8-10} 
Dimensions & Method & $\leq -2$ & $-1$ & $0$ & $1$ & $\geq 2$ & $j=1$ & $j=2$ & $j=3$ & Runtime (s)\\
\cmidrule(lr){1-2} \cmidrule(lr){3-7} \cmidrule(lr){8-10} \cmidrule(lr){11-11} 
(10,10,10) & TFMseg & 0.02 & 0.50 & 0.48 & 0 & 0 & 1 & 0.87 & 0.57 & 1.17 $\pm$ 0.12\\
 & LR & 0.03 & 0.97 & 0 & 0 & 0 & 0.99 & 0.39 & 0.58 & 50.37 $\pm$ 4.10\\
(10,10,100) & TFMseg & 0 & 0.49 & 0.51 & 0 & 0 & 1 & 0.92 & 0.58 & 4.99 $\pm$ 0.49\\
 & LR & 0.09 & 0.90 & 0.01 & 0 & 0 & 1 & 0.33 & 0.59 & 51.91 $\pm$ 3.19\\
(10,20,40) & TFMseg & 0.02 & 0.39 & 0.59 & 0 & 0 & 1 & 0.90 & 0.67 & 4.44 $\pm$ 0.41\\
 & LR & 0.04 & 0.96 & 0 & 0 & 0 & 1 & 0.37 & 0.59 & 51.27 $\pm$ 2.94\\
(20,20,20) & TFMseg & 0.01 & 0.35 & 0.63 & 0.01 & 0 & 1 & 0.90 & 0.70 & 4.42 $\pm$ 0.40\\
 & LR & 0.05 & 0.94 & 0.01 & 0 & 0 & 0.99 & 0.35 & 0.61 & 51.05 $\pm$ 3.45\\
\bottomrule
\end{tabular}
\end{table}

\begin{table}[h!t!b!p!]
\caption{\ref{s:two} Summary of change point estimators returned by TFMseg and LR when $\Theta  = \big\{\lfloor 0.25T \rfloor, \lfloor 0.5T \rfloor, \lfloor 0.625T \rfloor\big\}$, $\rho_f = 0$,  $T=800$ and varying $(p_1,p_2,p_3)$, based on 100 realizations.}
\label{tab: unbal_rnull_indep_si_T800}
\centering
\setlength{\tabcolsep}{5pt}
\begin{tabular}[t]{clccccccccc}
\toprule
\multicolumn{2}{c}{ } & \multicolumn{5}{c}{$\wh{q}-q$} & \multicolumn{3}{c}{Accuracy} & \multicolumn{1}{c}{ } \\
\cmidrule(lr){3-7} 
\cmidrule(lr){8-10} 
Dimensions & Method & $\leq -2$ & $-1$ & $0$ & $1$ & $\geq 2$ & $j=1$ & $j=2$ & $j=3$ & Runtime (s)\\
\cmidrule(lr){1-2} \cmidrule(lr){3-7} \cmidrule(lr){8-10} \cmidrule(lr){11-11} 
(10,10,10) & TFMseg & 0 & 0.18 & 0.82 & 0 & 0 & 1 & 0.95 & 0.83 & 2.07 $\pm$ 0.20\\
 & LR & 0 & 0.99 & 0.01 & 0 & 0 & 1 & 0.46 & 0.55 & 51.72 $\pm$ 3.56\\
(10,10,100) & TFMseg & 0 & 0.16 & 0.84 & 0 & 0 & 1 & 0.98 & 0.85 & 8.66 $\pm$ 0.85\\
 & LR & 0.02 & 0.92 & 0.06 & 0 & 0 & 1 & 0.39 & 0.65 & 58.93 $\pm$ 3.16\\
(10,20,40) & TFMseg & 0 & 0.12 & 0.88 & 0 & 0 & 1 & 0.96 & 0.89 & 7.88 $\pm$ 0.94\\
 & LR & 0.02 & 0.95 & 0.03 & 0 & 0 & 1 & 0.38 & 0.63 & 57.17 $\pm$ 3.29\\
(20,20,20) & TFMseg & 0 & 0.19 & 0.81 & 0 & 0 & 1 & 0.87 & 0.93 & 7.96 $\pm$ 0.86\\
 & LR & 0.01 & 0.90 & 0.09 & 0 & 0 & 1 & 0.43 & 0.65 & 57.04 $\pm$ 3.20\\
\bottomrule
\end{tabular}
\end{table}

\begin{table}[h!t!b!p!]
\caption{\ref{s:two} Summary of change point estimators returned by TFMseg and LR when $\Theta  = \big\{\lfloor 0.25T \rfloor, \lfloor 0.5T \rfloor, \lfloor 0.625T \rfloor\big\}$, $\rho_f = 0$,  $T=1600$ and varying $(p_1,p_2,p_3)$, based on 100 realizations.}
\label{tab: unbal_rnull_indep_si_T1600}
\centering
\setlength{\tabcolsep}{5pt}
\begin{tabular}[t]{clccccccccc}
\toprule
\multicolumn{2}{c}{ } & \multicolumn{5}{c}{$\wh{q}-q$} & \multicolumn{3}{c}{Accuracy} & \multicolumn{1}{c}{ } \\
\cmidrule(lr){3-7} 
\cmidrule(lr){8-10} 
Dimensions & Method & $\leq -2$ & $-1$ & $0$ & $1$ & $\geq 2$ & $j=1$ & $j=2$ & $j=3$ & Runtime (s)\\
\cmidrule(lr){1-2} \cmidrule(lr){3-7} \cmidrule(lr){8-10} \cmidrule(lr){11-11} 
(10,10,10) & TFMseg & 0 & 0.03 & 0.96 & 0.01 & 0 & 1 & 0.97 & 0.99 & 3.67 $\pm$ 0.37\\
 & LR & 0 & 0.78 & 0.22 & 0 & 0 & 1 & 0.65 & 0.57 & 57.62 $\pm$ 4.59\\
(10,10,100) & TFMseg & 0 & 0.05 & 0.95 & 0 & 0 & 1 & 0.98 & 0.96 & 16.92 $\pm$ 1.85\\
 & LR & 0.01 & 0.77 & 0.22 & 0 & 0 & 1 & 0.55 & 0.66 & 85.47 $\pm$ 5.77\\
(10,20,40) & TFMseg & 0 & 0.01 & 0.99 & 0 & 0 & 1 & 0.99 & 0.98 & 14.27 $\pm$ 1.65\\
 & LR & 0 & 0.68 & 0.32 & 0 & 0 & 1 & 0.69 & 0.63 & 82.19 $\pm$ 5.87\\
(20,20,20) & TFMseg & 0 & 0.04 & 0.95 & 0.01 & 0 & 1 & 0.99 & 0.96 & 14.73 $\pm$ 1.52\\
 & LR & 0 & 0.66 & 0.34 & 0 & 0 & 1 & 0.69 & 0.65 & 81.48 $\pm$ 6.23\\
\bottomrule
\end{tabular}
\end{table}

\begin{table}[h!t!b!p!]
\caption{\ref{s:two} Summary of change point estimators returned by TFMseg and LR when $\Theta  = \big\{\lfloor 0.25T \rfloor, \lfloor 0.5T \rfloor, \lfloor 0.625T \rfloor\big\}$, $\rho_f = 0$,  $T=3200$ and varying $(p_1,p_2,p_3)$, based on 100 realizations.}
\label{tab: unbal_rnull_indep_si_T3200}
\centering
\setlength{\tabcolsep}{5pt}
\begin{tabular}[t]{clccccccccc}
\toprule
\multicolumn{2}{c}{ } & \multicolumn{5}{c}{$\wh{q}-q$} & \multicolumn{3}{c}{Accuracy} & \multicolumn{1}{c}{ } \\
\cmidrule(lr){3-7} 
\cmidrule(lr){8-10} 
Dimensions & Method & $\leq -2$ & $-1$ & $0$ & $1$ & $\geq 2$ & $j=1$ & $j=2$ & $j=3$ & Runtime (s)\\
\cmidrule(lr){1-2} \cmidrule(lr){3-7} \cmidrule(lr){8-10} \cmidrule(lr){11-11} 
(10,10,10) & TFMseg & 0 & 0 & 0.99 & 0.01 & 0 & 1 & 1 & 1 & 6.25 $\pm$ 0.59\\
 & LR & 0 & 0.51 & 0.49 & 0 & 0 & 1 & 0.80 & 0.69 & 85.84 $\pm$ 7.67\\
(10,10,100) & TFMseg & 0 & 0 & 1 & 0 & 0 & 1 & 1 & 0.99 & 32.08 $\pm$ 4.12\\
 & LR & 0 & 0.52 & 0.48 & 0 & 0 & 1 & 0.74 & 0.74 & 238.92 $\pm$ 24.17\\
(10,20,40) & TFMseg & 0 & 0 & 1 & 0 & 0 & 1 & 1 & 0.99 & 26.51 $\pm$ 2.85\\
 & LR & 0 & 0.45 & 0.55 & 0 & 0 & 1 & 0.86 & 0.69 & 226.88 $\pm$ 22.13\\
(20,20,20) & TFMseg & 0 & 0 & 0.98 & 0.02 & 0 & 1 & 1 & 1 & 27.15 $\pm$ 4.06\\
 & LR & 0 & 0.39 & 0.61 & 0 & 0 & 1 & 0.88 & 0.73 & 225.11 $\pm$ 23.70\\
\bottomrule
\end{tabular}
\end{table}

\clearpage
\begin{figure}[h!t!b!p!]
\centering
\includegraphics[width=0.68\linewidth]{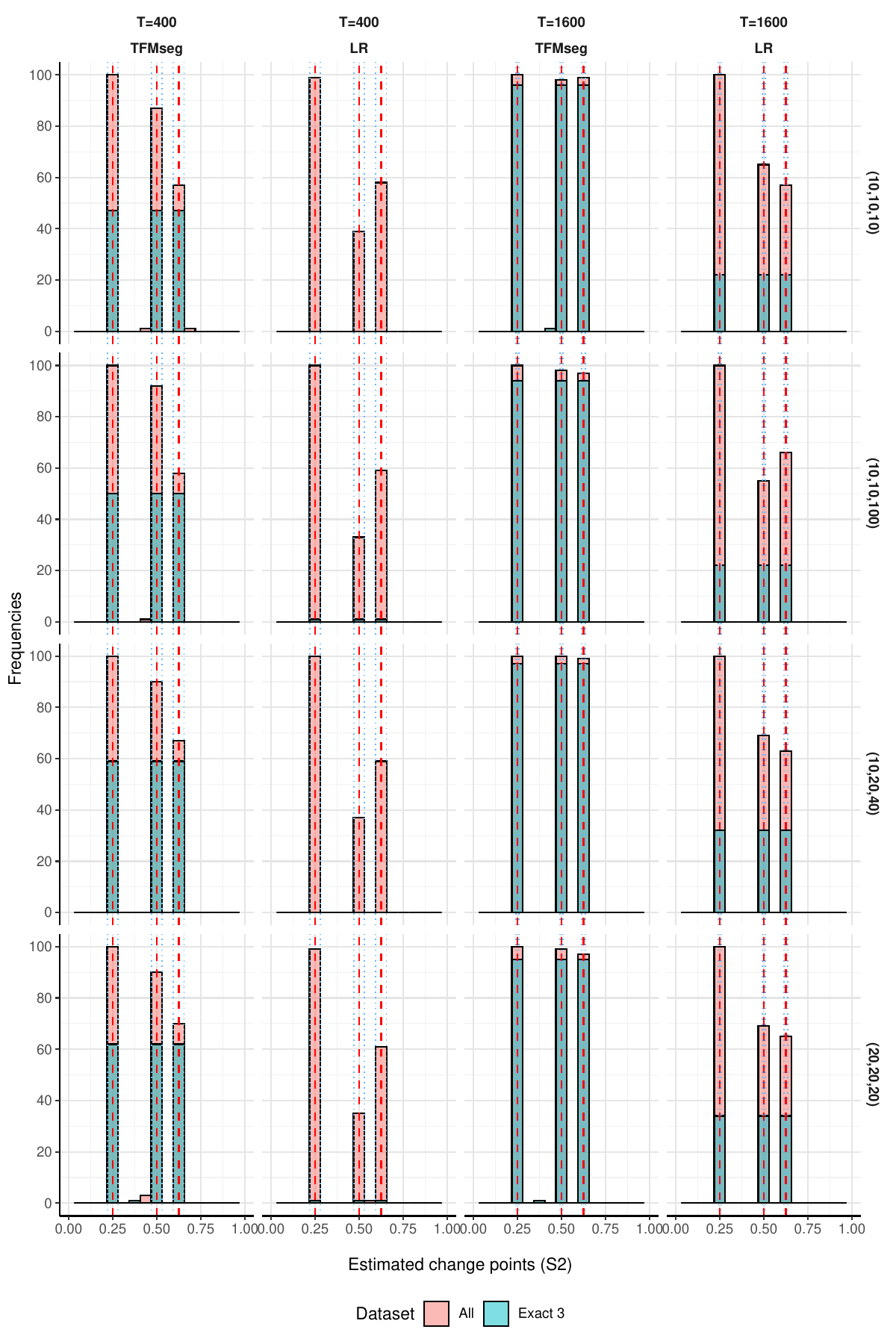}
\caption{\ref{s:two} Barplots of the scaled change point estimators $\{ \wh\theta^{(n)}_j/T, \, j \in [\wh q^{(n)}]\}$ returned by TFMseg and LR  when $\Theta = \big\{\lfloor 0.25T \rfloor, \lfloor 0.5T \rfloor, \lfloor 0.625T \rfloor\big\}$, $\rho_f = 0$, $T \in \{400, 1600\}$ and varying $(p_1,p_2,p_3)$ (top to bottom) over $100$ realizations. The red bars give the total frequency of estimated change points for all $n \in [N]$, while the blue bars give the frequency from the subset of realizations $n \in \wh{\cD}$.}
\label{fig: subplot_unbal_rnull_indep_si}
\end{figure}



\clearpage

\subsection{Results for mode-identification}\label{app: num_idt}


Recalling the definition of $\wh{\cD}$ in Appendix~\ref{app: num_det}, we report the following performance metrics:
With $\wh\cK_j^{(n)} = \{ k \in [K]: \, \Vert \wh\Xi_j^{(k), (n)} \Vert > \zeta_{T,p} \}$,
\[
\text{TPR}_j = \frac{1}{\vert \wh{\cD} \vert}\sum_{n \in \wh{\cD}} \frac{\vert \wh{\cK}_j^{(n)} \cap \cK_j \vert}{ \max(\vert \cK_j \vert, \, 1) } \text{ \ and \ } \text{FPR}_j = \frac{1}{\vert \wh{\cD} \vert}\sum_{n \in \wh{\cD}} \frac{\vert \wh\cK_j^{(n)} \setminus \cK_j \vert}{\max(K - \vert \cK_j \vert, \, 1)}.
\]

\subsubsection{\ref{s:one} Single-mode change with equal-spaced change points}\label{app: num_idt_bal}

We consider when the change points are equally spaced with $\Theta = \big\{\lfloor 0.25T \rfloor, \lfloor 0.5T \rfloor, \lfloor 0.75T \rfloor\big\}$ under~\ref{s:one}, see Tables~\ref{tab: Mode_rnull_est}--\ref{tab: Mode_rnull_indep_est}.

\begin{table}[h!t!b!p!]
\centering
\caption{\ref{s:one} Summary of mode-identification results when $\Theta  = \big\{\lfloor 0.25T \rfloor, \lfloor 0.5T \rfloor, \lfloor 0.75T \rfloor\big\}$ and $\rho_f = 0.7$, for varying $T$ and $(p_1,p_2,p_3)$, over the subset of realizations $n \in \wh{\cD}$.}
\label{tab: Mode_rnull_est}
\centering
\resizebox{\ifdim\width>\linewidth\linewidth\else\width\fi}{!}{
\begin{tabular}[t]{ccccccccc}
\toprule
\multicolumn{2}{c}{ } & \multicolumn{2}{c}{$j=1$} & \multicolumn{2}{c}{$j=2$} & \multicolumn{2}{c}{$j=3$} & \multicolumn{1}{c}{  } \\
\cmidrule(l{3pt}r{3pt}){3-4} \cmidrule(l{3pt}r{3pt}){5-6} \cmidrule(l{3pt}r{3pt}){7-8}
$T$ & $(p_1,p_2,p_3)$ & TPR & FPR & TPR & FPR & TPR & FPR & $\vert \wh{\cD} \vert$\\
\cmidrule(lr){1-2} \cmidrule(lr){3-4} \cmidrule(lr){5-6} \cmidrule(lr){7-8} \cmidrule(lr){9-9}
 & (10,10,10) & 0.84 & 0 & 0.95 & 0.05 & 0.89 & 0 & 19\\
 & (10,10,100) & 0.87 & 0 & 1 & 0.02 & 1 & 0 & 23\\
 & (10,20,40) & 0.95 & 0.03 & 1 & 0 & 1 & 0 & 19\\
\multirow{-4}{*}{\centering\arraybackslash 400} & (20,20,20) & 1 & 0 & 1 & 0.02 & 1 & 0.02 & 25\\
\cmidrule(lr){1-2} \cmidrule(lr){3-4} \cmidrule(lr){5-6} \cmidrule(lr){7-8} \cmidrule(lr){9-9}
 & (10,10,10) & 0.90 & 0.01 & 1 & 0.04 & 0.97 & 0 & 39\\
 & (10,10,100) & 0.93 & 0 & 1 & 0 & 1 & 0 & 44\\
 & (10,20,40) & 0.92 & 0 & 1 & 0.02 & 1 & 0.01 & 59\\
\multirow{-4}{*}{\centering\arraybackslash 800} & (20,20,20) & 1 & 0.01 & 1 & 0.02 & 1 & 0.03 & 50\\
\cmidrule(lr){1-2} \cmidrule(lr){3-4} \cmidrule(lr){5-6} \cmidrule(lr){7-8} \cmidrule(lr){9-9}
 & (10,10,10) & 0.93 & 0 & 0.95 & 0.02 & 0.97 & 0.03 & 61\\
 & (10,10,100) & 0.96 & 0 & 0.97 & 0.03 & 1 & 0.01 & 69\\
 & (10,20,40) & 0.93 & 0.01 & 1 & 0.01 & 0.99 & 0.02 & 74\\
\multirow{-4}{*}{\centering\arraybackslash 1600} & (20,20,20) & 1 & 0 & 0.99 & 0.03 & 0.97 & 0.03 & 76\\
\cmidrule(lr){1-2} \cmidrule(lr){3-4} \cmidrule(lr){5-6} \cmidrule(lr){7-8} \cmidrule(lr){9-9}
 & (10,10,10) & 0.94 & 0.01 & 0.96 & 0.02 & 1 & 0.05 & 82\\
 & (10,10,100) & 0.92 & 0.01 & 0.96 & 0.04 & 1 & 0.06 & 79\\
 & (10,20,40) & 0.92 & 0.02 & 0.97 & 0.03 & 1 & 0.04 & 87\\
\multirow{-4}{*}{\centering\arraybackslash 3200} & (20,20,20) & 0.99 & 0.02 & 0.99 & 0.02 & 0.99 & 0.04 & 89\\
\bottomrule
\end{tabular}}
\end{table}

\begin{table}[h!t!b!p!]
\centering
\caption{\ref{s:one} Summary of mode-identification results when $\Theta  = \big\{\lfloor 0.25T \rfloor, \lfloor 0.5T \rfloor, \lfloor 0.75T \rfloor\big\}$ and $\rho_f = 0$, for varying $T$ and $(p_1,p_2,p_3)$, over the subset of realizations $n \in \wh{\cD}$.} 
\label{tab: Mode_rnull_indep_est}
\centering
\resizebox{\ifdim\width>\linewidth\linewidth\else\width\fi}{!}{
\begin{tabular}[t]{ccccccccc}
\toprule
\multicolumn{2}{c}{ } & \multicolumn{2}{c}{$j=1$} & \multicolumn{2}{c}{$j=2$} & \multicolumn{2}{c}{$j=3$} & \multicolumn{1}{c}{  } \\
\cmidrule(l{3pt}r{3pt}){3-4} \cmidrule(l{3pt}r{3pt}){5-6} \cmidrule(l{3pt}r{3pt}){7-8}
$T$ & $(p_1,p_2,p_3)$ & TPR & FPR & TPR & FPR & TPR & FPR & $\vert \wh{\cD} \vert$\\
\cmidrule(lr){1-2} \cmidrule(lr){3-4} \cmidrule(lr){5-6} \cmidrule(lr){7-8} \cmidrule(lr){9-9}
 & (10,10,10) & 0.82 & 0 & 0.96 & 0.02 & 0.98 & 0 & 45\\
 & (10,10,100) & 0.91 & 0.03 & 1 & 0 & 0.95 & 0.01 & 43\\
 & (10,20,40) & 0.86 & 0.01 & 1 & 0 & 0.98 & 0 & 56\\
\multirow{-4}{*}{\centering\arraybackslash 400} & (20,20,20) & 1 & 0 & 1 & 0.01 & 0.98 & 0.01 & 54\\
\cmidrule(lr){1-2} \cmidrule(lr){3-4} \cmidrule(lr){5-6} \cmidrule(lr){7-8} \cmidrule(lr){9-9}
 & (10,10,10) & 0.89 & 0.01 & 1 & 0.03 & 0.99 & 0.02 & 75\\
 & (10,10,100) & 0.93 & 0.01 & 1 & 0.01 & 1 & 0 & 80\\
 & (10,20,40) & 0.90 & 0 & 1 & 0.01 & 0.99 & 0.01 & 89\\
\multirow{-4}{*}{\centering\arraybackslash 800} & (20,20,20) & 0.99 & 0 & 1 & 0 & 0.98 & 0.02 & 87\\
\cmidrule(lr){1-2} \cmidrule(lr){3-4} \cmidrule(lr){5-6} \cmidrule(lr){7-8} \cmidrule(lr){9-9}
 & (10,10,10) & 0.90 & 0 & 0.98 & 0.02 & 1 & 0.02 & 89\\
 & (10,10,100) & 0.91 & 0 & 1 & 0.04 & 1 & 0.01 & 92\\
 & (10,20,40) & 0.91 & 0 & 1 & 0.01 & 0.99 & 0.02 & 92\\
\multirow{-4}{*}{\centering\arraybackslash 1600} & (20,20,20) & 1 & 0 & 1 & 0.02 & 0.99 & 0.01 & 96\\
\cmidrule(lr){1-2} \cmidrule(lr){3-4} \cmidrule(lr){5-6} \cmidrule(lr){7-8} \cmidrule(lr){9-9}
 & (10,10,10) & 0.90 & 0.01 & 1 & 0.02 & 1 & 0.02 & 93\\
 & (10,10,100) & 0.89 & 0.01 & 1 & 0.02 & 1 & 0.02 & 95\\
 & (10,20,40) & 0.90 & 0.01 & 1 & 0.03 & 1 & 0.02 & 98\\
\multirow{-4}{*}{\centering\arraybackslash 3200} & (20,20,20) & 0.99 & 0.01 & 1 & 0.01 & 0.99 & 0.01 & 96\\
\bottomrule
\end{tabular}}
\end{table}

\clearpage
\subsubsection{\ref{s:one} Single-mode change with unequally spaced change points}\label{app: num_idt_unbal}

We consider when the change points are unequally spaced with $\Theta = \big\{\lfloor 0.25T \rfloor, \lfloor 0.5T \rfloor, \lfloor 0.625T \rfloor\big\}$ under~\ref{s:one}, see Tables~\ref{tab: Mode_unbal_rnull_est} and~\ref{tab: Mode_unbal_rnull_indep_est}.

\begin{table}[h!t!b!p!]
\centering
\caption{\ref{s:one} Summary of mode-identification results when $\Theta  = \big\{\lfloor 0.25T \rfloor, \lfloor 0.5T \rfloor, \lfloor 0.625T \rfloor\big\}$ and $\rho_f = 0$, for varying $T$ and $(p_1,p_2,p_3)$, over the subset of realizations $n \in \wh{\cD}$.} 
\label{tab: Mode_unbal_rnull_indep_est}
\centering
\resizebox{\ifdim\width>\linewidth\linewidth\else\width\fi}{!}{
\begin{tabular}[t]{ccccccccc}
\toprule
\multicolumn{2}{c}{ } & \multicolumn{2}{c}{$j=1$} & \multicolumn{2}{c}{$j=2$} & \multicolumn{2}{c}{$j=3$} & \multicolumn{1}{c}{  } \\
\cmidrule(l{3pt}r{3pt}){3-4} \cmidrule(l{3pt}r{3pt}){5-6} \cmidrule(l{3pt}r{3pt}){7-8}
$T$ & $(p_1,p_2,p_3)$ & TPR & FPR & TPR & FPR & TPR & FPR & $\vert \wh{\cD} \vert$\\
\cmidrule(lr){1-2} \cmidrule(lr){3-4} \cmidrule(lr){5-6} \cmidrule(lr){7-8} \cmidrule(lr){9-9}
 & (10,10,10) & 0.87 & 0 & 0.96 & 0.04 & 0.96 & 0 & 23\\
 & (10,10,100) & 0.95 & 0 & 0.95 & 0.02 & 1 & 0.02 & 22\\
 & (10,20,40) & 0.91 & 0 & 1 & 0.04 & 1 & 0.03 & 35\\
\multirow{-4}{*}{\centering\arraybackslash 400} & (20,20,20) & 0.97 & 0 & 1 & 0.02 & 0.97 & 0.05 & 32\\
\cmidrule(lr){1-2} \cmidrule(lr){3-4} \cmidrule(lr){5-6} \cmidrule(lr){7-8} \cmidrule(lr){9-9}
 & (10,10,10) & 0.87 & 0.01 & 1 & 0.07 & 0.98 & 0.05 & 55\\
 & (10,10,100) & 0.95 & 0 & 1 & 0.09 & 0.98 & 0.02 & 59\\
 & (10,20,40) & 0.88 & 0.01 & 1 & 0.07 & 1 & 0.04 & 68\\
\multirow{-4}{*}{\centering\arraybackslash 800} & (20,20,20) & 0.99 & 0 & 1 & 0.04 & 1 & 0.03 & 71\\
\cmidrule(lr){1-2} \cmidrule(lr){3-4} \cmidrule(lr){5-6} \cmidrule(lr){7-8} \cmidrule(lr){9-9}
 & (10,10,10) & 0.92 & 0 & 0.98 & 0.06 & 0.99 & 0.02 & 87\\
 & (10,10,100) & 0.94 & 0 & 1 & 0.08 & 1 & 0.03 & 79\\
 & (10,20,40) & 0.91 & 0 & 1 & 0.06 & 0.99 & 0.02 & 91\\
\multirow{-4}{*}{\centering\arraybackslash 1600} & (20,20,20) & 0.99 & 0 & 1 & 0.05 & 1 & 0.03 & 87\\
\cmidrule(lr){1-2} \cmidrule(lr){3-4} \cmidrule(lr){5-6} \cmidrule(lr){7-8} \cmidrule(lr){9-9}
 & (10,10,10) & 0.88 & 0.01 & 0.99 & 0.06 & 1 & 0.03 & 93\\
 & (10,10,100) & 0.89 & 0.01 & 0.99 & 0.09 & 1 & 0.01 & 96\\
 & (10,20,40) & 0.90 & 0.01 & 1 & 0.08 & 1 & 0.03 & 99\\
\multirow{-4}{*}{\centering\arraybackslash 3200} & (20,20,20) & 0.99 & 0.01 & 1 & 0.06 & 0.99 & 0.03 & 97\\
\bottomrule
\end{tabular}}
\end{table}

\clearpage
\subsubsection{\ref{s:two} Multi-mode change with equal-spaced change points}\label{app: num_idt_bal_si}

We consider when the change points are equally spaced with $\Theta = \big\{\lfloor 0.25T \rfloor, \lfloor 0.5T \rfloor, \lfloor 0.75T \rfloor\big\}$ under~\ref{s:two},
see Tables~\ref{tab: Mode_rnull_si_est}--\ref{tab: Mode_rnull_indep_si_est}.

\begin{table}[h!t!b!p!]
\centering
\caption{\ref{s:two} Summary of mode-identification results when $\Theta  = \big\{\lfloor 0.25T \rfloor, \lfloor 0.5T \rfloor, \lfloor 0.75T \rfloor\big\}$ and $\rho_f = 0.7$, for varying $T$ and $(p_1,p_2,p_3)$, over the subset of realizations $n \in \wh{\cD}$.} 
\label{tab: Mode_rnull_si_est}
\centering
\resizebox{\ifdim\width>\linewidth\linewidth\else\width\fi}{!}{
\begin{tabular}[t]{ccccccccc}
\toprule
\multicolumn{2}{c}{ } & \multicolumn{2}{c}{$j=1$} & \multicolumn{2}{c}{$j=2$} & \multicolumn{2}{c}{$j=3$} & \multicolumn{1}{c}{  } \\
\cmidrule(l{3pt}r{3pt}){3-4} \cmidrule(l{3pt}r{3pt}){5-6} \cmidrule(l{3pt}r{3pt}){7-8}
$T$ & $(p_1,p_2,p_3)$ & TPR & FPR & TPR & FPR & TPR & FPR & $\vert \wh{\cD} \vert$\\
\cmidrule(lr){1-2} \cmidrule(lr){3-4} \cmidrule(lr){5-6} \cmidrule(lr){7-8} \cmidrule(lr){9-9}
 & (10,10,10) & 0.76 & 0.03 & 0.94 & 0.02 & 0.82 & 0 & 33\\
 & (10,10,100) & 0.85 & 0.02 & 0.89 & 0.02 & 0.79 & 0 & 46\\
 & (10,20,40) & 0.85 & 0.01 & 0.98 & 0 & 0.89 & 0 & 41\\
\multirow{-4}{*}{\centering\arraybackslash 400} & (20,20,20) & 0.90 & 0.04 & 1 & 0 & 0.87 & 0 & 41\\
\cmidrule(lr){1-2} \cmidrule(lr){3-4} \cmidrule(lr){5-6} \cmidrule(lr){7-8} \cmidrule(lr){9-9}
 & (10,10,10) & 0.86 & 0.05 & 0.98 & 0.05 & 0.95 & 0 & 64\\
 & (10,10,100) & 0.91 & 0.05 & 1 & 0.02 & 0.97 & 0 & 64\\
 & (10,20,40) & 0.92 & 0.03 & 1 & 0.01 & 0.98 & 0 & 73\\
\multirow{-4}{*}{\centering\arraybackslash 800} & (20,20,20) & 0.96 & 0.03 & 0.97 & 0.02 & 1 & 0.01 & 70\\
\cmidrule(lr){1-2} \cmidrule(lr){3-4} \cmidrule(lr){5-6} \cmidrule(lr){7-8} \cmidrule(lr){9-9}
 & (10,10,10) & 0.90 & 0.04 & 0.92 & 0.03 & 0.97 & 0 & 79\\
 & (10,10,100) & 0.92 & 0.03 & 0.97 & 0.06 & 0.98 & 0 & 90\\
 & (10,20,40) & 0.93 & 0.04 & 0.98 & 0.02 & 0.99 & 0 & 81\\
\multirow{-4}{*}{\centering\arraybackslash 1600} & (20,20,20) & 0.98 & 0.02 & 0.93 & 0.02 & 0.96 & 0 & 84\\
\cmidrule(lr){1-2} \cmidrule(lr){3-4} \cmidrule(lr){5-6} \cmidrule(lr){7-8} \cmidrule(lr){9-9}
 & (10,10,10) & 0.89 & 0.04 & 0.94 & 0.04 & 0.98 & 0 & 95\\
 & (10,10,100) & 0.95 & 0.04 & 0.97 & 0.04 & 1 & 0 & 96\\
 & (10,20,40) & 0.95 & 0.06 & 0.96 & 0.06 & 1 & 0 & 97\\
\multirow{-4}{*}{\centering\arraybackslash 3200} & (20,20,20) & 1 & 0.04 & 0.95 & 0.05 & 0.99 & 0 & 95\\
\bottomrule
\end{tabular}}
\end{table}

\begin{table}[h!t!b!p!]
\centering
\caption{\ref{s:two} Summary of mode-identification results when $\Theta  = \big\{\lfloor 0.25T \rfloor, \lfloor 0.5T \rfloor, \lfloor 0.75T \rfloor\big\}$ and $\rho_f = 0$, for varying $T$ and $(p_1,p_2,p_3)$, over the subset of realizations $n \in \wh{\cD}$.} 
\label{tab: Mode_rnull_indep_si_est}
\centering
\resizebox{\ifdim\width>\linewidth\linewidth\else\width\fi}{!}{
\begin{tabular}[t]{ccccccccc}
\toprule
\multicolumn{2}{c}{ } & \multicolumn{2}{c}{$j=1$} & \multicolumn{2}{c}{$j=2$} & \multicolumn{2}{c}{$j=3$} & \multicolumn{1}{c}{  } \\
\cmidrule(l{3pt}r{3pt}){3-4} \cmidrule(l{3pt}r{3pt}){5-6} \cmidrule(l{3pt}r{3pt}){7-8}
$T$ & $(p_1,p_2,p_3)$ & TPR & FPR & TPR & FPR & TPR & FPR & $\vert \wh{\cD} \vert$\\
\cmidrule(lr){1-2} \cmidrule(lr){3-4} \cmidrule(lr){5-6} \cmidrule(lr){7-8} \cmidrule(lr){9-9}
 & (10,10,10) & 0.73 & 0.01 & 0.94 & 0.01 & 0.79 & 0 & 70\\
 & (10,10,100) & 0.80 & 0.03 & 0.97 & 0.01 & 0.82 & 0 & 65\\
 & (10,20,40) & 0.84 & 0.01 & 1 & 0 & 0.91 & 0 & 70\\
\multirow{-4}{*}{\centering\arraybackslash 400} & (20,20,20) & 0.96 & 0.01 & 1 & 0 & 0.85 & 0 & 72\\
\cmidrule(lr){1-2} \cmidrule(lr){3-4} \cmidrule(lr){5-6} \cmidrule(lr){7-8} \cmidrule(lr){9-9}
 & (10,10,10) & 0.85 & 0.04 & 1 & 0.03 & 0.95 & 0.01 & 96\\
 & (10,10,100) & 0.88 & 0.04 & 1 & 0.02 & 0.97 & 0 & 92\\
 & (10,20,40) & 0.91 & 0.02 & 1 & 0.01 & 0.97 & 0 & 97\\
\multirow{-4}{*}{\centering\arraybackslash 800} & (20,20,20) & 0.96 & 0.02 & 1 & 0.01 & 0.99 & 0.02 & 92\\
\cmidrule(lr){1-2} \cmidrule(lr){3-4} \cmidrule(lr){5-6} \cmidrule(lr){7-8} \cmidrule(lr){9-9}
 & (10,10,10) & 0.87 & 0.03 & 1 & 0.02 & 1 & 0 & 97\\
 & (10,10,100) & 0.92 & 0.04 & 1 & 0.03 & 1 & 0.01 & 99\\
 & (10,20,40) & 0.92 & 0.04 & 1 & 0 & 0.99 & 0 & 100\\
\multirow{-4}{*}{\centering\arraybackslash 1600} & (20,20,20) & 0.99 & 0.02 & 1 & 0.03 & 0.99 & 0 & 99\\
\cmidrule(lr){1-2} \cmidrule(lr){3-4} \cmidrule(lr){5-6} \cmidrule(lr){7-8} \cmidrule(lr){9-9}
 & (10,10,10) & 0.89 & 0.02 & 0.99 & 0.02 & 0.99 & 0 & 100\\
 & (10,10,100) & 0.94 & 0.03 & 1 & 0.02 & 1 & 0 & 100\\
 & (10,20,40) & 0.94 & 0.03 & 1 & 0.03 & 1 & 0 & 100\\
\multirow{-4}{*}{\centering\arraybackslash 3200} & (20,20,20) & 1 & 0.01 & 0.99 & 0.02 & 0.99 & 0 & 100\\
\bottomrule
\end{tabular}}
\end{table}

\clearpage
\subsubsection{\ref{s:two} Multi-mode change with unequally spaced change points}\label{app: num_idt_unbal_si}

We consider when the change points are unequally spaced with $\Theta = \big\{\lfloor 0.25T \rfloor, \lfloor 0.5T \rfloor, \lfloor 0.625T \rfloor\big\}$ under~\ref{s:two},
see Tables~\ref{tab: Mode_unbal_rnull_si_est}--\ref{tab: Mode_unbal_rnull_indep_si_est}.

\begin{table}[h!t!b!p!]
\centering
\caption{\ref{s:two} Summary of mode-identification results when $\Theta  = \big\{\lfloor 0.25T \rfloor, \lfloor 0.5T \rfloor, \lfloor 0.625T \rfloor\big\}$ and $\rho_f = 0.7$, for varying $T$ and $(p_1,p_2,p_3)$, over the subset of realizations $n \in \wh{\cD}$.} 
\label{tab: Mode_unbal_rnull_si_est}
\centering
\resizebox{\ifdim\width>\linewidth\linewidth\else\width\fi}{!}{
\begin{tabular}[t]{ccccccccc}
\toprule
\multicolumn{2}{c}{ } & \multicolumn{2}{c}{$j=1$} & \multicolumn{2}{c}{$j=2$} & \multicolumn{2}{c}{$j=3$} & \multicolumn{1}{c}{  } \\
\cmidrule(l{3pt}r{3pt}){3-4} \cmidrule(l{3pt}r{3pt}){5-6} \cmidrule(l{3pt}r{3pt}){7-8}
$T$ & $(p_1,p_2,p_3)$ & TPR & FPR & TPR & FPR & TPR & FPR & $\vert \wh{\cD} \vert$\\
\cmidrule(lr){1-2} \cmidrule(lr){3-4} \cmidrule(lr){5-6} \cmidrule(lr){7-8} \cmidrule(lr){9-9}
 & (10,10,10) & 0.73 & 0.08 & 0.88 & 0.13 & 0.88 & 0 & 26\\
 & (10,10,100) & 0.84 & 0.05 & 0.78 & 0.09 & 0.81 & 0 & 32\\
 & (10,20,40) & 0.86 & 0.07 & 0.93 & 0.09 & 0.90 & 0 & 29\\
\multirow{-4}{*}{\centering\arraybackslash 400} & (20,20,20) & 0.92 & 0.06 & 1 & 0.02 & 0.92 & 0 & 26\\
\cmidrule(lr){1-2} \cmidrule(lr){3-4} \cmidrule(lr){5-6} \cmidrule(lr){7-8} \cmidrule(lr){9-9}
 & (10,10,10) & 0.90 & 0.04 & 0.98 & 0.08 & 0.96 & 0.02 & 51\\
 & (10,10,100) & 0.90 & 0.04 & 0.96 & 0.09 & 0.97 & 0 & 52\\
 & (10,20,40) & 0.88 & 0.05 & 0.98 & 0.09 & 0.99 & 0 & 60\\
\multirow{-4}{*}{\centering\arraybackslash 800} & (20,20,20) & 0.94 & 0.06 & 0.98 & 0.04 & 0.99 & 0.04 & 50\\
\cmidrule(lr){1-2} \cmidrule(lr){3-4} \cmidrule(lr){5-6} \cmidrule(lr){7-8} \cmidrule(lr){9-9}
 & (10,10,10) & 0.89 & 0.04 & 0.86 & 0.06 & 1 & 0.04 & 70\\
 & (10,10,100) & 0.95 & 0.04 & 0.88 & 0.12 & 1 & 0.03 & 78\\
 & (10,20,40) & 0.92 & 0.02 & 0.92 & 0.07 & 0.99 & 0.01 & 76\\
\multirow{-4}{*}{\centering\arraybackslash 1600} & (20,20,20) & 0.97 & 0.03 & 0.90 & 0.07 & 0.96 & 0 & 72\\
\cmidrule(lr){1-2} \cmidrule(lr){3-4} \cmidrule(lr){5-6} \cmidrule(lr){7-8} \cmidrule(lr){9-9}
 & (10,10,10) & 0.88 & 0.03 & 0.93 & 0.09 & 0.99 & 0.01 & 95\\
 & (10,10,100) & 0.93 & 0.04 & 0.91 & 0.10 & 0.99 & 0.01 & 96\\
 & (10,20,40) & 0.93 & 0.08 & 0.95 & 0.10 & 1 & 0 & 91\\
\multirow{-4}{*}{\centering\arraybackslash 3200} & (20,20,20) & 1 & 0.05 & 0.90 & 0.12 & 0.99 & 0.02 & 93\\
\bottomrule
\end{tabular}}
\end{table}

\begin{table}[h!t!b!p!]
\centering
\caption{\ref{s:two} Summary of mode-identification results when $\Theta = \big\{\lfloor 0.25T \rfloor, \lfloor 0.5T \rfloor, \lfloor 0.625T \rfloor\big\}$ and $\rho_f = 0$, for varying $T$ and $(p_1,p_2,p_3)$, over the subset of realizations $n \in \wh{\cD}$.} 
\label{tab: Mode_unbal_rnull_indep_si_est}
\centering
\resizebox{\ifdim\width>\linewidth\linewidth\else\width\fi}{!}{
\begin{tabular}[t]{ccccccccc}
\toprule
\multicolumn{2}{c}{ } & \multicolumn{2}{c}{$j=1$} & \multicolumn{2}{c}{$j=2$} & \multicolumn{2}{c}{$j=3$} & \multicolumn{1}{c}{  } \\
\cmidrule(l{3pt}r{3pt}){3-4} \cmidrule(l{3pt}r{3pt}){5-6} \cmidrule(l{3pt}r{3pt}){7-8}
$T$ & $(p_1,p_2,p_3)$ & TPR & FPR & TPR & FPR & TPR & FPR & $\vert \wh{\cD} \vert$\\
\cmidrule(lr){1-2} \cmidrule(lr){3-4} \cmidrule(lr){5-6} \cmidrule(lr){7-8} \cmidrule(lr){9-9}
 & (10,10,10) & 0.77 & 0.02 & 0.94 & 0.05 & 0.83 & 0.02 & 47\\
 & (10,10,100) & 0.84 & 0.02 & 0.96 & 0.03 & 0.85 & 0.02 & 50\\
 & (10,20,40) & 0.86 & 0.01 & 1 & 0.04 & 0.91 & 0 & 59\\
\multirow{-4}{*}{\centering\arraybackslash 400} & (20,20,20) & 0.97 & 0.01 & 0.98 & 0.01 & 0.85 & 0 & 62\\
\cmidrule(lr){1-2} \cmidrule(lr){3-4} \cmidrule(lr){5-6} \cmidrule(lr){7-8} \cmidrule(lr){9-9}
 & (10,10,10) & 0.86 & 0.04 & 1 & 0.08 & 0.96 & 0.01 & 78\\
 & (10,10,100) & 0.87 & 0.03 & 1 & 0.09 & 0.98 & 0.01 & 83\\
 & (10,20,40) & 0.90 & 0.01 & 1 & 0.08 & 0.98 & 0 & 86\\
\multirow{-4}{*}{\centering\arraybackslash 800} & (20,20,20) & 0.95 & 0.02 & 1 & 0.04 & 0.99 & 0.02 & 80\\
\cmidrule(lr){1-2} \cmidrule(lr){3-4} \cmidrule(lr){5-6} \cmidrule(lr){7-8} \cmidrule(lr){9-9}
 & (10,10,10) & 0.84 & 0.02 & 0.99 & 0.06 & 1 & 0.02 & 96\\
 & (10,10,100) & 0.91 & 0.03 & 1 & 0.10 & 1 & 0.02 & 94\\
 & (10,20,40) & 0.91 & 0.03 & 1 & 0.08 & 0.99 & 0.01 & 97\\
\multirow{-4}{*}{\centering\arraybackslash 1600} & (20,20,20) & 0.98 & 0.01 & 1 & 0.07 & 0.98 & 0.01 & 95\\
\cmidrule(lr){1-2} \cmidrule(lr){3-4} \cmidrule(lr){5-6} \cmidrule(lr){7-8} \cmidrule(lr){9-9}
 & (10,10,10) & 0.89 & 0.02 & 0.99 & 0.08 & 0.99 & 0.01 & 100\\
 & (10,10,100) & 0.93 & 0.03 & 1 & 0.09 & 0.99 & 0.01 & 99\\
 & (10,20,40) & 0.93 & 0.03 & 1 & 0.08 & 1 & 0 & 99\\
\multirow{-4}{*}{\centering\arraybackslash 3200} & (20,20,20) & 1 & 0.01 & 0.98 & 0.06 & 0.99 & 0.01 & 100\\
\bottomrule
\end{tabular}}
\end{table}

\clearpage

\subsection{Results for mode-wise loading space estimation}
\label{app: num_reest}

For the implementation of mode-wise loading estimation, we focus on the single change scenario~\ref{s:three} in which a rank-deficient transformation matrix $A_{2,2}$ is applied to the first mode with $\cK_1 = \{ 1 \}$. 
Consequently, we have $\col(\Lambda_{1, k}) = \col(\Lambda_{2, k}) = \col(\Lambda_{k})$ for $k \in \{2, 3\}$.
Upon consistently detecting and locating $\theta_1 = \lfloor T/2\rfloor$ by TFMseg and identifying $\cK_1$ as described in Section~\ref{sec: mode_identify}, we compare two approaches to estimating $\col(\Lambda_{j, k}), \, j \in [2], \, k \in [3]$:
\begin{enumerate}[label=(M\arabic*)]
\item \label{M:one} A segment-wise estimator that applies the projection-based estimation method described in Section~\ref{sec: detection}, to each $(0, \wh\theta_1]$ and $(\wh\theta_1, T]$, separately.
This requires the estimation of $r_{j, k}$, the mode-$k$ factor number on the $j$-th segment, which is achieved as described in Section~\ref{sec: tuning}.

\item \label{M:two} A mode-informed estimator following Remark~\ref{rem: reest}, where $\Lambda_{1, 1}$ and $\Lambda_{2, 1}$ are estimated as in~\ref{M:one} (denoted by $\wh\Lambda_{1, 1}$ and $\wh\Lambda_{2, 1}$, respectively), while $\Lambda_2$ is estimated as $\sqrt{p_2}$ times the $\wh{r}_2$ leading eigenvectors of the matrix
\begin{align*}
\frac{1}{Tpp_1p_2}&\l\{\sum_{t=1}^{\wh\theta_1} \l( \mat_2(\cX_t)(\wt{\Lambda}_{3} \otimes \wh{\Lambda}_{1,1})\r) \l( \mat_2(\cX_t)(\wt{\Lambda}_{3} \otimes \wh{\Lambda}_{1,1})\r)^\trans \r.\\
& \l.\quad + \sum_{t=\wh\theta_1 + 1}^{T} \l( \mat_2(\cX_t)(\wt{\Lambda}_{3} \otimes \wh{\Lambda}_{2,1})\r) \l( \mat_2(\cX_t)(\wt{\Lambda}_{3} \otimes \wh{\Lambda}_{2,1})\r)^\trans\r\},
\end{align*}
where $\wt{\Lambda}_3$ is the preliminary estimator given in Section~\ref{sec: detection}; we estimate $\Lambda_3$ analogously. 
\end{enumerate}

After applying TFMseg and the mode-identification procedure to $N = 100$ realizations, we denote by $\wt{\cD} \subset \wh{\cD} \subset [N]$ the index set of realizations where the single change is detected (i.e.\ $\wh{\cD}$) and $\wh{\cK}_1 = \cK_1$ is returned.
Then for the realizations $n \in \wt{\cD}$, we measure the distance between the projection matrices onto the corresponding column spaces of the target matrix, say $A$, and its estimator $\wh A$, as
\begin{align}
d(\wh A, A) = \l\Vert \wh A(\wh A^\trans \wh A)^{-1} \wh A^\trans - A(A^\trans A)^{-1} A^\trans \r\Vert, \label{eq:fle_dist}
\end{align}
see Figure~\ref{fig: plot_reest_rnull_indep}.
We truncate anomalously large estimation errors from the boxplots for the ease of comparing the results across different settings, noting that the large errors are solely attributed to that the factor number estimator occasionally under-estimates $r_{j, k}$ over shorter segments post change point detection when $T$ is small. 

As expected, for modes~2 and~3, pooling the pre- and post-change point segments improves the estimation of the loading spaces, and the improvement is observable across all settings.
In line with Proposition~\ref{prop: consistency_proj}, we note that the mode~$k$ loading estimation error decreases with $\pmk$, which is noticeable when the dimensions are highly unbalanced.
For mode~1, the post-change estimation error is smaller as $r_{j, 1} = \text{rank}(\Lambda_{j, 1})$ reduces from $3$ to $2$ after the change point.
This has an adverse effect on the other two modes, when observing the estimation errors form~\ref{M:one}, which is accounted by that the projection involved in~\eqref{eqn: est_lam} post-change point is of reduced rank.

\begin{figure}[h!t!b!p!]
\centering
\includegraphics[width=0.7\linewidth]{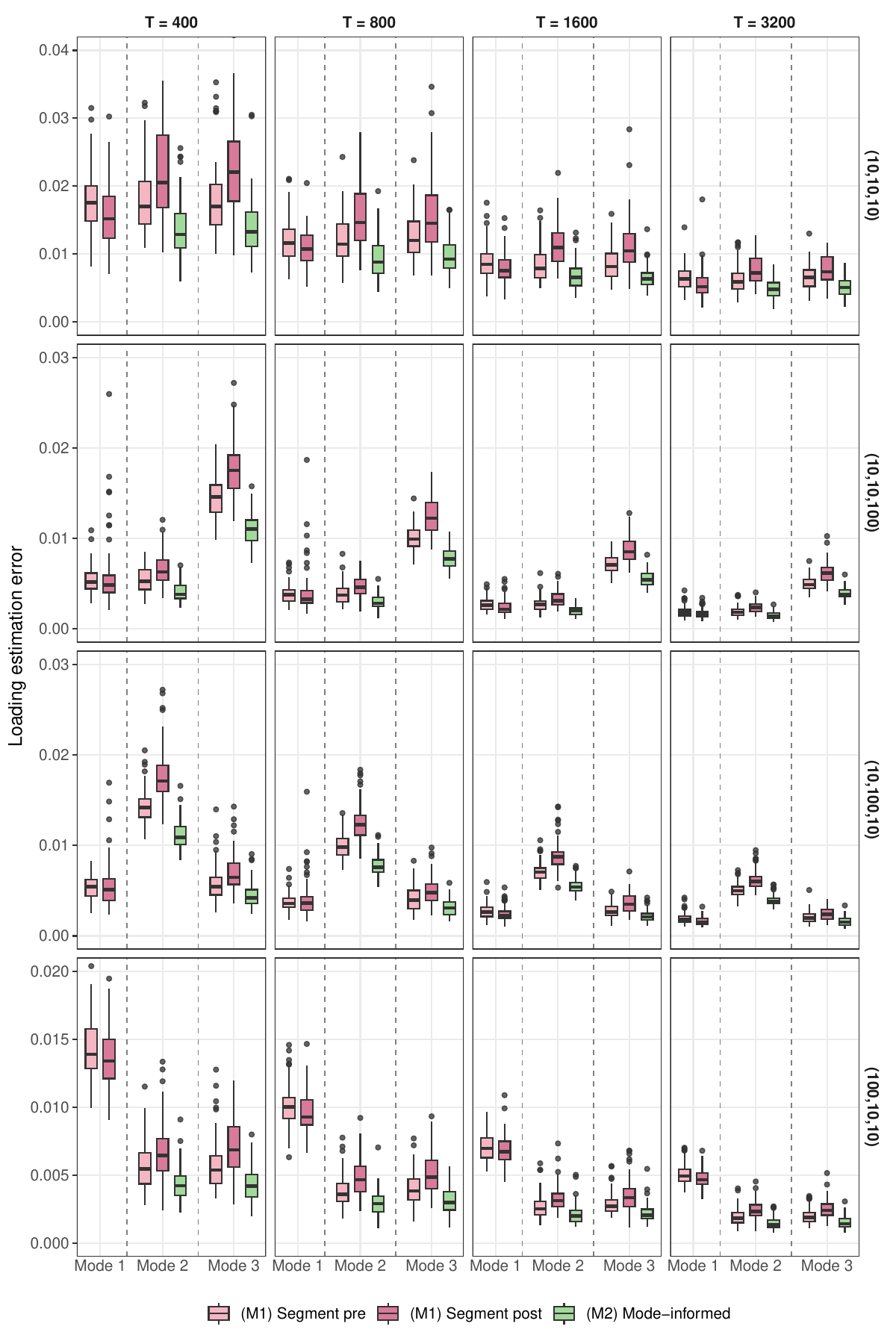}
\caption{\ref{s:three} Boxplots of mode-wise loading estimation errors with varying $T$ (left to right) and $(p_1,p_2,p_3)$ (top to bottom) over subset of realizations $n \in \wt{\cD}$. Segment pre (resp.\ Segment post) refers to segment-wise estimation errors using the data before (resp.\ after) the change point estimator $\wh\theta_1$ by \ref{M:one}.}
\label{fig: plot_reest_rnull_indep}
\end{figure}


\clearpage 

\subsection{Results for change point detection in the presence of missingness}\label{app: num_miss}

To investigate the method described in Remark~\ref{rem: miss} for handling missing observations in the data, we introduce missingness to $\{\cX_t\}_{t = 1}^T$ generated under~\ref{s:one} by setting an element $\cX_{i_1,\ldots,_K,t}$ to be missing if $t > \lfloor T/2 \rfloor$ and $i_k > \lfloor p_k/2 \rfloor$ for $k\in[K]$.
This is a structured missingness pattern explored in \cite{CenLam2025}.

Overall, we observe that the proposed modification to TFMseg handles missingness well; the detection of $\theta_1$ is little influenced by the missingness, while detecting $\theta_2$ and $\theta_3$ becomes more challenging as the missingness is concentrated in the second half of the data. It suffers more from the missingness in the presence of serial correlations, which is considered to be more challenging even in the absence of missingness.

\subsubsection{\ref{s:one} Single-mode change with equal-spaced change points}\label{app: num_miss_bal}

We consider when the change points are equally spaced with $\Theta = \big\{\lfloor 0.25T \rfloor, \lfloor 0.5T \rfloor, \lfloor 0.75T \rfloor\big\}$, see Tables~\ref{tab: miss_bal_rnull}--\ref{tab: miss_bal_rnull_indep} and Figures~\ref{fig: subplot_miss_rnull_method}--\ref{fig: subplot_miss_rnull_indep_method}; in the figures, we present the results obtained from the complete observations for comparison, which are taken from Appendix~\ref{app: num_det_bal}.

\begin{table}[h!t!b!]
\centering
\caption{\ref{s:one} Summary of change point estimators returned by TFMseg in the presence of missingness, when $\Theta  = \big\{\lfloor 0.25T \rfloor, \lfloor 0.5T \rfloor, \lfloor 0.75T \rfloor\big\}$ and $\rho_f = 0.7$, for varying $T$ and $(p_1,p_2,p_3)$, based on 100 realizations.}
\label{tab: miss_bal_rnull}
\centering
\resizebox{\ifdim\width>\linewidth\linewidth\else\width\fi}{!}{
\begin{tabular}[t]{cccccccccc}
\toprule
\multicolumn{2}{c}{ } & \multicolumn{5}{c}{$\wh{q}-q$} & \multicolumn{3}{c}{Accuracy} \\
\cmidrule(l{3pt}r{3pt}){3-7} \cmidrule(l{3pt}r{3pt}){8-10}
$T$ & $(p_1,p_2,p_3)$ & $\leq -2$ & $-1$ & $0$ & $1$ & $\geq 2$ & $j=1$ & $j=2$ & $j=3$\\
\cmidrule(lr){1-2} 
\cmidrule(lr){3-7} 
\cmidrule(lr){8-10} 
 & (10,10,10) & 0.47 & 0.46 & 0.07 & 0 & 0 & 0.64 & 0.29 & 0.31\\
 & (10,10,100) & 0.30 & 0.53 & 0.17 & 0 & 0 & 0.76 & 0.45 & 0.31\\
 & (10,20,40) & 0.24 & 0.54 & 0.22 & 0 & 0 & 0.82 & 0.50 & 0.36\\
\multirow{-4}{*}{\centering\arraybackslash 400} & (20,20,20) & 0.26 & 0.45 & 0.29 & 0 & 0 & 0.82 & 0.57 & 0.39\\
\cmidrule(lr){1-2} 
\cmidrule(lr){3-7} 
\cmidrule(lr){8-10} 
 & (10,10,10) & 0.20 & 0.50 & 0.29 & 0.01 & 0 & 0.73 & 0.45 & 0.45\\
 & (10,10,100) & 0.14 & 0.52 & 0.32 & 0.02 & 0 & 0.74 & 0.48 & 0.58\\
 & (10,20,40) & 0.06 & 0.42 & 0.51 & 0.01 & 0 & 0.83 & 0.80 & 0.59\\
\multirow{-4}{*}{\centering\arraybackslash 800} & (20,20,20) & 0.03 & 0.41 & 0.54 & 0.02 & 0 & 0.90 & 0.75 & 0.57\\
\cmidrule(lr){1-2} 
\cmidrule(lr){3-7} 
\cmidrule(lr){8-10} 
 & (10,10,10) & 0.06 & 0.40 & 0.52 & 0.02 & 0 & 0.82 & 0.49 & 0.64\\
 & (10,10,100) & 0.04 & 0.35 & 0.59 & 0.02 & 0 & 0.88 & 0.63 & 0.70\\
 & (10,20,40) & 0 & 0.17 & 0.78 & 0.05 & 0 & 0.91 & 0.80 & 0.80\\
\multirow{-4}{*}{\centering\arraybackslash 1600} & (20,20,20) & 0 & 0.12 & 0.84 & 0.04 & 0 & 0.97 & 0.83 & 0.85\\
\cmidrule(lr){1-2} 
\cmidrule(lr){3-7} 
\cmidrule(lr){8-10} 
 & (10,10,10) & 0.03 & 0.28 & 0.60 & 0.07 & 0.02 & 0.92 & 0.66 & 0.75\\
 & (10,10,100) & 0 & 0.19 & 0.71 & 0.10 & 0 & 0.96 & 0.72 & 0.80\\
 & (10,20,40) & 0 & 0.07 & 0.85 & 0.07 & 0.01 & 0.99 & 0.88 & 0.92\\
\multirow{-4}{*}{\centering\arraybackslash 3200} & (20,20,20) & 0 & 0.05 & 0.86 & 0.08 & 0.01 & 1 & 0.94 & 0.87\\
\bottomrule
\end{tabular}}
\end{table}

\begin{figure}[h!t!b!p!]
\centering
\includegraphics[width=0.68\linewidth]{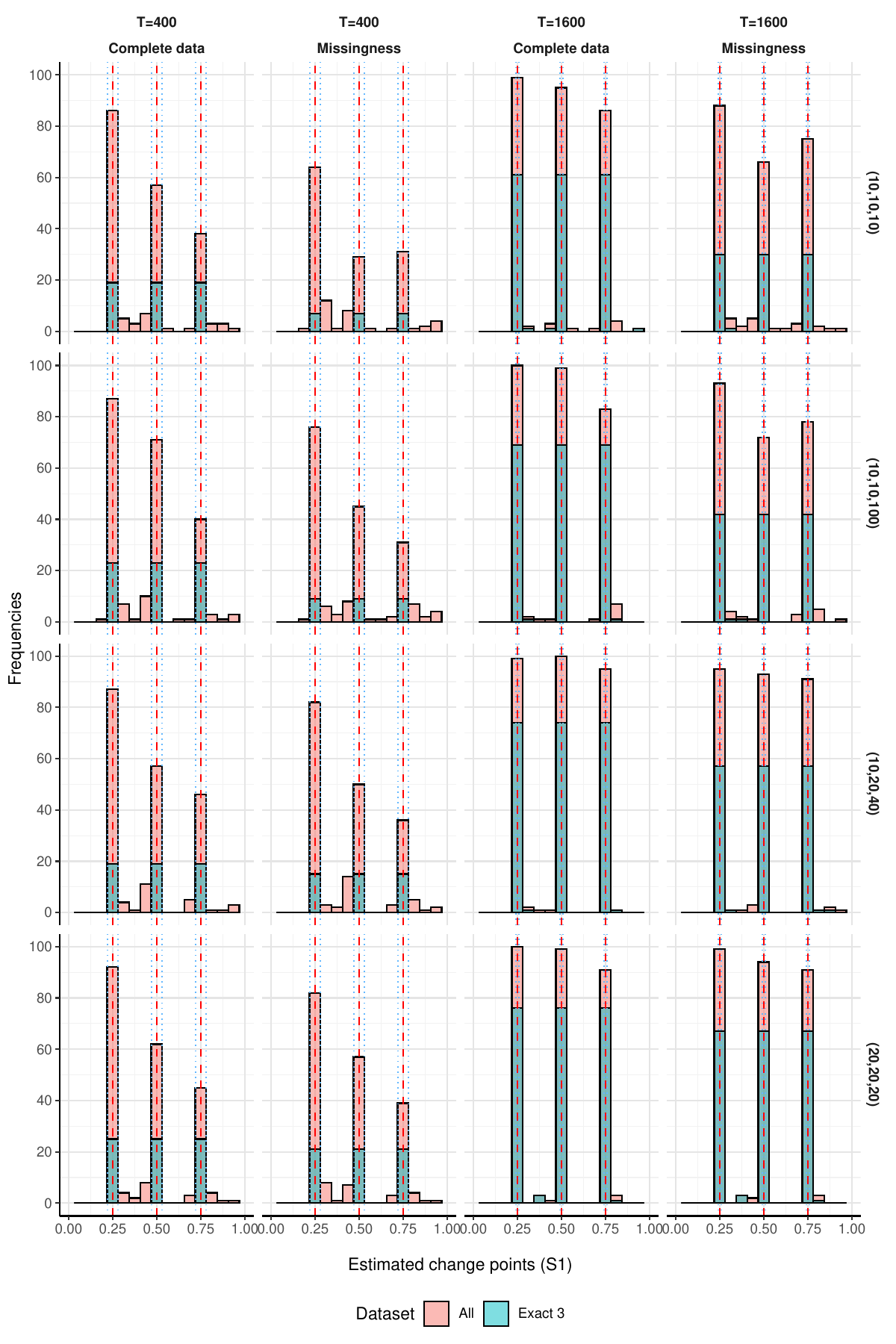}
\caption{\ref{s:one} Barplots of the scaled change point estimators $\{ \wh\theta^{(n)}_j/T, \, j \in [\wh q^{(n)}]\}$ returned by TFMseg for the complete and missing data, when $\Theta = \big\{\lfloor 0.25T \rfloor, \lfloor 0.5T \rfloor, \lfloor 0.75T \rfloor\big\}$, $\rho_f = 0.7$, $T \in \{400, 1600\}$ and varying $(p_1,p_2,p_3)$ (top to bottom) over $100$ realizations. The red bars give the total frequency of estimated change points for all $n \in [N]$, while the blue bars give the frequency from the subset of realizations $n \in \wh{\cD}$.}
\label{fig: subplot_miss_rnull_method}
\end{figure}

\begin{table}[h!t!b!p!]
\centering
\caption{\ref{s:one} Summary of change point estimators returned by TFMseg in the presence of missingness, when $\Theta  = \big\{\lfloor 0.25T \rfloor, \lfloor 0.5T \rfloor, \lfloor 0.75T \rfloor\big\}$ and $\rho_f = 0$, for varying $T$ and $(p_1,p_2,p_3)$, based on 100 realizations.}
\label{tab: miss_bal_rnull_indep}
\centering
\resizebox{\ifdim\width>\linewidth\linewidth\else\width\fi}{!}{
\begin{tabular}[t]{cccccccccc}
\toprule
\multicolumn{2}{c}{ } & \multicolumn{5}{c}{$\wh{q}-q$} & \multicolumn{3}{c}{Accuracy} \\
\cmidrule(l{3pt}r{3pt}){3-7} \cmidrule(l{3pt}r{3pt}){8-10}
$T$ & $(p_1,p_2,p_3)$ & $\leq -2$ & $-1$ & $0$ & $1$ & $\geq 2$ & $j=1$ & $j=2$ & $j=3$\\
\cmidrule(lr){1-2} 
\cmidrule(lr){3-7} 
\cmidrule(lr){8-10} 
 & (10,10,10) & 0.31 & 0.48 & 0.21 & 0 & 0 & 0.80 & 0.46 & 0.50\\
 & (10,10,100) & 0.19 & 0.59 & 0.22 & 0 & 0 & 0.86 & 0.57 & 0.50\\
 & (10,20,40) & 0.11 & 0.40 & 0.49 & 0 & 0 & 0.93 & 0.75 & 0.67\\
\multirow{-4}{*}{\centering\arraybackslash 400} & (20,20,20) & 0.04 & 0.48 & 0.48 & 0 & 0 & 0.95 & 0.79 & 0.65\\
\cmidrule(lr){1-2} 
\cmidrule(lr){3-7} 
\cmidrule(lr){8-10} 
 & (10,10,10) & 0.12 & 0.44 & 0.44 & 0 & 0 & 0.91 & 0.65 & 0.65\\
 & (10,10,100) & 0.07 & 0.41 & 0.52 & 0 & 0 & 0.89 & 0.70 & 0.80\\
 & (10,20,40) & 0.02 & 0.21 & 0.77 & 0 & 0 & 0.95 & 0.88 & 0.85\\
\multirow{-4}{*}{\centering\arraybackslash 800} & (20,20,20) & 0.01 & 0.14 & 0.84 & 0.01 & 0 & 0.98 & 0.92 & 0.88\\
\cmidrule(lr){1-2} 
\cmidrule(lr){3-7} 
\cmidrule(lr){8-10} 
 & (10,10,10) & 0.01 & 0.35 & 0.63 & 0.01 & 0 & 0.94 & 0.70 & 0.85\\
 & (10,10,100) & 0 & 0.29 & 0.71 & 0 & 0 & 0.98 & 0.70 & 0.93\\
 & (10,20,40) & 0 & 0.08 & 0.92 & 0 & 0 & 0.98 & 0.92 & 0.95\\
\multirow{-4}{*}{\centering\arraybackslash 1600} & (20,20,20) & 0 & 0.06 & 0.93 & 0.01 & 0 & 1 & 0.94 & 0.96\\
\cmidrule(lr){1-2} 
\cmidrule(lr){3-7} 
\cmidrule(lr){8-10} 
 & (10,10,10) & 0.01 & 0.23 & 0.76 & 0 & 0 & 0.99 & 0.73 & 0.89\\
 & (10,10,100) & 0 & 0.16 & 0.84 & 0 & 0 & 0.99 & 0.79 & 0.96\\
 & (10,20,40) & 0 & 0.06 & 0.93 & 0.01 & 0 & 1 & 0.95 & 0.98\\
\multirow{-4}{*}{\centering\arraybackslash 3200} & (20,20,20) & 0 & 0.04 & 0.96 & 0 & 0 & 1 & 0.97 & 0.93\\
\bottomrule
\end{tabular}}
\end{table}

\begin{figure}[h!t!b!p!]
\centering
\includegraphics[width=0.68\linewidth]{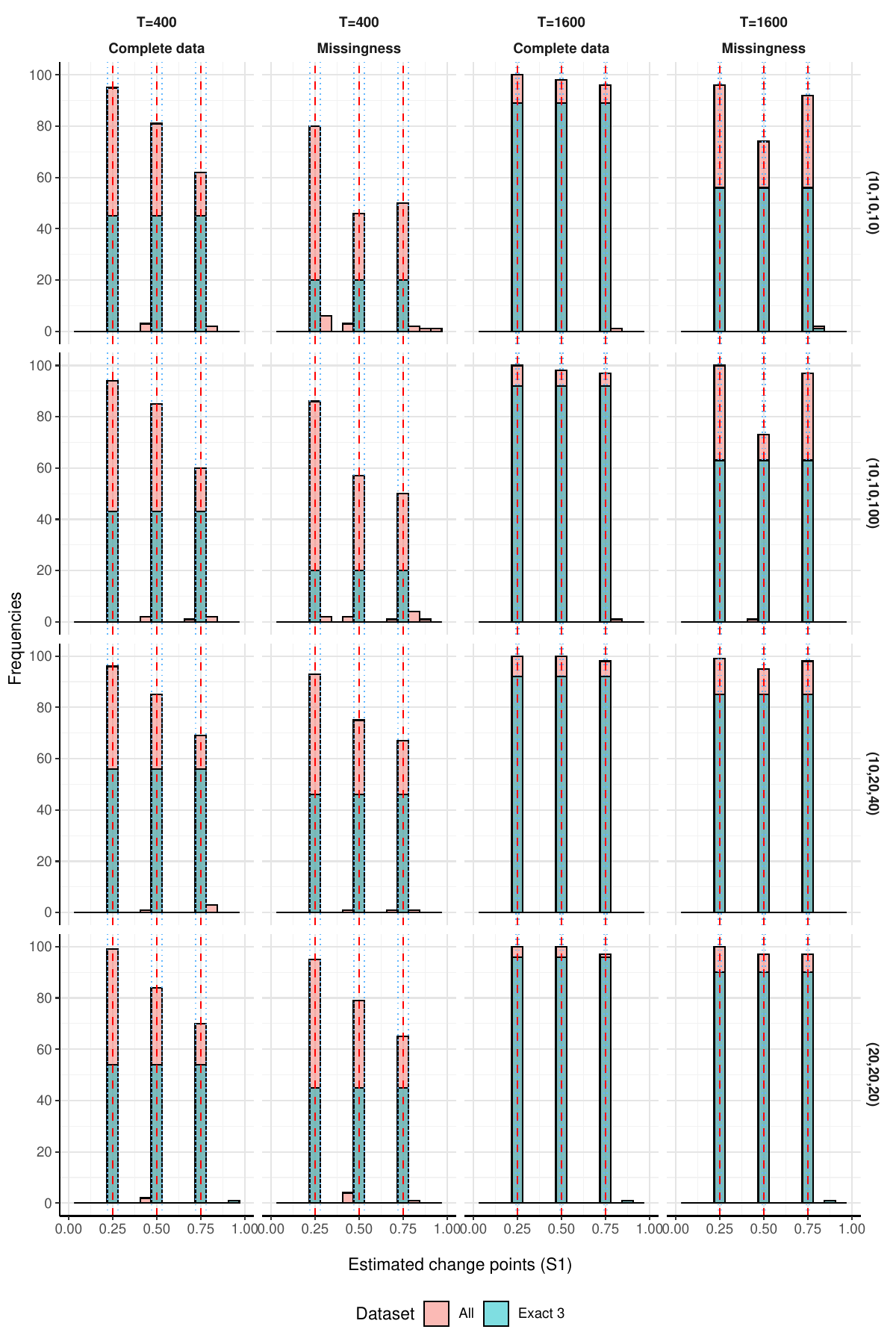}
\caption{\ref{s:one} Barplots of the scaled change point estimators $\{ \wh\theta^{(n)}_j/T, \, j \in [\wh q^{(n)}]\}$ returned by TFMseg for the complete and missing data, when $\Theta = \big\{\lfloor 0.25T \rfloor, \lfloor 0.5T \rfloor, \lfloor 0.75T \rfloor\big\}$, $\rho_f = 0$, $T \in \{400, 1600\}$ and varying $(p_1,p_2,p_3)$ (top to bottom) over $100$ realizations. The red bars give the total frequency of estimated change points for all $n \in [N]$, while the blue bars give the frequency from the subset of realizations $n \in \wh{\cD}$.}
\label{fig: subplot_miss_rnull_indep_method}
\end{figure}

\clearpage

\subsubsection{\ref{s:one} Single-mode change with unequally spaced change points}\label{app: num_miss_unbal}

We consider when the change points are unequally spaced with $\Theta = \big\{\lfloor 0.25T \rfloor, \lfloor 0.5T \rfloor, \lfloor 0.625T \rfloor\big\}$, see Tables~\ref{tab: miss_unbal_rnull}--\ref{tab: miss_unbal_rnull_indep} and Figures~\ref{fig: subplot_miss_unbal_rnull_method}--\ref{fig: subplot_miss_unbal_rnull_indep_method} ; in the figures, we present the results obtained from the complete observations for comparison, which are taken from Appendix~\ref{app: num_det_unbal}.

\begin{table}[h!t!b!]
\centering
\caption{\ref{s:one} Summary of change point estimators returned by TFMseg in the presence of missingness, when $\Theta  = \big\{\lfloor 0.25T \rfloor, \lfloor 0.5T \rfloor, \lfloor 0.625T \rfloor\big\}$ and $\rho_f = 0.7$, for varying $T$ and $(p_1,p_2,p_3)$, based on 100 realizations.}
\label{tab: miss_unbal_rnull}
\centering
\resizebox{\ifdim\width>\linewidth\linewidth\else\width\fi}{!}{
\begin{tabular}[t]{cccccccccc}
\toprule
\multicolumn{2}{c}{ } & \multicolumn{5}{c}{$\wh{q}-q$} & \multicolumn{3}{c}{Accuracy} \\
\cmidrule(l{3pt}r{3pt}){3-7} \cmidrule(l{3pt}r{3pt}){8-10}
$T$ & $(p_1,p_2,p_3)$ & $\leq -2$ & $-1$ & $0$ & $1$ & $\geq 2$ & $j=1$ & $j=2$ & $j=3$\\
\cmidrule(lr){1-2} 
\cmidrule(lr){3-7} 
\cmidrule(lr){8-10} 
 & (10,10,10) & 0.57 & 0.38 & 0.05 & 0 & 0 & 0.61 & 0.32 & 0.18\\
 & (10,10,100) & 0.33 & 0.63 & 0.04 & 0 & 0 & 0.74 & 0.42 & 0.26\\
 & (10,20,40) & 0.34 & 0.53 & 0.13 & 0 & 0 & 0.77 & 0.48 & 0.30\\
\multirow{-4}{*}{\centering\arraybackslash 400} & (20,20,20) & 0.27 & 0.56 & 0.17 & 0 & 0 & 0.81 & 0.54 & 0.29\\
\cmidrule(lr){1-2} 
\cmidrule(lr){3-7} 
\cmidrule(lr){8-10} 
 & (10,10,10) & 0.29 & 0.53 & 0.18 & 0 & 0 & 0.70 & 0.42 & 0.30\\
 & (10,10,100) & 0.19 & 0.64 & 0.17 & 0 & 0 & 0.76 & 0.48 & 0.34\\
 & (10,20,40) & 0.09 & 0.52 & 0.39 & 0 & 0 & 0.82 & 0.76 & 0.47\\
\multirow{-4}{*}{\centering\arraybackslash 800} & (20,20,20) & 0.07 & 0.52 & 0.41 & 0 & 0 & 0.88 & 0.75 & 0.40\\
\cmidrule(lr){1-2} 
\cmidrule(lr){3-7} 
\cmidrule(lr){8-10} 
 & (10,10,10) & 0.09 & 0.50 & 0.39 & 0.02 & 0 & 0.82 & 0.48 & 0.55\\
 & (10,10,100) & 0.04 & 0.49 & 0.44 & 0.03 & 0 & 0.88 & 0.59 & 0.57\\
 & (10,20,40) & 0.01 & 0.24 & 0.71 & 0.04 & 0 & 0.92 & 0.78 & 0.77\\
\multirow{-4}{*}{\centering\arraybackslash 1600} & (20,20,20) & 0 & 0.24 & 0.72 & 0.04 & 0 & 0.96 & 0.78 & 0.74\\
\cmidrule(lr){1-2} 
\cmidrule(lr){3-7} 
\cmidrule(lr){8-10} 
 & (10,10,10) & 0.07 & 0.29 & 0.55 & 0.09 & 0 & 0.92 & 0.63 & 0.71\\
 & (10,10,100) & 0.02 & 0.26 & 0.64 & 0.08 & 0 & 0.94 & 0.72 & 0.80\\
 & (10,20,40) & 0 & 0.09 & 0.80 & 0.11 & 0 & 0.97 & 0.89 & 0.87\\
\multirow{-4}{*}{\centering\arraybackslash 3200} & (20,20,20) & 0 & 0.10 & 0.78 & 0.11 & 0.01 & 0.99 & 0.92 & 0.86\\
\bottomrule
\end{tabular}}
\end{table}

\begin{figure}[h!t!b!p!]
\centering
\includegraphics[width=0.68\linewidth]{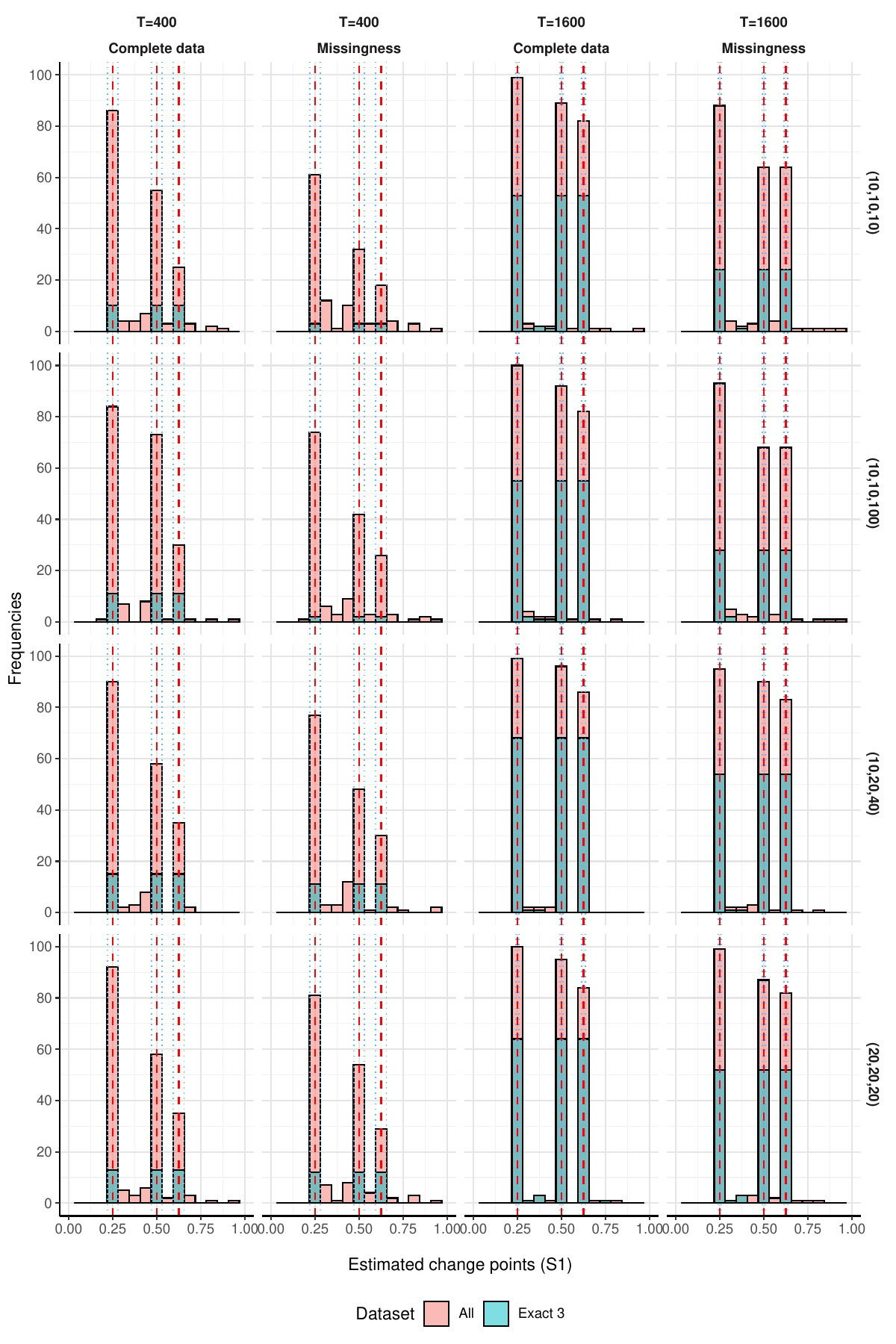}
\caption{\ref{s:one} Barplots of the scaled change point estimators $\{ \wh\theta^{(n)}_j/T, \, j \in [\wh q^{(n)}]\}$ returned by TFMseg for the complete and missing data, when $\Theta = \big\{\lfloor 0.25T \rfloor, \lfloor 0.5T \rfloor, \lfloor 0.625T \rfloor\big\}$, $\rho_f = 0.7$, $T \in \{400, 1600\}$ and varying $(p_1,p_2,p_3)$ (top to bottom) over $100$ realizations. The red bars give the total frequency of estimated change points for all $n \in [N]$, while the blue bars give the frequency from the subset of realizations $n \in \wh{\cD}$.}
\label{fig: subplot_miss_unbal_rnull_method}
\end{figure}

\clearpage

\begin{table}[h!t!b!]
\centering
\caption{\ref{s:one} Summary of change point estimators returned by TFMseg in the presence of missingness, when $\Theta  = \big\{\lfloor 0.25T \rfloor, \lfloor 0.5T \rfloor, \lfloor 0.625T \rfloor\big\}$ and $\rho_f = 0$, for varying $T$ and $(p_1,p_2,p_3)$, based on 100 realizations.}
\label{tab: miss_unbal_rnull_indep}
\centering
\resizebox{\ifdim\width>\linewidth\linewidth\else\width\fi}{!}{
\begin{tabular}[t]{cccccccccc}
\toprule
\multicolumn{2}{c}{ } & \multicolumn{5}{c}{$\wh{q}-q$} & \multicolumn{3}{c}{Accuracy} \\
\cmidrule(l{3pt}r{3pt}){3-7} \cmidrule(l{3pt}r{3pt}){8-10}
$T$ & $(p_1,p_2,p_3)$ & $\leq -2$ & $-1$ & $0$ & $1$ & $\geq 2$ & $j=1$ & $j=2$ & $j=3$\\
\cmidrule(lr){1-2} 
\cmidrule(lr){3-7} 
\cmidrule(lr){8-10} 
 & (10,10,10) & 0.38 & 0.54 & 0.08 & 0 & 0 & 0.83 & 0.47 & 0.30\\
 & (10,10,100) & 0.24 & 0.67 & 0.09 & 0 & 0 & 0.84 & 0.55 & 0.39\\
 & (10,20,40) & 0.20 & 0.54 & 0.26 & 0 & 0 & 0.91 & 0.62 & 0.51\\
\multirow{-4}{*}{\centering\arraybackslash 400} & (20,20,20) & 0.11 & 0.65 & 0.24 & 0 & 0 & 0.93 & 0.76 & 0.40\\
\cmidrule(lr){1-2} 
\cmidrule(lr){3-7} 
\cmidrule(lr){8-10} 
 & (10,10,10) & 0.18 & 0.53 & 0.29 & 0 & 0 & 0.91 & 0.61 & 0.47\\
 & (10,10,100) & 0.11 & 0.54 & 0.35 & 0 & 0 & 0.89 & 0.68 & 0.62\\
 & (10,20,40) & 0.05 & 0.35 & 0.60 & 0 & 0 & 0.93 & 0.83 & 0.76\\
\multirow{-4}{*}{\centering\arraybackslash 800} & (20,20,20) & 0.01 & 0.30 & 0.68 & 0.01 & 0 & 0.98 & 0.87 & 0.79\\
\cmidrule(lr){1-2} 
\cmidrule(lr){3-7} 
\cmidrule(lr){8-10} 
 & (10,10,10) & 0.05 & 0.40 & 0.55 & 0 & 0 & 0.91 & 0.65 & 0.81\\
 & (10,10,100) & 0 & 0.43 & 0.56 & 0.01 & 0 & 0.98 & 0.68 & 0.84\\
 & (10,20,40) & 0 & 0.15 & 0.85 & 0 & 0 & 0.98 & 0.90 & 0.90\\
\multirow{-4}{*}{\centering\arraybackslash 1600} & (20,20,20) & 0 & 0.15 & 0.85 & 0 & 0 & 1 & 0.91 & 0.91\\
\cmidrule(lr){1-2} 
\cmidrule(lr){3-7} 
\cmidrule(lr){8-10} 
 & (10,10,10) & 0.02 & 0.30 & 0.67 & 0.01 & 0 & 0.99 & 0.69 & 0.86\\
 & (10,10,100) & 0 & 0.23 & 0.77 & 0 & 0 & 0.98 & 0.75 & 0.96\\
 & (10,20,40) & 0 & 0.07 & 0.93 & 0 & 0 & 1 & 0.95 & 0.93\\
\multirow{-4}{*}{\centering\arraybackslash 3200} & (20,20,20) & 0 & 0.07 & 0.93 & 0 & 0 & 1 & 0.96 & 0.96\\
\bottomrule
\end{tabular}}
\end{table}

\begin{figure}[h!t!b!p!]
\centering
\includegraphics[width=0.68\linewidth]{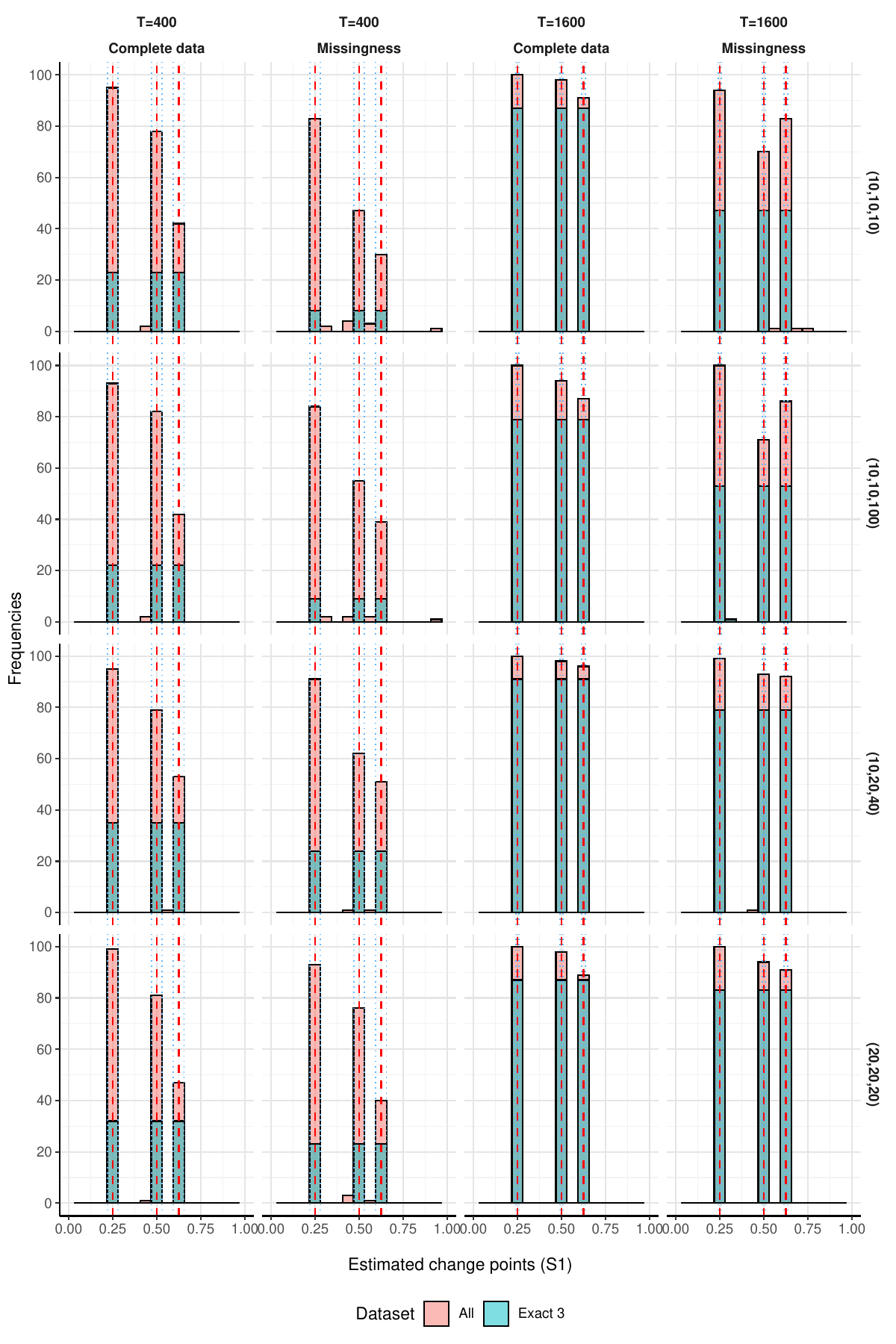}
\caption{\ref{s:one} Barplots of the scaled change point estimators $\{ \wh\theta^{(n)}_j/T, \, j \in [\wh q^{(n)}]\}$ returned by TFMseg for the complete and missing data, when $\Theta = \big\{\lfloor 0.25T \rfloor, \lfloor 0.5T \rfloor, \lfloor 0.625T \rfloor\big\}$, $\rho_f = 0$, $T \in \{400, 1600\}$ and varying $(p_1,p_2,p_3)$ (top to bottom) over $100$ realizations. The red bars give the total frequency of estimated change points for all $n \in [N]$, while the blue bars give the frequency from the subset of realizations $n \in \wh{\cD}$.}
\label{fig: subplot_miss_unbal_rnull_indep_method}
\end{figure}

\clearpage

\section{Additional results for real data applications}
\label{app: real_fama}

For the Fama--French portfolio returns data analyzed in Section~\ref{sec:fama},
Figures~\ref{fig: fama_va}--\ref{fig: fama_eq} plot the series of average absolute return $\{ 100^{-1} \sum_{i=1}^{10} \sum_{j=1}^{10} \vert \cX_{t,ij} \vert \}_{t = 1}^T$, which shows that the market-wide fluctuation formed a sharp peak around the estimated break.

\begin{figure}[h!t!b!]
\centering
\includegraphics[width=0.7\linewidth]{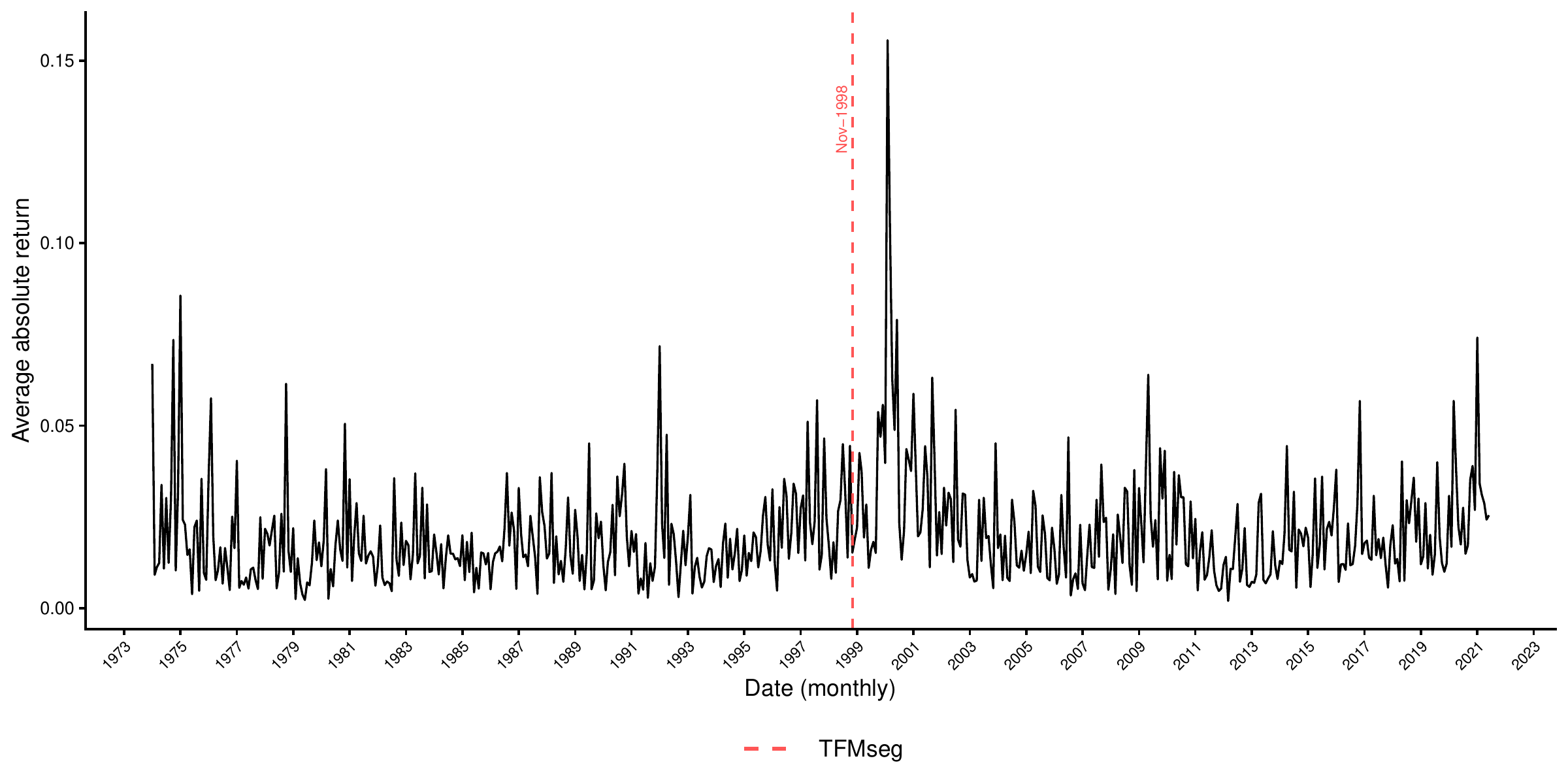}
\caption{Average absolute returns from the value-weighted return series, with the change point estimator returned by TFMseg.}
\label{fig: fama_va}
\end{figure}

\begin{figure}[h!t!b!]
\centering
\includegraphics[width=0.7\linewidth]{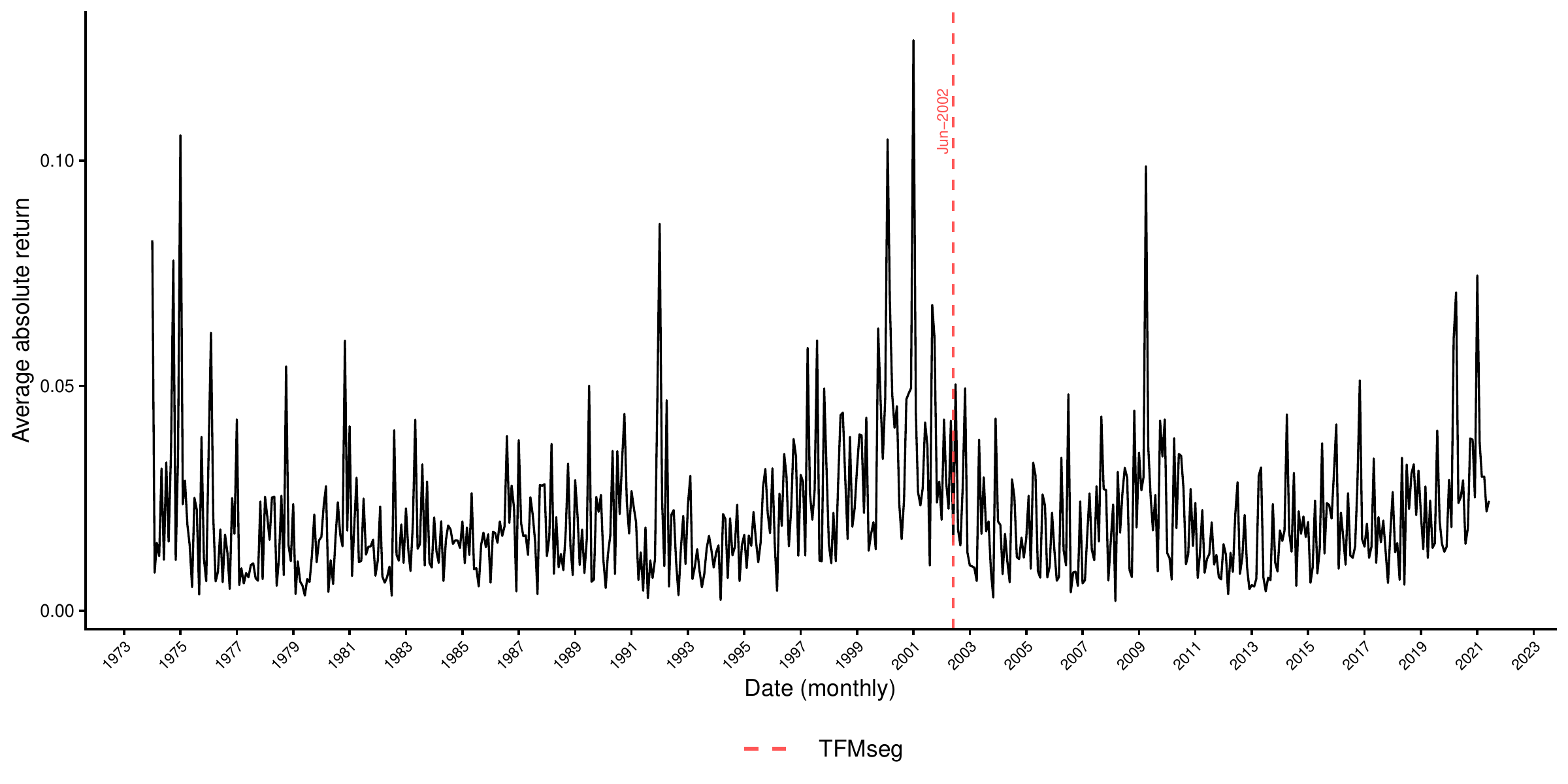}
\caption{Average absolute returns from the equal-weighted return series, with the change point estimator returned by TFMseg.}
\label{fig: fama_eq}
\end{figure}

\end{document}